\documentclass{birkjour}
\usepackage[english]{babel}
\usepackage{amssymb}
\usepackage{amsmath,esint}
\usepackage[shortlabels]{enumitem}
\usepackage{amsthm}
\usepackage{bbm, mathrsfs,pgf,tikz}
\usepackage[backgroundcolor=white, bordercolor=blue,
linecolor=blue]{todonotes}
\usepackage{url}

%\usepackage[pagewise]{lineno}
% \linenumbers

\usepackage[pagewise]{lineno} 
%\linenumbers
%\internallinenumbers
\usepackage{dsfont}
\usepackage{cases}
\usetikzlibrary{arrows}
\usepackage[
pagebackref, 
pdfpagelabels, plainpages=false]{hyperref}

\hypersetup{
colorlinks=true,
linkcolor=red,%---------------------%couleur des liens dans le document (references
filecolor=magenta,%-----------------%couleur des files à ouvrir
urlcolor=cyan,%---------------------%couleur des liens internets à ouvrir
citecolor=blue,%--------------------%couleur des liens pour les bibliographies
}%-------------------------------------%pour les courleur des liens ; ne jamias oublier de mettre le package hyperref avant

%\pagestyle{plain} 

%\theoremstyle{plain}
%%\renewcommand{\proof}{\text{Proof.}}
%\declaretheorem[title=Theorem, parent=section]{theorem}
%\declaretheorem[title=Lemma,sibling=theorem]{lemma}
%\declaretheorem[title=Proposition,sibling=theorem]{proposition}
%\declaretheorem[title=Corollary,sibling=theorem]{corollary}
%
%\theoremstyle{definition}
%\declaretheorem[title=Definition,sibling=theorem]{definition}
%\declaretheorem[title=Remark,sibling=theorem]{remark}
%\declaretheorem[title=Remark, numbered=no]{remark*}
%\declaretheorem[title=Example, sibling=theorem]{example}
%\declaretheorem[title=Counterexample, sibling=example]{counterexample}

%\numberwithin{equation}{section}

 \newtheorem{theorem}{Theorem}[section]
\newtheorem{corollary}[theorem]{Corollary}
\newtheorem{lemma}[theorem]{Lemma}
\newtheorem{proposition}[theorem]{Proposition}
\theoremstyle{definition}
\newtheorem{definition}[theorem]{Definition}
\newtheorem{remark}[theorem]{Remark}
\newtheorem{example}[theorem]{Example}
\newtheorem{counterexample}[theorem]{Counterexample}
\numberwithin{equation}{section}

\DeclareMathOperator*{\essinf}{ess\,inf}
\DeclareMathOperator{\dist}{dist}
\DeclareMathOperator{\supp}{supp}
\DeclareMathOperator{\diam}{diam}

\DeclareMathOperator{\diag}{diag}

\DeclareMathOperator{\loc}{loc}

\DeclareMathOperator{\pv}{\operatorname{p.\!v.}}

\DeclareMathOperator{\cE}{\mathcal{E}}

\DeclareMathOperator{\cN}{\mathcal{N}}

\DeclareMathOperator{\cJ}{\mathcal{J}}

%%% Begin: Guy's crazy stuff
%%%%%%new by Nov 30

\DeclareMathOperator{\R}{\mathbb{R}}

\newcommand{\il}{\int\limits}
\newcommand{\iil}{\iint\limits}
\renewcommand{\d}{\mathrm{d}}
\renewcommand{\div}{\operatorname{div}}
\newcommand{\eps}{\varepsilon} 
\newcommand{\classA}[1]{\mathbf{\mathscr{A}_{#1}}}
\newcommand{\nuxminy}{\nu(x\!-\!y)}

%%%%%% Commands for the spaces%%%%%%%%%%%%%%%%%%%%%
\newcommand{\Wnu}{W_{\nu}^{p}(\R^d)}
\newcommand{\WnuOmR}{W^{p}_{\nu}(\Omega|\R^d )}
\newcommand{\WnuOme}{W^{p}_{\nu}(\Omega|\Omega_\nu)}
\newcommand{\WnuOmO}{W^{p}_{\nu,\Omega}(\Omega|\R^d )}% Zero outside omega
\newcommand{\WnuOmRO}{W_{\nu,0}^{p}(\Omega|\R^d )}% the closure of smooth fct of compact supp
\newcommand{\WnuOm}{W^{p}_{\nu}(\Omega)}
 % the closure of smooth fct of compact supp

\newcommand{\TnuOm}{T^{p}_{\nu} (\Omega^c)}

%%ttttttt

%\newcommand{\WnuOmRa}{V_{\nu_\alpha}(\Omega|\R^d)}
%\newcommand{\WnuOmRao}{V_{\nu_\alpha,0}(\Omega|\R^d)}
\newcommand{\WnuOmRa}{W^p_{\nu_\eps}(\Omega|\R^d )}
\newcommand{\WnuOmRao}{W^p_{\nu_\eps,0}(\Omega|\R^d)}
\newcommand{\WnuOma}{W^{p}_{\nu_\varepsilon}(\Omega)}

% Betrag

% Skalarprodukt

\newcommand{\vertiii}[1]{{\left\vert\kern-0.25ex\left\vert\kern-0.25ex\left\vert #1 \right\vert\kern-0.25ex\right\vert\kern-0.25ex\right\vert}}
\setcounter{tocdepth}{1}

\addto\extrasenglish{%

}
%\makeatletter
%\@namedef{subjclassname@2020}{\textup{2020} Mathematics Subject Classification}
%\makeatother

\parindent0ex

\begin{document}

\title[Stability of complement value problems for  $p$-L\'{e}vy operators]{Stability of complement value problems for nonlocal $p$-L\'{e}vy operators  
\begin{center}
	\footnotesize{\url{https://doi.org/10.1007/s00030-024-01006-6}}
	\end{center}
	}

\author{Guy Foghem} 
\address{{\tiny Technische Universit\"{a}t Dresden, Fakult\"{a}t f\"{u}r Mathematik, Institut f\"{u}r Wissenschaftliches Rechnen,  Zellescher Weg 23-25 01217, Dresden, Germany.}}
\email{guy.foghem[at]tu-dresden.de}
\thanks{Financial support by the DFG via the Research Group Unit FOR 3013: ``Vector-and Tensor-Valued Surface PDEs''(project number 417223351) is gratefully acknowledged.}

\vspace{-14ex}

\begin{abstract}
We set up a general framework tailor-made to solve complement value problems governed by symmetric nonlinear nonlocal integrodifferential $p$-L\'{e}vy operators. 
A prototypical example of integrodifferential $p$-L\'{e}vy operators is the well-known fractional $p$-Laplace operator. 
Our main focus is on nonlinear Integro-Differential Equations (IDEs) in the presence of Dirichlet, Neumann and Robin conditions and we show well-posedness results. 
Several results are new even for the fractional $p$-Laplace operator but we develop the approach for general translation-invariant nonlocal operators.
We also bridge the gap from nonlocal to local, by showing that solutions to the local Dirichlet and Neumann boundary value problems associated with $p$-Laplacian are strong limits of the nonlocal
ones.
\end{abstract}

\keywords{Integro-Differential Equations(IDEs), Nonlinear nonlocal operators, $p$-L\'{e}vy operators,  Nonlocal Sobolev spaces, Stability of weak solutions}
\subjclass{
35D30, %Weak solutions to PDEs
 35J20, %Variational methods for second-order elliptic equations
%35J92, %Quasilinear elliptic equations with p-Laplacian
 34B15, %Nonlinear boundary value problems for ordinary differential equations
%34G20, %Nonlinear differential equations in abstract spaces
 35J60, %Nonlinear elliptic equations
 35J66, %Nonlinear boundary value problems for nonlinear elliptic equations
 35B35, %Stability in context of PDEs
%46B10, %Duality and reflexivity in normed linear and Banach spaces 
46E35%Sobolev spaces and other spaces of “smooth”
}

\maketitle
\vspace{-5ex}
%\small
{\tableofcontents}

\vspace{-5ex}

\section{Introduction}\label{sec:intro}
In this article, we  study certain classes of nonlinear  Integro-Differential Equations (IDEs) associated with symmetric nonlinear nonlocal $p$-L\'{e}vy operators, $1<p<\infty$ which are integro-differential operators of the form
\begin{align}
\begin{split}
L u(x)&= 2\pv \int_{\R^d}|u(x)-u(y)|^{p-2}(u(x)-u(y))\nuxminy \d y, \qquad\quad (x\in \R^d)\\
&=2\pv \int_{\R^d}\psi(u(x)-u(y))\nuxminy \d y,
\end{split}
\end{align}
defined for a measurable function $u:\R^d \to \R$ whenever the right hand side makes sense.
Throughout, we assume that $d\geq 1$, $\psi (t)=|t|^{p-2}t,\, t\in\R$ and  the function $\nu:\R^d\setminus\{0\}\to[0,\infty)$ is the density of a symmetric $p$-L\'{e}vy measure, i.e., 
$\nu$ satisfies 
\begin{align}\tag{L}\label{eq:plevy-integ-cond-intro}
\begin{split}
\nu(h) = \nu(-h) \quad &\text{ \,\, and\,\, }
\qquad \int_{\R^d} (1\land |h|^p )\nu(h) \d h <\infty . 
\end{split}
\end{align}
Moreover, if $\nu$ is radial, we  use the same notation for its radial profile, i.e., $\nu(h) = \nu(|h|)$. The notation $\pv$ stands for the principal value, while  $a\land b$ denotes $\min(a, b)$, $a,b\in \R$. More succinctly, given an open set $\Omega\subset \R^d$,  we are interested in nonlinear nonlocal problems of the forms
\begin{align}\tag{$P_{\nu, \tau}$}\label{eq:main-problem}
L u = f \,\,\text{ in }  \,\,\Omega\quad\text{ and }\quad \tau\mathcal{N}u + (1-\tau)\beta |u|^{p-2}u= g \,\, \text{ on }\,\, \Omega^c,
\end{align}
where for simplicity\footnote{At the risk of being pedantic, the general case $\tau\in (0,1)$ readily reduces to the case $\tau=\frac12$ by renaming the data $\beta$ and $g$.} $\tau\in \{0,\frac{1}{2}, 1\}$ and the data $f: \Omega\to \R$, $g,\beta: \Omega^c\to \R$ are given. The operator $u\mapsto\cN u$ is the nonlocal $p$-normal derivative across the boundary of $\Omega$, 
\begin{align}
\cN u(y)=2\int_{\Omega}\psi(u(y)-u(x))\nuxminy \d x\qquad\quad (y\in \Omega^c). 
\end{align}
The nonlinear operator $u\mapsto \cN u$ appears naturally while deriving the nonlocal Gauss-Green formula 
(see Appendix \ref{sec:appendix-gauss-green}), which was introduced in the linear setting in \cite{DROV17}.  We point out that another type of such an operator appeared earlier in literature see for instance \cite{DGLZ12}. 

In view of problem \eqref{eq:main-problem}, it is necessary and sufficient to prescribe the complement condition on $\Omega_e:=\Omega_\nu\setminus \Omega$ with $\Omega_\nu=\Omega+\supp\nu$ which we name as \emph{the nonlocal boundary} or \emph{the exterior boundary} of $\Omega$ with respect to $\nu$. As well, we also call $\Omega_\nu^{*}=\Omega\cup \Omega_\nu$ to be \emph{the nonlocal hull} of $\Omega$ with respect to $\nu$. Note that if $0\in\supp\nu$ then $\Omega\subset \Omega_\nu$ hence $\Omega_\nu^{*}=\Omega_\nu$; see Section \ref{sec:basicsandnotations} and Section \ref{sec:existence-weak-sol} for more details. Broadly speaking, PDEs oriented folks might regard view the problem \eqref{eq:main-problem}  as the nonlocal counterpart of the nonlinear local problem of the form 
\begin{align}\tag{$P_{\tau}$}\label{eq:main-problem-local}
-\Delta_p u = f \text{ in }  \Omega\quad\text{ and }\quad \tau\partial_{n,p}u + (1-\tau)\beta |u|^{p-2}u= g \text{ on } \partial \Omega,
\end{align}
where $ \Delta_pu=\div(|\nabla u|^{p-2}\nabla u)$ denotes the $p$-Laplace operator and when $\Omega$ is smooth, $\partial_{n,p}u =|\nabla u|^{p-2}\nabla u\cdot n$ is the $p$-normal derivative on the boundary $\partial \Omega$ of $\Omega$, viz., at a point $x\in \partial\Omega$, $n (x)$ is the outward normal derivative on the boundary $\partial\Omega$. Of course, one immediately recognizes that the complement (resp. boundary) condition the problem \eqref{eq:main-problem} (resp.  \eqref{eq:main-problem-local}) is the Neumann condition $\cN u= g$ (resp. $\partial_{n,p} u=g$) for $\tau=1$, the Robin condition $\cN u + \beta|u|^{p-2}u=2g$
(resp. $\partial_{n,p} u+ \beta|u|^{p-2}u=2g$) for $\tau=\frac12$ and  the Dirichlet condition  $|u|^{p-2} u=g$ or equivalently $u= |g|^{p'-2}g$ for $\tau=0$ and $\beta=1$ where we have $pp'=p+p'$.  Actually, our second goal is to show that weak solutions to the problem \eqref{eq:main-problem-local} with $\tau=0,1$ are strong limits of those of  problems of the forms \eqref{eq:main-problem}. Let us  first discuss the assumption in \eqref{eq:plevy-integ-cond-intro} which is utterly  decisive and, afterwards, give a prototypical example. 

%\smallskip 

\par It remarkably appears that the assumption in  \eqref{eq:plevy-integ-cond-intro} is unavoidable. Indeed, on the one hand, the symmetry condition is natural in the sense that one has
\begin{align*}
\cE_{\R^d}(u,u)&=\iint_{ \R^d\R^d} |u(x)-u(y)|^p\nu(x-y)\d y\d x\\
&= \iint_{ \R^d\R^d} |u(x)-u(y)|^p\nu^{\operatorname{sym}}(x-y)\d y\d x. 
\end{align*} 
where $\nu^{\operatorname{sym}}(h)= \frac12(\nu(h)+\nu(-h))$ is the symmetric part of $\nu$. On the other hand, it appears that $\cE_{\R^d}(u,u)<\infty$ for all $u\in C_c^\infty(\R^d)$ if and only if $\nu$ is $p$-L\'{e}vy integrable, i.e., $\nu\in L^1(\R^d,1\land|h|^p)$; cf. Section \ref{sec:charac-plevy} for the details.  It turns out that the symmetry and the $p$-L\'{e}vy condition in  \eqref{eq:plevy-integ-cond-intro} can be self-generated through the energy form $\cE_{\R^d}(u,u).$ Furthermore, a heuristic computation reveals that the first variation of the operator $u\mapsto Q(u)=\frac1p\cE_{\R^d}(u,u)$ on the Banach space $\Wnu=\{ u\in L^p(\R^d)\,:\, \cE_{\R^d}(u,u)<\infty\}$ with the norm
$\|u\|_{\Wnu}= \big(\|u\|^p_{L^p(\R^d)}+ \cE_{\R^d}(u,u)\big)^{1/p}$ gives rise to the $p$-L\'{e}vy operator $L$, which is also the subdifferential of $Q$. Moreover, we have
$\cE_{\R^d}(u,u)= \langle Lu, u\rangle= Q'(u)(u) $, $u\in C_c^\infty(\R^d)$.  Note  in passing that under mild assumptions, the space $\Wnu$ embeds in an Orlicz space; this is implied by the Sobolev type inequality established in \cite{Fog21b}. The density $\nu$ is somewhat the ``order'' of the operator $L$, which becomes apparent in the case of fractional kernel $\nu(h) = |h|^{-d-sp}$, $ s \in (0,1)$ is fixed. For case $p=2$, one immediately recognizes the so-called L\'{e}vy condition in \eqref{eq:plevy-approx-intro} while the operator $L$ is translation invariant and generates a symmetric L\'{e}vy process \cite{Sat13}. It is important to emphasize that our main assumption here is only the condition \eqref{eq:plevy-integ-cond-intro}, i.e., the symmetry and the $p$-L\'{e}vy integrability of $\nu$. Moreover, we do not compare $\nu$ to the fractional kernel $|h|^{-d-sp}$, this makes the investigations in our framework very general and more challenging. In general, the pointwise evaluation of $Lu(x)$ and  $-\Delta_p u(x)$ for  $u\in C^2_b(\R^d)$ is warranted in the degenerate case (also often called the superquadratic case), i.e. $p\geq 2$. However, in the  singular case (also often called the subquadratic case), i.e.,  $1<p<2$, the pointwise definition of $Lu(x)$ and  $-\Delta_p u(x) $ might not exist even for \emph{a bona fide} function  $u\in C_c^\infty(\R^d)$. In many situations, the operators $L$ and  $-\Delta_p$ do not always act on functions in a reasonable pointwise sense. We refer interested readers to Appendix \ref{sec:appendix-pointwise} for some sketchy examples. In both cases, i.e., $1<p<\infty$, a reasonable alternative is rather to  evaluate  $Lu$ and $\Delta_pu$ in the generalized sense, i.e., in the weak sense or via their respective associated energies forms. For instance by duality (see Section \ref{sec:nonlocal-form}), one finds that the nonlinear operators $L: W^p_\nu(\R^d)\to (W^p_\nu(\R^d))'$ and $\Delta_p:W^{1,p}(\R^d)\to (W^{1,p}(\R^d))'$ are well-defined.  
In particular, $Lu$ and $\Delta_p u$ are distributions. Morally, the  nonlocal operator $L$ may be as good or as bad as the local operator $-\Delta_p$.  The operator $L$ can be seen as a prototype of a nonlinear nonlocal operator of divergence form just as the $-\Delta_p$ is a prototype of a nonlinear local operator of divergence form.

%\smallskip 
\par For prototypical example, consider $s\in \R$ then an effortless computation reveals  that $ \nu(h) =\frac{C_{d,p,s}}{2}|h|^{-d-sp}$ belongs to $L^1(\R^d, 1\land|h|^p)$, i.e., $\nu$ satisfies \eqref{eq:plevy-integ-cond-intro} if and only if $s\in (0,1)$. In this case, the resulting  space $\Wnu=W^{s,p}(\R^d)$ is the usual fractional Sobolev-Slobodeckij space, while the associated integro-differential operator $L$ is the well-known fractional $p$-Laplace operator $(-\Delta)^s_p$,  
\begin{align*}
(-\Delta)^s_pu(x)&= C_{d,p,s}\pv \int_{\R^d} \frac{|u(x)-u(y)|^{p-2}(u(x)-u(y))}{|x-y|^{d+sp}}\d y, 
\end{align*}
where $C_{d,p,s}$ is a normalizing constant  of the fractional $p$-Laplacian which we define by 
\begin{align*}
C_{d,p,s}=\frac{s(1-2s)\Gamma(\frac{d+sp}{2})}{\pi^{\frac{d-1}{2}}\Gamma(\frac{sp+1}{2})\Gamma(p(1-s))\cos(s\pi)}
=\frac{2^{2s}\,\Gamma(\frac{d+sp}{2})\Gamma(\frac{2s+1}{2})\Gamma(2(1-s))}{\pi^{\frac{d}{2}}\,\, |\Gamma(-s)|\,\Gamma(\frac{sp+1}{2})\Gamma(p(1-s))}. 
\end{align*}
Our choice of the constant $C_{d,p,s}$, (see Section \ref{sec:norming-cste-pfrac} where the asymptotic near $s=0$ is also considered), guaranties the following properties:
\begin{itemize}
\item For $p=2$, by \cite[Proposition 2.21]{guy-thesis},  $C_{d,2,s}$  is the unique normalizing constant of the fractional Laplacian such that $\widehat{(-\Delta)^s u}(\xi)= |\xi|^{2s}\widehat{u}(\xi)$, $\xi\in \R^d$ for all $u\in C_c^\infty(\R^d)$, where $\widehat{u}(\xi)=\int_{\R^d}e^{-ix\cdot\xi} u(x)\d x$ denotes the Fourier transform of $u$. Namely, we have 
\begin{align*}
C_{d,2,s}= \frac{s(1-2s)\Gamma(\frac{d+2s}{2})}{\pi^{\frac{d-1}{2}}\Gamma(\frac{2s+1}{2})\Gamma(2(1-s))\cos(s\pi)}= 
\frac{s2^{2s}\Gamma(\frac{d+2s}{2})}{\pi^{\frac{d}{2}}\Gamma(1-s)}. 
\end{align*}
\item For any $u\in L^\infty(\R^d)\cap C^2(B_1(x))$, $\nabla u(x)\neq0$ we have $(-\Delta)^s_p u(x)\xrightarrow{s\to1}-\Delta_p u(x)$.
\item For all $u\in W^{1,p}(\R^d)$ we have $\|u\|_{W^{s,p}(\R^d)}\xrightarrow{s\to 1} \|u\|_{W^{1,p}(\R^d)}$; see Section \ref{sec:convergence-form}. 
\item Moreover we have the following asymptotic behaviors  
\end{itemize}
\begin{align}\label{eq:asymptotic-norming-intro}
\lim_{s\to 0} \frac{C_{d,p,s}}{s(1-s)}&=\frac{2}{|\mathbb{S}^{d-1}|\Gamma(p)}
\qquad \text{and}\qquad 
\lim_{s\to 1} \frac{C_{d,p,s}}{s(1-s)}=\frac{2p}{|\mathbb{S}^{d-1}| K_{d,p}}. 
\end{align} 
Here we emphasize that the constant $K_{d,p}$, see \cite{Fog23,guy-thesis} for the computation, plays a crucial role in our asymptotic analysis and is given by 
\begin{align}\label{eq:cst-kdp}
K_{d,p}&= \fint_{\mathbb{S}^{d-1}} |w_d|^p\d\sigma_{d-1}(w) = \frac{\Gamma\big(\frac{d}{2}\big)\Gamma\big(\frac{p+1}{2}\big)}{\Gamma\big(\frac{d+p}{2}\big) \Gamma\big(\frac{1}{2}\big)}.
\vspace*{-2ex}\end{align}

The asymptotic $s\to 1$,  highlighting the factor $K_{d,p}$  is already anticipated in \cite[Eq: 2.38]{guy-thesis} for the case $p=2$ since $K_{d,2}=\frac1{d}$. The above asymptotic in \eqref{eq:asymptotic-norming-intro} perfectly aligns with the case $p=2$, as obtained in \cite{Hitchhiker}.
 Despite the amusing fact of this asymptotic, it is important for the reader to keep in mind that $C_{d,p,s}$ is purely artificial and that only the case $p=2$ naturally appears as the unique constant for which $\widehat{(-\Delta)^s u} (\xi)= |\xi|^{2s}\widehat{u}(\xi)$. Other  different normalizing constants for the fractional $p$-Laplacian are proposed in \cite{dTGCV21,War16} wherein, one also finds other representations of the fractional $p$-Laplace  operator. Additional special examples of $p$-L\'evy operators are obtained by considering $p$-L\'evy kernels of the form $ \nu(h) =\frac{C_{d,p,s}}{2}|h|^{-d-sp}+\frac{C_{d,q,t}}{2}|h|^{-d-tq}$ with $s,t\in (0,1)$ and $1<q\leq p$; the particular case $q=p$ yields the mixed nonlocal operator $Lu=(-\Delta)^s_pu+(-\Delta)^t_pu$.

\smallskip
\par The main purpose of this article is twofold. The primary objective is to study the well-posedness of the nonlocal problem \eqref{eq:main-problem} under additional mild assumptions. 
For instance, as an \textit{avant-go\^{u}t}, let us illuminate  what we do with the particular case  of the Neumann problem, i.e., $\tau=1$ and for the particular fractional kernel $\nu(h)=\frac{C_{d,p,s}}{2}|h|^{-d-sp}$.
We refer the reader to Section \ref{sec:existence-weak-sol} for the general setting and  more details.  We point out that $u$ is a weak solution to Neumann problem $(P_{\nu,1})$ if $u\in \WnuOmR$ and satisfies 
\begin{align}\label{eq:illustr-neumann}
\vspace*{-4ex}\cE(u, v)=\int_\Omega f (x)v(x)\d x+ \int_{\Omega^c} g(y) v(y)\d y\qquad\text{for all $u\in \WnuOmR$}. 
\vspace*{-2ex}\end{align}
Here, $\WnuOmR=\big\{ u:\R^d\to\R\,\text{meas.}:\, u|_\Omega\in L^p(\Omega), \,\,\cE(u,u)<\infty\big\}$
where the energy form $\cE(\cdot,\cdot)$ is defined as  
\begin{align*}
\cE(u, v)= \frac{C_{d,p,s}}{2}\iil_{(\Omega^c\times \Omega^c)^c}\frac{ \psi(u(x)-u(y))(v(x)-v(y))}{|x-y|^{d+sp}}\d y\d x.
\end{align*}
In Theorem \ref{thm:nonlocal-Neumann-var}, we establish the well-posedness up to additive constants of the variational problem \eqref{eq:illustr-neumann} and hence of the  Neumann problem $(P_{\nu,1})$  on the space $\WnuOmR$ whenever $f\in L^{p'}(\Omega)$ and $g\in L^{p'}(\Omega^c, \omega^{1-p'})$ with $\omega(h)= 
(1+|h|)^{-d-sp}$ are compatible, i.e., 
$$\int_\Omega f (x)\d x+ \int_{\Omega^c} g(y)\d y=0. 
$$

Let us highlight two important observations regarding the weak formulation \eqref{eq:illustr-neumann} at this stage. 
First of all, one observes in this  particular case  that the weight $\omega^{1-p'}$ is rapidly increasing, hence  the Neumann data $g\in L^{p'}(\Omega^c, \omega^{1-p'})$ is required to decay rapidly at infinity. Although this may seem  restrictive, it is however counterintuitive since the space 
$L^{p'}(\Omega^c, \omega^{1-p'})$ is in fact the largest admissible function space for the Neumann data $g$. Indeed, we establish a non-existence result in Theorem \ref{thm:non-existence-Neumann} where we exhibit some examples of Neumann data $g$ not belonging to $L^{p'}(\Omega^c, \omega^{1-p'})$ and  compatible with $f=0$
and such that  the variational Neumann problem \eqref{eq:illustr-neumann} does not have a  weak solution in $\WnuOmR$. 
As the second observation, one notices that \cite[Definition 3.6]{DROV17} and subsequent definitions like \cite[Definition 2.7]{MP19} look very similar to \eqref{eq:illustr-neumann} at first glance; for the general case, we refer the reader to Definition \ref{def:neumann-var-sol}. However,  the test space defined in \cite[Eq. (3.1)]{DROV17}, \cite[Section 2]{MP19} depends on the Neumann data $g$, which is somewhat unaccustomed. We emphasize that our test space $\WnuOmR$ in the weak formulation \eqref{eq:illustr-neumann} does not depend on the Neumann data $g$. Next, we would like to mention that our strategy of studying the problem \eqref{eq:main-problem} in the general setting, including  Dirichlet and Robin complement problems, requires bringing into play certain crucial tools. These include nonlocal function spaces,
nonlocal Poincar\'{e} types inequalities and nonlocal trace spaces. 
On the one hand, the Poincar\'{e} inequalities encoding the coercivity of the nonlocal energies which we establish in Section \ref{sec:poincare} include the Poincar\'{e} type inequality when $\Omega$ is not necessarily connected but has finitely many connected components and the Poincar\'{e}-Friedrichs type inequality when $\Omega$ is bounded only in one direction. To the best of our knowledge, such generalization of the Poincar\'{e} inequalities do not exist hitherto in the literature. On the other hand, the nonlocal trace spaces truly embody the Dirichlet, Neumann  and/or Robin complement data $g$ of the problem \eqref{eq:main-problem}. There are several new findings on nonlocal trace spaces, we refer interested reader for instance to \cite{GH22,DTCY21,BGPR20,Ru20,DyKa19}.  It is worth mentioning that the result \cite{GH22} provides a robust nonlocal fractional trace space, viz., one is able to recover the classical local trace from the nonlocal one.  

\smallskip 

\par The secondary objective is to prove the $L^p$-convergence of nonlocal  to local weak solutions; cf Section \ref{sec:conv-solution}. Let us explain for which family of problems we are able to prove the convergence. Given a family $(\nu_\eps)_\eps$, $\nu_\eps\geq0$, of radial $p$-L\'{e}vy kernels, we prove in Theorem \ref{thm:charac-plevy-concentrate} that, there holds the formula 
\begin{align}\label{eq:limit-goal-intro}
\lim_{\eps\to0}\iint_{ \R^d\R^d}|u(x)-u(y)|^p\nu_\eps(x-y)\d y \d x=K_{d,p}\int_{\R^d}|\nabla u(x)|^p\d x
\end{align}
for all $u\in W^{1,p}(\R^d)$ if and only if the family $(\nu_\eps)_\eps$ satisfies 
\begin{align}\label{eq:plevy-approx-intro}
\lim_{\eps\to0}\int_{\R^d}(1\land|h|^p) \nu_\eps (h)\d h=1 \,\,\text{and for all $\delta>0,$}\,\,  
%\forall \delta>0,
 \lim_{\eps\to0} \int_{|h|>\delta} \hspace*{-2ex}\nu_\eps (h)\d h=0. 
\end{align} 

In other words, condition \eqref{eq:plevy-approx-intro} is the sharp condition regarding the validity of the formula \eqref{eq:limit-goal-intro}, a.k.a., the BBM-formula. The aforementioned  family $(\nu_\eps)_\eps$ was introduced in \cite{Fog23,guy-thesis}. Note in passing that a typical example of family $(\nu_\eps)_\eps$  satisfying  \eqref{eq:plevy-approx-intro}, is to consider the normalized fractional kernels $\nu_\eps(h)= a_{\varepsilon, d,p} |h|^{-d-(1-\eps)p}$ with $\eps=1-s$ and $a_{\varepsilon, d,p} = \frac{p\eps(1-\eps)}{|\mathbb{S}^{d-1}|}$; see Example \ref{Ex:stable-class}. Another fascinating example  of $(\nu_{\eps})_{\eps}$ satisfying \eqref{eq:plevy-approx-intro} is obtained from any radial function $\nu: \R^d \setminus\{0\}\to [0, \infty)$ that is $p$-L\'{e}vy  normalized, i.e., $\int_{\R^d}(1\land|h|^p)\,\nu(h)\d h=1$, by defining $\nu_\eps$ the rescaled version of $\nu$, as follows

\begin{align*}
\begin{split}
\nu_\varepsilon(h) = 
\begin{cases}
\varepsilon^{-d-p}\nu\big(h/\varepsilon\big)& 
\text{if}~~|h|\leq \varepsilon,\\
\varepsilon^{-d}|h|^{-p}\nu\big(h/\varepsilon\big)& \text{if}~~\varepsilon<|h|\leq 1,\\
\varepsilon^{-d}\nu\big(h/\varepsilon\big)& \text{if}~~|h|>1.
\end{cases}
\end{split}
\end{align*}
Let $L_\eps$ and $\cN_\eps$ be the nonlocal operators associated with $\nu_\eps$ and put $\mu=K_{d,p}^{-1}$. Consider the problem 
\begin{align}\tag{$P_{\nu_\eps, \tau}$}\label{eq:main-problem-esp}
\mu L_\eps u = f_\eps\,\,\text{ in }  \,\,\Omega\quad\text{ and }\quad \tau\mu\mathcal{N}_\eps u + (1-\tau)|u|^{p-2}u= g_\eps \,\, \text{ on }\,\, \Omega^c. 
\end{align}

Surprisingly, under the condition in \eqref{eq:plevy-approx-intro} and some mild conditions on $\Omega$, $f_\eps$ and $g_\eps$ we prove, cf. Section \ref{sec:conv-solution}, that weak solutions to the problem  $\eqref{eq:main-problem-esp}$ with $\tau\in\{0,1\}$ strongly converge in $L^p(\Omega)$ as $\eps\to0$, to the weak solutions of the corresponding local problem $\eqref{eq:main-problem-local}$. We also prove the convergence of weak associated with regional type operators. Another appealing effect of the approximation family $(\nu_\eps)_\eps$ is that for $u\in  L^\infty(\R^d)\cap C^2(B_1(x))$ with $\nabla u(x)\neq0$ there holds,
\begin{align*}
%\lim_{\eps\to0}L_\eps u(x):= 
\lim_{\eps\to0}2\pv\int_{\R^d} |u(x)-u(y)|^{p-2}(u(x)-u(y)) \nu_\eps(x-y)\d y= -K_{d,p}\Delta_p u(x). 
\end{align*}
In particular we find that  $(-\Delta)^s_pu(x)\to -\Delta_p u(x)$ as $s\to1$. A similar pointwise asymptotic from the fractional $p$-Laplacian to the $p$-Laplacian $ -\Delta_p$ can be found in \cite{BS22,dTL21,IN10}. We refer to Section \ref{sec:from-nonlocal-local} for more details. In our strategy of proving the convergence of solutions we need, cf. Section \ref{sec:robust-poincare},  to establish the robust Poincar\'{e} type inequalities \textit{{\`{a}} la} A. Ponce \cite{Pon04} including the situation where $\Omega$ is only bounded in one direction. 

Overall, the main results of the article can be summarized  as follows. 
\begin{enumerate}[(a)]
\item We introduce appropriate nonlocal Sobolev spaces adapted for the problem \eqref{eq:main-problem}. In this framework, we establish completeness and compact embeddings in Theorem \ref{thm:WnuOme-complete}, Theorem \ref{thm:local-compactness} and Theorem \ref{thm:embd-compactness}.
\item In Theorem \ref{thm:w-emb-on-weigh-lp} and Theorem \ref{thm:trace-nonloc-thm} we establish a nonlocal trace theorem.
\item In Theorem \ref{thm:poincare-friedrichs-ext-bis} and Theorem \ref{thm:poincare-friedrichs-finite-bis} we establish Poincar\'{e}-Friedrichs inequalities for a domain $\Omega$ bounded in one direction or $|\Omega|<\infty$. In Theorem \ref{thm:poincare-gene} and Theorem \ref{thm:poincare-gene-bis} we establish Poincar\'{e} type inequalities for a general domain $\Omega$ which is only finite union of connected components. 
\item In Theorem \ref{thm:nonlocal-Neumann-var}, Theorem \ref{thm:non-existence-Neumann}, Theorem \ref{thm:nonlocal-dirichlet-gen} and   Theorem \ref{thm:nonlocal-robin-var} we establish the well-posedness of the problem \eqref{eq:main-problem}. 

\item In Theorem \ref{thm:charac-plevy} and Theorem \ref{thm:charac-plevy-radial} provide an analytic characterization of $p$-L\'{e}vy integrability of the kernel $\nu$. Analogously, in  Theorem \ref{thm:charac-plevy-concentrate} we provide a characterization of a $p$-L\'{e}vy approximation family $(\nu_\eps)_\eps$. 
\item  In Theorem \ref{thm:asymp-plevy-to-plaplace} and  we establish the pointwise convergence of $L_\eps u(x)\to -K_{d,p}\Delta_pu(x)$,  a fortiori the convergence  $(-\Delta)^s_pu(x)\to -\Delta_pu(x)$ in Corollary \ref{cor:asymp-pfrac-to-plaplace}. 
\item In Theorem \ref{thm:robust-poincare}, Theorem \ref{thm:robust-pcare-one-direct} and Theorem \ref{thm:robust-pcare-finite} we establish robust Poincar\'{e} type inequalities; including the situation where $\Omega$ is only bounded in one direction. 
\item Finally, in Theorem \ref{thm:convergence-dirichlet} and Theorem \ref{thm:convergence-neumann} we establish convergence of weak solutions to the problem \eqref{eq:main-problem-esp} to those of \eqref{eq:main-problem-local}, $\tau=0,1$.
\end{enumerate} 
We emphasize that some of our aforementioned findings are also new for the fractional $p$-Laplacian. In general, the results we deem innovative for fractional kernels of the form $\nu(h)= |h|^{-d-sp}\mathds{1}_{B_r(0)}(h),s\in (0,1) $ for some $r\in (0,\infty]$ include the trace theorem in (b), Poincar\'{e}-Friedrichs inequalities in (a) and (g),  Poincar\'{e} type inequalities in (c) and (g) and, the well-posedness and convergence of the Neumann and/or  Robin problem in (d) and (h). 
\smallskip 

\par Let us comment on related and existing works in the literature. For an outstanding reference on basics related to the $p$-Laplace operator, we refer  the reader to the classical lecture notes \cite{Lin19}. The study of nonlocal problems driven by  $p$-L\'{e}vy kernels is becoming popular. We refer the reader to  \cite{ guy-thesis, FK22, Fog23,Fog21b} for studies of  nonlocal functions spaces generated by $p$-L\'{e}vy kernels.  For recent studies of Integro-Differential Equations (IDEs) involving the L\'{e}vy operator of type $L$ (for the case $p=2$) see \cite{FK22,FPS23, DFK22,JW19} and also\cite{ACM23} where the authors compare their interaction kernel to the fractional one; we emphasize that such comparison is not required in our setting. See also \cite{Rut18,Ru20} and \cite{RV16} for the studies of Dirichlet problems associated with  L\'{e}vy operators driven by singular L\'{e}vy measures. For problems related to general nonlocal elliptic operators of L\'{e}vy type see \cite{guy-thesis,Voi17}. We also point out \cite{BGPR23,Ru20} where the nonlinear Douglas identity for $p$-L\'{e}vy type operators are investigated. The fractional $p$-Laplacian is one of the most studied $p$-L\'{e}vy operator in the framework of IDEs. For instance, the study of Dirichlet problems can be found in \cite{FP14,DKP16, ILP16,Pal18,QX16,BP16,FI22,Vaz16}. For the study of Neumann problems see \cite{MP19,MP21}. Note that \cite{MP19} is somewhat the extension of the set-up from \cite{DROV17} to the nonlinear setting.  We emphasize that in both works \cite{DROV17,MP19}, the test spaces in the weak formulation of the Neumann problem depend on the Neumann data, which is not the case in ours. We also refer to \cite{MRT16,AMRT10,War16} for the studies of problems related to regional type operators. For regularity of solutions  associated to the fractional $p$-Laplacian and its alike see \cite{IMS16,IMS16b,BLS18,CK22,CKW22}. Several different other models of nonlocal nonlinear problems such as porous medium equations are listed in \cite{KKK20}. It is noteworthy to emphasize that our setting is sufficiently general and includes kernels with bounded support, integrable kernels and non-integrable kernels. Nonlocal problems involving regional type operators with integrable kernels are addressed for instance in  \cite{AMRT08e, AMRT10,BT10}. Note however, that one of the downsides of integrable kernels is that associated function spaces are not rich enough, as they  do not differ much  from the  underlying $L^p$-space. This contrasts with non-integrable kernels  yielding strict refinement of $L^p$-Lebesgue spaces whose additional structure are of great importance. The class of kernels with bounded support are popular  in peridynamic.  Indeed, nonlocal problems driven by integro-differential operators associated with kernels of  bounded support  appear as core models to several problems, wherein, the nonlocal boundary $\Omega_e=\Omega_\nu\setminus\Omega$ is also called volume constrain or finite horizon. We explain in Section \ref{sec:existence-weak-sol} how to deal with such types of kernels. For recent studies of nonlocal problems  aiming at the application to peridynamics see \cite{ACM23,FK22,FVV22,DTZ22}, several additional references can be found therein. Additional works  in peridynamic involving nonlocal operators of regional type are dressed in \cite{MD15,BM14}. 

\smallskip 
\par Let us comment about the convergence from nonlocal to local.  In fact, under the assumption in \eqref{eq:plevy-approx-intro}, 
the convergence in \eqref{eq:limit-goal-intro}  remains true with $\R^d$ replaced by any extension domain $\Omega\subset\R^d$; see for instance \cite{Fog23}. Interestingly, the latter convergence also holds in the sense of the gamma convergence or Mosco convergence; see for instance \cite{guy-thesis,FGKV20,Pon04-gamma}. This type of limit was originally studied for Lipschitz bounded domains in \cite{BBM01} and several generalizations have recently emerged, see for instance \cite{BMR20,DB22,DD22,Fog23,PS17,Mil05} see also  the variants \cite{NPSV18,LMP19}
along with the references contained there. It is worth mentioning that, in the framework of fractional Sobolev spaces with $p=2$, the earlier work \cite[Section 4]{AAS67} by R. Adams, N. Aronszajn, and T. Smith, which is now largely overlooked in the current literature, also addresses the problem of convergence from nonlocal to local energy forms, as subsequently studied by Bourgain-Brezis-Mironescu \cite{BBM01} and the asymptotic compactness result as studied by A. Ponce \cite{Pon04}. Of course, just as \cite{BBM01}, the work \cite{AAS67} also highlights the necessity of the rectifying factor $(1-s)^{1/p}$ in front of the fractional seminorm when addressing theses asymptotic properties; this is the cornerstone of the question addressed in the work \cite{BBM01}.
To the best of our knowledge, rigorous proofs of convergence  of weak solutions to nonlocal  problems with complement Neumann condition to the local ones appear in \cite{guy-thesis,FK22,GH22}. A heuristic proof of the convergence for fractional Laplacian $(-\Delta)^s$ was investigated in \cite{DROV17}. Note however that the convergence of nonlocal Neumann problems associated with regional type operators can be found in \cite{AMRT08e, AMRT10,DLS15}.  In contrast to the Neumann problems, there is a substantial amount of works treating the convergence from nonlocal to local of weak solutions to homogeneous Dirichlet problems associated with the fractional $p$-Laplacian $(-\Delta)^s_p$; we refer interested reader for instance to  \cite{BPS16,BS20,BO20,BO21,SV22}. The convergence  of weak solutions of Dirichlet problems for fractional  $g$-Laplacian where $g$ is an Orlicz function see \cite{BS19}. For convergence of solutions to elliptic problems, see \cite{guy-thesis,Voi17}. Last, uniform convergence of viscosity solutions to the constrained fractional $p$-Laplace operator is established in \cite{IN10}.  For more on viscosity solutions associated with the $p$-Laplacian and the fractional $p$-Laplacian see for instance the recent results  \cite{Lin16,Lin19,KKL19,dTL21} as well as the references therein. 

The rest of this article is organized as follows. We introduce some basic concepts regarding the support of $\nu$ in Section \ref{sec:basicsandnotations}. In Section \ref{sec:function-spaces} we study nonlocal Sobolev spaces, whereas Section \ref{sec:nonlocal-trace-spaces} is dedicated to the study of the  corresponding nonlocal trace spaces. Meanwhile, we give an analytic characterization of $p$-L\'{e}vy integrability in Section \ref{sec:charac-plevy}. In Section \ref{sec:compactness} we give some compact embeddings.  Nonlocal Poincar\'{e} and nonlocal Poincar\'{e}-Friedrichs inequalities are established in Section \ref{sec:poincare}, while their robust versions are provided in Section \ref{sec:robust-poincare}. Section \ref{sec:from-nonlocal-local}  and Section \ref{sec:conv-solution} focus on convergences from nonlocal to local objects; this includes convergence of forms, weak solutions and nonlinear operators. We achieve the convergence of weak solutions by implicitly establishing the Gamma convergence of related nonlocal forms to the local ones. We also characterize the family $(\nu_\eps)_\eps$ satisfying \eqref{eq:plevy-approx-intro}. In Appendix \ref{sec:appendix-estim} and Appendix \ref{sec:appendix-elment} we provide some basic results such as nonlocal Gauss-Green formula, pointwise evaluations of  $Lu$ and $\cN u$, elementary inequalities that are useful for the study of nonlinear operators under consideration such as $L$ and $-\Delta_p$. 

\vspace{-2mm}
\section{Basic concepts and notations}\label{sec:basicsandnotations}
\subsection{Notations} Throughout this article unless otherwise stated, we assume  that $\Omega\subset \R^d$ is an open set, $1<p',p<\infty$ with $p+p'=pp'$ and $\nu$ is a symmetric kernel satisfying \eqref{eq:plevy-integ-cond}. We frequently use the convex inequality $(a+b)^p\leq 2^{p-1}(a^p+b^p)$ for $a>0, b>0$. If there is no specific mention, all functions and sets are assumed to be at least Borel measurable and are understood in almost everywhere sense. Given two Banach spaces $X$ and $Y$, $Y\subset X$ we write $Y\hookrightarrow X$ to indicate that $Y$ is continuously embedded in $X$.  Given two quantities $F$ and $G$ the relation $F\asymp G$ indicates that there are positive constants $C_1$ and $C_2$ such that $C_1\leq F/G\leq C_2$.  In general, $C>0$ denotes a generic constant depending on the local and  $\eps>0$ is a small quantity tending to $0$. The Euclidean scalar product of $x=(x_1,x_2,\cdots, x_d)\in \R^d $ and $ y=(y_1,y_2,\cdots, y_d)\in \R^d$ is $x\cdot y =x_1x_1+x_2y_2+\cdots+ x_dy_d$ and denote the norm of $x$ by $|x| =\sqrt{x\cdot x}$. For $r>0$ $B_r(x)=B(x,r)=\{y\in \R^d\,:\,|y-x|<r\}$. The distance between two sets $A,B\subset\R^d$ is given by $\dist(A,B)=\inf\{ |x-y|\,:\, x\in A\, y\in B \}$. In addition for $x\in \R^d$ we denote $\dist(x,B)=\dist(\{x\},B)$. Furthermore, $|A|$ denotes the Lebesgue measure of $A$. We also abuse the notation $|\mathbb{S}^{d-1}|$ to denote $d-1$-Hausdorff surface measure of the  $d-1$-dimensional sphere $\mathbb{S}^{d-1}$.

%%%%%%%%%%%%%%%%%%%%%%%%%%%%%
\subsection{Support and nonlocal boundary}\label{sec:basic-suppnu}
%%%%%%%%%%%%%%%%%%%%%%%%%%%%%
We introduce some basic definitions in connection with the measure theoretic support of a measurable function. 
\begin{definition} The support of a measurable function $\omega:\R^d\to [0,\infty)$ is defined as
\begin{align*}
\begin{split}
\supp\omega
&= \R^d\setminus\mathcal{O}\quad\text{with}\quad \mathcal{O}=\bigcup\big\{O\,:\, \text{$O$ is open and $\omega=0$ a.e. on $O$}\big\}\\
&=\big\{x\in \R^d\,:\, \text{$\omega>0$ a.e. on any open set $O$ s.t. $x\in O$}\big\}. 
\end{split}
\end{align*}
\end{definition}

Note that $\supp\omega$ is a closed set and $ \omega=0$ a.e. $\R^d\setminus\supp\omega$. In particular, $\omega=0$ a.e.  if and only if $\supp\omega=\emptyset.$ If $\omega$ is  a continuous function then $\supp\omega= \overline{\big\{h\in \R^d\,:\,\omega(h)\neq 0\big\}}.$ 
The latter is not true in general. For instance, on the real line we put $\omega = \mathds{1}_{\mathbb{Q}}$, where $\mathbb{Q}$ is the set of rational numbers, then $\overline{\big\{h\in \R\,:\,\omega(h)\neq 0\big\}}=\R$ but $\supp\omega=\emptyset$.
\begin{definition}
We say that $\nu: \R^d\setminus\{0\}\to [0,\infty)$ has full support if  $\supp\nu=\R^d$, equivalently $\nu(h)>0$ for almost every $h\in \R^d$ that is $|\{\nu=0\}|=0$. 
\end{definition}

\begin{remark}\label{rem:zero-supp-nu}
It is very tempting to think that the set of zeros of $\nu$ in $\supp\nu$ is a null set, i.e.,  $|\supp\nu \cap \{\nu=0\}|=0$. In other words, is it true that 
$\nu>0$ a.e. on $\supp\nu$?  This is not always true, indeed consider $O\subset \R^d$ a non-empty open set whose boundary has positive measure, i.e., $|\partial O|>0$. The function $g(x)=\dist(x,\R^d\setminus O)$
is continuous. Note that $\{g\neq 0\}=O$ and thus $\supp g= \overline{O}$. However,  we have 
$|\supp g\cap \{g=0\}|=|\partial O|>0$. 
\end{remark} 

\begin{definition}[Nonlocal hull and nonlocal boundary] 
Given a measurable set $S\subset \R^d$:
\begin{enumerate}[$(i)$,wide, labelwidth=1mm, labelindent=6pt]
\item  We define the nonlocal hull  with respect to $\nu$ (or simply the $\nu$-nonlocal hull) of $S$ to be the set $S_\nu^{*}=S\cup S_\nu$ where  $S_\nu= S+ \supp\nu$. Note that if $0\in \supp\nu$ then $S\subset S_\nu$ that is $S_\nu^{*}=S_\nu= S+\supp\nu$. 
\item We define the exterior boundary (or the nonlocal boundary) of $S$ with respect to $\nu$ denoted $S_e$ or  $\partial_\nu S$, to be the set $S_e= S_\nu^{*}\setminus S=S_\nu\setminus S= (S+ \supp\nu) \setminus S$ (or $\partial_\nu S=S_\nu\setminus S$). 
\end{enumerate}
\end{definition}
The terminologies are justified by the following  facts.  
%\begin{itemize}\item 
From an analysis point of view, the set $S_\nu= S+ \supp\nu$ is smallest set among the sets $A$ such that 
\begin{align*}
\int_{\R^d}\psi(u(x)-u(y))\nu(x-y)\d y&= \int_{A}\psi(u(x)-u(y))\nu(x-y)\d y\\&=:L_Au(x)\quad \quad\text{for all $x\in S$}.  
\end{align*}
That is $S_\nu\subset A$ and we have $Lu(x)= L_{S_\nu} u(x)$ for all $x\in S$. In this respect, the nonlocal hull $S_\nu^{*}$ is  smallest set containing $S$ such that $Lu(x)= L_{S_\nu} u(x)$ for all $x\in S$. In other words, the values $Lu(x)$ for $x\in S$ solely depend on the values of $u$ in $S_\nu$.  Therefore to solve a nonlocal equation of the form $Lu= f$ in $S$, it is sufficient to prescribe exterior boundary condition on $S_\nu\setminus S$. For instance, if we consider $S=\Omega$ then the Neumann condition $\cN u=g$,  only makes if $g=0$ on $\R^d\setminus \Omega_\nu$; see Section \ref{sec:existence-weak-sol} for additional details. Indeed, for $x\in \Omega$ and $y\in \R^d\setminus \Omega_\nu$ we have $x-y\not\in \supp\nu$ so that $\nu(x-y)=0$. Hence,
\begin{align*}
\cN u(y):= \int_\Omega\psi(u(y)-u(x))\nu(x-y)\d x= 0 \qquad\text{for all $y\in \R^d\setminus \Omega_\nu$.}
\end{align*}
%\item 
From a probabilistic point of view, when $0\in \supp\nu $ so that $S\subset S_\nu$, the nonlocal hull $S_\nu^{*}=S_\nu$ can be seen as the reachable area of a jump process starting from a point in $S$. In some sense, any jump of the process starting in $S$ cannot reach farther beyond the set $S_\nu$.  Therefore, if a jump process is censored (restricted) at the set $S$ then the whole universe of the process is $S_\nu$. In other words, any censored process on $\Omega$ is never aware of anything happening on $\R^d\setminus\Omega_\nu$.

\begin{example} Let us mention two classes of examples that are well-known in the literature. 
\begin{enumerate}[$(i)$,wide, labelwidth=!, labelindent=10pt]
\item If $\nu$ has full support, i.e. $\supp\nu=\R^d$ then $S_\nu= S+ \R^d=\R^d$ and $S_e= \R^d\setminus S$. That is the exterior boundary of $S$ with respect to $\nu$ coincides with its whole complement. A typical example include $\nu(h)=|h|^{-d-sp}$. Integro-Differential Equation (IDEs) associated with nonlocal operators driven  kernels $\nu$ with $\supp\nu=\R^d$ are often called complement values problems in the modern literature. 

\item If $\supp\nu=\overline{B_\delta(0)}$, $\delta>0$ then $S_\nu= S+ \overline{B_\delta(0)}=\{x\in \R^d: \dist(x, S)\leq\delta\}$, i.e., $S_\nu$ is the $\delta$-tubular thickening neighborhood of $S$. On other hand we have  $S_e= S_\nu\setminus S= \{x\in \R^d\setminus S: \dist(x, S)\leq \delta\}$. 

\noindent For instance, if $S=B_r(a)$, $r>0$ and $a\in \R^d$ then $S_\nu= B_{r+\delta}(a)$ and $S_e= B_{r+\delta}(a)\setminus \overline{B_{r}(a)}$. For a concrete examples we have $\nu(h)=\mathds{1}_{B_\delta(0)}(h)$ or $\nu(h)=|h|^{-d-sp}\mathds{1}_{B_\delta(0)}(h)$. These types of kernels often appear in the area of peridynamics, wherein the exterior boundary $S_e=S_\nu\setminus S$ is also known as the volume constraint or the finite horizon. Thus, Integro-Differential Equation (IDEs) associated with nonlocal operators driven by kernels $\nu$ with $\supp\nu=\overline{B_\delta(0)}$, are called in the area of peridynamic as  volume constrained problems or finite horizon problems. See for instance the recent works \cite{ACM23,FK22,FVV22,DTZ22} and several additional references therein.
\end{enumerate}
\end{example}

\begin{remark} Let us highlight some remarks concerning $\supp\nu$ and $S_\nu$.  
\begin{enumerate}[$(i)$,wide, labelwidth=!, labelindent=10pt]
\item  If $0\in \supp\nu$ then $S\subset S_\nu$. This property is sometimes important to make sense of many IDEs. 
\item  If $S$ and $\supp\nu$ are compact so is $S_\nu$. But if both are closed, and one is  compact  then $S_\nu$ is a closed set. 
\item If  $S$ is open then $S_\nu= S+\supp\nu = \bigcup_{h\in \supp\nu} S+h$ is also open, since each $S+h$ is open. 
\item If each $S$ and $\supp\nu$ is of positive measure then $S_\nu$ has a non-empty interior. 
\end{enumerate}
\end{remark}

%%%%%%%%%%%%%%%%%%%%%%%%%%%%%%%%%%%%%%%%%%%%%%
\vspace{-5mm}
\section{Nonlocal function spaces} \label{sec:function-spaces}
%%%%%%%%%%%%%%%%%%%%%%%%%%%%%%%%%%%%%%%%%%%%%%%% 
\vspace{-1.5mm}
In this section we introduce generalized  Sobolev-Slobodeckij-like function spaces with respect to a $p$-L\'{e}vy integrable function $\nu$ and an open subset $\Omega \subset \R^d$, in particular the space $\WnuOmR$ and its nonlocal trace space $\TnuOm$. The function spaces are tailor-made for Integro-Differential Equations (IDEs) with complement  value problems.  
Recall that $\nu:\R^d\setminus \{0\} \to [0, \infty)$  is the density of a symmetric $p$-L\'{e}vy measure, that is, 
\vspace{-1mm}
\begin{align}\tag{L}\label{eq:plevy-integ-cond}
\begin{split}
\nu(h) = \nu(-h) \quad &\text{ and}
\qquad \int_{\R^d} (1\land |h|^p )\nu(h) \d h <\infty . 
\end{split}
\end{align}
In fact $\nu$ is said to be $p$-L\'{e}vy integrable, i.e. $\nu\in L^1(\R^d, 1\land|h|^p)$. 

\vspace{-1.5mm}
%%%%%%%%%%%%%%%%%%%%%%%%%%%%%%%%
\subsection{Nonlocal energy forms} \label{sec:nonlocal-form} 
%%%%%%%%%%%%%%%%%%%%%%%%%%%%%%%%
Recall $\psi(t)= |t|^{p-2} t$. Given a symmetric kernel $k:(\R^d\times \R^d)\setminus \diag \to [0, \infty)$ we consider the energy form $\cE^k(\cdot, \cdot)$ by 
\vspace{-1.5mm}
\begin{align*}
\cE^k(u, v)= \iil_{\R^d\R^d} \psi(u(x)-u(y))(v(x)-v(y))\,
k(x, y)\d y\d x.  
\end{align*}

We begin with the following proposition. 
\begin{proposition}{\ }\label{prop:enrgy-forms} The following assertions hold true. 
\begin{enumerate}[$(i)$]
\item If $\cE^k(u, u)$ and $\cE^k(v, v)$ are finite so is $\cE^k(u, v)$ and we have 
$$|\cE^k(u, v)|\leq \cE^k(u, u)^{1/p'}\cE^k(v, v)^{1/p}.$$
\item Let  the Banach space $W^p_k(\R^d)=\{u\in L^p(\R^d)\,:\, \cE^k(u, u)<\infty \},$ with the norm
$\|u\|^p_{W^p_k(\R^d)}= \|u\|^p_{L^p(\R^d)}+ \cE^k(u, u)$. Then the nonlinear  operator
 $Lu:=\cE^k(u,\cdot): W^p_k(\R^d)\to (W^p_k(\R^d))'$ is bounded,  with 
\begin{align*}
&|\langle Lu, v\rangle|= |\cE^k(u, v)|\leq \cE^k(u, u)^{1/p'}\cE^k(v, v)^{1/p}, \\
&\|Lu\|_{(W^p_k(\R^d))'}\leq \cE^k(u, u)^{1/p'}\leq \|u\|^{p-1}_{W^p_k(\R^d)}.
\end{align*}
\end{enumerate}
\end{proposition}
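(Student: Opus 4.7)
The plan is to prove (i) by direct application of Hölder's inequality and then to deduce (ii) as a short corollary.

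For (i), I would split the integrand of $\cE^k(u,v)$ using $|\psi(t)| = |t|^{p-1}$ together with $k(x,y) = k(x,y)^{1/p'}\cdot k(x,y)^{1/p}$:
$$|\psi(u(x)-u(y))(v(x)-v(y))|\,k(x,y) = \bigl(|u(x)-u(y)|^{p-1}k(x,y)^{1/p'}\bigr)\bigl(|v(x)-v(y)|\,k(x,y)^{1/p}\bigr).$$
Applying Hölder's inequality on $\R^d\times\R^d$ with the conjugate exponents $p'$ and $p$ and using the key identity $(p-1)p' = p$, the first factor contributes $\cE^k(u,u)^{1/p'}$ after the $p'$-th power is integrated, and the second contributes $\cE^k(v,v)^{1/p}$. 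In particular $\cE^k(u,v)$ is absolutely integrable whenever both diagonal energies are finite.

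For (ii), the starting observation is that although $\psi$ is nonlinear in its argument, $v$ enters the integrand only through the linear expression $v(x)-v(y)$; hence $v\mapsto\cE^k(u,v)$ is a genuine linear functional on $W^p_k(\R^d)$. Continuity then follows at once from (i) combined with the trivial inequality $\cE^k(v,v)^{1/p}\leq \|v\|_{W^p_k(\R^d)}$, which is immediate from the definition of the norm. This yields the first estimate $\|Lu\|_{(W^p_k(\R^d))'} \leq \cE^k(u,u)^{1/p'}$. The second estimate is obtained by applying the definition of the norm one more time: $\cE^k(u,u)^{1/p'} \leq \bigl(\|u\|^p_{W^p_k(\R^d)}\bigr)^{1/p'} = \|u\|^{p-1}_{W^p_k(\R^d)}$.

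No serious obstacle is expected: the statement is essentially the Hölder duality between $L^p$ and $L^{p'}$ on the measure space $(\R^d\times\R^d,\, k(x,y)\,dx\,dy)$, adapted to the $\psi$-pairing. The only subtle point worth emphasizing is the source of the nonlinearity: it is entirely carried by the first argument of $L$, and the exponent $p-1$ in the operator bound reflects the homogeneity of $\psi$. (The Banach-space property of $W^p_k(\R^d)$ itself is asserted in the statement and is not needed for the estimates; it would be verified in the standard way via Fatou's lemma on the double integral.)
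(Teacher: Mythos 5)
Your proof is correct and follows exactly the route the paper takes: part (i) is Hölder's inequality with exponents $p'$ and $p$ on the measure $k(x,y)\,\d x\,\d y$ (using $(p-1)p'=p$), and part (ii) is then an immediate consequence of (i) together with the trivial bounds $\cE^k(v,v)^{1/p}\leq \|v\|_{W^p_k(\R^d)}$ and $\cE^k(u,u)^{1/p'}\leq \|u\|^{p-1}_{W^p_k(\R^d)}$.
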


\begin{proof}
$(ii)$ follows from $(i)$ whereas, $(i)$ is an immediate consequence of the H\"{o}lder inequality.
\end{proof}

\begin{remark}
Formally, the operator $L$ is denoted as 
\begin{align*}
Lu (x):= 2\pv \int_{\R^d} \psi(u(x)-u(y)) k(x,y)\d y,  
\end{align*}
whenever this expression makes sense. For instance if $k(x,y)=\nu(x-y)$ with $\nu\in L^1(\R^d, 1\land |h|^p)$ then under additional mild conditions $Lu(x)$ can be defined for smooth functions (see Appendix \ref{sec:appendix-pointwise}).
Besides this, a curious reader easily verifies that a similar analysis can be achieved with the spaces $W_k^p(\R^d)$ and the nonlocal operator $L$ replaced with the space $W^{1,p}(\R^d)$  and the local $p$-Laplace operator $-\Delta_p$. 
\end{remark}
From now we assume that $\cE^k(u,u)<\infty$ and $\cE^k(v,v)<\infty$. We will consider special cases of the $k(x,y)$ that are of paramount interest. 
First we consider the form $\cE_\Omega(\cdot,\cdot)$ given by
\begin{align*}
\cE_\Omega(u,&v)= \iil_{\Omega\Omega} \psi(u(x)-u(y))(v(x)-v(y))\,\nu(x-y)\d y\d x\\
&= \iil_{\R^d\R^d} \psi(u(x)-u(y))(v(x)-v(y))
\min(\mathds{1}_\Omega(x),\mathds{1}_\Omega(y) )\nu(x-y)\d y\d x.  
\end{align*}

That is, $k(x,y)=\min(\mathds{1}_\Omega(x),\mathds{1}_\Omega(y) )\nu(x-y)$. Next, the Gauss-Green formula (see Appendix \ref{sec:appendix-gauss-green}) motivates us to consider  the special energy form $\cE(\cdot, \cdot)$, define by 
\begin{align*}
\cE(u,v)&= \iil_{(\Omega^c\times \Omega^c)^c} \psi(u(x)-u(y))(v(x)-v(y))\,\nu(x-y)\d y\d x\\
&= \iil_{\R^d\R^d} \psi(u(x)-u(y))(v(x)-v(y))\,\max(\mathds{1}_\Omega(x),\mathds{1}_\Omega(y) )\nu(x-y)\d y\d x.  
\end{align*}
That is, in this case we have $k(x,y)=\max(\mathds{1}_\Omega(x),\mathds{1}_\Omega(y) )\nu(x-y)$. Taking $k(x,y)=\frac12(\mathds{1}_\Omega(x),\mathds{1}_\Omega(y) )\nu(x-y)$ we obtain another related form $\cE_+(\cdot, \cdot)$, that is 
\begin{align*}
\cE_+&(u,v)= \iil_{\Omega\R^d} \psi(u(x)-u(y))(v(x)-v(y))\,\nu(x-y)\d y\d x\\
&= \frac{1}{2}\iil_{\R^d\R^d} \psi(u(x)-u(y))(v(x)-v(y))(\mathds{1}_\Omega(x)+\mathds{1}_\Omega(y) )\nu(x-y)\d y\d x.  
\end{align*}
As shown in the next result, the forms $\cE(\cdot, \cdot)$ and $\cE_+(\cdot, \cdot)$ are equivalents. 
\begin{proposition}\label{prop:equiv-energy}
We have $\cE(u,u)\leq 2\cE_+(u,u)\leq 2\cE(u,u)$ and $\cE_\Omega(u,u)\leq \cE(u,u)$.
\end{proposition}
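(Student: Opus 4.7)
The plan is to exploit the elementary identity $\max(a,b) + \min(a,b) = a + b$, valid for all $a,b\in\R$ (in particular for $a,b\in\{0,1\}$). Applied pointwise to the indicator kernels, this yields
\begin{equation*}
\max(\mathds{1}_\Omega(x),\mathds{1}_\Omega(y)) + \min(\mathds{1}_\Omega(x),\mathds{1}_\Omega(y)) = \mathds{1}_\Omega(x) + \mathds{1}_\Omega(y),
\end{equation*}
which is the crucial kernel-level identity tying together the three forms.

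Next I would observe the key sign fact: the integrand appearing in any of $\cE, \cE_\Omega, \cE_+$ evaluated at $(u,u)$ is
\begin{equation*}
\psi(u(x)-u(y))(u(x)-u(y))\,\nu(x-y) = |u(x)-u(y)|^p\,\nu(x-y)\geq 0.
\end{equation*}
Multiplying this non-negative quantity by the kernel identity above and integrating over $\R^d\times\R^d$ (using the definitions of $\cE$, $\cE_\Omega$, $\cE_+$ as rewritten in the max/min/sum form given in the excerpt), I obtain the exact identity
\begin{equation*}
\cE(u,u) + \cE_\Omega(u,u) = 2\,\cE_+(u,u).
\end{equation*}

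From here both chains of inequalities are immediate. The pointwise bound $\min(\mathds{1}_\Omega(x),\mathds{1}_\Omega(y))\leq \max(\mathds{1}_\Omega(x),\mathds{1}_\Omega(y))$, combined with non-negativity of the integrand, gives $\cE_\Omega(u,u)\leq \cE(u,u)$; feeding this into the identity yields $2\cE_+(u,u)\leq 2\cE(u,u)$. Conversely, non-negativity of $\cE_\Omega(u,u)$ together with the identity gives $\cE(u,u)\leq 2\cE_+(u,u)$. No step is really an obstacle here; the only point that requires a moment of care is justifying the rewrite of $\cE$, $\cE_+$, $\cE_\Omega$ as integrals over $\R^d\times\R^d$ with the stated max/min/sum kernels, which is just the symmetry of the integrand in $(x,y)$ together with Fubini applied to the non-negative integrand.
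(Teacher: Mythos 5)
Your proof is correct and is essentially the paper's argument: the paper simply integrates the pointwise chain $\min(a,b)\leq\max(a,b)\leq a+b\leq 2\max(a,b)$ for the indicator kernels against the non-negative integrand $|u(x)-u(y)|^p\nu(x-y)$. Your identity $\cE(u,u)+\cE_\Omega(u,u)=2\cE_+(u,u)$ is just a slightly sharper packaging of the same pointwise facts (via $\max+\min=a+b$) and yields the stated inequalities in the same way.
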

\begin{proof} The proof is immediate  since we have  $\min(a,b)\leq \max(a,b)\leq a+b\leq 2\max(a,b)$ for all $a,b>0$.
\end{proof}

\begin{remark}\label{rem:form-representation} 
For brevity let us denote $\chi(x,y)=|u(x)-u(y)|^{p-2}(u(x)-u(y))(v(x)-v(y))$.  Recall that $\Omega_e=\Omega_\nu\setminus\Omega$ and  $\Omega_\nu= \Omega+\supp\nu$. The following identities hold

\begin{align*}
\cE(u,v) &= \iil_{(\Omega^c\times \Omega^c)^c} \chi(x,y)\nu(x-y)\d y\,\d x
= \Big(\iil_{\Omega\Omega} 
+2\iil_{\Omega^c \Omega} \Big) \chi(x,y)\nu(x-y)\d y\,\d x\\
&= \Big(\iil_{\Omega\Omega} + 
2 \iil_{\Omega_e \Omega}\Big)\chi(x,y)\nu(x-y)\d y\,\d x
=\hspace*{-2ex}\iil_{\mathcal{D}(\Omega_\nu,\Omega_e)} \hspace*{-2ex}\chi(x,y)\nu(x-y)\d y\d x. 
\end{align*}
where $\mathcal{D}(\Omega_\nu,\Omega_e)=(\Omega_\nu\times \Omega_\nu)\setminus (\Omega_e\times \Omega_e)$. Indeed, since $y\in \Omega$ and $x\in \R^d\setminus\Omega$ necessarily $x\in \Omega_\nu \setminus \Omega$ otherwise $\nu(x-y)=0$.  Analogously we have 
\begin{align*}
\cE_+(u,v) &= \iil_{\Omega\R^d} \chi(x,y)\nu(x-y)\d y\,\d x
= \iil_{\Omega\Omega_\nu} \chi(x,y)\nu(x-y)\d y\,\d x\\
&=  \iil_{\Omega\R^d} \chi(x,x+h)\nu(h)\d h\,\d x=\iil_{\Omega\,\supp\nu} \chi(x,x+h)\nu(h)\d h\,\d x.  
\end{align*}
\end{remark}

%%%%%%%%%%%%%%%%%%%%%%%%%%%%%%%%
\subsection{Nonlocal Sobolev-Slobodeckij-like spaces}
%%%%%%%%%%%%%%%%%%%%%%%%%%%%%%%%
We define the space $\WnuOm$ by
\begin{align*}%\label{eq:def-HnuOm}
\WnuOm= \Big\{u \in L^p(\Omega)\,:\, |u|^p_{\WnuOm}:=\cE_\Omega(u,u) <\infty \Big \},
\end{align*}
equipped with the norm 
\begin{align*}
\|u\|_{\WnuOm} = \big(\|u\|^p_{L^p(\Omega)}+ |u|^p_{\WnuOm}\big)^{1/p}=
\big( \|u\|^p_{L^p(\Omega)}+ \cE_\Omega(u,u)\big) ^{1/p}.
\end{align*}

\noindent For the fractional kernel $\nu(h)=|h|^{-d-sp}$, $s\in (0,1)$, the space $\WnuOm$ corresponds to the classical fractional Sobolev-Slobodeckij space $W^{s,p}(\Omega)$.  
Whereas, if  $\nu\in L^1(\R^d)$ then we find that $\WnuOm=L^p(\Omega)$. Indeed, 
\begin{align*}
\iil_{\Omega\Omega} \big|u(x)-u(y) \big|^p \, \nuxminy\d x\,\d y
&\leq 2^p \iil_{\Omega\R^d}  |u(x)|^p\, \nuxminy\d x\,\d y\\
&= 2^p\|\nu\|_{L^1(\R^d)}\|u\|^p_{L^p(\Omega)}. 
\end{align*}
We point out that a Gagliardo-Nirenberg-Sobolev type inequality for the nonlocal Sobolev space $\Wnu$ is established in  \cite{Fog21b}. For various choices of $\nu$ it appears (see Section \ref{sec:poincare}) that,  $\cE(u, u)<\infty$ implies  $u\in L^p(\Omega)$. 
Motivated by this remark and following \cite{SV13,FKV15, guy-thesis} we introduce the crucial space $ \WnuOmR$,

\begin{align}\label{eq:def-WnuOmR}
\begin{split}
\WnuOmR &= \Big\lbrace u: \R^d \to \R \text{ meas.} \,: \, u|_\Omega\in L^p(\Omega)\,\!\text{ and }\,\! |u|^p_{\WnuOmR}<\infty\Big\rbrace \,\\
&= \Big\lbrace u: \R^d \to \R \text{ meas.} \,: \, u|_\Omega\in L^p(\Omega)\,\!\text{ and }\,\!  \cE(u,u) <\infty \Big\rbrace, 
\end{split}
\end{align}
where we put $|u|^p_{\WnuOmR}:= \cE_+(u,u)$. The equality in \eqref{eq:def-WnuOmR} follows from Proposition \ref{prop:equiv-energy}. The space $\WnuOmR$ is a seminormed space  equipped with the seminorm defined by 
\begin{align*}
\|u\|_{\WnuOmR} = \big(\|u\|^p_{L^p(\Omega)}+ |u|^p_{\WnuOmR}\big)^{1/p}\asymp 
\big( \|u\|^p_{L^p(\Omega)}+ \cE(u,u)\big) ^{1/p}.
\end{align*}
Last, we define $	\WnuOmO$ as the space of functions that vanish on the complement of $\Omega$, 
\begin{align}\label{eq:def-WnuOm-vanish}
\begin{split}
\WnuOmO&= \{ u\in \WnuOmR~: ~u=0~~\text{a.e. on } \R^d\setminus \Omega\}\,\\
&= \{ u\in \Wnu~:~u=0~~\text{a.e. on } \R^d\setminus \Omega\}\,. 
\end{split}
\end{align} 
Naturally, $\WnuOmO$ is endowed with the $\|\cdot \|_{\WnuOmR} $ which is equivalent therein with the norm
$\|\cdot \|_{\Wnu}$, viz.,  for $u\in \WnuOmO$ we have 
\begin{align*}
\|u\|_{\Wnu}=\big( \|u\|^p_{L^p(\Omega)}+ \cE(u,u)\big) ^{1/p}\asymp \|u\|_{\WnuOmR}. 
\end{align*}

It is straightforward to  check that $\WnuOmO$ is a closed subspace of $\Wnu$ and $\WnuOmR$. Another closely related closed subspace of $\WnuOmR$ is defined as 
\begin{align*}
\WnuOmRO= 
\overline{C_c^\infty(\Omega)}^{\WnuOmR}, {\text{ i.e., the closure of $C_c^\infty(\Omega)$ in $\WnuOmR$}}.
\end{align*} 
It is worth emphasizing that if $\Omega$ has a continuous boundary then  $\WnuOmRO$ coincides with $\WnuOmO$ (see  \cite[Theorem 3.76]{guy-thesis} and also Theorem \ref{thm:density}). However, this equality does not hold true in general. For $\nu(h)=|h|^{-d-sp}, s\in (0,1)$ we denote $\WnuOmR=W^{s,p}(\Omega|\R^d)$, $\WnuOmO=W^{s,p}_\Omega(\Omega|\R^d)$ and $\WnuOmRO=W^{s,p}_0(\Omega|\R^d)$. Furthermore, if $\nu\in L^1(\R^d) $ then $\WnuOmR= L^p(\R^d, \mu)$ with $\mu(x)= \mathds{1}_{\Omega}(x)+\mathds{1}_{\Omega^c}(x)\int_{\Omega}\nu(x-y)\d y$. Indeed if $u\in  L^p(\R^d, \mu)$ then $u\in L^p(\Omega)$ and  we have 
\begin{align*}
\cE(u,u)
&= \iil_{ \Omega \Omega} |u(x)-u(y)|^p\nu(x-y)\d y\d x+
2\iil_{ \Omega \Omega^c} |u(x)-u(y)|^p\nu(x-y)\d y\d x \\
&\leq 2^{p+1}\|u\|^p_{L^p(\Omega)}\|\nu\|_{L^1(\R^d)}+  2^{p}\int_{\Omega^c} |u(y)|^p\int_{\Omega}\nu(x-y)\d x\d y\\&\leq 2^{p+1}(1+\|\nu\|_{L^1(\R^d)})\|u\|^p_{L^p(\R^d, \mu)}. 
\end{align*}
Conversely, if $u\in \WnuOmR$ then $u\in L^p(\Omega)=\WnuOm$ and  we have 
\begin{align*}
\int_{\Omega^c}& |u(y)|^p\mu(y)\d y
\leq  2^{p-1}\iil_{\Omega\R^d} (|u(y)-u(x)|^p+ |u(x)|^p)\nu(x-y)\d y\d x\\
&=2^{p-1}\|u\|^p_{L^p(\Omega)}\|\nu\|_{L^1(\R^d)}+ 2^{p-1} \iil_{\Omega\R^d} |u(y)-u(x)|^p\nu(x-y)\d y\d x. 
\end{align*}

\begin{remark} As a direct consequence of the Proposition \ref{prop:enrgy-forms} we have the following.  
\begin{enumerate}[$(i)$]
\item If $u,v\in \WnuOmR$ then $|\cE(u,v)|\leq \cE(u,u)^{1/p'}\cE(v,v)^{1/p}$ and  $\cE(u,\cdot)\in(\WnuOmR)'$. 
\item If $u,v\in \WnuOm$ then $|\cE_\Omega (u,v)|\leq \cE_\Omega(u,u)^{1/p'}\cE_\Omega(v,v)^{1/p}$ and $\cE_\Omega(u,\cdot)\in(\WnuOm)'$.
\end{enumerate}
\end{remark}

A simple proof of the following Theorem can be found in \cite[Theorem 3.46]{guy-thesis}. 
\begin{theorem}The spaces
$\WnuOm$ and $\WnuOmO$ are separable reflexive Banach spaces. 
\end{theorem}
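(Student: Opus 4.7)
The plan is to prove both properties by an isometric embedding argument into a product of $L^p$-spaces, which are separable and reflexive (since $1<p<\infty$). Concretely, define the isometry
\begin{equation*}
T:\WnuOm \to L^p(\Omega)\times L^p(\Omega\times\Omega,\,\nu(x-y)\,\d x\,\d y),\qquad Tu=(u,\,d_u),
\end{equation*}
with $d_u(x,y)=u(x)-u(y)$. By construction $\|Tu\|^p=\|u\|^p_{L^p(\Omega)}+\cE_\Omega(u,u)=\|u\|^p_{\WnuOm}$, so $T$ is an isometry onto its image. The target space is separable and reflexive (product of two such $L^p$-spaces over $\sigma$-finite measure spaces, since $\nu(x-y)\,\d x\,\d y$ is $\sigma$-finite on $\Omega\times\Omega$). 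Hence if I can show that $T(\WnuOm)$ is a closed subspace of the target, then $\WnuOm$ is itself isometrically isomorphic to a closed subspace of a separable reflexive Banach space, giving completeness, separability and reflexivity at once.

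The core step, then, is completeness of $\WnuOm$ (which is equivalent to closedness of $T(\WnuOm)$). Given a Cauchy sequence $(u_n)$ in $\WnuOm$, the sequences $(u_n)$ and $(d_{u_n})$ are Cauchy in $L^p(\Omega)$ and $L^p(\Omega\times\Omega,\nu(x-y)\,\d x\,\d y)$ respectively, so they converge to some $u\in L^p(\Omega)$ and some $g\in L^p(\Omega\times\Omega,\nu(x-y)\,\d x\,\d y)$. Passing to a subsequence, I may assume $u_n\to u$ pointwise a.e.\ on $\Omega$, say off a Lebesgue-null set $N\subset\Omega$. Since $\nu(x-y)\,\d x\,\d y$ is absolutely continuous with respect to the planar Lebesgue measure, the complement of $(\Omega\setminus N)\times(\Omega\setminus N)$ is also null for this measure, and on this full-measure set $d_{u_n}(x,y)=u_n(x)-u_n(y)\to u(x)-u(y)$. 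By uniqueness of pointwise a.e.\ limits of a subsequence of a convergent $L^p$-sequence, $g(x,y)=u(x)-u(y)=d_u(x,y)$ a.e., so $u\in\WnuOm$ and $u_n\to u$ in the $\WnuOm$-norm. This simultaneously shows $\WnuOm$ is Banach and that $T(\WnuOm)$ is closed in the target.

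For $\WnuOmO$, I would run the analogous argument but starting from the isometric embedding
\begin{equation*}
\widetilde{T}:\Wnu\to L^p(\R^d)\times L^p(\R^d\times\R^d,\,\nu(x-y)\,\d x\,\d y),\qquad \widetilde{T}u=(u,\,d_u),
\end{equation*}
to get that $\Wnu$ is a separable reflexive Banach space. Then $\WnuOmO$ is defined by the linear condition $u=0$ a.e.\ on $\R^d\setminus\Omega$; a Cauchy sequence in $\WnuOmO$ converges in $\Wnu$ to some $u$, and by passing to an a.e.-convergent subsequence the condition $u=0$ a.e.\ on $\R^d\setminus\Omega$ is preserved in the limit. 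Thus $\WnuOmO$ is closed in $\Wnu$, and a closed subspace of a separable reflexive Banach space is itself separable and reflexive. The only delicate point throughout is the subsequence-and-a.e.-convergence step used to identify the limit of $(d_{u_n})$ with $d_u$; once that is handled, everything else is soft functional analysis.
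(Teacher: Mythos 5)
Your argument is correct and takes essentially the paper's approach: the paper defers the proof to the thesis it cites, but the very same device (isometric embedding $u\mapsto(u,(u(x)-u(y))\nu^{1/p}(x-y))$ into a product of $L^p$-spaces, completeness via Cauchy subsequences, then separability and reflexivity from being a closed subspace) is exactly what it uses in the proof of Theorem \ref{thm:WnuOme-complete}. The only minor variation is that the paper identifies the limit with Fatou's lemma applied to $|u_n-u_m|$, while you identify $g=d_u$ through a.e.-convergent subsequences and absolute continuity of $\nu(x-y)\,\d x\,\d y$ with respect to Lebesgue measure; both steps are valid.
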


\begin{remark}
It is worthwhile noticing that $  \|\cdot\|_{\WnuOmR}$ is always a norm on  $\WnuOmO$, but not in general a norm on $\WnuOmR$ if $\nu$ is not fully supported. A simple counterexample is given by $\nu(h)=\mathds{1}_{B_1(0)} (h)$ and $\Omega=B_1(0)$. For the function $u(x)= \mathds{1}_{B^c_2(0)}(x)$ we have  $\|u\|_{\WnuOmR}=0$ whereas $u\neq 0$. More generally, assume that $\Omega$ is bounded and $\nu$ has a compact support. Let $S= \R^d\setminus \big(\Omega\cup  \Omega_\nu\big)$ and consider the function $u(x)= \mathds{1}_S(x)$. A routine verification shows that $\|u\|_{\WnuOmR}=0$ but $u\neq0$. This means that $(\WnuOmR, \|\cdot\|_{\WnuOmR})$ cannot be a normed space. 
\end{remark}

However $(\WnuOmR, \|\cdot\|_{\WnuOmR})$ is a Banach space when $\nu$ is of full support; cf. \cite{guy-thesis,FK22} for the proof.  Let us say a few words for the general case where $\nu$ is not necessarily fully supported. In view of the Remark \ref{rem:form-representation} we introduce another version of the space $\WnuOmR$. Namely, recall $\Omega_\nu= \supp\nu+\Omega$ and  $\Omega_\nu^{*}=\Omega\cup \Omega_\nu$ we consider the space 
\begin{align*}
\WnuOme 
&= \big\{u:\Omega_\nu^{*} \to \R \,\text{meas.} :  u|_\Omega\in L^p(\Omega),\, |u|^p_{\WnuOme }<\infty \big\},
\end{align*}
equipped with the seminorm $\|u\|_{W^p_\nu(\Omega|\Omega_\nu)} = \big( \|u\|^p_{L^p(\Omega)}+ |u|^p_{\WnuOme }\big)^{1/p}$ where 
we define 
\begin{align*}
|u|^p_{\WnuOme }= \iil_{\Omega \Omega_\nu} \hspace{-1ex}|u(x)-u(y)|^p  \nuxminy \d y  \d x.
\end{align*}
It is worth recalling that if $0\in\supp\nu$ then $\Omega\subset  \Omega_\nu$ so that $\Omega_\nu^{*}=\Omega\cup \Omega_\nu=\Omega_\nu$. 

\begin{remark}\label{rem:identification}
We have $W^p_\nu(\Omega|\Omega_\nu)\equiv \WnuOmR$ in the sense that both spaces are isometric isomorphic. 
Clearly if $u\in \WnuOmR$ then $u_\nu\in W^p_\nu(\Omega|\Omega_\nu)$ with $u_\nu=u|_{\Omega_\nu^{*}}$ and we have $\|u_\nu\|_{W^p_\nu(\Omega|\Omega_\nu)}=\|u\|_{\WnuOmR}$. Conversely, if $u\in W^p_\nu(\Omega|\Omega_\nu)$ then $\widetilde{u}\in \WnuOmR$ where $\widetilde{u}$ is the zero extension of $u$ off $\Omega_\nu^{*}$ and $\|u\|_{W^p_\nu(\Omega|\Omega_\nu)}=\|\widetilde{u}\|_{\WnuOmR}$. 
Note that $u\in \WnuOmR$ is the null function  if and only if $u=0$ a.e. on $\Omega_\nu^{*}$. The notation $W^p(\Omega|\Omega_\nu )$ instead of $\WnuOmR$ should be 	also appropriate. However, we deliberately keep the  notation $\WnuOmR$ for simplicity.  It is counterintuitive to think that $\|\cdot\|_{\WnuOme}$ always defines a norm on $\WnuOme$. In view of the Remark \ref{rem:zero-supp-nu}, it is legitimate to assume that $|\supp\nu\cap \{\nu=0\}|=0$. Nevertheless, the latter seems not enough to show that $\|\cdot\|_{\WnuOme}$ is a norm.  More precisely, one may consider the following question. 

\medskip 

\textbf{Open Question:} Assume that $\nu(h)>0$ for all $h\in \supp\nu$.  Prove or disprove that is $\|\cdot\|_{\WnuOme}$ defines a norm on $\WnuOme$. 
\end{remark}

Next, we provide a condition under  which we can show that $\WnuOme$ is a Banach space. 
\begin{theorem}\label{thm:WnuOme-complete}
The space $\WnuOmR\equiv \WnuOme$ is a separable and reflexive Banach space if we have
\begin{align}\label{eq:positive-Oe}\tag{$S_1$}
&\omega(x):= \int_\Omega 1\land \nu(x-y)\d y>0\quad \text{for almost all $x\in \Omega_\nu\setminus\Omega$}. 
\end{align}
This claim is true  in the particular case where $\nu$ is of full support. 
\end{theorem}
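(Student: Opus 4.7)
The plan is to verify three things in sequence: first, that $\|\cdot\|_{\WnuOme}$ is a genuine norm on $\WnuOme$ under $(S_1)$; second, that $\WnuOme$ is complete; and third, that separability and reflexivity then follow automatically. For the last step I would introduce the linear isometry $T:\WnuOme\to L^p(\Omega)\oplus L^p(\Omega\times\Omega_\nu,\,\nu(x-y)\d y\,\d x)$ defined by $Tu=(u|_\Omega,\,u(x)-u(y))$; its image is closed precisely when $\WnuOme$ is complete, and closed subspaces of separable reflexive Banach spaces inherit both properties.

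For the norm axiom, I suppose $\|u\|_{\WnuOme}=0$, so $u=0$ a.e.\ on $\Omega$ and $|u(x)-u(y)|^p\nu(x-y)=0$ a.e.\ on $\Omega\times\Omega_\nu$. Fubini then gives, for a.e.\ $y\in\Omega_\nu\setminus\Omega$, that $|u(y)|^p\nu(x-y)=0$ for a.e.\ $x\in\Omega$. Since $\omega(y)>0$ is equivalent to $|\{x\in\Omega:\nu(x-y)>0\}|>0$, this forces $u(y)=0$; condition $(S_1)$ upgrades the conclusion to $u=0$ a.e.\ on $\Omega_\nu\setminus\Omega$, so $u$ vanishes a.e.\ on $\Omega_\nu^{*}$ and is the zero element of $\WnuOme$.

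The heart of the matter is completeness. For a Cauchy sequence $(u_n)\subset\WnuOme$ the restrictions $u_n|_\Omega$ converge in $L^p(\Omega)$ to some $u_\Omega$. The central estimate is the pointwise convex inequality
\[
|u_n(x)-u_m(x)|^p\leq 2^{p-1}\bigl(|(u_n-u_m)(x)-(u_n-u_m)(y)|^p+|u_n(y)-u_m(y)|^p\bigr).
\]
Multiplying by $1\land\nu(x-y)$, integrating in $y\in\Omega$ and then in $x\in\Omega_\nu\setminus\Omega$, using $1\land\nu\leq\nu$ on the first summand and the finiteness $\int_{\R^d}(1\land\nu(h))\d h<\infty$ (obtained by splitting at $|h|=1$ and invoking the $p$-Lévy condition) on the second, I obtain
\[
\int_{\Omega_\nu\setminus\Omega}|u_n(x)-u_m(x)|^p\omega(x)\d x\leq C\bigl(\cE(u_n-u_m,u_n-u_m)+\|u_n-u_m\|^p_{L^p(\Omega)}\bigr).
\]
The right-hand side is Cauchy, so $(u_n)$ is Cauchy in the weighted space $L^p(\Omega_\nu\setminus\Omega;\omega\,\d x)$. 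By $(S_1)$, $\omega>0$ a.e.\ on $\Omega_\nu\setminus\Omega$, so after extracting a subsequence $u_{n_k}$ converges a.e.\ there to some $u_e$. Setting $u=u_\Omega$ on $\Omega$ and $u=u_e$ on $\Omega_\nu\setminus\Omega$, Fatou's lemma applied to $|u_n-u_{n_k}|^p_{\WnuOme}$ as $k\to\infty$ yields $|u_n-u|_{\WnuOme}\to 0$ and $u\in\WnuOme$.

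The main obstacle is precisely the pointwise identification of the candidate limit on $\Omega_\nu\setminus\Omega$: on the set $\{\omega=0\}$ the weighted Cauchyness is vacuous and the seminorm provides no control on $u$, which is why $(S_1)$ is indispensable — and exactly why without it $\|\cdot\|_{\WnuOme}$ may already fail to separate points, as the counterexamples preceding the statement illustrate.
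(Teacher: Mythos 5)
Your proof is correct and takes essentially the same route as the paper: the norm axiom and the identification of the exterior limit both rest on the weighted estimate $\|u\|_{L^p(\Omega_\nu\setminus\Omega,\,\omega)}\leq C\|u\|_{\WnuOme}$ (which the paper imports from its embedding theorem into $L^p(\R^d,\widetilde{\nu}_B)$ and you re-derive inline with the same convexity/splitting argument), completeness is closed by Fatou, and separability/reflexivity come from the same isometric embedding into a product of separable reflexive $L^p$ spaces. The only cosmetic difference is in recovering the a.e.\ limit on $\Omega_\nu\setminus\Omega$: you work directly in $L^p(\omega\,\d x)$, using that $(S_1)$ makes $\omega\,\d x$ and Lebesgue share null sets there, whereas the paper multiplies by $\omega$, passes to an a.e.\ limit in unweighted $L^p(\Omega_\nu)$, and divides back by $\omega$ — the two devices are equivalent.
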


\begin{proof}
Observe that if $x\not \in \Omega_\nu$ then for all $y\in\Omega$ we have $x-y\not\in \supp\nu$. It follows that $\omega(x)=0$ on $\R^d\setminus\Omega_\nu$. By Theorem \ref{thm:w-emb-on-weigh-lp} we have $\WnuOme\equiv \WnuOmR\hookrightarrow L^p(\R^d,\omega)
\equiv  L^p(\Omega_\nu, \omega).$ Whence, there is $C>0$ such that 
\begin{align}\label{eq:weighted-estx}
\|u\|_{L^p(\Omega_\nu, \omega)}\leq C\|u\|_{\WnuOme}\quad\text{for all $u\in \WnuOme$}. 
\end{align}
Assume that $\eqref{eq:positive-Oe}$ holds, if $\|u\|_{\WnuOme}=0$ then $u=0$ a.e. on $\Omega$ and by \eqref{eq:weighted-estx},  $u\cdot\omega=0$ a.e. on $\Omega_\nu\setminus\Omega$. There holds that $u=0$ a.e. $\Omega_\nu$, since $\omega>0$ a.e. on $\Omega_\nu\setminus\Omega$. Whence the seminorm $\|\cdot\|_{\WnuOme}$ is a norm on $\WnuOme$.
Now, consider $(u_n)_n$ to be a Cauchy sequence in $\WnuOme$, then taking into account the estimate \eqref{eq:weighted-estx}, the sequences $(u_n)_n$ and $(w_n)_n$ with $w_n=u_n\cdot \omega$ are Cauchy sequences in $L^p(\Omega)$ and $L^p(\Omega_\nu)$ respectively. Thus, by passing to a subsequence if necessary, we can assume there exist $u\in L^p(\Omega)$ and $w\in L^p(\Omega_\nu)$ such that 
\begin{enumerate}[$\bullet$]
\item $(u_n)$ converges in $L^p(\Omega)$ and pointwise  a.e.  on $\Omega$ to some $u\in L^p(\Omega)$,  
\item $(w_n)$ converges in $L^p(\Omega_\nu)$ and pointwise  a.e.  in $\Omega_\nu$ to some $w\in L^p(\Omega_\nu)$. 
\end{enumerate}
We extend $u$ for  $x\in \Omega_\nu\setminus\Omega$ by $u(x)= w(x)/\omega(x)$ which is well defined since $\omega>0$ a.e. on $\Omega_\nu\setminus\Omega$. It follows that 
$u:\Omega^*_\nu= \Omega\cup\Omega_\nu\to \R$ is measurable since $u_n\to u$ a.e. on $\Omega$ and for almost all $x\in \Omega_\nu\setminus\Omega$ we have $u_n(x)=w_n(x)/\omega(x)\xrightarrow{n\to\infty}w(x)/\omega(x)= u(x).$ 
It turns out that $(u_n)_n$ is a Cauchy sequence in $\WnuOme$ converging to $u$ on $L^p(\Omega)$ and  a.e. on $\Omega_\nu$. This combined with Fatou's lemma implies that 
\begin{align*}
|u_n-u|^p_{\WnuOme}\leq \liminf_{m\to \infty}|u_n-u_m|^p_{\WnuOme}
%\iil_{ \Omega \Omega_\nu} | [u_n-u_m](x)- [u_n-u_m](y)|^p\nu(x-y)\d y\d x
\xrightarrow{n\to \infty}0. 
\end{align*} 
We  conclude that $\|u_n-u\|_{\WnuOme}\xrightarrow{n\to\infty}0$ and $u\in \WnuOme$ which show the completeness of $\WnuOme$.
Now, consider the  isometry  $\mathcal{I}: \WnuOme\to L^p(\Omega) \times L^p(\Omega\times \Omega_\nu)$ with $\mathcal{I}(u)= (u(x), (u(x)-u(y))\nu^{1/p}(x-y))$. From its Banach structure, the space  $\big(\WnuOme, \|\cdot\|_{\WnuOme}\big)$, identified with $\mathcal{I}\big(\WnuOme\big)$, is  separable and reflexive as a closed subspace of the separable and reflexive space  $ L^p(\Omega) \times L^p(\Omega\times \Omega_\nu)$.
\end{proof}

\begin{example}
A typical situation where $\WnuOme$ is a Banach space although $\nu>0$ a.e. only on  $\supp\nu$, 
can be obtained as follows. Let $\Omega= B_1(0)$ and consider $\nu(h)= \mathds{1}_{B_1(0)}(h)$ or $\nu(h)= \mathds{1}_{B_1(0)}(h)|h|^{-d-sp}$, $s\in (0,1)$ so that $\supp\nu= \overline{B_1(0)}$. Thus, $\WnuOme= W^p_\nu(B_1(0)|B_2(0))$ is a Banach space with $\Omega_\nu =B_1(0)+\overline{B_1(0)}= B_2(0)$ . 
\end{example}

\subsection{Density of smooth functions} 
For many results, it is crucial that smooth functions with compact support are dense in the function space under consideration. Let us summarize some important results in this direction.
\begin{theorem}\label{thm:density} Let $\nu$ satisfies \eqref{eq:plevy-integ-cond} with full support and let $\Omega \subset \R^d$ be open.
\begin{enumerate}[$(i)$]
\item  $C^\infty(\Omega) \cap\WnuOm$ is dense in $\WnuOm$. 
\item  If  the boundary  $\partial \Omega$ is  continuous then $C_c^\infty(\overline{\Omega})$ is dense in $\WnuOm$. 
\item   If  the boundary  $\partial \Omega$ is  continuous,  then $C_c^\infty(\Omega)$ is dense in $\WnuOmO$.
\item   If  the boundary  $\partial \Omega$ is  Lipschitz, then $C_c^\infty(\R^d)$ is dense in $\WnuOmR$. 
\item $C_c^\infty(\R^d)$ is dense in $\Wnu$. 
\end{enumerate}
\end{theorem}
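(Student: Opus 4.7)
My approach is to assemble three reusable building blocks --- (A) a mollification estimate compatible with the nonlocal seminorm, (B) a truncation estimate controlling the behaviour at infinity, and (C) a boundary-translation device --- and then combine them according to the boundary regularity available in each part. For (A), let $\rho_\eps \in C_c^\infty(B_\eps(0))$ be a standard mollifier. The pointwise bound $|(u*\rho_\eps)(x)-(u*\rho_\eps)(y)| \leq \int_{\R^d}|u(x-z)-u(y-z)|\rho_\eps(z)\d z$, raised to the $p$-th power and integrated against $\nu(x-y)\d x\d y$, gives by Minkowski's inequality $|u*\rho_\eps|_{\Wnu} \leq |u|_{\Wnu}$, and similarly for $\cE$ and $\cE_\Omega$. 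Combined with the $L^p$-continuity of translations this yields $u*\rho_\eps \to u$ in $\Wnu$ (and in $\WnuOm$, $\WnuOmR$, $\WnuOmO$) as $\eps \to 0$ under the obvious support compatibility. For (B), pick $\eta_R \in C_c^\infty(\R^d)$ with $\eta_R = 1$ on $B_R$ and $|\nabla \eta_R| \leq C/R$; the splitting $u\eta_R(x)-u\eta_R(y) = (u(x)-u(y))\eta_R(x) + u(y)(\eta_R(x)-\eta_R(y))$ combined with the bound $|\eta_R(x)-\eta_R(y)|^p \leq C(R^{-p}|x-y|^p \wedge 1)$ makes $\|u\eta_R-u\|_{\Wnu}\to 0$, the integrability being precisely what $\nu \in L^1(\R^d, 1\wedge|h|^p)$ delivers.

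Given (A) and (B), part (v) follows by first truncating $u$ by $\eta_R$ and then mollifying; parts (i) and (iii) come from a Meyers--Serrin-type construction. Concretely for (i), fix a locally finite cover $(\Omega_k)$ of $\Omega$ by relatively compact open sets with a subordinate partition of unity $(\chi_k)$, and mollify each $u\chi_k$ at a scale $\eps_k$ small enough both to keep the support inside $\Omega$ and to ensure $\sum_k \|(u\chi_k)*\rho_{\eps_k}-u\chi_k\|_{\WnuOm} < \eta$; the sum belongs to $C^\infty(\Omega)\cap \WnuOm$. The full-support hypothesis on $\nu$ enters only because it supplies \eqref{eq:positive-Oe} so that $\|\cdot\|_{\WnuOm}$ is a bona fide norm and the quoted Banach structure from Theorem \ref{thm:WnuOme-complete} is available.

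Parts (ii), (iii), (iv) each reduce to approximating $u$ by smooth functions supported in a prescribed closed set --- $\overline{\Omega}$, $\Omega$, or $\R^d$ respectively --- and this is where (C) enters. I would use the classical Nikod\'{y}m/Ne\v{c}as device: cover $\partial\Omega$ by finitely many local charts in which the boundary is the graph of a continuous (resp.\ Lipschitz) function, use a subordinate partition of unity to localize, and translate the localized piece by a small vector $\tau e$ pointing strictly to the required side of $\partial\Omega$ (into $\overline{\Omega}$ for (ii), into $\Omega$ for (iii), unrestricted for (iv)). The shifted function $u_\tau(x):=u(x-\tau e)$ then has support at distance of order $\tau$ from $\partial\Omega$ on the chosen side, so that a subsequent mollification at scale $\eps \ll \tau$ produces an approximant in $C_c^\infty(\overline{\Omega})$, $C_c^\infty(\Omega)$, or $C_c^\infty(\R^d)$ respectively. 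The convergence $u_\tau \to u$ in the relevant nonlocal norm is obtained by applying the translation argument from (A) to the energy form.

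The main obstacle is precisely the boundary-translation step in (ii)--(iv): the shifted function $u_\tau$ probes values of $u$ slightly outside each chart, and one must verify that $\cE(u_\tau-u, u_\tau-u) \to 0$ in a setting whose integration domain is the cross-shaped region $(\Omega^c\times\Omega^c)^c$ rather than all of $\R^d\times\R^d$, and where only $u|_\Omega \in L^p(\Omega)$ is known. The Lipschitz case (iv) is cleanest because the uniform cone condition furnishes a direction $e$ that works simultaneously on every chart, so the global translation $u \mapsto u_\tau$ is well controlled by the $\Wnu$-style estimate from (A). The continuous-boundary cases (ii) and (iii) require a direction field that varies continuously from chart to chart, reassembled by the partition of unity; this is exactly the subtle point already treated in \cite[Theorem 3.76]{guy-thesis}, which I would invoke to close the argument.
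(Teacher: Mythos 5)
The paper does not actually prove Theorem \ref{thm:density}: it delegates all five parts to the literature (parts (i)--(ii) to \cite{guy-thesis,DyKi21}, part (iii) to \cite{FSV15,guy-thesis,BGPR20}, parts (iv)--(v) to \cite{FGKV20,guy-thesis}). Your outline reconstructs precisely the strategy used in those references --- Jensen/Minkowski contractivity of mollification (the paper's Lemma \ref{lem:convolu-convex}), a Meyers--Serrin partition-of-unity argument for (i), cutoff plus mollification for (v), and the segment-property translation--mollification device near a continuous or Lipschitz graph boundary for (ii)--(iv) --- and at the one genuinely delicate point you invoke the same source the paper cites, \cite[Theorem 3.76]{guy-thesis}. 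So your proposal is consistent with the paper's treatment rather than in conflict with it, but like the paper it is not self-contained at the boundary step.

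One concrete caveat you should repair if you want the sketch to stand on its own. Your truncation estimate (B) is proved for $u\in\Wnu$, i.e.\ under $u\in L^p(\R^d)$, and you then use truncation implicitly in (iv), where $u\in\WnuOmR$ is only known to satisfy $u|_\Omega\in L^p(\Omega)$. In the splitting $u\eta_R(x)-u\eta_R(y)=(u(x)-u(y))\eta_R(x)+u(y)(\eta_R(x)-\eta_R(y))$, the second term integrated over $\Omega\times\Omega^c$ against $\nu(x-y)$ cannot be controlled by Minkowski alone; one needs the weighted integrability of $u$ on $\Omega^c$, i.e.\ the embedding $\WnuOmR\hookrightarrow L^p(\R^d,\widetilde{\nu}_\Omega)$ of Theorem \ref{thm:w-emb-on-weigh-lp} (or the equivalent weighted norms of Lemma \ref{lem:natural-norm-on-w}), together with a dominated-convergence argument in $R$, and a short check that $|\eta_R(x)-\eta_R(y)|^p\lesssim 1\wedge|x-y|^p$ interacts correctly with that weight. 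A similar remark applies to justifying $\cE(u_\tau-u,u_\tau-u)\to0$ for the global translation in (iv): the translation-continuity argument of your step (A) is formulated on $\Wnu$ and must be rerun on the cross-shaped region $(\Omega^c\times\Omega^c)^c$ using the same weighted bounds, which is exactly the technical content of the cited \cite[Theorem 3.76]{guy-thesis} and of \cite{FGKV20}. Finally, a minor slip: the full-support hypothesis is not needed to make $\|\cdot\|_{\WnuOm}$ a norm (it always is, since the $L^p(\Omega)$ part is included); it matters for the spaces on $\R^d$, e.g.\ via \eqref{eq:positive-Oe} for $\WnuOmR$.
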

The proofs of the first and second statements can be found in \cite{guy-thesis} and \cite{DyKi21}. The first statement reminisces of the Meyer-Serrin density type result (see \cite[Theorem 3.67]{guy-thesis}). 
Note that $C_c^\infty(\overline{\Omega})$ consists of functions of the form $v|_{\overline{\Omega}}$ where $v \in C^\infty_c(\R^d)$. The proof of the third statement is given in \cite{FSV15} for a special choice of $\nu$ and in \cite{guy-thesis}, \cite{BGPR20} for the general case. Note however that the third may fail if $\partial\Omega$ is not continuous see for instance  \cite[Remark 7]{FSV15}.  This goes against the claim \cite[Theorem 3.3.9]{CF12}.
The proof of the fourth and the fifth assertions are given in \cite{FGKV20, guy-thesis}.

%%%%%%%%%%%%%%%%%%%%%%%%%%%%%%%%
\subsection{Connection with classical Sobolev spaces}
%%%%%%%%%%%%%%%%%%%%%%%%%%%%%%%%
Let us delve into the connection between nonlocal Sobolev spaces and classical Sobolev spaces by examining the embeddings and related results. Recall that $W^{1,p}(\Omega)$ denotes the classical Sobolev space endowed with the norm 
\[ \|u\|^p_{W^{1,p}(\Omega)} = \|u\|^p_{L^p(\Omega)} +\|\nabla u\|^p_{L^p(\Omega)} \,.\]
While, $W^{1,p}_0(\Omega)$ is the closure of $C_c^\infty(\Omega)$ with respect to the $W^{1,p}(\Omega)$. We emphasize that $W^{1,p}_0(\Omega)$ also coincides with the closure of $C_c^\infty(\Omega)$ in $W^{1,p}(\R^d)$. 
Let $ W^p_{\nu,0}(\Omega) $ be the closure of $C_c^\infty(\Omega)$ in  $\WnuOm$. Note that, the zero extension to $\R^d$ of any function in $ W^{1,p}_0(\Omega)$ belongs to $W^{1,p}(\R^d)$. Recall that  $\Omega$ is an $W^{1,p}$-extension domain if there is an operator $E:W^{1,p}(\Omega)\to W^{1,p}(\R^d)$ and $C>0$ such that $Eu|_\Omega= u$  and $\|Eu\|_{W^{1,p}(\R^d)} \leq C \|u\|_{W^{1,p}(\Omega)}$ for all  $u\in W^{1,p}(\Omega)$. 

\begin{proposition} Let $\Omega \subset \R^d$ be open. The following embeddings hold true. 
\begin{enumerate}[$(i)$]
\item $W^{1,p}(\R^d) \hookrightarrow W^p_{\nu}(\R^d).$
\item If $\Omega\subset \R^d$ is an $W^{1,p}$-extension domain then $W^{1,p}(\Omega)\hookrightarrow \WnuOm$.
\item$W^p_{\nu}(\R^d) \hookrightarrow \WnuOmR \hookrightarrow \WnuOm \hookrightarrow L^p(\Omega)\,.$
\item $W^{1,p}_0(\Omega)\hookrightarrow \WnuOmO \hookrightarrow \WnuOm.$
\item If $\partial\Omega$ continuous then $\WnuOmO\hookrightarrow W^p_{\nu, 0}(\Omega) \hookrightarrow L^p(\Omega)$. 
\end{enumerate}
\end{proposition}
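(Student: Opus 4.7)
The plan is to prove (i) carefully, since the remaining four embeddings reduce to it via composition with an extension operator, restriction, or density of smooth functions, combined with the elementary comparisons between the forms $\cE_\Omega$, $\cE$ and $\cE_{\R^d}$ recorded in Proposition \ref{prop:equiv-energy}. In every case the task is simply to exhibit a pointwise inequality between integrands, integrate, and read off a norm bound.

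\textbf{The core step, namely (i).}
To prove $W^{1,p}(\R^d)\hookrightarrow W^p_{\nu}(\R^d)$ I would first reduce to $u\in C_c^\infty(\R^d)$ by density. After the change of variables $y=x+h$ the form factorizes as
\[ \cE_{\R^d}(u,u) = \int_{\R^d}\Big(\int_{\R^d}|u(x+h)-u(x)|^p\d x\Big)\nu(h)\d h. \]
For $|h|\leq 1$ I would apply the fundamental theorem of calculus together with Minkowski's integral inequality to bound the inner integral by $|h|^p\|\nabla u\|_{L^p(\R^d)}^p$; for $|h|>1$ I would simply use $|a-b|^p\leq 2^{p-1}(|a|^p+|b|^p)$ together with translation invariance of Lebesgue measure to bound it by $2^p\|u\|_{L^p(\R^d)}^p$. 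Multiplying by $\nu(h)$ and integrating gives
\[ \cE_{\R^d}(u,u)\leq \|\nabla u\|_{L^p}^p \int_{|h|\leq 1}|h|^p\nu(h)\d h + 2^p\|u\|_{L^p}^p \int_{|h|>1}\nu(h)\d h, \]
and both coefficients are finite because $\nu\in L^1(\R^d,1\land|h|^p)$. Density then transports the estimate to all of $W^{1,p}(\R^d)$.

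\textbf{The remaining embeddings.}
For (ii) I would apply the $W^{1,p}$-extension operator $E$, apply (i) to $Eu$, and use the obvious inequality $\cE_\Omega(u,u)\leq \cE_{\R^d}(Eu,Eu)$. For (iii) the three arrows are immediate from the pointwise kernel inequalities
\[ \min(\mathds{1}_\Omega(x),\mathds{1}_\Omega(y))\nu(x-y)\leq \max(\mathds{1}_\Omega(x),\mathds{1}_\Omega(y))\nu(x-y)\leq \nu(x-y), \]
combined with Proposition \ref{prop:equiv-energy}; the final step $\WnuOm\hookrightarrow L^p(\Omega)$ is definitional. For (iv), given $u\in W^{1,p}_0(\Omega)$ its zero extension $\tilde u$ lies in $W^{1,p}(\R^d)$, so (i) places $\tilde u\in \Wnu$, and since $\tilde u$ vanishes off $\Omega$ it sits in $\WnuOmO$, with the equivalence $\|\cdot\|_{\Wnu}\asymp \|\cdot\|_{\WnuOmR}$ on this subspace providing the final embedding constant; the second arrow $\WnuOmO\hookrightarrow \WnuOm$ is restriction.

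\textbf{Embedding (v) and the main obstacle.}
For (v) I would invoke Theorem \ref{thm:density}(iii) to produce, for any $u\in \WnuOmO$, a sequence $\phi_n\in C_c^\infty(\Omega)$ with $\phi_n\to u$ in $\WnuOmR$. Restricting to $\Omega$ and using $\cE_\Omega\leq \cE$, the restrictions $\phi_n|_\Omega\in C_c^\infty(\Omega)$ form a Cauchy sequence in $\WnuOm$ converging to $u|_\Omega$, so $u|_\Omega\in W^p_{\nu,0}(\Omega)$ by definition of the latter as a closure; the inclusion $W^p_{\nu,0}(\Omega)\hookrightarrow L^p(\Omega)$ is trivial. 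The only genuinely substantive point across the whole proof is (i): the singular behavior of $\nu$ near the origin must be absorbed by the gradient and its slow decay at infinity absorbed by the $L^p$ norm, and this is exactly what the split dictated by the weight $1\land|h|^p$ achieves. Every other step is bookkeeping on top of this single estimate.
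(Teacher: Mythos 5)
Your proposal is correct and follows essentially the same route as the paper: the core estimate is the translation bound $\|u(\cdot+h)-u\|_{L^p}^p\leq 2^p(1\land|h|^p)\|u\|_{W^{1,p}}^p$ integrated against $\nu(h)\,\d h$ for (i), with (ii) via the extension operator, (iii)--(iv) via kernel comparison and zero extension, and (v) via density of $C_c^\infty(\Omega)$ in $\WnuOmO$. The only cosmetic differences (reducing first to $C_c^\infty$ and splitting $|h|\leq 1$ versus $|h|>1$ instead of using the weight $1\land|h|^p$ directly) do not change the argument.
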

\begin{proof}
Note that $(ii)$ is implied by $(i)$. Whereas $(v)$ follows from the fact that $C_c^\infty(\Omega)$ is dense in $\WnuOmO$; see Theorem \ref{thm:density}. We only prove $(i)$, i.e., $W^{1,p}(\R^d) \hookrightarrow W^p_{\nu}(\R^d)$  since the remaining ones are trivial. A routine argument yields that, for $u \in W^{1,p}(\R^d)$ and $h\in \R^d$ 
\begin{align*}
\int_{\R^d} |u(x+ h)-u(x)|^p\d x
\leq |h|^p\| \nabla u \|^p_{L^p(\R^d)}, 
\end{align*} 
whereas $\|u(\cdot+ h)-u\|^p_{L^p(\R^d)} \leq 2^p\|u\|^p_{L^p(\R^d)} $. Therefore we  get 
\begin{align*}
\int_{\R^d} |u(x+ h)-u(x)|^p\d x
\leq 2^p(1\land |h|^p)\| u \|^p_{W^{1,p}(\R^d)}\,.
\end{align*} 
Integrating both side over $\R^d$ with respect to the measure $\nu(h)\d h$ yields 
\begin{align}\label{eq:levy-p-estimate}
\iint\limits_{\R^d\R^d} |u(x)-u(y)|^p \nuxminy\d y\,\d x \leq
2^p \| \nu\|_{L^1(\R^d, 1\land |h|^p\d h)} \| u \|^p_{W^{1,p}(\R^d)}.
\end{align} 
The desired embedding readily follows.   
\end{proof}

\begin{remark}
The embedding $W^{1,p}(\Omega)\hookrightarrow \WnuOm$ may fail  if $\Omega$ is not an extension domain 
(see \cite[Counterexample 3.10]{Fog23} or 
\cite[Example 9.1]{Hitchhiker}). 
More importantly, $W^{1,p}(\Omega)$ can be viewed as limiting space of the nonlocal space of type $\WnuOm$ and $\WnuOmR$; see for instance  \cite{Fog23,guy-thesis}.
\end{remark}

%%%%%%%%%%%%%%%%%%%%%%%%%%%%%%%%
\section{Characterization of the p-L\'{e}vy integrability}\label{sec:charac-plevy}
%%%%%%%%%%%%%%%%%%%%%%%%%%%%%%%%
We will now see that  the  $p$-L\'{e}vy integrability condition \eqref{eq:plevy-integ-cond} is optimal and can be self-generated from the associated energy form. In fact the  $p$-L\'{e}vy integrability condition \eqref{eq:plevy-integ-cond} renders the space $\Wnu$ more consistent, in a sense that it warrants the space $\Wnu$ to contain smooth functions.  

\begin{theorem}\label{thm:charac-plevy}
Let $\nu\geq0$ be symmetric. The following assertions are equivalent. 
\begin{enumerate}[$(i)$]
\item The $p$-L\'{e}vy condition \eqref{eq:plevy-integ-cond} holds, i.e. $\nu\in L^1(\R^d,1\land|h|^p)$.
\item The embedding $W^{1,p}(\R^d)\hookrightarrow\Wnu$ is continuous. 
\item $\cE_{\R^d}(u,u)<\infty$ for all $u\in W^{1,p}(\R^d)$. 
\item $\cE_{\R^d}(u,u)<\infty$ for all $u\in C_c^\infty(\R^d)$. 
\end{enumerate}
Moreover, this remains true when $p=1$ with $BV(\R^d)$ in place of $W^{1,1}(\R^d)$.
\end{theorem}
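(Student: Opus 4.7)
The plan is to close the chain $(i) \Rightarrow (ii) \Rightarrow (iii) \Rightarrow (iv) \Rightarrow (i)$. The forward chain is essentially already in hand. The implication $(i) \Rightarrow (ii)$ is the computation leading to \eqref{eq:levy-p-estimate}: combining $\|u(\cdot+h) - u\|_{L^p(\R^d)} \leq 2\|u\|_{L^p(\R^d)}$ with the classical finite-difference bound $\|u(\cdot+h) - u\|_{L^p(\R^d)} \leq |h|\|\nabla u\|_{L^p(\R^d)}$ gives $\|u(\cdot+h) - u\|_{L^p(\R^d)}^p \leq 2^p (1\wedge |h|^p)\|u\|_{W^{1,p}(\R^d)}^p$, and integrating against $\nu(h)\,dh$ delivers the embedding. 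Then $(ii) \Rightarrow (iii)$ is the definition of continuous embedding and $(iii) \Rightarrow (iv)$ is immediate from $C_c^\infty(\R^d) \subset W^{1,p}(\R^d)$.

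The content is in $(iv) \Rightarrow (i)$. The key is to exploit, via Fubini and symmetry, the representation
\[
\cE_{\R^d}(u,u) = \int_{\R^d} g_u(h)\, \nu(h) \, dh, \qquad g_u(h):=\int_{\R^d} |u(x+h) - u(x)|^p \, dx,
\]
and test against specific $u \in C_c^\infty(\R^d)$ designed so that $g_u(h)$ matches $1 \wedge |h|^p$ from below on the respective scales. For the tail at infinity, I fix $r>0$ and choose $\phi_r \in C_c^\infty(\R^d)$ with $\phi_r \equiv 1$ on $B_{r/4}(0)$ and $\supp \phi_r \subset B_{r/2}(0)$. Then for every $|h| \geq r$ and every $x \in B_{r/4}(0)$ one has $x+h \notin \supp \phi_r$, hence $g_{\phi_r}(h) \geq |B_{r/4}(0)|$. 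Finiteness of $\cE_{\R^d}(\phi_r, \phi_r)$ in $(iv)$ then yields $\int_{|h|\geq r} \nu(h)\,dh < \infty$ for every $r>0$.

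For the behavior near the origin, fix $\phi \in C_c^\infty(\R^d)$ with $\phi \equiv 1$ on $B_1(0)$ and $\supp \phi \subset B_2(0)$, and set $u_i(x) = x_i\, \phi(x) \in C_c^\infty(\R^d)$ for $i=1,\dots,d$. Whenever $|h| \leq 1/2$ and $x \in B_{1/2}(0)$, both $x$ and $x+h$ lie in $B_1(0)$, so $u_i(x+h) - u_i(x) = h_i$, whence $g_{u_i}(h) \geq |B_{1/2}(0)|\, |h_i|^p$. Summing and using $\sum_i |h_i|^p \geq d^{-p/2} |h|^p$ yields
\[
\int_{|h|\leq 1/2} |h|^p \, \nu(h) \, dh \leq C_{d,p}\sum_{i=1}^d \cE_{\R^d}(u_i, u_i) < \infty.
\]
Splitting $\int_{\R^d}(1\wedge|h|^p)\nu(h)\,dh$ into $\{|h|\leq 1/2\}$ and $\{|h|>1/2\}$ and combining with the tail estimate (applied with $r=1/2$) yields $(i)$.

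The $p=1$ case with $BV(\R^d)$ in place of $W^{1,1}(\R^d)$ requires only a cosmetic adjustment: the forward estimate uses the BV finite-difference bound $\|u(\cdot+h)-u\|_{L^1} \leq |h|\,|Du|(\R^d)$, which is standard; the reverse implication is unchanged because all test functions $\phi_r, u_i$ used above are already in $C_c^\infty(\R^d)$. The only delicate point I anticipate is making the constants in the origin-scale argument genuinely depend only on $d$ and $p$ (not on the test functions) so that the argument is clean; this boils down to the elementary inequality $\sum_i |h_i|^p \geq d^{-p/2}|h|^p$ and is not a real obstacle.
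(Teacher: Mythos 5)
Your proof is correct and follows essentially the same route as the paper: the forward chain is the estimate \eqref{eq:levy-p-estimate} plus trivial inclusions, and $(iv)\Rightarrow(i)$ is handled by the same two-scale testing — coordinate-indexed smooth test functions to capture the $|h|^p$ behaviour near the origin and a compactly supported bump to control the tail $\int_{|h|\geq r}\nu$. The only deviation is that your $u_i(x)=x_i\,\phi(x)$ are exactly linear near $0$, so the increment equals $h_i$ identically and you bypass the paper's shift-continuity ($\varepsilon$--$\delta$) argument and the elementary inequality $b^p\geq 2^{1-p}(a+b)^p-a^p$; this is a harmless simplification of the same idea, not a different method.
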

This characterization in Theorem \ref{thm:charac-plevy} highlights the robustness of the $p$-L\'{e}vy integrability condition across different function spaces, and  ensuring the finiteness of the associated energy forms.

\begin{proof} $(i)\implies (ii)$ The estimate \eqref{eq:levy-p-estimate} readily yields  the continuity of the embedding  $W^{1,p}(\R^d)\hookrightarrow\Wnu$. The implications $(ii)\implies (iii)$ and $(iii)\implies (iv)$ are straightforward. It remains to prove that $(iv)\implies (i)$. To this end, assume that $\cE_{\R^d}(u,u)<\infty$ for all $u\in C_c^\infty(\R^d)$. Let $\zeta\in C_c^\infty(\R)$ with $\zeta\not\equiv 0$ and  $\supp \zeta\subset (0,1)$ and $\vartheta\in C_c^\infty(\R^{d-1})$ such that $\vartheta=1$ on $Q'$ with $Q'=[0,1]^{d-1}$. Consider $u_i (x)=\zeta\otimes \vartheta(x_i ,x'_i)= \zeta(x_i) \vartheta(x'_i)$ where $x'_i= (x_1,\cdots x_{i-1}, x_{i+1}\cdots, x_d)$. Clearly we have   $u_i\in C_c^\infty(\R^d)$ and $\nabla u_i (x)= \zeta'(x_i)e_i$ for $x\in Q= [0,1]^{d}$. By the continuity of the shift, for each $\varepsilon>0,$ there is $\delta>0$ such that 
\begin{align*}
\text{$\|\nabla u(\cdot+h)-\nabla u\|^p_{L^p(Q)}< \varepsilon$\quad  for all $|h|\leq \delta.$ }
\end{align*}
Using this and the inequality $b^p\geq 2^{1-p}(a+b)^p-a^p$, $a,b\geq0$ we find that 
\begin{align*}
\begin{split}
\cE_{\R^d}(u_i,u_i)&\geq \int_{Q} \int_{B_\delta(0)} \Big| \int_0^1\nabla u_i(x+th)\cdot h\d t\Big|^p \nu(h) \d h\,\d x\\
&\geq 2^{1-p}\int_{Q} \int_{B_\delta(0)} |\nabla u_i(x)\cdot h|^p\nu(h) \d h\,\d x- \varepsilon \int_{B_\delta(0)} |h|^p\nu(h)\d h\\
&= 2^{1-p}\int_{Q}  |\zeta'(x_i)|^p\d x \int_{B_\delta(0)}|h_i|^p \nu(h) \d h- \varepsilon \int_{B_\delta(0)} |h|^p\nu(h)\d h.
\end{split}
\end{align*}
Note  that $\int_{Q}  |\zeta'(x_i)|^p\d x = \|\zeta'\|^p_{L^p(0,1)}
$.  We have  $|h_1|^p+\cdots+ |h_d|^p\geq c_{d,p}|h|^p$ for some $c_{d,p}>0$, by equivalence of Euclidean norms. Taking $\eps=\frac{C}{2d}$ with $C= 2^{1-p}c_{d,p} \|\zeta'\|^p_{L^p(0,1)}$ we get 
\begin{align}\label{eq:xaround-zero}
\sum_{i=1}^d \cE_{\R^d}(u_i,u_i)
&\geq 
(C- d\varepsilon) \int_{B_\delta(0)} |h|^p\nu(h)\d h= 
\frac{C}{2} \int_{B_\delta(0)} |h|^p\nu(h)\d h . 
\end{align}
Now, consider $u\in C_c^\infty( B_{\delta/2}(0)), $ $u\not\equiv 0$.  Since  $B_{\delta/2}(0)\subset B_\delta(x)$ for all $x\in B_{\delta/2}(0)$ we get 
\begin{align}\label{eq:xtail}
\begin{split}
\cE_{\R^d}(u,u)
&\geq 2\int_{B_{\delta/2}(0)} |u(x)|^p \int_{B^c_{\delta/2}(0)} \hspace{-2ex}\nu(x-y)\d y\,\d x\\
&\geq 2\|u\|^p_{L^p( B_{\delta/2}(0))} \int_{ B^c_\delta(0)} \nu(h)\d h.
\end{split}
\end{align} 
Combing \eqref{eq:xaround-zero} and \eqref{eq:xtail} yields $\nu\in L^1(\R^d, 1\land|h|^p)$  that is $\nu$ satisfies \eqref{eq:plevy-integ-cond}. The case $p=1$ follows analogously.
\end{proof}
The next result underscores that for radial $\nu$, the condition $\Wnu \neq \{0\}$ is both necessary and sufficient for the $p$-L\'{e}vy integrability condition to hold. We first need the following lemma. 
\begin{lemma}\label{lem:convolu-convex}
Let $\phi\in L^1(\R^d)$  such that $\phi\geq0$ and $\int_{\R^d}\phi(z)\d z=1$. Then for all $u\in L^1_{\loc}(\R^d)$, we have 
\begin{align*}
\cE_{\R^d} (u*\phi,u*\phi)\leq \cE_{\R^d} (u,u). 
\end{align*}
\end{lemma}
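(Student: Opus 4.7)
The plan is to exploit the translation invariance of the kernel $\nu$ together with Jensen's inequality applied to the convex function $t\mapsto |t|^p$, using the hypothesis that $\phi(z)\,\d z$ is a probability measure on $\R^d$. If $\cE_{\R^d}(u,u) = \infty$ there is nothing to prove, so I would assume it is finite.

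First, I would write the difference of the convolutions in its natural shifted form,
\begin{align*}
(u*\phi)(x) - (u*\phi)(y) = \int_{\R^d} \bigl(u(x-z) - u(y-z)\bigr)\,\phi(z)\,\d z,
\end{align*}
and then apply Jensen's inequality with respect to the probability measure $\phi(z)\,\d z$ to obtain the pointwise estimate
\begin{align*}
\bigl|(u*\phi)(x) - (u*\phi)(y)\bigr|^p \leq \int_{\R^d} \bigl|u(x-z) - u(y-z)\bigr|^p \phi(z)\,\d z.
\end{align*}

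Second, I would multiply both sides by $\nu(x-y)$ and integrate over $\R^d\times\R^d$. Since all integrands are non-negative, Tonelli's theorem allows me to swap the order and pull the $z$-integral to the outside:
\begin{align*}
\cE_{\R^d}(u*\phi,u*\phi) \leq \int_{\R^d} \phi(z) \left( \iint_{\R^d\R^d} \bigl|u(x-z) - u(y-z)\bigr|^p \nu(x-y)\,\d y\,\d x \right) \d z.
\end{align*}

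Third, for each fixed $z$, I would perform the translation change of variables $x' = x-z$, $y' = y-z$; the Jacobian is one and, crucially, $\nu(x-y) = \nu(x'-y')$ because the kernel depends only on the difference. Hence the inner double integral equals $\cE_{\R^d}(u,u)$ for every $z$. Using $\int_{\R^d}\phi(z)\,\d z = 1$ then yields the desired inequality. The argument contains no genuine obstacle; the only subtlety is ensuring $u*\phi$ makes sense pointwise almost everywhere, which is already guaranteed whenever the right-hand side $\cE_{\R^d}(u,u)$ is finite (the integral defining $(u*\phi)(x)$ being otherwise handled by Fubini within the inequality itself).
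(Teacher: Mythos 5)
Your proof is correct and is essentially the paper's argument: both rest on Jensen's inequality for $t\mapsto|t|^p$ with respect to the probability measure $\phi(z)\,\d z$, combined with Tonelli and the translation invariance of $\nu(x-y)$ under the shift $x\mapsto x-z$, $y\mapsto y-z$. The only difference is cosmetic — the paper starts from $\cE_{\R^d}(u,u)$ and bounds it from below, while you start from $\cE_{\R^d}(u*\phi,u*\phi)$ and bound it from above.
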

\begin{proof}
From Jensen's inequality and simple change of variables we find that
\begin{alignat*}{2}
\cE_{\R^d} (u,u)
&=\int_{\R^d} \phi(z) \iil_{ \R^d\R^d}|u(x-z)-u(y-z  )|^p\nu(x-y)\d y\d x\, \d z\\
&\geq \iil_{ \R^d\R^d}\Big|\int_{\R^d} \phi(z)(u(x-z)-u(y-z))\d z\Big|^p \nu(x-y)\d y\d x\\
&= 	\iil_{ \R^d\R^d}|u*\phi(x)-u*\phi(y)|^p\nu(x-y)\d y\d x=\cE_{\R^d} (u*\phi,u*\phi).
\end{alignat*}
\end{proof}

\begin{theorem}\label{thm:charac-plevy-radial}
If  $\nu$ is radial, then $\nu$ satisfies \eqref{eq:plevy-integ-cond} if and only if $\Wnu\neq \{0\}$.
\end{theorem}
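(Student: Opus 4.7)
The forward direction is immediate from Theorem~\ref{thm:charac-plevy}: $p$-L\'evy integrability of $\nu$ gives $W^{1,p}(\R^d)\hookrightarrow\Wnu$, so in particular $\Wnu\ne\{0\}$. For the converse, starting from some $u\in\Wnu\setminus\{0\}$, I would first pass to a smooth representative via Lemma~\ref{lem:convolu-convex}. Take a standard mollifier $\phi\in C_c^\infty(\R^d)$ with $\phi\ge 0$, $\int\phi=1$, and small support; then $v:=u\ast\phi$ lies in $C^\infty(\R^d)\cap W^{1,p}(\R^d)$ (since $\nabla v=u\ast\nabla\phi$), satisfies $\cE_{\R^d}(v,v)\le\cE_{\R^d}(u,u)<\infty$, is nonzero for $\phi$ sufficiently close to $\delta_0$, and decays at infinity (apply H\"older inside the convolution and use $u\in L^p$). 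The key identity, obtained from the substitution $h=x-y$ together with $\nu(-h)=\nu(h)$, is
\[
\cE_{\R^d}(v,v)\;=\;\int_{\R^d}\|v(\cdot+h)-v\|_{L^p(\R^d)}^p\,\nu(h)\,\d h.
\]

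I would then decompose $\int_{\R^d}(1\wedge|h|^p)\nu(h)\,\d h$ into three regimes. \emph{Near zero} $(|h|\le\delta)$: since $v\in L^p(\R^d)\setminus\{0\}$ cannot be constant (the only constant in $L^p(\R^d)$ is $0$ when $p<\infty$), pick $x_1$ with $\nabla v(x_1)\ne 0$ and a ball $Q\ni x_1$ on which $\int_Q|\nabla v|^p>0$. Taylor expansion $v(x+h)-v(x)=\int_0^1\nabla v(x+th)\cdot h\,\d t$, the elementary inequality $a^p\ge 2^{1-p}(a+b)^p-b^p$ for $a,b\ge 0$, and the $L^p$-continuity of the shifts of $\nabla v$ run exactly as in the proof of Theorem~\ref{thm:charac-plevy}. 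Radiality of $\nu$ then enters crucially via
\[
\int_{B_\delta(0)}|\nabla v(x)\cdot h|^p\nu(h)\,\d h\;=\;K_{d,p}\,|\nabla v(x)|^p\int_{B_\delta(0)}|h|^p\nu(h)\,\d h,
\]
obtained by rotating $h$ so that $\widehat{\nabla v(x)}$ is aligned with a coordinate axis and invoking the definition of $K_{d,p}$. For $\delta$ small and the error term absorbed, this forces $\int_{B_\delta(0)}|h|^p\nu(h)\,\d h<\infty$. \emph{Tail} $(|h|\ge R)$: fix $x_0$ with $v(x_0)>0$ (after translating if necessary), set $M=v(x_0)/2$; continuity and the decay $v(x)\to 0$ at infinity furnish $r>0$ and $R>0$ with $v\ge M$ on $B_r(x_0)$ and $|v|\le M/2$ on $\R^d\setminus B_{R-|x_0|-r}(0)$, so $|v(x)-v(x-h)|\ge M/2$ for all $x\in B_r(x_0)$, $|h|>R$, and substitution into the energy representation yields $\int_{|h|>R}\nu(h)\,\d h<\infty$. \emph{Annulus} $(\delta\le|h|\le R)$: the map $h\mapsto\|v(\cdot+h)-v\|_{L^p(\R^d)}$ is continuous in $h$ (by $L^p$-continuity of translations) and strictly positive away from the origin (a nonzero $L^p(\R^d)$-function cannot be $h$-periodic for $p<\infty$, as tiling by the period would force $\|v\|_{L^p}=\infty$), so by compactness it admits a positive infimum on the annulus, giving $\int_{\delta\le|h|\le R}\nu(h)\,\d h<\infty$. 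Summing the three contributions yields $\int(1\wedge|h|^p)\nu(h)\,\d h<\infty$.

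The principal obstacle is the near-zero step: passing from a single generic nonzero $v\in\Wnu$ to control of $\int|h|^p\nu(h)\,\d h$ near $0$ requires reducing the anisotropic quantity $|\nabla v(x)\cdot h|^p$ to $|h|^p$ up to a positive constant. Radiality of $\nu$ is exactly what enables this reduction through rotational symmetry, and this is precisely where the radial hypothesis on $\nu$ is essential. In the proof of Theorem~\ref{thm:charac-plevy} the same issue was bypassed by testing against a basis of $d$ coordinate-aligned tensor-product bumps; that device is unavailable here, since we are only granted a single element of $\Wnu$ whose gradient direction we cannot prescribe.
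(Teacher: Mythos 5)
Your proof is correct, and its skeleton is the paper's: mollify a nonzero element of $\Wnu$ into a smooth nonzero $W^{1,p}$-function whose energy is no larger (Lemma \ref{lem:convolu-convex}), then bound $\int_{B_\delta(0)}|h|^p\nu$, $\int_{\{\delta\le|h|\le R\}}\nu$ and $\int_{\{|h|\ge R\}}\nu$ separately, with radiality entering exactly through $\fint_{\mathbb{S}^{d-1}}|z\cdot w|^p\,\d\sigma_{d-1}(w)=K_{d,p}|z|^p$ in the small-$h$ regime. Where you genuinely diverge is in the two outer regimes. The paper convolves $|v|$ with a Gaussian so that the resulting $u$ is strictly positive with full support, handles the annulus via $M=\min_{\delta/2\le|z|\le R+\delta/2}\int_{B_{\delta/4}(0)}|u(x)-u(z)|^p\,\d x>0$, and handles the tail through the truncation $w_r=u-\zeta_r(u)$, compactly supported because $u\to0$ at infinity. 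You keep a compactly supported mollifier, never need $|v|$ or positivity, justify the annulus by noting that $h\mapsto\|v(\cdot+h)-v\|_{L^p(\R^d)}$ is continuous and vanishes only at $h=0$ (a nonzero $L^p(\R^d)$ function cannot be periodic), hence has a positive minimum on the compact annulus, and get the tail directly from a ball where $|v|\ge M$ together with decay at infinity. This is arguably cleaner: it sidesteps the paper's implicit point that $u$ is non-constant on small balls (needed for $M>0$ and justified there only by smoothness and full support), at the cost of nothing new. Two small touch-ups: to arrange $v(x_0)>0$ replace $v$ by $-v$ if necessary (translating does not fix the sign); and in the small-$h$ step the subtraction should formally be carried out with $\nu$ replaced by $\min(\nu,n)\mathds{1}_{B_\delta(0)}$ (or with Minkowski applied to finite quantities) and then $n\to\infty$ by monotone convergence, so that no infinite terms are subtracted — the same caveat applies to the paper's own computation.
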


\begin{proof}
By Theorem \ref{thm:charac-plevy} we know  $C_c^\infty(\R^d)\subset \Wnu$ if $\nu$ satisfies \eqref{eq:plevy-integ-cond}. 
Conversely, assume there is $v\in \Wnu$ such that $v\not \equiv0$. Then $|v|\in \Wnu$ with $\cE(|v|,|v|)\leq \cE(v, v)$, since $||v(x)|-|v(y)||\leq |v(x)-v(y)|$.  Let $\phi(x)= \kappa e^{-|x|^2}$ satisfies $\|\phi\|_{L^1(\R^d)}=1$ for some $\kappa>0$. By  Young's inequality and Lemma \ref{lem:convolu-convex}  we get $u=|v|*\phi\in \Wnu$. Moreover,  we have 
\begin{align*}
\cE(u,u)\leq \cE(|v|,|v|)\leq \cE(v, v)<\infty.
\end{align*}
Clearly $u\in C^\infty(\R^d)\cap L^\infty(\R^d)$ and $u(x)>0$ for each $x\in \R^d$, i.e., $\supp u=\R^d$. On the other hand, Young's inequality implies $u\in W^{1,p}(\R^d)$. In particular, $\|\nabla u\|_{L^p(\R^d)}\neq0$. 
For each $\eps>0$, there is $\delta= \delta(\eps)>0$ such that $\|\nabla u(\cdot+h)-\nabla u\|_{L^p(\R^d)}<\eps$ for all $|h|\leq \delta$. 
Minkowski's inequality implies 
\begin{align*}
\Big(\hspace{-1ex}\iil_{\R^dB_\delta(0)}\hspace{-2ex}|\nabla u(x)\cdot h|^p\nu(h)\d h\d x\Big)^{1/p} \hspace*{-2ex}
&\leq \Big(\hspace{-1ex}\iil_{\R^dB_\delta(0)}	\hspace{-1ex} \Big|\int_0^1 \hspace*{-1ex}\nabla u(x+th) \cdot h\d t\Big|^p\nu(h) \d h\d x\Big)^{1/p}\\
&+\eps \Big(\int_{B_\delta(0)}|h|^p\nu(h)\d h\Big)^{1/p}.
\end{align*} 
By the rotation invariance of the Lebesgue measure, for $z\in \R^d$ and $e\in \mathbb{S}^{d-1}$,
\begin{align}\label{eq:rotation-inv}
\fint_{\mathbb{S}^{d-1}}  |w\cdot z|^p \d\sigma_{d-1}(w) = |z|^p\fint_{\mathbb{S}^{d-1}} |w\cdot e|^p\d\sigma_{d-1}(w)  = K_{d,p}|z|^p.
\end{align} 

The fundamental theorem of calculus and a passage to polar coordinates yield
\begin{align*}
\begin{split}
\cE(u,u)^{1/p}&\geq\Big( \iil_{\R^d\, B_\delta(0)} \Big| \int_0^1\nabla u(x+th)\cdot h\d t\Big|^p \nu(h) \d h\,\d x\Big)^{1/p}\\
&\geq \Big( \int_{\R^d} \int_{\mathbb{S}^{d-1}} \hspace{-2ex} |\nabla u(x)\cdot w|^p \mathrm{d}\sigma_{d-1}(w) \int_{0}^{\delta}\hspace{-1ex} r^{p+d-1} \nu(r)\d r
\Big)^{1/p}\\&- \eps \Big( \int_{B_\delta(0)} \hspace{-2ex}|h|^p\nu(h)\d h\Big)^{1/p}\\
&= \Big(K^{1/p}_{d,p}\|\nabla u\|_{L^p(\R^d)}-\eps\Big)\Big( \int_{B_\delta(0)} \hspace{-2ex}|h|^p\nu(h)\d h\Big)^{1/p}. 
\end{split}
\end{align*}
 Since $u\in W^{1,p}(\R^d)\setminus \{0\}$, we can take  $\eps=\frac{K^{1/p}_{d,p}}{2} \|\nabla u\|_{L^p(\R^d)}\neq0$  so that  
\begin{align*}
\begin{split}
\cE(u,u)\geq \frac{K_{d,p}}{2} \|\nabla u\|^p_{L^p(\R^d)} \int_{B_\delta(0)} \hspace{-2ex}|h|^p\nu(h)\d h. 
\end{split}
\end{align*}
Necessarily, we get  $\int_{B_\delta(0)} |h|^p\nu(h)\d h<\infty$.  
Next, it is not difficult to  show that $u(x)\to 0 $ as $|x|\to \infty$. Thus for $r>0$ there is $R>\delta>0$ such that $0<u(x)\leq r$ whenever $|x|>\frac{R}{2}$. Now consider the 1-Lipschitz function 
\begin{align*}
\zeta_r(s)=\max(-r, \min(r, s))=\begin{cases}
s &|s|\leq r \\
r\operatorname{sgn}(s)&|s|\geq  r.
\end{cases} 
\end{align*}
It is readily seen that  $|\zeta_r(u(x))-\zeta_r(u(y))|\leq |u(x)-u(y)|$ and $0\leq \zeta_r(u(x))\leq u(x)$. In particular, $\cE(u,u)\geq\cE(\zeta_r(u), \zeta_r(u))$. If we set  $w_r(x)= u(x)-\zeta_r(u(x))$ then $w_r(x)= 0$ whenever $|x|>R/2$.
Since $B_{R/2}(x)\subset B_{R}(0)$ for all $x\in B_{R/2}(0)$, we find that 
\begin{align*}
\cE(u,u)\geq\cE(w_r, w_r)&\geq 2\int_{B_{R/2}(0)}|w_r(x)|^p\int_{\R^d\setminus B_{R/2}(0)}\nu(x-y)\d y \d x\\
&\geq \|w_r\|^p_{L^p(\R^d)}\int_{|h|\geq R}\nu(h)\d h. 
\end{align*}
Since $u$ is not constant we get  $\|w_r\|^p_{L^p(\R^d)}\neq 0$. Therefore we deduce that 
\begin{align*}
\int_{|h|\geq R}\nu(h)\d h<\infty. 
\end{align*}
Last, note that for $\delta\leq |h|\leq R$ and $x\in B_{\frac{\delta}{4}}(0)$ we have $|x|<\frac{\delta}{4}<\frac{\delta}{2}\leq |x+h|\leq R+ \frac{\delta}{2}$. Since $u$ is smooth with full support we put 
\begin{align*}
M=\min_{\frac{\delta}{2} \leq z\leq R+\frac{\delta}{2}}\int_{B_{\frac{\delta}{4}}(0)} | u(x)-u(z)|^p\d x >0.
\end{align*}
From this we find that 
\begin{align*}
\cE(u,u)&\geq \hspace*{-1ex}\int_{B_{R}(0)\setminus B_\delta(0)}\hspace*{-0.5ex}\int_{B_{\frac{\delta}{4}}(0)} \hspace*{-2ex}|u(x)-u(x+h)|^p\d x\, \nu(h)\d h
\geq M  \int_{B_{R}(0)\setminus B_\delta(0)}\hspace*{-4ex}\nu(h)\d h.
\end{align*}
Hence, it follows that $
\int_{B_{R}(0)\setminus B_\delta(0)}\nu(h)\d h<\infty.$
Finally, we find that $\nu\in L^1(\R^d, 1\land|h|^p)$.
\end{proof}
The following result is a straightforwards consequence of Theorem \ref{thm:charac-plevy} and Theorem \ref{thm:charac-plevy-radial}. 

\begin{theorem}\label{thm:charac-plevy-radial-bis} 
Assume $\nu\geq0$ is radial. The following assertions are equivalent. 
\begin{enumerate}[$(i)$]
\item The $p$-L\'{e}vy condition \eqref{eq:plevy-integ-cond} holds, i.e. $\nu\in L^1(\R^d,1\land|h|^p)$.
\item The embedding $W^{1,p}(\R^d)\hookrightarrow\Wnu$ is continuous. 
\item $\cE_{\R^d}(u,u)<\infty$ for all $u\in W^{1,p}(\R^d)$. 
\item $\cE_{\R^d}(u,u)<\infty$ for all $u\in C_c^\infty(\R^d)$. 
\item The space $\Wnu$ is nontrivial, i.e.,  $\Wnu\neq \{0\}$. 
\end{enumerate}
Moreover, this  remains true when $p=1$ with $BV(\R^d)$ in place of $W^{1,1}(\R^d)$.
\end{theorem}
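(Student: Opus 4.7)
The proof is essentially a matter of assembling the two previous characterization results. The plan is to observe that Theorem \ref{thm:charac-plevy} already establishes the equivalence of statements $(i)$, $(ii)$, $(iii)$, and $(iv)$ for arbitrary symmetric $\nu \geq 0$, without any radiality assumption, so these four conditions are equivalent a fortiori in the radial setting considered here. It therefore suffices to insert $(v)$ into the chain of equivalences, which is exactly the content of Theorem \ref{thm:charac-plevy-radial}.

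More concretely, I would argue $(i) \Rightarrow (v)$ and $(v) \Rightarrow (i)$ separately (both of which are available). For the forward direction, if $\nu$ satisfies \eqref{eq:plevy-integ-cond}, then by the equivalence $(i) \iff (iv)$ from Theorem \ref{thm:charac-plevy}, every $u \in C_c^\infty(\R^d)$ belongs to $\Wnu$, so in particular $\Wnu \neq \{0\}$ because $C_c^\infty(\R^d) \subset L^p(\R^d)$ and contains nontrivial functions. For the reverse direction, the radiality of $\nu$ is essential: this is precisely what Theorem \ref{thm:charac-plevy-radial} supplies, via the mollification argument using Lemma \ref{lem:convolu-convex}, the rotation invariance identity \eqref{eq:rotation-inv}, and the truncation $\zeta_r$ that produces control of $\nu$ both near the origin and at infinity.

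Combining these two implications with the equivalences from Theorem \ref{thm:charac-plevy} closes the loop, yielding the full equivalence $(i) \Leftrightarrow (ii) \Leftrightarrow (iii) \Leftrightarrow (iv) \Leftrightarrow (v)$. The extension to $p=1$ follows from the corresponding statements in both source theorems, where one substitutes $BV(\R^d)$ for $W^{1,1}(\R^d)$ (the embedding argument in $(i) \Rightarrow (ii)$ goes through because the $p$-estimate \eqref{eq:levy-p-estimate} has a $BV$ analogue via finite differences, while the reverse directions only use test functions in $C_c^\infty(\R^d) \subset BV(\R^d)$).

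Since the heavy lifting has already been done in the two cited theorems, there is no genuine obstacle here; the only subtlety worth flagging explicitly is that $(v) \Rightarrow (i)$ is the one step that genuinely requires radiality — without it, a nonzero element of $\Wnu$ need not give enough control in every direction to integrate $\nu$ against $1 \wedge |h|^p$ — and this is precisely why the hypothesis ``$\nu$ radial'' appears in the statement.
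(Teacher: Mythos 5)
Your proposal is correct and matches the paper exactly: the paper states this theorem as a straightforward consequence of Theorem \ref{thm:charac-plevy} (giving the equivalence of $(i)$--$(iv)$) together with Theorem \ref{thm:charac-plevy-radial} (linking $(i)$ and $(v)$ under radiality), which is precisely the assembly you carry out.
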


As illustrated in the next result, the L\'{e}vy integrability \eqref{eq:plevy-integ-cond}  draws the  borderline for which a space of type $\Wnu$ is trivial or not, see \cite[Proposition 2.15]{Fog23} or \cite[Proposition 3.46]{guy-thesis} for a general setting. 

\begin{proposition}\label{prop:boderline-plevy} Let  $\nu\geq0$ be symmetric and put $C_\delta=\int_{B_\delta(0)} |h|^p\nu(h)\d h$, $\delta>0$. The following hold true. 
\begin{enumerate}[$(i)$]
\item If $\nu\in L^1(\R^d)$ then $\Wnu= L^p(\R^d)$ with equivalence in norms.
\item If $\nu \in L^1(\R^d, 1\land |h|^p\d h)$ and $\Omega$ is bounded, then bounded Lipschitz functions are in  $\WnuOm$ and $\WnuOmR$.  
\item %Consider $C_\delta=\int_{B_\delta(0)} |h|^p\nu(h)\d h$. 
If  $\nu$ is radial, $\Omega$ is connected and  $C_\delta=\infty$ for all $\delta>0$, in particular  $\nu \not\in L^1(\R^d, 1\land |h|^p\d h)$, then $u\in W^{1,p}(\Omega)\cap \WnuOm$ or  $u\in C^1(\Omega)\cap \WnuOm$ if and only if  $u=c$ is a constant function. 
\item If  $\nu$ is radial and $u\in W^{1,p} (\R^d)$ then there is $\delta=\delta(u)>0$, such that 
\begin{align*}
\frac{K_{d,p}}{2} C_\delta\|\nabla u\|^p_{L^p (\R^d)} \leq |u|^p_{\Wnu} \leq 2^p\|\nu\|_{L^1(\R^d, 1\land |h|^p\d h)}\|u\|^p_{W^{1,p} (\R^d)}.
\end{align*}
\end{enumerate}
\end{proposition}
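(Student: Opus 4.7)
Parts (i) and (ii) are elementary and I would handle them first. For (i), apply the convex bound $|u(x)-u(y)|^p\leq 2^{p-1}(|u(x)|^p+|u(y)|^p)$, integrate against $\nu(x-y)\d x\d y$, and use Fubini: with $\nu\in L^1(\R^d)$ this gives $\cE_{\R^d}(u,u)\leq 2^p\|\nu\|_{L^1(\R^d)}\|u\|^p_{L^p(\R^d)}$. Combined with the trivial bound $\|u\|_{L^p(\R^d)}\leq \|u\|_{\Wnu}$, the norms are equivalent and $\Wnu=L^p(\R^d)$. For (ii), if $u$ is bounded and Lipschitz on $\R^d$, the pointwise estimate $|u(x)-u(y)|^p\leq C(u)(1\land|x-y|^p)$ with $C(u)=\max(\lip u,2\|u\|_\infty)^p$ combined with Proposition \ref{prop:equiv-energy} gives $\cE(u,u)\leq 2C(u)|\Omega|\,\|\nu\|_{L^1(\R^d,1\land|h|^p)}$ after the substitution $h=x-y$ in $\iint_{\Omega\times\R^d}$; the same estimate yields $\cE_\Omega(u,u)<\infty$, so $u\in\WnuOm\cap\WnuOmR$.

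For (iv) the upper bound is already the estimate \eqref{eq:levy-p-estimate}. For the lower bound I mimic the Minkowski argument from the proof of Theorem \ref{thm:charac-plevy-radial}. By $L^p(\R^d)$-continuity of translation of $\nabla u$, for every $\eta>0$ there is $\delta=\delta(u,\eta)>0$ with $\sup_{|h|<\delta}\|\nabla u(\cdot+h)-\nabla u(\cdot)\|_{L^p(\R^d)}<\eta$. Starting from $u(x+h)-u(x)=\int_0^1\nabla u(x+th)\cdot h\,\d t$ and combining the triangle inequality in $L^p$ with Minkowski's integral inequality gives
\[
\|u(\cdot+h)-u(\cdot)\|_{L^p(\R^d)}\geq \|\nabla u(\cdot)\cdot h\|_{L^p(\R^d)}-|h|\eta,\qquad |h|<\delta.
\]
Raising to the $p$-th power, integrating in $h$ over $B_\delta(0)$ against $\nu(h)\d h$, and passing to polar coordinates via \eqref{eq:rotation-inv} yields
\[
|u|^p_{\Wnu}\geq \frac{C_\delta}{|\mathbb{S}^{d-1}|}\int_{\mathbb{S}^{d-1}}(A_w-\eta)_+^p\,\d\sigma_{d-1}(w),\quad A_w:=\|\nabla u(\cdot)\cdot w\|_{L^p(\R^d)}.
\]
Since $\fint_{\mathbb{S}^{d-1}}A_w^p\,\d\sigma_{d-1}=K_{d,p}\|\nabla u\|^p_{L^p(\R^d)}$ by \eqref{eq:rotation-inv}, dominated convergence on $\mathbb{S}^{d-1}$ lets me fix $\eta=\eta(u)>0$ small enough that this spherical average is at least $\tfrac12 K_{d,p}\|\nabla u\|^p_{L^p(\R^d)}$, and the corresponding $\delta$ furnishes the claimed lower bound.

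For (iii), I argue by contradiction. Suppose $u\in W^{1,p}(\Omega)\cap \WnuOm$ (the $C^1$-case is identical) is not constant. Connectivity of $\Omega$ forces $\nabla u\not\equiv 0$, hence there exist a Lebesgue point $x_0$ of $|\nabla u|^p$ with $|\nabla u(x_0)|>0$ and $r>0$ with $\overline{B_{2r}(x_0)}\subset\Omega$ and $\alpha:=\int_{B_r(x_0)}|\nabla u|^p\,\d x>0$. The elementary inequality $b^p\geq 2^{1-p}(a+b)^p-a^p$, $a,b\geq 0$, applied to $b=|u(x+h)-u(x)|$ and $a=|r_h(x)|$ with $r_h(x):=u(x+h)-u(x)-\nabla u(x)\cdot h$ yields $|u(x+h)-u(x)|^p\geq 2^{1-p}|\nabla u(x)\cdot h|^p-|r_h(x)|^p$. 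Integrating over $B_r(x_0)\times B_\delta(0)$ for $\delta<r$ against $\nu(h)\d h\d x$, using \eqref{eq:rotation-inv} in polar coordinates on the leading term and Jensen plus $L^p$-continuity of translation of $\nabla u\in L^p(B_{2r}(x_0))$ on the remainder, produces
\[
\cE_\Omega(u,u)\geq \bigl(2^{1-p}K_{d,p}\alpha-\eta(\delta)^p\bigr)C_\delta,
\]
with $\eta(\delta)\to 0$ as $\delta\to 0$. Choose $\delta$ small so that $\eta(\delta)^p<2^{1-p}K_{d,p}\alpha$; since $C_\delta=+\infty$ by hypothesis, this forces $\cE_\Omega(u,u)=+\infty$, contradicting $u\in\WnuOm$. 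Hence $\nabla u=0$ a.e. on $\Omega$, and $u$ is constant on the connected set $\Omega$.

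The only real technical subtlety is the sharpness of the constant in (iv): the crude pointwise inequality used in (iii) only produces $2^{1-p}K_{d,p}$ as a leading factor, which falls below $K_{d,p}/2$ when $p>2$, so for (iv) one is genuinely forced to route through the Minkowski inequality and extract the coefficient $K_{d,p}$ by letting $\eta\to 0^+$ in a dominated-convergence argument on $\mathbb{S}^{d-1}$. Everything else is routine bookkeeping with Fubini, Jensen, and $L^p$-continuity of translation.
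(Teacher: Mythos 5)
The paper never proves this proposition in-house (it is delegated to \cite[Proposition 2.15]{Fog23} and \cite[Proposition 3.46]{guy-thesis}), so there is no proof to match line by line; what you do is reassemble it from the paper's own toolkit, and for (i), (ii) and (iv) this works. Parts (i) and (ii) are the expected convexity/Fubini estimates. In (iv) the upper bound is exactly \eqref{eq:levy-p-estimate}, and your lower bound reruns the Minkowski/translation-continuity argument from the proof of Theorem \ref{thm:charac-plevy-radial}; your spherical dominated-convergence step is correct, but your closing remark that one is ``genuinely forced'' into it to reach the constant $K_{d,p}/2$ is overstated: taking $\eps=(1-2^{-1/p})K_{d,p}^{1/p}\|\nabla u\|_{L^p(\R^d)}$ in the scalar estimate of that same proof already produces $\big(K_{d,p}^{1/p}\|\nabla u\|_{L^p}-\eps\big)^p=\tfrac{K_{d,p}}{2}\|\nabla u\|^p_{L^p}$, so the sharper route is a matter of taste, not necessity.

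The one step you must tighten is in (iii). You integrate the pointwise bound $|u(x+h)-u(x)|^p\ge 2^{1-p}|\nabla u(x)\cdot h|^p-|r_h(x)|^p$ over $B_r(x_0)\times B_\delta(0)$ and split the result as $2^{1-p}K_{d,p}\alpha\,C_\delta-\eta(\delta)^p C_\delta$; but under the standing hypothesis $C_\delta=\infty$ both pieces are infinite, so the displayed inequality $\cE_\Omega(u,u)\ge(2^{1-p}K_{d,p}\alpha-\eta(\delta)^p)C_\delta$ is reached through an $\infty-\infty$ manipulation, and the only safe rearrangement of nonnegative integrals, $\cE_\Omega(u,u)+\eta(\delta)^pC_\delta\ge 2^{1-p}K_{d,p}\alpha\,C_\delta$, is vacuous when $C_\delta=\infty$. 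The repair is short: fix $\delta$ with $\eta(\delta)^p<2^{1-p}K_{d,p}\alpha$, run the same three estimates with the truncated radial kernels $\nu_n=\min(\nu,n)\mathds{1}_{B_\delta(0)\setminus B_{1/n}(0)}$, for which every integral involved is finite, to get $\cE_\Omega(u,u)\ge\big(2^{1-p}K_{d,p}\alpha-\eta(\delta)^p\big)\int_{B_\delta(0)}|h|^p\nu_n(h)\,\d h$, and let $n\to\infty$ by monotone convergence to force $\cE_\Omega(u,u)=\infty$. (The same gloss occurs in the paper's own proof of Theorem \ref{thm:charac-plevy}, so this is a repair of bookkeeping, not of the idea.) Finally, (iii) is stated as an equivalence: you should add the one-line converse that constants have zero energy, hence belong to the spaces whenever they lie in $L^p(\Omega)$. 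With these two small additions your proof is complete and correct.
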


If $\nu$ is radial, then for any $u\in W^{1,p} (\R^d)$ there is $\delta=\delta(u)>0$, such that 
\begin{align*}
	\frac{K_{d,p}}{2} C_\delta\|\nabla u\|^p_{L^p (\R^d)} \leq |u|^p_{\Wnu} \leq 2^p\|\nu\|_{L^1(\R^d, 1\land |h|^p\d h)}\|u\|^p_{W^{1,p} (\R^d)}.
\end{align*}

%%%%%%%%%%%%%%%%%%%%%%%%%%%%%%%%
\section{Nonlocal Trace Theorem}\label{sec:nonlocal-trace-spaces}
\subsection{Weighted $L^p$-spaces} 
%%%%%%%%%%%%%%%%%%%%%%%%%%%%%%%%
In order to set up IDEs in $L^p$-spaces over $\R^d$, we introduce Borel measures on $\R^d$ that capture the behavior of $\nu$ at infinity. We will establish the  embedding of $\WnuOmR$ into  weighted spaces $L^p(\R^d, \omega)$ for different measures $\omega$. We will need the notion of unimodality.

\begin{definition}\label{def:unimodality}
$\nu$ is called \emph{unimodal} if $\nu$ is radial with an almost decreasing profile, i.e., there is $c>0$ such that $\nu(|x|) \geq c\, \nu(|y|)$ for all whenever $ |x| \leq |y|$. 
\end{definition}

\begin{definition}\label{def:different-nus}
Let $\nu$ satisfies  the $p$-L\'{e}vy integrability condition \eqref{eq:plevy-integ-cond} and $B\subset \R^d$ be an open set with positive measure, i.e., $|B|>0$. Define the weights  $\overline{\nu}_B, \widetilde{\nu}_B: \R^d \to [0,\infty)$ by  
\begin{align*}
\widetilde{\nu}_B (x) &= \int_B \left( 1\wedge \nu(x-y) \right) \d y, \\
\overline{\nu}_B(x) &= \operatorname{ess}\inf\limits_{y\in B}\nu(x-y) \,.
\intertext{If $\nu$ is a unimodal, then for an arbitrary fixed number $R>1$, we define  the weight $\widehat{\nu}_{R} : \R^d \to [0,\infty)$  by} 
\widehat{\nu}_{R} (x) &= \nu(R(1+|x|)). 
\end{align*} 
We emphasize that $\nu$ needs not be unimodal for the definition of  $\widetilde{\nu}_B$ and $\overline{\nu}_B$. 
\end{definition}

\noindent It is worthwhile noticing that $\widetilde{\nu}_B$, $\overline{\nu}_B$ and $\widehat{\nu}_{R}$ somehow rule out the eventual singularity of the original kernel $\nu$. This is illustrated in the next proposition on important properties of the three measures $\widetilde{\nu}_B$, $\overline{\nu}_B$ and $\widehat{\nu}_{R} $.

\begin{proposition}
\label{prop:weigth-prop} The following assertion are true.
\begin{enumerate}[$(i)$]
\item  $\widehat{\nu}_{R} ,\, 1\land\nu\in L^1(\R^d)\cap L^\infty(\R^d)$. 
\item $\widetilde{\nu}_B\in L^1_{\loc}(\R^d)\cap L^\infty(\R^d)$. If $|B|<\infty$, then $\widetilde{\nu}_B\in L^1(\R^d)\cap L^\infty(\R^d)$.
\item  $\overline{\nu}_B\in L^1 (\R^d)$. If  $\nu$ unimodal then $\overline{\nu}_B\in L^1(\R^d)\cap L^\infty(\R^d) $. 
\item $\overline{\nu}_B(x)= \widetilde{\nu}_B(x)=0$ for all $x\not\in B_\nu:= B+\supp\nu$.
\item  For $B_1\subset B_2$ we have $\overline{\nu}_{B_2}\leq \overline{\nu}_{B_1}$ and $ \widetilde{\nu}_{B_1}\leq \widetilde{\nu}_{B_2}$.
\end{enumerate}
\end{proposition}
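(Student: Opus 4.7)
The plan is to dispatch the five assertions in the order $(v)$, $(iv)$, $(i)$, $(ii)$, $(iii)$, since the $L^1$ bound in $(iii)$ is the only step that requires genuine work; everything else reduces to Fubini, monotonicity, and the $p$-L\'{e}vy tail control.

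Parts $(v)$ and $(iv)$ are purely formal: the essential infimum is antitone in its ambient set and the Lebesgue integral of a nonnegative integrand is monotone, which gives $(v)$; and for $x\notin B+\supp\nu$ we have $x-y\notin\supp\nu$ for every $y\in B$, so $\nu(x-y)=0$ for a.e.\ $y\in B$, killing both $\overline{\nu}_B(x)$ and $\widetilde{\nu}_B(x)$ and giving $(iv)$.

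For $(i)$, the estimate $1\land\nu\leq 1$ is the $L^\infty$ bound, and splitting at $|h|=1$ gives integrability: $\int_{|h|\leq 1}\d h<\infty$ while $\int_{|h|>1}\nu(h)\,\d h=\int_{|h|>1}(1\land|h|^p)\nu(h)\,\d h<\infty$ by \eqref{eq:plevy-integ-cond}. For $\widehat{\nu}_R$, unimodality yields $\nu(R(1+|x|))\leq c\,\nu(R)<\infty$; polar coordinates together with the substitution $r=R(1+\rho)$ and the estimate $\rho^{d-1}\leq(r/R)^{d-1}$ reduce $\|\widehat{\nu}_R\|_{L^1(\R^d)}$ to $R^{-d}\int_{|h|>R}\nu(h)\,\d h$, again finite since $R>1$. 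Part $(ii)$ then follows from $(i)$: the bound $\widetilde{\nu}_B\leq\|1\land\nu\|_{L^1(\R^d)}$ gives the $L^\infty$ estimate; Fubini bounds $\int_K\widetilde{\nu}_B$ by $|K|\,\|1\land\nu\|_{L^1}$ for $K$ compact, yielding $L^1_{\loc}$; and when $|B|<\infty$ the symmetric application of Fubini yields $\|\widetilde{\nu}_B\|_{L^1}=|B|\,\|1\land\nu\|_{L^1}$.

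The hard part is the $L^1$ bound in $(iii)$, since $\nu$ itself need not be globally integrable near the origin. Using $(v)$, it is enough to prove $\overline{\nu}_{B'}\in L^1(\R^d)$ for a sub-ball $B'=B_r(y_0)\subset B$, which exists by positivity of $|B|$. My plan is to bound the essinf by an average on a judiciously chosen sub-ball of $B'$ that stays uniformly away from the origin: set $e(x)=-(x-y_0)/|x-y_0|$ (any unit vector if $x=y_0$) and $z_0(x)=y_0+\tfrac{3r}{4}e(x)$, so that $B_{r/4}(z_0(x))\subset B'$ and the translated ball $B_{r/4}(x-z_0(x))$ lies in $\{|w|\geq|x-y_0|+r/2\}\subset\{|w|\geq r/2\}$ uniformly in $x$. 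The essinf-by-average bound followed by Fubini produces
\[
\int_{\R^d}\overline{\nu}_{B'}(x)\,\d x\leq\frac{1}{|B_{r/4}|}\int_{|w|>r/2}\nu(w)\,\bigl|\{x\in\R^d\colon\ |w-(x-z_0(x))|<r/4\}\bigr|\,\d w.
\]
Because the map $\xi\mapsto\phi(\xi)=\xi+\tfrac{3r}{4}\xi/|\xi|$ is a radial stretching with Jacobian $(1+\tfrac{3r}{4|\xi|})^{d-1}\geq 1$, its inverse contracts volume, so the measure of the bracketed set is at most $|B_{r/4}|$; this closes the estimate modulo the tail bound $\int_{|w|>r/2}\nu\,\d w<\infty$, which in turn follows from \eqref{eq:plevy-integ-cond} after noting that $\nu\leq\delta^{-p}|h|^p\nu$ on $\{\delta<|h|<1\}$. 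For the $L^\infty$ improvement under unimodality, the almost-decreasing profile permits a direct argument: for a.e.\ $t\in(r/2,r)$, taking $y=y_0+te(x)$ forces $|x-y|\geq r/2$, so $\nu(x-y)\leq c^{-1}\nu(r/2)$, and the essinf over such $y$ gives the uniform bound $\|\overline{\nu}_B\|_{L^\infty}\leq c^{-1}\nu(r/2)$.
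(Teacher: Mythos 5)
Your proofs of $(iv)$ and $(v)$ coincide with the paper's (which are the only parts the paper proves directly; for $(i)$--$(iii)$ it simply cites \cite[Chapter 3]{guy-thesis} and \cite[Section 2.3]{FK22}), so your self-contained treatment of $(i)$--$(iii)$ is a genuine addition rather than a retelling. The arguments for $(i)$ and $(ii)$ (splitting $1\land\nu$ at $|h|=1$, the polar-coordinate substitution $r=R(1+\rho)$ with $\rho^{d-1}\le (r/R)^{d-1}$, and the Fubini/pointwise bounds via $\|1\land\nu\|_{L^1(\R^d)}$) are correct. The $L^1$ bound for $\overline{\nu}_B$ is the only delicate point, and your scheme works: reduce to a ball $B'=B_r(y_0)\subset B$ by $(v)$, bound the essential infimum by the average over the shifted sub-ball $B_{r/4}(z_0(x))\subset B'$ whose image under $y\mapsto x-y$ avoids $B_{r/2}(0)$, and control the Fubini weight $|\{x:\ |w-(x-z_0(x))|<r/4\}|\le |B_{r/4}(0)|$ because $\xi\mapsto\xi\,(1+\tfrac{3r}{4|\xi|})$ has Jacobian $(1+\tfrac{3r}{4|\xi|})^{d-1}\ge 1$; together with $\int_{|w|>r/2}\nu(w)\,\d w<\infty$ (from \eqref{eq:plevy-integ-cond}) this gives $\|\overline{\nu}_{B'}\|_{L^1(\R^d)}\le \int_{|w|\ge r/2}\nu(w)\,\d w$, and no radiality of $\nu$ is used, as required.

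The one step that does not survive as written is the $L^\infty$ bound on $\overline{\nu}_B$ under unimodality: you bound the essential infimum by the values of $\nu(x-y)$ along the segment $\{y_0+te(x):\ t\in(r/2,r)\}$, but this is a Lebesgue-null subset of $B$, and $\essinf_{y\in B}\nu(x-y)$ is taken with respect to $d$-dimensional Lebesgue measure, so values on a null set cannot bound it from above. The repair is immediate with the object you already built for the $L^1$ part: for every $x$ the sub-ball $B_{r/4}(z_0(x))\subset B'\subset B$ has positive measure and satisfies $|x-y|\ge |x-y_0|+r/2\ge r/2$ for all $y$ in it, so the almost-decreasing property gives $\nu(x-y)\le c^{-1}\nu(r/2)$ there, whence $\overline{\nu}_B(x)\le \essinf_{y\in B_{r/4}(z_0(x))}\nu(x-y)\le c^{-1}\nu(r/2)$ uniformly in $x$. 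With that substitution the whole proposal is correct.
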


\begin{proof}
The proof of $(i)-(iii)$ can be found in \cite[Chapter 3]{guy-thesis} or adapted from \cite[Section 2.3]{FK22}.
To prove $(iv)$, consider  $x\not \in B+\supp\nu$. Then $x-y\not\in \supp\nu$  for all $y\in B$. In other words, $\nu(x-y)=0$ for almost all $y\in B.$ Whence,  $\overline{\nu}_B(x)= \widetilde{\nu}_B(x)=0$. $(v)$ is blatantly obvious.
\end{proof}

The weights $\widehat{\nu}_{R} $, $\widetilde{\nu}_B$ and $\overline{\nu}_B$ turn out to be comparable when $\nu$ is unimodal and satisfies the doubling scaling condition at infinity \eqref{eq:doubling-scaling_infinity}. 

\begin{definition}\label{def:doublingatinfinity}
We say that  a radial kernel $\nu$  satisfies a doubling condition at infinity if: 
\begin{align}\label{eq:global-scaling_infinity}
%\text{For every }
\forall\,  \, \theta\geq 1 \text{ there exist } c_1, c_2 > 0 \text{ with } c_1\nu(r)\leq \nu(\theta r)\leq c_2\nu(r)\,\, \forall\, \,  r\geq 1\,.
\end{align}
Not that the property \eqref{eq:global-scaling_infinity} is indeed equivalent  to say that 
\begin{align}\label{eq:doubling-scaling_infinity}
\text{ There exist } c_1, c_2 > 0 \text{ with } c_1\nu(r)\leq \nu(2 r)\leq c_2\nu(r) \text{ for all } r\geq 1. 
\end{align}
The doubling condition is  only relevant when the support of $\nu$ is large enough. 
\end{definition}

\begin{theorem}\label{thm:compare-nu-weight}
If $\nu$ is unimodal and $B$ is bounded, e.g.  say  $B\subset B_R(0)$ then  
\begin{align*}%\lesssim
&\sigma^*_R C^{-1}(1\land\nu(2Rx))\leq \widehat{\nu}_{R} (x)\leq C(1\land\nu(x)),\\
& C^{-1}\widehat{\nu}_{R} (x)\leq \overline{\nu}_B(x) \leq C\widetilde{\nu}_B(x),\\
&\widetilde{\nu}_B(x) \leq  \sigma_R C(1\land \nu(\frac{x}{2})), 
\end{align*}
where $\sigma_R= (1+\nu^{-1}(2R))$ and $\sigma^*_R= 1\land\nu(2R)$.
\noindent  In addition, if $\nu$ is of full support and  satisfies the doubling condition \eqref{eq:global-scaling_infinity} then
\begin{align*}
\widetilde{\nu}_B (x) \asymp \overline{\nu}_B(x)\asymp \widehat{\nu}_{R} (x)\asymp 1\land \nu(x). 
\end{align*}
Here the constant $C>0$ and the constants behind the relation $\asymp$ are generic and depend on $B$, $R$ and $\nu$. Moreover, one notes that $\sigma^*_R=\sigma^{-1}_R=0$ if $\nu(2R)=0$. 
\end{theorem}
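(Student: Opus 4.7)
The proof rests on three observations: unimodality yields $\nu(|h_1|)\geq c\,\nu(|h_2|)$ whenever $|h_1|\leq |h_2|$; the inclusion $B\subset B_R(0)$ constrains $|x-y|$ to the interval $[\,\bigl||x|-R\bigr|,\,|x|+R\,]$ for $y\in B$; and all three weights are monotone/symmetric with respect to the essential infimum and the truncation $a\mapsto 1\land a$. I would treat the six inequalities one at a time, switching freely between these ingredients.

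For the bounds on $\widehat{\nu}_R$, the upper bound $\widehat{\nu}_R(x)\leq C(1\land\nu(x))$ follows by applying unimodality to $R(1+|x|)\geq\max(R,|x|)$ and absorbing $\nu(R)$ into $C$. The lower bound uses a dichotomy: for $|x|\leq 1$ one has $R(1+|x|)\leq 2R$ and hence $\widehat{\nu}_R(x)\geq c\nu(2R)\geq c\sigma^*_R(1\land\nu(2Rx))$ (since $1\land\nu(2Rx)\leq 1$); for $|x|>1$ one has $R(1+|x|)\leq 2R|x|$ and unimodality gives $\widehat{\nu}_R(x)\geq c\nu(2Rx)\geq c\sigma^*_R(1\land\nu(2Rx))$. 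For the chain $\widehat{\nu}_R\lesssim \overline{\nu}_B\lesssim \widetilde{\nu}_B$, the left bound is immediate from $|x-y|\leq R(1+|x|)$ and unimodality. The right bound starts from $\widetilde{\nu}_B(x)\geq |B|(1\land\overline{\nu}_B(x))$; it is obvious when $\overline{\nu}_B(x)\leq 1$, and when $\overline{\nu}_B(x)>1$ one first establishes the a priori uniform bound $\overline{\nu}_B(x)\leq c^{-2}\nu(R)$ by noting that a positive-measure subset of $B$ lies near the farthest point from $x$, on which $\nu(x-y)\leq c^{-1}\nu(|x|+R)\leq c^{-2}\nu(R)$; combined with $\widetilde{\nu}_B(x)\geq |B|$ this closes the estimate.

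For $\widetilde{\nu}_B(x)\leq \sigma_R C(1\land\nu(x/2))$, I would split at $|x|=2R$: if $|x|\geq 2R$ then $|x-y|\geq |x|/2$ for $y\in B$, so unimodality gives $\nu(x-y)\leq c^{-1}\nu(x/2)$ and the estimate is direct; if $|x|<2R$ then $|x|/2<R$ and hence $\nu(x/2)\geq c\nu(R)$ (again by unimodality), and the factor $\sigma_R=1+1/\nu(2R)$ is precisely the reciprocal needed to absorb $1\land\nu(R)$ when bounding $\widetilde{\nu}_B(x)\leq |B|$. Finally, under full support and doubling, iterating $\nu(2r)\asymp\nu(r)$ for $r\geq 1$ gives $\nu(\theta r)\asymp\nu(r)$ for every $\theta\geq 1$; this upgrades each of the one-sided bounds above (notably $\nu(R(1+|x|))\leq c^{-1}\nu(|x|)$, and similarly for the $\widetilde{\nu}_B$ and $\overline{\nu}_B$ estimates) to two-sided ones. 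Full support keeps $\sigma^*_R>0$ and $\sigma_R<\infty$, so the edge-case factors collapse and all four weights become comparable to $1\land\nu(x)$.

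The principal difficulty I anticipate is the case $\overline{\nu}_B(x)>1$ in $\overline{\nu}_B\leq C\widetilde{\nu}_B$: it requires the non-trivial uniform upper bound on $\overline{\nu}_B$ drawn from a measure-theoretic version of ``the essential infimum is attained near the farthest point of $B$'', which is where unimodality has to be used with some care rather than as a black box. Everything else amounts to routine dichotomies in $|x|$ and careful bookkeeping of the factors $\sigma_R$ and $\sigma^*_R$ near the boundary of the regimes.
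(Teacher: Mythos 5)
Most of your proposal follows the paper's own route (the same dichotomies in $|x|$, unimodality applied to $R\le R(1+|x|)$, $|x|\le R(1+|x|)$, $R(1+|x|)\le 2R$ or $2R|x|$, and the split at $|x|\asymp R$ for the $\widetilde{\nu}_B$ bound), and those parts are correct. The genuine problem is your treatment of $\overline{\nu}_B(x)\le C\widetilde{\nu}_B(x)$ in the case $\overline{\nu}_B(x)>1$. You claim the uniform bound $\overline{\nu}_B(x)\le c^{-2}\nu(R)$ via ``$\nu(x-y)\le c^{-1}\nu(|x|+R)$ for $y$ near the farthest point of $B$ from $x$''. This inequality points the wrong way: for every $y\in B\subset B_R(0)$ one has $|x-y|\le |x|+R$, so almost-decreasingness gives the \emph{lower} bound $\nu(x-y)\ge c\,\nu(|x|+R)$, not an upper bound. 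Moreover the asserted conclusion is false in general: take $\nu$ decreasing with a sharp drop at radius $R$ (e.g.\ $\nu(h)=|h|^{-d-sp}$ for $|h|<R$ and $\nu(h)=\eps|h|^{-d-sp}$ for $|h|\ge R$, $\eps$ small), $B$ a small ball near the origin and $x=0$; then $\overline{\nu}_B(0)\gg \nu(R)$, so no bound of the form $c^{-2}\nu(R)$ can hold.

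What you actually need — and what the paper simply invokes as Proposition \ref{prop:weigth-prop}$(iii)$, namely $\overline{\nu}_B\in L^\infty(\R^d)$ when $\nu$ is unimodal — is a uniform bound whose constant depends on $B$ itself, not on the outer radius $R$. The correct version of your ``farthest point'' idea: for any $x$, a positive-measure portion of $B$ lies at distance at least $r_B$ from $x$, where $r_B$ is chosen so that $|B_{r_B}(x)|<|B|$ (equivalently $r_B\sim(|B|/|B_1(0)|)^{1/d}$, or use $r_B=\operatorname{diam}(B)/4$); on that set $\nu(x-y)\le c^{-1}\nu(r_B)$, hence $\overline{\nu}_B(x)\le c^{-1}\nu(r_B)$ for all $x$. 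With this repaired $L^\infty$ bound your two-case argument ($\overline{\nu}_B(x)\le 1$ versus $>1$, combined with $\widetilde{\nu}_B(x)\ge |B|(1\land\overline{\nu}_B(x))$) closes exactly as the paper's does, where the paper writes $\overline{\nu}_B(x)\le(1+\|\overline{\nu}_B\|_{L^\infty})(1\land\nu(x-y))$ for a.e.\ $y\in B$ and integrates over $B$. The remaining steps (the $\widehat{\nu}_R$ bounds, $\widehat{\nu}_R\lesssim\overline{\nu}_B$, the $\sigma_R$ bookkeeping, and the doubling/full-support upgrade to two-sided comparability) are sound and essentially identical to the paper's proof.
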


\begin{proof}
Let us fix $x\in \R^d$.  Since $R\leq R(1+|x|)$ and $|x|\leq R(1+|x|)$  by unimodality,  
\begin{align*}
\text{$\widehat{\nu}_{R} (x)\leq C1\land \nu(x)\quad $ with $\quad C =c(1+\nu(R))$.}
\end{align*}
For $|x|\leq 1$ we have $ R(1+|x|)\leq 2R$, the unimodality implies 
$$c\widehat{\nu}_{R} (x)\geq \nu(2R)\geq \nu(2R)(1\land \nu(2Rx))\geq \sigma^*_R(1\land \nu(2Rx)).$$
For $|x|\geq 1$ we have  $R(1+|x|)\leq 2R|x|$  the unimodality implies $c\widehat{\nu}_{R} (x)\geq \nu(2Rx)\geq \sigma^*_R(1\land \nu(2Rx)).$
In any case, taking  $C =c(1+\nu(R))$,  we find that
\begin{align*}
\sigma^*_RC^{-1}(1\land \nu(2Rx))\leq  \widehat{\nu}_{R} (x)\leq C(1\land \nu(x)).
\end{align*}
%\smallskip 

Next, according to Proposition \ref{prop:weigth-prop} $\overline{\nu}_B$ is bounded, thus by definition of $\overline{\nu}_B$ 
\begin{align*}
\overline{\nu}_B(x)\leq (1+\|\overline{\nu}_B\|_{L^\infty(\R^d)} ) (1\land \nu(x-y))\quad\text{for almost all $\, y\in B$}
\end{align*}
whence letting $C=|B|^{-1}(1+\|\overline{\nu}_B\|_{L^\infty(\R^d)})$ we find that 
\begin{align*}
\overline{\nu}_B(x)\leq C \int_B 1\land \nu(x-y)\d y= C\widetilde{\nu}_B (x).
\end{align*}
We  find that $|x-y|\leq R(1+|x|)$ for $y\in B$, the unimodality implies $\widehat{\nu}_{R} (x)\leq c\nu(x-y)$. Hence we get 
\begin{align*}
c^{-1}\widehat{\nu}_{R} (x)\leq \essinf_{y\in B}\nu(x-y)=\overline{\nu}_B(x). 
\end{align*}
The second claim is proved. Now, for $\frac{|x|}{2}\leq 2R$ the unimodality implies $1\leq c\nu^{-1}(2R)\nu(\frac{x}{2})\leq c\sigma_R\nu(\frac{x}{2})$. Since by definition of  $\widetilde{\nu}_B(x)$ we have  $\widetilde{\nu}_B(x)\leq\|1\land \nu\|_{L^1(\R^d)}$, we deduce that 
\begin{align*}
\widetilde{\nu}_B(x)\leq\sigma_R (c+1)\|1\land\nu\|_{L^1(\R^d)}(1\land \nu(\frac{x}{2})). 
\end{align*}
If $\frac{|x|}{2}\geq 2R$ then  $\frac{|x|}{2}\leq |x-y|$ for all $y\in B\subset B_R(0)$. By unimodality we get $ \nu(x-y)\leq c\nu(\frac{x}{2})$ which implies $ 1\land \nu(x-y)\leq (c+1)1\land \nu(\frac{x}{2})$ for all $y\in B$. Thence we get 
\begin{align*}
\widetilde{\nu}_B(x)= \int_B1\land \nu(x-y)\d y
\leq\sigma_R (c+1) |B|(1\land \nu(\frac{x}{2})).
\end{align*}
In any case, taking $C= (c+1)(|B|+\|1\land \nu\|_{L^1(\R^d)}) $ we find that 
\begin{align*}
\widetilde{\nu}_B(x)\leq \sigma_RC (1\land \nu(\frac{x}{2})).
\end{align*}

Last, assume that $\nu$ has full support and  satisfies the doubling condition \eqref{eq:global-scaling_infinity}. Given the previous estimates, it is sufficient to show that $1\land \nu(\frac{x}{2}) \asymp1\land\nu(x)\asymp1\land\nu(2Rx)$. For $|x|\leq 2$ then unimodality implies  
$c\nu(2Rx)\geq \nu(2R)\geq \nu(2R)(1\land \nu(x))$. Likewise, $c\nu(x)\geq \nu(2)(1\land \nu(\frac{x}{2}))$.  We deduce that
\begin{align*}
& (1+ c\nu^{-1}(2))^{-1}1\land \nu(\frac{x}{2})\leq 1\land\nu(x)\leq (1+ c\nu^{-1}(2R))1\land\nu(2Rx).
\end{align*}
For $|x|\geq 2$, the doubling condition \eqref{eq:global-scaling_infinity} implies $c_1\nu(\frac{x}{2})\leq \nu(x)\leq c_2\nu(2Rx)$, so that
\begin{align*}
&(c_1^{-1}+1)^{-1}1\land \nu(\frac{x}{2})\leq1\land\nu(x)\leq (c_2+1)1\land \nu(2Rx).
\end{align*}
Finally we put  $C= \max(c_1^{-1}+1, c_2+1,1+ c\nu^{-1}(2R), 1+ c\nu^{-1}(2))$ so that $0<C<\infty$ since  $\nu$ has full support and hence  we get 
\begin{align*}
C^{-1}(1\land \nu(\frac{x}{2}))\leq 1\land\nu(x)\leq C(1\land\nu(2Rx). 
\end{align*} 
This achieves the proof. 
\end{proof}

\begin{example}
Consider $\nu(h) = |h|^{-d-sp}$, $s\in(0,1)$ then  $\widetilde{\nu}\asymp 1\land \nu $. In this case one can take $ \widetilde{\nu}(h)= (1+|h|)^{-d-sp}$.
In this case, the space $\WnuOm$ equals the classical Sobolev-Slobodeckij space $W^{s,p}(\Omega)$. For the same choice of $\nu$ we define $W^{s,p}(\Omega|\R^d)$ as the space $\WnuOmR$. 
\end{example}

It is possible to define other interesting weighted norms on $\WnuOmR$ which are equivalent to the norm $\|\cdot\|_{\WnuOmR}$.
\begin{lemma}\label{lem:natural-norm-on-w}
Assume that $B\subset \Omega$ where $B$ is bounded, say $B\subset B_R(0)$. 
Let $\omega\in \{ \widetilde{\nu}_B, \overline{\nu}_B, \widehat{\nu}_{R}  \}$. If $\nu$ is unimodal  then the following  seminorms $\|\cdot\|^{\#}_{\WnuOmR}$ and $\|\cdot\|^{*}_{\WnuOmR}$ are equivalent.
\begin{align*}
\|u\|^{*p}_{\WnuOmR}&=\int_{\R^d}  |u(x)|^p \omega(x)\d x+\hspace*{-1ex} \iil\limits_{\Omega\R^d}\hspace*{-1ex} |u(x)-u(y)|^p \nuxminy\d y\d x,\\
\|u\|^{\#p}_{\WnuOmR}&=\int_{\Omega}  |u(x)|^p \omega(x)\d x+ \hspace*{-1ex}\iil\limits_{\Omega \R^d}\hspace*{-1ex} |u(x)-u(y)|^p \nuxminy\d y\d x.
\end{align*} 

Furthermore, the seminorms $\|\cdot\|_{\WnuOmR}$ and $\|\cdot\|^{*}_{\WnuOmR}$ are also equivalent provided that  $\Omega$ is bounded. 
\end{lemma}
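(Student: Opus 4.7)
The inequality $\|\cdot\|^{\#}_{\WnuOmR} \leq \|\cdot\|^{*}_{\WnuOmR}$ is immediate from $\Omega \subset \R^d$, so the substance is the reverse bound $\|\cdot\|^{*}_{\WnuOmR} \leq C\,\|\cdot\|^{\#}_{\WnuOmR}$. The plan is to dominate the only missing contribution $\int_{\R^d \setminus \Omega}|u(x)|^p \omega(x)\,\d x$ by $C\,(\|u\|^{\#}_{\WnuOmR})^p$. I will first establish the estimate for the pivotal weight $\omega = \widetilde{\nu}_B$, and then transfer it to $\overline{\nu}_B$ and $\widehat{\nu}_R$ via the pointwise comparisons $\omega \leq C\,\widetilde{\nu}_B$ supplied by Theorem \ref{thm:compare-nu-weight}.

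The core step for $\omega = \widetilde{\nu}_B$ is to insert the elementary triangle inequality $|u(x)|^p \leq 2^{p-1}(|u(x)-u(y)|^p + |u(y)|^p)$ inside the integral defining $\widetilde{\nu}_B(x) = \int_B (1\land \nu(x-y))\,\d y$, and then integrate over $x \in \R^d\setminus\Omega$. By Fubini the estimate splits into two pieces. The first piece is absorbed into $\cE_+(u,u)$ after using $(1\land\nu) \leq \nu$ together with $B \subset \Omega$. The second piece is controlled by
\[
\int_B |u(y)|^p \int_{\R^d\setminus\Omega}(1\land\nu(x-y))\,\d x\, \d y \leq \|1\land \nu\|_{L^1(\R^d)}\int_B |u(y)|^p\,\d y,
\]
where the crucial integrability $1\land\nu \in L^1(\R^d)$ is provided by Proposition \ref{prop:weigth-prop}. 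The final ingredient is the absorption of $\int_B|u|^p$ into $\int_\Omega|u|^p \omega$: by unimodality, for $y, z \in B$ the bound $|y-z| \leq \diam B$ combined with the almost-decreasing property gives $\nu(y-z) \geq c\,\nu(\diam B)$, and hence $\widetilde{\nu}_B(y) \geq c'|B| > 0$ on $B$. The analogous positive lower bounds for $\overline{\nu}_B$ and $\widehat{\nu}_R$ on $B$, together with Theorem \ref{thm:compare-nu-weight}, then propagate the desired estimate to those two weights.

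For the second assertion, with $\Omega$ bounded, the same unimodality argument applied to $\Omega$ in place of $B$ (using $|x-y| \leq \diam\Omega + \diam B$ for $x \in \Omega$, $y \in B$) yields a positive lower bound for $\omega$ on $\Omega$, while Proposition \ref{prop:weigth-prop} supplies the upper bound $\omega \in L^\infty(\R^d)$. Consequently $\int_\Omega|u|^p \asymp \int_\Omega|u|^p\omega$, and combined with the first equivalence this gives $\|\cdot\|_{\WnuOmR} \asymp \|\cdot\|^{*}_{\WnuOmR}$. The main obstacle, and the reason I pivot through $\widetilde{\nu}_B$ rather than directly attack $\overline{\nu}_B$, is that the tail integral $\int_{\R^d\setminus\Omega}\nu(x-y)\,\d x$ can be infinite when $\nu$ is singular at $0$ and $y$ lies near $\partial\Omega$; replacing $\nu$ by $1\land\nu$ sidesteps exactly this singularity.
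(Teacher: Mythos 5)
Your proof is correct and is essentially the paper's own argument run in the opposite direction: the same ingredients appear — the lower bound $\omega\geq c'>0$ on $B$ from unimodality and the boundedness of $B$, the integrability of $1\land\nu$, the comparison $\omega\leq C\widetilde{\nu}_B$ from Theorem \ref{thm:compare-nu-weight}, the convexity inequality $|u(x)|^p\leq 2^{p-1}(|u(x)-u(y)|^p+|u(y)|^p)$ (the paper uses its equivalent form $b^p\geq 2^{1-p}(a+b)^p-a^p$), and the energy over $B\times\Omega^c$ absorbed into $\cE_+(u,u)$ — with only the cosmetic difference that you treat $\widetilde{\nu}_B$ first and then transfer to the other two weights, while the paper handles all three at once. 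Note that, exactly as in the paper's proof (which explicitly invokes full support at this point), your positive lower bound for $\omega$ on $B$ tacitly requires $\nu$ to be positive at the relevant scale (e.g.\ $\nu(\diam B)>0$, resp.\ $\nu(R(1+R))>0$ for $\widehat{\nu}_R$), so this is a shared caveat rather than a gap.
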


\begin{remark}
It is noteworthy to emphasize that the seminorms $\|\cdot\|^{\#}_{\WnuOmR}$,  $\|\cdot\|^{*}_{\WnuOmR}$ and $\|\cdot\|_{\WnuOmR}$ are norms on $\WnuOme$ whenever the condition \eqref{eq:positive-Oe} holds true; cf Theorem \ref{thm:WnuOme-complete}. In particular, if $\nu$ is of full support. 
\end{remark}

\begin{proof} 
In virtue of Theorem \ref{thm:compare-nu-weight} we have 
$C^{-1}\widehat{\nu}_{R} (x)\leq \omega(x)\leq C\widetilde{\nu}_B(x)$ for some $C>0$.  
Whence, since $B$ is bounded and $\nu$ is of full support, the unimodality implies that there is $c'>0$ such that $\omega(x)\geq c'$ for almost all $x\in B$. This together with the fact that
$B\subset \Omega$ and $\int_{\Omega^c}(1\land\nu(x-y))\d y\leq \|1\land\nu\|_{L^1(\R^d)}$ implies 

\begin{align*}
\int_{\Omega}& |u(x)|^p\omega(x)\d x
+\iil_{\Omega\Omega^c} |u(x)-u(y)|^p \nu(x-y)\d y\,\d x
\\
&\geq c'\int_{B} |u(x)|^p \d x+\iil_{\Omega\Omega^c} |u(x)-u(y)|^p \nu(x-y)\d y\,\d x
\\
&\geq c'\|1\land\nu\|_{L^1(\R^d)}^{-1}\iil_{B\Omega^c}  |u(x)|^p (1\land \nu(x-y))\d y\,\d x
\\&+\iil_{B\Omega^c} |u(x)-u(y)|^p \nu(x-y)\d y\,\d x\\
&\geq ( 1\land c'\|1\land\nu\|_{L^1(\R^d)}^{-1}) \iil_{\Omega^cB}\Big[  |u(x)|^p+ |u(x)-u(y)|^p  \Big] 1\land \nu(x-y)\d x\,\d y. 
\end{align*}
Using $b^p\geq 2^{1-p}(a+b)^p-a^p$, $a,b\geq0$ and  $\omega(y)\leq C\widetilde{\nu}_B(y)$ we get
\begin{align*}
\int_{\Omega} |u(x)|^p&\omega(x)\d x
+\iil_{\Omega\Omega^c} |u(x)-u(y)|^p \nu(x-y)\d y\,\d x
%\geq C_1\int_{\Omega^c}|u(y)|^p \widetilde{\nu}_B(y)\d y\, 
\geq C_1\int_{\Omega^c}|u(y)|^p \omega(y)\d y\, 
\end{align*}
with $C_1= \frac{C^{-1}}{2^{p-1}} ( 1\land c'\|1\land\nu\|_{L^1(\R^d)}^{-1})$. This clearly implies that $\| u\|^{*}_{\WnuOmR}\leq  C\| u\|^{\#}_{\WnuOmR}$ for some $C>0$. Thus, $\| \cdot \|^{*}_{\WnuOmR}$ and $\| \cdot \|^{\#}_{\WnuOmR}$ are equivalent.
\noindent Now if $\Omega$ is bounded, the unimodality and the boundedness of $\omega$ (see Proposition \ref{prop:weigth-prop}) yield $\|\omega\|_{L^\infty(\R^d)}\geq \omega(x)\geq c'$, $x\in \Omega$ for some $c'>0$.  The equivalence of $\|\cdot\|_{\WnuOmR}$ and $\| \cdot\|^{\#}_{\WnuOmR}$ readily follows.
\end{proof}

The next result shows that functions in $\WnuOmR$  possess certain weighted integrability. 
\begin{theorem}\label{thm:w-emb-on-weigh-lp}
Assume $B\subset \Omega$ and $R\geq1$.  Let $\omega\in \{ \widetilde{\nu}_B, \overline{\nu}_B, \widehat{\nu}_{R}  \}$. 
\begin{enumerate}[$(i)$]
\item  
For $\omega\in \{ \widetilde{\nu}_B, \overline{\nu}_B\}$ we have the continuous embedding 
$$\WnuOmR\hookrightarrow L^p(\R^d,\omega).$$ 
The same holds for $\omega=\widehat{\nu}_{R} $ if in addition $|B_R(0) \cap \Omega| > 0$.
\item For $\omega\in \{\overline{\nu}_B, \widehat{\nu}_{R} \}$ we have  the continuous embeddings  
\begin{align}
&L^p(\R^d,\omega) \hookrightarrow L^{p-1}(\R^d,\omega)\cap L^1(\R^d,\omega)\\
\intertext{equivalently for any $q\in [\min(1, p-1), p]$ we have  }
&L^p(\R^d,\omega) \hookrightarrow L^q(\R^d,\omega)\hookrightarrow L^{\min(1,p-1)}(\R^d,\omega).
\end{align}
The same holds for $\omega=\widetilde{\nu}_B$ if in addition  $|B|<\infty$. 
\end{enumerate}
\end{theorem}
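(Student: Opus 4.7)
In each case the strategy is the same: combine the elementary convex inequality $|a|^p\leq 2^{p-1}(|b|^p+|a-b|^p)$ with a pointwise comparison between $\omega(x)$ and $\nu(x-y)$ for $y$ in a suitable subset of $\Omega$, and then integrate. The main obstacle is not an analytic one but rather bookkeeping: one must ensure that the resulting ``diagonal'' integral is bounded by $\cE_+(u,u)$ (hence by $\cE(u,u)\leq |u|^p_{\WnuOmR}$ by Proposition \ref{prop:equiv-energy}), while the ``off-diagonal'' term only involves $\|u\|_{L^p(\Omega)}$ and a quantity of type $\|\omega\|_{L^1(\R^d)}$ or $\|1\land\nu\|_{L^1(\R^d)}$, all of which are finite by Proposition \ref{prop:weigth-prop}.

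For part (i) with $\omega=\widetilde{\nu}_B$, I would write $|u(x)|^p\widetilde{\nu}_B(x)=\int_B |u(x)|^p(1\land\nu(x-y))\,dy$, apply the convex inequality to $|u(x)|^p$ with $b=u(y)$, and use $1\land\nu\leq\nu$ on the difference term. Integrating over $x\in\R^d$ and invoking Fubini, the first summand yields $\|1\land\nu\|_{L^1}\|u\|_{L^p(B)}^p$, while the second is bounded by $\cE_+(u,u)$ because $B\subset\Omega$. For $\omega=\overline{\nu}_B$, the pointwise bound $\overline{\nu}_B(x)\leq\nu(x-y)$ (valid for a.e.\ $y\in B$) is applied only to the difference term; multiplying by $|B|$ before integrating in $y\in B$ gives
\begin{align*}
|B|\,|u(x)|^p\overline{\nu}_B(x)\leq 2^{p-1}\overline{\nu}_B(x)\|u\|^p_{L^p(B)}+2^{p-1}\int_B |u(x)-u(y)|^p\nu(x-y)\,dy,
\end{align*}
and integration over $x$ uses $\overline{\nu}_B\in L^1(\R^d)$. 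For $\omega=\widehat{\nu}_R$ with $|B_R(0)\cap\Omega|>0$, the only extra step is to note that for $y\in B':=B_R(0)\cap\Omega$ and $R\geq 1$ one has $|x-y|\leq |x|+R\leq R(1+|x|)$, so unimodality gives $c\,\widehat{\nu}_R(x)\leq\nu(x-y)$; the argument then duplicates the $\overline{\nu}_B$ case with $B$ replaced by $B'$, using $\widehat{\nu}_R\in L^1(\R^d)$.

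For part (ii), the underlying measure $\omega\,dx$ is finite on $\R^d$ (by Proposition \ref{prop:weigth-prop}: $\overline{\nu}_B,\widehat{\nu}_R\in L^1$ unconditionally, and $\widetilde{\nu}_B\in L^1$ when $|B|<\infty$). Hence H\"older's inequality with exponents $p/q$ and $p/(p-q)$, applied to $|u|^q\omega^{q/p}\cdot\omega^{1-q/p}$, yields
\begin{align*}
\|u\|_{L^q(\R^d,\omega)}^q\leq \|u\|_{L^p(\R^d,\omega)}^q\,\|\omega\|_{L^1(\R^d)}^{1-q/p}
\end{align*}
for every $q\in(0,p]$, and in particular for $q\in[\min(1,p-1),p]$. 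Specializing to $q=p-1$ and $q=1$ gives the displayed embeddings $L^p(\R^d,\omega)\hookrightarrow L^{p-1}(\R^d,\omega)\cap L^1(\R^d,\omega)$, while the chain $L^p\hookrightarrow L^q\hookrightarrow L^{\min(1,p-1)}$ follows by monotonicity of $L^q$-norms on a finite measure space. No serious difficulty arises here; the only point to track is that the range of $q$ covers both $1$ and $p-1$ whether $p\geq 2$ or $1<p<2$.
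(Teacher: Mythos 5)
Your argument is essentially the paper's own proof: the $\widetilde{\nu}_B$ case via the convex inequality integrated against $1\land\nu(x-y)$ over $y\in B$, the $\overline{\nu}_B$ case via the pointwise bound $\overline{\nu}_B(x)\leq\nu(x-y)$ applied to the difference term, the $\widehat{\nu}_R$ case via unimodality and $|x-y|\leq R(1+|x|)$ for $y\in B_R(0)\cap\Omega$ (this is exactly the comparison $\widehat{\nu}_R\leq C\,\overline{\nu}_{B'}$ from Theorem \ref{thm:compare-nu-weight} that the paper invokes), and part (ii) from $\omega\in L^1(\R^d)$ (Proposition \ref{prop:weigth-prop}) plus H\"older on a finite measure.

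One small point needs repair in your $\overline{\nu}_B$ step: part (i) does not assume $|B|<\infty$ (only part (ii) does, and only for $\widetilde{\nu}_B$), so ``multiplying by $|B|$ before integrating in $y\in B$'' and then dividing by $|B|$ breaks down when $|B|=\infty$. The fix is exactly what the paper does: since $\overline{\nu}_B(x)\leq\nu(x-y)$ for a.e.\ $y\in B$, choose any measurable $K'\subset B$ with $0<|K'|<\infty$, integrate the convexity inequality over $y\in K'$ instead of over all of $B$, and divide by $|K'|$; the resulting constant is $2^{p-1}|K'|^{-1}\big(\|\overline{\nu}_B\|_{L^1(\R^d)}+1\big)$ and the energy term is still controlled by $\cE_+(u,u)$ because $K'\subset\Omega$. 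With that adjustment your proof coincides with the one in the paper; your $\widetilde{\nu}_B$ argument needs no such restriction, and in the $\widehat{\nu}_R$ case the set $B'=B_R(0)\cap\Omega$ is automatically of finite measure, so no issue arises there.
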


\begin{proof}
$(i)$ The embedding 
$ \WnuOmR\hookrightarrow L^p(\R^d, \widetilde{\nu}_B)$ is obtained as follows
\begin{align*}
\int_{\R^d} |u(x)|^p\widetilde{\nu}_B(x)\d x
& \leq 2^{p-1}\int_{B} |u(y)|^p \int_{\R^d} 1\land \nu(x-y)\d x\d y \\&+ 2^{p-1}\iil_{B\R^d}  |u(x)-u(y)|^p  1\land \nu(x-y)\d y\,\d x\\
&\leq C\int_{\Omega} |u(y)|^p \d y+ C\iil_{\Omega\R^d}  |u(x)-u(y)|^p  \nu(x-y)\d y\,\d x
\\&= C\|u\|^p_{\WnuOmR}.
\end{align*}
Here $C=2^{p-1}(1+\|1\land\nu\|_{L^1(\R^d)})$. To  prove 
$ \WnuOmR\hookrightarrow L^p(\R^d, \overline{\nu}_B)$, let us consider $K'\subset B\subset \Omega$ be a measurable such that  $0<|K'|<\infty$. For all $y\in K'$ we have  
\begin{align*}
\int_{\R^d}\hspace*{-1ex}|u(x)|^p \overline{\nu}_B (x)\d x
&\leq 2^{p-1}\big(\|\overline{\nu}_B\|_{L^1(\R^d)} |u(y)|^p+ \hspace*{-1ex} \int_{\R^d}\hspace*{-2ex}|u(x)-u(y)|^p \nu(x-y) \d x\big).
\end{align*}

Integrating over $K'$ we find that 
\begin{align*}
\int_{\R^d}|u(x)|^p \overline{\nu}_B (x)\d x
&\leq C\int_{K'}|u(y)|^p\d y+ C\iil_{K'\R^d}|u(x)-u(y)|^p \nuxminy\d y \d x\\&\leq C\|u\|^p_{\WnuOmR}.
\end{align*}
where $C=2^{p-1}|K'|^{-1}( \|\overline{\nu}\|_{L^1(\R^d) }+1).$
Finally, if $|B_R(0)\cap \Omega|>0$, consider $B'=B_R(0)\cap \Omega$ so that $B'\subset B_R(0)$. Since for the definition of $\widehat{\nu}_{R} $, $\nu$ is unimodal, by Theorem \ref{thm:compare-nu-weight}
there is $C=C(B', R)>0$ such that $\widehat{\nu}_{R} (x)\leq C\overline{\nu}_{B'}(x)$. 
Thus, we deduce from the previous case that 
$\WnuOmR\hookrightarrow L^p(\R^d,\overline{\nu}_{B'})\hookrightarrow L^p(\R^d,\widehat{\nu}_{R} ).$

$(ii)$ For $\omega\in \{\widetilde{\nu}_B, \overline{\nu}_B,\widehat{\nu}_{R} \}$,  Proposition \ref{prop:weigth-prop} implies $\omega\in L^1(\R^d)$. Thus H\"{o}lder's inequality implies $\|u\|_{L^r(\R^d, \omega)}\leq \|\omega\|^{1/r-1/q}_{L^1(\R^d)}\|u\|_{L^q(\R^d, \omega)}$  whenever $q\geq r>0$. The desired embeddings immediately follow. 
\end{proof}

\begin{corollary}\label{cor:w-emb-on-weigh-lp}
Let $\omega\in \{ \widetilde{\nu}_B, \overline{\nu}_B, \widehat{\nu}_{R}  \}$ where $B\subset \Omega$.  Assume $|B|<\infty$ and $|B_R(0)\cap \Omega|>0$, $R\geq1$.  
The following embeddings  are continuous 
\begin{align*}
\WnuOmR\hookrightarrow L^p(\R^d,\omega)\hookrightarrow L^{p-1}(\R^d,\omega)\cap L^1(\R^d,\omega). 
\end{align*}
\end{corollary}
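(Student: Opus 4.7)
The plan is entirely compositional: Corollary \ref{cor:w-emb-on-weigh-lp} is just the chaining of the two parts of Theorem \ref{thm:w-emb-on-weigh-lp} under hypotheses arranged so that both parts apply simultaneously to every weight in $\{\widetilde{\nu}_B,\overline{\nu}_B,\widehat{\nu}_R\}$. I would therefore produce it in two short paragraphs.

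First I would invoke Theorem \ref{thm:w-emb-on-weigh-lp}$(i)$ to obtain $\WnuOmR \hookrightarrow L^p(\R^d,\omega)$. The hypothesis $B\subset\Omega$ is exactly what is needed for the cases $\omega\in\{\widetilde{\nu}_B,\overline{\nu}_B\}$, while the additional hypothesis $|B_R(0)\cap\Omega|>0$ with $R\geq 1$ is precisely the extra assumption required to handle $\omega=\widehat{\nu}_R$ (which, moreover, presupposes that $\nu$ is unimodal so that $\widehat{\nu}_R$ is defined).

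Next I would invoke Theorem \ref{thm:w-emb-on-weigh-lp}$(ii)$ to obtain the second embedding $L^p(\R^d,\omega)\hookrightarrow L^{p-1}(\R^d,\omega)\cap L^1(\R^d,\omega)$. For $\omega\in\{\overline{\nu}_B,\widehat{\nu}_R\}$ this is immediate from the theorem. For $\omega=\widetilde{\nu}_B$ the theorem additionally requires $|B|<\infty$, which is exactly the remaining hypothesis of the corollary; this guarantees $\widetilde{\nu}_B\in L^1(\R^d)$ (via Proposition \ref{prop:weigth-prop}$(ii)$) and then H\"{o}lder's inequality yields the continuous inclusion into $L^q(\R^d,\omega)$ for any $q\in[\min(1,p-1),p]$, in particular for $q=p-1$ and $q=1$. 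Composing the two embeddings gives the stated chain.

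There is no real obstacle here: the content lies entirely in Theorem \ref{thm:w-emb-on-weigh-lp}, and the only thing to verify is that the three hypotheses $B\subset\Omega$, $|B|<\infty$, and $|B_R(0)\cap\Omega|>0$ together cover the union of the hypothesis sets needed for parts $(i)$ and $(ii)$ across all three choices of $\omega$. The only mildly delicate point worth spelling out in the write-up is the role of $|B|<\infty$: it is exactly what ensures $\widetilde{\nu}_B\in L^1(\R^d)$, which is the key ingredient behind the second embedding for that particular weight.
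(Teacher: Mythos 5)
Your proposal is correct and matches the paper's own proof, which simply states that the corollary is a straightforward consequence of Theorem \ref{thm:w-emb-on-weigh-lp}; you merely spell out the bookkeeping of which hypothesis activates which part of that theorem for each weight. No gap, and no difference in approach.
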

\begin{proof}
This is	a straightforward consequence of Theorem \ref{thm:w-emb-on-weigh-lp}. 
\end{proof}

\begin{corollary}\label{cor:w-emb-on-weigh-lp-bis}
Assume $\nu$ is unimodal with full support and satisfies the doubling condition \eqref{eq:global-scaling_infinity}. The following embeddings  are continuous 
\begin{align*}
\WnuOmR\hookrightarrow L^p(\R^d,1\land\nu)\hookrightarrow L^{p-1}(\R^d,1\land\nu)\cap L^1(\R^d,1\land\nu). 
\end{align*}
\end{corollary}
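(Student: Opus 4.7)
The plan is to reduce the statement to Corollary \ref{cor:w-emb-on-weigh-lp} by verifying that, under the present hypotheses on $\nu$, the weight $1\land\nu$ is equivalent (up to multiplicative constants) to each of the weights $\widetilde{\nu}_B$, $\overline{\nu}_B$, $\widehat{\nu}_R$ used there. So the proof is essentially a substitution of one equivalent weight by another.

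First, I would fix a bounded open ball $B\subset \Omega$ with $0<|B|<\infty$ (possible whenever $\Omega$ is nonempty and open, otherwise the claim is vacuous), and pick $R\geq 1$ large enough so that $|B_R(0)\cap \Omega|>0$. These are precisely the conditions required to apply Corollary \ref{cor:w-emb-on-weigh-lp}, which yields the continuous embeddings
\begin{align*}
\WnuOmR\hookrightarrow L^p(\R^d,\omega)\hookrightarrow L^{p-1}(\R^d,\omega)\cap L^1(\R^d,\omega)
\end{align*}
for $\omega\in\{\widetilde{\nu}_B,\overline{\nu}_B,\widehat{\nu}_R\}$.

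Next, I would invoke the final assertion of Theorem \ref{thm:compare-nu-weight}: since $\nu$ is assumed to be unimodal, of full support, and to satisfy the doubling condition \eqref{eq:global-scaling_infinity}, there exists a constant $C=C(B,R,\nu)>0$ such that almost everywhere
\begin{align*}
C^{-1}(1\land \nu(x))\leq \widetilde{\nu}_B(x)\leq C(1\land \nu(x)).
\end{align*}
This pointwise equivalence immediately implies that the weighted spaces $L^q(\R^d,\widetilde{\nu}_B)$ and $L^q(\R^d,1\land\nu)$ coincide as sets and have equivalent norms for every $q\in(0,\infty)$.

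Combining the two steps, the chain of embeddings produced by Corollary \ref{cor:w-emb-on-weigh-lp} with $\omega=\widetilde{\nu}_B$ can be transferred verbatim (up to constants) to the weight $1\land \nu$, which is exactly the desired assertion. The only step that required genuine analysis is the comparability $1\land\nu\asymp \widetilde{\nu}_B$, and this is already established in Theorem \ref{thm:compare-nu-weight}; accordingly there is no real obstacle, and the proof amounts to a short citation argument.
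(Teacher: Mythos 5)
Your proposal is correct and follows essentially the same route as the paper, which also deduces the corollary directly from Corollary \ref{cor:w-emb-on-weigh-lp} together with the equivalence $\widetilde{\nu}_B\asymp\overline{\nu}_B\asymp\widehat{\nu}_R\asymp 1\land\nu$ from Theorem \ref{thm:compare-nu-weight}. Your only addition is to spell out the choice of a bounded $B\subset\Omega$ with $B\subset B_R(0)$ and $|B_R(0)\cap\Omega|>0$, which the paper leaves implicit.
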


\begin{proof}
This is a direct  consequence of Corollary \ref{cor:w-emb-on-weigh-lp} and Theorem \ref{thm:compare-nu-weight}. 
\end{proof}
In the fractional setting $L^{p-1}(\R^d, \omega)$ is  sometimes  called the tail space. 
\begin{corollary}\label{cor:w-emb-on-weigh-lp-frac}
Let $\nu(h)=|h|^{-d-sp}$ with  $s\in (0,1)$, we put $ \WnuOmR= W^{s,p}(\Omega|\R^d)$. The following embeddings  are continuous 
\begin{align*}
W^{s,p}(\Omega|\R^d)&\hookrightarrow L^p(\R^d,(1+|h|)^{-d-sp}))\\&\hookrightarrow L^{p-1}(\R^d,(1+|h|)^{-d-sp}))\cap L^1(\R^d,(1+|h|)^{-d-sp})). 
\end{align*}
\end{corollary}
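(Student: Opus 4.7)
The plan is to reduce Corollary \ref{cor:w-emb-on-weigh-lp-frac} to Corollary \ref{cor:w-emb-on-weigh-lp-bis} by checking two things: that $\nu(h)=|h|^{-d-sp}$ meets the structural hypotheses of the latter corollary, and that the weight $1\land\nu$ is pointwise comparable to $(1+|h|)^{-d-sp}$.

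First, I would verify the hypotheses. The function $\nu(h)=|h|^{-d-sp}$ is manifestly radial and strictly decreasing in $|h|$, hence unimodal in the sense of Definition \ref{def:unimodality}. It is positive everywhere on $\R^d\setminus\{0\}$, so $\supp\nu=\R^d$, i.e.\ $\nu$ has full support. For the doubling condition \eqref{eq:global-scaling_infinity}, a direct computation gives $\nu(\theta r)=\theta^{-d-sp}\nu(r)$ for any $\theta\geq 1$ and $r\geq 1$, so one may take $c_1=c_2=\theta^{-d-sp}$; in particular $\nu(2r)=2^{-d-sp}\nu(r)$, confirming \eqref{eq:doubling-scaling_infinity}. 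With these hypotheses in place, Corollary \ref{cor:w-emb-on-weigh-lp-bis} applies and yields
\begin{equation*}
W^{s,p}(\Omega|\R^d)\hookrightarrow L^p(\R^d,1\land\nu)\hookrightarrow L^{p-1}(\R^d,1\land\nu)\cap L^1(\R^d,1\land\nu).
\end{equation*}

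Second, I would show the pointwise equivalence $1\land|h|^{-d-sp}\asymp (1+|h|)^{-d-sp}$. For $|h|\leq 1$, the left side equals $1$, and the right side satisfies $2^{-d-sp}\leq (1+|h|)^{-d-sp}\leq 1$, so the two are equivalent with constants $1$ and $2^{d+sp}$. For $|h|\geq 1$, the left side equals $|h|^{-d-sp}$; since $|h|\leq 1+|h|\leq 2|h|$, one obtains $2^{-d-sp}|h|^{-d-sp}\leq (1+|h|)^{-d-sp}\leq |h|^{-d-sp}$. Consequently, there exists $C>0$ (depending only on $d$, $s$, $p$) such that
\begin{equation*}
C^{-1}(1\land\nu(h))\leq (1+|h|)^{-d-sp}\leq C(1\land\nu(h))\qquad\text{for all }h\in\R^d.
\end{equation*}

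This equivalence identifies the weighted Lebesgue spaces $L^q(\R^d,1\land\nu)$ and $L^q(\R^d,(1+|h|)^{-d-sp})$ with equivalent norms for every $q>0$, and the chain of embeddings obtained from Corollary \ref{cor:w-emb-on-weigh-lp-bis} transfers verbatim to the weight $(1+|h|)^{-d-sp}$, yielding the claim. I do not anticipate any serious obstacle; the only thing to be careful about is separating the regimes $|h|\leq 1$ and $|h|\geq 1$ when comparing the two weights.
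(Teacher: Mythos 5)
Your proposal is correct and follows exactly the paper's route: the paper proves this corollary by invoking Corollary \ref{cor:w-emb-on-weigh-lp-bis} together with the equivalence $1\land\nu(h)\asymp(1+|h|)^{-d-sp}$, which is precisely your argument. You simply spell out the verification of unimodality, full support, the doubling condition, and the two-regime weight comparison, all of which the paper leaves implicit.
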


\begin{proof}
This follows from Corollary \ref{cor:w-emb-on-weigh-lp-bis} since  $1\land\nu(h) \asymp(1+|h|)^{-d-sp}$. 
\end{proof}
\subsection{Nonlocal , trace space of $\WnuOmR$}
The main goal of this part is to introduce an abstract notion trace space of $\WnuOmR$ similarly as one does for the space $W^{1,p}(\Omega)$.  Due to its nonlocal character, the nonlocal  trace space of $\WnuOmR$ assumes functions defined on $\R^d\setminus\Omega$, or strictly speaking on the nonlocal boundary $\Omega_e=\Omega_\nu\setminus\Omega$ of $\Omega$ with respect to $\nu$.  The main reason is that elements of $\WnuOmR$ are essentially defined on $\R^d$, or strictly speaking  on the nonlocal hull 
$\Omega_\nu^{*} =\Omega\cup\Omega_\nu$,  $\Omega_\nu=\Omega+\supp\nu$ (see Theorem \ref{thm:WnuOme-complete}). This contrasts with the local situation, where the trace space of $W^{1,p}(\Omega)$ (with $\Omega$ smooth enough) are elements defined on the boundary $\partial\Omega$. 
\begin{definition}[Trace space of $\WnuOmR$]
The trace space of $\WnuOmR$ denoted  $\TnuOm$ is the space of restrictions to $\R^d\setminus \Omega$ of functions of $\WnuOmR$. More precisely, 
\begin{align*}
\TnuOm = \{v: \Omega^c\to \mathbb{R}~\text{meas.} ~\hbox{ such that }~ v = u|_{\Omega^c} ~~\hbox{with }~~ u \in \WnuOmR\}.
\end{align*}
We endow $\TnuOm $ with its natural norm, 
\begin{align*}
\|v\|_{\TnuOm } = \inf\{ \|u\|_{\WnuOmR }: ~~ u \in \WnuOmR ~~ \hbox{ with }~~ v = u|_{\Omega^c} \}. 
\end{align*}
If we identify $\WnuOmR\equiv W^p_\nu(\Omega|\Omega_\nu)$ then we also identify $\TnuOm\equiv T^p_\nu(\Omega_e)$ where $\Omega_e=\Omega_\nu\setminus\Omega$ and 
\begin{align*}
T^p_\nu(\Omega_e) 
= \{v: \Omega_e\to \mathbb{R}~\text{meas.} ~~\hbox{ such that }~~ v = u|_{\Omega_e} ~~\hbox{with }~~ u \in W^p_\nu(\Omega|\Omega_\nu)\}.
\end{align*}
\end{definition}

\begin{theorem}
If $(\WnuOmR, \|\cdot\|_{\WnuOmR})$	is a Banach space then  $(\TnuOm,  \|\cdot\|_{\TnuOm})$ is also a Banach space. 
\end{theorem}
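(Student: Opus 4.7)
The plan is to realize $\TnuOm$ as the image of $\WnuOmR$ under the linear restriction map $R:\WnuOmR \to \TnuOm$ defined by $Ru = u|_{\Omega^c}$, equipped with the quotient (infimum) seminorm. By the very definition of $\TnuOm$, this map is surjective, and the trace norm satisfies $\|Ru\|_{\TnuOm} \leq \|u\|_{\WnuOmR}$ because $u$ itself is an admissible extension of its own restriction; hence $R$ is bounded (in fact of operator norm at most one). A standard selection consequence of taking an infimum is that for every $v \in \TnuOm$ and every $\eta>0$ there exists $u \in \WnuOmR$ with $Ru = v$ and $\|u\|_{\WnuOmR} < \|v\|_{\TnuOm} + \eta$.

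To prove completeness I would use the absolute-convergence criterion rather than argue Cauchy-wise directly. Given $(v_n) \subset \TnuOm$ with $\sum_{n}\|v_n\|_{\TnuOm} < \infty$, the selection above produces $u_n \in \WnuOmR$ with $Ru_n = v_n$ and $\|u_n\|_{\WnuOmR} \leq \|v_n\|_{\TnuOm} + 2^{-n}$, so that $\sum_n \|u_n\|_{\WnuOmR} < \infty$. Since $\WnuOmR$ is by hypothesis a Banach space, the series $\sum_n u_n$ converges in $\WnuOmR$ to some $U$. Applying the bounded (hence continuous) operator $R$ and using $R u_n = v_n$, one concludes $RU = \sum_n v_n$ in $\TnuOm$. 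Absolute convergence thus implies convergence in $\TnuOm$, which is equivalent to completeness.

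The only subtle point—more of a conceptual check than a real obstacle—is verifying that $\|\cdot\|_{\TnuOm}$ is a genuine norm and not merely a seminorm, so that the completeness argument really produces a Banach space. Via the identification $\WnuOmR \equiv W^p_\nu(\Omega|\Omega_\nu)$ discussed in Remark \ref{rem:identification} and the hypothesis that $\WnuOmR$ is Banach, condition \eqref{eq:positive-Oe} is in force and the continuous embedding $\WnuOmR \hookrightarrow L^p(\R^d,\omega)$ of Theorem \ref{thm:w-emb-on-weigh-lp} is available with a weight $\omega$ positive a.e. on $\Omega_\nu \setminus \Omega$. A vanishing-norm sequence in $\WnuOmR$ then converges (along a subsequence) a.e. on $\Omega_\nu \setminus \Omega$, so that the kernel of $R$ is closed; equivalently, if $\|v\|_{\TnuOm}=0$ then a minimizing sequence of extensions tends to zero in $\WnuOmR$, forcing $v=0$ a.e. on $\Omega^c$. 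This closes the argument.
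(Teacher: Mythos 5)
Your completeness argument is correct and is in substance the paper's own proof: the paper either invokes the isometric identification of $\TnuOm$ with the quotient $\WnuOmR/\WnuOmO$ (quotient of a Banach space by a closed subspace), or, in its detailed version, selects near-optimal extensions of a telescoped Cauchy sequence and uses completeness of $\WnuOmR$; your absolutely-convergent-series formulation with lifts satisfying $\|u_n\|_{\WnuOmR}\leq \|v_n\|_{\TnuOm}+2^{-n}$ is the same standard device, merely phrased through the series criterion, and the boundedness $\|Ru\|_{\TnuOm}\leq\|u\|_{\WnuOmR}$ that you use to pass to the limit is exactly right.

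The one place you go astray is the final paragraph on the norm property. First, you assert that the hypothesis ``$\WnuOmR$ is Banach'' forces condition \eqref{eq:positive-Oe}; Theorem \ref{thm:WnuOme-complete} only proves the implication \eqref{eq:positive-Oe} $\Rightarrow$ Banach, so you are relying on a converse the paper never establishes (plausible under the identification of Remark \ref{rem:identification}, but it would require an argument). Second, even granting it, the weighted embedding of Theorem \ref{thm:w-emb-on-weigh-lp} only yields that $v$ vanishes a.e. on $\Omega_\nu\setminus\Omega$, where $\omega>0$, not on all of $\Omega^c$ as you claim, since $\omega\equiv 0$ off $\Omega_\nu$. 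Both defects disappear if you argue via the kernel of $R$ instead: $\ker R=\WnuOmO$, which the paper notes is closed in $\WnuOmR$. If $\|v\|_{\TnuOm}=0$, fix any extension $u$ of $v$ and extensions $u_n$ with $\|u_n\|_{\WnuOmR}\to 0$; then $u-u_n\in\WnuOmO$ and $u-u_n\to u$ in $\WnuOmR$, so $u\in\WnuOmO$ by closedness, i.e. $v=0$ a.e. on $\Omega^c$. This uses only the stated hypothesis, needs no positivity of $\omega$, and is precisely the quotient-norm argument underlying the paper's one-line proof.
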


\begin{proof}
Noting that $\TnuOm$ and the quotient space $\WnuOmR/\WnuOmO$ are identical with equal norm in space  and that $\WnuOmO$ is a closed subspace of $\WnuOmR,$ one concludes that $\TnuOm$ is complete. 
Alternatively, for the reader's convenience, we provide a detailed proof regarding the completeness of $\TnuOm$ under the norm $\|\cdot \|_{\TnuOm }$. Let $(u_n)_n$ be a Cauchy sequence in $\TnuOm$ then up to the  extraction of a subsequence we may assume that
\begin{align*}	
\|u_n-u_{n+1}\|_{\TnuOm } <\frac{1}{2^{n+1}}\quad\hbox{for all $n\geq 1$}. 
\end{align*}
Fix $\overline{u}_1\in \WnuOmR $ such that $u_1= \overline{u}_1|_{\Omega^c}$. 
By definition of $\|u_1-u_2\|_{\TnuOm}$ there exists $v \in \WnuOmR$ such that $u_1-u_2 = v|_{\Omega^c}$ and 
$\|v\|_{\WnuOmR } <\|u_1-u_2\|_{\TnuOm } +\frac{1}{4} \,. $
Letting $\overline{u}_2= v+ \overline{u}_1$ i.e. $v= \overline{u}_1-\overline{u}_2$ yields $\overline{u}_2\in \WnuOmR$, $u_2= \overline{u}_2|_{\Omega^c}$ and hence
$\|\overline{u}_1-\overline{u}_2\|_{\WnuOmR } <\|u_1-u_2\|_{\TnuOm } +\frac{1}{4} <\frac{1}{2}\,.$ Repeating this process one constructs a sequence  $\overline{u}_n\in \WnuOmR $ such that
\begin{align*}
\|\overline{u}_n-\overline{u}_{n+1}\|_{\WnuOmR } <\frac{1}{2^{n}}\qquad\hbox{for all}~~~ n\geq 1
\end{align*}
which turns out to be a Cauchy sequence in the complete space $\WnuOmR$. 
Let $\overline{u}\in \WnuOmR$ be the limit of $(\overline{u}_n)_n$. 
Clearly, setting $u = \overline{u}\mid_{\Omega^c} $ we have 
$\|u_n-u \|_{\TnuOm } \leq \|\overline{u}_n-\overline{u}\|_{\WnuOmR } \to 0$ as  $n \to \infty$. Finally, the original Cauchy sequence $(u_n)_n$ converges up to the extraction of subsequence to $u$ and hence converges itself to $u$. 
\end{proof}

Let us state the nonlocal trace theorem for the trace space $\TnuOm$.
\begin{theorem}[Nonlocal trace theorem]\label{thm:trace-nonloc-thm}
Let $\omega\in \{\widetilde{\nu}_{\Omega}, \overline{\nu}_{\Omega}, \widehat{\nu}_{R}\}$ where $|B_R(0)\cap \Omega|>0$ $($see Definition \ref{def:different-nus}$)$. 
Define the trace operator $u \mapsto \operatorname{Tr}(u) = u\mid_{\Omega^c}$. The following assertions hold. 
\begin{enumerate}[$(i)$]
\item $ \ker(\operatorname{Tr} ) = \WnuOmO$ and $\operatorname{Tr}(\WnuOmR)= \TnuOm$. 
\item The mappings $\WnuOmR \xrightarrow{\operatorname{Tr}}\TnuOm\xrightarrow{\operatorname{Id}} L^p(\Omega^c,\omega)$ 
are continuous.
\item If $\Omega$ is bounded in one direction (see Section \ref{sec:poincare}), then there is a bounded and continuous mapping $\operatorname{Ext}: \TnuOm\to \WnuOmR$ with  $\operatorname{Ext}\circ \operatorname{Tr}= Id$. 
\end{enumerate}
\end{theorem}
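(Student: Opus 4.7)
The plan is to verify the three assertions in turn, with the main effort directed at part (iii).

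\textbf{Part (i)} is immediate from the definitions: $u \in \ker(\operatorname{Tr})$ means $u \in \WnuOmR$ with $u=0$ on $\Omega^c$, which is precisely $\WnuOmO$ by \eqref{eq:def-WnuOm-vanish}, while $\operatorname{Tr}(\WnuOmR) = \TnuOm$ is built into the definition of $\TnuOm$.

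\textbf{Part (ii)}: For $\operatorname{Tr}:\WnuOmR\to\TnuOm$, the estimate $\|\operatorname{Tr}(u)\|_{\TnuOm} \leq \|u\|_{\WnuOmR}$ is just the definition of the trace norm as an infimum (take $u$ itself as competitor). For the embedding $\TnuOm \hookrightarrow L^p(\Omega^c,\omega)$, given $v \in \TnuOm$ and any $u \in \WnuOmR$ with $u|_{\Omega^c} = v$, Theorem~\ref{thm:w-emb-on-weigh-lp} gives $\|v\|_{L^p(\Omega^c,\omega)} \leq \|u\|_{L^p(\R^d,\omega)} \leq C\|u\|_{\WnuOmR}$; infimizing over $u$ yields the desired bound.

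\textbf{Part (iii)} requires constructing a (necessarily nonlinear) right inverse $\operatorname{Ext}$ of $\operatorname{Tr}$ --- so that $\operatorname{Tr}\circ\operatorname{Ext}=\operatorname{Id}_{\TnuOm}$, which is the natural interpretation since $\operatorname{Tr}$ has nontrivial kernel $\WnuOmO$. For $v \in \TnuOm$, I would set
\[
\operatorname{Ext}(v) := \arg\min\{ \cE(u,u) : u \in \WnuOmR,\; u|_{\Omega^c}=v \}.
\]
The admissible set is nonempty (by definition of $\TnuOm$), convex, and weakly closed in $\WnuOmR$ (by part (ii) the trace is continuous and hence weakly continuous). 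The Poincaré--Friedrichs inequality (Theorem~\ref{thm:poincare-friedrichs-ext-bis}), valid precisely when $\Omega$ is bounded in one direction, delivers an estimate of the form $\|u\|_{L^p(\Omega)}^p \leq C\, \cE(u,u) + C\,\|u\|_{L^p(\Omega^c,\omega)}^p$, providing coercivity on the admissible set. The direct method combined with reflexivity of $\WnuOmR$ (Theorem~\ref{thm:WnuOme-complete}) yields a minimizer; strict convexity (since $p>1$) gives uniqueness. By minimality, $\cE(\operatorname{Ext}(v),\operatorname{Ext}(v))\leq \cE(u,u)\leq\|u\|_{\WnuOmR}^p$ for every competitor $u$, so infimizing gives $\cE(\operatorname{Ext}(v),\operatorname{Ext}(v))\leq\|v\|_{\TnuOm}^p$; combined with Poincaré--Friedrichs and (ii), this yields $\|\operatorname{Ext}(v)\|_{\WnuOmR}\leq C\|v\|_{\TnuOm}$. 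Continuity of $\operatorname{Ext}$ follows by a standard argmin-stability argument: $v_n\to v$ in $\TnuOm$ produces a bounded sequence $\operatorname{Ext}(v_n)$ in $\WnuOmR$; any weak subsequential limit has trace $v$ and minimizes $\cE$ on the limit admissible set, hence equals $\operatorname{Ext}(v)$, and uniform convexity of the nonlocal energy (via Clarkson-type inequalities, valid for $1<p<\infty$) upgrades weak to strong convergence.

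\textbf{Main obstacle}: The coercivity step in part (iii). Without the Poincaré--Friedrichs inequality one cannot control $\|u\|_{L^p(\Omega)}$ by the energy $\cE(u,u)$ and the boundary data, and the direct method for the extension breaks down; this is why the hypothesis that $\Omega$ be bounded in one direction is essential. A secondary subtlety is to verify strong (not just weak) continuity of the argmin map, which relies on uniform convexity of the quantity $u\mapsto \cE(u,u)^{1/p}$.
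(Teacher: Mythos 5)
Your proof follows essentially the same route as the paper: parts (i)–(ii) are verbatim the paper's argument, and for (iii) your energy minimizer with prescribed exterior datum is exactly the weak solution of the Dirichlet problem $Lu=0$ in $\Omega$, $u=g$ on $\Omega^c$, which is precisely how the paper defines $\operatorname{Ext}$ — the only difference being that the paper simply invokes its Dirichlet well-posedness result (Theorem~\ref{thm:nonlocal-dirichlet-gen}) instead of re-running the direct method. (Your reading $\operatorname{Tr}\circ\operatorname{Ext}=\operatorname{Id}$ is indeed the correct interpretation of the statement.) Two small repairs in your inlined argument: the coercivity estimate $\|u\|^p_{L^p(\Omega)}\leq C\,\cE(u,u)+C\|u\|^p_{L^p(\Omega^c,\omega)}$ is not what Theorem~\ref{thm:poincare-friedrichs-ext-bis} gives (that inequality is stated for functions vanishing on $\Omega^c$); the clean route, as in the paper, is to fix one extension $\overline{g}\in\WnuOmR$ of $v$ and apply the Poincar\'{e}--Friedrichs inequality to $u-\overline{g}\in\WnuOmO$, which bounds minimizing sequences relative to $\overline{g}$. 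Likewise, the weak-compactness step should be carried out in $\WnuOmO$ (always a separable reflexive Banach space) after subtracting $\overline{g}$, rather than in $\WnuOmR$ itself, since completeness and reflexivity of $\WnuOmR$ are only guaranteed under the extra condition \eqref{eq:positive-Oe} of Theorem~\ref{thm:WnuOme-complete}, which is not assumed in the trace theorem; with these adjustments your argument coincides with the paper's proof of Theorem~\ref{thm:nonlocal-dirichlet-gen}, whose quantitative continuity estimates also replace your sketchier argmin-stability/uniform-convexity step.
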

\begin{proof}
It is easy to check that $\operatorname{Tr}(\WnuOmR)= \TnuOm$ and $ \ker(\operatorname{Tr}) = \WnuOmO$. 
It immediately follows from the definition of $\|\cdot\|_{\TnuOm}$ that 
$\|\operatorname{Tr}(u)\|_{\TnuOm}\leq \|u\|_{\WnuOmR}$ for all $u\in \WnuOmR$ whereas,  by Theorem \ref{thm:w-emb-on-weigh-lp} there exists $C>0$ such that
\begin{align*}
\|\operatorname{Tr}(u)\|_{L^p(\Omega^c, \omega)} \leq \| u\|_{L^p(\R^d, \omega)} 
\leq C \| u\|_{\WnuOmR} \qquad \text{for all }~~~u \in \WnuOmR. 
\end{align*}
Inasmuch as the above estimate is true for all $u\in \WnuOmR$, we deduce $\|u\|_{L^p(\Omega^c, \omega)} \leq C \| u\|_{\TnuOm}$, 
$u \in \TnuOm$.  Last, for $g\in \TnuOm$ consider $\operatorname{Ext}g=u_g$ where by Theorem \ref{thm:nonlocal-dirichlet-gen} $ u_g\in \WnuOmR$ is the unique weak solution to the Dirichlet problem $Lu=0$ in $\Omega$ and $u=g$ in $\R^d\setminus\Omega$.  The operator $\operatorname{Ext}$ is well-defined and $\operatorname{Ext}\circ \operatorname{Tr}= Id$. The
continuity and the boundedness follow from Theorem \ref{thm:nonlocal-dirichlet-gen}. 
\end{proof}
\noindent One may view the objects $L^p(\Omega^c, \omega)$, $\TnuOm$, $\WnuOmR$ and $\WnuOmO$ respectively as the nonlocal counterpart of the objects $L^p(\partial\Omega)$, $W^{1-1/p,p}(\partial\Omega)$, $W^{1,p}(\Omega)$ and $W^{1,p}_0(\Omega)$.  Let us recall the classical trace theorem. 

\begin{theorem}[{Classical trace theorem, see \cite[Chap III]{BF13}}]\label{thm:trace-loc-thm}
Assume $\Omega\subset \R^d$ is bounded Lipschitz. There exists a linear and continuous trace operator $\gamma_0: W^{1,p}(\Omega)\to L^p(\partial\Omega)$ such that 
$\gamma_0 u= u|_{\partial\Omega}$ for all $u\in C^1(\overline{\Omega})\cap W^{1,p}(\Omega)$. Moreover, $ \ker(\gamma_0 ) = W^{1,p}_0(\Omega)$ and, by definition, $\gamma_0 (W^{1,p}(\Omega))= W^{1,1-1/p}(\partial \Omega)$.  
\end{theorem}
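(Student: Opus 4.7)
The plan is to follow the standard three-step strategy: localize near $\partial\Omega$, flatten the boundary, and prove everything first on the half-space $\R^d_+=\{x=(x',x_d):x_d>0\}$, then transfer back and conclude by density.

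First I would cover $\overline{\Omega}$ by finitely many open sets: one $U_0\Subset\Omega$ and finitely many boundary charts $(U_j,\varphi_j)_{j=1}^N$ such that $\varphi_j:U_j\to B_1(0)\subset\R^d$ is a bi-Lipschitz homeomorphism with $\varphi_j(U_j\cap\Omega)=B_1(0)\cap\R^d_+$ and $\varphi_j(U_j\cap\partial\Omega)=B_1(0)\cap\{x_d=0\}$; these exist since $\partial\Omega$ is Lipschitz. Using a subordinate smooth partition of unity $\{\eta_j\}_{j=0}^N$, one writes $u=\sum_{j} \eta_j u$, and the interior term $\eta_0 u$ extended by zero is in $W^{1,p}_0(\Omega)$ and contributes nothing to the trace. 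Each $\eta_j u$ is transported via $\varphi_j^{-1}$ to a compactly supported function on $\R^d_+$; because Lipschitz changes of variables preserve $W^{1,p}$ (with an equivalent norm coming from the chain rule and the bounded Jacobian), the problem reduces to proving the trace inequality for $u\in W^{1,p}(\R^d_+)$ with support in a fixed ball.

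Next I would prove the model estimate on $\R^d_+$. For $u\in C^1_c(\overline{\R^d_+})$ and $\chi\in C_c^\infty(\R)$ with $\chi(0)=1$ and $\chi\equiv 0$ on $[1,\infty)$, the fundamental theorem of calculus gives
\begin{equation*}
|u(x',0)|^p=-\int_0^\infty \partial_{x_d}\bigl(\chi(x_d)|u(x',x_d)|^p\bigr)\,dx_d,
\end{equation*}
and expanding $\partial_{x_d}(|u|^p)=p|u|^{p-2}u\,\partial_{x_d}u$ then applying Young's inequality $ab\le \tfrac{a^{p'}}{p'}+\tfrac{b^p}{p}$ with $a=|u|^{p-1}$ and $b=|\partial_{x_d}u|$, followed by integration over $\R^{d-1}$, yields
\begin{equation*}
\|u(\cdot,0)\|_{L^p(\R^{d-1})}^p\le C\bigl(\|u\|_{L^p(\R^d_+)}^p+\|\nabla u\|_{L^p(\R^d_+)}^p\bigr).
\end{equation*}
This defines $\gamma_0$ on the dense subspace $C^1(\overline{\Omega})\cap W^{1,p}(\Omega)$ (density in $W^{1,p}(\Omega)$ for Lipschitz $\Omega$ being a classical Meyers--Serrin-type fact), and by continuity extends uniquely to all of $W^{1,p}(\Omega)$.

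For the kernel identification $\ker\gamma_0=W^{1,p}_0(\Omega)$, the inclusion $W^{1,p}_0(\Omega)\subset\ker\gamma_0$ is immediate from the continuity of $\gamma_0$ and the fact that $\gamma_0 u=0$ for $u\in C_c^\infty(\Omega)$. For the reverse inclusion, given $u\in W^{1,p}(\Omega)$ with $\gamma_0 u=0$, one first uses the above localization to reduce to a half-space setting, where the condition $\gamma_0 u=0$ combined with a Hardy-type inequality allows one to approximate $u$ by functions compactly supported in $\R^d_+$; globalizing through the partition of unity yields $u\in W^{1,p}_0(\Omega)$. The hardest and most technical step is the last claim $\gamma_0(W^{1,p}(\Omega))=W^{1-1/p,p}(\partial\Omega)$, which requires not only the forward Gagliardo-type estimate $\|\gamma_0 u\|_{W^{1-1/p,p}(\partial\Omega)}\le C\|u\|_{W^{1,p}(\Omega)}$ (proved on $\R^d_+$ by direct computation of the Gagliardo seminorm using $|u(x',0)-u(y',0)|\le\int_0^{|x'-y'|}|\nabla u|\,dt$ plus Hardy), but also construction of a bounded right inverse --- typically via harmonic extension or a Poisson-like extension formula on $\R^d_+$, carried back through the partition of unity; this right inverse both gives surjectivity and, combined with the forward estimate, pins down the image space precisely. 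As this is a classical textbook result, I would simply invoke \cite{BF13} here rather than reprove it.
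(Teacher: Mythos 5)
The paper gives no proof of this statement at all: it is quoted verbatim as a classical result, with the citation to \cite[Chap III]{BF13}, purely to contrast the local trace operator with the nonlocal one constructed in Theorem \ref{thm:trace-nonloc-thm}. Your outline (localization by Lipschitz charts and a partition of unity, the half-space estimate via the fundamental theorem of calculus and Young's inequality, density, Hardy-type argument for the kernel, and deferring the Gagliardo image characterization and right inverse to \cite{BF13}) is exactly the standard argument behind that citation and is correct in its essentials, so there is nothing to reconcile with the paper.
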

\smallskip

\begin{remark}
Let us emphasize that  our nonlocal trace operator $\operatorname{Tr}$ does not need any special construction via 
functional analysis and density argument. Since $\Omega^c$ is still a d-dimensional manifold. Then it makes sense to consider hardcore restriction of functions on $\Omega^c.$ 
Moreover, no regularity on $\Omega$ is required nor on $u$. Whereas in the local situation (see Theorem \ref{thm:trace-loc-thm}), the trace of a Sobolev function $u$ on the boundary $\partial\Omega$ requires the smoothness of both $u$ and $\partial\Omega$.
\end{remark}

\noindent
It is natural to ask the following question: Can the space $\TnuOm$ be self-defined with an intrinsic norm preserving its initial Banach structure in a way that its trivial connection to $\WnuOmR$ is less seeable? 
In the local situation, it is possible to define a scalar product on the space $H^{1/2}(\partial \Omega)$ when $\Omega$ is a special Lipschitz domain (see\cite{Din96}). This question  is discussed in  \cite{FK22,GH22} when $p=2$, using the main result from \cite{BGPR20}. 
Another treatment of nonlocal trace operator for the special fractional kernel  $\nu(h)=|h|^{-d-sp}$ is encapsulated in  \cite[Theorem 3]{DyKa19}. 

%%%%%%%%%%%%%%%%%%%%%%%%%%%%%%%%
\section{Compact embeddings}\label{sec:compactness}
%%%%%%%%%%%%%%%%%%%%%%%%%%%%%%%%

In this section we prove compact embeddings of the spaces $\WnuOm$, $\WnuOmR$ and $W_{\nu,\Omega}(\Omega|\R^d)$ into $L^p(\Omega)$. Our result on global compactness Theorem \ref{thm:embd-compactness} requires some extra regularity assumptions on $\Omega$ compatible  with $\nu$. We exploit  some recent ideas  from \cite{JW20,DMT18}. However, extended  details and discussions for the global compactness can be found in \cite{guy-thesis,FK22}. 

%%%%%%%%%%%%%%%%%%%%%%%%%%%%%%%%
\subsection{Local and global compactness results}\label{sec:compact-poincare}
%%%%%%%%%%%%%%%%%%%%%%%%%%%%%%%%

Given a Banach space  $X$,  we say that an operator $T: X\to L^p_{\loc}(\Omega)$ is compact if for any  compact set $K\subset \Omega$, the operator  
$R_KT: X\to L^p_{\loc}(K)$, $R_K Tu= Tu|_K$ is compact.

\begin{lemma}[{\hspace*{-1ex} \cite[Corollary 4.28]{Bre10}}] \label{lem:compactness-convolution} 
Let $w \in L^1(\R^d)$. The convolution operator $T_w : L^p(\R^d )\to L^p(\R^d)$, $ T_wu=w*u $ is continuous with $\|T_w\|_{\mathcal{L}(L^p(\R^d), L^p(\R^d))} \leq \|w\|_{L^1(\R^d)}$. Moreover, $R_KT_w : L^p(\R^d) \to L^p(K)$ is compact for any measurable set $K\subset \R^d$ with $|K|<\infty$. 
\end{lemma}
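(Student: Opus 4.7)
The first assertion, namely the boundedness of $T_w$ on $L^p(\R^d)$ with operator norm at most $\|w\|_{L^1(\R^d)}$, is simply Young's convolution inequality, which follows by writing $(w*u)(x)=\int w(y)u(x-y)\,\d y$, applying Minkowski's integral inequality, and using the translation invariance of $\|\cdot\|_{L^p(\R^d)}$. I would dispatch this in one line.

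The heart of the lemma is the compactness of $R_K T_w$. My plan is to invoke the Kolmogorov--Riesz--Fr\'echet compactness criterion (Theorem 4.26 in \cite{Bre10}): a bounded subset $\mathcal{F}$ of $L^p(\R^d)$ has relatively compact image in $L^p(K)$ for every measurable $K$ with $|K|<\infty$ provided that
\begin{align*}
\lim_{|h|\to 0}\,\sup_{u\in\mathcal{F}}\|\tau_h u-u\|_{L^p(\R^d)}=0,
\end{align*}
where $\tau_h u(x)=u(x+h)$. Apply this to $\mathcal{F}=T_w(B)$ with $B$ the closed unit ball of $L^p(\R^d)$. The set $\mathcal{F}$ is bounded in $L^p(\R^d)$ by the first part. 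For the equicontinuity of translations, observe that convolution commutes with translations, so
\begin{align*}
\tau_h(w*u)-(w*u)=(\tau_h w-w)*u,
\end{align*}
and Young's inequality gives $\|\tau_h(w*u)-(w*u)\|_{L^p(\R^d)}\le \|\tau_h w-w\|_{L^1(\R^d)}\|u\|_{L^p(\R^d)}$. Taking the supremum over $u\in B$ and using continuity of translation in $L^1(\R^d)$ (which yields $\|\tau_h w-w\|_{L^1(\R^d)}\to 0$ as $|h|\to 0$) gives the required equicontinuity. Kolmogorov--Riesz--Fr\'echet then delivers relative compactness of $T_w(B)$ in $L^p(K)$, which is exactly the compactness of $R_K T_w\colon L^p(\R^d)\to L^p(K)$.

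The main obstacle, if one wishes to be self-contained, is the continuity of translation in $L^1(\R^d)$; one first proves it for $C_c(\R^d)$ (trivial by uniform continuity and dominated convergence) and then extends by density, using $\|\tau_h f\|_{L^1}=\|f\|_{L^1}$ to control the approximation error uniformly in $h$. Every other step is a direct application of Young's inequality or a named theorem, so no separate effort is needed for the tightness-at-infinity condition of Kolmogorov--Riesz--Fr\'echet because here we restrict to $K$ of finite measure and only the translation-equicontinuity hypothesis is active.
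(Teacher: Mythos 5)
Your proposal is correct and follows exactly the route of the cited reference: the paper itself offers no proof of this lemma, quoting it as \cite[Corollary 4.28]{Bre10}, and Brezis's argument is precisely Young's inequality for the norm bound plus the Kolmogorov--Riesz--Fr\'echet criterion applied to $T_w(B)$, with translation equicontinuity obtained from $\tau_h(w*u)-(w*u)=(\tau_h w-w)*u$ and continuity of translation in $L^1(\R^d)$. Nothing is missing.
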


We now present the characterization of the local compactness result that will be used in the sequel. 
\begin{theorem}\label{thm:local-compactness}
If $\nu$ satisfies \eqref{eq:plevy-integ-cond}, then the following assertions are equivalent. 
\begin{enumerate}[$(i)$]
\item $\nu$ is not integrable, i.e., $\nu\not\in L^1(\R^d)$. 
\item The embedding $\Wnu\hookrightarrow L^p(K)$ is compact for any measurable set with $|K|<\infty$. 
\item The embedding $\Wnu\hookrightarrow L^p_{\loc}(\R^d)$ is compact. 
\item The embedding $\Wnu\hookrightarrow L^p(B(0,1))$ is compact.
\end{enumerate}
\end{theorem}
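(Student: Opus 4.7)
The plan is to run the cycle (ii) $\Rightarrow$ (iii) $\Rightarrow$ (iv) $\Rightarrow$ (i) $\Rightarrow$ (ii). The first two implications are immediate: every compact subset of $\R^d$ has finite Lebesgue measure, and $B(0,1)$ is a particular bounded set.

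For (iv) $\Rightarrow$ (i) I would argue by contraposition. If $\nu \in L^1(\R^d)$ then Proposition \ref{prop:boderline-plevy}(i) yields $\Wnu = L^p(\R^d)$ with equivalent norms, so (iv) would reduce to compactness of the restriction $L^p(\R^d) \to L^p(B(0,1))$. To rule this out, fix $\phi \in C_c^\infty(B(0,1/4))$ with $\phi \not\equiv 0$ and consider the concentrating sequence $u_n(x) = n^{d/p}\phi(nx)$. It is bounded in $L^p(\R^d)$ and eventually supported in $B(0,1)$; however $u_n \rightharpoonup 0$ weakly while $\|u_n\|_{L^p(B(0,1))} = \|\phi\|_{L^p}$ stays bounded away from $0$, precluding any $L^p(B(0,1))$-convergent subsequence.

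The core of the proof is (i) $\Rightarrow$ (ii). The key observation is that the $p$-L\'evy condition confines non-integrability of $\nu$ to the origin: on $B_1 \setminus B_\delta$ the bound $\nu(h) \leq \delta^{-p}(1 \wedge |h|^p)\nu(h)$ holds, so $\nu \notin L^1(\R^d)$ forces $\int_{B_\delta(0)} \nu(h)\,dh = \infty$ for every $\delta > 0$. I would then fabricate a truncated probability kernel
\begin{align*}
\rho_{\delta,M}(h) = A_{\delta,M}^{-1}\,(\nu(h) \wedge M)\,\mathds{1}_{B_\delta(0)}(h), \qquad A_{\delta,M} = \int_{B_\delta(0)} (\nu(h) \wedge M)\,dh,
\end{align*}
which is bounded, of finite $L^1$-mass (so Lemma \ref{lem:compactness-convolution} applies), yet satisfies $A_{\delta,M} \to \infty$ as $M \to \infty$ by monotone convergence. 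Jensen's inequality combined with Fubini and the symmetry of $\nu$ deliver the key uniform comparison
\begin{align*}
\|u - u * \rho_{\delta,M}\|_{L^p(\R^d)}^p \leq \int \rho_{\delta,M}(h)\,\|u(\cdot - h) - u\|_{L^p(\R^d)}^p\,dh \leq \frac{\cE_{\R^d}(u,u)}{A_{\delta,M}}.
\end{align*}

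Given a bounded sequence $(u_n) \subset \Wnu$, a measurable $K \subset \R^d$ with $|K|<\infty$, and $\varepsilon > 0$, I choose $M$ large enough that the right-hand side above falls below $\varepsilon^p$ for every $n$. Since $\rho_{\delta,M}$ is bounded and integrable, Lemma \ref{lem:compactness-convolution} forces $(u_n * \rho_{\delta,M})|_K$ to be precompact in $L^p(K)$. Thus the restricted sequence $(u_n|_K)$ lies within $\varepsilon$ of a precompact subset of $L^p(K)$, so it is totally bounded; letting $\varepsilon \to 0$ and running a standard diagonal argument delivers a Cauchy subsequence in $L^p(K)$. The main technical obstacle is engineering the mollifier so that it is simultaneously (a) bounded and integrable (to invoke Lemma \ref{lem:compactness-convolution}) and (b) of total mass tending to infinity (so that the Jensen estimate vanishes uniformly across the bounded family $(u_n)$); the truncation $\nu \wedge M$ on $B_\delta(0)$ is the simplest construction fulfilling both, and it succeeds only because the $p$-L\'evy condition isolates the non-integrability of $\nu$ at the origin.
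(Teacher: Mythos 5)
Your proof is correct and takes essentially the same approach as the paper: for $(i)\Rightarrow(ii)$ both arguments normalize a truncation of $\nu$ dominated by $\nu$ whose $L^1$-mass blows up (the paper kills the singularity near the origin, you cap the height at $M$ on $B_\delta(0)$), then combine the Jensen estimate with Lemma \ref{lem:compactness-convolution} to approximate the restriction operator by compact ones. For $(iv)\Rightarrow(i)$ the paper simply notes that the identity on the infinite-dimensional space $L^p(B(0,1))$ cannot be compact, while you exhibit an explicit concentrating sequence; this is a minor cosmetic difference.
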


\begin{proof} We only  prove $(iv)\implies (i)$ and $(i)\implies (ii)$ as the other implications are straightforward. Assume $\nu\in L^1(\R^d)$ and put $B=B(0,1)$ then $\Wnu= L^2(\R^d)$ and hence the mappings $L^p(B)\xrightarrow{E}\Wnu\xrightarrow{R_B} L^p(B)$, where  $Eu= \overline{u}$ is the zero extension of $u$ and  $R_{B}u=u|_{B}$,  are continuous. Since   is the identity map $I=R_{B}\circ E:L^p(B)\to L^p(B)$ is not compact, necessarily, $R$ is not compact. 
\noindent Now assume $\nu\not \in L^1(\R^d)$ and  $\delta>0$ sufficiently  small such that $0<\|\nu_\delta\|_{L^1(\mathbb{R}^d)}$. Consider $w_\delta(h) =\nu_\delta(h) \|\nu_\delta\|^{-1}_{L^1(\mathbb{R}^d)}$ so that $\|w_\delta\|_{L^1(\mathbb{R}^d)} =1$. Define $T_{w_\delta}= w_\delta*u$,  $u \in L^p(\mathbb{R}^d)$. By the symmetry of $\nu $ we have, 
\begin{align*}
T_{w_\delta} u(x)= \int_{\mathbb{R}^d} w_\delta(y) u(x-y)\d y = \int_{\mathbb{R}^d} w_\delta(y) u(x+y)\d y\,.
\end{align*} 

 Meanwhile, Jensen's inequality implies
\begin{align*}
\|u-& T_{w_\delta} u\|^p_{L^p(\mathbb{R}^d)} 
= \int_{\mathbb{R}^d} \Big| \int_{\mathbb{R}^d} [u(x)-u(x+h) ]w_\delta (h)\d\, h\Big|^p\d x\\
& \leq \|\nu_\delta\|^{-1}_{L^1(\mathbb{R}^d)} \iil_{\mathbb{R}^d\mathbb{R}^d} |u(x)-u(x+h) |^p \nu (h)\d\, h\d x \leq \|\nu_\delta\|^{-1}_{L^1(\mathbb{R}^d)} \|u\|^p_{\Wnu} \,.
\end{align*}
\noindent Thus, since $\nu\not\in L^1(\R^d)$, for a subset $K\subset \mathbb{R}^d$ with $|K|<\infty$ we find that  
%\eqref{eq:non-integrability-condition} leads to
%
\begin{align*}
\|R_K -R_K T_{w_\delta} \|_{\mathcal{L}\big(\Wnu,\, L^p(K)\big)}\leq \|\nu_\delta\|^{-1/p}_{L^1(\mathbb{R}^d)}\xrightarrow{\delta\to 0}0\,.
\end{align*}
\noindent It follows that the operator $R_K: \Wnu\to L^p(K)$ with $R_Ku= u|_K$ is compact since each operator $R_K T_{w_\delta}$ is compact (by Lemma \ref{lem:compactness-convolution}) and the set of compact operator is closed.
\end{proof}

Another version of Theorem \ref{thm:local-compactness} is proved in \cite[Theorem 1.1]{JW20} for the case $p=2$ and also in \cite[Theorem 2.1]{CP18} under restrictive assumptions on the kernel $\nu$, using the Pego criterion for compact compactness in $L^2(\R^d)$. It is worth mentioning that earlier analogous results are provided in \cite[Proposition 6]{PZ17}  and \cite[Proposition 1]{BJ16} for periodic functions on the  torus. This technique of killing the singularity is also used in \cite[Lemma 3.1]{BJ13}. 

\begin{corollary}
Assume  $\Omega\subset \R^d$ is open and $\nu\not\in L^1(\R^d)$.  The embedding $\WnuOm \hookrightarrow L^p_{\loc}(\Omega)$ is compact.
Moreover, if  $|\Omega|<\infty$ then the embedding $\WnuOmO \hookrightarrow L^p(\Omega)$, is compact. 
\end{corollary}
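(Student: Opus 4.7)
My plan is to reduce both embeddings to the local compactness Theorem \ref{thm:local-compactness}, using $\Wnu$ as an ambient space. Since $\nu\not\in L^1(\R^d)$, that theorem provides the compactness of $\Wnu\hookrightarrow L^p(K)$ for any measurable $K\subset\R^d$ with $|K|<\infty$, so it suffices to factor each claimed embedding through $\Wnu$ in a bounded way.

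For the local compactness of $\WnuOm\hookrightarrow L^p_{\loc}(\Omega)$, I would fix a compact $K\subset\Omega$ and pick a cutoff $\varphi\in C_c^\infty(\Omega)$ with $\varphi\equiv 1$ on $K$ and $K':=\operatorname{supp}\varphi\subset\Omega$ compact. The key step is to show that $u\mapsto\varphi u$ maps $\WnuOm$ boundedly into $\Wnu$. The $L^p$-part is trivial from $|\varphi|\le\|\varphi\|_\infty$. For the seminorm, I would split the integration on $\R^d\times\R^d$ into the four regions determined by $\Omega$ and $\Omega^c$. The piece on $\Omega^c\times\Omega^c$ vanishes since $\varphi$ is supported in $\Omega$; the two mixed pieces reduce by symmetry to $\int_{K'}|\varphi(x)u(x)|^p\int_{\Omega^c}\nu(x-y)\,\d y\,\d x$, which is finite because $\delta:=\dist(K',\Omega^c)>0$ forces the inner integral to be bounded by $\int_{|h|\ge\delta}\nu(h)\,\d h<\infty$ by the $p$-L\'{e}vy condition \eqref{eq:plevy-integ-cond}. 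On $\Omega\times\Omega$ I would use the algebraic split
\[
|\varphi(x)u(x)-\varphi(y)u(y)|^p \leq 2^{p-1}\bigl(|\varphi(x)|^p|u(x)-u(y)|^p + |u(y)|^p|\varphi(x)-\varphi(y)|^p\bigr),
\]
combined with the Lipschitz-plus-bounded estimate $|\varphi(x)-\varphi(y)|^p\leq C(1\wedge|x-y|^p)$, which converts the $\varphi$-difference term into a $p$-L\'{e}vy integral again controlled by \eqref{eq:plevy-integ-cond}. Combining these yields $\|\varphi u\|_{\Wnu}\leq C(\varphi,\nu,\Omega)\|u\|_{\WnuOm}$, and the factorization $\WnuOm\xrightarrow{u\mapsto\varphi u}\Wnu\xrightarrow{R_K} L^p(K)$, together with the observation $\varphi u|_K=u|_K$, delivers the desired compactness.

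For the second claim, recall from the definition \eqref{eq:def-WnuOm-vanish} that $\WnuOmO$ consists of functions in $\Wnu$ vanishing a.e.\ on $\R^d\setminus\Omega$, and $\|\cdot\|_{\Wnu}\asymp\|\cdot\|_{\WnuOmR}$ on this subspace. Hence the inclusion $\WnuOmO\hookrightarrow\Wnu$ is bounded with no cutoff needed, and since $|\Omega|<\infty$ the restriction $\Wnu\to L^p(\Omega)$ is compact by Theorem \ref{thm:local-compactness}$(ii)$; the composition finishes the argument.

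The main technical obstacle will be the boundedness of the cutoff multiplication $u\mapsto\varphi u:\WnuOm\to\Wnu$, because $\WnuOm$ only controls the Gagliardo-type seminorm on $\Omega\times\Omega$, whereas embedding into $\Wnu$ forces us to control tail integrals over the complement. The saving observation is that $\operatorname{supp}\varphi$ is separated from $\partial\Omega$ by a positive distance $\delta$, so only the tail $\int_{|h|\geq\delta}\nu(h)\,\d h<\infty$ intervenes and not the singularity of $\nu$ at the origin; this is precisely what the $p$-L\'{e}vy condition guarantees. No regularity of $\partial\Omega$ is needed, which is why the corollary holds for arbitrary open $\Omega$.
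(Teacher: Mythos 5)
Your proposal is correct and follows essentially the same route as the paper: factor through the multiplication operator $J_\varphi u=\varphi u$ from $\WnuOm$ into $\Wnu$ (whose boundedness you verify in detail, while the paper only asserts it) and then invoke the compactness of $\Wnu\hookrightarrow L^p(K)$ from Theorem \ref{thm:local-compactness}, with the second claim obtained from the continuous inclusion $\WnuOmO\hookrightarrow\Wnu$ and the same theorem. The only difference is presentational: you add the cutoff choice $\varphi\equiv 1$ on $K$ and the distance-to-the-boundary tail estimate explicitly, which the paper leaves implicit.
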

\begin{proof}
For fixed $\varphi\in C_c^\infty(\Omega)$  the operator $J_\varphi:\WnuOm\to\Wnu $, $J_\varphi u= u\varphi$ is continuous.  By Theorem \ref{thm:local-compactness}
the embedding $\Wnu \hookrightarrow L^p(K)$, $K=\supp\varphi\subset \Omega$, is compact. 
It follows that the embedding $\WnuOm\hookrightarrow L^p(K)$ is compact. Hence the  first claim is proved. 
The embeddings $ \WnuOmO\hookrightarrow\Wnu \hookrightarrow L^p(\Omega)$ are continuous and, by Theorem \ref{thm:local-compactness}, the last one is compact when $|\Omega|<\infty$. Whence the second claim follows. 
\end{proof}
\noindent Another consequence of Theorem \ref{thm:local-compactness} is the following  local compactness. 

\begin{corollary}\label{cor:local-compatcness} 
Assume $\Omega\subset \R^d$ be open and $\nu\not\in L^1(\R^d)$. 
The following assertions hold.
\begin{enumerate}[$(i)$]
\item Any bounded sequence $(u_n)_n$ in $\WnuOm$ admits  a subsequence $(u_{n_j})_j$ that converges  in $L^p_{\loc}(\Omega)$ to some  $u\in \WnuOm$. Moreover, 
\begin{align*}
|u|_{\WnuOm}\leq \liminf_{n \to \infty}|u_n|_{\WnuOm}.
\end{align*}
\item Assume $|\Omega|<\infty$ then for a bounded sequence $(u_n)_n$ in $\WnuOmO$ there exists $u\in \WnuOmO$ and subsequence $(u_{n_j})_j$ converging to $u$ in $L^p(\Omega)$. Moreover, 
\begin{align*}
|u|_{\Wnu}\leq \liminf_{n \to \infty}|u_n|_{\Wnu}. 
\end{align*}
\end{enumerate}
\end{corollary}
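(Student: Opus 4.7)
The plan is to combine the reflexivity of the function spaces $\WnuOm$ and $\WnuOmO$ (a property recorded immediately before Theorem~\ref{thm:WnuOme-complete}) with the compact embeddings obtained in the corollary that immediately precedes this statement, and then to invoke Fatou's lemma to pass to the limit in the double integral defining the seminorm.

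For assertion $(i)$, I would first use the definition of $\liminf$ to extract a subsequence $(u_{n_k})_k$ with $\lim_{k\to\infty}|u_{n_k}|_{\WnuOm} = \liminf_{n\to\infty}|u_n|_{\WnuOm}$. Since this subsequence remains bounded in the separable reflexive Banach space $\WnuOm$, a further subsequence, which I relabel $(u_{n_j})_j$, converges weakly in $\WnuOm$ to some $u\in \WnuOm$. The compact embedding $\WnuOm\hookrightarrow L^p_{\loc}(\Omega)$ proved in the preceding corollary (which itself relied on Theorem~\ref{thm:local-compactness} via a cutoff with $\varphi\in C_c^\infty(\Omega)$) promotes this to strong convergence $u_{n_j}\to u$ in $L^p(K)$ for every compact $K\subset \Omega$. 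A standard diagonal extraction then produces a further subsequence (not relabeled) with $u_{n_j}(x)\to u(x)$ for almost every $x\in \Omega$. Applying Fatou's lemma to
\[
|u|_{\WnuOm}^p = \iil_{\Omega\Omega}|u(x)-u(y)|^p\,\nu(x-y)\,\d y\,\d x
\]
yields $|u|_{\WnuOm}^p \leq \liminf_{j\to\infty}|u_{n_j}|_{\WnuOm}^p = \liminf_{n\to\infty}|u_n|_{\WnuOm}^p$.

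For assertion $(ii)$, the argument is structurally identical but uses the global compact embedding $\WnuOmO\hookrightarrow L^p(\Omega)$, which was proved in the preceding corollary under the assumption $|\Omega|<\infty$. Reflexivity of $\WnuOmO$, inherited as a closed subspace of the reflexive space $\Wnu$, again furnishes a weakly convergent subsequence $u_{n_j}\rightharpoonup u$; weak closedness of the convex closed set $\WnuOmO$ inside $\Wnu$ forces $u\in \WnuOmO$, so $u=0$ a.e.\ on $\R^d\setminus\Omega$. Strong $L^p(\Omega)$-convergence, combined with the vanishing of all iterates and the limit outside $\Omega$, upgrades to strong $L^p(\R^d)$-convergence and hence to a.e.\ convergence on $\R^d$ along a further subsequence. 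Fatou's lemma applied to $\iil_{\R^d\R^d}|u(x)-u(y)|^p\,\nu(x-y)\,\d y\,\d x$ concludes.

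The only subtlety is that the asserted inequality involves the limit inferior over the \emph{full} sequence $(u_n)$, not over the extracted subsequence; this is handled by the standard device of first extracting a subsequence that realises the full $\liminf$ and only then extracting from it a weakly (and almost everywhere) convergent sub-subsequence, which reverses the quantifiers in the correct direction. No serious obstacle is anticipated: the corollary is essentially an off-the-shelf consequence of reflexivity, the previously established compact embeddings, and Fatou's lemma applied to the convex integrand defining the seminorm.
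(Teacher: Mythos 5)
Your argument is correct and follows the route the paper intends: the corollary is stated there without an explicit proof, as a direct consequence of Theorem \ref{thm:local-compactness} and the preceding corollary, and the paper's own uses of this type of statement elsewhere (e.g.\ in the proofs of Theorem \ref{thm:poincare-gene-bis} and Theorem \ref{thm:WnuOme-complete}) employ exactly your pattern of compact embedding, a.e.\ convergence along a subsequence, and Fatou's lemma for the seminorm. Your extra appeal to reflexivity/weak convergence (which conveniently avoids a diagonal argument over compacts) and the initial extraction of a subsequence realizing the full $\liminf$ are harmless refinements rather than departures.
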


\subsection{Global compactness.}
The global compactness requires some compatibility between the singularity of the kernel $\nu$ and the regularity of the
boundary  $\partial\Omega$. Let us introduce conditions on $\Omega$ and $\nu$ yielding the global compactness result. 
\begin{definition}\label{def:classesAi} Assume $\Omega\subset \R^d$ is open bounded, $\nu\in L^1(\R^d,1\land|h|^p)$ and $\nu\not\in L^1(\R^d)$. 
We say that $(\nu, \Omega)$ is in the class $\mathscr{A}_i$, $i\in \{1,2,3\}$, if in addition 
\begin{itemize}
\item [($\classA{1}$)] $\ldots$ there exists an $\WnuOm$-extension operator $E: \WnuOm\to \Wnu$, i.e., there is $C(\nu, \Omega,d)>0$ such that for every $u\in \WnuOm$, $\|u\|_{\Wnu}\leq C\|u\|_{\WnuOm}$ and $Eu|_\Omega =u$. 
\item[($\classA{2}$)] $\ldots$  $\Omega$ has Lipschitz boundary, $\nu$ is radial and $q(\delta)\xrightarrow[]{\delta \to 0}\infty $ where
\begin{align}\label{eq:class-lipschitz}
q(\delta):= \frac{1}{\delta^p}\int\limits_{B_\delta(0)} |h|^p\nu(h)\d h\,.
\end{align}
\item [($\classA{3}$)] $\ldots$ the following condition holds true: $\widetilde{q}(\delta)\xrightarrow[]{\delta \to 0}\infty $ where
\begin{align}\label{eq:class-sing-boundary}
\widetilde{q}(\delta): = \inf_{a\in \partial\Omega}\int_{\Omega_\delta}\nu(h-a)\d h
\end{align}
with $\Omega_\delta= \{x\in \Omega: \operatorname{dist}(x,\partial\Omega)>\delta\}$.
\end{itemize}
\end{definition}
Note that the assumption  $\nu\not\in L^1(\R^d)$ here is  redundant. Indeed, having $q(\delta)\xrightarrow{\delta\to0}\infty$ 
or $\widetilde{q}(\delta)\xrightarrow{\delta\to0}\infty$ implies $\nu\not\in L^1(\R^d)$. The converse is not true, e.g., for $\nu(h)=|h|^{-d}\mathds{1}_{B_1(0)}$ we have $\nu\not\in L^1(\R^d)$ but $q(\delta)=c_d.$ For the same example  and $\Omega=B_1(0)$ we have also $\widetilde{q}(\delta)=c_d.$
Here is our global compactness result; see  \cite{FK22} for the case $p=2$ or \cite[Theorem 3.89]{guy-thesis}. 
\begin{theorem}\label{thm:embd-compactness}
If the couple $(\nu, \Omega)$ belongs to one of the class $\mathscr{A}_i,~ i=1,2,3$ then the embedding $\WnuOm \hookrightarrow L^p(\Omega)$ is compact. In particular, the embedding $\WnuOmR \hookrightarrow L^p(\Omega)$ is compact.
\end{theorem}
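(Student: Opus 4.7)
Let $(u_n)\subset\WnuOm$ be bounded with $\sup_n\|u_n\|_{\WnuOm}\leq M$. In each class $\mathscr{A}_i$ one has $\nu\notin L^1(\R^d)$: this is part of the definition in $\mathscr{A}_1$, and is forced by $q(\delta)\to\infty$ or $\widetilde{q}(\delta)\to\infty$ in $\mathscr{A}_2$ and $\mathscr{A}_3$. Hence Corollary \ref{cor:local-compatcness} applies and, up to a subsequence (not relabeled), $(u_n)$ converges in $L^p_{\loc}(\Omega)$ and a.e.\ on $\Omega$ to some $u\in\WnuOm$. Since $|\Omega|<\infty$, dominated convergence yields $\|u\|_{L^p(\Omega\setminus\Omega_\delta)}\to 0$ as $\delta\to 0$, so to upgrade to convergence in $L^p(\Omega)$ it suffices to establish the equi-tightness
\[
\eta(\delta):=\sup_n\|u_n\|_{L^p(\Omega\setminus\Omega_\delta)}^p\xrightarrow{\delta\to 0}0.
\]
The three classes correspond to three distinct mechanisms for obtaining this bound.

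In class $\mathscr{A}_1$ no explicit boundary estimate is needed: the extension $(Eu_n)$ is uniformly bounded in $\Wnu$, and since $|\Omega|<\infty$ Theorem \ref{thm:local-compactness} furnishes a subsequence of $(Eu_n)$ converging in $L^p(\Omega)$; as $Eu_n|_\Omega=u_n$, the corresponding subsequence of $(u_n)$ converges in $L^p(\Omega)$. In class $\mathscr{A}_2$, cover $\partial\Omega$ by finitely many open sets $U_j$ on which $\partial\Omega$ is a Lipschitz graph and a fixed unit vector $e_j$ points uniformly into $\Omega$. Exploiting the radial symmetry of $\nu$ through rotation averaging, the seminorm controls translations: for $|h|\leq\delta$ and suitable interior subsets $U\subset U_j$,
\[
\|u_n(\cdot+h)-u_n\|_{L^p(U)}^p\lesssim\frac{1}{q(|h|)}|u_n|_{\WnuOm}^p.
\]
Shifting each $U_j\cap(\Omega\setminus\Omega_\delta)$ by $\delta e_j$ into $\Omega_\delta$, summing over the finite cover, and invoking $q(\delta)\to\infty$, one obtains $\eta(\delta)\to 0$.

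In class $\mathscr{A}_3$, for $x\in\Omega\setminus\Omega_\delta$ let $a=\pi_{\partial\Omega}(x)$ and $\xi=x-a$, so $|\xi|<\delta$. The inclusion $\Omega_{2\delta}\subset\Omega_\delta-\xi$ and the change of variables $z=y-\xi$ yield
\[
\int_{\Omega_\delta}\nu(x-y)\,dy=\int_{\Omega_\delta-\xi}\nu(a-z)\,dz\geq\int_{\Omega_{2\delta}}\nu(a-z)\,dz\geq\widetilde{q}(2\delta).
\]
Applying $|u(x)|^p\leq 2^{p-1}(|u(x)-u(y)|^p+|u(y)|^p)$, multiplying by $\nu(x-y)$, integrating over $y\in\Omega_{2\delta}$ (for which $|x-y|\geq\delta$, so that the reverse integral $\int_{\Omega\setminus\Omega_\delta}\nu(x-y)\,dx$ is bounded by $N_\delta:=\int_{|h|\geq\delta}\nu(h)\,dh<\infty$), and finally integrating over $x\in\Omega\setminus\Omega_\delta$, one obtains
\[
\widetilde{q}(3\delta)\,\|u_n\|_{L^p(\Omega\setminus\Omega_\delta)}^p\leq 2^{p-1}|u_n|_{\WnuOm}^p+2^{p-1}N_\delta\,\|u_n\|_{L^p(\Omega)}^p,
\]
from which, after division by $\widetilde{q}(3\delta)\to\infty$, the equi-tightness follows. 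The in-particular statement $\WnuOmR\hookrightarrow L^p(\Omega)$ is then immediate from the continuity of the restriction $\WnuOmR\hookrightarrow\WnuOm$.

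The hardest step is the rotation-averaging / local-flattening argument in class $\mathscr{A}_2$: transferring the scalar translation estimate governed by $q(\delta)$ from a flat Lipschitz chart to the curved geometry of $\Omega$ via a partition of unity requires careful bookkeeping of cross terms on chart overlaps. In class $\mathscr{A}_3$ one must also verify the quantitative balance between $N_\delta$ and $\widetilde{q}(\delta)$ under the structural hypotheses; in class $\mathscr{A}_1$, by contrast, everything reduces to Theorem \ref{thm:local-compactness} applied to the extensions.
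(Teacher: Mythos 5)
Your treatment of class $\mathscr{A}_1$ is correct and complete, and your overall skeleton (local compactness from Theorem \ref{thm:local-compactness} plus a boundary--strip estimate) is the same route as in the references the paper itself defers to ([FK22], [guy-thesis]); the paper prints no proof of its own. However, both of your boundary mechanisms have genuine gaps. For $\mathscr{A}_3$: the ``quantitative balance'' you leave to be verified cannot hold. Every $h\in\Omega_{3\delta}$ satisfies $|h-a|>3\delta$ for all $a\in\partial\Omega$, so $\widetilde{q}(3\delta)\le\int_{|z|>3\delta}\nu(z)\,\d z\le N_\delta$, where $N_\delta=\int_{|h|\ge\delta}\nu(h)\,\d h$; hence $N_\delta/\widetilde{q}(3\delta)\ge1$ for every $\delta$, and after dividing your final display by $\widetilde{q}(3\delta)$ the right-hand side stays of order $\sup_n\|u_n\|^p_{L^p(\Omega)}$ and does not tend to zero. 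So equi-tightness $\eta(\delta)\to0$ does not follow from that computation. For $\mathscr{A}_2$ there is an analogous problem: after shifting $U_j\cap(\Omega\setminus\Omega_\delta)$ by $\delta e_j$ you are left with the $L^p$-norm of $u_n$ over a thin \emph{interior} strip; smallness of its measure gives nothing, since boundedness in $L^p$ carries no $p$-equi-integrability, so this term is only $O(M)$ uniformly in $n$, and $q(\delta)\to\infty$ only controls the translation error. (The translation lemma itself, which you assert via ``rotation averaging'', is a known tool for radial kernels, but it is not the missing piece.)

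Both gaps are repaired by changing the target: do not try to prove equi-tightness of the norms $\|u_n\|_{L^p(\Omega\setminus\Omega_\delta)}$ themselves, but apply exactly your strip inequalities to the differences $w=u_n-u_m$ (or $u_{n_k}-u$) along the subsequence already convergent in $L^p_{\loc}(\Omega)$. Then the problematic terms --- the $N_\delta$-weighted integral $\int_{\Omega_{2\delta}}|w(y)|^p\big(\int_{\Omega\setminus\Omega_\delta}\nu(x-y)\,\d x\big)\d y$ in class $\mathscr{A}_3$, respectively the shifted-strip norm of $w$ in class $\mathscr{A}_2$ --- live on interior regions where $w\to0$ in $L^p$ for each \emph{fixed} $\delta$, while the remaining terms are bounded by $C(2M)^p/\widetilde{q}(3\delta)$, respectively $C(2M)^p/q(\delta)$, uniformly in $n,m$. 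Choosing first $\delta$ small and then $n,m$ large shows the subsequence is Cauchy in $L^p(\Omega)$, which yields the theorem; this is how the cited proofs (following the ideas of [JW20, DMT18]) proceed. Your ``in particular'' step for $\WnuOmR$ and the reduction $\nu\notin L^1(\R^d)$ in all three classes are fine.
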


\begin{remark}
The Rellich-Kondrachov's compact embeddings $W^{1,p}_0(\Omega)\hookrightarrow L^p(\Omega) $ and $W^{1,p}(\Omega)\hookrightarrow L^p(\Omega)$ when $\Omega$ is Lipschitz, respectively can be derived from Theorem \ref{thm:local-compactness} combined with the embedding $W^{1,p}_0(\Omega)\hookrightarrow \WnuOmO$ and from Theorem \ref{thm:embd-compactness} combined with the embedding $W^{1,p}(\Omega)\hookrightarrow \WnuOm$ when $\Omega$ is Lipschitz. 
\end{remark}

%%%%%%%%%%%%%%%%%%%%%%%%%%%%%%
\section{Nonlocal Poincar\'{e} type inequalities} \label{sec:poincare} 
%%%%%%%%%%%%%%%%%%%%%%%%%%%
%\smallskip  
\subsection{Nonlocal Poincar\'{e}-Friedrichs inequality} 
In this section  $1\leq p<\infty$, $\Omega\subset \R^d$ is open and that $\nu:\R^d\setminus\{0\}\to [0, \infty)$  is symmetric satisfying $\nu\in L^1(\R^d\setminus B(0,r))$ for every $r>0$.  In particular, the latter holds true if $\nu\in L^1(\R^d, 1\land|h|^\gamma\d h)$ for any $ \gamma\geq 0$. 
\noindent Let us define  the space $L^p_\Omega(\R^d)$ by 
\begin{align*}
L^p_\Omega(\R^d)= \{u\in L^p(\R^d): \,  \text{$u= 0$ a.e. on $\Omega^c$}\}.  
\end{align*}
Next, we use a more refined argument to prove the above Poincar\'{e}-Friedrichs type inequalities with strictly positive constants, in a general setting only by assuming that $\nu$ is nontrivial that is, $\nu\not\equiv0$. We follow the strategy from\cite{JW20} using the $2^m$-folded convolutions which is also used in \cite[proof of Lemma 2.7]{FKV15}. 
\begin{lemma}\label{lem:folded-convo}
Let $q\in L^1(\R^d)\cap L^\infty(\R^d)$ with  $q\geq0$ be symmetric, i.e., $q(h)=q(-h)$ and nontrivial,  i.e., $\big|\{q>0\}\big|>0$.  Define $2^m$-fold convolution of $q$ as follows $q_0=q$, $q_m= q_{m-1}* q_{m-1}= \underbrace{q*\cdots*q}_{2^m-\text{times}}$. For every $m\geq1$, the following assertions are true.  
\vspace{-1ex}

\begin{enumerate}[$(i)$]
\item  Each $q_m$  belongs to $L^1(\R^d)\cap L^\infty(\R^d)$ and is uniformly continuous.
\item 	There exists $\delta>0$,  such that $\inf_{B_{\delta_m }(0)} q_m >0$ with 
$\delta_m= (\frac{7}{4})^m\delta $ so that  $\delta_m\xrightarrow{m\to\infty}\infty$. 
\item For $u:\R^d\to \R$ measurable,  we have 
\begin{align}
\mathcal{Q}_{m}(u)\leq B_m\mathcal{Q}(u), \qquad B_m=2^{pm} \|q\|^{2^m-1}_{L^1(\R^d)}, 
\end{align} 
where  we set $\mathcal{Q}(u)= \mathcal{Q}_0(u)$ and 
\begin{align*}
\mathcal{Q}_{m}(u):=\iil_{\R^d \R^d} |u(x) -u(y)|^p q_m(x-y)\d y\,\d x.
\end{align*} 
\end{enumerate}
\end{lemma}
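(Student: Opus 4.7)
The plan is to prove (i) and (iii) by standard convolution estimates, and (ii) by induction starting from a base case that uses the uniform continuity from (i) together with the symmetry and nontriviality of $q$.

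For (i), Young's inequality gives $\|q_m\|_{L^1(\R^d)} \le \|q\|_{L^1(\R^d)}^{2^m}$ and $\|q_m\|_{L^\infty(\R^d)} \le \|q_{m-1}\|_{L^1(\R^d)} \|q_{m-1}\|_{L^\infty(\R^d)}$, so a short induction keeps both norms finite. Since $q \in L^1 \cap L^\infty \subseteq L^2$, already one convolution produces a uniformly continuous function: by Cauchy--Schwarz,
\begin{equation*}
|q_1(x+h) - q_1(x)| \le \|q\|_{L^2(\R^d)}\, \|q(\cdot + h) - q\|_{L^2(\R^d)} \xrightarrow[|h|\to 0]{} 0,
\end{equation*}
uniformly in $x$, and each subsequent $q_m$ inherits uniform continuity in the same way.

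For (iii), I would unfold $q_m = q_{m-1} * q_{m-1}$ and introduce the convolution variable as an intermediate point. Writing $q_m(x-y) = \int q_{m-1}(x-z)\, q_{m-1}(z-y)\, dz$ via the substitution $z = y + w$, and using the convex inequality $|u(x)-u(y)|^p \le 2^{p-1}(|u(x)-u(z)|^p + |u(z)-u(y)|^p)$, I obtain two triple integrals. In each of them one variable does not appear in the difference, so integrating it out yields a factor $\|q_{m-1}\|_{L^1(\R^d)}$ and recomposes $\mathcal{Q}_{m-1}$. This gives the one-step bound $\mathcal{Q}_m(u) \le 2^p \|q_{m-1}\|_{L^1(\R^d)}\, \mathcal{Q}_{m-1}(u)$. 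Iterating and using $\|q_k\|_{L^1(\R^d)} \le \|q\|_{L^1(\R^d)}^{2^k}$ then yields $\mathcal{Q}_m(u) \le 2^{pm} \|q\|_{L^1(\R^d)}^{2^m-1}\mathcal{Q}(u)$, which is exactly $B_m$.

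For (ii), the base case uses that by symmetry
\begin{equation*}
q_1(0) = \int_{\R^d} q(-y)\, q(y)\, dy = \|q\|_{L^2(\R^d)}^2 > 0,
\end{equation*}
since $q$ is nontrivial. Combined with the uniform continuity of $q_1$ from (i), this produces a radius $\delta_1 > 0$ and a constant $c_1 > 0$ with $q_1 \ge c_1$ on $B_{\delta_1}(0)$; set $\delta := (4/7)\delta_1$. For the inductive step, assume $q_{m-1} \ge c_{m-1}$ on $B_{\delta_{m-1}}(0)$. For $|z| \le \delta_m = (7/4)\delta_{m-1}$,
\begin{equation*}
q_m(z) \ge c_{m-1}^2\, \bigl|B_{\delta_{m-1}}(0) \cap B_{\delta_{m-1}}(z)\bigr|,
\end{equation*}
and the elementary inclusion $B_{\delta_{m-1} - |z|/2}(z/2) \subseteq B_{\delta_{m-1}}(0) \cap B_{\delta_{m-1}}(z)$, valid for $|z| \le 2\delta_{m-1}$, bounds this measure below by the measure of $B_{\delta_{m-1}/8}(0)$. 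Hence $q_m \ge c_m > 0$ on $B_{\delta_m}(0)$. The factor $7/4$ is calibrated precisely so that $\delta_{m-1} - |z|/2$ stays bounded below by $\delta_{m-1}/8$ across the whole ball $B_{\delta_m}(0)$, and any constant in $(1,2)$ would work at the cost of different bookkeeping.

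The main obstacle I anticipate is the correct sequencing in (iii): the substitution has to be done so that after applying the convex split, one of the two triple integrals recomposes as $\mathcal{Q}_{m-1}$ with a clean factor $\|q_{m-1}\|_{L^1(\R^d)}$, and the other does the same after swapping the roles of $x$ and $y$. Step (i) is routine, and step (ii) is a geometric induction whose only subtlety is making sure one proves the base $m=1$ from the nontriviality of $q$ rather than hoping $q$ itself has a neighborhood of strict positivity at the origin.
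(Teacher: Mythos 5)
Your proof is correct and follows essentially the same route as the paper: Young's inequality plus continuity of translations for (i), the convexity split $|a+b|^p\le 2^{p-1}(|a|^p+|b|^p)$ with Fubini and iteration for (iii), and for (ii) the observation $q_1(0)=\|q\|_{L^2(\R^d)}^2>0$ followed by the $\tfrac74$-radius bootstrap via a lower bound on the measure of $B_{\delta_{m-1}}(0)\cap B_{\delta_{m-1}}(z)$. The only differences are cosmetic: you get uniform continuity from an $L^2$ Cauchy--Schwarz bound rather than the $L^1$ shift, and you use the ball $B_{\delta_{m-1}-|z|/2}(z/2)$ where the paper uses the fixed ball $B_{\delta_{m-1}/16}(z/2)$.
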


\begin{proof}
(i)  Since  $q_m =q_{m-1}*q_{m-1}$, by the Young's inequalities $q_m\in L^1(\R^d)\cap L^\infty(\R^d)$ and
\begin{align}\label{eq:mfold-young-inequality}
&\|q_m\|_{L^1(\R^d)}\leq \|q_{m-1}\|^2_{L^1(\R^d)}\leq 	\|q\|^{2^m}_{L^1(\R^d)},\\
&\|q_m\|_{L^\infty(\R^d)}\leq  \|q_{m-1}\|_{L^\infty(\R^d)}\|q_{m-1}\|_{L^1(\R^d)}\leq \|q\|_{L^\infty(\R^d)}	\|q\|^{2^m-1}_{L^1(\R^d)}.
\end{align}
The uniform continuity $q_m$  follows from the continuity of the shift in $L^1(\R^d)$.

\noindent (ii) Since $q$ is not identically vanishing, we have 
\begin{align*}
q_1(0)= q*q(0)= \int_{\R^d} |q(h)|^2\d h>0
\end{align*}
\noindent wherefrom, the continuity of $q_1$ at $0$ implies that $\theta=\inf_{B_\delta(0)} q_1>0$ for some  $\delta>0$. We claim that $q_2(x)=q_1*q_1(x)>0$ whenever $|x|\leq \frac{7\delta}{4}$.  Indeed, note that   $B_{\frac{\delta}{16}} (\frac{x}{2})\subset  B_{\delta}(0) \cap B_{\delta}(x)$ since for $z\in  B_{\frac{\delta}{16}} (\frac{x}{2})$  we have 
\begin{align*}
\big|z-\frac{x}{2} \pm \frac{x}{2} \big|\leq \big|z-\frac{x}{2} \big| + \big|\frac{x}{2}\big|\leq \frac{15\delta}{16}<\delta.
\end{align*}
Given that $q_1(h)q_1(x-h)>\theta^2$ for all $h\in  B_{\delta}(0) \cap B_{\delta}(x)$ we  finally get 
\begin{align*}
q_2(x)=q_1*q_1(x)\geq \theta^2|B_{\frac{\delta}{16}} (0)|>0\quad\text{for all $|x|\leq \frac{7\delta}{4}$}.
\end{align*}
Repeating this process, one finds that for  $\theta_{m-1}=\inf_{B_{\delta_{m-1} }(0)} q_{m-1} (x)>0$ 
\begin{align*}
q_m(x)\geq \theta^2_{m-1}|B_{\frac{\delta_{m-1}}{16}} (0)|>0\quad\text{for all $|x|\leq \delta_m.$}
\end{align*}
\noindent (iii) The inequality $|a+b|^p\leq 2^{p-1}(|a|^p+|b|^p)$ and Fubini's theorem  yield
\begin{align*}
\mathcal{Q}_{m}(u)& = \iil_{\R^d \R^d} |u(x) -u(x+h)|^p \int_{\R^d} q_{m-1}(z)q_{m-1}(h-z)\d z\d h \d x\\
& \leq 2^{p-1}\iil_{\R^d \R^d} |u(x) -u(x+z)|^p\int_{\R^d} q_{m-1}(z)q_{m-1}(h-z)\d z\d h \d x\\
&+2^{p-1}\iil_{\R^d \R^d} |u(x+z) -u(x+h)|^p\int_{\R^d} q_{m-1}(z)q_{m-1}(h-z)\d z\d h \d x\\
&= 2^{p-1}\iil_{\R^d \R^d} |u(x) -u(x+z)|^p\Big(\int_{\R^d} q_{m-1}(\xi)\d \xi \Big) q_{m-1}(z)\d z \d x\\
&\overset{\xi=h-z}{+} 2^{p-1}\int_{\R^d} q_{m-1}(z)\d z \iil_{\R^d \R^d}  |u(x+z) -u(x+z+\xi)|^p q_{m-1}(\xi)\d \xi \d x
%\quad (\text{fix $z $ and put $\xi=h-z$})\\
\\&=2^p \|q_{m-1}\|_{L^1(\R^d)}\mathcal{Q}_{m-1}(u). 
\end{align*}

\noindent In short, this combined with inequality \eqref{eq:mfold-young-inequality} implies 
\begin{align*}
\mathcal{Q}_{m}(u) &\leq 2^p \|q_{m-1}\|_{L^1(\R^d)}\mathcal{Q}_{m-1}(u)\\
&\leq \Big(\prod_{k=1}^{m}2^p \|q\|^{2^{k-1}}_{L^1(\R^d)}\Big) \mathcal{Q}(u)= 2^{pm} \|q\|^{2^m-1}_{L^1(\R^d)}\mathcal{Q}(u).
\end{align*}
\end{proof}

In the sequel, it is decisive to keep in mind that for $u\in L^p_\Omega(\R^d)$ we have 
\begin{align*}
\mathcal{E}(u,u)&=\hspace*{-3ex}\iil_{(\Omega^c\times\Omega^c)^c}\hspace*{-2ex}|u(x)-u(y)|^p\nuxminy\d y\,\d x=\hspace*{-1ex}\iil_{\R^d \R^d}\hspace*{-1ex}|u(x)-u(y)|^p\nuxminy\d y\,\d x.
\end{align*}
\begin{theorem}[Poincar\'{e}-Friedrichs inequality I]
\label{thm:poincare-friedrichs-ext-bis}
Assume that $\nu\not \equiv0$, i.e., $\big|\{\nu >0\}\big|>0$
and  that $\Omega$ is bounded one direction, i.e., there exist $R>0$ and  $e\in \R^d$, $|e|=1$ such that $\Omega\subset H_R$ with $H_R=\{ z\in \R^d: |z\cdot e|\leq R \}$. Then for $m\in \mathbb{N}$ large, there is $0<C_{R, m}:= C(d,p,R,m,\nu)<\infty$ such that 
\begin{align}\label{eq:poincare-friedrichs-ext}
\|u\|^p_{L^p(\Omega)}\leq C_{R, m} \mathcal{E}(u,u)\,\qquad\text{for all $u\in L^p_\Omega(\R^d)$}. 
\end{align}
Moreover,  letting  $q_m = q_{m-1}* q_{m-1}$, $q_0=q$, $q:=1\land \nu$ we can choose 
\begin{align*}
C_{R, m} = 2^{pm} \|q\|^{2^m-1}_{L^1(\R^d)}\Big(\int_{H^c_{2R} }q_m(h)\d h\Big)^{-1}.
\end{align*}
\end{theorem}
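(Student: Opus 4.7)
The plan is to apply Lemma \ref{lem:folded-convo} to the symmetric, bounded, integrable, nontrivial kernel $q := 1\wedge \nu$, and to use the $2^m$-fold convolution $q_m$ to ``spread out'' the kernel so that it captures interactions between $\Omega$ and a large exterior region where $u$ vanishes.

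First, I would observe the key pointwise inequality $q = 1\wedge \nu \leq \nu$, which gives $\mathcal{Q}(u) \leq \mathcal{E}(u,u)$. Combined with Lemma \ref{lem:folded-convo}(iii), this yields the upper bound
\begin{equation*}
\mathcal{Q}_m(u) \leq B_m\, \mathcal{E}(u,u), \qquad B_m = 2^{pm}\|q\|_{L^1(\R^d)}^{2^m-1}.
\end{equation*}
Next, I would produce a matching lower bound of the form $\mathcal{Q}_m(u) \geq c\,\|u\|_{L^p(\Omega)}^p$. Since $u \equiv 0$ on $\Omega^c$, splitting the double integral and using symmetry in $x,y$ gives
\begin{equation*}
\mathcal{Q}_m(u) \geq 2\int_\Omega |u(x)|^p \int_{\Omega^c} q_m(x-y)\,\d y\,\d x.
\end{equation*}
The geometric step is to bound the inner integral uniformly in $x \in \Omega$. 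For $x \in \Omega \subset H_R$ and $h \in H_{2R}^c$ (i.e.\ $|h\cdot e| > 2R$), one has $|(x+h)\cdot e| \geq |h\cdot e| - |x\cdot e| > 2R - R = R$, so $x+h \in H_R^c \subset \Omega^c$. Changing variables $h = y-x$ and invoking the symmetry $q_m(-h) = q_m(h)$ yields
\begin{equation*}
\int_{\Omega^c} q_m(x-y)\,\d y \geq \int_{H_{2R}^c} q_m(h)\,\d h \qquad \text{for every } x\in\Omega.
\end{equation*}

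It remains to ensure that this last integral is strictly positive for $m$ large enough. By Lemma \ref{lem:folded-convo}(ii), $q_m$ is bounded below by a positive constant on $B_{\delta_m}(0)$ with $\delta_m = (7/4)^m \delta \to \infty$. Choosing $m$ so large that $\delta_m > 2R$, the set $B_{\delta_m}(0) \cap H_{2R}^c$ has positive Lebesgue measure, whence $\int_{H_{2R}^c} q_m(h)\,\d h > 0$. Combining the upper and lower bounds gives \eqref{eq:poincare-friedrichs-ext} with constant $C_{R,m}$ proportional to $B_m\,\bigl(\int_{H_{2R}^c}q_m\bigr)^{-1}$, matching the stated form up to an inessential factor of $2$.

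The only delicate point in this strategy is the geometric estimate that transforms the $y$-integral over $\Omega^c$ into an $h$-integral over a fixed slab complement $H_{2R}^c$ independent of $x$; this is precisely where the hypothesis ``bounded in one direction'' is used. The positivity of $\int_{H_{2R}^c}q_m$ for large $m$ then follows cleanly from the $(7/4)^m$-growth of the support of $q_m$ furnished by Lemma \ref{lem:folded-convo}(ii), so no additional obstacle arises.
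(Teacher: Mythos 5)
Your proposal is correct and takes essentially the same route as the paper's proof: bound $\mathcal{E}(u,u)\geq \mathcal{Q}(u)\geq B_m^{-1}\mathcal{Q}_m(u)$ via Lemma \ref{lem:folded-convo}(iii), keep only the cross term over $\Omega\times\Omega^c$, and use the slab geometry (for $x\in H_R$ and $h\in H_{2R}^c$ one has $x+h\in H_R^c\subset\Omega^c$, the paper's $H_R\subset H_{2R}(x)$ in translated form) to get the uniform lower bound $\int_{H_{2R}^c}q_m(h)\,\d h$, positive for large $m$ by Lemma \ref{lem:folded-convo}(ii). Your positivity step via $|B_{\delta_m}(0)\cap H_{2R}^c|>0$ is exactly what the paper intends (its displayed set difference is a typo), and keeping the factor $2$ only improves the constant.
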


\begin{proof}
We have  $H_R\subset H_{2R}(x) =\{ z\in \R^d: |(z-x)\cdot e|\leq 2R\}$ for $x\in H_R$.  
Accordingly, for $u\in L^p_\Omega(\R^d)$ there holds
\begin{align*}
\mathcal{Q}_{m}(u) 
&\geq 2\int_{\Omega}|u(x)|^p \d x \int_{ H_R^c} q_m(x-y)\d y  \\
&\geq \int_{\Omega}|u(x)|^p \d x \int_{H_{2R}^c(x)} q_m(x-y)\d y= \|u\|^p_{L^p(\Omega)}\int_{H_{2R}^c} q_m(h)\d h. 
\end{align*}
The fact that $\nu\in L^1(\R^d\setminus B(0,r))$ implies $q=1\land \nu\in L^1(\R^d)\cap L^\infty(\R^d) $. In addition, $\nu\equiv0$ if and only if $q\equiv0$.  Thus, $q= 1\land \nu\leq \nu$, the estimate above and  Lemma \ref{lem:folded-convo} (iii)  yield
\begin{align*}
\cE(u,u)\geq \mathcal{Q}(u) \geq B^{-1}_m\mathcal{Q}_{m}(u) 
\geq  C^{-1}_{R,m}\|u\|^p_{L^p(\Omega)}.
\end{align*}
\noindent According to Lemma \ref{lem:folded-convo} (i), $ q_m\in L^1(\R^d)\cap L^\infty(\R^d)$ and by Lemma \ref{lem:folded-convo} (ii) for $m$ large we have  $\inf_{B_{\delta_m }(0)} q_m>0$ and $|B_{\delta_m }(0) \setminus H^c_{2R}|>0$ where we recall $\delta_m\xrightarrow{m\to \infty}\infty$. Hence 
\begin{align*}
0<	|B_{\delta_m }(0) \setminus H^c_{2R}|\big(\inf_{B_{\delta_m }(0)} q_m\big)\leq \int_{H^c_{2R} }q_m(h)\d h\leq \|q_m\|_{L^1(\R^d)}<\infty, 
\end{align*}
wherefrom we deduce that $0<C_{R,m}<\infty$.
\end{proof}

Next, we want to consider the case where $|\Omega|<\infty$. We need the following Lemma from \cite{Fog21b}. 
\begin{lemma}\label{lem:inequality-indicator}
Let $E\subset \R^d$ be measurable with $|E|<\infty$. Then we have 
\begin{align}\label{eq:nu-hash}
&\essinf_{x\in\R^d}\int_{E^c} \nu(x-y)\d y \geq \nu^\# (|E|), \quad\text{with} \quad \nu^\# (|E|)= \int_{ \{\nu <\nu^*(r_E)\} }\nu(h)\d h,
\end{align}	
where  $ r_E= \big(\frac{|E|}{|B_1(0)|}\big)^{1/d}$ and  $\nu^*$ is the  symmetric rearrangement of $\nu$ 
\begin{align}\label{eq:sym-rearran}
\nu^*(x)=\int_0^\infty \mathds{1}_{\{ |\nu| >s\}^* }(x)\d s= \inf\{s>0: ~|\{ |\nu| >s\}|\leq c_d|x|^d\}. 
\end{align}
\end{lemma}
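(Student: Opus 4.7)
My plan is to fix $x \in \R^d$ and reduce the estimate to a translation-invariant statement via the change of variables $h = y-x$. Setting $F := E-x$, so $|F| = |E|$, this gives
\begin{align*}
\int_{E^c} \nu(x-y)\,\d y = \int_{F^c} \nu(h)\,\d h = \int_{\R^d}\nu(h)\,\d h - \int_F \nu(h)\,\d h,
\end{align*}
and the task reduces to proving the uniform upper bound $\int_F \nu \leq \int_{\{\nu \geq \nu^*(r_E)\}}\nu$ over all measurable sets $F$ with $|F| = |E|$.

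For the latter I would use the layer cake formula $\int_F \nu = \int_0^\infty |F \cap \{\nu > t\}|\,\d t$ and split at the threshold $t_0 := \nu^*(r_E)$. For $t \geq t_0$ one uses $|F \cap \{\nu > t\}| \leq |\{\nu > t\}|$, while for $0 < t < t_0$ one uses $|F \cap \{\nu > t\}| \leq |F| = |E|$. The infimum definition of $\nu^*(r_E)$ together with $c_d r_E^d = |E|$ also yields $|\{\nu \geq t_0\}| \geq |E|$ (by letting $s \nearrow t_0$), so
\begin{align*}
\int_F \nu \leq t_0 |E| + \int_{t_0}^\infty |\{\nu > t\}|\,\d t \leq t_0 |\{\nu \geq t_0\}| + \int_{t_0}^\infty |\{\nu > t\}|\,\d t = \int_{\{\nu \geq t_0\}}\nu,
\end{align*}
the last equality being another layer cake computation applied to $\int_{\{\nu \geq t_0\}}\nu$.

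Subtracting this bound from $\int_{\R^d}\nu$ produces
\begin{align*}
\int_{E^c} \nu(x-y)\,\d y \geq \int_{\R^d}\nu - \int_{\{\nu \geq \nu^*(r_E)\}}\nu = \int_{\{\nu < \nu^*(r_E)\}}\nu = \nu^\#(|E|),
\end{align*}
uniformly in $x \in \R^d$, and taking the essential infimum on the left concludes the proof. Equivalently, one could invoke the classical Hardy--Littlewood rearrangement inequality $\int_F \nu \leq \int_{B_{r_E}(0)}\nu^*$ together with the equimeasurability identity $\int_{B_{r_E}(0)}\nu^* = \int_{\{\nu \geq \nu^*(r_E)\}}\nu$, but the direct layer cake argument keeps the proof self-contained. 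The only mildly delicate point is the bookkeeping between $|\{\nu > t_0\}|$ and $|\{\nu \geq t_0\}|$ at the threshold, which is immediate from the infimum definition of $\nu^*(r_E)$; I do not anticipate any serious obstacle.
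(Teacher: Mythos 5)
This lemma is not proved in the paper itself (it is quoted from \cite{Fog21b}), so there is no internal proof to compare against; judged on its own terms, your overall plan is the right one: translate so that the claim becomes a uniform bound over sets $F$ with $|F|=|E|$, control $\int_F\nu$ by a Hardy--Littlewood/bathtub argument at the threshold $t_0=\nu^*(r_E)$, and pass to the complement. Your layer-cake computation giving $\int_F\nu\leq \int_{\{\nu\geq t_0\}}\nu$ and the observation $|\{\nu\geq t_0\}|\geq|E|$ (which, incidentally, uses continuity from above and hence the finiteness of $|\{\nu>s\}|$ for $s>0$, guaranteed by \eqref{eq:plevy-integ-cond}) are correct.

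The gap is in the complementation step. You write $\int_{F^c}\nu=\int_{\R^d}\nu-\int_F\nu$ and later $\int_{\R^d}\nu-\int_{\{\nu\geq t_0\}}\nu=\int_{\{\nu<t_0\}}\nu$, but in this paper $\nu$ is only $p$-L\'evy integrable, so typically $\nu\notin L^1(\R^d)$ (e.g.\ the fractional kernel $|h|^{-d-sp}$). Then $\int_{\R^d}\nu=\infty$ and $\int_{\{\nu\geq t_0\}}\nu=\infty$ (the superlevel set contains the singularity), and whenever the translate $F=E-x$ carries the singularity one also has $\int_F\nu=\infty$; both subtractions are then of the form $\infty-\infty$, your upper bound on $\int_F\nu$ is vacuous, and the chain proves nothing precisely in the prototypical case the lemma is used for. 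The repair stays within your toolkit: bound $\int_{F^c}\nu\geq\int_{F^c}(\nu\land t_0)$ and run your subtraction with the truncated kernel, which is integrable since
\begin{align*}
\int_{\R^d}(\nu\land t_0)\,\d h=t_0\,|\{\nu\geq t_0\}|+\int_{\{\nu<t_0\}}\nu(h)\,\d h<\infty,
\end{align*}
so that $\int_{F^c}(\nu\land t_0)\geq t_0|\{\nu\geq t_0\}|+\int_{\{\nu<t_0\}}\nu-t_0|E|\geq \nu^\#(|E|)$ by $|\{\nu\geq t_0\}|\geq|E|$; alternatively, prove directly the equivalent inequality $\int_{F\cap\{\nu<t_0\}}\nu\leq\int_{F^c\cap\{\nu\geq t_0\}}\nu$, using $|F^c\cap\{\nu\geq t_0\}|\geq|\{\nu\geq t_0\}|-|F\cap\{\nu\geq t_0\}|\geq|F\cap\{\nu<t_0\}|$ and then cancel the finite quantity $\int_{F^c\cap\{\nu<t_0\}}\nu$. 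As written, though, the argument fails for every non-integrable kernel, so this must be fixed rather than glossed over.
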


It is worth noting that when  $\nu(h)= |h|^{-d-sp},$ $h\neq 0$ we have $\nu^*=\nu$ and 
\begin{align*}
\nu^\# (|E|)= \int_{ \{\nu <\nu^*(r_E)\} } \hspace*{-3ex} \nu(h)\d h
= \int_{|h|>r_E } \hspace*{-3ex}|h|^{-d-sp} \d h= \frac{1}{sp} \Big(\frac{|E|}{|B_1(0)|} \Big)^{-sp}|\mathbb{S}^{d-1}|. 
\end{align*}
Accordingly, Lemma \ref{lem:inequality-indicator} implies the following; see also \cite[Lemma 6.1]{Hitchhiker}, 
\begin{align*}
&\essinf_{x\in\R^d}\int_{E^c} \frac{\d y}{|x-y|^{d+sp}} \geq \frac{1}{sp} \Big(\frac{|E|}{|B_1(0)|} \Big)^{-sp}|\mathbb{S}^{d-1}|. 
\end{align*}	

\begin{theorem}[Poincar\'{e}-Friedrichs inequality II]\label{thm:poincare-friedrichs-finite-bis}
Assume $\nu\not \equiv0$, i.e., $\big|\{\nu >0\}\big|>0$ and  $|\Omega|<\infty$.  Then for every $m\in \mathbb{N}$ the following inequality holds 
\begin{align}\label{eq:poincare-friedrichs-ext-bis}
\|u\|^p_{L^p(\Omega)}\leq C^\#_{m} \mathcal{E}(u,u)\,\qquad\text{for all $u\in 	L^p_\Omega(\R^d)$},
\end{align}
where, for  sufficiently large $m$, we have  $0<C^\#_{m}= C(d,p,|\Omega|,m,\nu)<\infty$ with 	
\begin{align*}
C^\#_{m}  = 2^{pm} \|q\|^{2^m-1}_{L^1(\R^d)}\Big(q^\#_m(|\Omega|)\Big)^{-1}.
\end{align*}
Here we recall $q_m = q_{m-1}* q_{m-1}$, $q_0=q$ with $q:=1\land \nu= \min(1, \nu)$ and $q_m^\#$ is defined as in \eqref{eq:nu-hash}. 	 
\end{theorem}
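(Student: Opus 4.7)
The plan is to mirror the proof of Theorem~\ref{thm:poincare-friedrichs-ext-bis}, replacing the slab-containment argument for the tail of $q_m$ with Lemma~\ref{lem:inequality-indicator} applied to the iterated convolution kernel $q_m$ of $q=1\wedge\nu$. That lemma provides exactly the tail lower bound needed when only $|\Omega|$, not the shape of $\Omega$, is controlled.

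First, observe that $q$ inherits symmetry and nontriviality from $\nu$, so by induction each $q_m$ is symmetric, nonnegative, and in $L^1(\R^d)\cap L^\infty(\R^d)$. Since $u=0$ on $\Omega^c$, expanding the double integral into four pieces and using the symmetry of $q_m$ yields
\begin{align*}
\mathcal{Q}_m(u)\;\geq\;2\iil_{\Omega\,\Omega^c}|u(x)|^p\,q_m(x-y)\,\d y\,\d x
\;\geq\; 2\,q_m^\#(|\Omega|)\,\|u\|^p_{L^p(\Omega)},
\end{align*}
where the second inequality applies Lemma~\ref{lem:inequality-indicator} with $E=\Omega$ to the symmetric kernel $q_m$. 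Combining this with Lemma~\ref{lem:folded-convo}~(iii) and the pointwise bound $q\leq\nu$ gives
\begin{align*}
2\,q_m^\#(|\Omega|)\,\|u\|^p_{L^p(\Omega)}\;\leq\;\mathcal{Q}_m(u)\;\leq\; B_m\,\mathcal{Q}(u)\;\leq\; B_m\,\mathcal{E}(u,u),
\end{align*}
with $B_m=2^{pm}\|q\|^{2^m-1}_{L^1(\R^d)}$. Rearranging produces \eqref{eq:poincare-friedrichs-ext-bis} with a constant of the asserted form $C^\#_m=B_m/q_m^\#(|\Omega|)$ up to the harmless factor $2$.

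The main remaining point is the finiteness of $C^\#_m$ for large $m$, i.e., the strict positivity of $q_m^\#(|\Omega|)$. Setting $r_\Omega=(|\Omega|/|B_1(0)|)^{1/d}$ and using the equimeasurability of $q_m$ with its symmetric decreasing rearrangement $q_m^*$, one can rewrite
\begin{align*}
q_m^\#(|\Omega|)=\int_{\{q_m<q_m^*(r_\Omega)\}}q_m\,\d h=\int_{|h|>r_\Omega}q_m^*(h)\,\d h,
\end{align*}
so positivity of $q_m^\#(|\Omega|)$ is equivalent to $|\{q_m>0\}|>|\Omega|$. Lemma~\ref{lem:folded-convo}~(ii) supplies exactly this: $q_m>0$ on $B_{\delta_m}(0)$ with $\delta_m=(7/4)^m\delta\to\infty$, so $|B_{\delta_m}(0)|>|\Omega|$ once $m$ is large. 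The bound $q_m^\#(|\Omega|)\leq\|q_m\|_{L^1(\R^d)}<\infty$ is automatic. The main difficulty lies in this positivity step, since $q_m^\#$ is defined via an implicit rearrangement threshold; the equimeasurability identity is the crucial bridge that converts it into the concrete support statement furnished by Lemma~\ref{lem:folded-convo}~(ii). The remainder is a routine adaptation of Theorem~\ref{thm:poincare-friedrichs-ext-bis}.
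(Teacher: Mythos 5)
Your proposal is correct and follows essentially the same route as the paper: the same splitting of $\mathcal{Q}_m(u)$ using $u=0$ on $\Omega^c$, the same application of Lemma~\ref{lem:inequality-indicator} to the convolved kernel $q_m$ with $E=\Omega$, the same chaining via Lemma~\ref{lem:folded-convo}~(iii) and $q\leq\nu$, and the same use of Lemma~\ref{lem:folded-convo}~(i)--(ii) to get $q_m^\#(|\Omega|)>0$ for large $m$. One minor caveat: the asserted identity $q_m^\#(|\Omega|)=\int_{|h|>r_\Omega}q_m^*(h)\,\d h$ is in general only an inequality ``$\leq$'' (the level set $\{q_m^*<q_m^*(r_\Omega)\}$ may omit an annulus where $q_m^*$ is constant), but since $q_m$ is continuous, integrable and hence vanishes at infinity, the set $\{0<q_m<q_m^*(r_\Omega)\}$ has positive measure whenever $|\{q_m>0\}|>|\Omega|$, so the positivity conclusion for large $m$ — which is all you need, and is exactly what the paper's own terse justification amounts to — still holds.
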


\begin{proof}
Accordingly, for $u\in L^p_\Omega(\R^d)$ there holds that 
\begin{align*}
\mathcal{Q}_{m}(u) &\geq 2\int_{\Omega}|u(x)|^p \d x \int_{\Omega^c} q_m(x-y)\d y \geq  \|u\|^p_{L^p(\Omega)}q^\#_m(|\Omega|). 
\end{align*}
The assumptions on $\nu$ imply that $q=1\land \nu\in L^1(\R^d)\cap L^\infty(\R^d) $ and $\nu\equiv0$ if and only if $q\equiv0$.  Moreover, $q= 1\land \nu\leq \nu$, the above estimate and Lemma \ref{lem:folded-convo} (iii)  yield 
\begin{align*}
\cE(u,u)\geq \mathcal{Q}(u) \geq B^{-1}_m\mathcal{Q}_{m}(u) 
\geq  (C^\#_m)^{-1}\|u\|^p_{L^p(\Omega)}.
\end{align*}
\noindent By Lemma \ref{lem:folded-convo} (i), $ q_m\in L^1(\R^d)\cap L^\infty(\R^d)$ and hence $q^\#_m(|\Omega|)<\infty$. By Lemma \ref{lem:folded-convo} (ii) we have  $\delta_m>|\Omega|$ for $m$ large so that $q^\#_m(|\Omega|)\geq\inf_{B_{\delta_m }(0)} q_m>0$. Therefore, $0<	C^\#_m <\infty$.
\end{proof}

%%%%%%%%%%%%%%%%%%%%%%%%%%%%%%%%
\subsection{Nonlocal Poincar\'{e} inequality}
%%%%%%%%%%%%%%%%%%%%%%%%%%%%%%%%
In this section $\Omega\subset \R^d$ is open and bounded, $1\leq p<\infty$ and $\nu:\R^d\setminus\{0\}\to [0, \infty)$  is symmetric. Our goal in this section is to find some conditions on $\Omega$ and $\nu$ under which the following Poincar\'{e} inequality holds true, i.e., we can find a constant $C= C(d,p, \Omega,\nu)>0$ such that 
\begin{align}\tag{$P$}\label{eq:poincare-semi-gagliardo}
\|u-\mbox{$\fint_{\Omega}$}u\|^p_{L^p(\Omega)}\leq C\cE_\Omega(u,u) \,\quad\text{for all $u\in L^p(\Omega)$}. 
\end{align}  
Here and in what follows the notation $\mbox{$\fint_E$} u= \frac{1}{|E|}\int_E u(x)\d x$. So that since  $\cE_\Omega(u,u) \leq \cE(u,u)$, \eqref{eq:poincare-semi-gagliardo} would imply
\begin{align}
\label{eq:poincare-big-semi}
\|u-\mbox{$\fint_{\Omega}$}u\|^p_{L^p(\Omega)}\leq C\cE(u,u) \,\quad\text{for all $u\in L^p(\Omega)$}. 
\end{align}

We were unable to find a suitable reference of the following Lemma \ref{lem:chain-covering}, which is probably known in the literature. We however give a simple proof for the convenience of the reader.
\begin{lemma}[Finite chain covering]\label{lem:chain-covering}
Assume $\Omega\subset \R^d$ is connected. For every $r>0$ there is  a finite family of balls $(B_i)_{1\leq i\leq n}$, $B_i=B(x_i, r)$ covering $\Omega$ with $x_i\in \Omega$ such that $B_{i-1}\cap B_i\neq \emptyset$, $i=2,3,\cdots,n$.
\end{lemma}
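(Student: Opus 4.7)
Since $\Omega$ is bounded, $\overline{\Omega}$ is compact. As $\Omega$ is open, for every $y\in\overline{\Omega}$ there is a point of $\Omega$ within distance $r$ of $y$, so the collection $\{B(x,r):x\in\Omega\}$ is an open cover of $\overline{\Omega}$. Compactness yields a finite subcover $\mathcal{B}=\{B(x_1,r),\ldots,B(x_N,r)\}$ with every $x_i\in\Omega$; this covers $\overline{\Omega}$ and a fortiori $\Omega$. The task then reduces to reordering (possibly with repetitions) the balls of $\mathcal{B}$ so that consecutive ones intersect.

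Next, I introduce the intersection graph $G$ on the vertex set $\{1,\ldots,N\}$ where $\{i,j\}$ is an edge iff $B(x_i,r)\cap B(x_j,r)\neq\emptyset$. The crux is to prove $G$ is connected, and this is exactly where I exploit the connectedness of $\Omega$. Suppose for contradiction that $G$ has connected components $C_1,\ldots,C_m$ with $m\geq 2$, and set $U_k=\bigcup_{i\in C_k}B(x_i,r)$. Each $U_k$ is open in $\R^d$, and the $U_k$ are pairwise disjoint: if $U_k\cap U_l\neq\emptyset$ for some $k\neq l$, there exist $i\in C_k$ and $j\in C_l$ with $B(x_i,r)\cap B(x_j,r)\neq\emptyset$, joining $C_k$ and $C_l$ into one component, a contradiction. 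Hence $\{\Omega\cap U_k\}_{k=1}^m$ is a partition of $\Omega$ into pairwise disjoint open subsets, each non-empty because $x_i\in\Omega\cap U_k$ for any $i\in C_k$. This contradicts the connectedness of $\Omega$, so $m=1$ and $G$ is connected.

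Finally, since $G$ is connected on finitely many vertices, pick any spanning tree $T$ of $G$ and perform a depth-first traversal starting from an arbitrary vertex. The resulting walk has length at most $2(N-1)+1$, visits every vertex of $G$, and any two consecutive vertices in the walk are adjacent in $T\subset G$, i.e. the corresponding balls intersect. Relabelling this walk as $B_1,B_2,\ldots,B_n$ gives the desired chain covering.

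The only mildly delicate point is the connectedness of $G$: two balls $B(x_i,r)$ and $B(x_j,r)$ may very well be disjoint in $\R^d$ even though both meet $\Omega$, so the conclusion is not purely combinatorial and genuinely uses the hypothesis that $\Omega$ is connected, as formalized by the clean open partition argument above.
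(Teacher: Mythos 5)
Your proof is correct, and it differs from the paper's in the combinatorial step. Both arguments begin the same way: compactness of $\overline{\Omega}$ gives a finite subcover by balls $B(x_i,r)$ with $x_i\in\Omega$ (note that, like the paper, you silently assume $\Omega$ is bounded — the statement omits this, but it is needed for a finite cover and is in force in the section where the lemma is applied). The paper then builds the chain greedily: using connectedness of $\Omega$ it finds, at each stage, a ball not yet used that meets the union of the current chain, and inserts it by relabeling; you instead prove once and for all that the intersection graph of the finite cover is connected, via the clean observation that a splitting into two or more graph components would partition $\Omega$ into disjoint nonempty open pieces, and then extract the chain as a depth-first walk of a spanning tree, allowing repeated balls. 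The underlying topological input is identical (a nontrivial grouping of the balls with disjoint unions would disconnect $\Omega$), but your version buys a cleaner separation of the topology from the combinatorics and avoids the paper's insertion bookkeeping — indeed, as written the paper's step inserting $B$ between $B_{j_0-1}$ and $B_{j_0}$ only guarantees $B\cap B_{j_0}\neq\emptyset$, not $B\cap B_{j_0-1}\neq\emptyset$, a gap your walk-with-repetitions sidesteps automatically. The price is that balls may occur several times in your chain; since the lemma asks for a finite family (not distinct balls) and the application in Lemma \ref{lem:zero-enegery-cst} only propagates equality of constants along consecutive intersecting balls, this is harmless.
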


\begin{proof}
It is readily seen that the balls $B(x,r)$, $x\in \Omega$ covers the compact set $\overline{\Omega}$ (the closure of $\Omega$). Thus, there is a finite sub-cover $\mathcal{B}= \{B_i\,:\, i\in \{1,2,\cdots,n\} \}$ of $\Omega$ with $B_i=B(x_i, r), x_i\in \Omega$. Next, let us write $B_{i-1}\sim B_i$ to indicate that $B_{i-1}\cap B_i \neq \emptyset$. Since $\Omega$ is connected, there are $B, B'\in \mathcal{B}$ such that $B\cap B'\neq \emptyset$. Up to a relabeling, denote the chain $C_2= B_1\sim B_2$ with $B_1=B, B_2=B'$. Assume, up to relabeling the indices, there is  a chain 
\begin{align*}
C_i= B_1\sim B_2\sim\cdots\sim B_i.
\end{align*} 
Given that $\Omega$ is connected, there is $B\in \mathcal{B}\setminus \bigcup_{j=1}^i B_j$ such that $B\cap \bigcup _{j=1}^i B_j\neq \emptyset$. Thus, we can consider $j_0=\max\{j\in \{1,2,\cdots,i\}\,: B_j\sim B\}$ and define a chain  $C_{i+1}$ as follows
\begin{alignat*}{2}
C_{i+1}&= B_1\sim\cdots \sim B_{j_0-1}\sim B\sim B_{j_0}\sim\cdots\sim B_i&&\quad\text{if $j_0\neq1$},\\
C_{i+1}&= B\sim B_1\sim B_2\sim\cdots\cdots\sim B_i&&\quad\text{if $j_0=1$}.
\end{alignat*}
In this manner, up to relabeling the indices, one gets a chain $C_{n}$  containing the whole $ \mathcal{B}$. 
\end{proof}
\smallskip 

We also need to consider the following generalization of connected sets.   
\begin{definition} 
We say that 
$\Omega\subset \R^d$ is $\rho-$connected, $\rho\geq 0$, if $\Omega= \Omega_1\cup \Omega_2\cup\cdots \cup\Omega_m$, $m\geq1,$ where each $\Omega_i$ is open and connected such that $\dist(\Omega_i, \Omega_{i+1})<\rho$, $i=1,2, \cdots,n-1$. 
\end{definition}

It is worthwhile to observe that  every connected set is $0$-connected and  the converse is not true. Consider $B_\pm= B_1(0)\cap \{x=(x', x_d): \pm x_d>0\}$ which are connected. Therefore, since $\dist(B_-,B_+)=0$ we see that $\Omega= B_-\cup B_+$ is $0$-connected  
but not connected. Another observation is that if  $\dist(\Omega_i, \Omega_{i+1})<\rho $ one finds $a_i\in \Omega_i, b_i\in \Omega_{i+1}$ such that $|a_i-b_i|<\rho$.

\begin{lemma}\label{lem:zero-enegery-cst} 
Assume $\nu> 0$ a.e. on $B_r(0)$ for some $r \in(0,\infty]$. Assume  one of the following  conditions holds. 
\begin{enumerate}[$(i)$]
\item $r\geq \diam(\Omega)$. 
\item $\Omega$ is connected.  
\item $\Omega$ is $\rho$-connected with $0\leq \rho< r$.
\end{enumerate}
There holds $\cE_\Omega(u,u)=0$ if and only if $u=\lambda$ a.e. in $\Omega$, $\lambda\in \R$.
\end{lemma}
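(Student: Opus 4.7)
The reverse implication is trivial: if $u=\lambda$ a.e.\ on $\Omega$, the integrand in $\cE_\Omega(u,u)$ vanishes pointwise. For the forward direction, the nonnegativity of $|u(x)-u(y)|^p\nu(x-y)$ together with the vanishing of its integral over $\Omega\times\Omega$ forces $|u(x)-u(y)|^p\nu(x-y)=0$ for a.e.\ $(x,y)\in\Omega\times\Omega$; combined with $\nu>0$ a.e.\ on $B_r(0)$, this yields the key local property $(\star)$: $u(x)=u(y)$ for a.e.\ $(x,y)\in\Omega\times\Omega$ with $|x-y|<r$.

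Case (i) is then immediate: any pair in $\Omega\times\Omega$ satisfies $|x-y|\leq\diam(\Omega)\leq r$, and since the sphere $\{|x-y|=r\}$ has Lebesgue measure zero in $\R^d\times\R^d$, property $(\star)$ gives $u(x)=u(y)$ for a.e.\ $(x,y)\in\Omega\times\Omega$; a Fubini extraction then produces $x_0\in\Omega$ with $u=u(x_0)$ a.e.\ on $\Omega$.

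For case (ii), I would fix $r'\in(0,r/4)$ and apply Lemma~\ref{lem:chain-covering} to obtain a finite family of balls $B_i=B(x_i,r')$, $x_i\in\Omega$, $1\leq i\leq n$, covering $\Omega$ with $B_{i-1}\cap B_i\neq\emptyset$. Each $B_i\cap\Omega$ contains a neighborhood of $x_i$ and hence has positive measure. Since any pair in $B_i\times B_i$ has distance $<2r'<r$, property $(\star)$ together with Fubini furnishes a constant $c_i\in\R$ with $u=c_i$ a.e.\ on $B_i\cap\Omega$. For consecutive indices, $|x_i-x_{i-1}|<2r'$ forces $|x-y|<4r'<r$ for every $(x,y)\in B_i\times B_{i-1}$, so $(\star)$ applied on $(B_i\cap\Omega)\times(B_{i-1}\cap\Omega)$ yields $c_i=c_{i-1}$; hence a single $\lambda\in\R$ works on every $B_i\cap\Omega$, and $u=\lambda$ a.e.\ on $\Omega=\bigcup_{i=1}^n(B_i\cap\Omega)$.

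Case (iii) follows by combining case (ii) with a transversal argument between components: case (ii) applied to each connected $\Omega_j$ produces $u\equiv c_j$ a.e.\ on $\Omega_j$, and for consecutive $j$ I pick $a_j\in\Omega_j$, $b_j\in\Omega_{j+1}$ with $|a_j-b_j|<r$ (possible since $\dist(\Omega_j,\Omega_{j+1})\leq\rho<r$), then choose $\epsilon>0$ so small that $|a_j-b_j|+2\epsilon<r$ and $B(a_j,\epsilon)\subset\Omega_j$, $B(b_j,\epsilon)\subset\Omega_{j+1}$. Every pair in $B(a_j,\epsilon)\times B(b_j,\epsilon)$ then has distance $<r$, so $(\star)$ forces $c_j=c_{j+1}$, and a common $\lambda$ is reached along the chain. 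The main technical point to watch throughout is the careful bookkeeping of the null sets arising from $(\star)$ when restricting to positive-measure product subsets, but since the chain coverings and the family of components are both finite, only finitely many Fubini extractions are needed and no accumulation of null sets can obstruct the argument.
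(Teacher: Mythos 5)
Your proof is correct and follows essentially the same route as the paper's: the same local constancy property extracted from $\cE_\Omega(u,u)=0$ together with $\nu>0$ a.e.\ on $B_r(0)$, the same use of the finite chain covering Lemma \ref{lem:chain-covering} for the connected case, and the same bridging of consecutive pieces through points at distance less than $r$ in the $\rho$-connected case. The only difference is bookkeeping: you state the local property almost everywhere on the product $\Omega\times\Omega$ and extract constants on positive-measure sets via Fubini, whereas the paper tracks the exceptional null sets $A$ and $A_x$ pointwise (choosing chain points outside them); this is a cosmetic variation, not a different argument.
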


\begin{proof}
Clearly, if $u=\lambda$ a.e. in $\Omega$ then $\cE_\Omega(u,u)=0$. Conversely, if $\cE_\Omega(u,u)=0$, there is a null set $A\subset \Omega$ such that 
\begin{align*}
\int_{\Omega} |u(x)-u(y)|^p\nu(x-y)\d y=0\qquad\text{for all $x\in \Omega\setminus A$}.
\end{align*}
For each $x\in \Omega\setminus A$ there is a null set $A_x\subset\Omega$ such that 
\begin{align*}
|u(x)-u(y)|^p\nu(x-y)=0\qquad\text{for all $y\in \Omega\setminus A_x$}.
\end{align*}
Since $\nu>0$ a.e. in $B_r(0)$, we can assume  up to renaming the null set $A_x$ that
\begin{align}\label{eq:u-constant-ae}
u(y)=u(x)\qquad\text{for $y\in B_r(x)\cap (\Omega\setminus A_x),\,\, x\in \Omega\setminus A$}.
\end{align}

$(i)$ If $r\geq \diam(\Omega)$ then for fixed $x_0\in \Omega\setminus A$ we have $\Omega\subset B_r(x_0)$ so that $u(x)=u(x_0)$ for all $x\in \Omega\setminus A_{x_0}$. Hence, $u= u(x_0)$ almost everywhere in $\Omega$. 

\smallskip 

$(ii)$ Assume $\Omega$ is connected. By Lemma \ref{lem:chain-covering}, we can cover $\Omega$ with a family of balls $(B_i)_{1\leq i\leq n}$, $B_i=B(x_i,r/2)$ such that $B_{i-1}\cap B_i \neq \emptyset,\,\,  i=2,3,\cdots, n.$ Thus, there is $z_i\in (B_{i-1}\cap B_i)\setminus A$. Consider the null set $A'=  A_{z_2}\cup A_{z_3}\cup\cdots\cup A_{z_n}$. 
Note that $B_{i-1}\cup B_i\subset B_r(z_i)$ and according to \eqref{eq:u-constant-ae} we get $u(x)=u(z_i)$ for all $x\in (B_r(z_i)\cap \Omega)\setminus A_{z_i}$. In particular, we have  $u(x)=u(z_i)$ for all $x\in(B_{i-1}\cup B_{i})\cap (\Omega\setminus A')$. By the same token,  $u(x)=u(z_{i+1})$ for all $x\in(B_{i}\cup B_{i+1})\cap (\Omega\setminus A')$. It turns out that $u(x)=u(z_i)= u(z_{i+1})$ for all $x\in B_{i}\cap (\Omega\setminus A')$. Finally, we deduce that $u(z_2)=u(z_3)=\cdots u(z_n) =\lambda$. Since $\Omega=\bigcup_{i=1}^n B_i\cap\Omega$,  we can conclude that  $u(x)= \lambda$ for all $x\in \Omega\setminus A'$ where we recall that $A'$ is a null set.

\smallskip 

$(iii)$ Now assume $\Omega$ is $\rho$-connected with $0\leq \rho<r$, i.e.  $\Omega=\Omega_1\cup\Omega_2\cdots\Omega_m$ where each $\Omega_i$ is open bounded, connected and $\dist(\Omega_i, \Omega_{i+1})<\rho$. The previous step implies that $u=\lambda_i$ a.e. on $\Omega_i$, $i=1,2,\cdots,m$. On the other hand, considering $0<\eps <\frac{r-\rho}{2}$ so that $0\leq \rho<\rho+2\eps <r$, we can find $a_i\in \Omega_i$ and $b_i\in \Omega_{i+1}$ such that $$\dist(\Omega_i, \Omega_{i+1})\leq|a_i-b_i|<\rho+\eps<r.$$  
It follows that $B(a_i, \eps)\cap \Omega_i\neq \emptyset$ and $B(a_i, \rho+\eps)\cap \Omega_{i+1}\neq \emptyset$. Given that $A$ is a null set, we can find $\xi_i\in (\Omega_{i+1}\cap B(a_i, \rho+\eps)) \setminus A$. Since $u=\lambda_{i+1}$ a.e. in $\Omega_{i+1}$, by \eqref{eq:u-constant-ae} we get
\begin{align*}
\text{$u(x)= u(\xi_i)= \lambda_{i+1}$ for all $x\in B(\xi_i,r)\cap(\Omega\setminus A_{\xi_i}) .$}
\end{align*}
Note that for $z\in B(a_i, \eps)$, since $\xi_i\in B(a_i, \rho+\eps)$, we have 
$$|z-\xi_i|\leq |z-a_i|+|a_i-\xi_i|< \rho+2\eps<r.$$ 
This implies $B(a_i, \eps)\subset B(\xi_i, r)$. Since $u=\lambda_i$ a.e. in $\Omega_i$, in particular we have 
\begin{align*}
\text{$\lambda_i= u(x)= u(\xi_i)= \lambda_{i+1}$\,\,  for \,\,  $x\in B_\eps(a_i)\cap(\Omega_{i}\setminus A_{\xi_i}) \subset  B_r(\xi_i)\cap(\Omega\setminus A_{\xi_i}) $}. 
\end{align*}
This shows that $\lambda_1= \lambda_2=\cdots=\lambda_n,$ and hence  we find $u=\lambda_1$ a.e. in $\Omega$. 
\end{proof}
 
The following result is some  general nonlocal Poincar\'{e} inequality. 
\begin{theorem}
%[Poincar\'{e} inequality III]
\label{thm:poincare-gene}
Assume $\Omega$ is bounded and there is $r>0$ such that 
\begin{align}\label{eq:kappa}
\kappa:=\essinf_{h\in B_r(0)}\nu(h)>0.
\end{align}
If  $r\geq \diam(\Omega)$ or else $\Omega$ is $\rho$-connected with  $r>\rho\geq0$ then there is $C=C(d,p,\Omega,\nu)>0$ such that 
\begin{align}
\|u-\mbox{$\fint_{\Omega}$}u\|^p_{L^p(\Omega)} \leq C\cE_\Omega(u,u)\,\qquad\text{for all $u\in L^p(\Omega)$}.
\end{align} 
\end{theorem}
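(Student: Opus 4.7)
The plan is to reduce the inequality to one for the truncated kernel. Since $\nu \geq \kappa \mathbf{1}_{B_r(0)}$, we have
\[
\cE_\Omega(u,u)\;\geq\;\kappa\,\mathcal{I}_r(u), \qquad \mathcal{I}_r(u):=\iil_{\Omega\Omega,\,|x-y|<r}|u(x)-u(y)|^p\,\d y\,\d x,
\]
so it suffices to show $\|u-\fint_\Omega u\|_{L^p(\Omega)}^p\leq C\,\mathcal{I}_r(u)$ with $C=C(d,p,\Omega,\nu)>0$. The proof then splits into the trivial case $r\geq \diam(\Omega)$ and the nontrivial case in which chaining of local averages is required.

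For $r\geq \diam(\Omega)$, the constraint $|x-y|<r$ is automatic on $\Omega\times\Omega$, so Jensen's inequality directly yields
\[
\int_\Omega\Big|u-\fint_\Omega u\Big|^p\d x\;\leq\; \frac{1}{|\Omega|}\iil_{\Omega\Omega}|u(x)-u(y)|^p\d y\,\d x\;=\;\frac{1}{|\Omega|}\,\mathcal{I}_r(u),
\]
giving the inequality with $C=(\kappa\,|\Omega|)^{-1}$.

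For the remaining range, first assume $\Omega$ is connected ($m=1$ in the $\rho$-connected decomposition, so the hypothesis reduces to $r>0$). Apply Lemma \ref{lem:chain-covering} with radius $r/4$ to obtain a chain cover $B_i=B(x_i,r/4)$, $i=1,\ldots,N$, of $\Omega$ with $x_i\in\Omega$ and $B_{i-1}\cap B_i\neq\emptyset$. Each $O_i:=B_i\cap\Omega$ has positive measure (as $\Omega$ is open and $x_i\in\Omega$), and the chain condition forces $|x_i-x_{i+1}|<r/2$, so $M_i:=O_i\cup O_{i+1}$ has diameter strictly less than $r$, whence $M_i\times M_i\subset\{|x-y|<r\}$. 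Writing $c_i:=\fint_{O_i}u$ and $c_i^{M}:=\fint_{M_i}u$, a double application of Jensen's inequality gives
\[
|c_i-c_i^{M}|^p \;\leq\; \fint_{O_i}|u-c_i^{M}|^p\,\d x \;\leq\; \frac{1}{|O_i|\,|M_i|}\iil_{M_iM_i}|u(x)-u(y)|^p\d y\,\d x \;\leq\; \frac{\mathcal{I}_r(u)}{|O_i|\,|M_i|},
\]
and similarly for $|c_{i+1}-c_i^{M}|^p$; hence $|c_i-c_{i+1}|^p\leq K_i\,\mathcal{I}_r(u)$, and iterating the triangle inequality along the (finite) chain gives $|c_i-c_1|^p\leq C'\,\mathcal{I}_r(u)$ for all $i$. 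Combined with the one-ball Poincar\'e estimate $\int_{O_i}|u-c_i|^p\d x\leq \frac{1}{|O_i|}\mathcal{I}_r(u)$ and the decomposition $\int_\Omega|u-c_1|^p\d x\leq \sum_i\int_{O_i}|u-c_1|^p\d x$, this yields $\|u-c_1\|_{L^p(\Omega)}^p\leq C''\,\mathcal{I}_r(u)$; replacing $c_1$ by $\fint_\Omega u$ costs only a constant factor by Jensen.

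For the general $\rho$-connected case, apply the connected subcase to each component $\Omega_k$ to obtain $\int_{\Omega_k}|u-\lambda_k|^p\d x\leq C_k\,\mathcal{I}_r(u)$ with $\lambda_k:=\fint_{\Omega_k}u$. To chain the $\lambda_k$ together, use $\dist(\Omega_k,\Omega_{k+1})<\rho<r$ to pick $a_k\in\Omega_k$, $b_k\in\Omega_{k+1}$ with $|a_k-b_k|<\rho'$ for some fixed $\rho'\in(\rho,r)$, and insert bridge balls $B(a_k,s)\subset\Omega_k$, $B(b_k,s)\subset\Omega_{k+1}$ with $s>0$ small enough that $\rho'+2s<r$; the bridge pair plays exactly the role of two consecutive elements of the previous chain, so the same Jensen argument controls $|\lambda_k-\lambda_{k+1}|^p$ by $\mathcal{I}_r(u)$ and iteration completes the proof. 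The principal difficulty I foresee is purely organizational: tracking that every intermediate averaging set has positive measure (ensured by the openness of $\Omega$ and the centers lying inside $\Omega$) and that every pair of averaged points lies within distance $r$ (ensured by the choices of radii $r/4$ and $s$), so that the constant $C$ ultimately depends only on $d$, $p$, $\Omega$, and $\nu$ through $\kappa$, $r$, and the number of balls in the cover.
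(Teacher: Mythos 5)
Your proof is correct, but it takes a genuinely different route from the paper. The paper argues by contradiction: it takes a normalized sequence with $\fint_\Omega u_n=0$, $\|u_n\|_{L^p(\Omega)}=1$, $\cE_\Omega(u_n,u_n)\to 0$, extracts a weakly convergent subsequence in $L^p(\Omega)$, shows via a.e.\ convergence of local averages over a finite cover that the limit is locally constant, invokes Fatou together with Lemma \ref{lem:zero-enegery-cst} (whose proof is where the chain covering Lemma \ref{lem:chain-covering} enters) to conclude the limit is a single constant, upgrades to strong $L^p$ convergence, and derives a contradiction with the normalization. You instead give a direct, quantitative chaining argument: after reducing to the truncated kernel $\kappa\mathds{1}_{B_r(0)}$, you use the chain covering at scale $r/4$ to compare consecutive local averages via Jensen on sets of diameter less than $r$, sum along the chain, and bridge between the components $\Omega_k$ of a $\rho$-connected set with small balls around nearby points $a_k\in\Omega_k$, $b_k\in\Omega_{k+1}$ (your choice $\rho'\in(\rho,r)$ plays the same role as the paper's $\rho+\eps<r$ in Lemma \ref{lem:zero-enegery-cst}). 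Your approach buys an explicit constant (in terms of $\kappa$, the number of chain balls, and the measures of the sets $O_i$, all depending only on $d,p,\Omega,\nu$), avoids weak-compactness machinery entirely, and treats $p=1$ on the same footing as $p>1$, whereas the paper's argument must pass to weak-$*$ limits in the space of measures when $p=1$; the paper's proof, in exchange, reuses the rigidity Lemma \ref{lem:zero-enegery-cst} and needs less bookkeeping. The only step you leave implicit is linking the bridge averages to the component averages $\lambda_k$; this is a one-line Jensen estimate, $|\fint_{A_k}u-\lambda_k|^p\leq |A_k|^{-1}\int_{\Omega_k}|u-\lambda_k|^p\,\d x$, so it is not a genuine gap.
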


\begin{proof}
If $r\geq \diam(\Omega)$ then $\nu(x-y)\geq \kappa$ for all $x,y\in\Omega$. Hence Jensen's inequality yields 
\begin{align*}
\cE_\Omega(u,u)\geq\kappa \int_{\Omega}\int_{\Omega}|u(x)-u(y)|^p\d y\d x\geq \kappa|\Omega|\|u-\mbox{$\fint_{\Omega}u$}\|^p_{L^p(\Omega)}. 
\end{align*}
Now assume that $0<r<\diam(\Omega)$ and $\Omega$ is $\rho$-connected with $0<\rho\leq r$.  Assume such $C$ does not exist. We can find  a sequence $(u_n)_n\subset L^p(\Omega)$ such that  $\fint_{\Omega}u_n =0$, $\|u_n\|_{L^p(\Omega)}=1$ and $\cE_\Omega(u_n,u_n)\leq \frac{1}{2^n}$ and hence
\begin{align*}
\kappa \iint\limits_{\Omega \Omega}|u_n(x)-u_n(y)|^p\mathds{1}_{B_r(0)}(x-y)\d y\,\d x\leq \cE_\Omega(u_n,u_n)
%\iint\limits_{\Omega\Omega}|u_n(x)-u_n(y)|^p\nu(x-y)\d y\,\d x
\leq \frac{1}{2^n}\,.
\end{align*}
\noindent Since $(u_n)_n$ is bounded in $L^p(\Omega)$, passing through a subsequence if necessary we can assume that $(u_n)_n$ weakly converges in $L^p(\Omega)$ to some $u$, where $u\in L^p(\Omega)$ $p>1$ and $u$ is a  signed Radon measure on $\Omega$ for $p=1$.   In particular, $\fint_E u_n(x)\d x\to \fint_E u(x)\d x$ for every Borel set $E\subset \Omega$. In addition, we can assume there is a null set $A\subset \Omega$ such that 
\begin{align}\label{eq:conv-average}
\int_{B_r(x)\cap\Omega}|u_n(x)-u_n(y)|^p\d y\xrightarrow{n\to \infty}0\quad \text{for all $x\in \Omega\setminus A$}. 
\end{align}
Moreover, for each $x\in \Omega\setminus A$ there is another null set $A_x\subset \Omega$ such that 
\begin{align}\label{eq:conv-point}
u_n(x)-u_n(y) \xrightarrow{n\to \infty}0\quad \text{for all $y\in (B_r(x)\cap\Omega) \setminus A_x$}. 
\end{align}
Consider $(B_{r/2}(x_i))_{1\leq i\leq n }$ a finite cover of $\Omega$ with $x_i\in \Omega$. 
Choose $\xi_i\in (B_{r/2}(x_i)\cap\Omega) \setminus A$. Hence $(E_{\xi_i})_{1\leq i\leq n}$ with $E_{\xi_i}= B_{r}(\xi_i)\cap\Omega$ is also a finite cover of  $\Omega$.  Note that
\begin{align*}
\Big|u_n(\xi_i)- \fint_{E_{\xi_i}} u(y)\d y \Big|^p
&\leq  2^{p-1} \fint_{E_{\xi_i}} \big|u_n(\xi_i)-u_n(y)\big|^p\d y 
\\&+ 2^{p-1}\Big| \fint_{E_{\xi_i}} u_n(y)\d y - \fint_{E_{\xi_i}} u(y)\d y \Big|^p. 
\end{align*}
This together with  the weak convergence and the convergence in \eqref{eq:conv-average} imply 
\begin{align*}
u_n(\xi_i)\xrightarrow{n\to \infty} \lambda_{\xi_i}= \fint_{E_{\xi_i}} u(y)\d y.
\end{align*}
Whence by \eqref{eq:conv-point} we get that
\begin{align*}
u_n(y)\xrightarrow{n\to \infty} \lambda_{\xi_i} \quad \text{for all $y\in E_{\xi_i} \setminus A_{\xi_i}$}, \quad E_{\xi_i}= B_r(\xi_i)\cap\Omega.
\end{align*}
In other words, $u_n\to v$ a.e. in $\Omega$ where $v(x)= \lambda_{\xi_i} $ for $x\in E_{\xi_i}$. The Fatou's Lemma yields
\begin{align*}
\iint\limits_{\Omega\Omega}\hspace*{-1ex} |v (x)-v (y)|^p\nu(x-y)\d y\,\d x\leq 
\liminf_{n\to\infty} \hspace*{-1ex}\iint\limits_{\Omega\Omega}\hspace*{-1ex}|u_n(x)-u_n(y)|^p\nu(x-y)\d y\,\d x=0.
\end{align*}
It follows from Lemma \ref{lem:zero-enegery-cst}  that $v=\lambda$ in $\Omega$, that is,  $\lambda=\lambda_{\xi_i}= \cdots= \lambda_{\xi_n}$. On the other hand, since $\Omega=\bigcup_{i=1}^n E_{\xi_i}$, this combined with \eqref{eq:conv-average} gives  
\begin{align*}
\|u_n-v\|_{L^p(\Omega)}
&\leq \sum_{i=1}^{n}  \Big(\int_{E_{\xi_i}} |u_n(\xi_i)-u_n(y)|^p\d y\Big)^{1/p}
\\&+ |E_{\xi_i}|^{1/p} |u_n(\xi_i)-\lambda_{\xi_i}|\xrightarrow{n\to \infty}0. 
\end{align*}
That is, $(u_n)_n$ strongly converges to $v=\lambda$ in $L^p(\Omega)$. Taking into account the weak convergence we deduce that $u=v= \lambda$. Since $\fint_{\Omega}u_n =0$ and $\|u_n\|_{L^p(\Omega)}=1$,  it follows that $u=\fint_{\Omega}u =0$ and $\|u\|_{L^p(\Omega)}=1$ which is a contradiction. 
\end{proof}

\begin{theorem}\label{thm:poincare-uni}
Assume $\Omega\subset \R^d$ is connected or $0$-connected. Assume $\nu$ is unimodal and $|\{\nu>0\}|>0$, i.e., $\nu\not\equiv 0$. Then there is  $C=C(d,p,\Omega,\nu)>0$ such that 
\begin{align}
\|u-\mbox{$\fint_{\Omega}$}u\|^p_{L^p(\Omega)} \leq C\cE_\Omega(u,u)\,\qquad\text{for all $u\in L^p(\Omega)$}.
\end{align}
\end{theorem}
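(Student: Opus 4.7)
The plan is to reduce this result to Theorem \ref{thm:poincare-gene} by showing that unimodality automatically upgrades the qualitative assumption $|\{\nu>0\}|>0$ to the quantitative positivity condition \eqref{eq:kappa} on a ball around the origin. Since $\nu$ is radial and nontrivial, we may choose some $r_0>0$ lying in the (positive-measure) set of radii on which $\nu>0$, so that $\nu(r_0)>0$. Unimodality supplies a constant $c>0$ such that $\nu(h)\geq c\,\nu(r_0)$ for almost every $h\in B_{r_0}(0)$, and therefore
\begin{align*}
\kappa := \essinf_{h\in B_{r_0}(0)} \nu(h) \;\geq\; c\,\nu(r_0) \;>\; 0.
\end{align*}
This is precisely the hypothesis \eqref{eq:kappa} of Theorem \ref{thm:poincare-gene} with $r=r_0$.

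It remains to verify the structural assumption of Theorem \ref{thm:poincare-gene}. Recall that $\Omega$ is bounded throughout Section \ref{sec:poincare}. If $r_0\geq \diam(\Omega)$, we apply Theorem \ref{thm:poincare-gene} immediately. Otherwise we check that $\Omega$ is $\rho$-connected with $\rho=0<r_0$: when $\Omega$ is connected this is trivial by taking $m=1$ in the definition of $\rho$-connectedness (no consecutive pairs to control); when $\Omega$ is $0$-connected in the sense of the paper's own illustrative example (a finite touching union $\Omega=\Omega_1\cup\cdots\cup\Omega_m$ with $\dist(\Omega_i,\Omega_{i+1})=0$), we land in the admissible boundary case $\rho=0$ handled by Lemma \ref{lem:zero-enegery-cst}(iii) and hence by Theorem \ref{thm:poincare-gene}. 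In either scenario Theorem \ref{thm:poincare-gene} yields a constant $C=C(d,p,\Omega,\nu)>0$ such that
\begin{align*}
\|u-\mbox{$\fint_{\Omega}$}u\|^p_{L^p(\Omega)} \;\leq\; C\,\cE_\Omega(u,u) \qquad \text{for all } u\in L^p(\Omega),
\end{align*}
which is the claim.

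\textbf{Main obstacle.} There is no substantive obstacle: the result is essentially a corollary of Theorem \ref{thm:poincare-gene}. The only mild subtlety lies in passing from the pointwise almost-decreasing profile condition defining unimodality to an essential-infimum bound on $B_{r_0}(0)$; this requires noting that radiality plus nontriviality forces a positive-measure set of ``good'' radii so that $\nu(r_0)>0$ can be attained (up to null sets) for a legitimate choice of $r_0$, after which unimodality does the rest.
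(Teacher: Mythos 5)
Your proof is correct and matches the paper's own argument: the paper likewise picks $\xi\neq 0$ with $\nu(\xi)>0$, uses unimodality to get $\kappa=\essinf_{B_{|\xi|}(0)}\nu\geq c\,\nu(\xi)>0$, and then invokes Theorem \ref{thm:poincare-gene}. Your extra remarks on the null-set issue and on checking the $\rho$-connectedness hypothesis are just a more explicit write-up of the same reduction.
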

\begin{proof}
Since $\nu\not\equiv 0$  there is $\xi\in \R^d$, $\xi\neq 0$ such that $\nu(\xi)>0$. Let $r=|\xi|>0$, since $\nu$ is unimodal we have  $\nu(h)\geq c\nu(\xi)$ for all $h\in B_r(0)$.  Therefore,  we have $\kappa= \essinf_{h\in B_r(0)}\nu(h)>0$ where $\kappa= c\nu(\xi)>0$.  The desired result follows from Theorem \ref{thm:poincare-gene}.  
\end{proof}

\begin{corollary}[Fractional Poincar\'{e} inequality]\label{cor:poincare-frac-bis}
Let $r\in (0,\infty]$ and $\Omega\subset \R^d$ be bounded. Assume either $r\geq \diam(\Omega)$ or  $\Omega$ is $\rho$-connected with $0\leq \rho<r$. Then there is  $C=C(d,p,\Omega, s,r)>0$ such that 
\begin{align*}
\|u-\mbox{$\fint_{\Omega}$}u\|^p_{L^p(\Omega)} \leq C\iil_{\Omega \Omega}\frac{|u(x)-u(y)|^p}{|x-y|^{d+sp}}\mathds{1}_{B_r (0)}(x-y)\d y\d x\,\, \text{for all $u\in L^p(\Omega)$}.
\end{align*}
\end{corollary}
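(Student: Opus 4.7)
The plan is to deduce this corollary directly from Theorem \ref{thm:poincare-gene} applied to the specific kernel $\nu_s(h) := |h|^{-d-sp}\mathds{1}_{B_r(0)}(h)$, which is manifestly symmetric and radial. The left-hand side of the inequality is exactly $\cE_\Omega(u,u)$ for this $\nu_s$, so everything reduces to verifying the hypotheses of Theorem \ref{thm:poincare-gene} for an appropriate choice of the radius parameter appearing there (which I will call $R$, to avoid conflict with the $r$ of the statement).

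\textbf{Case 1: $r \geq \diam(\Omega)$.} Since $\Omega$ is bounded, $\diam(\Omega) < \infty$. I would pick any finite $R$ with $\diam(\Omega) \leq R \leq r$ (always possible, including when $r = \infty$). For $h \in B_R(0) \subseteq B_r(0)$, $\nu_s(h) = |h|^{-d-sp} \geq R^{-d-sp}$, so $\kappa := \essinf_{h\in B_R(0)}\nu_s(h) \geq R^{-d-sp} > 0$. With $R \geq \diam(\Omega)$, the first alternative of Theorem \ref{thm:poincare-gene} applies.

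\textbf{Case 2: $\Omega$ is $\rho$-connected with $0 \leq \rho < r$.} Choose any $R$ with $\rho < R < r$ (if $r = \infty$ take any $R > \rho$; the needed finiteness of the lower bound is what matters). Again $\essinf_{h\in B_R(0)}\nu_s(h) \geq R^{-d-sp} > 0$, and the second alternative of Theorem \ref{thm:poincare-gene} applies with this $R$ since $R > \rho$.

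In both cases Theorem \ref{thm:poincare-gene} delivers a constant $C = C(d,p,\Omega,\nu_s) = C(d,p,\Omega,s,r)$ such that
\begin{equation*}
\|u-\mbox{$\fint_{\Omega}$}u\|^p_{L^p(\Omega)} \leq C\cE_\Omega(u,u) = C\iil_{\Omega\Omega}\frac{|u(x)-u(y)|^p}{|x-y|^{d+sp}}\mathds{1}_{B_r(0)}(x-y)\d y\d x,
\end{equation*}
which is exactly the claim. There is no serious obstacle here; the only mild subtlety is to handle $r = \infty$ and the borderline $r = \diam(\Omega)$ by inserting an intermediate finite $R$, so that the $\essinf$ of $\nu_s$ on $B_R(0)$ is strictly positive. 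The heavy lifting (the compactness/contradiction argument and the connectedness bookkeeping via Lemma \ref{lem:chain-covering} and Lemma \ref{lem:zero-enegery-cst}) has already been carried out in the proof of Theorem \ref{thm:poincare-gene}.
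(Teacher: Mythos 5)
Your proof is correct and follows essentially the same route as the paper, which likewise deduces the corollary by applying Theorem \ref{thm:poincare-gene} to the kernel $\nu(h)=|h|^{-d-sp}\mathds{1}_{B_r(0)}(h)$. Your insertion of an intermediate finite radius $R$ to guarantee a strictly positive $\essinf$ (handling $r=\infty$ and the borderline $r=\diam(\Omega)$) is a small but welcome refinement of the paper's one-line argument.
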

\begin{proof}
	The claim is a direct consequence of Theorem \ref{thm:poincare-gene} applied to $\nu(h)= |h|^{-d-sp}\mathds{1}_{B_r(0)}(h)$.  
\end{proof}
\begin{remark}
One readily recovers the usual fractional Poincar{\'e} inequality holding true for any arbitrary bounded $\Omega$. Indeed, taking $r\geq \diam(\Omega)$ implies  $\mathds{1}_{B_r (0)}(x-y)=1$ for all $x,y\in\Omega$ so that 
\begin{align*}
\|u-\mbox{$\fint_{\Omega}$}u\|^p_{L^p(\Omega)} \leq C\iil_{\Omega \Omega}\frac{|u(x)-u(y)|^p}{|x-y|^{d+sp}}\d y\,\d x\,\qquad\text{for all $u\in L^p(\Omega)$}.
\end{align*} 
\end{remark}

If the assumption \eqref{eq:kappa} fails i.e., $\kappa=0$ for any $r$, we can balance this deficiency with the compactness. 
\begin{theorem}%[Poincar\'{e} inequality IV] 
\label{thm:poincare-gene-bis}Let $r>\rho\geq0$ and assume $\Omega$ is bounded. Assume that 
\begin{itemize}
\item $\nu> 0$ a.e. on $B_r(0)$,
\item  $r\geq \diam(\Omega)$ or that $\Omega\subset \R^d$ is $\rho$-connected,
\item the embedding $\WnuOm\hookrightarrow L^p(\Omega)$ is compact. 
\end{itemize}
Then there is $C=C(d,p,\Omega,\nu)>0$ such that 
\begin{align*}
\|u-\mbox{$\fint_{\Omega}$}u\|^p_{L^p(\Omega)} \leq C\iil_{\Omega \Omega}|u(x)-u(y)|^p\nu(x-y)\d y\,\d x\,\qquad\text{for all $u\in L^p(\Omega)$}.
\end{align*} 
\end{theorem}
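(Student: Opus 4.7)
The plan is to argue by contradiction and exploit the compact embedding $\WnuOm \hookrightarrow L^p(\Omega)$ in the same spirit as the proof of Theorem \ref{thm:poincare-gene}, but with compactness replacing the pointwise positivity $\kappa > 0$ of $\nu$ near the origin. Suppose no such constant $C$ exists. Then for each $n \in \mathbb{N}$ I can find $u_n \in L^p(\Omega)$ with $\fint_\Omega u_n = 0$, $\|u_n\|_{L^p(\Omega)} = 1$, and $\cE_\Omega(u_n, u_n) \leq 1/n$. In particular $(u_n)_n$ is bounded in $\WnuOm$.

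By the assumed compact embedding $\WnuOm \hookrightarrow L^p(\Omega)$, there is a subsequence (still denoted $(u_n)_n$) and $u \in L^p(\Omega)$ such that $u_n \to u$ strongly in $L^p(\Omega)$, and passing to a further subsequence, pointwise a.e.\ on $\Omega$. Applying Fatou's lemma to the nonnegative integrand $|u_n(x) - u_n(y)|^p \nu(x - y)$ on $\Omega \times \Omega$ yields
\begin{align*}
\cE_\Omega(u, u) \leq \liminf_{n \to \infty} \cE_\Omega(u_n, u_n) = 0,
\end{align*}
so $u \in \WnuOm$ with $\cE_\Omega(u, u) = 0$.

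Now I invoke Lemma \ref{lem:zero-enegery-cst}: since $\nu > 0$ a.e.\ on $B_r(0)$ and either $r \geq \diam(\Omega)$ or $\Omega$ is $\rho$-connected with $0 \leq \rho < r$, the identity $\cE_\Omega(u, u) = 0$ forces $u \equiv \lambda$ a.e.\ in $\Omega$ for some constant $\lambda \in \R$. Passing to the limit in $\fint_\Omega u_n = 0$ using the $L^p$-convergence gives $\lambda = \fint_\Omega u = 0$, hence $u \equiv 0$. This contradicts $\|u\|_{L^p(\Omega)} = \lim_n \|u_n\|_{L^p(\Omega)} = 1$.

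The only delicate point is verifying the applicability of Lemma \ref{lem:zero-enegery-cst} in the $\rho$-connected case, which is already handled by part $(iii)$ of that lemma thanks to the strict inequality $\rho < r$. Everything else is routine: the compact embedding hypothesis is precisely what allows one to bypass the pointwise positivity requirement \eqref{eq:kappa} from Theorem \ref{thm:poincare-gene} and to still identify a strong $L^p$ limit in the contradiction argument.
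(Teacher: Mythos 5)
Your argument is correct and follows essentially the same route as the paper's proof: a contradiction sequence normalized in $L^p(\Omega)$ with vanishing energy, strong $L^p$-convergence via the assumed compact embedding, Fatou's lemma to get $\cE_\Omega(u,u)=0$, and Lemma \ref{lem:zero-enegery-cst} to conclude that the limit is a constant, hence zero, contradicting the normalization. No gaps.
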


\begin{proof}
Assume $C$ does not exist, then we can find a sequence $(u_n)_n\subset L^p(\Omega)$ such that  $\fint_{\Omega}u_n =0$, $\|u_n\|_{L^p(\Omega)}=1$ and $\cE_{\Omega}(u_n,u_n)\leq \frac{1}{2^n}.$ The sequence $(u_n)_n$ is thus bounded in $\WnuOm$. Since the embedding  $\WnuOm\hookrightarrow L^p(\Omega)$ is compact, passing through a subsequence if necessary we can assume that $(u_n)_n$ converges in $L^p(\Omega)$ and a.e. in $\Omega$ to some  $u\in L^p(\Omega)$. It clearly follows that $\fint_{\Omega}u =0$ and $\|u\|_{L^p(\Omega)}=1$. Moreover, by Fatou's lemma, we have 
\begin{align*}
\cE_{\Omega}(u, u)\leq
\liminf_{n\to\infty}\cE_{\Omega}(u_n,u_n)=0.
\end{align*}
It follows from Lemma \ref{lem:zero-enegery-cst} that  $u$ is constant and hence $u= \fint_{\Omega}u =0$ a.e. in $\Omega$. This goes against the fact that $\|u\|_{L^p(\Omega)}=1$ and hence  our initial assumption is wrong.
\end{proof}

The Poincar\'{e} inequality fails if $\Omega$ is $\rho$-connected with $\rho$ too large.  
\begin{counterexample} 
Assume $d\geq2$ and consider $\Omega=\Omega_1\cup \Omega_2$ with $\Omega_1= B_1(0)$ and $\Omega_2=B_4(0)\setminus B_3(0)$ and $\nu(h)= \mathds{1}_{B_1(0)} (h)$. 
The sets $\Omega_1$ and $\Omega_2$ are connected thus $\Omega$ is $\rho$-connected with $\rho>2$. The Poincar\'{e} inequality fails for the couple $(\Omega, \nu)$. Indeed, for $u(x)=\mathds{1}_{\Omega_1}(x)- \mathds{1}_{\Omega_2}(x) $. 
As $u$ is not constant on $\Omega$, it is readily seen that  $\|u-\mbox{$\fint_{\Omega} u$}\|^p_{L^p(\Omega)}\neq 0$.  On the other hand, $u=1$  on $\Omega_1$ and $u=-1$ on $\Omega_2$, moreover $\mathds{1}_{B_1(0)} (x-y)=0$ if $x\in \Omega_1$ and $y\in \Omega_2$.  Therefore, one easily verifies that $\cE_\Omega(u,u)=0.$

\smallskip 
In general, the Poincar\'{e} inequality fails if $\Omega=\Omega_1\cup\Omega_2$, $\dist(\Omega_1, \Omega_2)> r$ and $\supp\nu \subset  \overline{B_r(0)}$. Indeed as above, for $u(x)=\mathds{1}_{\Omega_1}(x)- \mathds{1}_{\Omega_2}(x)$ we have  $\|u-\mbox{$\fint_{\Omega} u$}\|^p_{L^p(\Omega)}\neq 0$ and $\cE_\Omega(u,u)=0.$
\end{counterexample}

%%%%%%%%%%%%%%%%%%%%%%%%%%%%%%%% 
\section{Existence of weak solutions}\label{sec:existence-weak-sol}
%%%%%%%%%%%%%%%%%%%%%%%%%%%%%%%%
In this section, we deal with the well-posedness of nonlocal problems \eqref{eq:main-problem}. It is worth mentioning that our setting easily extends to the general class of symmetric operators $L$ and  $\cN$, of the form
\begin{align*}
Lu(x) &=2\pv\int_{\R^d}\psi(u(x)-u(y))k(x,y)\d y,\\
\cN u(x)&=\int_{\R^d}\psi(u(y)-u(x))k(x,y)\d x
\end{align*}
where, for some $\Lambda\geq1$, $k:\R^d\times\R^d\setminus\diag\to (0,\infty)$ is a symmetric kernel satisfying the elliptic condition 
\begin{align*}
\Lambda^{-1}\nu(x-y)\leq k(x,y)\leq \Lambda\nu(x-y)\qquad\text{for all $(x,y)\in \R^d\times\R^d\setminus\diag$}.
\end{align*}
We only consider case $k(x,y)=\nu(x-y)$ and leave this generalization to the good hand of the readers.

%%%%%%%%%%%%%%%%%%%%%%%%%%%%%%%% 
\subsection{Basics on direct method}
%%%%%%%%%%%%%%%%%%%%%%%%%%%%%%%% 
Our strategy for the existence results   relies on the direct method of calculus of variations. Let us evoke some fundamental results of calculus of variation mainly collected from \cite{BC17,ET76,Ri18}.
\begin{definition} Let $(X,\tau)$ be a topological space and  $\mathcal{J}:X\to \R\cup \{\infty\}$ be a functional. 
\begin{itemize}
\item%(Coercivity) 
$\mathcal{J}$ is called sequentially $\tau$-coercive (or simply coercive) if lower level sets of $\mathcal{J}$ are $\tau$-sequentially  precompact, i.e., every  sequence $(u_n)_n\subset \{ u\in X: \mathcal{J}(u)\leq a\}$,  $a\in \R$, has a $\tau$-converging subsequence in $X$. 
\item% (Lower semicontinuity)  
$\mathcal{J}$ is called sequentially $\tau$-lower semicontinuous (or simply lower semicontinuous) if for any sequence $(u_n)_n$ $\tau$-converging to $u$ in $X$, it holds that
\begin{align*}
\mathcal{J}(u)\leq \liminf_{n\to\infty} \mathcal{J}(u_n).
\end{align*}
\end{itemize}
\end{definition}

\noindent The direct method for the minimization problem 
is encapsulated in the following result. 
\begin{theorem}\label{thm:direct-method}
Let $(X,\tau)$ be a topological space. If the functional $\mathcal{J}:X\to \R\cup \{\infty\}$ is both coercive and lower semicontinuous, then there is $u_*\in X$ and 
\begin{align*}
\mathcal{J}(u_*)= \min_{u\in X} \mathcal{J}(u).
\end{align*} 
\end{theorem}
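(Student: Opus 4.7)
The plan is to run the classical direct method argument, which in this setting reduces to unpacking the two hypotheses on a well-chosen minimizing sequence. First I would set $m := \inf_{u \in X} \mathcal{J}(u) \in [-\infty, +\infty]$. If $m = +\infty$ then $\mathcal{J} \equiv +\infty$ and any element of $X$ is a minimizer (we implicitly assume $X$ nonempty, otherwise the statement is vacuous). Otherwise $m < +\infty$, and the definition of infimum supplies a minimizing sequence $(u_n)_n \subset X$ with $\mathcal{J}(u_n) \to m$.

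Next I would observe that the tail of $(u_n)$ sits inside a sublevel set of $\mathcal{J}$: pick any $a > m$ (for instance $a = m+1$ when $m \in \R$, or any real $a$ when $m = -\infty$); then $\mathcal{J}(u_n) \le a$ for all $n$ large enough, so eventually $u_n \in \{u \in X : \mathcal{J}(u) \leq a\}$. By sequential $\tau$-coercivity, this sublevel set is sequentially $\tau$-precompact, so after passing to a subsequence we obtain $u_{n_k} \xrightarrow{\tau} u_*$ for some $u_* \in X$.

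Finally I would invoke sequential $\tau$-lower semicontinuity along this subsequence:
\begin{equation*}
\mathcal{J}(u_*) \;\leq\; \liminf_{k\to\infty} \mathcal{J}(u_{n_k}) \;=\; \lim_{k\to\infty} \mathcal{J}(u_{n_k}) \;=\; m.
\end{equation*}
The reverse inequality $\mathcal{J}(u_*) \geq m$ is immediate from the definition of $m$, so $\mathcal{J}(u_*) = m$ and $u_*$ is a minimizer. In particular $m$ is attained and must be finite, since the codomain $\R \cup \{\infty\}$ excludes $-\infty$; this retroactively rules out the pathological case $m = -\infty$.

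There is no real obstacle here: the result is a purely formal consequence of the two hypotheses. The only subtlety worth flagging in the write-up is that sequential coercivity alone does not preclude $m = -\infty$ a priori, but the combination of lower semicontinuity with the range restriction $\mathcal{J}(X) \subset \R \cup \{\infty\}$ does, once a candidate minimizer has been produced by the compactness argument.
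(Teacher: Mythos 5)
Your argument is correct and is exactly the classical direct-method proof; the paper does not reproduce a proof of this statement but simply cites it from the standard references \cite{BC17,ET76,Ri18}, and your write-up matches that standard argument (extract a minimizing sequence, trap its tail in a sublevel set, use sequential coercivity to pass to a $\tau$-convergent subsequence, and conclude by sequential lower semicontinuity). Your observation that the case $\inf_X \mathcal{J}=-\infty$ is ruled out a posteriori by the range restriction $\mathcal{J}(X)\subset \R\cup\{\infty\}$ is the right way to close that loose end.
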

\noindent Next we see characterizations of the weak lower semicontinuity and the weak coercivity for the particular  situation where $(X,\|\cdot\|_X)$  is a normed space. Recall on $X$ there is the strong topology induced by $\|\cdot\|_X$  and the weak topology induced by the weak convergence. Let us recall  Mazur's Lemma beforehand. 
\begin{theorem}[{Mazur's Lemma, \cite[page 6]{ET76}}]\label{thm:Banach-Mazur}%\cite[Corollary 3.8]{Bre10}. 
If a sequence  $(u_n)_n$ is weakly converge in $X$ to some $u\in X$, then for each $n$ there is $(\theta_k^n)_{k=n,\cdots, N_n}$, $N_n\geq n$, such that $\|v_n-u\|_X\xrightarrow{n\to\infty}0$ where for each $n,$
\begin{align*}
v_n=\sum_{k=n}^{N_n}\theta_k^nu_k \quad \text{with} \quad \sum_{k=n}^{N_n} \theta_k^n=1,\quad \theta_k^n \geq 0.   
\end{align*}
\end{theorem}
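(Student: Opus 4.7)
The plan is to follow the classical Hahn--Banach route: identify $u$ as belonging to the strong closure of the convex hull of every tail of the sequence $(u_n)_n$, and then extract diagonally the convex combinations $v_n$.

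First I would set, for each $n \in \mathbb{N}$,
\begin{align*}
K_n = \operatorname{conv}\{u_k : k \geq n\}, \qquad C_n = \overline{K_n}^{\,\|\cdot\|_X},
\end{align*}
so that $C_n$ is a norm-closed convex subset of $X$. The crucial functional-analytic ingredient is that a convex norm-closed set is automatically weakly closed. This is a direct consequence of the Hahn--Banach separation theorem: if $w \notin C_n$, then since $\{w\}$ is compact convex and $C_n$ is closed convex, one can strictly separate them by a continuous linear functional $\ell \in X'$, producing a weak open neighbourhood of $w$ disjoint from $C_n$. Hence $C_n$ is also weakly closed.

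Next, since $u_k \in K_n \subset C_n$ for every $k \geq n$, and $u_k \rightharpoonup u$ weakly, the weak closedness of $C_n$ forces $u \in C_n$ for every $n$. By definition of the strong closure, for each $n$ there exists $v_n \in K_n$ with $\|v_n - u\|_X < 1/n$. Being an element of $K_n$, the vector $v_n$ is a finite convex combination
\begin{align*}
v_n = \sum_{k=n}^{N_n} \theta_k^n u_k, \qquad \theta_k^n \geq 0, \qquad \sum_{k=n}^{N_n} \theta_k^n = 1,
\end{align*}
for some $N_n \geq n$, which gives $\|v_n - u\|_X \to 0$, as required.

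The only nontrivial step is the passage from norm-closed convex to weakly closed, which is where the Hahn--Banach separation theorem enters; everything else is just bookkeeping on tails and convex hulls. I expect no additional obstacle, since the assumption $u \in X$ and the weak convergence in $X$ make the argument formulation identical to the standard one in reflexive or general Banach space settings, and the conclusion is purely about the approximability of weak limits by convex combinations.
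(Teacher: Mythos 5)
Your proof is correct and is precisely the classical Hahn--Banach separation argument: the paper gives no proof of its own but cites Ekeland--Temam, whose proof of Mazur's lemma is exactly this identification of the weak limit $u$ as a point of the norm closure of $\operatorname{conv}\{u_k : k\geq n\}$ for every tail, followed by extracting $v_n$ within distance $1/n$. The only bookkeeping remark is that a convex combination drawn from $K_n$ may skip some indices between $n$ and $N_n$, which is harmless since the statement allows $\theta_k^n = 0$.
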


In Mazur's Lemma, the coefficients $(\theta_k^j)'s$ are not explicitly given.  However, if $X$ is a $L^p$-Lebesgue space with $1\leq p<\infty$, then the Banach-Saks theorem as stated in \cite{Fog23BS} infers that for a suitable subsequence $(u_{n_k})_k$, one can explicitly choose $v_j =\frac{1}{j}\sum_{k=1}^j u_{n_k}$, that is we have  $\theta_k^j\in\{0, \frac{1}{j}\}$. The Mazur's Lemma implies the following characterization of the weak lower semicontinuity.

\begin{theorem}%[{\cite[Corollary 2.2, page 11]{ET76}}]
\label{thm:charac-weak-lsc}
A  weakly lower semicontinuous  functional $\mathcal{J}:X\to \R\cup \{\infty\}$ is lower semicontinuous. The converse is true if in addition $\mathcal{J}$ is convex. 
\end{theorem}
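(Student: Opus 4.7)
The plan is to establish the two directions separately, with Mazur's lemma being the key ingredient for the nontrivial converse.

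\textbf{Forward direction.} First I would observe that strong convergence implies weak convergence in any normed space. Hence if $u_n\to u$ in the norm topology of $X$, then $u_n\rightharpoonup u$ weakly, and weak lower semicontinuity of $\mathcal{J}$ gives $\mathcal{J}(u)\leq \liminf_{n\to\infty}\mathcal{J}(u_n)$. This shows that every weakly lower semicontinuous functional is (strongly) lower semicontinuous.

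\textbf{Converse direction.} Assume $\mathcal{J}$ is convex and strongly lower semicontinuous. Let $u_n\rightharpoonup u$ weakly in $X$ and set $\ell=\liminf_{n\to\infty}\mathcal{J}(u_n)$. If $\ell=+\infty$ there is nothing to prove, so assume $\ell<\infty$. By extracting a subsequence (still denoted $(u_n)_n$) I would arrange that $\mathcal{J}(u_n)\to \ell$, while keeping $u_n\rightharpoonup u$. Applying Mazur's Lemma (Theorem \ref{thm:Banach-Mazur}) to the weakly convergent sequence $(u_n)_n$ yields convex combinations
\[
v_n=\sum_{k=n}^{N_n}\theta_k^n u_k,\qquad \theta_k^n\geq 0,\quad \sum_{k=n}^{N_n}\theta_k^n=1,
\]
with $\|v_n-u\|_X\to 0$.

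\textbf{Combining convexity with strong lsc.} By convexity of $\mathcal{J}$,
\[
\mathcal{J}(v_n)\leq \sum_{k=n}^{N_n}\theta_k^n\mathcal{J}(u_k)\leq \sup_{k\geq n}\mathcal{J}(u_k).
\]
Since $v_n\to u$ strongly, strong lower semicontinuity gives
\[
\mathcal{J}(u)\leq \liminf_{n\to\infty}\mathcal{J}(v_n)\leq \liminf_{n\to\infty}\sup_{k\geq n}\mathcal{J}(u_k)=\limsup_{n\to\infty}\mathcal{J}(u_n)=\ell,
\]
where the last equality uses that we passed to a subsequence along which $\mathcal{J}(u_n)$ converges to $\ell$. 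This establishes $\mathcal{J}(u)\leq \liminf_{n\to\infty}\mathcal{J}(u_n)$ for the original sequence, hence the weak lower semicontinuity.

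\textbf{Main obstacle.} The genuinely nontrivial point is the converse, and the subtlety is the usual one in applying Mazur's lemma: the approximating sequence $(v_n)_n$ is built from indices $k\geq n$, which is what allows the bound $\mathcal{J}(v_n)\leq \sup_{k\geq n}\mathcal{J}(u_k)$ to converge to $\limsup\mathcal{J}(u_n)$ rather than to an uncontrolled supremum. One also needs the preliminary reduction to a subsequence along which $\mathcal{J}(u_n)$ converges to the liminf, so that $\limsup$ and $\liminf$ agree; weak convergence is preserved under this extraction, which is the only reason the argument closes.
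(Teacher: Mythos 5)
Your proof is correct and follows exactly the route the paper intends: the easy direction via strong-implies-weak convergence, and the converse by combining Mazur's Lemma (Theorem \ref{thm:Banach-Mazur}) with convexity and strong lower semicontinuity, after reducing to a subsequence realizing the liminf. The paper itself omits the details and simply notes that Mazur's Lemma implies the statement (citing standard references), so your argument supplies precisely the intended proof.
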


\begin{theorem}
\label{thm:charac-weak-coercive}
If $\mathcal{J}:X\to \R\cup \{\infty\}$ is weakly coercive then $\mathcal{J}(u)\to \infty$ as $\|u\|_X\to \infty.$ The converse holds if in addition $X$ is a reflexive Banach space. 
\end{theorem}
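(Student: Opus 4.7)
The plan is to establish both implications via short contradiction/extraction arguments, each relying on a single standard functional-analytic fact: that weakly convergent sequences in a normed space are automatically norm-bounded (a consequence of Banach--Steinhaus), and that in a reflexive Banach space, norm-bounded sequences admit weakly convergent subsequences (Eberlein--\v{S}mulian).

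For the forward direction, suppose $\mathcal{J}$ is weakly (sequentially) coercive. Assume toward a contradiction that $\mathcal{J}(u)\not\to\infty$ as $\|u\|_X\to\infty$. Then there exist $M\in\R$ and a sequence $(u_n)\subset X$ with $\|u_n\|_X\to\infty$ and $\mathcal{J}(u_n)\leq M$ for all $n$. Thus $(u_n)$ lies in the sublevel set $\{u\in X:\mathcal{J}(u)\leq M\}$, which by the weak coercivity of $\mathcal{J}$ is weakly sequentially precompact. Extract a weakly convergent subsequence $(u_{n_k})$; by Banach--Steinhaus it is norm-bounded, contradicting $\|u_{n_k}\|_X\to\infty$. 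This contradiction yields $\mathcal{J}(u)\to\infty$ as $\|u\|_X\to\infty$.

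For the converse, assume $X$ is a reflexive Banach space and $\mathcal{J}(u)\to\infty$ as $\|u\|_X\to\infty$. Fix any $a\in\R$ and any sequence $(u_n)\subset\{u\in X:\mathcal{J}(u)\leq a\}$. The growth assumption immediately forces $(u_n)$ to be norm-bounded: if $\|u_{n_k}\|_X\to\infty$ along some subsequence, then $\mathcal{J}(u_{n_k})\to\infty$, contradicting $\mathcal{J}(u_{n_k})\leq a$. By reflexivity of $X$, the bounded sequence $(u_n)$ admits a weakly convergent subsequence, which establishes the weak sequential precompactness of the sublevel set and hence the weak coercivity of $\mathcal{J}$.

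There is essentially no hard step here; the only point worth emphasizing is that the converse genuinely uses the reflexivity hypothesis, since it is precisely reflexivity that upgrades norm-boundedness to weak sequential compactness. The forward direction, by contrast, is valid for $\mathcal{J}$ defined on any normed space, because only the boundedness of weakly convergent sequences is invoked.
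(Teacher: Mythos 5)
Your proof is correct. Note that the paper does not actually prove this statement: it is quoted as a standard fact from the calculus-of-variations literature (the references \cite{BC17,ET76,Ri18}), so there is no argument in the text to compare against. Your two-step argument — the forward direction via the norm-boundedness of weakly convergent sequences (Banach--Steinhaus applied in $X^{**}$), and the converse via the growth condition forcing sublevel sets to be bounded plus reflexivity upgrading boundedness to weak sequential compactness (Eberlein--\v{S}mulian) — is precisely the standard proof one finds in those references, and your closing remark correctly identifies where reflexivity (and completeness) is genuinely needed.
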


%%%%%%%%%%%%%%%%%%%%%%%%%%%%%%%%
\subsection{Neumann problem}
%%%%%%%%%%%%%%%%%%%%%%%%%%%%%%%%
The Neumann problem for the operator $L$ associated with the data $f:\Omega\to \R$ and $g: \Omega^c \to \mathbb{R}$, is to find $u:\R^d\to \R$ such that 
\begin{align}\label{eq:nonlocal-Neumann}\tag{$N$}
L u = f \quad\text{in}~~~ \Omega \quad\quad\text{ and } \quad\quad \mathcal{N} u= g ~~~ \text{ on }~~~ \R^d\setminus\Omega.
\end{align} 
It is worth to emphasize that problem \eqref{eq:nonlocal-Neumann} makes sense only if have $g= 0$ on ${\R^d\setminus \Omega_\nu}=0$. Recall that $\Omega_\nu=\Omega+\supp\nu$ and  $\Omega_\nu^{*}=\Omega\cup \Omega_\nu$ is the nonlocal hull of $\Omega$. Keep in mind that $ \Omega\subset \Omega_\nu=\Omega_\nu^{*}$ if $0\in\supp\nu$. 
This is obviously due to the  fact that by nonlocality we have 
\begin{align*}
\cN u(x)=\int_{\Omega} \psi(u(x)- u(y))\nu(x-y)\d y =0\qquad\text{for all $x\in \R^d\setminus\Omega_\nu$},
\end{align*} 
we recall $\psi(t)= |t|^{p-2} t$. By nonlocality, cf. Section \ref{sec:basicsandnotations}, it turns out that $\Omega_\nu$ is the smallest set such that
\begin{align}\label{eq:nonlocal-hullnu}
L_{\Omega_\nu} u(x) :=\int_{\Omega_\nu} \psi(u(x)- u(y))\nu(x-y)\d y = Lu(x)\quad \text{for all $x\in \Omega$}. 
%=\int_{\R^d} \psi(u(x)- u(y))\nu(x-y)\d y 
\end{align}
Therefore, it is sufficient to prescribe the complement data on $\Omega_e=\Omega_\nu\setminus\Omega$. It is rather counterintuitive to see that prescribing the nonlocal boundary data on $\Omega_e$ is not all restrictive. Indeed, if we put $g_e= g|_{\Omega_e}$, the restriction of $g$ on $\Omega_e$, the problem \eqref{eq:nonlocal-Neumann} is the same as finding $u:\Omega_\nu^{*}= \Omega\cup \Omega_\nu\to \R$  such that 
\begin{align}\label{eq:nonlocal-Neumann-ex}\tag{$N_\nu$}
L_{\Omega_\nu^{*}} u = f \quad\text{in}~~~ \Omega \quad\quad\text{ and } \quad\quad \mathcal{N} u= g_e
~~~ \text{ on }~~~ \Omega_e=\Omega_\nu\setminus\Omega,
\end{align} 
where we recall  $\Omega_e=\Omega_\nu^{*}\setminus\Omega= \Omega_\nu\setminus\Omega$ is nonlocal boundary of $\Omega$ with respect to $\nu$. Actually, both  problems \eqref{eq:nonlocal-Neumann}  and \eqref{eq:nonlocal-Neumann-ex} are equivalent.
Indeed, if $u$ solves \eqref{eq:nonlocal-Neumann} then  clearly $u_\nu=u|_{\Omega_\nu^{*}}$ solves \eqref{eq:nonlocal-Neumann-ex}. Conversely, if $u_\nu$ solves \eqref{eq:nonlocal-Neumann-ex}, one verifies that $u= \widetilde{u}_\nu$ solves \eqref{eq:nonlocal-Neumann} where $\widetilde{u}_\nu$ is the zero the extension of $u_\nu$ off $\Omega_\nu^{*}$, i.e.,  $\widetilde{u}_\nu= u_\nu$ on $\Omega_\nu^{*}$ and $\widetilde{u}_\nu=0$ on $\R^d\setminus\Omega_\nu^{*}$.  From now on, the problem \eqref{eq:nonlocal-Neumann} is understood in the sense of \eqref{eq:nonlocal-Neumann-ex}. Motivated by the Gauss-Green formula \eqref{eq:gauss-green-nonlocal}, in  Appendix \ref{sec:appendix-gauss-green},
\begin{align*}
	\int_{\Omega} Lu(x)v(x)\d x= \mathcal{E}(u,v) -\int_{\Omega^c}\mathcal{N}u(y)v(y)\d y, 
\end{align*}
 we define weak solutions of the Neumann problem as follows.

\begin{definition}\label{def:neumann-var-sol} Let $ f \in \WnuOmR'$ and $g \in \TnuOm'$. We call $u \in \WnuOmR$  a weak solution or a variational solution of Neumann problem \eqref{eq:nonlocal-Neumann} if 
\begin{align}\label{eq:var-nonlocal-Neumann-gen}\tag{$V'$}
\mathcal{E}(u,v) = \langle f , v \rangle  + \langle g , v \rangle  \quad \mbox{for all}~~v \in \WnuOmR\,. 
\end{align}
Note that if $\Omega$ is bounded, the existence of a solution implies the compatibility condition $\langle f , 1 \rangle  + \langle g , 1 \rangle = 0.$ 
\end{definition}
It is worth emphasizing that, the choice of $f$ and $g$ legitimately  follows from the natural embeddings  $\WnuOmR \hookrightarrow \TnuOm\hookrightarrow L^p(\Omega^c,  \omega)$. In particular, we have the following. 
\begin{definition}
Let   $\omega\in\{\widetilde{\nu}_B, \overline{\nu}_B, \widehat{\nu}_R\}$, $B\subset\Omega$ (see Definition \ref{def:different-nus}). Let $ f \in L^{p'}(\Omega)$ and $g \in L^{p'}(\Omega^c, \omega^{1-p'})$.  We say that $u \in \WnuOmR$ is a weak solution to the Neumann problem  \eqref{eq:nonlocal-Neumann} if 
\begin{align}\label{eq:var-nonlocal-Neumann}\tag{$V$}
\mathcal{E}(u,v) = \int_{\Omega} f(x)v(x)\d x +\int_{\Omega^c} g(y)v(y)\d y,\quad \mbox{for all}~~v \in \WnuOmR\,.
\end{align}
In this case,  taking $v=1,$ the compatibility condition reads
\begin{align}\label{eq:compatible-nonlocal}\tag{$C$}
\int_{\Omega} f(x)\d x +\int_{\Omega^c} g(y)\d y=0 \,.
\end{align}
\end{definition} 

\noindent Some comments and remarks about the Neumann problem may be helpful at this stage. 
\begin{remark}
Note that \cite[Def. 3.6]{DROV17} and subsequent definitions like \cite[Definition 2.7]{MP19} look very similar to \eqref{eq:var-nonlocal-Neumann} at first glance. However,  the test space defined in \cite[Eq. (3.1)]{DROV17}, \cite[Section 2]{MP19} depends on the Neumann data $g$, which is not natural. Our test space $\WnuOmR$ in the weak formulation \eqref{eq:var-nonlocal-Neumann} does not depend on the Neumann data $g$. 
\end{remark}

\begin{remark}
The compatibility condition $\langle f, 1 \rangle  + \langle g, 1 \rangle = 0$ or \eqref{eq:compatible-nonlocal} is an implicit requirement that the data $f$ and $g$ must fulfill before any attempt of solving the problems \eqref{eq:nonlocal-Neumann},                             
\eqref{eq:var-nonlocal-Neumann} or \eqref{eq:var-nonlocal-Neumann-gen}. This is essentially due to the fact that the operators $L$ and $\mathcal{N}$ annihilate additive constants. An immediate effect is that, if $f$ and $g$ are compatible, as long as $u$ is a solution to the problem \eqref{eq:nonlocal-Neumann}, \eqref{eq:var-nonlocal-Neumann} or \eqref{eq:var-nonlocal-Neumann-gen} so is any function $u+c$ with $c\in \mathbb{R}$. Accordingly, problems \eqref{eq:nonlocal-Neumann}, \eqref{eq:var-nonlocal-Neumann} or \eqref{eq:var-nonlocal-Neumann-gen} are ill-posed in the sense of Hadamard. One of the finest strategies to overcome this issue is to introduce the reduced problem 
\begin{align}\label{eq:var-nonlocal-Neumann-bis}\tag{$V'^\perp$}
\mathcal{E}(u,v) = \langle f , v \rangle  +\langle g , v \rangle ,\quad \mbox{for all}~~v \in \WnuOmR^{\perp}\,
\end{align}
where the space $\WnuOmR^{\perp}$ is given by  

\begin{align*}
\WnuOmR^{\perp}:= \big\{ u\in \WnuOmR: \int_{\Omega}u(x)\d x=0\big\}.
\end{align*}
Note that $\WnuOmR^{\perp}$  only discards non-zero constants from $\WnuOmR$ and  is a closed subspace of $\WnuOmR$. 
\noindent It is worth to emphasize that, if $f$ and $g$ are compatible and  $u$ is a solution to reduced problem \eqref{eq:var-nonlocal-Neumann-bis} then  each $u+c,\, c\in \R$ is also solution to \eqref{eq:var-nonlocal-Neumann-gen} and vice versa.  
\end{remark}

\begin{remark}
The nonlocal formulation of the Neumann problem should be contrasted with the local one, where the operators $L$ and $\mathcal{N}$ respectively replaced by the operators $-\Delta_p$ and $\partial_{n,p}u(x)= |\nabla u(x)|^{p-2} \nabla u(x)\cdot n(x)$. Recall that in passing that  $u\in W^{1,p}(\Omega)$  is a  weak  to the Neumann problem 
\begin{align}\label{eq:local-Neumann}
-\Delta_p u = f \quad\text{in}~~~ \Omega \quad\quad\text{ and } \quad\quad \partial_{n,p}u= g ~~~ \text{ on }~~~ \partial \Omega.
\end{align}
%with $\partial_{n,p} u= |\nabla u|^{p-2}\nabla u\cdot n$, 
if $u$ satisfies the variational problem
\begin{align}
\cE^0(u,v)
= \int_{\Omega} f(x)v(x)\d x +\int_{\partial\Omega} g(y)v(y)\d \sigma(y),\quad \mbox{for all}~~v \in W^{1,p}(\Omega)\,, 
\end{align} 
here we define
\begin{align*}
\cE^0(u,v)	=\int_{\Omega} |\nabla u(x)|^{p-2} \nabla u(x)\cdot \nabla v(x) \d x. 
\end{align*}
The local counterpart of the compatibility condition \eqref{eq:compatible-nonlocal} is given by 
\begin{align}
\int_{\Omega} f(x)\d x +\int_{\partial\Omega} g(y)\d \sigma(y)=0.
\end{align} 
Define $W^{1,p}(\Omega)^{\perp}:= \big\{ u\in W^{1,p}(\Omega): \int_{\Omega}u(x)\d x=0\big\}.$ The local counterpart of \eqref{eq:var-nonlocal-Neumann-bis} reads
\begin{align}
\cE^0(u,v)
= \int_{\Omega} f(x)v(x)\d x +\int_{\partial\Omega} g(y)v(y)\d \sigma(y),\quad \mbox{for all}~~v \in W^{1,p}(\Omega)^\perp\,. 
\end{align} 
\end{remark}

For $\Omega$ and $u$ sufficiently regular, $p\geq2$, it  can be shown that (see for instance \cite{FK22, guy-thesis}), $u$ solves the problem  \eqref{eq:nonlocal-Neumann}  if and only if it solves the problem  \eqref{eq:var-nonlocal-Neumann}. This is in particular possible under the condition that the Gauss-Green formula \eqref{eq:gauss-green-nonlocal} holds. To put it simply, under mild conditions on $\Omega$ and $\nu$ both problems \eqref{eq:nonlocal-Neumann} and  \eqref{eq:var-nonlocal-Neumann}  are equivalent when the solutions are  sufficiently regular enough. Actually, solutions to the variational problem \eqref{eq:var-nonlocal-Neumann-gen}  are critical points of the functional 
\begin{align*}
\mathcal{J}(v) &= \frac{1}{p} \mathcal{E}(v,v) -\langle f , v \rangle  -\langle g , v \rangle .
\end{align*}
It is important to keep in mind that the  Fr\'echet derivative of  $\mathcal{J}$ on $\WnuOmR$ is given by
\begin{align*}
\langle	\mathcal{J}'(u), v\rangle= \cE(u,v) -\langle f , v \rangle  - \langle g , v \rangle. 
\end{align*}

\begin{proposition}\label{prop:min-equiv-var}
The variational problem \eqref{eq:var-nonlocal-Neumann-bis} is equivalent to the minimization problem
\begin{align}\label{eq:nonlocal-Neumann-min}\tag{$M^\perp$}
\mathcal{J}(u) =\min_{v\in \WnuOmR^{\perp}}\mathcal{J}(v). 
\end{align}
Analogously, the problem \eqref{eq:var-nonlocal-Neumann-gen} is equivalent to the minimization problem 
\begin{align}\label{eq:nonlocal-Neumann-min-const}\tag{$M$}
\mathcal{J}(u) =\min_{v\in \WnuOmR}\mathcal{J}(v).
\end{align}
\end{proposition}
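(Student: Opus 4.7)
The plan is to prove this via the classical correspondence between convex minimization and the vanishing of the Fréchet derivative, exploiting the fact that $\WnuOmR$ and $\WnuOmR^\perp$ are vector spaces (so one can freely perturb in both directions).

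First I would verify that $\mathcal{J}$ is convex. The integrand $(x,y)\mapsto |v(x)-v(y)|^p$ is convex in $v$ since $p>1$ (composition of the convex function $t\mapsto |t|^p$ with the linear map $v\mapsto v(x)-v(y)$), and convexity is preserved under integration against the nonnegative measure $\nu(x-y)\d y\,\d x$. Hence $v\mapsto \frac{1}{p}\cE(v,v)$ is convex, and adding the affine terms $-\langle f,v\rangle -\langle g,v\rangle$ preserves convexity. Next, I would record the Fréchet differentiability: by the polarization-type argument based on Proposition \ref{prop:enrgy-forms} (or by direct computation using the elementary inequalities for $\psi$ from Appendix \ref{sec:appendix-elment}), the map $v\mapsto \tfrac1p\cE(v,v)$ is of class $C^1$ on $\WnuOmR$ with derivative $v\mapsto \cE(u,v)$, so that
\begin{align*}
\langle \mathcal{J}'(u),v\rangle = \cE(u,v)-\langle f,v\rangle -\langle g,v\rangle, \qquad u,v\in \WnuOmR.
\end{align*}

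For the forward implication in both equivalences, suppose $u$ minimizes $\mathcal{J}$ over $\WnuOmR^\perp$ (resp.\ $\WnuOmR$). Since these sets are linear subspaces, for every $v$ in the relevant space and every $t\in\R$ we have $u+tv$ in the same space, and the scalar map $t\mapsto \mathcal{J}(u+tv)$ attains its minimum at $t=0$. Differentiating gives $\langle \mathcal{J}'(u),v\rangle=0$, which is exactly \eqref{eq:var-nonlocal-Neumann-bis} (resp.\ \eqref{eq:var-nonlocal-Neumann-gen}).

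For the converse, convexity and differentiability yield the standard first-order subgradient inequality
\begin{align*}
\mathcal{J}(v)\geq \mathcal{J}(u)+\langle \mathcal{J}'(u),v-u\rangle \qquad \text{for all } v\in\WnuOmR.
\end{align*}
If $u$ solves \eqref{eq:var-nonlocal-Neumann-bis}, then for any $v\in \WnuOmR^\perp$ the difference $v-u$ lies again in $\WnuOmR^\perp$, so $\langle \mathcal{J}'(u),v-u\rangle=0$ and $\mathcal{J}(v)\geq \mathcal{J}(u)$, proving \eqref{eq:nonlocal-Neumann-min}. The same reasoning with $\WnuOmR$ in place of $\WnuOmR^\perp$ gives the equivalence between \eqref{eq:var-nonlocal-Neumann-gen} and \eqref{eq:nonlocal-Neumann-min-const}. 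There is no genuine obstacle here: the only point requiring a brief verification is the differentiation of $\tfrac1p\cE(v,v)$, which rests on dominated convergence together with the pointwise derivative $\frac{d}{dt}\tfrac1p|a+tb|^p\big|_{t=0}=\psi(a)\,b$ and the $L^{p'}$-$L^p$ H\"older bound from Proposition \ref{prop:enrgy-forms}.
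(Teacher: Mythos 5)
Your proof is correct and essentially the paper's argument: the direction from the minimization problems to the variational identities is exactly the paper's one-dimensional perturbation $t\mapsto\mathcal{J}(u+tv)$ along the linear spaces $\WnuOmR^{\perp}$ and $\WnuOmR$, with a critical point at $t=0$, and both you and the paper take the formula $\langle\mathcal{J}'(u),v\rangle=\cE(u,v)-\langle f,v\rangle-\langle g,v\rangle$ as given. For the converse, the paper argues concretely — testing \eqref{eq:var-nonlocal-Neumann-bis} with both $u$ and $v$ and using H\"older's and Young's inequalities to get $\cE(u,v)\leq \tfrac{1}{p'}\cE(u,u)+\tfrac{1}{p}\cE(v,v)$ — which is precisely the first-order convexity inequality $\mathcal{J}(v)\geq\mathcal{J}(u)+\langle\mathcal{J}'(u),v-u\rangle$ that you invoke abstractly (together with the observation that $v-u$ stays in the subspace), so the two routes coincide in substance.
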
 
\begin{proof}
Let $ u,v\in\WnuOmR^{\perp}$. Assume \eqref{eq:var-nonlocal-Neumann-bis} holds. H\"{o}lder's and Young's inequalities imply 
\begin{align*}
\mathcal{E}(u,v) 
\leq \mathcal{E}(u,u)^{1/p'}\mathcal{E}(v,v)^{1/p}
&\leq \frac{1}{p'} \mathcal{E}(u,u)+ \frac{1}{p} \mathcal{E}(v,v)
\\&= \mathcal{E}(u,u)- \frac{1}{p} \mathcal{E}(u,u)+ \frac{1}{p} \mathcal{E}(v,v).
\end{align*} 
By virtue of \eqref{eq:var-nonlocal-Neumann-bis} which holds for $u$ and $v$ we get $\mathcal{J}(u)\leq \mathcal{J}(v)$ and thus $u $ solves \eqref{eq:nonlocal-Neumann-min}.
Conversely let $ u $ satisfies \eqref{eq:nonlocal-Neumann-min}, i.e.,  $\mathcal{J}(u)\leq \mathcal{J}(v)$ for  $ v\in\WnuOmR^{\perp}$. In particular, $\mathcal{J}(u) \leq \mathcal{J}(u+tv)$ for all $t\in \R$. Hence, the mapping  $t\mapsto \mathcal{J}(u+t v)$ is differentiable and  has a critical point at $t=0$. It follows that  $u$ satisfies  \eqref{eq:var-nonlocal-Neumann-bis} since
\begin{align*}
\cE(u,v) -\langle f , v \rangle  - \langle g , v \rangle= \langle	\mathcal{J}'(u), v\rangle= \lim_{t\to 0}\frac{\mathcal{J}(u+tv)-\mathcal{J}(u)}{t} =0. 
\end{align*}
The equivalence between \eqref{eq:var-nonlocal-Neumann} and  \eqref{eq:nonlocal-Neumann-min-const} can be proved analogously. 
\end{proof}

\noindent We are now in position to state the well-posedness of the problems  \eqref{eq:var-nonlocal-Neumann-bis},  \eqref{eq:var-nonlocal-Neumann-gen} and  \eqref{eq:var-nonlocal-Neumann}. Given a weight $\omega$, we opt for the convention that $\omega^{-1}(x)=0$ whenever $\omega(x)=0$. 
\begin{theorem} \label{thm:nonlocal-Neumann-var}
Let $\omega\in \{\widetilde{\nu}_{\Omega}, \overline{\nu}_{\Omega}, \widehat{\nu}_{R}\}$ where $|B_R(0)\cap \Omega|>0$ $($see Definition \ref{def:different-nus}$)$. 
Assume that \eqref{eq:positive-Oe} holds so that $\WnuOmR\equiv W^p_\nu(\Omega|\Omega_\nu)$ is a reflexive Banach space and that Poincar\'{e} inequality \eqref{eq:poincare-semi-gagliardo} holds (see page \pageref{eq:poincare-semi-gagliardo}).  Assume  $f\in \WnuOmR'$ $($or $ f \in L^{p'}(\Omega))$  and  $g\in \TnuOm'$ $($or $g\in L^{p'}(\Omega^c, \omega^{1-p'}))$. The following assertions hold.
\begin{enumerate}[$(i)$]
\item  
\textbf{Existence}. 
There is a unique  $u\in \WnuOmR^\perp$ satisfying \eqref{eq:var-nonlocal-Neumann-bis}. 
The problem \eqref{eq:var-nonlocal-Neumann-gen} has a solution $w\in\WnuOmR$ if and only if  the compatibility  $\langle f , 1 \rangle  + \langle g , 1 \rangle=0$ holds  and $w=u+c$ for some $c\in \R.$
\item \textbf{Boundedness}. 
There is $C= C(d,p,\Omega,\nu)>0$ such that any solution  $w$ to the problem \eqref{eq:var-nonlocal-Neumann-gen} satisfies 
\begin{align}\label{eq:weak-sol-bounded}
\|w-\hbox{$\fint_{\Omega}w$} \|_{\WnuOmR}\leq C \Big(\|f\|_{\WnuOmR'}+\|g\|_{\TnuOm'}\Big)^{1/(p-1)}.
\end{align}
\item \textbf{Continuity}.  
Moreover, if $u_i= w_i-\hbox{$\fint_{\Omega}w_i$}$, $i= 1,2$ where $w_i$ satisfies \eqref{eq:var-nonlocal-Neumann-gen} with $f=f_i$ then we have 
\begin{align}\label{eq:weak-sol-conti}
\|u_1-u_2\|_{\WnuOmR}\leq 
\begin{cases}
C \big(\|f_1-f_2\|_{\WnuOmR'}+\|g_1-g_2\|_{\TnuOm'}\big)^{1/(p-1)} & \hspace*{-2ex} \text{$p\geq2$},\\
CM\big( \|f_1-f_2\|_{\WnuOmR'}+  \|g_1-g_2\|_{\TnuOm'}\big) & \hspace*{-2ex}\text{$p<2$},
\end{cases}
\end{align}
where 
$M=M(f_1,f_2,g_1,g_2)= \big(\sum_{i=1}^2 \|f_i\|_{\WnuOmR'}+\|g_i\|_{\TnuOm'}\big)^{\frac{2-p}{p-1}}.$
\item \textbf{Problem \eqref{eq:var-nonlocal-Neumann}.} In the case $f\in L^{p'}(\Omega)$ and $g\in L^{p'}(\Omega^c, \omega^{1-p'})$ identified with the forms $v\mapsto\langle f,v\rangle= \int_\Omega f(x)v(x)\d x$ and  $v\mapsto \langle g,v\rangle= \int_{\Omega^c} g(y)v(y)\d y$ respectively. Then $f\in \WnuOmR'$, $ g\in \TnuOm'$ and there is $C_1=C_1( d, p,\Omega,\nu)$ such that 
\begin{align*}
\|f\|_{\WnuOmR'}\leq \|f\|_{L^{p'}(\Omega)}\quad\text{and}\quad 
\|g\|_{\TnuOm'}	\leq C_1\|g\|_{L^{p'}(\Omega^c,\omega^{1-p'})}.
\end{align*}
In other words, both problems \eqref{eq:var-nonlocal-Neumann} and \eqref{eq:var-nonlocal-Neumann-gen} are identical. 
\end{enumerate}
\end{theorem}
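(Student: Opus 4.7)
The plan is to apply the direct method of calculus of variations to the functional $\mathcal{J}(v)=\frac{1}{p}\cE(v,v)-\langle f,v\rangle-\langle g,v\rangle$ on the closed subspace $\WnuOmR^\perp$ of the reflexive Banach space $\WnuOmR$ (the reflexivity and Banach structure being ensured by Theorem \ref{thm:WnuOme-complete} via assumption \eqref{eq:positive-Oe}). For weak lower semicontinuity, I would invoke Theorem \ref{thm:charac-weak-lsc}: the map $v\mapsto \cE(v,v)$ is convex (as $t\mapsto |t|^p$ is convex) and strongly continuous on $\WnuOmR$, while the linear terms are continuous by assumption, so $\mathcal{J}$ is convex and lower semicontinuous, hence weakly lower semicontinuous. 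For coercivity on $\WnuOmR^\perp$, I would combine the Poincar\'e inequality \eqref{eq:poincare-semi-gagliardo} with the inequality $\cE_\Omega(u,u)\leq \cE(u,u)$ from Proposition \ref{prop:equiv-energy}: for $u\in\WnuOmR^\perp$, one has $\fint_\Omega u=0$, so $\|u\|_{L^p(\Omega)}^p\leq C\,\cE(u,u)$, whence $\|u\|_{\WnuOmR}^p \leq (C+1)\cE(u,u)$. Then
\begin{align*}
\mathcal{J}(u)\geq \tfrac{1}{p}\cE(u,u)-\bigl(\|f\|_{\WnuOmR'}+\|g\|_{\TnuOm'}\bigr)\|u\|_{\WnuOmR}
\end{align*}
tends to $+\infty$ as $\|u\|_{\WnuOmR}\to\infty$, giving coercivity via Theorem \ref{thm:charac-weak-coercive}. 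Theorem \ref{thm:direct-method} then yields a minimizer $u\in\WnuOmR^\perp$, and strict convexity of $t\mapsto |t|^p$ gives uniqueness. Proposition \ref{prop:min-equiv-var} converts this minimizer into the unique solution of \eqref{eq:var-nonlocal-Neumann-bis}. For the full problem \eqref{eq:var-nonlocal-Neumann-gen}, testing with $v\equiv 1$ forces the compatibility $\langle f,1\rangle+\langle g,1\rangle=0$; conversely, when this holds, the right-hand side of \eqref{eq:var-nonlocal-Neumann-gen} vanishes on constants so any $v\in\WnuOmR$ can be decomposed as $v=(v-\fint_\Omega v)+\fint_\Omega v$ with the constant part giving zero, reducing \eqref{eq:var-nonlocal-Neumann-gen} to \eqref{eq:var-nonlocal-Neumann-bis}; uniqueness up to constants follows.

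For boundedness (ii), take $w$ a solution, set $u=w-\fint_\Omega w\in\WnuOmR^\perp$ (so $\cE(u,u)=\cE(w,w)$ since the energy annihilates constants), and test \eqref{eq:var-nonlocal-Neumann-gen} with $u$. By H\"older and the embeddings,
\begin{align*}
\cE(u,u) = \langle f,u\rangle+\langle g,u\rangle \leq \bigl(\|f\|_{\WnuOmR'}+\|g\|_{\TnuOm'}\bigr)\|u\|_{\WnuOmR},
\end{align*}
and combining with the coercivity estimate $\|u\|_{\WnuOmR}^p\leq (C+1)\cE(u,u)$ yields $\|u\|_{\WnuOmR}^{p-1}\leq C'(\|f\|_{\WnuOmR'}+\|g\|_{\TnuOm'})$, i.e.\ \eqref{eq:weak-sol-bounded}.

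For continuity (iii), subtract the equations for $w_1,w_2$ and test with $u_1-u_2\in\WnuOmR^\perp$ to obtain
\begin{align*}
\iil_{(\Omega^c\times\Omega^c)^c}\bigl[\psi(u_1(x)-u_1(y))-\psi(u_2(x)-u_2(y))\bigr]\bigl[(u_1-u_2)(x)-(u_1-u_2)(y)\bigr]\nu(x-y)\d y\d x
\end{align*}
equal to $\langle f_1-f_2,u_1-u_2\rangle+\langle g_1-g_2,u_1-u_2\rangle$. Here the main obstacle enters: one needs the standard elementary $\psi$-monotonicity inequalities (see the appendices referenced in the excerpt). For $p\geq 2$ one has $(\psi(a)-\psi(b))(a-b)\geq c_p|a-b|^p$ pointwise, giving directly $c_p\cE(u_1-u_2,u_1-u_2)\leq (\|f_1-f_2\|_{\WnuOmR'}+\|g_1-g_2\|_{\TnuOm'})\|u_1-u_2\|_{\WnuOmR}$, and the Poincar\'e inequality again produces the bound. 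For $1<p<2$, the pointwise bound $(\psi(a)-\psi(b))(a-b)\geq c_p|a-b|^2(|a|+|b|)^{p-2}$ must be combined with H\"older's inequality with exponents $2/p$ and $2/(2-p)$, using the a priori bound \eqref{eq:weak-sol-bounded} on each $w_i$ to control the factor involving $(|a|+|b|)^{p(2-p)/2}$, producing the extra factor $M$.

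For (iv), one uses H\"older's inequality together with the continuous embedding $\WnuOmR\hookrightarrow L^p(\Omega)$ to get $|\langle f,v\rangle|\leq \|f\|_{L^{p'}(\Omega)}\|v\|_{L^p(\Omega)}\leq \|f\|_{L^{p'}(\Omega)}\|v\|_{\WnuOmR}$, and Theorem \ref{thm:w-emb-on-weigh-lp} (or Theorem \ref{thm:trace-nonloc-thm}) to estimate
\begin{align*}
|\langle g,v\rangle|\leq \Bigl(\int_{\Omega^c}|g|^{p'}\omega^{1-p'}\Bigr)^{1/p'}\Bigl(\int_{\Omega^c}|v|^p\omega\Bigr)^{1/p}\leq C_1\|g\|_{L^{p'}(\Omega^c,\omega^{1-p'})}\|v\|_{\WnuOmR},
\end{align*}
identifying the $L^{p'}$ data with functionals in the respective dual spaces, so \eqref{eq:var-nonlocal-Neumann} is just \eqref{eq:var-nonlocal-Neumann-gen} for these specific functionals.
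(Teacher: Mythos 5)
Your proposal is correct and follows essentially the same route as the paper: the direct method with weak lower semicontinuity via convexity and coercivity via the Poincar\'{e} inequality for (i), testing with $w-\fint_\Omega w$ plus the coercivity estimate for (ii), the degenerate/singular monotonicity inequalities for $\psi$ combined (in the case $1<p<2$) with H\"{o}lder with exponents $2/p$, $2/(2-p)$ and the a priori bound for (iii), and H\"{o}lder plus the weighted/trace embeddings for (iv). The only cosmetic point is that in (iv) the estimate for $g$ should be stated against $\|v\|_{\TnuOm}$ (using the embedding $\TnuOm\hookrightarrow L^p(\Omega^c,\omega)$ from Theorem \ref{thm:trace-nonloc-thm}) rather than $\|v\|_{\WnuOmR}$ so as to conclude $g\in\TnuOm'$ as claimed, which your citation of that theorem already covers.
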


\begin{proof}We emphasize that throughout the proof,  $C>0$ denotes a generic constant only depending on the constant from the Poincar\'{e} inequality and $p$. 
\smallskip 

$(i)$ Since $ \WnuOmR\hookrightarrow\TnuOm$ we have $g\in \TnuOm' \hookrightarrow \WnuOmR'$, i.e., $g\in \WnuOmR'$. Thus, the functional 
$$v\mapsto \mathcal{J}(v) = \frac{1}{p} \mathcal{E}(v,v) -\langle f , v \rangle  -\langle g , v \rangle$$
is clearly continuous (hence lower semicontinuous)  and convex on $\WnuOmR$ a fortiori on $\WnuOmR^\perp$. According to Theorem \ref{thm:charac-weak-lsc}, $\mathcal{J}$ is weakly lower semicontinuous. On the other hand, in virtue of the Poincar\'{e} inequality \eqref{eq:poincare-semi-gagliardo}  one readily finds a constant $C>0$
\begin{align}\label{eq:coercivity-est}
C\|v-\mbox{$\fint_{\Omega} v$}\|^p_{\WnuOmR}\leq \cE(v,v)\quad\text{for all $v\in \WnuOmR$}.
\end{align}
Therefore  if $v\in \WnuOmR^\perp$ then we have 
\begin{align*}
\mathcal{J}(v)\geq C\|v\|^p_{\WnuOmR}- \|f\|_{\WnuOmR'}\|v\|_{\WnuOmR}-\|g\|_{\TnuOm'}\|v\|_{\WnuOmR}. 
\end{align*}
Since $p>1$ it follows that $\mathcal{J}(v)\to \infty$ as $ \|v\|_{\WnuOmR}\to\infty$. In fact, we have 
\begin{align*}
\frac{\mathcal{J}(v)}{\|v\|_{\WnuOmR}}
\xrightarrow{}\infty, \quad\text{as $\|v\|_{\WnuOmR}\to \infty$}. 
\end{align*}
Since $\WnuOmR$ is a Banach space,  one deduces in view of Theorem \ref{thm:charac-weak-coercive} that $\mathcal{J}$ is  weakly coercive on $\WnuOmR^\perp$. 
Hence, by Theorem \ref{thm:direct-method} $\mathcal{J}$  possesses a minimizer
$u\in \WnuOmR^\perp$, i.e., $u$ solves \eqref{eq:nonlocal-Neumann-min}. By Proposition \ref{prop:min-equiv-var}, $u$ is also a solution to \eqref{eq:var-nonlocal-Neumann-bis}. The uniqueness follows from the strict convexity of $\mathcal{J}$ or merely from $(iii)$; see the estimate \eqref{eq:weak-sol-conti}.

\noindent Now if $f$ and $g$ are compatible then we easily observe that $\mathcal{J}(v+c)=\mathcal{J}(v)$ for every $v\in \WnuOmR$ and every $c\in \R$. For $ v\in \WnuOmR$ we have $ v-\mbox{$\fint_{\Omega}v$}\in \WnuOmR^\perp$. Hence 
$\mathcal{J}(u+c)=\mathcal{J}(u)\leq \mathcal{J}(v-\mbox{$\fint_{\Omega}v$})=\mathcal{J}(v)$.
Thus, $u+c$ also solves \eqref{eq:nonlocal-Neumann-min-const} which is equivalent to \eqref{eq:var-nonlocal-Neumann-gen}. Conversely if $w$ solves  \eqref{eq:var-nonlocal-Neumann-gen} then one verifies that $ w-\mbox{$\fint_{\Omega}w$}\in \WnuOmR^\perp$ solves \eqref{eq:var-nonlocal-Neumann-bis}. By uniqueness we get $u= w- \mbox{$\fint_{\Omega}w$}$ that is $w=u+c$ with  $c=\mbox{$\fint_{\Omega}w$}$. Moreover, by taking $v=1$ in  \eqref{eq:var-nonlocal-Neumann-gen} yields the compatibility condition. 

\smallskip 

$(ii)$ If $w\in \WnuOmR$ is a solution to \eqref{eq:var-nonlocal-Neumann-gen} we have  
\begin{align*}
\cE(w,w)&= \cE(w,w-\mbox{$\fint_{\Omega} w$}) 
= \langle f , w-\mbox{$\fint_{\Omega} w$} \rangle  +\langle g , w -\mbox{$\fint_{\Omega} w$} \rangle \\
&\leq \|w-\mbox{$\fint_{\Omega} w$}\|_{\WnuOmR} ( \|f\|_{\WnuOmR'}+ \|g\|_{\TnuOm'}).
\end{align*}
Combining this with  the estimate  \eqref{eq:coercivity-est} we find that 
\begin{align*}
\|w-\mbox{$\fint_{\Omega} w$}\|_{\WnuOmR}
\leq 
C( \|f\|_{\WnuOmR'}+  \|g\|_{\TnuOm'})^{1/(p-1)}.
\end{align*}

$(iii)$ \textbf{Case $p\geq 2$.} The  estimate $(|b|^{p-2}b-|a|^{p-2}a|)(b-a)\geq A'_p|b-a|^p$ from \eqref{eq:under-elem-degen} implies 
\begin{align}\label{eq:under-form-degen}
|\cE(v,v-v')-\cE(v',v-v') |&\geq A'_p\cE(v-v',v-v'). 
\end{align}
Put $w=w_1-w_2$ so that $u_1-u_2=w-\mbox{$\fint_{\Omega} w$} \in \WnuOmR$. By \eqref{eq:under-form-degen} we have  
\begin{align*}
A'_p\cE(w,w)
&\leq |\cE(w_1,w_1-w_2)-\cE(w_2,w_1-w_2)|\\
&= | \cE(w_1,w-\mbox{$\fint_{\Omega} w$})-\cE(w_2,w-\mbox{$\fint_{\Omega} w$})|\\
&= |\langle f_1-f_2 , w-\mbox{$\fint_{\Omega} w$} \rangle  +\langle g_1-g_2, w -\mbox{$\fint_{\Omega} w$} \rangle |\\
&\leq \|w-\mbox{$\fint_{\Omega} w$}\|_{\WnuOmR} ( \|f_1-f_2\|_{\WnuOmR'}+  \|g_1-g_2\|_{\TnuOm'}).
\end{align*}
Inserting this into  \eqref{eq:coercivity-est} implies  
\begin{align*}
\|w-\mbox{$\fint_{\Omega} w$}\|_{\WnuOmR}
\leq 
C(\|f_1-f_2\|_{\WnuOmR'}+  \|g_1-g_2\|_{\TnuOm'})^{1/(p-1)}.
\end{align*}
\textbf{ Case $1<p<2$.} Let rewrite the inequality  
$(|b|^{p-2}b-|a|^{p-2}a)(b-a)\geq A'_p |b-a|^2(|a|^p+|b|^p)^{\frac{p-2}{p}}$ (see \eqref{eq:under-elem-sing}) as
\begin{align*}
c_p |b-a|^p\leq
\big((|b|^{p-2}b-|a|^{p-2}a)(b-a)\big)^{1/q} (|a|^p+|b|^p)^{1/q'}, 
\end{align*}
where $q=\frac{2}{p}$, $q'=\frac{2}{2-p}$ and $c_p=2^{\frac{2-p}{2}}{A'_p}^{\frac{p}{2}} $.
This, together with H\"{o}lder inequality  yields, with $ v_1=v-v'$, 
\begin{align}\label{eq:under-form-sing}
\big(\cE(v,v_1)-\cE(v',v_1)\big)^{\frac{p}{2}}
\big( \cE(v,v)+\cE(v',v')\big)^{\frac{2-p}{2}} \geq c_p \cE(v_1, v_1).
\vspace*{-4ex}
\end{align}
The same reasoning as above yields 
\begin{align*}
c_p\cE(w,w)
\leq \|w-\mbox{$\fint_{\Omega} w$}\|_{\WnuOmR}^{\frac{p}{2}}&\big( \|f_1-f_2\|_{\WnuOmR'}+  \|g_1-g_2\|_{\TnuOm'}\big)^{\frac{p}{2}}\\&
\times \big(\cE(w_1,w_1)+\cE(w_2,w_2)\big)^{\frac{2-p}{2}} . 
\end{align*}
On the other hand, by the estimate \eqref{eq:weak-sol-bounded} we have 
\begin{align*}
\big( \cE(w_1,w_1)+\cE(w_2,w_2)\big)^{\frac{2-p}{2}}
\hspace*{-1ex}\leq \hspace*{-0.5ex}
C\big( \hspace*{-0.2ex}\sum_{i=1}^2 \|f_i\|_{\WnuOmR'}+\|g_i\|_{\TnuOm'}\hspace*{-0.3ex}\big)^{\frac{2-p}{p-1}} 
\hspace*{-1ex}= \hspace*{-0.5ex}CM. 
\end{align*}
Altogether with the estimate  \eqref{eq:coercivity-est} implies 
\begin{align*}
\|w-\mbox{$\fint_{\Omega} w$}\|_{\WnuOmR}
&\leq CM\big( \|f_1-f_2\|_{\WnuOmR'}+  \|g_1-g_2\|_{\TnuOm'}\big). 
\end{align*}
$(iv)$ Clearly, H\"{o}lder inequality implies 
\begin{align*}
\Big|\int_{\Omega} f(x)v(x)\d x \Big|\leq \|f\|_{L^{p'}(\Omega)}\|v\|_{L^p(\Omega)} \leq \|f\|_{L^{p'}(\Omega)}\|v\|_{\WnuOmR}\,.
\end{align*}
By H\"{o}lder inequality and the continuity of the embedding $ \operatorname{Tr}: \TnuOm\hookrightarrow L^p(\Omega^c, \omega)$ (see  Theorem \ref{thm:trace-nonloc-thm}) we get
\begin{align*}
\Big|\int_{\Omega^c} g(y) v(y)\d y \Big|
&\leq \|g\|_{ L^{p'}(\Omega^c, \omega^{1-p'})}\|v\|_{L^p(\Omega^c, \omega)} \\&
\leq C_1\|g\|_{L^{p'}(\Omega^c, \omega^{1-p'})} \|v\|_{\TnuOm}.
\end{align*}
The remaining follows since  in this case problems \eqref{eq:var-nonlocal-Neumann} and \eqref{eq:var-nonlocal-Neumann-gen} are identical. 
\end{proof}

\begin{remark}\label{rem:variant-Neumann}
A modification of the Neumann data up to a multiplicative weight; such as the substitution $g(y)= g_{*}(y)\omega^\beta(y)$, $\beta\in \R$ results in another variant of the  Neumann problem.  
Naturally, one retrieves the following configuration: 
\begin{enumerate}[$\bullet$]
\item The Neumann problem \eqref{eq:nonlocal-Neumann} becomes 
\begin{align}\label{eq:nonlocal-Neumann-weight}\tag{$N_{*}$}
L u = f \quad\text{in}~~~ \Omega \quad\quad\text{ and } \quad\quad \mathcal{N} u= g_{*}\,\omega^{\beta} ~~~ \text{ on }~~~ \R^d\setminus\Omega.
\end{align} 
\item The weak formulation \eqref{eq:var-nonlocal-Neumann} becomes 
\begin{align}\label{eq:var-nonlocal-Neumann-weigthed}\tag{$V_{*}$}
\mathcal{E}(u,v) = \int_{\Omega} f(x)v(x)\d x +\int_{\Omega^c} g_{*}(y)v(y)\omega^{\beta}(y)\d y,\quad \mbox{for all}~~v \in \WnuOmR\,, 
\end{align}
whereas the compatibility condition \eqref{eq:compatible-nonlocal} becomes  \begin{align}\tag{$C_{*}$}\label{eq:compatible-nonlocal-weighted}
\int_{\Omega} f(x)\d x +\int_{\Omega^c} g_{*}(y)\omega^{\beta}(y)\d y=0. 
\end{align}
\item 
Last, if $g_*\in L^{p'}(\Omega^c, \omega^{\gamma})$ with $\beta=\frac1p+\frac{\gamma}{p'}$, the  map  $v\mapsto \int_{\Omega^c} g_{*}(y)\omega^{\beta}(y)\d y$ is linear and  belongs to $\TnuOm'$. Some special couples are given by $(\gamma, \beta)\in \{ (1-p', 0), (1,1), (0, \frac1p)\}.$ 
\end{enumerate}
\end{remark}
The next result concerns the non-existence of weak solutions when the Neumann data $g$ is not in the weighted nonlocal trace space $L^{p'}(\Omega^c,\omega^{1-p'})$. In other words,  $L^{p'}(\Omega^c,\omega^{1-p'})$ is a sufficiently large function space as the data space for the Neumann problem. 
\begin{theorem}[\textbf{Non-existence of weak solution}]\label{thm:non-existence-Neumann}
Let $\Omega= B_1(0)$ and   $\nu(h)=|h|^{-d-sp}$, $s\in (0,1)$ so that $\widetilde{\nu}(h)\asymp (1+|h|)^{-d-sp}$. 
There exists $g \in L^{1}(\Omega^c)\setminus L^{p'}(\Omega^c, \widetilde{\nu}^{1-p'})$ compatible with $f=0$ i.e. $\int_{\Omega^c} g (y)\d y=0$, such that the  Neumann problem $Lu=0$ on $\Omega$ and $\mathcal{N} u=g$ on $\R^d\setminus\Omega$ does not have a weak solution in $\WnuOmR$. 
\end{theorem}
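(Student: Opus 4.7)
The plan is to construct the bad Neumann datum $g$ explicitly and then exhibit a single test function $v\in W^{s,p}(\Omega|\R^d)$ that forces the weak formulation to fail by producing a finite left-hand side against an infinite right-hand side. Fix exponents $0<\beta<\gamma<s$ (for concreteness take $\beta=s/2$, $\gamma=3s/4$), and define $g(y)=|y|^{-d-\beta}$ for $|y|\geq 2$ and $g(y)=-c$ for $1<|y|<2$, where $c>0$ is chosen so that $\int_{\Omega^c}g\,dy=0$. Since $\beta>0$, this gives $g\in L^1(\Omega^c)$ together with the compatibility condition for $f=0$. A polar-coordinate computation yields
\[
\int_{|y|\geq 2}|g(y)|^{p'}(1+|y|)^{(d+sp)/(p-1)}\,dy \asymp \int_2^\infty r^{p(s-\beta)/(p-1)-1}\,dr=+\infty
\]
because $\beta<s$, so $g\notin L^{p'}(\Omega^c,\widetilde{\nu}^{1-p'})$.

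Next I would let $v\in C^\infty(\R^d)$ be radial and nonnegative, with $v\equiv 1$ on $\overline{B_1(0)}$ and $v(y)=|y|^\gamma$ for $|y|\geq 2$. Then $v|_\Omega\in L^\infty(\Omega)\subset L^p(\Omega)$, and splitting $\mathcal{E}(v,v)$ into contributions from $\Omega\times\Omega$ (which vanishes since $v\equiv 1$ there), from $\Omega\times(B_2\setminus\Omega)$ (finite by the Lipschitz estimate $|v(x)-v(y)|\leq L|x-y|$ together with $\int|x-y|^{p-d-sp}\,dy<\infty$ locally, using $p(1-s)>0$), and from $\Omega\times\{|y|>2\}$ (where $|v(x)-v(y)|^p\lesssim|y|^{p\gamma}$ and $|x-y|\gtrsim|y|$, giving the convergent tail $\int_2^\infty r^{p(\gamma-s)-1}\,dr$ since $\gamma<s$), I obtain $\mathcal{E}(v,v)<\infty$ and hence $v\in W^{s,p}(\Omega|\R^d)$.

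To close the argument, suppose toward a contradiction that $u\in W^{s,p}(\Omega|\R^d)$ is a weak solution, so $\mathcal{E}(u,v)=\int_{\Omega^c}g(y)v(y)\,dy$. Proposition \ref{prop:enrgy-forms}(i) yields
\[
|\mathcal{E}(u,v)|\leq \mathcal{E}(u,u)^{1/p'}\mathcal{E}(v,v)^{1/p}<\infty.
\]
On the other hand, since $v\geq 0$ everywhere and $v(y)=|y|^\gamma$ on $\{|y|\geq 2\}$,
\[
\int_{\Omega^c}g(y)v(y)\,dy= \int_{|y|\geq 2}|y|^{\gamma-d-\beta}\,dy+O(1)=|\mathbb{S}^{d-1}|\int_2^\infty r^{\gamma-\beta-1}\,dr+O(1)=+\infty,
\]
because $\gamma\geq\beta$. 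These two conclusions are incompatible, so no such weak solution $u$ can exist.

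The main obstacle I expect is the delicate simultaneous balance of four scaling constraints: $L^1$-integrability of $g$ ($\beta>0$), non-membership of $g$ in the weighted trace dual ($\beta<s$), finiteness of $\mathcal{E}(v,v)$ ($\gamma<s$), and divergence of $\int gv$ ($\gamma\geq\beta$). The window $0<\beta\leq\gamma<s$ pins down exactly the Neumann-data gap between $L^1(\Omega^c)$ and $L^{p'}(\Omega^c,\widetilde{\nu}^{1-p'})$ that the theorem highlights.
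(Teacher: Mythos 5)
Your proof is correct, and it runs on the same engine as the paper's argument -- assume a weak solution $u$ exists, bound $|\cE(u,v)|\leq \cE(u,u)^{1/p'}\cE(v,v)^{1/p}$ by Proposition \ref{prop:enrgy-forms}, and then exhibit one admissible test function $v\in\WnuOmR$ for which $\int_{\Omega^c}g\,v=+\infty$ -- but your counterexample pair $(g,v)$ is genuinely different from the paper's. The paper takes $g=g_\gamma\widetilde\nu$ with $g_\gamma(x)=\tfrac{x_1}{|x|}(|x|-1)^\gamma\mathds{1}_{B_1^c}(x)$, gets compatibility for free from the odd symmetry in $x_1$, and must split into the cases $sp\geq1$ and $sp\leq1$, choosing $\gamma+\beta$ at the endpoints $\{-1,sp\}$ so that the failure of membership in $L^{p'}(\Omega^c,\widetilde\nu^{1-p'})$ comes either from a boundary singularity at $\partial\Omega$ or from insufficient decay at infinity. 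Your construction ($g=|y|^{-d-\beta}$ far out, a compensating constant on an annulus, $v$ smooth with $v\equiv1$ on $\overline{\Omega}$ and $v=|y|^\gamma$ outside $B_2$, with $0<\beta<\gamma<s$) keeps $g$ bounded near $\partial\Omega$ and exploits only the infinity failure mode; this removes the case distinction in $(s,p)$ and makes the scaling window $\beta<\gamma<s$ transparent, at the cost of illustrating only one of the two mechanisms (slow decay at infinity) by which data can fall outside the trace dual, whereas the paper's two-parameter family also captures boundary-singular data. Your exponent computations check out (the non-membership integral has exponent $\tfrac{p(s-\beta)}{p-1}-1\geq-1$, the cross-term energy of $v$ converges precisely because $\gamma<s$, and the pairing diverges because $\gamma-\beta-1\geq-1$), and your reading of ``weak solution'' -- the identity must hold for every $v\in\WnuOmR$, so a single $v$ with infinite right-hand side suffices -- is exactly the interpretation the paper itself uses.
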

\begin{proof}
Let us define $ g(x)= g_\gamma(x)\widetilde{\nu}(x)$ with $g_\gamma(x)=\frac{x_1}{|x|} (|x|-1)^{\gamma}\mathds{1}_{B^c_1(0)} (x)$, $\gamma\in \R$. 
For $x\in B^c_1(0)$ we have  $\dist(x, \partial \Omega)=\dist(x, \mathbb{S}^{d-1})= (|x|-1)$ and 
\begin{align*}
\int_{B_1(0)}\frac{\d y}{|x-y|^{d+sp}}\asymp (|x|-1)^{-sp}\land (|x|-1)^{-d-sp}. 
\end{align*}
Using  polar coordinates yields that   
\begin{align*}
\|g_\gamma\|^p_{\WnuOmR}
&= 2\int_{B^c_1(0) }\frac{|x_1|^p}{|x|^p} (|x|-1)^{p\gamma}
\int_{B_1(0)}|x-y|^{-d-sp}\d y\,\d x\\
&\asymp 2\int_{B^c_1(0) }\frac{|x_1|^p}{|x|^p} (|x|-1)^{p\gamma-sp}(1\land (|x|-1)^{-d})\d x  \\
&=2|\mathbb{S}^{d-1}|K_{d,p}
\Big( \int_0^1 r^{p\gamma-sp} \d r+ \int_1^\infty r^{p\gamma-sp-1} \d r\Big).
\end{align*}
Therefore we deduce that $g_\gamma\in \WnuOmR$ if and only if $\gamma\in (\frac{sp-1}{p},s)$. Analogously, $g_{\gamma}g_{\beta}\in L^1(\Omega^c,\widetilde{\nu})$ if and only if $\gamma+\beta\in (-1,sp)$. Since $g_{p'\gamma+0}=g^{p'}_{\gamma}$, it follows that  $g_{\gamma}\in L^{p'}(\Omega^c,\widetilde{\nu})$ if and only if $\gamma\in (-\frac{1}{p'}, \frac{sp}{p'})$. Indeed, 
\begin{align*}
\|g_\gamma g_\beta\|_{L^1(\Omega^c, \widetilde{\nu})}
&= \int_{B^c_1(0) }\frac{|x_1|^2}{|x|^2}  (|x|-1)^{\gamma+\beta} (1+|x|)^{-d-sp} \d x\\
&\asymp \Big( \int_0^1 r^{\gamma+\beta} \d r+ \int_1^\infty r^{\gamma+\beta-sp-1} \d r\Big).
\end{align*} 
By duality $g= g_\gamma\widetilde{\nu}\in L^1(\Omega^c)\setminus L^{p'}(\Omega^c, \widetilde{\nu}^{1-p'})$ if and only if $g_\gamma\in L^1(\Omega^c, \widetilde{\nu})\setminus L^{p'}(\Omega^c, \widetilde{\nu})$, i.e., if and only if $\gamma\in (-1, -\frac{1}{p'}]\cup [\frac{sp}{p'}, sp)$. 
Moreover by symmetry of $g_\gamma$, we have that $g= g_\gamma\widetilde{\nu}$ satisfies the compatibility condition
\begin{align*}
\int_{\Omega^c} g(y)\d y=\int_{\Omega^c} g_\gamma(y)\widetilde{\nu}(y)\d y=0. 
\end{align*}
Assume the Neumann problem has a weak solution $u$. That is 
\begin{align*}
\mathcal{E}(u,v)=\int_{\Omega^c} g_\gamma(y)v(y)\widetilde{\nu}(y)\d y\quad \text{for all}\quad v\in \WnuOmR.
\end{align*}
Consider $C= 1+\|u\|_{\WnuOmR}>0$, we find that  
\begin{align*}
\Big|\int_{\Omega^c} g_\gamma(y)v(y)\widetilde{\nu}(y)\d y\Big|\leq C \|v\|_{\WnuOmR} 
\quad \text{ for all}\quad  v\in \WnuOmR.
\end{align*}
In particular taking  $v=g_\beta\in \WnuOmR$ amounts the above estimate to 
\begin{align*}
\|g_{\gamma}g_{\beta}\|_{L^1(\Omega^c, \widetilde{\nu})}= \Big|\int_{\Omega^c} g_\gamma(y) g_\beta(y)\widetilde{\nu}(y)\d y\Big|\leq C \|g_\beta\|_{\WnuOmR}, 
\end{align*}
for all $\beta\in (\tfrac{sp-1}{p}, s)$. Now if $sp\geq1$, consider $\gamma\in (\frac{sp}{p'}, \frac{1}{p}+\frac{sp}{p'})\subset  [\frac{sp}{p'}, sp)$ and take $\beta=sp-\gamma$.
If $sp\leq1$ consider $\gamma\in (-1, -(s+\tfrac{1}{p'}))\subset  (-1, -\frac{1}{p'}]$ and take $\beta= -\gamma-1$. 
In both cases, $\gamma\in (-1, -\frac{1}{p'}]\cup [\frac{sp}{p'}, sp)$ and 
$\beta\in  (\tfrac{sp-1}{p}, s)$ and $\gamma+\beta\in \{-1,sp\}$. 
In other words, $g_\gamma\in L^1(\Omega^c, \widetilde{\nu})\setminus L^{p'}(\Omega^c, \widetilde{\nu})$ and $g_\beta\in \WnuOmR$. Whence $\|g_{\gamma}g_{\beta}\|_{L^1(\Omega^c, \widetilde{\nu})}=\infty$ and $\|g_{\beta}\|_{\WnuOmR}<\infty$, which contradicts the above inequality. 
\end{proof}

The well-posedness of the Neumann problem for the regional operator $L_\Omega$ can be derived analogously.
\begin{align*}
L_\Omega u(x):= 2\pv\int_\Omega\psi(u(x)-u(y)) \nu(x-y)\d y. 
\end{align*}
\begin{theorem} \label{thm:nonlocal-Neumann-regional}
Assume that the Poincar\'{e} inequality \eqref{eq:poincare-semi-gagliardo} holds (see page \pageref{eq:poincare-semi-gagliardo}).  Let $ f \in \WnuOm'$. The following assertions are true. 
\begin{enumerate}[$(i)$]
\item \textbf{Existence}. There is a unique  $u\in \WnuOm^\perp$ satisfying $\cE_\Omega(u,v)=\langle f, v\rangle$ for all $v\in \WnuOm^\perp$.  Furthermore a function $w\in \WnuOm$ is weak solution to $L_\Omega u= f$ in $\Omega$, i.e.,  satisfies 
\begin{align}\label{eq:var-nonlocal-Neumann-regional}
\cE_\Omega(u,v)=\langle f, v\rangle\qquad\text{for all $v\in \WnuOm$}, 
\end{align}
if and only if $w=u+c$ form some $c\in\R$  and $\langle f,1\rangle=0$. 
\item \textbf{Boundedness}. 
Any solution  $w$ to \eqref{eq:var-nonlocal-Neumann-regional} satisfies 
\begin{align*}
\|w-\hbox{$\fint_{\Omega}w$} \|_{\WnuOm}\leq C \|f\|^{1/(p-1)}_{\WnuOm'}.
\end{align*}
\item \textbf{Continuity}.  
If $u_i= w_i-\hbox{$\fint_{\Omega}w_i$}$, $i= 1,2$ where $w_i$ satisfies \eqref{eq:var-nonlocal-Neumann-regional} with $f=f_i$ then 
\begin{align*}
\|u_1-u_2\|_{\WnuOm}\leq 
\begin{cases}
C \|f_1-f_2\|^{1/(p-1)}_{\WnuOm'} &  \hspace*{-2ex}\text{$p\geq2$},\\
C\big(\|f_1\|_{\WnuOm'} + \|f_2\|_{\WnuOm'}\big)^{\frac{2-p}{p-1}}\|f_1-f_2\|_{\WnuOm'} & \hspace*{-2ex}\text{$p<2$}. 
\end{cases}
\end{align*}

\end{enumerate}
\end{theorem}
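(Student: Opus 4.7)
The plan is to mirror the proof of Theorem \ref{thm:nonlocal-Neumann-var} verbatim, with the ambient space $\WnuOmR$ replaced by the Banach space $\WnuOm$ (already known to be separable and reflexive), the energy form $\cE$ replaced by $\cE_\Omega$, and the exterior contribution $\langle g,\cdot\rangle$ simply dropped. Note that $\WnuOm$ plays here the same role that $\WnuOmR$ played there, and the fact that the operator is regional means no trace space and no weighted integrability of a Neumann datum enter the picture — this actually simplifies the argument.

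For existence I would introduce the functional
\begin{equation*}
\mathcal{J}(v)=\tfrac{1}{p}\cE_\Omega(v,v)-\langle f,v\rangle, \qquad v\in \WnuOm,
\end{equation*}
which is convex and continuous, hence weakly lower semicontinuous by Theorem \ref{thm:charac-weak-lsc}. The Poincar\'e inequality \eqref{eq:poincare-semi-gagliardo} supplies a constant $C>0$ with $C\|v-\fint_\Omega v\|_{\WnuOm}^p\le \cE_\Omega(v,v)$ for every $v\in \WnuOm$; restricted to $\WnuOm^\perp$ this gives $\mathcal{J}(v)/\|v\|_{\WnuOm}\to\infty$ as $\|v\|_{\WnuOm}\to\infty$, hence weak coercivity by Theorem \ref{thm:charac-weak-coercive}. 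Theorem \ref{thm:direct-method} then yields a minimizer $u\in\WnuOm^\perp$, and the analogue of Proposition \ref{prop:min-equiv-var} (noting that $\mathcal{J}(v+c)=\mathcal{J}(v)-c\langle f,1\rangle$ for $c\in\R$) shows that $u$ solves the reduced variational equation. For a general $w\in\WnuOm$, observing that $w-\fint_\Omega w\in\WnuOm^\perp$ and testing \eqref{eq:var-nonlocal-Neumann-regional} against $v\equiv 1$ gives the compatibility $\langle f,1\rangle=0$ and the representation $w=u+c$; uniqueness follows from the continuity estimate below.

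For (ii), test \eqref{eq:var-nonlocal-Neumann-regional} with $v=w-\fint_\Omega w$, bound the right-hand side by $\|f\|_{\WnuOm'}\|w-\fint_\Omega w\|_{\WnuOm}$, and combine with the Poincar\'e estimate to extract the $1/(p-1)$ power. For (iii), set $w=w_1-w_2$ and apply the same trick as in Theorem \ref{thm:nonlocal-Neumann-var}: in the degenerate case $p\ge 2$ the elementary inequality \eqref{eq:under-elem-degen} gives $A'_p\cE_\Omega(w,w)\le |\cE_\Omega(w_1,w-\fint_\Omega w)-\cE_\Omega(w_2,w-\fint_\Omega w)|=|\langle f_1-f_2,w-\fint_\Omega w\rangle|$; while in the singular case $1<p<2$ the refined bound \eqref{eq:under-elem-sing} combined with H\"older (exponents $2/p$ and $2/(2-p)$) produces the form displayed in \eqref{eq:under-form-sing}, and one concludes via the a priori bound from (ii) applied to $w_1$ and $w_2$.

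I do not expect any serious obstacle: all the ingredients (reflexivity of $\WnuOm$, Poincar\'e inequality on $\WnuOm^\perp$, direct method, and the elementary monotonicity/coercivity inequalities \eqref{eq:under-elem-degen}--\eqref{eq:under-elem-sing}) are already at our disposal. The one point that deserves care is the verification that $\WnuOm^\perp$ is weakly closed in $\WnuOm$, which is automatic since $v\mapsto \int_\Omega v$ is a bounded linear functional on $\WnuOm$ (by H\"older, $\big|\int_\Omega v\big|\le |\Omega|^{1/p'}\|v\|_{L^p(\Omega)}\le |\Omega|^{1/p'}\|v\|_{\WnuOm}$), so its kernel is a closed, hence weakly closed, subspace.
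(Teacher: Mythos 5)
Your proposal is correct and follows exactly the route the paper takes: the paper's own proof of this theorem consists of the remark that it is analogous to Theorem \ref{thm:nonlocal-Neumann-var}, with $\cE$ replaced by $\cE_\Omega$ and the exterior datum dropped, which is precisely what you carry out (including the coercivity on $\WnuOm^\perp$ via \eqref{eq:poincare-semi-gagliardo} and the continuity estimates via \eqref{eq:under-elem-degen}--\eqref{eq:under-elem-sing}). No gaps.
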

\begin{proof}
The proof is analogous to that of Theorem \ref{thm:nonlocal-Neumann-var}. 
\end{proof}

%%%%%%%%%%%%%%%%%%%%%%%%%%%%%%%%
\subsection{Dirichlet problem}\label{sec:dirichlet-problem}
%%%%%%%%%%%%%%%%%%%%%%%%%%%%%%%%

The Dirichlet  problem associated with the data $f:\Omega\to \mathbb{R}$ and $g: \Omega^c \to \R$ is to find $u:\R^d\to \R$ such that 
\begin{align}\label{eq:nonlocal-Dirichlet}\tag{$D$}
L u = f \quad\text{in}~~~ \Omega \quad\quad\text{ and } \quad\quad  u=g ~~~ \text{ on }~~~ \R^d\setminus\Omega.
\end{align}
\noindent In contrast to the Neumann condition ($\cN u =g$ on $\R^d\setminus\Omega$), the Dirichlet condition 
($u=g$ on $\R^d\setminus\Omega$) does not impose any constraint on $g$. Note however that the evaluation of $g$ on $\R^d\setminus\Omega_\nu$ with $\Omega_\nu=\Omega+\supp\nu$, does not influence the values of $u$ in $\Omega$ since $Lu(x)=L_{\Omega_\nu} u(x)$ for all $x\in\Omega$. This is merely due to the fact that $L u(x)= L_{\Omega_\nu} u(x)$ for all $x\in \Omega$; see \eqref{eq:nonlocal-hullnu}. 
It is therefore enough to prescribe the Dirichlet data only the exterior domain (the nonlocal boundary) $\Omega_e=\Omega_\nu^{*}\setminus\Omega= \Omega_\nu\setminus\Omega$ where $\Omega_\nu=\Omega+\supp\nu$ and $\Omega_\nu^{*}=\Omega\cup \Omega_\nu$ is the nonlocal hull of $\Omega$. Accordingly, the problem \eqref{eq:nonlocal-Dirichlet} is the same as finding $u:\Omega_\nu^{*}=\Omega\cup \Omega_\nu\to \R$ such that 
\begin{align}\label{eq:nonlocal-Dirichlet-ex}\tag{$D_\nu$}
L u = f \quad\text{in}~~~ \Omega \quad\quad\text{ and } \quad\quad  u= g_e ~~~ \text{ on }~~~ \Omega_e.
\end{align} 
where $g_e= g|_{\Omega_e}$ is the restriction of $g$ on $\Omega_e$. 
Actually, both  problems \eqref{eq:nonlocal-Dirichlet} and \eqref{eq:nonlocal-Dirichlet-ex} are equivalent. 
Indeed, if $u$ solves  \eqref{eq:nonlocal-Dirichlet} then  clearly $u_\nu=u|_{\Omega_\nu^{*}}$ solves  \eqref{eq:nonlocal-Dirichlet-ex}. Conversely, if $u_\nu$ solves  \eqref{eq:nonlocal-Dirichlet-ex} then the function defined $u(x)= \widetilde{u}_\nu(x)$ for $x\in \Omega_\nu^{*}$ and $u(x)= g(x)$ for $x\in \R^d\setminus\Omega_\nu^{*}$ solves \eqref{eq:nonlocal-Dirichlet}.
From now on, the problem \eqref{eq:nonlocal-Dirichlet} is understood in the sense of \eqref{eq:nonlocal-Dirichlet-ex}. Motivated by the Gauss-Green formula \eqref{eq:gauss-green-nonlocal}, in  Appendix \ref{sec:appendix-gauss-green},
\begin{align*}
	\int_{\Omega} Lu(x)v(x)\d x= \mathcal{E}(u,v) -\int_{\Omega^c}\mathcal{N}u(y)v(y)\d y, 
\end{align*}
we define weak solutions of the Dirichlet problem as follows.

\begin{definition}\label{def:dirichlet-var-sol} Let $ f \in \WnuOmO'$ and $g \in \TnuOm$. We say that $u \in \WnuOmR$ is a weak solution or the variational solution of the Dirichlet problem \eqref{eq:nonlocal-Dirichlet} if 
\begin{align}\label{eq:var-nonlocal-Dirichlet-gen}\tag{$V_0$}
u-g\in \WnuOmO\quad \text{and}\quad \mathcal{E}(u,v) = \langle f , v \rangle   \quad \mbox{for all}~~v \in \WnuOmR\,. 
\end{align}
\end{definition} 
Actually, for any extension $\overline{g}\in \WnuOmR$ of $g$, i.e., $g=\overline{g}$ a.e. on $\Omega^c$, solution to the variational problem \eqref{eq:var-nonlocal-Dirichlet-gen}  are critical points of the functional 
\begin{align}
\mathcal{J}_0(v) &= \frac{1}{p} \mathcal{E}(v,v) -\langle f , v-\overline{g}\rangle,\qquad\text{$v\in g+\WnuOmO$}.
\end{align}
It is decisive to keep in mind that the Fr\'echet derivative of $\mathcal{J}_0$ is given as  
\begin{align*}
\langle	\mathcal{J}'_0(u), v\rangle= \cE(u,v) -\langle f , v \rangle   
\qquad\text{for all $u,v\in \WnuOmO$}.
\end{align*}

\begin{proposition}\label{prop:min-equiv-var-D}
Let $\overline{g}\in \WnuOmR$ be an extension of $g$, i.e., $g=\overline{g}$ a.e. on $\Omega^c$.  The variational problem \eqref{eq:var-nonlocal-Dirichlet-gen} is equivalent to the minimization problem
\begin{align}\label{eq:nonlocal-Dirichlet-min}\tag{$M_0$}
\begin{split}
\mathcal{J}_0(u) =\min_{v-\overline{g}\in \WnuOmO}\mathcal{J}_0(v).
\end{split}
\end{align}
Moreover, if $\Omega\subset \R^d$ is bounded in one direction or $|\Omega|<\infty$ and   $\nu\not\equiv0$ then any solution to\eqref{eq:var-nonlocal-Dirichlet-gen} or  \eqref{eq:nonlocal-Dirichlet-min} is independent on the choice of $\overline{g}$. 
\end{proposition}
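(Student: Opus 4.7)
The plan is to mimic the proof of Proposition \ref{prop:min-equiv-var}, with the affine admissible class $\overline{g}+\WnuOmO$ in place of the linear subspace $\WnuOmR^{\perp}$, and then to dispose of the dependence on $\overline{g}$ by a short algebraic observation combined with a uniqueness argument that is activated by the Poincar\'{e}--Friedrichs inequality.

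For the equivalence between \eqref{eq:var-nonlocal-Dirichlet-gen} and \eqref{eq:nonlocal-Dirichlet-min}, the implication ``minimizer $\Rightarrow$ weak solution'' uses that for any $v\in\WnuOmO$ the curve $t\mapsto u+tv$ stays in the admissible class, so that $t\mapsto\mathcal{J}_0(u+tv)$ is smooth and has a critical point at $t=0$, yielding $\cE(u,v)=\langle f,v\rangle$. Conversely, given a competitor $v\in\overline{g}+\WnuOmO$, one has $v-u\in\WnuOmO$ and may test the Euler--Lagrange identity against $v-u$; together with the H\"{o}lder--Young bound $\cE(u,v-u)\le \tfrac{1}{p'}\cE(u,u)+\tfrac{1}{p}\cE(v,v)$ (exactly as in the Neumann argument of Proposition \ref{prop:min-equiv-var}) this delivers $\mathcal{J}_0(u)\le\mathcal{J}_0(v)$.

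For the independence of the extension: given two extensions $\overline{g}_1,\overline{g}_2\in\WnuOmR$ of $g$, their difference lies in $\WnuOmR$ and vanishes a.e.\ on $\Omega^c$, hence belongs to $\WnuOmO$ by \eqref{eq:def-WnuOm-vanish}. Therefore the two affine admissible classes $\overline{g}_i+\WnuOmO$ coincide, the corresponding functionals differ only by the constant $\langle f,\overline{g}_1-\overline{g}_2\rangle$ (well-defined since $f\in\WnuOmO'$), and the Euler--Lagrange equation $\cE(u,v)=\langle f,v\rangle$ itself never references $\overline{g}$. Thus the solution set of either problem is intrinsic to the boundary datum $g$. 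Under the extra hypotheses, the final step is uniqueness: for any two solutions $u_1,u_2$, the difference $w:=u_1-u_2\in\WnuOmO$, and testing both equations against $w$ yields $\cE(u_1,w)-\cE(u_2,w)=0$. The monotonicity estimates \eqref{eq:under-elem-degen}--\eqref{eq:under-elem-sing} for $\psi(t)=|t|^{p-2}t$ then force $\cE(w,w)=0$, and the Poincar\'{e}--Friedrichs inequality of Theorem \ref{thm:poincare-friedrichs-ext-bis} or Theorem \ref{thm:poincare-friedrichs-finite-bis} (triggered precisely by $\Omega$ bounded in one direction or $|\Omega|<\infty$, together with $\nu\not\equiv 0$) gives $\|w\|_{L^p(\Omega)}=0$, so that $u_1=u_2$.

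The main obstacle is the singular regime $1<p<2$, where the inequality $(\psi(b)-\psi(a))(b-a)\ge c|b-a|^p$ degenerates and does not yield $\cE(w,w)=0$ directly from $\cE(u_1,w)-\cE(u_2,w)=0$. I would handle this exactly as in the continuity estimate of Theorem \ref{thm:nonlocal-Neumann-var}(iii), using the two-factor version \eqref{eq:under-form-sing}, $(\cE(u_1,w)-\cE(u_2,w))^{p/2}(\cE(u_1,u_1)+\cE(u_2,u_2))^{(2-p)/2}\ge c_p\cE(w,w)$, which still forces $\cE(w,w)=0$ because each $u_i$ has finite energy. Everything else is standard first-order calculus of variations.
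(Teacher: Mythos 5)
Your proposal is correct and follows essentially the same route as the paper: the equivalence is proved via the convexity/Hölder–Young inequality in one direction and differentiation of $t\mapsto\mathcal{J}_0(u+tv)$ at $t=0$ in the other, and the independence of the extension is obtained exactly as in the paper by testing the two Euler–Lagrange identities against $u_1-u_2\in\WnuOmO$, invoking \eqref{eq:under-form-degen} for $p\ge 2$ and the two-factor estimate \eqref{eq:under-form-sing} for $1<p<2$ to get $\cE(u_1-u_2,u_1-u_2)=0$, and then the Poincar\'e--Friedrichs inequality. Your additional remark that two extensions differ by an element of $\WnuOmO$, so the admissible classes coincide and the functionals differ only by a constant, is a harmless supplement rather than a different argument.
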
 
\begin{proof}
Let $u, v\in \overline{g}+\WnuOmO$ then  $v-u\in \WnuOmO$. Assume $u$ solves  \eqref{eq:var-nonlocal-Dirichlet-gen} then using H\"{o}lder's and Young's inequalities we get 
\begin{align*}
\langle f , v-u \rangle= \mathcal{E}(u,v-u) 
&\leq  \mathcal{E}(u,u)^{1/p'}\mathcal{E}(v,v)^{1/p} -\mathcal{E}(u,u)\\&
\leq  \frac{1}{p} \mathcal{E}(v,v)- \frac{1}{p} \mathcal{E}(u,u).
\end{align*} 
Since $\langle f , v-u \rangle=\langle f , v-\overline{g} \rangle- \langle f , u-\overline{g} \rangle$, it follows that $\mathcal{J}_0(u)\leq \mathcal{J}_0(v)$ hence $u$ solves \eqref{eq:nonlocal-Dirichlet-min}.

\noindent Conversely let $u $ satisfies \eqref{eq:nonlocal-Dirichlet-min}, i.e.,  $u-g\in\WnuOmO$ and $\mathcal{J}_0(u)\leq \mathcal{J}_0(v)$ for  $ v\in g+\WnuOmO$. 
In particular, since $u+tv\in g+ \WnuOmO$ for all $v\in \WnuOmO$ and $t\in \R$ it follows that $\mathcal{J}_0(u) \leq \mathcal{J}_0(u+tv)$. Thus, the mapping  $t\mapsto \mathcal{J}_0(u+t v)$ is differentiable and  has a critical point at $t=0$. It follows that  $u$ satisfies  \eqref{eq:var-nonlocal-Dirichlet-gen} since
\begin{align*}
\cE(u,v) -\langle f , v \rangle=  \langle	\mathcal{J}'_0(u), v\rangle= \lim_{t\to 0}\frac{\mathcal{J}_0(u+tv)-\mathcal{J}_0(u)}{t} =0.
\end{align*}
Now, assume $g_i\in \WnuOmR$, $i=1,2$ are different extensions of $g$, i.e., $g_1=g_2=g$ a.e. on $\Omega^c$. Let $u_i$ be the associated solution to \eqref{eq:var-nonlocal-Dirichlet-gen} (or  minimizer of $\mapsto \mathcal{J}_0(v+ g_i)$). In particular  $u_1=u_2=g$ a.e. on $\Omega^c$ and $u_i-g_i\in \WnuOmO$. Hence  testing with $u_1-u_2\in\WnuOmO$, by definition of $u_1$ and $u_2$ we have $\cE(u_1, u_1-u_2)=\cE(u_1, u_1-u_2) \langle f, u_1-u_2\rangle$.  In virtue of the estimates \eqref{eq:under-form-degen} and \eqref{eq:under-form-sing} we deduce that $\cE(u_1-u_2, u_1-u_2)=0$. According to  Theorem \ref{thm:poincare-friedrichs-ext-bis} or Theorem \ref{thm:poincare-friedrichs-finite-bis} the Poincar\'{e}-Friedrichs inequality \eqref{eq:poincare-friedrichs-ext} or \eqref{eq:poincare-friedrichs-ext-bis}  holds, i.e., 
\begin{align*}
\|u_1-u_2\|^p_{L^p(\Omega)}\leq C\cE(u_1-u_2, u_1-u_2)=0. 
\end{align*}
Thus, $u_1=u_2$ a.e. on $\Omega$ and $u_1=u_2=g$ a.e. on $\Omega^c$, that is we get  $u_1=u_2$ a.e. on $\R^d$. 
\end{proof}

\noindent We are now in position to state the well-posedness of the problem \eqref{eq:var-nonlocal-Dirichlet-gen}. 

\begin{theorem}\label{thm:nonlocal-dirichlet-gen}
Assume $\nu\not\equiv 0$, i.e., $|\{\nu>0\}|>0$ and $\Omega\subset \R^d$ is bounded in one direction or $|\Omega|<\infty$. Let $f\in \WnuOmO'$ and $g\in \TnuOm$.   The following assertions hold.
\begin{enumerate}[$(i)$]
\item \textbf{Existence}. The variational problem \eqref{eq:var-nonlocal-Dirichlet-gen} has a unique  solution $u\in \WnuOmR$. 
\item \textbf{Boundedness}. Moreover, there is $C= C(d,p,\Omega,\nu)>0$ such that for any $\overline{g}\in \WnuOmR$ with $\overline{g}|_{\Omega^c}=g$, 
\begin{align}\label{eq:energy-bound-D}
\cE(u,u)&\leq C(\|f\|_{\WnuOmO'} ^{p'} + \cE(\overline{g},\overline{g})), 
\end{align}
\begin{align}\label{eq:weak-sol-bounded-D}
\|u \|_{\WnuOmR}\leq C \big(\|f\|^{p'}_{\WnuOmO'}+\|g\|^p_{\TnuOm}\big)^{1/p}.
\end{align}
\item \textbf{Continuity}.  
Let $u_i$ be the solution associated with $f=f_i$ and $g=g_i$, $i=1,2$. Let us put  $R=	(D^{\frac{1}{p-1}} + D)^{1/2}$ with
$$D=D(f_1,f_2,g_1,g_2)=\sum_{i=1}^2  \big(\|f_i\|^{p'}_{\WnuOmO'}+\|g_i\|^p_{\TnuOm}\big)^{1/p'}.$$
The following estimates hold true.  If $p\geq 2$ we have 
\end{enumerate}

\begin{align*}
\|u_1-u_2\|_{\WnuOmR}
&\leq C(\|f_1-f_2\|^{1/(p-1)}_{\WnuOmR}\\&+\|g_1-g_2\|_{\TnuOm}+  D^{1/p}\|g_1-g_2\|^{1/p}_{\TnuOm}). 
\end{align*}

If $1<p<2$ we have 
\begin{align*}
\begin{split}
\|u_1-u_2\|_{\WnuOmR} &\leq C( D^{\frac{2-p}{p-1}} \|f_1-f_2\|_{\WnuOmO'}\\ &+ \|g_1-g_2\|_{\TnuOm}+
R\|g_1-g_2\|_{\TnuOm}^{1/2}).
\end{split}
\end{align*}
\end{theorem}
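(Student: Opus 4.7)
The plan is to combine the direct method of calculus of variations with the Poincar\'{e}--Friedrichs inequality from Theorem~\ref{thm:poincare-friedrichs-ext-bis} or Theorem~\ref{thm:poincare-friedrichs-finite-bis} (both available since $\Omega$ is bounded in one direction or $|\Omega|<\infty$ and $\nu\not\equiv 0$), and then extract the boundedness and continuity estimates from the monotonicity-type inequalities \eqref{eq:under-form-degen} and \eqref{eq:under-form-sing}.

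For $(i)$, fix an arbitrary extension $\overline{g}\in\WnuOmR$ of $g$, which exists by definition of $\TnuOm$. On the closed affine convex set $\overline{g}+\WnuOmO\subset\WnuOmR$, the functional $\mathcal{J}_0$ is strictly convex and continuous, hence weakly sequentially lower semicontinuous by Theorem~\ref{thm:charac-weak-lsc}. For coercivity, write $v=\overline{g}+w$ with $w\in\WnuOmO$ and note that Poincar\'{e}--Friedrichs gives $\|w\|_{L^p(\Omega)}^p\leq C\cE(w,w)$; combined with the triangle-type inequality $\cE(v,v)^{1/p}\geq \cE(w,w)^{1/p}-\cE(\overline{g},\overline{g})^{1/p}$, this yields $\mathcal{J}_0(v)\to\infty$ as $\|v\|_{\WnuOmR}\to\infty$ along $\overline{g}+\WnuOmO$. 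Theorem~\ref{thm:direct-method} then furnishes a minimizer $u$, which by Proposition~\ref{prop:min-equiv-var-D} solves \eqref{eq:var-nonlocal-Dirichlet-gen}; uniqueness follows from strict convexity (or directly from \eqref{eq:under-form-degen}/\eqref{eq:under-form-sing} applied to $u_1-u_2$ combined with Poincar\'{e}--Friedrichs).

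For $(ii)$, test \eqref{eq:var-nonlocal-Dirichlet-gen} with $v=u-\overline{g}\in\WnuOmO$ to get $\cE(u,u)=\cE(u,\overline{g})+\langle f,u-\overline{g}\rangle$. H\"{o}lder's inequality on the form, the continuity bound $\|u-\overline{g}\|_{\WnuOmO}\leq C\cE(u-\overline{g},u-\overline{g})^{1/p}\leq C(\cE(u,u)^{1/p}+\cE(\overline{g},\overline{g})^{1/p})$ from Poincar\'{e}--Friedrichs, and Young's inequality to absorb the $\cE(u,u)^{1/p'}$ and $\cE(u,u)^{1/p}$ terms yield \eqref{eq:energy-bound-D}. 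The estimate \eqref{eq:weak-sol-bounded-D} is then obtained by writing $\|u\|_{\WnuOmR}\leq \|u-\overline{g}\|_{\WnuOmR}+\|\overline{g}\|_{\WnuOmR}\leq C\cE(u,u)^{1/p}+C\cE(\overline{g},\overline{g})^{1/p}+\|\overline{g}\|_{\WnuOmR}$ and taking the infimum over all extensions $\overline{g}$ using the definition of $\|g\|_{\TnuOm}$.

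For $(iii)$, set $w=u_1-u_2$ and let $\overline{h}\in\WnuOmR$ be an extension of $g_1-g_2$, so $w-\overline{h}\in\WnuOmO$ is a legitimate test function in the difference of the two variational identities:
\begin{equation*}
\cE(u_1,w)-\cE(u_2,w)=\cE(u_1,\overline{h})-\cE(u_2,\overline{h})+\langle f_1-f_2,w-\overline{h}\rangle.
\end{equation*}
In the degenerate case $p\geq 2$, the inequality \eqref{eq:under-form-degen} gives $A'_p\cE(w,w)$ as a lower bound for the left side; estimating the right side by H\"{o}lder (introducing $\cE(u_i,u_i)^{1/p'}$, bounded via \eqref{eq:weak-sol-bounded-D} by a constant times $D$) and by $\|f_1-f_2\|_{\WnuOmO'}\|w-\overline{h}\|_{\WnuOmO}$, then applying Poincar\'{e}--Friedrichs and Young, yields $\cE(w,w)\lesssim D\,\|g_1-g_2\|_{\TnuOm}+\|f_1-f_2\|^{p'}_{\WnuOmO'}$ after taking infimum over $\overline{h}$. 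Converting $\cE(w,w)^{1/p}$ to $\|w\|_{\WnuOmR}$ via the same triangle/Poincar\'{e} combination as in $(ii)$ produces the stated bound. In the singular case $1<p<2$, the analogous inequality \eqref{eq:under-form-sing} introduces an extra factor $(\cE(u_1,u_1)+\cE(u_2,u_2))^{(2-p)/2}$, which by \eqref{eq:weak-sol-bounded-D} is controlled by a power of $D$; solving the resulting interpolation inequality for $\cE(w,w)^{1/p}$ gives the form of the bound involving $R$. The main obstacle I anticipate is the careful bookkeeping in the $p<2$ case: the exponents $\tfrac{p}{2}$ and $\tfrac{2-p}{2}$ coming from \eqref{eq:under-form-sing} must be combined with H\"{o}lder, Young and the a priori bound from $(ii)$ in a way that isolates $\cE(w,w)$ without any hidden circular dependence on $\|u_1-u_2\|_{\WnuOmR}$, and the final passage to the infimum over $\overline{h}$ must be carried out so that the contribution of $\|\overline{h}\|_{\WnuOmR}$ itself reduces to $\|g_1-g_2\|_{\TnuOm}$.
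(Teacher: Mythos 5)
Your proposal is correct and follows essentially the same route as the paper: the direct method combined with the Poincar\'{e}--Friedrichs inequality for existence, testing with $u-\overline{g}$ plus H\"{o}lder and Young for the bounds \eqref{eq:energy-bound-D}--\eqref{eq:weak-sol-bounded-D}, and the monotonicity estimates \eqref{eq:under-form-degen}, \eqref{eq:under-form-sing} together with the a priori bound (absorbing the $\cE(u_1-u_2,u_1-u_2)$ term by Young) for the continuity estimates. The only refinement worth making is to carry out the coercivity/weak-compactness argument in the reflexive Banach space $\WnuOmO$, i.e.\ minimize $w\mapsto\mathcal{J}_0(\overline{g}+w)$ over $\WnuOmO$ as the paper does, since under the stated hypotheses $\WnuOmR$ is in general only a seminormed space and the weak topology on your affine set should be the one inherited from $\WnuOmO$.
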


\begin{remark}
By definition of the trace space, for $g\in \TnuOm$ there is $\overline{g}\in \WnuOmR$ such that $g=\overline{g}$ a.e. on $\Omega^c$. Recall that by Proposition \ref{prop:min-equiv-var-D}, any solution $u$ to the Dirichlet problem \eqref{eq:var-nonlocal-Dirichlet-gen} does not depend on the choice of $\overline{g}$. Furthermore we recall that 
\begin{align*}
\|g\|_{\TnuOm}=\inf\{\|\overline{g}\|_{\WnuOmR}\,: \, \overline{g}\in \WnuOmR, \,\overline{g}= g \,\text{ a.e. on $\Omega^c$} \}.
\end{align*}
Without loss of generality, it is sufficient to assume that all Dirichlet data $g\in \WnuOmR$. 
\end{remark}

\begin{proof}
We emphasize that throughout the proof,  $C>0$ denotes a generic constant only depending on the constant from the Poincar\'{e} inequality and $p$. 
%\smallskip

$(i)$ The functional $v\mapsto \mathcal{J}_0(v+g) = \frac{1}{p} \mathcal{E}(v+g,v+g) -\langle f , v \rangle $ is clearly convex and continuous (hence lower semicontinuous) on $\WnuOmO$.  According to Theorem \ref{thm:charac-weak-lsc} $\mathcal{J}_0$ is weakly lower semicontinuous. 
On the other hand, in virtue of Theorem \ref{thm:poincare-friedrichs-ext-bis} or Theorem \ref{thm:poincare-friedrichs-finite-bis} the Poincar\'{e}-Friedrichs inequality \eqref{eq:poincare-friedrichs-ext} or \eqref{eq:poincare-friedrichs-ext-bis}  holds.  In any case, one readily finds a constant $C>0$  
\begin{align*}
C\|v\|^p_{\WnuOmR}\leq \cE(v,v)\quad\text{for all $v\in \WnuOmO$}.
\end{align*}
In particular, we have 
\begin{align}\label{eq:coercivity-est-D}
C\|v-g\|^p_{\WnuOmR}\leq C\cE(v-g,v-g)\quad\text{for all $v\in g+\WnuOmO$}.
\end{align}
Therefore, since $\cE(v,v)\leq 2^p\cE(v+g,v+g)+2^p\cE(g,g)$ we have 
\begin{align*}
\mathcal{J}_0(v+g)\geq 2^{-p}C\|v\|^p_{\WnuOmR}-\cE(g,g)- \|f\|_{\WnuOmO'}\|v\|_{\WnuOmR}.  
\end{align*}
Since $p>1$ we have  $\mathcal{J}(v+g)\to \infty$ as $ \|v\|_{\WnuOmR}\to\infty$. In fact, we have 
\begin{align*}
\frac{\mathcal{J}_0(v+g)}{\|v\|_{\WnuOmR}}
\xrightarrow{}\infty, \quad\text{as $\|v\|_{\WnuOmR}\to \infty$, $v\in \WnuOmO$}. 
\end{align*}
\noindent Since  $\WnuOmO$ is always a reflexive Banach space as $p>1$,  Theorem \ref{thm:charac-weak-coercive} implies that  $v\mapsto \mathcal{J}_0(v+g)$ is  weakly coercive on $\WnuOmO$. Hence, by Theorem \ref{thm:direct-method} $\mathcal{J}_0(\cdot+g)$  possesses a minimizer $u_0\in \WnuOmO$, that is,  
\begin{align*}
\mathcal{J}_0(u_0+g)= \min_{w\in \WnuOmO}\mathcal{J}_0(w+g)= \min_{v\in g+ \WnuOmO}\mathcal{J}_0(v). 	
\end{align*}
In other words $u= u_0+g$ solves \eqref{eq:nonlocal-Dirichlet-min} and, by Proposition \ref{prop:min-equiv-var-D}, $u$ is also a solution to \eqref{eq:var-nonlocal-Dirichlet-gen}. 
It is worth emphasizing that $u$ is independent of the choice of the extension $g$; see Proposition \ref{prop:min-equiv-var-D}. The uniqueness follows from the strict convexity of $\mathcal{J}_0(\cdot+g)$ or merely from the estimates in $(iii)$. 

\smallskip 

$(ii)$ 
Since $u$ is a solution to \eqref{eq:var-nonlocal-Dirichlet-gen} we have  
\begin{align*}
\cE(u,u)&= \cE(u,u-g) +\cE(u,g)= \langle f , u-g \rangle  +\cE(u,g)\\
&\leq \| u-g \|_{\WnuOmR} \|f\|_{\WnuOmO'}+ \cE(u,u)^{1/p'}\cE(g,g)^{1/p}.
\end{align*}
Since $u-g\in \WnuOmO$ the coercivity estimate \eqref{eq:coercivity-est-D} yields 
\begin{align*}
\| u-g \|_{\WnuOmR}\leq C \cE(u-g,u-g)^{1/p}
\leq C\cE(u,u)^{1/p}+ C\cE(g,g)^{1/p}. 
\end{align*}	

Next, for $a,b\in \R$ $\delta>0$, applying the Young's inequality on $a\delta$ and $\frac{b}{\delta}$ implies 
\begin{align}\label{eq:young-delta}
|ab|\leq \frac{\delta^p a^p}{p}+ \frac{b^{p'}}{p'\delta^{p'}}. 
\end{align}
Accordingly, by exploiting the Young inequality \eqref{eq:young-delta} we get 
\begin{align*}
\cE(u,u)^{1/p'}\cE(g,g)^{1/p}
&\leq  \frac{\delta^{p}\cE(u,u)}{p'}+ \frac{\cE(g,g)}{p\delta^{\frac{p^2}{p'}}},\\
C\|f\|_{\WnuOmO'} \cE(u,u)^{1/p} 
&\leq  \frac{\delta^p\cE(u,u)}{p}+ \frac{C^{p'}\|f\|_{\WnuOmO'} ^{p'}}{p'\delta^{p'}},\\
C\|f\|_{\WnuOmO'}\cE(g,g)^{1/p}
&\leq \frac{\delta^{p}\cE(g,g)}{p} + \frac{C^{p'}\|f\|_{\WnuOmO'} ^{p'}}{p'\delta^{p'}}.
\end{align*}
Inserting altogether in the previous estimate we obtain 

\begin{align*}
\cE(u,u)&\leq C\|f\|_{\WnuOmO'} \cE(u,u)^{1/p}
\\&+ C\|f\|_{\WnuOmO'} \cE(g,g)^{1/p} + \cE(u,u)^{1/p'}\cE(g,g)^{1/p}\\
&\leq \delta^{p}\cE(u,u) + ( \frac{1}{p\delta^{\frac{p^2}{p'}}}+ \frac{\delta^{p}}{p}) \cE(g,g) 
+ \frac{2C^{p'}}{p'\delta^{p'}} \|f\|_{\WnuOmO'} ^{p'}. 
\end{align*}

Taking in particular $\delta^p=\frac12$ yields the desired estimate \eqref{eq:energy-bound-D} 
\begin{align*}
\cE(u,u)&\leq C(\|f\|_{\WnuOmO'} ^{p'} + \cE(g,g)) . 
\end{align*}

The coercivity estimate \eqref{eq:coercivity-est-D} implies 
\begin{align}\label{eq:xlp-bound-dirich}
\begin{split}
\|u\|_{L^p(\Omega)}
&\leq  \|g\|_{L^p(\Omega)}+ C\cE(u-g, u-g)^{1/p}\\
&\leq C\|g\|_{\WnuOmR}+ C\cE(u,u)^{1/p}. 
\end{split}
\end{align}
This together with  the penultimate estimate yields the  inequality \eqref{eq:weak-sol-bounded-D} 
\begin{align*}
\|u\|_{\WnuOmR}
&\leq C(\|f\|^{p'}_{\WnuOmR}+\|g\|^p_{\WnuOmR})^{1/p}.
\end{align*}
$(iii)$ Put $u=u_1-u_2$, $f=f_1-f_2$ and $g=g_1-g_2$. We clearly have  $u-g\in \WnuOmO$.

\smallskip 

\textbf{Case $p\geq 2$.} 
By exploiting the estimate  \eqref{eq:coercivity-est-D} and Young's inequality \eqref{eq:young-delta} we find that 
\begin{align}\label{eq:bounded-dirich1}
\begin{split}
\|u-&g\|_{\WnuOmR}  \|f\|_{\WnuOmO'}
\leq C\cE(u-g, u-g)^{1/p} \|f\|_{\WnuOmO'}\\
&\leq  C\|f\|_{\WnuOmO'} (\cE(u,u)^{1/p}+ \cE(g,g)^{1/p}) \\
&\leq \frac{\delta^p\cE(u,u)}{p} + \frac{C}{p'\delta^{p'}}\|f\|^{p'}_{\WnuOmO'}+ CD\cE(g,g)^{1/p}, 
\end{split}
\end{align}
where we used $\|f\|_{\WnuOmO'}\leq D$.  Analogously as for  \eqref{eq:energy-bound-D}, we find that 
\begin{align}\label{eq:bounded-dirich2}
\begin{split}
	|\cE(u_1,g)-\cE(u_2,g)|
&\leq (\cE(u_1,u_1)^{1/p'}+\cE(u_2,u_2)^{1/p'})\cE(g,g)^{1/p}
\\
&\leq CD \cE(g,g)^{1/p}. 
\end{split}
\end{align}
Using the definition of $u_i$ and the estimates  \eqref{eq:under-form-degen}, \eqref{eq:bounded-dirich1}  and \eqref{eq:bounded-dirich2} we obtain  
\begin{align*}
A'_p\cE(u,u)
&\leq  |\cE(u_1,u-g)-\cE(u_2,u-g)+ \cE(u_1,g)-\cE(u_2,g)|\\
&= |\langle f,u-g\rangle + \cE(u_1,g)-\cE(u_2,g)|\\
&\leq \|u-g\|_{\WnuOmR} \|f\|_{\WnuOmO'}+ |\cE(u_1,g)-\cE(u_2,g)|\\
&\leq \frac{\delta^p\cE(u,u)}{p} + \frac{C}{p'\delta^{p'}}\|f\|^{p'}_{\WnuOmO'}+ CD\cE(g,g)^{1/p}. 
\end{align*}
Accordingly, taking $\delta^p= A'_p$ yields 
\begin{align*}
\cE(u,u)\leq 
C\|f\|^{p'}_{\WnuOmO'}+ C D\cE(g,g)^{1/p}. 
\end{align*}
Since $u-g\in \WnuOmO$, combining this with the estimate \eqref{eq:xlp-bound-dirich} gives 
\begin{align*}
\|u\|_{\WnuOmR}
&\leq C(\|f\|^{p'}_{\WnuOmR}+\|g\|^p_{\WnuOmR}+  D\|g\|_{\WnuOmR})^{1/p}. 
\\&\leq C(\|f\|^{1/(p-1)}_{\WnuOmR}+\|g\|_{\WnuOmR}+  D^{1/p}\|g\|^{1/p}_{\WnuOmR}). 
\end{align*}
\textbf{Case $1<p<2$.} Using the definition of $u_i$ and the estimates  \eqref{eq:under-form-sing} yields 
\begin{align*}
c_p\cE(u, u)&\leq \big(\cE(u_1,u-g)-\cE(u_2,u-g) + \cE(u_1,g)-\cE(u_2,g)\big)^{\frac{p}{2}} H\\
&=\big(\langle f,u-g\rangle+ \cE(u_1,g)-\cE(u_2,g)\big)^{\frac{p}{2}}H\\
&\leq \big(\|u-g\|_{\WnuOmR}\|f\|_{\WnuOmO'}+ |\cE(u_1,g)-\cE(u_2,g)|\big)^{\frac{p}{2}}H,
\end{align*}
with $H=\big( \cE(u_1,u_1)+\cE(u_2,u_2)\big)^{\frac{2-p}{2}}.$ By exploiting once more, the estimates in \eqref{eq:bounded-dirich1} and \eqref{eq:bounded-dirich2} one readily arrives at the following 
\begin{align*}
c_p\cE(u, u)
&\leq \hspace*{-0.5ex}CD^{\frac{p'(2-p)}{2}}
(\|f\|_{\WnuOmO'} (\cE(u,u)^{\frac{1}{p}}+ \cE(g,g)^{1/p}))^{\frac{p}{2}} \hspace*{-0.4ex}+ \hspace*{-0.5ex}D^{p/2}\,\cE(g,g)^{1/2}) \\
&\leq CD^{\frac{p(2-p)}{2(p-1)}}
(\|f\|^{\frac{p}{2}}_{\WnuOmO'} (\cE(u,u)^{1/2}+ \cE(g,g)^{1/2}))+ 
D^{p/2}\,\cE(g,g)^{1/2}\\
&\leq \frac{\delta^2\cE(u,u)}{2} + \frac{C}{2\delta^{2}}\|f\|^{p}_{\WnuOmO'}D^{^{\frac{p(2-p)}{p-1}}}\hspace*{-0.5ex}+
C(D^{p'/2}+ D^{p/2}) \cE(g,g)^{1/2}, 
\end{align*}

we used $\|f\|_{\WnuOmO'}\leq D $ and the  inequality \eqref{eq:young-delta}. For $\delta^2= c_p$ we get 

\begin{align*}
\cE(u, u)\leq  CD^{^{\frac{p(2-p)}{p-1}}}\|f\|^{p}_{\WnuOmO'}+
C(D^{p/2(p-1)}+ D^{p/2}) \cE(g,g)^{1/2}. 
\end{align*}
Since $u-g\in \WnuOmO$, combining this with the estimate \eqref{eq:xlp-bound-dirich} implies
\begin{align*}
\|u\|_{\WnuOmR}
&\leq C( D^{\frac{2-p}{p-1}} \|f\|_{\WnuOmO'}+ \|g\|_{\WnuOmR}
\\
&+ 
(D^{\frac{1}{p-1}} + D)^{1/2}\|g\|_{\WnuOmR}^{1/2}).
\end{align*}
\end{proof}

\begin{theorem}[Weak comparison principle]\label{thm:comparison-principle}
Assume that $\Omega\subset \R^d$ is bounded in one direction or that  $|\Omega|<\infty$ and $\nu\not\equiv0$.  Let  $u,v\in \WnuOmR$. Assume that  $v\leq u$ a.e. on $\Omega^c$ and $Lv\leq Lu$ in $\Omega$ in the weak sense, i.e., 
\begin{align*}
\cE(v, w)\leq \cE(u,w) \qquad\text{for all $w\in \WnuOmRO$, $w\geq0$}. 
\end{align*}
Then we have $v\leq u$ a.e. in $\R^d$. 
\end{theorem}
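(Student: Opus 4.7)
The plan is to test the inequality $\cE(v,w)\le\cE(u,w)$ against the truncation $w=(v-u)_+$ and exploit the monotonicity of $\phi(t)=|t|^{p-2}t$ together with the non-expansivity of the positive-part map, then use Poincar\'{e}--Friedrichs to conclude.

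First I would verify that $w=(v-u)_+$ is an admissible test function. Since $v\le u$ a.e.\ on $\Omega^c$, $w$ vanishes a.e.\ on $\Omega^c$; since $|w(x)-w(y)|\le|v(x)-v(y)|+|u(x)-u(y)|$, $\cE(w,w)<\infty$; and $w|_\Omega\in L^p(\Omega)$. Hence $w\in\WnuOmO$. Under mild regularity of $\partial\Omega$ this equals $\WnuOmRO$ by Theorem \ref{thm:density}; in general a density/approximation argument extends the assumed weak inequality to all nonnegative $w\in\WnuOmO$.

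Next I would establish the key \emph{pointwise} inequality. Write $A(x,y)=u(x)-u(y)$, $B(x,y)=v(x)-v(y)$ and $h=v-u$, so that $B-A=h(x)-h(y)$ and $w=h_+$. Since $\phi$ is strictly increasing, $\phi(B)-\phi(A)$ has the sign of $B-A=h(x)-h(y)$; since $t\mapsto t_+$ is non-decreasing, $w(x)-w(y)=h_+(x)-h_+(y)$ has the same sign (or is zero). Therefore
\begin{equation*}
\bigl(\phi(B(x,y))-\phi(A(x,y))\bigr)\bigl(w(x)-w(y)\bigr)\ \ge\ 0 \quad\text{a.e.\ }(x,y).
\end{equation*}
Integrating this pointwise bound against $k(x,y)=\max(\mathds{1}_\Omega(x),\mathds{1}_\Omega(y))\nu(x-y)$ gives $\cE(v,w)-\cE(u,w)\ge 0$, but the assumption yields $\cE(v,w)-\cE(u,w)\le 0$. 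Hence the integrand vanishes $k\,dx\,dy$-a.e.

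The third step is to upgrade pointwise vanishing to $\cE(w,w)=0$. Wherever $k>0$ and the integrand is zero, either $\phi(B)=\phi(A)$ (so by strict monotonicity $B=A$, i.e.\ $h(x)=h(y)$, hence $w(x)=h_+(x)=h_+(y)=w(y)$) or $w(x)=w(y)$ directly. In either alternative $|w(x)-w(y)|^p k(x,y)=0$, so $\cE(w,w)=0$. This unified argument avoids the split between the degenerate ($p\ge 2$) and singular ($1<p<2$) regimes.

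Finally, since $\nu\not\equiv 0$ and either $\Omega$ is bounded in one direction or $|\Omega|<\infty$, I apply the Poincar\'{e}--Friedrichs inequality (Theorem \ref{thm:poincare-friedrichs-ext-bis} or Theorem \ref{thm:poincare-friedrichs-finite-bis}) to $w\in L^p_\Omega(\R^d)$ to obtain $\|w\|_{L^p(\Omega)}^p\le C\,\cE(w,w)=0$. Combined with $w=0$ on $\Omega^c$, this gives $(v-u)_+=0$ a.e., i.e.\ $v\le u$ a.e. The main obstacle I foresee is Step~1: ensuring the truncation $(v-u)_+$ is admissible as a test function in $\WnuOmRO$ (rather than merely $\WnuOmO$), which may require either imposing continuity of $\partial\Omega$ to invoke Theorem \ref{thm:density}, or a separate density argument. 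The rest of the proof is a matter of carefully aligning signs and invoking the Poincar\'{e}--Friedrichs estimate.
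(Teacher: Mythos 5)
Your proof is correct and follows the paper's overall skeleton (test with $w=(v-u)_+$, conclude $\cE(w,w)=0$, finish with Poincar\'{e}--Friedrichs), but the middle step is genuinely different. The paper invokes the quantitative elementary inequality of Corollary \ref{cor:elm-est-cutplus} and splits into the cases $p\geq 2$ and $1<p<2$, obtaining $0\geq \cE(v,w)-\cE(u,w)\geq A'_p\cE(w,w)$ (resp.\ the analogue of \eqref{eq:under-form-sing}); you instead use only the strict monotonicity of $\psi$ and the non-expansivity of $t\mapsto t_+$ to get the pointwise sign inequality, deduce that the integrand vanishes a.e.\ where the kernel is positive, and then observe that this already forces $w(x)=w(y)$ there, hence $\cE(w,w)=0$. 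Your route is more elementary, avoids the case distinction, and needs no quantitative constants; the paper's route yields in addition a coercive lower bound on $\cE(w,w)$ of the type used elsewhere for continuity estimates. One caveat on admissibility: what you obtain directly is $w\in\WnuOmO$, while the hypothesis is stated for $w\in\WnuOmRO$; the paper's own proof simply asserts $w\in\WnuOmRO$ without comment, so your concern mirrors an imprecision already present there. Your fix via continuity of $\partial\Omega$ (so that $\WnuOmRO=\WnuOmO$ by Theorem \ref{thm:density}) is fine, but note that a general ``density argument extending the inequality to all nonnegative $w\in\WnuOmO$'' is not automatic when the two spaces differ, so either the boundary regularity assumption or reformulating the hypothesis with test space $\WnuOmO$ is the honest resolution.
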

\begin{proof}
Recall that for $t\in \R$ we put $\psi(t)=|t|^{p-2}t$ and $t_{+}=\max(t, 0)$ and $t_{-}=\max(-t, 0)$ so that $t=t_{+}-t_{-}$. Note that by Corollary \ref{cor:elm-est-cutplus} we have 
\begin{align*}
(\psi(b)- \psi(a)) &((b_1-a_1)_{+}-(b_2-a_2)_{+})\\
&\geq
\begin{cases} 
A'_p |(b_1-a_1)_{+}-(b_2-a_2)_{+}|^p& \hspace{-1ex} p\geq2, 
\\
A'_p |(b_1-a_1)_{+}-(b_2-a_2)_{+}|^2(|b|+|a|)^{p-2} &\hspace{-1ex}  1< p<2. 
\end{cases}
\end{align*}
Consider $w= (v-u)_+$ so that $w=0$ on $\Omega^c$ since $v-u\leq0$ on $\Omega^c$. Hence $w\in \WnuOmRO$ and $w\geq0$. 
Taking $b_1= v(x),$ $  b_2= v(y),$ $ a_1= u(x),\, a_2=u(y)$ and proceeding as for the estimates \eqref{eq:under-form-degen} and \eqref{eq:under-form-sing} we get
\begin{align*}
0\geq \cE(v, w)-\cE(u, w) &\geq A'_p\cE(w,w)\qquad \text{$p\geq2$},\\
0\geq \big(\cE(v,w)-\cE(u,w)\big)
\big( \cE(v,v)+\cE(u,u)\big)^{\frac{2-p}{p}} &\geq c_p \cE(w,w)^{\frac{2}{p}}\qquad\text{$1<p<2$}.
\end{align*}
In any case we deduce that $\cE(w,w)=0$. In view of the Poincar\'{e}-Friedrichs inequality (see Theorem \ref{thm:poincare-friedrichs-ext-bis} and Theorem \ref{thm:poincare-friedrichs-finite-bis}) we also have $\|w\|_{L^p(\Omega)}=0$ and hence $\|w\|_{\WnuOmR}=0$. It follows that $w=(v-u)_+=0$ a.e. on $\R^d$ equivalently  $v\leq u$ a.e. on $\R^d$.
\end{proof}

\begin{theorem}[Weak  and strong maximum principle]
Let the assumptions of Theorem \ref{thm:comparison-principle} be in force. Assume that  $u=g\geq 0$ a.e. on $\Omega^c$ and $Lu=f\geq 0$ in $\Omega$ in the weak sense. 
Then we have $u\geq 0$ a.e. in $\R^d$. If in addition $u$ is continuous and $\nu>0$ then either $u\equiv0$ in $\R^d$ or $u>0$ in $\Omega$.  
\end{theorem}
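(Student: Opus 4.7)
The first claim $u \geq 0$ a.e.\ in $\R^d$ follows directly from the weak comparison principle (Theorem~\ref{thm:comparison-principle}) applied to the ordered pair $v \equiv 0$ and $u$. Indeed, $v \equiv 0$ trivially satisfies $v \leq g = u$ a.e.\ on $\Omega^c$ (since $g \geq 0$) and $\cE(v, w) = 0 \leq \cE(u, w) = \langle f, w\rangle$ for every nonnegative $w \in \WnuOmRO$ (since $f \geq 0$), so Theorem~\ref{thm:comparison-principle} yields $0 \leq u$ a.e.\ on $\R^d$.

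For the strong maximum principle, assume in addition that $u$ is continuous on $\R^d$, that $\nu > 0$ everywhere, and that $u \not\equiv 0$; I argue by contradiction and suppose $x_0 \in \Omega$ satisfies $u(x_0) = 0$. Since $u \geq 0$, the point $x_0$ is a global minimum, and the heuristic pointwise evaluation of $L$ at $x_0$ reads
\begin{align*}
L u(x_0) \,=\, -2 \int_{\R^d} u(y)^{p-1}\, \nu(x_0 - y)\, dy \,\leq\, 0,
\end{align*}
which clashes with the weak hypothesis $L u \geq 0$ and should force both sides to vanish; the full support of $\nu$ then yields $u \equiv 0$ a.e.\ on $\R^d$, and by continuity $u \equiv 0$ on $\R^d$, contradicting $u \not\equiv 0$.

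To convert the heuristic into a rigorous argument, I test the weak inequality $\cE(u, \phi) \geq \int_\Omega f \phi \geq 0$ against nonnegative $\phi_\eps \in C_c^\infty(B_\eps(x_0)) \subset C_c^\infty(\Omega)$ with $\int \phi_\eps = 1$, which is admissible for $\eps$ small enough that $B_\eps(x_0) \subset \Omega$. By the symmetry of $\nu$ and the antisymmetry of $(x, y) \mapsto \psi(u(x) - u(y))(\phi_\eps(x) - \phi_\eps(y))$, the inequality rewrites as
\begin{align*}
0 \,\leq\, \cE(u, \phi_\eps) \,=\, 2 \int_{\R^d} \phi_\eps(x) \int_{\R^d} \psi\bigl(u(x) - u(y)\bigr)\, \nu(x - y)\, dy\, dx.
\end{align*}
For a fixed $\delta > 0$ I split the inner integral at $B_\delta(x_0)$. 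The far-field piece $\{|y - x_0| > \delta\}$ is treated by dominated convergence: combining the local boundedness of $u$ near $x_0$ (from continuity) with the embedding $\WnuOmR \hookrightarrow L^{p-1}(\R^d, \overline{\nu}_{B_\delta(x_0)})$ of Corollary~\ref{cor:w-emb-on-weigh-lp}, the integrand is dominated uniformly in $x \in B_\eps(x_0)$ by an integrable function of $y$ and converges as $\eps \to 0$ to the expected tail $-2 \int_{|y - x_0| > \delta} u(y)^{p-1}\, \nu(x_0 - y)\, dy$.

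The principal technical obstacle is the near-diagonal piece $\{|y - x_0| < \delta\}$, since $\psi(u(x) - u(y))\, \nu(x - y)$ need not be separately integrable there under the bare $p$-L\'evy condition. It is handled by returning to the antisymmetric formulation $\iint \psi(u(x) - u(y))(\phi_\eps(x) - \phi_\eps(y))\, \nu(x - y)\, dx\, dy$ and combining the uniform bound $|\psi(u(x) - u(y))| \leq (2 \omega_u(2 \delta))^{p-1}$ on $B_{2\delta}(x_0)^2$ (where $\omega_u$ is the modulus of continuity of $u$ at $x_0$, with $\omega_u(0) = 0$) with a standard mollifier estimate and the $p$-L\'evy integrability $\int (1 \wedge |h|^p)\, \nu(h)\, dh < \infty$; the resulting contribution is $O(\omega_u(2\delta)^{p-1})$ uniformly in small $\eps$, which vanishes as $\delta \to 0$. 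Sending $\eps \to 0$ and then $\delta \to 0$ gives $0 \leq -2 \int u(y)^{p-1}\, \nu(x_0 - y)\, dy$; nonnegativity of the integrand together with $\nu > 0$ then forces $u \equiv 0$ a.e.\ on $\R^d$, and continuity lifts this to $u \equiv 0$ pointwise, contradicting $u \not\equiv 0$.
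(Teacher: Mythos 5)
Your proof of the weak principle is exactly the paper's: the paper disposes of $u\geq 0$ by invoking Theorem \ref{thm:comparison-principle}, and your choice $v\equiv 0$ is the intended instantiation. Your "heuristic" pointwise computation at an interior zero $x_0$ is also precisely the computation the paper gives for the strong principle: the paper's proof simply writes $0\leq Lu(x_0)\leq -2\pv\int_{\R^d}(u(y))^{p-1}\nu(x_0-y)\,\d y\leq 0$ using monotonicity of $\psi$, i.e.\ it works at the same formal level and tacitly assumes that $Lu$ can be evaluated at $x_0$ and that the weak inequality holds there pointwise. Up to that point you and the paper agree.

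The problem lies in the extra step you add to rigorize this via mollifiers, and it is a genuine gap. Bounding $|\psi(u(x)-u(y))|$ by the constant $(2\omega_u(2\delta))^{p-1}$ on $B_{2\delta}(x_0)^2$ discards the cancellation that makes the principal value meaningful, and what remains to be controlled is $(2\omega_u(2\delta))^{p-1}\iint|\phi_\eps(x)-\phi_\eps(y)|\,\nu(x-y)\,\d x\,\d y$. Under the bare $p$-L\'evy condition with $p>1$ this double integral is \emph{not} bounded uniformly in $\eps$: for $\nu(h)=|h|^{-d-sp}$ a scaling argument gives growth of order $\eps^{-sp}$ when $sp<1$, and when $sp\geq 1$ the Lipschitz mollifier bound $|\phi_\eps(x)-\phi_\eps(y)|\lesssim \eps^{-d-1}|x-y|$ is not even locally integrable against $\nu$; mere continuity of $u$ at $x_0$ provides no $|x-y|$-decay of $\psi(u(x)-u(y))$ to compensate (and H\"older's inequality fares no better, since $[\phi_\eps]_{W^p_{\nu}}$ also blows up as $\eps\to0$). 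Consequently the claimed bound "$O(\omega_u(2\delta)^{p-1})$ uniformly in small $\eps$" is unsubstantiated, and the order of limits ($\eps\to0$, then $\delta\to0$) does not go through; the far-field domination step also needs more care than stated, since $\sup_{x\in B_\eps(x_0)}\nu(x-y)$ is not controlled by the paper's weights without unimodality. A rigorous version of the pointwise step needs additional input — e.g.\ pointwise evaluability of $Lu$ under extra regularity as in Appendix \ref{sec:appendix-pointwise}, or the equivalence of weak and viscosity supersolutions as in \cite{KKL19} — none of which your outline supplies. To be fair, the paper's own proof assumes the pointwise evaluation silently; but since your write-up explicitly claims to close that gap, the unjustified uniform-in-$\eps$ estimate is a real defect of the argument as written.
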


\begin{proof} 
The fact that $u\geq 0$ a.e. in $\R^d$ follows from Theorem \ref{thm:comparison-principle}. Assume $u$ is continuous and $\nu>0$. If $u(x_0)\leq 0$ for some $x_0\in \Omega$ then  since $\psi$ is increasing we have 
\begin{align*}
0\leq Lu(x_0)&=2\pv \int_{\R^d} \psi(u(x_0)-u(y))\nu(x_0-y)\d y\\
&\leq -2\pv\int_{\R^d} (u(y))^{p-1}\nu(x_0-y)\d y\leq 0,
\end{align*}
which is only possible if $u (y)=0$ for all $y\in\R^d$ since $u\geq0$ and $\nu>0$. 
\end{proof}
%%%%%%%%%%%%%%%%%%%%%%%%%%%%%%%
\subsection{Robin problem}
%%%%%%%%%%%%%%%%%%%%%%%%%%%%%%%%
In the classical setting for the $p$-Laplace operator, the Robin boundary problem\footnote{
According to the over 20 years of survey work \cite{GuAb98}, there is no historical evidence why the problem \eqref{eq:robin-local} is termed  after Robin's name in the setting $p=2$. The survey \cite[p.69]{GuAb98} also points out that the first mathematical appearance of the problem \eqref{eq:robin-local} goes back at least to the works on cooling law by Fourier(1822) and/or Newton (1701, but a mathematical contribution  by Newton is uncertain).}
-- also known as Fourier boundary problem or third boundary problem --combines the Dirichlet and Neumann boundary problem as follows
\begin{align}\label{eq:robin-local}
-\Delta_p u = f \,\,\text{ in }  \Omega\quad \text{ and } \quad \partial_{n,p}u + \beta |u|^{p-2}u= g \,\,\text{ on } \partial\Omega.
\end{align}
Here $f\in L^{p'}(\Omega)$ and $\beta, g: \partial\Omega\to \mathbb{R}$ are given. Analogously, in the nonlocal set-up, the Robin problem for $L$ with data  $\beta, g: \Omega^c\to \mathbb{R}$ and $f\in L^{p'}(\Omega)$ is to find $u:\R^d\to \mathbb{R}$ such that 
\begin{align}\label{eq:robin-problem}
L u = f \text{ in }  \Omega\quad\text{ and }\quad \mathcal{N}u +
\beta |u|^{p-2}u= g \text{ on } \Omega^c.
\end{align}
\noindent Note that, for  $\beta=0$ one recovers the inhomogeneous Neumann problem. For $\beta\to \infty $ it leads to the homogeneous Dirichlet problem. Let define the form 
\begin{align*}
Q_\beta(u,v)= \mathcal{E}( u,v)+ \int_{\Omega^c} |u(y)|^{p-2}u(y)v(y)\beta(y)\d y. 
\end{align*} 
As for the Neumann problem, we define a weak solution of \eqref{eq:robin-problem} as follows. 

\begin{definition}
We say that $u\in \WnuOmR$ is a weak solution (or variational solution) of the Robin problem \eqref{eq:robin-problem} if 
\begin{align}\tag{$V_\beta$}\label{eq:var-nonlocal-robin}
Q_\beta(u,v)=\langle f,v\rangle +\langle g,v \rangle ~~\text{for all }~v\in \WnuOmR.
\end{align}
In fact, the problem \eqref{eq:var-nonlocal-robin} is equivalent to the minimization problem
\begin{align}\label{eq:nonlocal-Robin-min}\tag{$M_\beta$}
\begin{split}
\mathcal{J}_\beta(u) &=\min_{v\in \WnuOmR}\mathcal{J}_\beta(v),
\\
\mathcal{J}_\beta(v) &= \frac{1}{p} Q_\beta(v,v) -\langle f,v\rangle  -\langle g,v \rangle. 
\end{split}
\end{align}
\end{definition}

\begin{theorem} \label{thm:nonlocal-robin-var}
Let $\omega\in \{\widetilde{\nu}_{\Omega}, \overline{\nu}_{\Omega}, \widehat{\nu}_{R}\}$ where $|B_R(0)\cap \Omega|>0$ $($see Definition \ref{def:different-nus}$)$. 
Assume that $\nu$ have full support, $ \beta\omega^{-1}\in L^\infty(\Omega^c)$, $\beta$ is nontrivial, i.e., $|\Omega^c\cap \{\beta>0\}|>0$ and the embedding $\WnuOmR\hookrightarrow L^p(\Omega)$ is compact.  Assume  $f\in \WnuOmR'$ $($or $ f \in L^{p'}(\Omega))$  and  $g\in \TnuOm'$ $($or $g\in L^{p'}(\Omega^c, \omega^{1-p'}))$. 
\begin{enumerate}[$(i)$]
\item \textbf{Existence}. There exists a unique  $u\in \WnuOmR$ satisfying \eqref{eq:var-nonlocal-robin}. 
\item \textbf{Boundedness}. Moreover, there exists $C = C(d,p, \Omega, \nu, \beta)>0$ such that
\begin{align}\label{eq:weak-sol-bounded-robin}
\|u\|_{\WnuOmR}\leq C \Big(\|f\|_{\WnuOmR'}+\|g\|_{\TnuOm'}\Big)^{1/(p-1)}.
\end{align}
\item \textbf{Continuity}.   
If $u_i$, $i= 1,2$ satisfies \eqref{eq:var-nonlocal-robin} with $f=f_i$  and $g=g_i$ then 
\begin{align*}%\label{eq:weak-sol-conti-robin}
\|u_1-u_2\|_{\WnuOmR}\leq 
\begin{cases}
C \big(\|f_1-f_2\|_{\WnuOmR'}\hspace*{-1ex}+\|g_1-g_2\|_{\TnuOm'}\big)^{\frac{1}{p-1}} &\hspace*{-1ex} \text{$p\geq2$},\\
CM\big( \|f_1-f_2\|_{\WnuOmR'} \hspace*{-1ex}+  \|g_1-g_2\|_{\TnuOm'}\big) &\hspace*{-1ex} \text{$p<2$},
\end{cases}
\end{align*}
where  
$M=M(f_1,f_2,g_1,g_2)= \big(\sum_{i=1}^2 \|f_i\|_{\WnuOmR'}+\|g_i\|_{\TnuOm'}\big)^{\frac{2-p}{p-1}}.$
\end{enumerate}
\end{theorem}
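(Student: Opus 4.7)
The plan is to adapt the direct method used in Theorem \ref{thm:nonlocal-Neumann-var} to the present setting, with the key twist that the zeroth-order boundary term $\int_{\Omega^c}|u|^{p-2}u\,v\,\beta\,\d y$ inside $Q_\beta$ is designed to break the degeneracy of $\mathcal{E}$ along constants, so no quotient by the constants is needed here. First I would verify that $\mathcal{J}_\beta$ is well-defined on $\WnuOmR$: from the continuous embedding $\WnuOmR\hookrightarrow L^p(\Omega^c,\omega)$ (Theorem \ref{thm:w-emb-on-weigh-lp}) together with the bound $\beta\leq\|\beta\omega^{-1}\|_{L^\infty(\Omega^c)}\,\omega$, I get $\int_{\Omega^c}|v|^p\beta\,\d y\leq C\|v\|^p_{\WnuOmR}$, so $v\mapsto Q_\beta(v,v)$ is finite, strictly convex and continuous. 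Hence $\mathcal{J}_\beta$ is strictly convex and continuous, and by Theorem \ref{thm:charac-weak-lsc} weakly lower semicontinuous on the reflexive Banach space $\WnuOmR$ (reflexivity coming from the full support of $\nu$ and Theorem \ref{thm:WnuOme-complete}).

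The decisive step---and the main obstacle---is the coercivity estimate
\[
\|v\|^p_{\WnuOmR}\leq C\,Q_\beta(v,v)\qquad\text{for all }v\in\WnuOmR,
\]
which I would establish by a compactness--contradiction argument modeled on Theorem \ref{thm:poincare-gene-bis}. Suppose it fails and extract $(v_n)\subset\WnuOmR$ with $\|v_n\|_{\WnuOmR}=1$ and $Q_\beta(v_n,v_n)\to 0$. The compact embedding $\WnuOmR\hookrightarrow L^p(\Omega)$ produces $v_n\to v$ strongly in $L^p(\Omega)$, and reflexivity yields $v_n\rightharpoonup v$ in $\WnuOmR$ along a subsequence. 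Weak lower semicontinuity of $w\mapsto\mathcal{E}(w,w)$ and of the convex continuous functional $w\mapsto\int_{\Omega^c}|w|^p\beta\,\d y$ on $\WnuOmR$ then forces $\mathcal{E}(v,v)=0$ and $\int_{\Omega^c}|v|^p\beta\,\d y=0$. Since $\nu$ has full support, the argument of Lemma \ref{lem:zero-enegery-cst} extended to the form $\mathcal{E}$ forces $v$ to be constant a.e. on $\R^d$, and the nontriviality assumption $|\Omega^c\cap\{\beta>0\}|>0$ pins that constant to $0$. But $v=0$ combined with $v_n\to v$ strongly in $L^p(\Omega)$ and $\mathcal{E}(v_n,v_n)\to 0$ contradicts $\|v_n\|_{\WnuOmR}=1$.

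Once coercivity is established, $\mathcal{J}_\beta(v)/\|v\|_{\WnuOmR}\to\infty$ as $\|v\|_{\WnuOmR}\to\infty$, so Theorem \ref{thm:charac-weak-coercive} gives weak coercivity and Theorem \ref{thm:direct-method} produces a minimizer $u\in\WnuOmR$; a routine Fr\'echet derivative computation shows $u$ solves \eqref{eq:var-nonlocal-robin}. Uniqueness is immediate from strict convexity, and the boundedness \eqref{eq:weak-sol-bounded-robin} follows by testing with $v=u$ in \eqref{eq:var-nonlocal-robin} and combining coercivity with H\"older's inequality: $C\|u\|^p_{\WnuOmR}\leq Q_\beta(u,u)\leq(\|f\|_{\WnuOmR'}+\|g\|_{\TnuOm'})\|u\|_{\WnuOmR}$. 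Finally, for the continuity estimates, the elementary inequalities \eqref{eq:under-form-degen} and \eqref{eq:under-form-sing} extend verbatim from $\mathcal{E}$ to $Q_\beta$, since the pointwise monotonicity $(\psi(u_1)-\psi(u_2))(u_1-u_2)\geq A'_p|u_1-u_2|^p$ (or the $1<p<2$ analogue) integrates against $\beta\,\d y$ with the correct sign. Testing with $v=u_1-u_2$ in the difference of the equations for $u_1$ and $u_2$, then applying coercivity and (for $1<p<2$) the a priori bound \eqref{eq:weak-sol-bounded-robin} to handle the factor $(Q_\beta(u_1,u_1)+Q_\beta(u_2,u_2))^{(2-p)/2}$, yields the two stated continuity estimates exactly as in the corresponding step of Theorem \ref{thm:nonlocal-Neumann-var}.
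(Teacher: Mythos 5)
Your proposal is correct and follows essentially the same route as the paper: the direct method on $\WnuOmR$, with the key coercivity estimate $\|v\|^p_{\WnuOmR}\leq C\,Q_\beta(v,v)$ proved by a compactness--contradiction argument that uses the compact embedding $\WnuOmR\hookrightarrow L^p(\Omega)$, the full support of $\nu$ to force the limit to be constant, and the nontriviality of $\beta$ together with the trace embedding $\WnuOmR\hookrightarrow L^p(\Omega^c,\omega)$ to force that constant to vanish. The only cosmetic difference is that you pass to a weak limit in $\WnuOmR$ and invoke weak lower semicontinuity, whereas the paper works with Fatou's lemma and strong convergence of the minimizing sequence; the remaining steps (strict convexity for uniqueness, testing with $u$ for boundedness, and the monotonicity inequalities applied to $Q_\beta$ for the continuity estimates) coincide with the paper's treatment.
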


\begin{proof}
First of all, we claim that the form $Q_\beta(\cdot, \cdot)$ is coercive on $\WnuOmR$. To prove this, it is sufficient to prove that there exists a constant $C=C(d, p, \Omega, \nu,\beta)>0$ such that
\begin{align}\label{eq:coercive-robin}
Q_\beta(u,u)\geq C\|u\|^p_{\WnuOmR}\quad\text{for all }~u\in W_\nu(\Omega|\mathbb{R}^d).
\end{align}
Assume $C$ does not exist, then one finds a sequence $u_n \in \WnuOmR $ preferably  $\|u_n\|_{\WnuOmR}=1$ such that
\begin{align*}
\mathcal{E}( u_n,u_n)+ \int_{\Omega^c} |u_n(y)|^p\beta(y)\d y= Q_\beta(u_n , u_n )
<\frac{1}{2^n}\,. 
\end{align*}
\noindent  In virtue of the compactness of the embedding
$\WnuOmR \hookrightarrow L^p(\Omega)$, the sequence $(u_n)_n$ converges up to a subsequence in $L^p(\Omega)$ to some $u\in L^p(\Omega)$. It turns out that $\|u\|_{L^p(\Omega)}= 1$, since 
$\mathcal{E}(u_n,u_n )\xrightarrow{n\to\infty}0$ and for all $n\geq 1$, $\|u_n\|_{\WnuOmR}=1$. Moreover, Fatou's lemma implies 
\begin{align*}
	\cE_\Omega(u,u)\leq\liminf_{n \to \infty}\cE(u_n,u_n)=0.
\end{align*}
Given that $\nu$ is of full support and $\mathcal{E}_\Omega(u,u)=0$ it follows that $u$ is constant almost everywhere on $\Omega$. Since $\mathcal{E}(u_n,u_n )\xrightarrow{n \to \infty} 0$, we can extend $u$ as a function that is constant almost everywhere on $\R^d$ so that $u\in \WnuOmR$. That $\mathcal{E}(u_n,u_n )\xrightarrow{n \to \infty} 0$ and $\|u_n-u\|_{L^p(\Omega)} \xrightarrow{n \to \infty}0$ imply that $\|u_n-u\|_{\WnuOmR} \xrightarrow{n\to\infty}0$ as we have  $\mathcal{E}(u,u)=0$.  On the other hand, since $\beta\omega^{-1}\in L^{\infty}(\Omega^c)$ and we have  the continuous embedding $\WnuOmR \hookrightarrow L^p(\Omega^c, \omega)$, by the nonlocal trace Theorem \ref{thm:trace-nonloc-thm}, we have 
\begin{align*}
\int_{\Omega^c} |u(y)|^p\beta(y)\d y&\leq 2^{p-1} \int_{\Omega^c} |u_n(y)|^p\beta(y)\d y
\\&+2^{p-1}\|\beta\omega^{-1}\|_\infty \int_{\Omega^c} |u_n(y)-u(y)|^p\omega(y)\d y\\
&\leq 2^{p-1}Q_\beta(u_n, u_n)+ C\|u_n-u\|^p_{\WnuOmR}\xrightarrow[]{n \to \infty}0\,.
\end{align*}
\noindent It follows that $u=0$, since $u$ is constant a.e and $|\Omega^c\cap \{\beta>0\}|>0$. This negates the fact that $\|u\|_{L^p(\Omega)}=1$ and hence our initial assumption was wrong. 
The other details follow analogously as for the Neumann problem \eqref{eq:var-nonlocal-Neumann-gen}, by replacing the form $\cE(\cdot,\cdot)$ with $Q_\beta(\cdot, \cdot)$. 
\end{proof}

%%%%%%%%%%%%%%%%%%%%%%%%%%%%%%%%
\section{Transition from nonlocal to local}\label{sec:from-nonlocal-local}
%%%%%%%%%%%%%%%%%%%%%%%%%%%%%%%%
In this  section  we introduce  and characterize what we name as $p$-L\'{e}vy approximation family; which serves as the main tool for the convergence of nonlocal objects to local ones. For instance, we show the convergence of nonlocal  energy forms to local ones, as well as the pointwise convergence of nonlocal $p$-L\'{e}vy operators to the $p$-Laplacian.  

\subsection{Basics on $p$-L\'{e}vy approximation family} A family of  radial $p$-L\'{e}vy integrable kernels $(\nu_\eps)_{\eps>0}$, $\nu_\eps : \R^d\setminus\{0\}\to [0,\infty)$, is said to be a  $p$-L\'{e}vy approximation family if it satisfies 
\begin{align}\label{eq:plevy-approx}
\int_{\R^d}\hspace*{-1ex}(1\land|h|^p) \nu_\eps (h)\d h=1\,\,\text{and for all $\delta>0,$}\,\,\lim_{\eps\to0} \int_{|h|>\delta} \hspace*{-2ex}(1\land|h|^p) \nu_\eps (h)\d h=0. 
\end{align}
\begin{proposition}\label{prop:asymp-nu}
Let $\beta\in \R$ and  $R>0$. Assume $(\nu_\eps)_\eps$ satisfies \eqref{eq:plevy-approx} then 
\begin{align*}
\lim_{\varepsilon\to 0}\int_{|h|\leq R} (1\land |h|^{p+\beta})\nu_\eps (h)\d h =
\begin{cases}
0\quad \text{ if}\quad \beta >0\\
1\quad \text{ if}\quad \beta =0\\
\infty \quad \text{if}\quad \beta <0.
\end{cases}
\end{align*}
\end{proposition}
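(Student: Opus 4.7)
The idea is to reinterpret the integral via the probability measures $\d\mu_\eps(h) := (1\land|h|^p)\nu_\eps(h)\,\d h$: by \eqref{eq:plevy-approx}, $\mu_\eps(\R^d)=1$ and $\mu_\eps(\{|h|>\delta\})\to 0$ for every fixed $\delta>0$, so $(\mu_\eps)_\eps$ is a family of probability measures concentrating to the Dirac mass at the origin. Setting $\phi_\beta(h) := (1\land|h|^{p+\beta})/(1\land|h|^p)$, the quantity of interest rewrites as $\int_{|h|\leq R}\phi_\beta(h)\,\d\mu_\eps(h)$. The case $\beta=0$ is then immediate: $\phi_0\equiv 1$, so the integral equals $\mu_\eps(\{|h|\leq R\}) = 1 - \mu_\eps(\{|h|>R\}) \to 1$.

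For $\beta>0$, observe that $\phi_\beta(h) = |h|^\beta$ on $\{|h|\leq 1\}$ and $\phi_\beta\leq 1$ everywhere. Split at $\delta\in(0,1\land R]$: on $\{|h|\leq\delta\}$ one has the pointwise bound $\phi_\beta(h)\leq\delta^\beta$, and on $\{\delta<|h|\leq R\}$ the cruder bound $\phi_\beta\leq 1$ suffices. This yields
\[
\int_{|h|\leq R}\phi_\beta\,\d\mu_\eps \;\leq\; \delta^\beta\,\mu_\eps(\R^d) + \mu_\eps(\{|h|>\delta\}) \;=\; \delta^\beta + o_\eps(1).
\]
Sending $\eps\to 0$ and then $\delta\to 0^+$ delivers the limit $0$.

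For $\beta<0$, the argument rests on a lower bound on $\phi_\beta$ near the origin. Here one must distinguish two sub-cases according to the sign of $p+\beta$: if $p+\beta\geq 0$ then $\phi_\beta(h)=|h|^\beta\geq\delta^\beta$ on $\{|h|\leq\delta\leq 1\}$, while if $p+\beta<0$ then on the same set $\phi_\beta(h)=1/|h|^p\geq\delta^{-p}$. Denoting the resulting constant $c_\delta\in\{\delta^\beta,\delta^{-p}\}$, in either sub-case $c_\delta\to+\infty$ as $\delta\to 0^+$. Hence
\[
\int_{|h|\leq R}\phi_\beta\,\d\mu_\eps \;\geq\; c_\delta\,\mu_\eps(\{|h|\leq\delta\}) \;=\; c_\delta\bigl(1-\mu_\eps(\{|h|>\delta\})\bigr) \;\xrightarrow[\eps\to 0]{}\; c_\delta,
\]
and letting $\delta\to 0^+$ produces the desired $+\infty$. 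The principal subtlety---and the only step requiring genuine care---is precisely this sub-casing in $\beta<0$: the expression $1\land|h|^{p+\beta}$ switches at $|h|=1$ between the regimes $|h|^{p+\beta}$ and $1$, and which regime governs the blow-up at the origin depends on the sign of $p+\beta$.
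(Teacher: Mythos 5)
Your proof is correct and follows essentially the same route as the paper: split the integral at a small radius $\delta\in(0,1\land R)$, use the normalization $\int_{\R^d}(1\land|h|^p)\nu_\eps=1$ together with the concentration property to control the two pieces, and then let $\delta\to0^+$ (your probability-measure reformulation with $\d\mu_\eps=(1\land|h|^p)\nu_\eps\,\d h$ is only a cosmetic repackaging of this). Your explicit sub-casing for $\beta<0$ according to the sign of $p+\beta$ is in fact slightly more careful than the paper, whose single pointwise lower bound with constant $\delta^{\beta}$ on $\{|h|\le\delta\}$ is literally valid only when $p+\beta\ge0$; your constant $\delta^{-p}$ in the other regime still diverges, so the conclusion is the same.
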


\begin{proof}
Fix $\delta\in (0,1\land R)$ sufficiently small. By \eqref{eq:plevy-approx} we get 
\begin{align*}
&\lim_{\varepsilon \to 0} \il_{ \delta<|h|\leq R} (1\land |h|^p) \nu_\varepsilon(h) \d h
\leq \lim_{\eps \to 0} \int_{ |h|>\delta} (1\land |h|^p) \nu_\eps (h) \d h=0,
\\
&\lim_{\eps \to 0}\int_{ |h|<\delta} (1\land |h|^p) \nu_\varepsilon(h) \d h
=1- \lim_{\varepsilon \to 0} \int_{ |h|\geq \delta} (1\land |h|^p) \nu_\eps (h) \d h=1. 
\end{align*}
The case $\beta=0$ follows immediately.
If $\beta>0$ then we have 
\begin{align*}
&\lim_{\varepsilon \to 0} \hspace*{-1ex}\il_{|h|\leq R} \hspace{-2ex}(1\land |h|^{p+\beta})\nu_\varepsilon(h)\mathrm{d}h
\leq \lim_{\varepsilon \to 0} \Big( R^{\beta} \hspace{-3ex}\il_{\delta< |h|\leq R} \hspace{-3ex}\nu_\varepsilon(h) \mathrm{d}h + \delta^{\beta} \hspace{-2ex}\il_{|h|\leq\delta}\hspace{-2ex} (1\land |h|^p) \nu_\varepsilon(h) \mathrm{d}h\Big) \hspace{-0.5ex}= \delta^{\beta} \hspace{-1ex}.
\end{align*}
Analogously, since $\delta<1$, if $\beta<0$ then  we have 
\begin{align*}
\lim_{\varepsilon \to 0}\hspace{-1ex}\il_{|h|\leq R} \hspace{-2ex}(1\land |h|^{p+\beta})\nu_\varepsilon(h)\mathrm{d}h
&\geq \lim_{\varepsilon \to 0} \Big( \delta^pR^{\beta} \hspace{-3ex}\il_{\delta< |h|\leq R} \hspace{-3ex}\nu_\varepsilon(h) \d h + \delta^{\beta} \hspace{-2ex}\il_{|h|\leq\delta}\hspace{-2ex} (1\land |h|^p)\nu_\varepsilon(h) \mathrm{d}h\Big) \hspace{-0.5ex}= \hspace{-0.5ex} \delta^{\beta}\hspace{-1ex}.
\end{align*}
In either case, letting $\delta \to 0$ provides the claim.
\end{proof}

\begin{remark} Assume the family $(\nu_\eps)_\eps$ satisfies \eqref{eq:plevy-approx}. Note that the relation 
\begin{align}\label{eq:concentration-prop}
\lim_{\varepsilon\to 0}\int_{|h|> \delta }(1\land |h|^p)\nu_\varepsilon(h)\,\d h=0,
\end{align}
is often known as the concentration property and is merely equivalent to 
\begin{align*}
\lim_{\varepsilon\to 0}\int_{|h|>\delta }\nu_\varepsilon(h)\,\d h=0\quad\text{for all}\quad \delta >0.
\end{align*}
\noindent Indeed, for all $\delta >0$ we have 
\begin{align*}
\int_{|h|>\delta }(1\land |h|^p)\nu_\varepsilon(h)\,\d h
\leq \int_{|h|> \delta }\hspace{-2ex}\nu_\varepsilon(h)\,\d h\leq (1\land \delta^p)^{-1}\,\int_{|h|> \delta }\hspace{-2ex}(1\land |h|^p)\nu_\varepsilon(h)\,\d h.
\end{align*}
Consequently, for all $\delta >0$ we also have 
\begin{align}\label{eq:concentration-bis}
\lim_{\varepsilon\to 0}\int_{|h|\leq \delta }(1\land |h|^p)\nu_\varepsilon(h)\,\d h= \lim_{\varepsilon\to 0}\int_{|h|\leq \delta }|h|^p\nu_\varepsilon(h)\,\d h=1.\quad
\end{align}
%\end{enumerate} 
\end{remark}

\noindent Let us mention some prototypical examples  of sequences $(\nu_\eps)_\eps$ satisfying  \eqref{eq:plevy-approx} of interest here. For more  examples we refer the reader to \cite{Fog23, guy-thesis,FGKV20}. 

\begin{example}
Assume   $\nu$ is radial and $p$-L\'{e}vy  normalized, i.e.,  $$\int_{\R^d}(1\land|h|^p)\,\nu(h)\d h=1.$$	
Consider the family $(\nu_{\eps})_{\eps}$ defined as the rescaled version of $\nu$  with
\begin{align*}%\label{eq:rescalled-levy-measure-bis}
\begin{split}
\nu_\varepsilon(h) = 
\begin{cases}
\varepsilon^{-d-p}\nu\big(h/\varepsilon\big)& 
\text{if}~~|h|\leq \varepsilon,\\
\varepsilon^{-d}|h|^{-p}\nu\big(h/\varepsilon\big)& \text{if}~~\varepsilon<|h|\leq 1,\\
\varepsilon^{-d}\nu\big(h/\varepsilon\big)& \text{if}~~|h|>1.
\end{cases}
\end{split}
\end{align*}

\end{example}

\begin{example}\label{Ex:stable-class}
Consider the sequence $(\nu_\varepsilon)_\varepsilon$ of fractional kernels defined by
$$\nu_\varepsilon(h) = a_{\varepsilon, d,p} |h|^{-d-(1-\eps)p}\quad\text{with }\quad a_{\varepsilon, d,p} = \frac{p\eps(1-\eps)}{|\mathbb{S}^{d-1}|}.$$ 
Indeed, passing through polar coordinates yields 
\begin{align*}
& \int_{\R^d} \frac{(1\land |h|^p)}{|h|^{-d-(1-\eps)p} }\d h
= |\mathbb{S}^{d-1}| \Big(\int_{0}^{1}\hspace{-1ex} r^{\eps p-1}\,\d r+ \hspace{-1ex} \int_{1}^{\infty} \hspace{-1ex}r^{-1-(1-\eps)p}\,\d r\Big) 
\hspace{-0.5ex}=	a^{-1}_{\eps,d,p}. 
\end{align*}
For $\delta>0,$ a similar computation gives
\begin{align*}
&a_{\eps, d,p}
\il_{|h|\geq \delta} (1\land |h|^p) |h|^{-d-(1-\eps)p}\,\d h
%\leq p\eps(1-\eps) \int_{\delta}^{\infty}r^{-1-(1-\eps)p}\,\d r
\leq \eps \delta^{-(1-\eps)p} \xrightarrow{\eps \to 0}0.
\end{align*}
The choice of $\nu_\eps(h) = a_{\eps,d,p}  |h|^{-d-(1-\eps)p}$ gives rise to a multiple of fractional $p$-Laplace operator, namely, we have $L_\eps u= \frac{2a_{\eps,d,p}}{C_{d,1-\eps,p}} (-\Delta)^s_pu$, $s=1-\eps$. We emphasize that  $C_{d,p,s}$ (cf. Section \ref{sec:norming-cste-pfrac}) is  the normalizing constant of $(-\Delta)^s_p$ and  that $ \frac{ 2a_{\eps,d,p} }{C_{d,1-\eps,p}}\to K_{d,p}$  as $\eps\to 0$. 
\end{example}

\begin{example} \label{Ex:example-poincre1} Let $0<\varepsilon < 1$ and $\beta >-d$. Set 
\begin{align*}
\nu_\varepsilon(h) = \frac{d+\beta}{ |\mathbb{S}^{d-1}|\varepsilon^{d+\beta}} |h|^{\beta-p}\mathds{1}_{B_{\varepsilon}}(h). 
\end{align*} 
Some special cases are obtained with $ \beta = 0$, $\beta =p$ and $\beta = (1-s)p-d$ for $s\in (0,1)$. For the limiting case $\beta=-d$ consider $0<\varepsilon <\varepsilon_0<1$ and put
\begin{align*}
\nu_\varepsilon(h) = \frac{1}{|\mathbb{S}^{d-1}|\log(\varepsilon_0/\varepsilon )}|h|^{-d-p}\mathds{1}_{ B_{\varepsilon_0}\setminus B_\varepsilon}(h). 
\end{align*} 

\end{example}

\subsection{Characterization of $p$-L\'{e}vy approximation family} Now we characterize the class $(\nu_\eps)_\eps$ such that for all $u\in W^{1,p}(\R^d)$, the following formula holds
\begin{align}\label{eq:limit-goal}
\lim_{\eps\to0} \iil_{\R^d\R^d} |u(x)- u(y)|^p\nu_\eps(x-y)\d y\,\d x
=K_{d,p}\int_{\R^d}|\nabla u(x)|^p\d x.
\end{align}
In fact, this is equivalent to saying that 	
\begin{align}\label{eq:plevy-approx-bis}
\lim_{\eps\to0}\int_{\R^d}\hspace{-2ex}(1\land|h|^p) \nu_\eps (h)\d h=1\,\, \text{and for all $\delta>0,$}\,\, \lim_{\eps\to0} \int_{|h|>\delta} \hspace{-2ex}\nu_\eps (h)\d h=0. 
\end{align} 

To be more accurate, we have the following. 
\begin{theorem}\label{thm:charac-plevy-concentrate}
Let $(\nu_\eps)_\eps$  be a family of radial functions.  
The following are equivalent.
\begin{enumerate}[$(i)$]
\item  
The family  $(\nu_\eps)_\eps$ satisfies \eqref{eq:plevy-approx-bis}.  

\item  The relation \eqref{eq:limit-goal} holds for all $u\in W^{1,p}(\R^d)$ .

\item   The relation \eqref{eq:limit-goal} holds for all $u\in  C_c^\infty(\R^d)$.

\item  There exists $u\in C_c^\infty(\R^d)\setminus\{0\}$ such that the relation \eqref{eq:limit-goal} holds for each $u_\tau(x) =\tau^d u(\tau x)$, $\tau>0$. 
\end{enumerate}
\end{theorem}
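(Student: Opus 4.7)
The plan is to establish the cycle (i) $\Rightarrow$ (ii) $\Rightarrow$ (iii) $\Rightarrow$ (iv) $\Rightarrow$ (i). The implication (ii) $\Rightarrow$ (iii) is immediate from $C_c^\infty(\R^d)\subset W^{1,p}(\R^d)$, and (iii) $\Rightarrow$ (iv) follows because $u_\tau\in C_c^\infty(\R^d)\setminus\{0\}$ whenever $u$ is.

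For (i) $\Rightarrow$ (ii), I would adapt the classical BBM-type argument. The normalization $\int(1\land|h|^p)\nu_\eps(h)\,dh=1$ together with the estimate \eqref{eq:levy-p-estimate} gives the uniform bound $\iint|u(x)-u(y)|^p\nu_\eps(x-y)\,dy\,dx\leq 2^p\|u\|_{W^{1,p}(\R^d)}^p$, so by density of $C_c^\infty(\R^d)$ in $W^{1,p}(\R^d)$ it suffices to treat $u\in C_c^\infty(\R^d)$. Fixing such $u$ and $\delta>0$, split the integral at $|h|=\delta$: the tail part is bounded by $2^p\|u\|_{L^p}^p\int_{|h|>\delta}\nu_\eps(h)\,dh$ and vanishes by the concentration hypothesis in \eqref{eq:plevy-approx-bis}; for the near part, use $u(x+h)-u(x)=\int_0^1\nabla u(x+th)\cdot h\,dt$, the $L^p$-continuity of the shift, and Minkowski's inequality to sandwich it between $\bigl(K_{d,p}^{1/p}\|\nabla u\|_{L^p}\mp\eta\bigr)^p\int_{|h|\leq\delta}|h|^p\nu_\eps(h)\,dh$ for arbitrarily small $\eta$, after invoking the rotation identity $\fint_{\mathbb{S}^{d-1}}|w\cdot z|^p\,d\sigma(w)=K_{d,p}|z|^p$ used in the proof of Theorem \ref{thm:charac-plevy-radial}. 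Proposition \ref{prop:asymp-nu} applied to \eqref{eq:plevy-approx-bis} yields $\int_{|h|\leq\delta}|h|^p\nu_\eps(h)\,dh\to 1$, whence \eqref{eq:limit-goal}.

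The substantive work is (iv) $\Rightarrow$ (i). Fix the given $u$ and set $F(h):=\int|u(x+h)-u(x)|^p\,dx$. Four structural properties of $F$ will be crucial: (a) $F$ is continuous with $F(0)=0$; (b) $F(h)=2\|u\|_{L^p}^p$ whenever $|h|\geq R:=\diam(\supp u)$, because the supports of $u(\cdot+h)$ and $u$ are then disjoint; (c) $F(h)>0$ for $h\neq 0$ since a nonzero $C_c^\infty$ function has no nontrivial period, which combined with (a) and (b) yields the two-sided bound $c(1\land|h|^p)\leq F(h)\leq C(1\land|h|^p)$ globally; (d) the spherical average $\tilde F(r):=\fint_{\mathbb{S}^{d-1}}F(rw)\,d\sigma(w)$ satisfies $\tilde F(r)/r^p\to K_{d,p}\|\nabla u\|_{L^p}^p$ as $r\to 0^+$, via the $L^p$-Taylor expansion combined with the rotation identity. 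A direct change of variables using the radial symmetry of $\nu_\eps$ rewrites the convergence in (iv) for each $u_\tau$ as the scaling family $\int_{\R^d}F(\tau h)\,\nu_\eps(h)\,dh\to K_{d,p}\tau^p\|\nabla u\|_{L^p}^p$ as $\eps\to 0$, for every $\tau>0$. Specializing to $\tau=1$ and using (c) immediately yields boundedness of $\alpha_\eps:=\int(1\land|h|^p)\nu_\eps(h)\,dh$.

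To finish, I would exploit this scaling family in two extreme regimes. For concentration, in the large-$\tau$ regime (with $\tau\geq R/\delta$), property (b) converts $\int_{|h|\geq\delta}F(\tau h)\nu_\eps(h)\,dh$ into $2\|u\|_{L^p}^p\int_{|h|\geq\delta}\nu_\eps(h)\,dh$; combining this with the upper bound $F(\tau h)\leq C(|\tau h|^p\land 1)$ on $\{|h|<\delta\}$, the boundedness of $\alpha_\eps$, and test-function differences of the form $F(\tau_1 h)-F(\tau_2 h)$ (which vanish for $|h|\geq R/\min(\tau_1,\tau_2)$ and thus isolate annular shells), one extracts $\int_{|h|>\delta}\nu_\eps(h)\,dh\to 0$ for every $\delta>0$. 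For the normalization, rewriting $\int F(\tau h)\nu_\eps(h)\,dh=|\mathbb{S}^{d-1}|\int_0^\infty\tilde F(\tau r)r^{d-1}\nu_\eps(r)\,dr$ and dividing by $\tau^p$, property (d) gives the pointwise limit $\tau^{-p}\tilde F(\tau r)\to K_{d,p}\|\nabla u\|_{L^p}^p r^p$ as $\tau\to 0^+$, while the already-established concentration supplies the tail control needed to apply dominated convergence and deliver $\alpha_\eps\to 1$. The principal obstacle is this double passage to the limit in $\eps$ and $\tau$: since $F$ has a fixed macroscopic value at infinity, concentration cannot be read off from a single choice of $\tau$, and one must use the whole scaling family (and, crucially, differences of rescaled test functions) to disentangle the small-scale and large-scale information encoded in $\nu_\eps$, all without the simplification of radial symmetry of $u$.
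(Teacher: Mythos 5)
Most of your reduction is sound and runs parallel to the paper: the trivial implications, the density/BBM argument for (i)$\Rightarrow$(ii), the rewriting of (iv) as $\int_{\R^d}F(\tau h)\,\nu_\eps(h)\,\d h\to K_{d,p}\tau^p\|\nabla u\|_{L^p}^p$ with $F(h)=\int|u(x+h)-u(x)|^p\d x$, the properties (a)--(d) of $F$, the boundedness of $\alpha_\eps=\int(1\land|h|^p)\nu_\eps$, and the derivation of $\alpha_\eps\to 1$ \emph{once concentration is known} (this last point matches the paper, which also uses small $\tau$ and the support separation $\supp u_\tau\subset B_{\delta/2}(0)$ to reach the tail).

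The genuine gap is the concentration step, which is precisely where the paper spends its effort. For $\tau_1,\tau_2\geq R/\delta$ the differences $F(\tau_1h)-F(\tau_2h)$ vanish identically on $\{|h|\geq R/\min(\tau_1,\tau_2)\}\supset\{|h|\geq\delta\}$, so no combination of such differences carries any information about $\int_{|h|>\delta}\nu_\eps$: they probe only the near field, the opposite of what you need. To isolate the tail term $2\|u\|_{L^p}^p\int_{|h|>\delta}\nu_\eps$ inside $A_\eps(\tau)=\int F(\tau h)\nu_\eps$ you must subtract a lower bound for $\int_{|h|\leq\delta}F(\tau h)\nu_\eps$ that matches $K_{d,p}\tau^p\|\nabla u\|_{L^p}^p$ asymptotically; your two-sided bound $c(1\land|h|^p)\leq F(h)\leq C(1\land|h|^p)$ has mismatched constants, and with it plus boundedness of $\alpha_\eps$ one only gets $\limsup_\eps\int_{|h|>\delta}\nu_\eps\leq$ a finite constant, not $0$ --- the bounded but a priori unknown tail and intermediate-scale mass cannot be separated at the level of constants, and trying to prove the sharp near-field lower bound by using the limit relation again is circular. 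What is missing is a decoupling mechanism. The paper supplies it by first proving the small-ball normalization $\lim_\eps\int_{B_{\delta_\eta/\tau}(0)}|h|^p\nu_\eps=1$: the inequality $\leq 1$ comes from the sharp gradient-shift lower estimate, while the delicate inequality $\geq 1$ is obtained by applying the crude upper estimate to the rescaled functions $u_\theta$ and letting $\theta\to\infty$, so that the tail contribution (whose coefficient $\|u_\theta\|_{L^p}^p$ stays bounded) becomes negligible relative to the gradient term (which scales with $\theta$); only then does the remainder over $\{|h|\geq\delta_\eta/\tau\}$ vanish, and the small-$\tau$ support separation converts this into $\int_{|h|>\delta}\nu_\eps\to0$, after which the full normalization follows. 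Alternatively your route can be closed by compactness: set $\lambda_\eps(\d h)=(1\land|h|^p)\nu_\eps(h)\d h$, extract a weak-$*$ limit on the compactified radial half-line $[0,\infty]$, observe that $H_\tau(r)=\widetilde F(\tau r)/(1\land r^p)$ extends continuously with $H_\tau(0)=K_{d,p}\|\nabla u\|_{L^p}^p\tau^p$ and $H_\tau(\infty)=2\|u\|_{L^p}^p$, and let $\tau\to0$ and $\tau\to\infty$ in the limiting identity to force the limit measure to be $\delta_0$ with mass $1$. Either way, an extra limiting mechanism beyond ``differences plus crude bounds'' is indispensable, and as written your extraction of the concentration property does not follow.
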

\begin{remark}
It is worthwhile noticing that properties in \eqref{eq:plevy-approx} and \eqref{eq:plevy-approx-bis}  are equivalent in the sense that, using the normalizing factor $c_\eps^{-1}=\int_{\R^d}(1\land|h|^p)\nu_\eps(h)\d h$ one readily recovers \eqref{eq:plevy-approx}  from \eqref{eq:plevy-approx-bis} vice-versa. Whence Theorem \ref{thm:charac-plevy-concentrate} also characterizes families satisfying \eqref{eq:plevy-approx}. 
\end{remark}
\begin{proof}
Up to replacing $\nu_\eps$ with $c_\eps\nu_\eps$ with $c_\eps^{-1}=\int_{\R^d}(1\land|h|^p)\nu_\eps(h)\d h$, the implication $(i)\implies (ii)$ follows from \cite{Fog23} or \cite[Theorem 5.23]{guy-thesis}. We only prove that  $(iv)\implies (i)$, as the remaining implications are trivial.  
Note in passing that, since $u\in C_c^\infty(\R^d)\setminus\{0\}$ we have $\|u_\tau\|_{L^p(\R^d)} = \|u\|_{L^p(\R^d)}\neq0$ and $\|\nabla u_\tau\|_{L^p(\R^d)} = \tau^{1/p}\|\nabla u\|_{L^p(\R^d)}\neq0$. By continuity of the shift operator, for every $\eta\in (0,1)$ there  is $0<\delta_\eta<\eta$ such that $\|\nabla u(\cdot+h)-\nabla u\|_{L^p(\R^d)}<\eta$ for all $|h|\leq \delta_\eta $. Thus for $\tau>0$ we find that 
\begin{align*}
\|\nabla u_\tau(\cdot+h)-\nabla u_\tau\|_{L^p(\R^d)}<\tau^{1/p}\eta\quad \text{for all $|h|\leq \delta_\eta/\tau $.} 
\end{align*} 
Minkowski's inequality implies 
\begin{align*}
\Big(\int_{\R^d}\int_{B_{\delta_\eta/\tau}(0)}
&	|\nabla u_\tau(x)\cdot h|^p\nu_\eps (h)\d h\d x\Big)^{1/p} 
\leq \tau^{1/p}\eta\Big( \int_{B_{\delta_\eta/\tau}(0)} \hspace{-2ex}|h|^p\nu_\eps(h)\d h\Big)^{1/p}\\ \\&+\Big(\int_{\R^d}\int_{B_{\delta_\eta/\tau}(0)}\hspace{-0.2ex} \Big|\int_0^1 \nabla u_\tau(x+th) \cdot h\d t\Big|^p\nu_\eps(h) \d h\d x\Big)^{1/p}.
\end{align*} 
Observe  that, since $\nu_\eps$ is radial, using polar coordinates yields, 
\begin{align}\label{eq:radial-decoupling}
\begin{split}
\int_{B_{\delta}(0)}\hspace*{-1ex}|\nabla u(x)\cdot h|^p\nu_\eps (h)\d h
&=K_{d,p}|\nabla u(x)|^p\int_{B_{\delta}(0)} \hspace{-2ex}|h|^p\nu_\eps(h)\d h.
\end{split}
\end{align}
\noindent Accordingly, using the fundamental theorem of calculus, the formula   \eqref{eq:rotation-inv} and the foregoing  yields
\begin{align*}
\begin{split}
&\iil_{\R^d\,B_{\delta_\eta/\tau}(0)}\hspace*{-1ex}|u_\tau(x)-u_\tau(x+h)|^p\nu_\eps(h)\,\d h\,\d x\\
&=\int_{\R^d} \int_{B_{\delta_\eta/\tau}(0)} \Big| \int_0^1\nabla u_\tau(x+th)\cdot h\d t\Big|^p \nu_\eps(h) \d h\,\d x\\
&\geq \Big[\Big(\int_{\R^d}\int_{B_{\delta_\eta/\tau}(0)}\hspace*{-2ex}	|\nabla u_\tau(x)\cdot h|^p\nu_\eps (h)\d h\d x\Big)^{\frac1p}\hspace*{-1ex}-\eta\Big( \int_{B_{\delta_\eta/\tau}(0)} \hspace*{-0.1ex}|h|^p\nu_\eps(h)\d h\Big)^{1/p} \Big]^p\\
&= \tau \Big(K^{1/p}_{d,p}\|\nabla u\|_{L^p(\R^d)}-\eta\Big)^p \int_{B_{\delta_\eta/\tau}(0)} |h|^p\nu_\eps (h)\d h. 
\end{split}
\end{align*}
 For the sake of brevity, let us put 
\begin{align*}
\cE_{\R^d}^\eps(u_\tau,u_\tau)
&=\iil_{\R^d\,\R^d}|u_\tau(x)-u_\tau(x+h)|^p\nu_\eps(h)\,\d h\,\d x,\\  
R_{\tau} (\eta,\eps)&= \iil_{\R^d\,B^c_{\delta_\eta/\tau}(0)}\hspace*{-1ex}|u_\tau(x)-u_\tau(x+h)|^p\nu_\eps(h)\,\d h\,\d x. 
\end{align*}
Therefore from the above, we find that 
\begin{align}\label{eq:low-eta-eps-estimate}
\begin{split}
\cE_{\R^d}^\eps(u_\tau,u_\tau)
&\geq \tau \Big(K^{1/p}_{d,p}\|\nabla u\|_{L^p(\R^d)}-\eta\Big)^p \hspace*{-1ex}\int_{B_{\delta_\eta/\tau}(0)} \hspace{-3ex}|h|^p\nu_\eps (h)\d h+ R_{\tau} (\eta,\eps). 
\end{split}
\end{align}
Using once again the fundamental theorem of calculus and  \eqref{eq:radial-decoupling} we easily get
\begin{align}\label{eq:upper-eta-eps-estimate}
\begin{split}
\cE_{\R^d}^\eps(u_\tau,u_\tau)&\leq \tau K_{d,p}\|\nabla u\|^p_{L^p(\R^d)} \int_{B_{\delta_\eta/\tau}(0)} \hspace{-1ex}|h|^p\nu_\eps (h)\d h+ R_{\tau} (\eta,\eps). 
\end{split}
\end{align} 
Now we consider the following quantities 
\begin{align*}
\rho_\tau^+&=  \limsup_{\eta\to 0}  \limsup_{\eps\to 0} \int_{B_{\delta_\eta/\tau}(0)} \hspace{-4ex}|h|^p\nu_\eps (h)\d h, \,\,
\rho_\tau^-=  \liminf_{\eta\to 0}  \liminf_{\eps\to 0} \int_{B_{\delta_\eta/\tau}(0)} \hspace{-4ex}|h|^p\nu_\eps (h)\d h, 
\\
R_\tau^+&=  \limsup_{\eta\to 0}  \limsup_{\eps\to 0} R_{\tau} (\eta,\eps) 
\quad \text{and}\quad 
R_\tau^-=  \liminf_{\eta\to 0}  \liminf_{\eps\to 0}  R_{\tau} (\eta,\eps). 
\end{align*}

Passing to the  $\lim\hspace{-0.5ex}\sup$ and $\lim\hspace{-0.5ex}\inf$ in \eqref{eq:low-eta-eps-estimate} and \eqref{eq:upper-eta-eps-estimate} respectively, we get the following  
\begin{align*}
\tau K_{d,p}\|\nabla u\|^p_{L^p(\R^d)}&\geq  \tau\rho_\tau^+ K_{d,p}\|\nabla u\|^p_{L^p(\R^d)} + R_\tau^+
\\
\tau K_{d,p}\|\nabla u\|^p_{L^p(\R^d)}&\leq  \tau\rho_\tau^- K_{d,p}\|\nabla u\|^p_{L^p(\R^d)} + R_\tau^-. 
\end{align*}
It follows that $\rho_\tau^+\leq 1$. To show that $\rho_\tau^-\geq1$ we observe that analogously to \eqref{eq:upper-eta-eps-estimate},  for all $\delta>0$ and $\theta>0$ we have
\begin{align*}
\begin{split}
\cE_{\R^d}^\eps(u_\theta, u_\theta)&\leq \theta K_{d,p}\|\nabla u\|^p_{L^p(\R^d)} \hspace*{-1ex}\int_{B_{\delta}(0)} \hspace{-2ex}|h|^p\nu_\eps (h)\d h+ 2^p\|u_\theta\|^p_{L^p(\R^d)} \hspace*{-1ex}\int_{|h|\geq \delta}\hspace*{-2ex}\nu_\eps(h)\d h. 
\end{split}
\end{align*}
Then passing to the $\lim\hspace{-0.5ex}\inf$  like previously also yields that 
\begin{align*}
1\leq \liminf_{\eps\to 0} 
\int_{B_{\delta}(0)} \hspace{-1ex}|h|^p\nu_\eps (h)\d h + \frac{1}{\theta}  \frac{2^p\|u\|^p_{L^p(\R^d)} }{K_{d,p}\|\nabla u\|^p_{L^p(\R^d)}} \liminf_{\eps\to 0}\int_{|h|\geq \delta}\hspace*{-1ex}\nu_\eps(h)\d h.
\end{align*}
Letting $\theta\to \infty$ implies that 
\begin{align*}
1\leq \liminf_{\eps\to 0} 
\int_{B_{\delta}(0)} \hspace{-1ex}|h|^p\nu_\eps (h)\d h  \quad\text{for all $\delta>0$}. 
\end{align*}
In particular taking $\delta=\delta_\eta/\tau$ we obtain
\begin{align*}
\rho_\tau^-=  \liminf_{\eta\to 0}  \liminf_{\eps\to 0} \int_{B_{\delta_\eta/\tau}(0)} \hspace{-1ex}|h|^p\nu_\eps (h)\d h\geq1. 
\end{align*}

From the foregoing we get $\rho_\tau^+\leq 1\leq \rho_\tau^-$ , that is, we have  
\begin{align}\label{eq:low-upper-lim-rho}
\rho_\tau^+= \rho_\tau^-= \lim_{\eta\to 0}  \lim_{\eps\to0} \int_{B_{\delta_\eta/\tau}(0)}\hspace{-1ex}|h|^p\nu_\eps (h)\d h=1. 
\end{align}
Therefore, we also deduce that 
\begin{align*}
R_\tau^+&=  \limsup_{\eta\to 0}  \limsup_{\eps\to 0} \iil_{\R^d\,B^c_{\delta_\eta/\tau}(0)}\hspace*{-1ex}|u_\tau(x)-u_\tau(x+h)|^p\, \d h\,\d x=0. 
\end{align*}
For fixed $\delta>0$ and $\eta<\delta\tau$ we have 
\begin{align*}
\iil_{\R^d\,B^c_{\delta}(0)}\hspace*{-2ex}|u_\tau(x)-u_\tau(x+h)|^p\nu_\eps(h)\,\d h\,\d x\leq \hspace*{-3ex} \iil_{\R^d\,B^c_{\delta_\eta/\tau}(0)}\hspace*{-4ex}|u_\tau(x)-u_\tau(x+h)|^p\nu_\eps(h)\,\d h\,\d x. 
\end{align*}
This implies that,  for all $\delta>0$ we have  
\begin{align*}
\limsup_{\eps\to 0} \iil_{\R^d\,B^c_{\delta}(0)}\hspace*{-1ex}|u_\tau(x)-u_\tau(x+h)|^p\nu_\eps(h)\,\d h\,\d x=0.
\end{align*}
For fixed $\delta>0$ and $\tau< \frac{\delta}{2}$ so that $\supp u_\tau\subset  B_{\delta/2}(0)$, if $x\in B_{\delta/2}(0)$ and $h\in B^c_{\delta}(0)$ we have $x+h\in B^c_{\delta/2}(0)$  and hence $u_\tau(x+h)=0$. Therefore, it follows that 

\begin{align*}
0&= \limsup_{\eps\to 0} \iil_{\R^d\,B^c_{\delta}(0)}\hspace*{-1ex}|u_\tau(x)-u_\tau(x+h)|^p\nu_\eps(h)\,\d h\,\d x\\
&\geq  \|u\|^p_{L^p(\R^d)}  \limsup_{\eps\to 0} \int_{|h|\geq\delta} (1\land |h|^p)\nu_\eps(h)\,\d h.
\end{align*}
Therefore we have, 
\begin{align*}
\lim_{\eps\to0} \int_{|h|\geq\delta} (1\land |h|^p)\nu_\eps(h)\,\d h=0\quad\text{for all $\delta>0$}. 
\end{align*}
Finally, combining this and \eqref{eq:low-upper-lim-rho}, with $\tau=1$, it follows that 
\begin{align*}
\lim_{\eps\to0} \int_{\R^d} (1\land |h|^p)\nu_\eps(h)\,\d h 
= \lim_{\eta\to 0}  \lim_{\eps\to0} \int_{|h|<\delta_\eta} \hspace*{-1.2ex}|h|^p\nu_\eps(h)\,\d h
= 1. 
\end{align*}
The proof is now complete. 
\end{proof}
%%%%%%%%%%%%%%%%%%%%%%%%%%%%%%%%
\subsection{Pointwise asymptotic}
%%%%%%%%%%%%%%%%%%%%%%%%%%%%%%%%
In this section, we wish to establish the asymptotics of the nonlinear nonlocal operators $L_{\varepsilon} u $ and $\cN_{\varepsilon} u $ as $\eps\to0$, with 
\begin{align*}
L_{\varepsilon} u (x)&= 2\pv \int_{\R^d}\!\! \psi(u(x)-u(y))\nu_\varepsilon(x-y)\d y,\qquad\hbox{$(x\in \R^d)$}
\\
\cN_\varepsilon u (x)&= 2 \int_{\Omega}\psi(u(x)-u(y))\nu_\varepsilon(x-y)\d y
\qquad\hbox{$(x\in \Omega^c)$}.
\end{align*}
Typically we show that $L_{\varepsilon} u$ converges to $K_{d,p}\Delta_p u$ pointwise and in the weak sense, while  $\cN_{\varepsilon} u $ converges to $K_{d,p}\partial_{n,p} u= K_{d,p}|\nabla u|^{p-2}\nabla u\cdot n$ in a (sort of) weak sense, where $n$ is normal vector on $\partial\Omega$. In fact, here we extend the linear results \cite[Proposition 2.5]{Fog23} and \cite[Lemma 5.3]{FK22} which only deals with the particular case $p=2$; see also \cite[Lemma 5.75 \& Proposition 2.38]{guy-thesis} wherein the asymptotic for general nonlocal elliptic operators is treated. To begin with, let us establish the following spherical mean representation for the $p$-Laplacian. 
\smallskip 

\begin{lemma}[Spherical mean representation of $\Delta_p$]\label{lem:mean-value-plaplace}
Assume $u\in C^2(B_1(x))$ and $\nabla u(x)\neq 0$ when $1<p<2$ then for $d\geq2$ we have 
\begin{align*}
\fint_{\mathbb{S}^{d-1}}|\nabla u(x)\cdot w|^{p-2} D^2u(x)w\cdot w \, \d \sigma_{d-1}(w)= \frac{K_{d,p}}{p-1}\Delta_p u(x).
\end{align*}
\end{lemma}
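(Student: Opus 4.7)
The plan is to expand $\Delta_p u$ and reduce the spherical integral to a linear combination of two scalar quantities: $\Delta u(x)$ and $D^2u(x)\nabla u(x)\cdot\nabla u(x)/|\nabla u(x)|^2$. Recall that
\[
\Delta_p u(x)= |\nabla u(x)|^{p-2}\Delta u(x)+(p-2)|\nabla u(x)|^{p-4}D^2u(x)\nabla u(x)\cdot\nabla u(x),
\]
and set $e=\nabla u(x)/|\nabla u(x)|$. Pulling out $|\nabla u(x)|^{p-2}$ from $|\nabla u(x)\cdot w|^{p-2}$ reduces the claim to showing
\[
\fint_{\mathbb{S}^{d-1}} |e\cdot w|^{p-2}\,D^2u(x)w\cdot w\,\d\sigma_{d-1}(w)=\frac{K_{d,p}}{p-1}\bigl(\Delta u(x)+(p-2)(D^2u(x)\,e\cdot e)\bigr).
\]

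Next I would introduce the symmetric $2$-tensor $M:=\fint_{\mathbb{S}^{d-1}}|e\cdot w|^{p-2}\,w\otimes w\,\d\sigma_{d-1}(w)$, so the left-hand side is $\operatorname{tr}(D^2u(x)\,M)$. Rotational covariance of the surface measure together with the symmetry $w\mapsto -w$ forces $M$ to have the form $M=\alpha(I-e\otimes e)+\beta\,e\otimes e$ for scalars $\alpha,\beta$ depending only on $d$ and $p$ (one checks this by rotating so that $e=e_d$: the kernel depends only on $w_d$, so all off-diagonal and mixed moments vanish and the transverse diagonal moments coincide). Consequently
\[
\operatorname{tr}(D^2u(x)\,M)=\alpha\,\Delta u(x)+(\beta-\alpha)(D^2u(x)\,e\cdot e).
\]

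It then remains to identify $\alpha$ and $\beta$. The coefficient $\beta$ is immediate from $e=e_d$: $\beta=\fint_{\mathbb{S}^{d-1}}|w_d|^p\,\d\sigma_{d-1}(w)=K_{d,p}$ by the definition \eqref{eq:cst-kdp}. For $\alpha$, I would trace the identity $M=\alpha(I-e\otimes e)+\beta\,e\otimes e$, getting $\operatorname{tr} M=(d-1)\alpha+\beta$, and on the other hand $\operatorname{tr} M=\fint_{\mathbb{S}^{d-1}}|w_d|^{p-2}|w|^2\,\d\sigma_{d-1}(w)=K_{d,p-2}$ (since $|w|=1$). Hence $(d-1)\alpha=K_{d,p-2}-K_{d,p}$. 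A short Gamma-function calculation using \eqref{eq:cst-kdp} and the identities $\Gamma(\tfrac{p+1}{2})=\tfrac{p-1}{2}\Gamma(\tfrac{p-1}{2})$ and $\Gamma(\tfrac{d+p}{2})=\tfrac{d+p-2}{2}\Gamma(\tfrac{d+p-2}{2})$ gives $K_{d,p}=\tfrac{p-1}{d+p-2}K_{d,p-2}$, whence $\alpha=\tfrac{K_{d,p-2}-K_{d,p}}{d-1}=\tfrac{K_{d,p}}{p-1}$ and $\beta-\alpha=K_{d,p}\tfrac{p-2}{p-1}$.

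Inserting these back yields
\[
\operatorname{tr}(D^2u(x)\,M)=\frac{K_{d,p}}{p-1}\Bigl(\Delta u(x)+(p-2)\frac{D^2u(x)\nabla u(x)\cdot \nabla u(x)}{|\nabla u(x)|^2}\Bigr),
\]
and multiplying through by $|\nabla u(x)|^{p-2}$ recovers $\tfrac{K_{d,p}}{p-1}\Delta_pu(x)$. The hypothesis $\nabla u(x)\neq 0$ in the subquadratic case is only needed to make sense of $|e\cdot w|^{p-2}$ and $|\nabla u|^{p-4}$; in the superquadratic regime both are integrable and continuous, so the formula extends by continuity. The only nontrivial step is the algebraic identification of $\alpha$ via the Gamma recursion, but this is routine once the decomposition of $M$ is in place.
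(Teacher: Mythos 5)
Your argument is correct and is essentially the paper's proof: both reduce to the second moments $\fint_{\mathbb{S}^{d-1}}|w_d|^{p-2}w_iw_j\,\d\sigma_{d-1}(w)$ after rotating $\nabla u(x)$ onto $e_d$, evaluate them via the same Gamma recursion $K_{d,p-2}=\tfrac{d+p-2}{p-1}K_{d,p}$, and assemble the result with the expansion $\Delta_p u=|\nabla u|^{p-2}\big(\Delta u+(p-2)|\nabla u|^{-2}D^2u\,\nabla u\cdot\nabla u\big)$. Your packaging via the tensor $M=\alpha(I-e\otimes e)+\beta\,e\otimes e$ and the trace identity $\operatorname{tr}M=K_{d,p-2}$ is only a cosmetic variant of the paper's explicit computation with $Q=O^tD^2u(x)O$.
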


\begin{proof}
If $p\geq 2$ and $\nabla u(x)=0$, there is nothing to prove. Let us put $e(x)= \frac{\nabla u(x)}{|\nabla u(x)|}$ and consider $O:\R^d\to \R^d$ be a rotation, i.e., $O^t O=I$ such that $ e(x)= Oe_d$ with $e_d=(0,0,\cdots,1)$. Note that $Oz\cdot Oy= z\cdot y$. By rotational invariance of the Lebesgue measure, the change of variables $w=O\xi$ yields $\d\sigma_{d-1}(w)= \d\sigma_{d-1}(\xi)$  and 
\begin{align*}
Iu(x)&:=\fint_{\mathbb{S}^{d-1}}|\nabla u(x)\cdot w|^{p-2} D^2u(x)w\cdot w \, \d \sigma_{d-1}(w)\\
&= |\nabla u(x)|^{p-2}\fint_{\mathbb{S}^{d-1}}|e(x)\cdot w|^{p-2} D^2u(x)w\cdot w \, \d \sigma_{d-1}(w)\\
&= |\nabla u(x)|^{p-2}\fint_{\mathbb{S}^{d-1}}|\xi_d|^{p-2} [O^tD^2u(x) O] \xi\cdot \xi\, \d \sigma_{d-1}(\xi)\\
&= |\nabla u(x)|^{p-2} \sum_{i,j=1}^d q_{ij}(x)\fint_{\mathbb{S}^{d-1}}|w_d|^{p-2} w_iw_j\, \d \sigma_{d-1}(w). 
\end{align*}
Here $Q(x)= (q_{ij}(x))_{1\leq i,j\leq d}$ is the $d\times d$-matrix $Q(x)= O^tD^2u(x) O$. By symmetry, we get 
\begin{align}\label{eq:xzero-symmetry}
\fint_{\mathbb{S}^{d-1}}|w_d|^{p-2} w_iw_j\, \d \sigma_{d-1}(w)=0 \quad\text{if $i\neq j$}. 
\end{align}
It is known from\cite{Fog23} that for $i=j=d$ we have  the following formula, 
\begin{align*}
K_{d,p}&= \fint_{\mathbb{S}^{d-1}} |w_d|^p\d\sigma_{d-1}(w) = \frac{\Gamma\big(\frac{d}{2}\big)\Gamma\big(\frac{p+1}{2}\big)}{\Gamma\big(\frac{d+p}{2}\big) \Gamma\big(\frac{1}{2}\big)}.
\end{align*}
Furthermore, the quantity $\fint_{\mathbb{S}^{d-1}}|w_d|^{p-2} w^2_j\, \d \sigma_{d-1}(w)$ is oblivious to the choice of $j=1,2,\cdots, d-1$. Indeed, let  $O':\R^{d-1}\to \R^{d-1}$ be a rotation such that $O' e_j'= e_1'$ where we put $e_i= (e_i', 0)$ and $x=(x', x_d)$. Then $x\mapsto Ox= (O'x', x_d)$ is a $x_d$-invariant rotation such that $Oe_j= e_1$.  Enforcing the change of variables $\xi=O w$, $ \d\sigma_{d-1}(w)= \d\sigma_{d-1}(\xi)$ we obtain 
\begin{align*}
\fint_{\mathbb{S}^{d-1}} |w_d|^{p-2} w_j^2\d\sigma_{d-1}(w)=\fint_{\mathbb{S}^{d-1}} |\xi_d|^{p-2} \xi_1^2\d\sigma_{d-1}(\xi). 
\end{align*}
Since $|w'|^2= w_1^2+w_2^2+\cdots+w_{d-1}^2= 1-|w_d|^2$ for $w\in\mathbb{S}^{d-1},$ this implies that 
\begin{align*}
\fint_{\mathbb{S}^{d-1}} |w_d|^{p-2} w_j^2\d\sigma_{d-1}(w)
&= \frac{1}{d-1}\fint_{\mathbb{S}^{d-1}} |w_d|^{p-2}(1-|w_d|^2)\d\sigma_{d-1}(w)
\\&= \frac{1}{d-1}\big(K_{d,p-2} -K_{d,p}\big).
\end{align*}
Applying the formula $\Gamma(x+1)= x\Gamma(x)$ one readily obtains
\begin{align*}
K_{d,p-2}=\frac{\Gamma\big(\frac{d}{2}\big)\Gamma\big(\frac{p-1}{2}\big)}{\Gamma\big(\frac{d+p-2}{2}\big) \Gamma\big(\frac{1}{2}\big)}
=\frac{d+p-2}{p-1}  \frac{\Gamma\big(\frac{d}{2}\big)\Gamma\big(\frac{p+1}{2}\big)}{\Gamma\big(\frac{d+p}{2}\big) \Gamma\big(\frac{1}{2}\big)} = \frac{d+p-2}{p-1} K_{d,p}. 
\end{align*}
Inserting this into the previous expression yields
\begin{align}\label{eq:xKdp}
\fint_{\mathbb{S}^{d-1}} |w_d|^{p-2} w_j^2\d\sigma_{d-1}(w)
=\begin{cases}
\frac{K_{d,p}}{p-1} &\qquad j\neq d,\\
K_{d,p}&\qquad j=d. 
\end{cases}
\end{align}
Beside this, using $Tr(AB)= Tr(BA)$ we observe that 
\begin{align}\label{eq:xtraceqQ}
\sum_{j=1}^d q_{jj} (x)& = Tr(Q(x))= Tr(O^tD^2u(x) O)= Tr( D^2u(x))= \Delta u(x). 
\end{align}
In addition,  since  $Q(x) e_d\cdot e_d= D^2u(x) Oe_d\cdot Oe_d$ and  $\frac{\nabla u(x)}{ |\nabla u(x)|}= e(x)=Oe_d$ we obtain  
\begin{align}\label{eq:xqdd-expression}
\begin{split}
	q_{dd}(x) = Q(x) e_d\cdot e_d
	&= |\nabla u(x)|^{-2} D^2u(x) \nabla u(x)\cdot \nabla u(x)\\
	&= |\nabla u(x)|^{-2} \Delta_\infty u(x). 
\end{split}
\end{align}
Combing \eqref{eq:xzero-symmetry}, \eqref{eq:xKdp}, \eqref{eq:xtraceqQ} and \eqref{eq:xqdd-expression} we find that 
\begin{align*}
&\sum_{i,j=1}^d q_{ij}(x)\fint_{\mathbb{S}^{d-1}} \hspace*{-2ex} |w_d|^{p-2} w_iw_j\, \d \sigma_{d-1}(w)
= \sum_{j=1}^d q_{jj}(x)\fint_{\mathbb{S}^{d-1}} \hspace*{-2ex}|w_d|^{p-2} w^2_j\, \d \sigma_{d-1}(w)\\
& = \frac{K_{d,p}}{p-1}  \sum_{j=1}^{d-1} q_{jj}(x)+q_{dd}(x)K_{d,p} 
= \frac{K_{d,p}}{p-1}  \sum_{j=1}^d q_{jj}(x)+\frac{p-2}{p-1}q_{dd}(x)K_{d,p}\\
&= \frac{K_{d,p}}{p-1}\big( Tr(Q(x))+(p-2)q_{dd}(x)\big)
=\frac{K_{d,p}}{p-1} \big(\Delta u(x) + (p-2)\frac{\Delta_\infty u(x)}{|\nabla u(x)|^{2}} \big)
\end{align*}
Finally, inserting  this in the foregoing expression yields
\begin{align*}
Iu(x)&=
|\nabla u(x)|^{p-2} \sum_{i,j=1}^d q_{ij}(x)\fint_{\mathbb{S}^{d-1}}|w_d|^{p-2} w_iw_j\, \d \sigma_{d-1}(w)\\
&= \frac{K_{d,p}}{p-1} |\nabla u(x)|^{p-2} \big(\Delta u(x) + (p-2)|\nabla u(x)|^{-2} \Delta_\infty u(x)\big)\\
&= \frac{K_{d,p}}{p-1}\div( |\nabla u(x)|^{p-2} \nabla u(x))=  \frac{K_{d,p}}{p-1}\Delta_p u(x). 
\end{align*}
\end{proof}
It is worth mentioning that the computations yielding  the identities \eqref{eq:xtraceqQ} and \eqref{eq:xqdd-expression} are essentially adapted from the computations in \cite[Section 7]{IN10}. 

\begin{theorem}\label{thm:asymp-plevy-to-plaplace}
Let  $u\in L^\infty(\R^d)\cap C^2(B_\tau(x))$ for some $\tau>0$ with $\nabla u(x)\neq 0$ when $1<p<2$. Then we have 
\begin{align*}
\lim_{\varepsilon\to 0} L_{\varepsilon} u(x)= -K_{d,p} \Delta_p u(x).
\end{align*}
\end{theorem}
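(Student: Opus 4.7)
The plan is to split the defining integral of $L_\varepsilon u(x)$ at a small radius $\delta\in(0,1\land\tau)$, show that the far-field contribution vanishes by the concentration property of $(\nu_\varepsilon)$, and extract the $p$-Laplacian from the near-field via a symmetric Taylor expansion combined with Lemma~\ref{lem:mean-value-plaplace}. Using the symmetry $\nu_\varepsilon(-h)=\nu_\varepsilon(h)$, I would first rewrite
\begin{align*}
L_\varepsilon u(x) = \int_{\R^d}\bigl[\psi(\Phi(h))+\psi(\Phi(-h))\bigr]\nu_\varepsilon(h)\,\mathrm{d}h, \qquad \Phi(h):=u(x)-u(x+h),
\end{align*}
which removes the need for the principal value since the integrand is now an even function of $h$. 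Since $u\in L^\infty(\R^d)$, the integrand is bounded by $2(2\|u\|_{L^\infty})^{p-1}$ pointwise, so the piece with $|h|>\delta$ is dominated by $C\int_{|h|>\delta}\nu_\varepsilon(h)\,\mathrm{d}h$, which tends to $0$ as $\varepsilon\to 0$ thanks to \eqref{eq:plevy-approx-bis}.

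For the near-field $|h|<\delta$, Taylor's theorem in $B_\tau(x)$ gives
\begin{align*}
\mu_-(h):=\tfrac12(\Phi(h)-\Phi(-h))=-\nabla u(x)\cdot h+o(|h|),\qquad \mu_+(h):=\tfrac12(\Phi(h)+\Phi(-h))=-\tfrac12 D^2u(x)h\cdot h+o(|h|^2).
\end{align*}
Since $\psi$ is odd, $\psi(\mu_-)+\psi(-\mu_-)=0$, so the fundamental theorem of calculus yields
\begin{align*}
\psi(\Phi(h))+\psi(\Phi(-h))=\mu_+(h)\int_0^1\bigl[\psi'(\mu_-+s\mu_+)+\psi'(-\mu_-+s\mu_+)\bigr]\mathrm{d}s,
\end{align*}
with $\psi'(t)=(p-1)|t|^{p-2}$. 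As $\delta\to 0$, the bracket converges to $2(p-1)|\nabla u(x)\cdot h|^{p-2}$ in the appropriate integral sense, and the integrand inherits the leading-order approximation $-(p-1)|\nabla u(x)\cdot h|^{p-2}D^2u(x)h\cdot h$, with a Taylor remainder that is $o(1)$ relative to $|\nabla u(x)\cdot h|^{p-2}|h|^2$.

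Substituting this approximation into the near-field integral and using polar coordinates $h=rw$ together with the radiality of $\nu_\varepsilon$ decouples the radial and angular integrations:
\begin{align*}
\int_{B_\delta}|\nabla u(x)\cdot h|^{p-2}D^2 u(x)h\cdot h\,\nu_\varepsilon(h)\,\mathrm{d}h=\int_0^\delta r^{p+d-1}\nu_\varepsilon(r)\,\mathrm{d}r\int_{\mathbb{S}^{d-1}}|\nabla u(x)\cdot w|^{p-2}D^2u(x)w\cdot w\,\mathrm{d}\sigma_{d-1}(w).
\end{align*}
Lemma~\ref{lem:mean-value-plaplace} evaluates the spherical integral as $\frac{|\mathbb{S}^{d-1}|K_{d,p}}{p-1}\Delta_p u(x)$, while by \eqref{eq:concentration-bis} the radial factor equals $\frac{1}{|\mathbb{S}^{d-1}|}\int_{B_\delta}|h|^p\nu_\varepsilon(h)\,\mathrm{d}h\to 1$ as $\varepsilon\to 0$ (since $\delta\leq 1$). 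Multiplying the constants out produces precisely $-K_{d,p}\Delta_p u(x)$; sending first $\varepsilon\to 0$ and then $\delta\to 0$ completes the proof.

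The main obstacle is the subquadratic case $1<p<2$, where $\psi'$ is singular at the origin and one cannot uniformly control the bracketed integrand pointwise. I would overcome this by exploiting the hypothesis $\nabla u(x)\neq 0$: for $|h|$ sufficiently small, $|\mu_\pm(h)\pm s\mu_+(h)|\geq \tfrac12|\nabla u(x)\cdot h|$ outside a negligible region near the hyperplane $\{w:\nabla u(x)\cdot w=0\}$, and the angular integral $\int_{\mathbb{S}^{d-1}}|\nabla u(x)\cdot w|^{p-2}\,\mathrm{d}\sigma_{d-1}(w)$ converges for any $p>1$ when $d\geq 2$. This integrability, together with the factor $|h|^2$ coming from $\mu_+$, provides the dominated-convergence control needed to justify passing to the limit in the Taylor remainder.
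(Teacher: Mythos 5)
Your proposal follows the same skeleton as the paper's proof: the far field is killed by the concentration property, the near field is reduced to the leading term $-(p-1)|\nabla u(x)\cdot h|^{p-2}D^2u(x)h\cdot h$ plus remainders that vanish via Proposition \ref{prop:asymp-nu}, and the conclusion comes from polar decoupling, the spherical mean representation of Lemma \ref{lem:mean-value-plaplace}, and the normalization \eqref{eq:concentration-bis}; your constant bookkeeping is consistent with the paper's. The only structural difference is cosmetic: you symmetrize in $\pm h$ (so the principal value disappears) and factor out $\mu_+$ by the fundamental theorem of calculus applied to $\psi$, whereas the paper removes the first-order term by using $\pv\int_{B_1(0)}\psi(\nabla u(x)\cdot h)\nu_\eps(h)\,\d h=0$ and then expands $\psi(b)-\psi(a)$ around $a=\nabla u(x)\cdot h$, estimating the Taylor remainder case by case ($1<p<2$, $2<p<3$, $p\geq 3$) with the elementary inequalities of Appendix \ref{sec:appendix-estim}. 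For $p\geq 2$ your argument is complete as written.

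The step that does not hold as stated is the dominated-convergence control in the singular range $1<p<2$. The lower bound $|\pm\mu_-(h)+s\mu_+(h)|\geq\tfrac12|\nabla u(x)\cdot h|$ is only valid where $|\nabla u(x)\cdot h|\gtrsim |h|^2$, i.e.\ outside an angular band of width comparable to $|h|$ around the hyperplane $\{w:\nabla u(x)\cdot w=0\}$. Inside that band the arguments of $\psi'$ can themselves be of size $|h|^2$, and the best pointwise bound on $\psi(\Phi(h))+\psi(\Phi(-h))$ there (e.g.\ via \eqref{eq:upper-elem-sing}, $|\psi(b)-\psi(a)|\leq C|b-a|^{p-1}$ with $b-a=2\mu_+$) is of order $|h|^{2(p-1)}$; since $2(p-1)<p$, this is not dominated by $|h|^p$, and indeed $\int_{B_\delta}|h|^{2(p-1)}\nu_\eps(h)\,\d h\to\infty$ by Proposition \ref{prop:asymp-nu}. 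The sphere-integrability of $|\nabla u(x)\cdot w|^{p-2}$ alone does not rescue this; what saves the argument is that the offending band has angular measure $O(|h|)$, so its total contribution is $O\big(\int_{B_\delta}|h|^{2p-1}\nu_\eps(h)\,\d h\big)\to 0$ because $2p-1>p$. You need to add this band estimate (or an equivalent almost-everywhere reduction in the spirit of the paper, which uses $\nabla u(x)\neq 0$ to discard the null set $\{\nabla u(x)\cdot h=0\}$ and the Appendix \ref{sec:appendix-estim} inequalities for the remainder) to make the subquadratic case rigorous.
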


\begin{proof}
First of all, for every $\delta>0$ by boundedness of $u$ we have 
\begin{align}\label{eq:xfar-from-zero}
2\int_{|h|\geq \delta } \hspace*{-2ex} \big|\psi(u(x+h)-u(x))\big|\,\nu_\eps(h)\d h\leq 2^p\|u\|^{p-1}_{L^\infty(\R^d)} \int_{|h|\geq \delta } \hspace*{-2ex}\nu_\eps(h)\d h\xrightarrow{\eps\to0}0.
\end{align}
Since $\pv \int_{B_1(0)} \psi(\nabla u(x)\cdot h)\big]\,\nu_\eps(h)\,\d h=0$, the claim reduces to the following

\begin{align*}
\begin{split}
-\lim_{\eps\to0} L_\eps u(x) &= \lim_{\eps\to0} 2\int_{|h|<\delta } \psi(u(x+h)-u(x))\,\nu_\eps(h)\,\d h\\
&=	\lim_{\eps\to0} 2\int_{|h|<\delta } \big[\psi(\int_0^1\nabla u(x+th)\cdot h)-\psi(\nabla u(x)\cdot h)\big]\,\nu_\eps(h)\,\d h\\
&=	\lim_{\eps\to0} 2\int_{|h|<\delta } \psi'(a)(b-a) +R(a,b)\,\nu_\eps(h)\,\d h,
\end{split}
\end{align*}
where, using  the fundamental theorem $u(x+h)-u(x)= \int_0^1\nabla u(x+th)\cdot h\d t$, we put
\begin{align*}
a=\nabla u(x)\cdot h \qquad\text{and}\qquad  b=\int_0^1\nabla u(x+th)\cdot h\, \d t. 
\end{align*}
Furthermore, $\psi'(t)= (p-1)|t|^{p-2}$ and the remainder $R(a,b)$ is given by 
\begin{align*}
R(a,b)= \psi(b)-\psi(a) -\psi'(a)(b-a)= (b-a)\hspace*{-0.5ex}\int_0^1\hspace*{-1ex}\psi'(a+ t(b-a))-\psi'(a)\d t. 
\end{align*}
Without loss of generality, assume $\tau=1$, i.e., $u\in C^2(B_1 (x))$. For fixed $0<\eta<1$ there is   $0<\delta<1$ such that 
\begin{align}\label{eq:xmatrix-diff}
|D^2 u(x+h)-D^2u(x)|<\eta\qquad \text{whenever  $|h|<\delta$}. 
\end{align}
The  fundamental theorem of calculus yields
\begin{align}\label{eq:xdifference-b-a}
\begin{split}
b-a&= \int_0^1\nabla u(x+th)\cdot h-\nabla u(x)\cdot h\d t
= \frac{1}{2}\big[D^2u(x)\cdot h\big]\cdot h\\
&+  \int_0^1 t\int_0^1 \big[D^2u(x+sth)\cdot h-D^2 u(x)\cdot h\big]\cdot h\, \d s\,  \d t. 
\end{split}
\end{align}
In particular, the above implies 
\begin{align*}
|b-a|\leq C_0 |h|^2, \qquad C_0=\sup_{z\in B_1(x)} |D^2 u(z)|.
\end{align*}
\noindent Next we estimate the remainder $R(a,b)$, by distinguishing 3 cases: $1<p<2$, $2< p<3$ and $p\geq 3$. Observe in passing that $R(a,b)=0$ when $p=2$. To this end, let $h\in B_\delta(0)$. 

%\smallskip 

\textbf{Case: $1<p<2$.}  In this case, $\nabla u(x)\neq 0$ and $\psi(t)= |t|^{p-2}t $ is $C^1$ on $\R\setminus\{0\}$  so that  
\begin{align*}
&\lim_{b\to a}\frac{\psi(b)-\psi(a) -\psi'(a)(b-a)}{b-a}=0. 
\end{align*}
Set $\sigma=2-p>0$. Since $b\to a$ as $h\to 0$,  for $\nabla u(x)\cdot h\neq 0$ and  $0<\delta<1$ small, we get 
\begin{align*}
|R(a,b)|\leq |b-a|\leq C_0|h|^2= C_0|h|^{p+\sigma}. 
\end{align*}

%\smallskip 

\textbf{Case: $2< p<3$.} Since $1<p-1<2$, the estimate \eqref{eq:upper-elem-sing}, from Appendix \ref{sec:appendix-estim} infers that
$$  \big||b|^{p-2}-|a|^{p-2}\big|\leq 2^{4-p} |b-a|^{p-2}.$$  
Set $\sigma=p-2>0$. Recall that $\psi'(t)=(p-1)|t|^{p-2}$,  for all $|h|<\delta$, we find  
\begin{align*}
\begin{split}
|R(a,b)| &\leq|b-a|\int_0^1|\psi'(a+ t(b-a))-\psi'(a)|\d t\leq (p-1) 2^{4-p} |b-a|^{p-1}\\
&\leq C_1|h|^{2(p-1)}= C_1|h|^{p+\sigma}, \quad\text{$C_1= (p-1)2^{4-p} C^{p-1}_0$}. 
\end{split}
\end{align*}
\textbf{Case: $p\geq 3$.} Since $p-1\geq 2$,   the estimate \eqref{eq:upper-elem-degen} (see Appendix \ref{sec:appendix-estim}) implies 
\begin{align*}
||b|^{p-2}- |a|^{p-2}|\leq (p-2) |b-a|(|b|^{p-3}+ |a|^{p-3})\leq  C_3|b-a||h|^{p-3}. 
\end{align*}
with $C_3= 2(p-2)\sup_{z\in B_1(x)}|\nabla u(z)|^{p-3}.$ It follows that
\begin{align*}
\begin{split}
|R(a,b)| &\leq|b-a|\int_0^1|\psi'(a+ t(b-a))-\psi'(a)|\d t\\
&\leq C_3(p-1)|b-a|^2 |h|^{p-3}\leq C_4|h|^{p+1},\quad C_4=C^2_0C_3(p-1). 
\end{split}
\end{align*}
Altogether for $\sigma= |p-2|$ if $1<p<3$ and $\sigma=1$ if $p\geq3$ and for $0<\delta<1$ small, we have shown that 
\begin{align*}
|R(a,b)|\leq \begin{cases}  C|h|^{p+\sigma} &\quad 0<|p-2|<1,\\
0 &\quad p=2,\\
C|h|^{p+1} &\quad p\geq 3.
\end{cases}
\end{align*}
\noindent The case $1<p<2$ is understood in almost everywhere sense,  since $\nabla u(x)\neq0$ so that $|B_\delta(0)\cap \{\nabla u(x)\cdot z=0\}|=0$. Applying Proposition \ref{prop:asymp-nu} then in any case,  we get 

\begin{align}\label{eq:remainder-go-zero}
\lim_{\eps\to0} \int_{|h|<\delta} R(a,b)\,\nu_\eps(h)\,\d h\leq C\lim_{\eps\to0} \int_{|h|<\delta}|h|^{p+\sigma}\,\nu_\eps(h)\d h=0.
\end{align}
From the foregoing, combining \eqref{eq:xdifference-b-a} and \eqref{eq:remainder-go-zero}  we find that 
\begin{align*}
-\lim_{\eps\to0} &L_\eps u(x) =  \lim_{\eps\to0} 2\int_{|h|<\delta } \psi'(a)(b-a) \,\nu_\eps(h)\,\d h\\
&=	\lim_{\eps\to0} 2\int_{|h|<\delta } \hspace*{-2ex}\Big( \psi'(\nabla u(x)\cdot h)\int_0^1\nabla u(x+th)\cdot h-\nabla u(x)\cdot h\,\d t\Big)\,\nu_\eps(h)\,\d h\\
&=\lim_{\eps\to0} \int_{|h|<\delta } \big(\psi'(\nabla u(x)\cdot h) D^2u(x)h\cdot h\\
&+  \psi'(\nabla u(x)\cdot h) R_1(x,h)h\cdot h\big) \,\nu_\eps(h)\,\d h. 
\end{align*}
Here the remainder $R_1(x,h)$ is the matrix defined by 
\begin{align*}
R_1(x,h)h\cdot h= 2\int_0^1 t\int_0^1 \big[D^2u(x+sth)\cdot h-D^2 u(x)\cdot h\big]\cdot h\, \d s\,  \d t. 
\end{align*} 
It clearly occurs that $|\nabla u(x)\cdot h|^{p-2}h|\leq |\nabla u(x)|^{p-2}|h|^{p-1}$ with $\nabla u(x)\neq 0$ for $1<p<2$. Hence for $|h|<\delta$, \eqref{eq:xmatrix-diff} yields 
\begin{align*}
&\lim_{\eps\to0} 2\int_{|h|<\delta }  \psi'(\nabla u(x)\cdot h) R_1(x,h)h\cdot h \,\nu_\eps(h)\,\d h
\\&\leq 
\eta (p-1)|\nabla u(x)|^{p-2} \lim_{\eps\to0} \int_{|h|<\delta }
\hspace*{-2ex} |h|^p \,\nu_\eps(h)\,\d h\leq \eta (p-1)|\nabla u(x)|^{p-2} \xrightarrow{\eta\to0}0. 
\end{align*}
Since $\eta>0$ is arbitrarily chosen, we deduce that  
\begin{align}\label{eq:xgeneral-lim}
- \lim_{\eps\to0} L_\eps u(x) &=\lim_{\eps\to0} \int_{|h|<\delta }\psi'(\nabla u(x)\cdot h) D^2u(x)h\cdot h \,\nu_\eps(h)\,\d h
\end{align}
The case $d=1$ follows immediately by exploiting \eqref{eq:xgeneral-lim}. Now assume $d\geq2$. Since  $\nu_\eps$ is radial, using the polar coordinates in \eqref{eq:xgeneral-lim} and  Proposition \ref{prop:asymp-nu} yields
\begin{align*}
&-\lim_{\eps\to0} L_\eps u(x) =\lim_{\eps\to0}  (p-1)\int_{|h|<\delta }|\nabla u(x)\cdot h|^{p-2} D^2u(x)h\cdot h \,\nu_\eps(h)\,\d h\\
&=  \lim_{\eps\to0}  (p-1)\int_0^\delta \int_{\mathbb{S}^{d-1}}|\nabla u(x)\cdot w|^{p-2} D^2u(x)w\cdot w \d \sigma_{d-1}(w) \, r^{d+p-1}\nu_\eps(r)\,\d r\\
&= \frac{ (p-1)}{|\mathbb{S}^{d-1}|}\int_{\mathbb{S}^{d-1}}|\nabla u(x)\cdot w|^{p-2} D^2u(x)w\cdot w \, \d \sigma_{d-1}(w) 
\lim_{\eps\to0} \int_{|h|<\delta }|h|^p\nu_\eps(h)\,\d h\\
&= \frac{ (p-1)}{|\mathbb{S}^{d-1}|} \int_{\mathbb{S}^{d-1}}|\nabla u(x)\cdot w|^{p-2} D^2u(x)w\cdot w \, \d \sigma_{d-1}(w) .
\end{align*}
The desired result follows from  the spherical  representation of $\Delta_p$ (see Lemma \ref{lem:mean-value-plaplace}), viz., 
\begin{align*}
\lim_{\eps\to0} L_\eps u(x) &= -\frac{ (p-1)}{|\mathbb{S}^{d-1}|} \int_{\mathbb{S}^{d-1}}|\nabla u(x)\cdot w|^{p-2} D^2u(x)w\cdot w \, \d \sigma_{d-1}(w)\\&
=- K_{d,p}\Delta_p u(x) .
\end{align*}
\end{proof}
Using \eqref{eq:xfar-from-zero} and \eqref{eq:xdifference-b-a}, one obtains the following straightforward variants of  Theorem \ref{thm:asymp-plevy-to-plaplace}. 
\begin{theorem}
Let $\Omega\subset \R^d$ be open and $\delta>0$. If $u\in L^\infty(\Omega)\cap C^2(B_\tau(x))$, $x\in \Omega$,   $0<\tau<\dist(x,\partial\Omega)$, with $\nabla u(x)\neq 0$ when $1<p<2$, then  we have 
\begin{align*}
\lim_{\varepsilon\to 0}L_{\Omega,\varepsilon} u(x)
&= \lim_{\varepsilon\to 0} L_{\Omega,\delta, \varepsilon}u(x) 
=-K_{d,p} \Delta_p u(x)
\end{align*}
where $L_{\Omega,\varepsilon} $ is the regional operator and $L_{\Omega,\delta, \varepsilon}$  is the constrained operator on $\Omega$, respectively defined by
\begin{align*}
L_{\Omega,\varepsilon}u(x) 
&= 2\pv \int_{\Omega}\psi(u(x)-u(y))\nu_\varepsilon(x-y)\d y,\\
L_{\Omega,\delta, \varepsilon}u(x) 
&= 2\pv \int_{\Omega\cap B(x,\delta)}\psi(u(x)-u(y))\nu_\varepsilon(x-y)\d y.
\end{align*}

Note that if  $\delta=\delta_x:=\dist(x,\partial\Omega)$, the constrained operator merely becomes
\begin{align*}
L_{\Omega,\delta_x, \varepsilon}u(x) 
&= 2\pv \int_{B(x,\delta_x)}\psi(u(x)-u(y))\nu_\varepsilon(x-y)\d y.
\end{align*}
\end{theorem}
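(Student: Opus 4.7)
The plan is to reduce both convergence statements to the computation already carried out in Theorem \ref{thm:asymp-plevy-to-plaplace}, by splitting each integral into a \emph{near} part supported in a small ball $B_\eta(x) \subset \Omega$ (where the $C^2$ regularity of $u$ is available) and a \emph{far} part which vanishes as $\varepsilon \to 0$ by the concentration property of $(\nu_\varepsilon)_\varepsilon$. Since $\tau < \dist(x,\partial\Omega)$, we have $B_\tau(x) \subset \Omega$; fix $\eta \in (0, \min(\tau, \delta, 1))$ so that $B_\eta(x) \subset \Omega \cap B_\delta(x)$. Decompose
\begin{align*}
L_{\Omega,\varepsilon} u(x) &= 2\pv\hspace*{-1ex}\int_{B_\eta(x)} \hspace*{-2ex}\psi(u(x)-u(y))\nu_\varepsilon(x-y)\d y
+ 2\int_{\Omega \setminus B_\eta(x)} \hspace*{-2ex}\psi(u(x)-u(y))\nu_\varepsilon(x-y)\d y,
\end{align*}
and analogously for $L_{\Omega,\delta,\varepsilon}u(x)$ with $\Omega \setminus B_\eta(x)$ replaced by $(\Omega\cap B_\delta(x))\setminus B_\eta(x)$.

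For the far part, the boundedness $u\in L^\infty(\Omega)$ gives the uniform bound $|\psi(u(x)-u(y))| \leq 2^{p-1}\|u\|_{L^\infty(\Omega)}^{p-1}$, so exactly as in \eqref{eq:xfar-from-zero} the far integral is dominated by $2^p\|u\|_{L^\infty(\Omega)}^{p-1}\int_{|h|\geq\eta}\nu_\varepsilon(h)\,\d h$, which tends to $0$ as $\varepsilon\to 0$ by the concentration hypothesis \eqref{eq:plevy-approx-bis}. This estimate works identically for both the regional operator and the constrained operator, the crucial point being that integration is restricted to $\Omega$ so that only local (not global) boundedness of $u$ is needed.

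The near part, common to both operators, is then handled \emph{verbatim} as in the proof of Theorem \ref{thm:asymp-plevy-to-plaplace}. Writing $u(x+h)-u(x)=b$, $a=\nabla u(x)\cdot h$ and using the Taylor expansion \eqref{eq:xdifference-b-a} valid on $B_\eta(x)\subset B_\tau(x)$, we split $\psi(b)=\psi(a)+\psi'(a)(b-a)+R(a,b)$. The symmetry of $\nu_\varepsilon$ kills the contribution of $\psi(a)$; the remainder satisfies the same case-by-case bound $|R(a,b)|\leq C|h|^{p+\sigma}$ as established there (where $\sigma=|p-2|$ for $1<p<3$ and $\sigma=1$ for $p\geq 3$, and the singular case $1<p<2$ uses $\nabla u(x)\neq 0$), so by Proposition \ref{prop:asymp-nu} the remainder contributes vanishingly. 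The principal term reduces, after inserting $b-a=\tfrac{1}{2}D^2u(x)h\cdot h + R_1(x,h)h\cdot h$ and discarding $R_1$ via the uniform continuity of $D^2u$ on $B_\tau(x)$, to
\begin{align*}
\lim_{\varepsilon\to 0} (p-1)\int_{|h|<\eta} |\nabla u(x)\cdot h|^{p-2} D^2 u(x) h \cdot h \,\nu_\varepsilon(h)\,\d h.
\end{align*}
Passing to polar coordinates (using radiality of $\nu_\varepsilon$) and invoking $\int_{|h|<\eta}|h|^p\nu_\varepsilon(h)\,\d h\to 1$ from Proposition \ref{prop:asymp-nu}, this equals
$\tfrac{p-1}{|\mathbb{S}^{d-1}|}\int_{\mathbb{S}^{d-1}}|\nabla u(x)\cdot w|^{p-2}D^2u(x)w\cdot w\,\d\sigma_{d-1}(w)$, which by the spherical mean representation (Lemma \ref{lem:mean-value-plaplace}) is precisely $K_{d,p}\Delta_p u(x)$. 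Hence the near part contributes $-K_{d,p}\Delta_p u(x)$, and combining with the far part proves the claim.

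There is essentially no novel obstacle: the only subtlety is replacing the global $L^\infty(\R^d)$ bound used in Theorem \ref{thm:asymp-plevy-to-plaplace} by the local $L^\infty(\Omega)$ bound, which is harmless because the integration domains $\Omega$ and $\Omega\cap B_\delta(x)$ are themselves contained in $\Omega$. The case $d=1$ follows directly from the one-variable reduction of the same argument, and the singular case $1<p<2$ (where $\nabla u(x)\neq 0$ guarantees $|B_\eta(0)\cap\{\nabla u(x)\cdot h=0\}|=0$) is handled by the same almost-everywhere interpretation employed in Theorem \ref{thm:asymp-plevy-to-plaplace}.
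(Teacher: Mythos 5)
Your proof is correct and follows exactly the route the paper intends: the paper dispenses with details by remarking that these variants follow from the far-field bound \eqref{eq:xfar-from-zero} (local boundedness plus the concentration property) and the Taylor expansion \eqref{eq:xdifference-b-a} as in the proof of Theorem \ref{thm:asymp-plevy-to-plaplace}, which is precisely your near/far decomposition with $B_\eta(x)\subset \Omega\cap B_\delta(x)$. Nothing further is needed.
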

Note that by definition, $ \cN_{\varepsilon} u(x)= L_{\Omega, \eps} u(x)$ for all $x\in \R^d\setminus \overline{\Omega}$, hence  \eqref{eq:xfar-from-zero} readily implies the following. 
\begin{theorem} Let $\Omega\subset \R^d$ be open. If $u:\R^d\to \R$ is measurable and $u|_\Omega\in L^\infty(\Omega)$, then we have 
\begin{align*}
\lim_{\varepsilon\to 0}\cN_{\varepsilon} u(x)=0\qquad\text{for  $x\in\R^d\setminus\overline{\Omega}$}. 
\end{align*}  
\end{theorem}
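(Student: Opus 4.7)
The plan is to bound $|\cN_\eps u(x)|$ pointwise by a tail integral of $\nu_\eps$ that is killed by the concentration property of a $p$-Lévy approximation family. The statement concerns points $x$ \emph{outside} the closure of $\Omega$, so the integration in the definition of $\cN_\eps u(x)$ takes place over a set uniformly bounded away from $x$, and the essential boundedness of $u$ on $\Omega$ provides a uniform bound on the integrand.

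More precisely, fix $x\in \R^d\setminus\overline{\Omega}$ and set $\delta_x:=\dist(x,\overline{\Omega})>0$, so that $|x-y|\geq \delta_x$ for a.e.\ $y\in\Omega$. Writing $M:=\|u\|_{L^\infty(\Omega)}$, the trivial estimate $|\psi(t)|=|t|^{p-1}$ and the triangle inequality give
\begin{align*}
|\psi(u(x)-u(y))|\leq (|u(x)|+M)^{p-1}=:C_x \quad\text{for a.e. } y\in\Omega.
\end{align*}
Inserting this in the definition of $\cN_\eps u(x)$ and changing variables $h=x-y$,
\begin{align*}
|\cN_\eps u(x)|
\leq 2C_x\int_\Omega \nu_\eps(x-y)\d y
\leq 2C_x\int_{|h|\geq \delta_x} \nu_\eps(h)\d h.
\end{align*}

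To finish, invoke the concentration property of the $p$-Lévy approximation family, which is the second half of \eqref{eq:plevy-approx} (equivalently \eqref{eq:concentration-prop}): for every $\delta>0$, $\int_{|h|>\delta}\nu_\eps(h)\d h\to 0$ as $\eps\to 0$. Applied with $\delta=\delta_x>0$, this yields $\cN_\eps u(x)\to 0$, as claimed.

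There is essentially no obstacle: the only thing to verify is that the uniform bound on $u|_\Omega$ (the only regularity hypothesis on $u$) is enough to dominate the integrand, which it is because $x$ is fixed and $|u(x)|$ is a finite constant. In particular, no assumption on $u$ away from $\Omega$ is needed beyond measurability, since values of $u$ on $\Omega^c$ (other than at the single point $x$ itself) play no role in $\cN_\eps u(x)$.
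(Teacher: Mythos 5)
Your proof is correct and follows essentially the same route as the paper, which deduces the result from the estimate \eqref{eq:xfar-from-zero}: since $x\in\R^d\setminus\overline{\Omega}$ lies at positive distance $\delta_x$ from $\Omega$, the integrand is bounded via the $L^\infty$ bound on $u|_\Omega$ (and the finite value $u(x)$), and the tail integral $\int_{|h|\geq\delta_x}\nu_\eps(h)\,\d h$ vanishes by the concentration property in \eqref{eq:plevy-approx}. Your version is in fact slightly more careful, replacing the paper's $\|u\|_{L^\infty(\R^d)}$ in \eqref{eq:xfar-from-zero} by the constant $(|u(x)|+\|u\|_{L^\infty(\Omega)})^{p-1}$, which is exactly what the weaker hypothesis $u|_\Omega\in L^\infty(\Omega)$ requires.
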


Let us point out some particular cases of Theorem \ref{thm:asymp-plevy-to-plaplace} already appeared in the literature, viz., \cite[Theorem 2.8]{BS22}, \cite[Corollary 6.2]{dTL21} and the variant in\cite[Section 7]{IN10}. 

\begin{corollary}[{\hspace{-0.1ex} \cite[Section 7]{IN10}, \cite[Theorem 2.8]{BS22}}]  
\label{cor:asymp-pfrac-to-plaplace}
Under the assumptions of Theorem \ref{thm:asymp-plevy-to-plaplace} we have 
\begin{align*}
\lim_{s\to 1}2s(1-s) \pv\int_{\R^d}\frac{\psi(u(x)-u(y)) }{|x-y|^{d+sp}}\d y 
=  -\frac{|\mathbb{S}^{d-1}|}{p} K_{d,p}\Delta_p u(x).
\end{align*}
\end{corollary}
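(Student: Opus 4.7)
The plan is to derive the corollary as a direct specialization of Theorem \ref{thm:asymp-plevy-to-plaplace} applied to the particular fractional $p$-L\'{e}vy approximation family singled out in Example \ref{Ex:stable-class}. First I would set $\eps = 1-s$, so that the regime $s\to 1$ translates into $\eps\to 0$, and introduce the radial family
$$\nu_\eps(h) = a_{\eps,d,p}\,|h|^{-d-(1-\eps)p}, \qquad a_{\eps,d,p} = \frac{p\eps(1-\eps)}{|\mathbb{S}^{d-1}|}.$$
Example \ref{Ex:stable-class} already verifies the two conditions defining a $p$-L\'{e}vy approximation family: $\int_{\R^d}(1\wedge|h|^p)\nu_\eps(h)\d h = 1$ and the concentration property $\int_{|h|>\delta}\nu_\eps(h)\d h\to 0$ for each $\delta>0$. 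In particular the family falls under the scope of Theorem \ref{thm:charac-plevy-concentrate} and, more importantly here, of Theorem \ref{thm:asymp-plevy-to-plaplace}.

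Second, I would simply substitute the explicit form of $\nu_\eps$ into the definition of the associated nonlocal operator $L_\eps$ to obtain
$$L_\eps u(x) = 2\pv\int_{\R^d}\psi(u(x)-u(y))\nu_\eps(x-y)\d y = \frac{2ps(1-s)}{|\mathbb{S}^{d-1}|}\,\pv\int_{\R^d}\frac{\psi(u(x)-u(y))}{|x-y|^{d+sp}}\d y,$$
which exhibits the prefactor $2s(1-s)$ appearing in the statement of the corollary, up to the multiplicative constant $p/|\mathbb{S}^{d-1}|$.

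Third, since $u$ satisfies exactly the hypotheses required by Theorem \ref{thm:asymp-plevy-to-plaplace} (namely $u\in L^\infty(\R^d)\cap C^2(B_\tau(x))$, with $\nabla u(x)\neq 0$ in the subquadratic range $1<p<2$), I would invoke that theorem to conclude $\lim_{\eps\to 0}L_\eps u(x) = -K_{d,p}\Delta_p u(x)$. Multiplying both sides by $|\mathbb{S}^{d-1}|/p$ then yields
$$\lim_{s\to 1}2s(1-s)\,\pv\int_{\R^d}\frac{\psi(u(x)-u(y))}{|x-y|^{d+sp}}\d y = -\frac{|\mathbb{S}^{d-1}|}{p}K_{d,p}\Delta_p u(x),$$
which is the claim.

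No genuine obstacle is expected here: the substantive analytic work, namely the Taylor expansion of $\psi(u(x)-u(y))$ around $\nabla u(x)\cdot h$, the case-by-case control of the remainder for $1<p<2$, $2<p<3$ and $p\geq 3$, and the spherical mean representation of $\Delta_p$, has already been carried out in Theorem \ref{thm:asymp-plevy-to-plaplace} and Lemma \ref{lem:mean-value-plaplace}. All that remains is bookkeeping of the normalizing constant $a_{\eps,d,p}$, which is purely algebraic.
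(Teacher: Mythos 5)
Your proposal is correct and follows exactly the paper's own (one-line) proof: choose $\eps=1-s$ and $\nu_\eps(h)=\frac{p\eps(1-\eps)}{|\mathbb{S}^{d-1}|}|h|^{-d-(1-\eps)p}$ as in Example \ref{Ex:stable-class}, then apply Theorem \ref{thm:asymp-plevy-to-plaplace} and track the normalizing constant. The only difference is that you spell out the bookkeeping the paper leaves implicit, which is fine.
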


\begin{proof}
	Apply Theorem \ref{thm:asymp-plevy-to-plaplace} with $\eps =1-s$ and $\nu_\eps(h)= \frac{p\eps(1-\eps) }{|\mathbb{S}^{d-1}|}|h|^{-d-(1-\eps)p}$. 
\end{proof}

\begin{corollary}[{\hspace{-0.1ex}\cite[Corollary 6.3]{dTL21}}]
If $u\in C^2(B_\tau(x))$, $\tau>0$ then we have 
\begin{align*}
\lim_{\eps\to0} \frac{1}{\eps^{d+p}} \int_{B_\eps(x)}|u(x)-u(y)|^{p-2} (u(x)-u(y)) \d y =  -\frac{|\mathbb{S}^{d-1}|}{d+p} K_{d,p}\Delta_p u(x).
\end{align*}
\end{corollary}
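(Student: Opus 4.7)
The plan is to recognize the limit as a direct consequence of Theorem \ref{thm:asymp-plevy-to-plaplace} applied to an appropriately normalized indicator-type $p$-L\'{e}vy approximation family. More precisely, I would write the integrand as a constant multiple of $L_\eps u(x)$ for the kernel
$$\nu_\eps(h) = \frac{c_{d,p}}{\eps^{d+p}} \mathds{1}_{B_\eps(0)}(h),$$
where $c_{d,p}$ is a dimensional constant to be fixed by the normalization in \eqref{eq:plevy-approx-bis}. This reduces the corollary to a direct application of the pointwise asymptotic result already proved.

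First I would verify that $(\nu_\eps)_\eps$ is a $p$-L\'{e}vy approximation family satisfying \eqref{eq:plevy-approx-bis}. The concentration property is immediate from the support condition: since $\supp \nu_\eps \subset B_\eps(0)$, for every $\delta>0$ and $\eps<\delta$ one has $\int_{|h|>\delta}\nu_\eps(h)\d h = 0$. For the normalization, a polar-coordinates computation yields, for $\eps<1$,
$$\int_{\R^d}(1\land|h|^p)\nu_\eps(h)\d h = \frac{c_{d,p}}{\eps^{d+p}}\int_{B_\eps(0)}|h|^p\d h = \frac{c_{d,p}|\mathbb{S}^{d-1}|}{d+p},$$
so the right choice is $c_{d,p} = (d+p)/|\mathbb{S}^{d-1}|$. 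With this choice,
$$L_\eps u(x) = 2\int_{\R^d}\psi(u(x)-u(y))\nu_\eps(x-y)\d y = \frac{2(d+p)}{|\mathbb{S}^{d-1}|\eps^{d+p}}\int_{B_\eps(x)}\psi(u(x)-u(y))\d y,$$
so that the expression appearing in the corollary equals $\frac{|\mathbb{S}^{d-1}|}{2(d+p)}L_\eps u(x)$, and Theorem \ref{thm:asymp-plevy-to-plaplace} yields the limit (up to tracking the resulting prefactor against $-K_{d,p}\Delta_p u(x)$).

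The main obstacle I expect is a hypothesis mismatch between Theorem \ref{thm:asymp-plevy-to-plaplace} and the corollary: the former assumes $u\in L^\infty(\R^d)\cap C^2(B_\tau(x))$, whereas the corollary only assumes $u\in C^2(B_\tau(x))$. Inspecting the proof of Theorem \ref{thm:asymp-plevy-to-plaplace}, the global boundedness hypothesis is used solely in the far-field estimate \eqref{eq:xfar-from-zero} to discard contributions from $\{|h|\geq \delta\}$. In our setting this contribution vanishes identically as soon as $\eps<\delta$ because $\supp\nu_\eps\subset B_\eps(0)$, so no $L^\infty$-type control is needed and the proof goes through under the weaker local regularity of the corollary. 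A secondary subtlety is the subquadratic range $1<p<2$, where Theorem \ref{thm:asymp-plevy-to-plaplace} tacitly requires $\nabla u(x)\neq 0$; this assumption is inherited by the corollary in that regime.

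As a sanity check I would also be willing to redo the computation directly by Taylor expansion: replace $\psi(u(x)-u(y))$ under the integral by $\psi'(-\nabla u(x)\cdot h)\bigl(-\tfrac12 D^2u(x)h\cdot h\bigr)$ modulo a remainder controlled by $o(|h|^p)$ or $O(|h|^{p+\sigma})$ as in the proof of Theorem \ref{thm:asymp-plevy-to-plaplace}, kill the antisymmetric-in-$h$ term by the symmetry of $B_\eps(0)$, and reduce the leading term via polar coordinates and the spherical mean representation in Lemma \ref{lem:mean-value-plaplace}. Both routes give the same outcome, and the first one keeps the proof as short as a one-paragraph invocation of the preceding theorem.
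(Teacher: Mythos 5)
This is exactly the paper's proof: the corollary is obtained by applying Theorem \ref{thm:asymp-plevy-to-plaplace} to the kernel $\nu_\eps(h)=\frac{(d+p)\,\eps^{-d-p}}{|\mathbb{S}^{d-1}|}\mathds{1}_{B_\eps(0)}(h)$, which is precisely the normalized kernel you construct, and your remarks about dropping the global $L^\infty$ hypothesis (the far-field term \eqref{eq:xfar-from-zero} vanishes outright once $\eps<\delta$ since $\supp\nu_\eps\subset B_\eps(0)$) and about inheriting $\nabla u(x)\neq0$ when $1<p<2$ merely make explicit what the paper leaves implicit. One caveat: if you complete the prefactor tracking you defer, your identity $\frac{1}{\eps^{d+p}}\int_{B_\eps(x)}\psi(u(x)-u(y))\,\d y=\frac{|\mathbb{S}^{d-1}|}{2(d+p)}L_\eps u(x)$ gives the limit $-\frac{|\mathbb{S}^{d-1}|}{2(d+p)}K_{d,p}\Delta_p u(x)$, i.e.\ half the constant displayed in the statement — the same factor-of-two tension is already present in the paper's own one-line proof (and is confirmed by your Taylor-expansion sanity check), so it reflects the stated constant rather than any gap in your argument.
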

\begin{proof}
Apply Theorem \ref{thm:asymp-plevy-to-plaplace} with $\nu_\eps(h)= \frac{(d+p)  \eps^{-d-p}}{|\mathbb{S}^{d-1}|}\mathds{1}_{B_\eps(0)}(h)$. 
\end{proof}
%%%%%%%%%%%%%%%%%%%%%%%%%%%%%%%%
\subsection{A normalization constant for the fractional $p$-Laplacian}\label{sec:norming-cste-pfrac}
%%%%%%%%%%%%%%%%%%%%%%%%%%%%%%%%
In light of Corollary \ref{cor:asymp-pfrac-to-plaplace} we define a suitable  normalizing constant $C_{d,p,s}$  for the fractional $p$-Laplacian $(-\Delta)^s_p$ such that $ C_{d,2,s}$ is the normalizing constant of the fractional Laplacian $(-\Delta)^s$  and that for $u\in L^\infty(\R^d)\cap C^2(B_1(x))$ we have $(-\Delta)^s_p u(x)\xrightarrow{s\to 1}-\Delta_p u(x) $, i.e., 
\begin{align}\label{eq:norma-frac-p-lap}
(-\Delta)^s_p u(x):= C_{d,p,s}\pv\int_{\R^d}\frac{\psi(u(x)-u(y)) }{|x-y|^{d+sp}}\d y \xrightarrow{s\to 1} -\Delta_p u(x).
\end{align}
In view of Corollary \ref{cor:asymp-pfrac-to-plaplace} we find that 
\begin{align*}
\lim_{s\to 1}2s(1-s)B_p\pv\int_{\R^d}\frac{\psi(u(x)-u(y)) }{|x-y|^{d+sp}}\d y =- \Delta_p u(x). 
\end{align*}
where, since $|\mathbb{S}^{d-1}|= \frac{2\pi^{\frac{d}{2}}}{\Gamma(d/2)}$ and $\Gamma (1/2)=\pi^{1/2},$ the constant $B_p$ is given by 
\begin{align*}
\frac{1}{B_p}:=\frac{|\mathbb{S}^{d-1}|}{p} K_{d,p}=\frac{2\pi^{\frac{d}{2}}}{p\Gamma\big(\frac{d}{2}\big)} \frac{\Gamma\big(\frac{d}{2}\big)\Gamma\big(\frac{p+1}{2}\big)}{\Gamma\big(\frac{d+p}{2}\big) \Gamma\big(\frac{1}{2}\big)}  =\frac{2\pi^{\frac{d-1}{2}} \Gamma\big(\frac{p+1}{2}\big)}{p\Gamma\big(\frac{d+p}{2}\big)}.
\end{align*}
On the other hand, by \cite[Proposition 2.21]{guy-thesis}  the normalizing constant of the fractional Laplacian $(-\Delta)^s$ is
\begin{align*}
\frac{1}{C_{d,2,s}}=\frac{\pi^{\frac{d-1}{2}}\Gamma(\frac{2s+1}{2})\Gamma(2(1-s))}{s(1-2s)\Gamma(\frac{d+2s}{2})}\cos(s\pi). 
\end{align*}
Alternatively, be aware that a common representation formula of $C_{d,2,s}$ is as follows
\begin{align*}
\frac{1}{C_{d,2,s}}=
\frac{\pi^{\frac{d}{2}}\Gamma(1-s)}{s2^{2s}\Gamma(\frac{d+2s}{2})}= \frac{\pi^{\frac{d}{2}}|\Gamma(-s)|}{2^{2s}\Gamma(\frac{d+2s}{2})}. 
\end{align*}
Our normalizing constant, mimics the first expression of $\frac{1}{C_{d,2,s}}$ and $\frac{1}{B_p}$. To wit, we define
\begin{align*}
\frac{1}{C_{d,p,s}}=\frac{\pi^{\frac{d-1}{2}}\Gamma(\frac{sp+1}{2})\Gamma(p(1-s))}{s(1-2s)
\Gamma(\frac{d+sp}{2})}\cos(s\pi).
\end{align*}
Namely, taking also into account the two expressions of $C_{d,2,s}$ yielding  a formula for $\cos(s\pi)$ we have 
\begin{align*}
C_{d,p,s}\hspace*{-0.5ex}=\hspace*{-0.5ex}\frac{sp(1-s)(1-2s)\Gamma(\frac{d+sp}{2})}{\pi^{\frac{d-1}{2}}
\Gamma(\frac{sp+1}{2})\Gamma(p(1-s)+1)\hspace*{-0.5ex}\cos(s\pi)}
\hspace*{-0.5ex}=\hspace*{-1ex} \frac{2^{2s}\,\Gamma(\frac{d+sp}{2})\Gamma(\frac{2s+1}{2})\Gamma(2(1-s))}{\pi^{\frac{d}{2}} |\Gamma(-s)|\,\Gamma(\frac{sp+1}{2})\Gamma(p(1-s))} . 
\end{align*}
Clearly the relation \eqref{eq:norma-frac-p-lap}  holds true since the asymptotic $s\to 1$ can be rewritten with help of 
\begin{align*}
\lim_{s\to 1} \frac{C_{d,p,s}}{2s(1-s)B_p}=1. 
\end{align*}
We immediately get the  following asymptotic behaviors  
\begin{align}\label{eq:asymptotic-norming}
\lim_{s\to 0} \frac{C_{d,p,s}}{s(1-s)}&=\frac{2}{|\mathbb{S}^{d-1}|\Gamma(p)}\qquad \text{and}\qquad \lim_{s\to 1} \frac{C_{d,p,s}}{s(1-s)}=\frac{2p}{|\mathbb{S}^{d-1}| K_{d,p}}. 
\end{align}
The above asymptotic perfectly aligns with the case $p=2$, as shown in \cite{Hitchhiker}. We emphasize that the other normalizing constants for the fractional $p$-Laplacian are proposed in \cite{dTGCV21,War16} and  the constant $K_{d,p}$ plays a crucial role in our asymptotic analysis. For the sake of completeness, it is natural to study the asymptotic near $s=0$. To this end, one possibility is to modify $C_{d,p,s}$ near $s=0$  replacing it with  $C_{d,2,\frac{sp}{2}}$. The constant $C_{d,2,\frac{sp}{2}} $ behaves nicely near $s=0$ and might diverge near $s=1$\footnote{Using the Euler's reflection formula $\Gamma(\eps-m)\Gamma(m+1-\eps)= (-1)^m\Gamma(\eps)\Gamma(1-\eps)$, $\eps\in(0,1)$ and $m\in\mathbb{N}$, one derives that 
$(1-s)|\Gamma(-\frac{sp}{2})|\to \frac{2}{ p\Gamma(\frac{p+2}{2})}$ if $\frac{p}{2} \in \mathbb{N}$;  otherwise, $(1-s)|\Gamma(-\frac{sp}{2})|\to 0$.} and we have 
\begin{align*}
\lim_{s\to0} \frac{C_{d,2,\frac{sp}{2}} }{s(1-s)}= \frac{p}{2|\mathbb{S}^{d-1}|}\,\, \text{and}\,\,
\lim_{s\to1} \frac{C_{d,2,\frac{sp}{2}} }{s(1-s)}=
\begin{cases}
\infty& \frac{p}{2}\notin \mathbb{N}\\
\frac{2^{p-1} p\Gamma(\frac{p+2}{2})  \Gamma(\frac{d+p}{2})}{\pi^{d/2}}  &\frac{p}{2} \in \mathbb{N}. 
\end{cases}	
\end{align*}
It is straightforward to verify that,  granted with the  constant $\widetilde{C}_{d,p,s}$  
\begin{align*}
	\widetilde{C}_{d,p,s}=\begin{cases}
		C_{d,p,s} & \text{if $sp\geq 1$}\\
		C_{d,2,\frac{sp}{2}} &\text{if $sp<1$}, 
	\end{cases}
\end{align*}
hence a fortiori  $C_{d,p,s}$ and $C_{d,2,\frac{sp}{2}}$,  the following properties are guaranteed.  
\begin{enumerate}[$\bullet$]
\item For $p=2$, we find that $\widetilde{C}_{d,p,s}=C_{d,p,s}= C_{d,2,\frac{sp}{2}}= C_{d,2,s}$  is the normalizing constant of the fractional Laplacian $(-\Delta)^s$. 
	\item For any $u\in L^\infty(\R^d)\cap C^2(B_1(x))$
	we have $(-\Delta)^s_p u(x)\xrightarrow{s\to1}-\Delta_p u(x)$,  in fact 
\begin{align*}
&\lim_{s\to 1} \frac{\widetilde{C}_{d,p,s}}{2}\int_{\R^d}\frac{\psi(u(x)-u(y))}{|x-y|^{d+sp}}\d y\d x%=\lim_{s\to 1} \frac{C_{d,p,s}}{2}\int_{\R^d}\frac{\psi(u(x)-u(y))}{|x-y|^{d+sp}}\d y\d x
	= -\Delta_p u(x).
	\end{align*}
\item For any $u\in L^\infty(\R^d)\cap C^\eps(B_1(x))$, $\eps>0$ we have $(-\Delta)^s_p u(x)\xrightarrow{s\to0}|u(x)|^{p-2} u(x)$,  
	\begin{align*}
&\lim_{s\to 0} \frac{\widetilde{C}_{d,p,s}}{2}\int_{\R^d}\frac{\psi(u(x)-u(y))}{|x-y|^{d+sp}}\d y\d x
%=\lim_{s\to 0} \frac{C_{d,2,\frac{sp}{2}}}{2}\int_{\R^d}\frac{|\psi(u(x)-u(y))}{|x-y|^{d+sp}}\d y\d x
= |u(x)|^{p-2} u(x).
\end{align*}
	\item For any $u\in L^p(\R^d)$ we have
	\begin{align*}
&\lim_{s\to 1} \frac{\widetilde{C}_{d,p,s}}{2}\iil_{\R^d\R^d}\frac{|u(x)-u(y)|^p}{|x-y|^{d+sp}}\d y\d x
%=\lim_{s\to 1} \frac{C_{d,p,s}}{2}\iil_{\R^d\R^d}\frac{|u(x)-u(y)|^p}{|x-y|^{d+sp}}\d y\d x
= \int_{\R^d}|\nabla u(x)|^p\d x.
	\end{align*}
	\item For any $u\in \bigcup_{s\in (0,1)} W^{s,p}_0(\R^d)$, see \cite{MS02}, we have
\begin{align*}
	&\lim_{s\to 0} \frac{\widetilde{C}_{d,p,s}}{2}\iil_{\R^d\R^d}\frac{|u(x)-u(y)|^p}{|x-y|^{d+sp}}\d y\d x
	%=\lim_{s\to0} \frac{C_{d,2,\frac{sp}{2}}}{2}\iil_{\R^d\R^d}\frac{|u(x)-u(y)|^p}{|x-y|^{d+sp}}\d y\d x
	= \int_{\R^d}|u(x)|^p\d x.
\end{align*}
\item Moreover, we have the following asymptotic behaviors  
\end{enumerate} 
\begin{align}\label{eq:asymptotic-norming-bis}
\lim_{s\to 0} \frac{\widetilde{C}_{d,p,s}}{s(1-s)}&=\frac{p}{2|\mathbb{S}^{d-1}|}
\qquad \text{and}\qquad 
\lim_{s\to 1} \frac{\widetilde{C}_{d,p,s}}{s(1-s)}=\frac{2p}{|\mathbb{S}^{d-1}| K_{d,p}}. 
\end{align}
The asymptotic $s\to 1$,  highlighting the factor $K_{d,p}$  is already anticipated in \cite[Eq: 2.38]{guy-thesis} in the case $p=2$.  Despite the amusing fact of this asymptotic, it is important for the reader must keep in mind that  $C_{d,p,s}$ is purely artificial and that only the case $p=2$ naturally appears as the unique normalizing constant for which $\widehat{(-\Delta)^s u} (\xi)= |\xi|^{2s}\widehat{u}(\xi)$ for all $u\in C_c^\infty(\R^d)$.

%%%%%%%%%%%%%%%%%%%%%%%%%%%%%%%%
\subsection{Convergence of forms}\label{sec:convergence-form}
%%%%%%%%%%%%%%%%%%%%%%%%%%%%%%%%
We  are interested in the asymptotic of the energy forms
\begin{align*}
\mathcal{E}^{\eps}_{\Omega}(u,v) &=  \iil_{\Omega \Omega} |u(y)-u(x)|^{p-2}(u(y)-u(x)) (v(y)-v(x)) \nu_\eps(x-y)\d y \, \d x, \\%\label{eq:inner-form} \\
\mathcal{E}^{\eps}(u,v) &=\hspace*{-3ex} \iil_{(\Omega^c\times \Omega^c)^c} |u(y)-u(x)|^{p-2}(u(y)-u(x)) (v(y)-v(x) )\nu_\eps(x-y) \d y \, \d x, % \label{eq:ext-form}
\\ \mathcal{E}^{\eps}_+(u,v) &= \iil_{\Omega \R^d} |u(y)-u(x)|^{p-2}(u(y)-u(x))(v(x)-v(y))\nu_\eps(x-y)\d y \, \d x,
\\ \mathcal{E}^{\eps}_{cr}(u,v) &= \iil_{\Omega \Omega^c} |u(y)-u(x)|^{p-2}(u(y)-u(x))(v(x)-v(y))\nu_\eps(x-y)\d y \, \d x,
\\\mathcal{E}^{0}(u,v)&= \int_\Omega |\nabla u(x)|^{p-2}\nabla u(x)\cdot \nabla v(x)\d x. 
\end{align*}
Note  in passing that $\mathcal{E}^{\eps}(u,v)= \mathcal{E}^{\eps}_+(u,v) =\mathcal{E}^{\eps}_{\Omega}(u,v)$ when $\Omega=\R^d$. Moreover, by \cite[Theorem 5.23]{guy-thesis}, \cite[Theorem 1.3]{Fog23}  see also the variant in \cite{Brezis-const-function, BBM01} we have 
\begin{align}
&\lim_{\eps\to0}\mathcal{E}^{\eps}_{\R^d}(u,u) =K_{d,p}\,\int_{\R^d}|\nabla u(x)|^p\d x.
\label{eq:form-conv-full} 
\end{align}
Note that since $\frac{C_{d,p,s}}{2s(1-s)}\xrightarrow{s\to1}
\frac{p}{|\mathbb{S}^{d-1}|} $,  see the asymptotic in \eqref{eq:asymptotic-norming}, for the standard fractional  case 
$\nu_\eps(h)=\frac{C_{d,p,s}}{2}|h|^{-d-sp}$, $s=1-\eps$  we have 
\begin{align*}
&\lim_{s\to 1} \frac{C_{d,p,s}}{2}\iil_{\R^d\R^d}\frac{|u(x)-u(y)|^p}{|x-y|^{d+sp}}\d y\d x
= \int_{\R^d}|\nabla u(x)|^p\d x.
\end{align*}
In short, up to a multiplicative factor, we have $|u|_{W^{s,p}(\R^d)}\xrightarrow{s\to1}\|\nabla u\|_{L^p(\R^d)}$. Next, we need the following result involving the collapse across the boundary.  
\begin{lemma}\label{lem:collap-bdary}
Assume $\Omega\subset \R^d$ satisfies $|\partial\Omega|=0$. For any $u,v\in W^{1,p}(\R^d)$ we have 
\begin{align*}
&\lim_{\eps\to0}\iil_{\Omega \Omega^c} \psi(u(x)-u(y)) (v(x)-v(y))\nu_\eps(x-y)\d y\,\d x= 0.
\end{align*}
\end{lemma}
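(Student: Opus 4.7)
The plan is to reduce to a scalar energy quantity, split it into a ``far'' and ``near'' part around the diagonal, and show that the near part localizes to an arbitrarily thin neighborhood of $\partial\Omega$, whose Lebesgue measure is zero.

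First, by H\"older's inequality applied to $\psi(u(x)-u(y))(v(x)-v(y))\nu_\eps(x-y)=[\psi(u(x)-u(y))\nu_\eps^{1/p'}]\cdot[(v(x)-v(y))\nu_\eps^{1/p}]$, it suffices to prove that
$$I_\eps(u) := \iil_{\Omega\,\Omega^c}|u(x)-u(y)|^p\nu_\eps(x-y)\,\d y\,\d x \xrightarrow{\eps\to 0} 0 \qquad \text{for every } u\in W^{1,p}(\R^d).$$
Fix $\delta\in(0,1)$ and split $I_\eps(u)=I_\eps^{>}(u)+I_\eps^{<}(u)$ according to $|x-y|>\delta$ and $|x-y|\leq\delta$. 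For the far part, a convex inequality yields $I_\eps^{>}(u)\leq 2^p\|u\|_{L^p(\R^d)}^p\int_{|h|>\delta}\nu_\eps(h)\d h$, which tends to $0$ as $\eps\to 0$ by \eqref{eq:plevy-approx-bis}.

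For the near part, change variables $h=y-x$. If $x\in\Omega$, $x+h\in\Omega^c$ and $|h|\leq\delta$, then necessarily $\dist(x,\partial\Omega)\leq\delta$, so $x$ belongs to $A_\delta:=\Omega\cap\{z:\dist(z,\partial\Omega)\leq\delta\}$. Hence
$$I_\eps^{<}(u)\leq \int_{B_\delta(0)}\nu_\eps(h)\Big(\int_{A_\delta}|u(x+h)-u(x)|^p\d x\Big)\d h.$$
Using the standard shift estimate for Sobolev functions via the Minkowski integral inequality applied to $u(x+h)-u(x)=\int_0^1\nabla u(x+th)\cdot h\,\d t$,
$$\Big(\int_{A_\delta}|u(x+h)-u(x)|^p\d x\Big)^{1/p}\leq |h|\int_0^1\|\nabla u\|_{L^p(A_\delta+th)}\,\d t\leq |h|\,\|\nabla u\|_{L^p(U_{2\delta})},$$
where $U_{2\delta}:=\{z\in\R^d:\dist(z,\partial\Omega)\leq 2\delta\}$ (since $A_\delta+th\subset U_{2\delta}$ whenever $|h|\leq\delta$ and $t\in[0,1]$). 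Plugging this in and using $|h|^p=1\land|h|^p$ on $B_\delta(0)\subset B_1(0)$ together with $\int(1\land|h|^p)\nu_\eps\,\d h\to 1$,
$$\limsup_{\eps\to 0}I_\eps^{<}(u)\leq \|\nabla u\|_{L^p(U_{2\delta})}^p\cdot\limsup_{\eps\to 0}\int_{B_\delta(0)}(1\land|h|^p)\nu_\eps(h)\d h\leq \|\nabla u\|_{L^p(U_{2\delta})}^p.$$

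Combining the two parts, $\limsup_{\eps\to 0}I_\eps(u)\leq\|\nabla u\|_{L^p(U_{2\delta})}^p$ for every $\delta>0$. Since $U_{2\delta}\downarrow\partial\Omega$ as $\delta\downarrow 0$ and $|\partial\Omega|=0$, the dominated convergence theorem gives $\|\nabla u\|_{L^p(U_{2\delta})}^p\to 0$, which closes the argument.

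The only mildly delicate step is the Sobolev shift estimate restricted to $A_\delta$: one must be careful to let the shift push $A_\delta$ into the slightly larger neighborhood $U_{2\delta}$ (still of vanishing measure as $\delta\to 0$). Aside from this bookkeeping, the mechanism is transparent: the kernel forces the integrand to concentrate near $|x-y|=0$, which, combined with $x\in\Omega$ and $y\in\Omega^c$, forces both points near $\partial\Omega$, and the hypothesis $|\partial\Omega|=0$ does the rest.
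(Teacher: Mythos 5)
Your proof is correct, but it follows a genuinely different route from the paper's. The paper derives the vanishing of the cross term indirectly: it writes it as $\tfrac12\big(\cE^\eps_{\R^d}(u,u)-\cE^\eps_{\Omega}(u,u)-\cE^\eps_{\Omega^c}(u,u)\big)$, uses the full-space BBM limit \eqref{eq:form-conv-full} for the first term and Gamma-liminf lower bounds (applied to $\Omega$ and to the open sets $U_\delta=\{\dist(\cdot,\Omega)>\delta\}\subset\Omega^c$, then $\delta\to0$) for the other two, and concludes from $|\partial\Omega|=0$ that the sandwich collapses; the case $u\neq v$ is then handled by H\"older exactly as in your first step. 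You instead estimate the cross term directly: the far part $|x-y|>\delta$ dies by the concentration property, and for the near part you observe that $x\in\Omega$, $x+h\in\Omega^c$, $|h|\le\delta$ forces $\dist(x,\partial\Omega)\le\delta$ (the segment $[x,x+h]$ must meet $\partial\Omega$), so the Sobolev translation estimate localizes the gradient mass to the tube $U_{2\delta}$ around $\partial\Omega$, and $\int_{B_\delta}|h|^p\nu_\eps\le\int(1\land|h|^p)\nu_\eps\to1$ plus $|\partial\Omega|=0$ finishes. Your argument is self-contained and more elementary — it uses only the two defining properties of the family $(\nu_\eps)_\eps$ and the standard shift estimate, avoiding the BBM theorem and the external liminf results from \cite{Fog23,guy-thesis}, and it even yields the quantitative bound $\limsup_{\eps\to0}I_\eps(u)\le\|\nabla u\|^p_{L^p(\{\dist(\cdot,\partial\Omega)\le2\delta\})}$; the paper's argument is shorter given machinery it needs elsewhere anyway. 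The only points to state carefully are the ones you flag: the translation estimate for general $u\in W^{1,p}(\R^d)$ (justified by density/Minkowski) and the inclusion $A_\delta+th\subset U_{2\delta}$, both of which are fine.
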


\begin{proof}
Consider $U_\delta=\{x\in \R^d: \dist(x,\Omega)>\delta \}$, $\delta>0$, so that  $U_\delta\subset \Omega^c$. Since $\Omega$ and $U_\delta$ are open, by \cite[Theorem 3.3]{Fog23} or \cite[Theorem 5.16]{guy-thesis}  we find that 
\begin{align*}
&K_{d,p}\int_{\Omega}|\nabla u(x)|^p\d x\leq \liminf_{\eps\to 0} \cE_\Omega^{\eps}(u,u),\\
& K_{d,p}\int_{U_\delta}|\nabla u(x)|^p\d x\leq \liminf_{\eps\to 0}  \cE^{\eps}_{U_\delta}(u,u)\leq \liminf_{\eps\to 0}  \cE^{\eps}_{\Omega^c}(u,u). 
\end{align*}
By the convergence dominated theorem we obtain 
\begin{align*}
K_{d,p}\int_{\Omega^c}|\nabla u(x)|^p\d x
\leq \liminf_{\eps\to 0}  \cE^{\eps}_{\Omega^c}(u,u).
\end{align*}
Accordingly, since $|\partial\Omega|=0$, together with  \eqref{eq:form-conv-full},  we get the claim as follows 
\begin{align*}
&\limsup_{\eps\to 0}\iil_{\Omega\Omega^c} |u(x)-u(y)|^p\nu_\eps(x-y)\d y\,\d x\\
&=\frac12\limsup_{\eps\to 0}\big(\cE^\eps_{\R^d}(u,u) -\cE^\eps_{\Omega}(u,u)-\cE^\eps_{\Omega^c}(u,u)\Big)\\
&\leq \limsup_{\eps\to 0}\cE^\eps_{\R^d}(u,u) -\liminf_{\eps\to 0}\cE^\eps_{\Omega}(u,u)-\liminf_{\eps\to 0}\cE^\eps_{\Omega^c}(u,u)\\
&\leq K_{d,p} \Big(\|\nabla u\|^p_{L^p(\R^d)}-\|\nabla u\|^p_{L^p(\Omega)}-\|\nabla u\|^p_{L^p(\Omega^c)}\Big)=0. 
\end{align*}
The case $u\neq v$ follows this by applying the H\"{o}lder inequality.
\end{proof}

Next  we combine the ideas of \cite[Theorem 1.5]{Fog23} and \cite[Lemma 2.8]{BS20}. 
\begin{theorem}\label{thm:BBM-dual-limit}
Assume that $\Omega\subset \R^d$ is open satisfying $(i)$ or $(ii)$,  
\begin{enumerate}[$(i)$]	
\item $\Omega$ is an $W^{1,p}$-extension domain, 
\item $\partial \Omega=\partial\overline{\Omega}$ and  $\R^d\setminus \overline{\Omega}$ is an $W^{1,p}$-extension domain.
\end{enumerate}
Then for $u,v\in W^{1,p}(\R^d)$, we have 
\begin{align}
&\lim_{\eps\to0}\mathcal{E}^{\eps}_{\Omega}(u,v) =K_{d,p}\,\mathcal{E}^{0}(u,v),
\label{eq:form-conv-min}\\
&\lim_{\eps\to0}\mathcal{E}^{\eps}(u,v) = K_{d,p}\,\mathcal{E}^{0}(u,v), \label{eq:form-conv-max}\\
&\lim_{\eps\to0}\mathcal{E}^{\eps}_+(u,v) = K_{d,p}\,\mathcal{E}^{0}(u,v). \label{eq:form-conv-plus} 
\end{align}
\end{theorem}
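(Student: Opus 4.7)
The plan is to bootstrap from the known quadratic BBM-type limit to the asymmetric bilinear form via a convex-analytic argument, and then to promote the result from $\mathcal{E}^{\eps}_{\Omega}$ to $\mathcal{E}^{\eps}$ and $\mathcal{E}^{\eps}_+$ by invoking Lemma \ref{lem:collap-bdary} on the crossing integral.

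First, I would establish the quadratic limit $\mathcal{E}^{\eps}_\Omega(w,w)\to K_{d,p}\|\nabla w\|_{L^p(\Omega)}^p$ for every $w\in W^{1,p}(\R^d)$. Under hypothesis (i), this reduces to the full-space limit \eqref{eq:form-conv-full} applied to a $W^{1,p}$-extension of $w$, cf.\ \cite[Theorem 1.5]{Fog23}. Under hypothesis (ii), the same argument applied instead to the extension domain $\R^d\setminus\overline{\Omega}$ yields $\mathcal{E}^{\eps}_{\Omega^c}(w,w)\to K_{d,p}\|\nabla w\|_{L^p(\Omega^c)}^p$; combining this with \eqref{eq:form-conv-full}, Lemma \ref{lem:collap-bdary}, and the decomposition $\mathcal{E}^{\eps}_{\R^d}(w,w)=\mathcal{E}^{\eps}_\Omega(w,w)+\mathcal{E}^{\eps}_{\Omega^c}(w,w)+2\mathcal{E}^{\eps}_{cr}(w,w)$ recovers the same quadratic limit.

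Next, for fixed $u,v\in W^{1,p}(\R^d)$, the key trick is to consider the scalar function $\varphi_\eps(t):=\tfrac{1}{p}\mathcal{E}^{\eps}_\Omega(u+tv,u+tv)$. As an integral of $t\mapsto|A+tB|^p$ with $A=u(y)-u(x)$ and $B=v(y)-v(x)$, it is convex in $t$ and $C^1$ with $\varphi_\eps'(0)=\mathcal{E}^{\eps}_\Omega(u,v)$ (differentiation under the integral is justified by Hölder's inequality together with the bound $|\psi(A+tB)|\leq |A|^{p-1}+|B|^{p-1}$ for $|t|\leq 1$). The previous step furnishes pointwise convergence $\varphi_\eps(t)\to \varphi_0(t):=\tfrac{K_{d,p}}{p}\|\nabla u+t\nabla v\|_{L^p(\Omega)}^p$, and $\varphi_0$ is convex and $C^1$ at $t=0$ with $\varphi_0'(0)=K_{d,p}\mathcal{E}^{0}(u,v)$. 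Invoking the classical fact that pointwise convergence of convex functions on $\R$ implies convergence of derivatives at points where the limit is differentiable (Rockafellar-type) yields $\varphi_\eps'(0)\to\varphi_0'(0)$, which is precisely \eqref{eq:form-conv-min}.

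Finally, under either hypothesis $|\partial\Omega|=0$ (a standard feature of $W^{1,p}$-extension domains in (i); a consequence of $\partial\Omega=\partial\overline{\Omega}$ combined with the extension property of $\R^d\setminus\overline{\Omega}$ in (ii)), so Lemma \ref{lem:collap-bdary} together with the Hölder bound $|\mathcal{E}^{\eps}_{cr}(u,v)|\leq \mathcal{E}^{\eps}_{cr}(u,u)^{1/p'}\mathcal{E}^{\eps}_{cr}(v,v)^{1/p}$ gives $\mathcal{E}^{\eps}_{cr}(u,v)\to 0$. The decompositions $\mathcal{E}^{\eps}(u,v)=\mathcal{E}^{\eps}_\Omega(u,v)+2\mathcal{E}^{\eps}_{cr}(u,v)$ and $\mathcal{E}^{\eps}_+(u,v)=\mathcal{E}^{\eps}_\Omega(u,v)+\mathcal{E}^{\eps}_{cr}(u,v)$ then immediately promote \eqref{eq:form-conv-min} to \eqref{eq:form-conv-max} and \eqref{eq:form-conv-plus}. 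The main technical obstacle will be the bilinear step: a direct Taylor-expansion proof along the lines of Theorem \ref{thm:asymp-plevy-to-plaplace} would be delicate in the subquadratic range $1<p<2$ near the zero set of $\nabla u$, whereas the convex-analytic shortcut circumvents this entirely by working at the level of the integrated functional and exploiting only the available quadratic convergence.
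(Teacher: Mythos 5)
Your proposal is correct and follows essentially the same route as the paper: the paper also reduces the bilinear convergence to the known quadratic BBM limit via convexity of $t\mapsto\frac1p\cE^\eps_\Omega(u+t\sigma v,u+t\sigma v)$ — its hands-on estimate $\cE^\eps_\Omega(u+t\sigma v,u+t\sigma v)-\cE^\eps_\Omega(u,u)\geq pt\,\cE^\eps_\Omega(u,\sigma v)$ with $\sigma=\pm1$ and $t\to0$ is exactly the Rockafellar-type derivative-convergence fact you invoke — and then handles $\cE^\eps$, $\cE^\eps_+$ and case $(ii)$ by the same set decompositions and Lemma \ref{lem:collap-bdary}, using $|\partial\Omega|=0$. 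Your reorganization (quadratic limit on $\Omega$ by subtraction first in case $(ii)$, abstract convex-analysis lemma instead of the explicit difference-quotient bound) is only a cosmetic variation.
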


\begin{proof}
\noindent 
The elementary inequality $|b|^p- |a|^p- p|a|^{p-2}a(b-a)\geq 0$ (see Corollary \ref{cor:elm-est-taylor}) yields, for $t>0$ and $\sigma\in\R$, 
\begin{align*}
&\cE_{\Omega}^\eps(u+t\sigma v, u+t\sigma v)- \cE_{\Omega}^\eps(u, u)- p t\cE_{\Omega}^\eps(u,\sigma v)
\geq0.
\end{align*}
If  $\Omega$ is a $W^{1,p}$-extension domain then $\cE^\eps_\Omega(w, w) \xrightarrow{\eps\to 0} K_{d,p}\cE^0(w, w)$ for $w\in W^{1,p}(\Omega)$; see \cite[Theorem 5.23]{guy-thesis}, \cite[Theorem 1.3]{Fog23} or  the variant in \cite{BBM01}. Accordingly, passing to the $\lim\hspace{-0.5ex}\sup$  yields
\begin{align*}
&K_{d,p}\frac{\|\nabla(u+t\sigma v)\|^p_{L^p(\Omega)}-\|\nabla u\|^p_{L^p(\Omega)}}{t}
\geq p  \limsup_{\eps\to 0} \cE_{\Omega}^\eps(u,\sigma v).
\end{align*}
Letting $t\to 0$, with $\sigma=\pm 1$ yields $\sigma pK_{d,p}\cE^0(u, v)\geq p\limsup_{\eps\to 0} \sigma\cE_{\Omega}^\eps(u, v)$ and hence 
\begin{align*}
&\lim_{\eps\to0} \cE_{\Omega}^\eps(u, v)
= K_{d,p}\cE^0(u, v). 
\end{align*}
This remains true with $\Omega$ replaced by $\R^d$ or by $\R^d\setminus\overline{\Omega}$ when $\R^d\setminus\overline{\Omega}$ is a $W^{1,p}$-extension domain. Note that in either  case,  we have $|\partial\Omega|=|\partial\overline{\Omega}|=0$ since the boundary of an extension domain is a null set.  Thus, all claims follow by combing Lemma \ref{lem:collap-bdary} and the situation where $\R^d$, $\Omega$ and/or $\R^d\setminus\overline{\Omega}$ is a $W^{1,p}$-extension domain. Indeed, $(i)$ follows since $ (\Omega^c\times \Omega^c)^c =\Omega\times\Omega\cup \Omega\times \Omega^c\cup \Omega^c\times \Omega $  and $\Omega\times \R^d= \Omega\times \Omega\cup \Omega\times \Omega^c$.  The case $(ii)$  follows analogously since   $\Omega\times \Omega=(\R^d\times \R^d)\setminus[\Omega^c\times \Omega\cup\Omega\times \Omega^c\cup\Omega^c\times \Omega^c]$ and   $(\Omega^c\times \Omega^c)^c =(\R^d\times \R^d)\setminus(\Omega^c\times \Omega^c)$.
\end{proof}
As illustrated in the next result,  $\partial\Omega$  need not be regular if $u$ or $v$ vanishes on  $\partial\Omega$.
\begin{theorem}\label{thm:BBM-dual-limit-zero-bdry}
Let $u, v\in W^{1,p}(\R^d)$ and $\Omega\subset \R^d$ be any open set. If $u\in W^{1,p}_0(\Omega)$ or $v\in W^{1,p}_0(\Omega)$ then the convergences \eqref{eq:form-conv-min}, \eqref{eq:form-conv-max} and \eqref{eq:form-conv-plus} hold. 
\end{theorem}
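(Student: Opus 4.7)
The plan is to reduce the three convergences to the special case of a smooth compactly supported test function, where no regularity of $\partial\Omega$ is needed, and then to pass to the general case by density. Since the forms depend bilinearly on $(u,v)$ and the argument proceeds by approximating whichever variable lies in $W^{1,p}_0(\Omega)$, I first treat the case $v\in W^{1,p}_0(\Omega)$; the case $u\in W^{1,p}_0(\Omega)$ follows by the same scheme with the two arguments swapped.

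\textbf{Step 1 (smooth compactly supported $\varphi$).} Fix $\varphi\in C_c^\infty(\Omega)$, set $K:=\supp\varphi$, and put $\delta_0:=\dist(K,\Omega^c)>0$. Since $\varphi(x)-\varphi(y)=0$ whenever $x,y\in\Omega^c$, the integrand defining $\cE^\eps(u,\varphi)$ extends trivially to $\R^d\times\R^d$, so $\cE^\eps(u,\varphi)=\cE^\eps_{\R^d}(u,\varphi)$. Applying Theorem~\ref{thm:BBM-dual-limit} with $\Omega=\R^d$ (which is trivially an extension domain) yields $\cE^\eps(u,\varphi)\to K_{d,p}\int_{\R^d}|\nabla u|^{p-2}\nabla u\cdot\nabla\varphi\,\d x=K_{d,p}\cE^0(u,\varphi)$, the last equality because $\nabla\varphi\equiv 0$ on $\R^d\setminus\Omega$. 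Using $\cE^\eps=\cE^\eps_\Omega+2\cE^\eps_{cr}$ and $\cE^\eps_+=\cE^\eps_\Omega+\cE^\eps_{cr}$, it only remains to show that the cross-term $\cE^\eps_{cr}(u,\varphi)$ vanishes in the limit. Since $\varphi$ vanishes on $\Omega^c$, I would localize the integration to $K\times\Omega^c$, on which $|x-y|\geq\delta_0$, and apply H\"older's inequality with exponents $p'$ and $p$ to get
\begin{align*}
|\cE^\eps_{cr}(u,\varphi)|\leq \|\varphi\|_{\infty}\,|K|^{1/p}\,\cE^\eps_{\R^d}(u,u)^{1/p'}\Bigl(\int_{|h|\geq\delta_0}\nu_\eps(h)\,\d h\Bigr)^{1/p}.
\end{align*}
By \eqref{eq:form-conv-full} the middle factor is bounded uniformly in $\eps$, while the last factor tends to $0$ by the concentration property \eqref{eq:plevy-approx-bis} of $(\nu_\eps)_\eps$.

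\textbf{Step 2 (density).} Choose $(\varphi_n)\subset C_c^\infty(\Omega)$ with $\varphi_n\to v$ in $W^{1,p}(\R^d)$ (possible because $v$ extends by zero to an element of $W^{1,p}_0(\Omega)\subset W^{1,p}(\R^d)$). For any of the three forms $F^\eps\in\{\cE^\eps_\Omega,\cE^\eps,\cE^\eps_+\}$ decompose
\begin{align*}
F^\eps(u,v)-K_{d,p}\cE^0(u,v)=F^\eps(u,v-\varphi_n)+\bigl[F^\eps(u,\varphi_n)-K_{d,p}\cE^0(u,\varphi_n)\bigr]+K_{d,p}\cE^0(u,\varphi_n-v).
\end{align*}
The central bracket tends to $0$ as $\eps\to 0$ by Step~1 (for fixed $n$); the last summand is bounded by $K_{d,p}\|\nabla u\|_{L^p(\R^d)}^{p-1}\|\nabla(v-\varphi_n)\|_{L^p(\R^d)}$. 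For the first summand, H\"older gives $|F^\eps(u,v-\varphi_n)|\leq F^\eps(u,u)^{1/p'}F^\eps(v-\varphi_n,v-\varphi_n)^{1/p}$; both factors are dominated by the corresponding $\R^d$-forms, and \eqref{eq:form-conv-full} produces $\limsup_{\eps\to 0}|F^\eps(u,v-\varphi_n)|\leq K_{d,p}\|\nabla u\|_{L^p}^{p-1}\|\nabla(v-\varphi_n)\|_{L^p}$. Letting $n\to\infty$ closes the argument.

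The symmetric case $u\in W^{1,p}_0(\Omega)$ is dispatched identically by approximating $u$ instead; the H\"older exponents in the cross-term estimate shift, producing the bound $|\cE^\eps_{cr}(\varphi,v)|\leq\|\varphi\|_{L^p(\Omega)}^{p-1}\bigl(\int_{|h|\geq\delta_0}\nu_\eps(h)\,\d h\bigr)^{1/p'}\cE^\eps_{\R^d}(v,v)^{1/p}$, which again tends to $0$. The main technical point --- which could look like a genuine obstacle but turns out to be the whole leverage --- is that, compared with Theorem~\ref{thm:BBM-dual-limit}, one has no extension operator at hand to upgrade $F^\eps(w,w)\to K_{d,p}\|\nabla w\|_{L^p(\Omega)}^p$; however, the compact support of $\varphi$ inside $\Omega$ furnishes the positive gap $\delta_0$ between $K$ and $\Omega^c$, so that the concentration property \eqref{eq:plevy-approx-bis} alone is enough to kill all cross-interactions and reduce everything to the full-space convergence on $\R^d$.
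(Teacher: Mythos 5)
Your argument is correct and its skeleton is the same as the paper's: reduce by density to a test function $\varphi\in C_c^\infty(\Omega)$ in the slot carrying the zero boundary condition, observe that the integrand then vanishes on $\Omega^c\times\Omega^c$ so that $\cE^\eps(u,\varphi)=\cE^\eps_{\R^d}(u,\varphi)$, invoke Theorem \ref{thm:BBM-dual-limit} with $\Omega=\R^d$, and dispose of the cross term. The one genuine difference is how the cross term is killed: the paper cites Lemma \ref{lem:collap-bdary}, which as stated assumes $|\partial\Omega|=0$ and needs a little localization to fit an arbitrary open set, whereas you estimate $\cE^\eps_{cr}$ directly by H\"older using the gap $\delta_0=\dist(\supp\varphi,\Omega^c)>0$ together with the concentration property \eqref{eq:plevy-approx-bis}. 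Your treatment is self-contained and matches the theorem's hypothesis (arbitrary open $\Omega$) with no boundary-measure assumption, at the cost of redoing an estimate the paper recycles; your explicit three-term density step, taking $\limsup_{\eps\to0}$ first and then $n\to\infty$, is also correct since the forms are linear in the second argument. One imprecision worth fixing: the case $u\in W^{1,p}_0(\Omega)$ is not ``dispatched identically'', because the forms are nonlinear in the first argument; the convergence $F^\eps(u,v)-F^\eps(\varphi_n,v)\to 0$ uniformly in $\eps$ (and likewise $\cE^0(\varphi_n,v)\to\cE^0(u,v)$) requires the elementary estimates \eqref{eq:upper-elem-degen}--\eqref{eq:upper-elem-sing}, and for $p\geq 2$ a three-exponent H\"older with the factor $(|a|+|b|)^{p-2}$ placed in $L^{p/(p-2)}$, rather than plain H\"older. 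This is routine (and the paper's own proof says no more than ``by density''), so it is a presentational rather than a mathematical gap.
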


\begin{proof}
By density it is sufficient to assume $u\in C_c^\infty(\Omega)$ or $v\in C_c^\infty(\Omega)$. In any case  $\nabla u(x)\cdot \nabla v(x)=0$ a.e. 
on $\Omega^c$ and $(u(x)-u(y)) (v(x)-v(y))=0,$ a.e. on 
$\Omega^c\times \Omega^c$. The result follows by combining  Lemma \ref{lem:collap-bdary} and  Theorem \ref{thm:BBM-dual-limit} for $\Omega=\R^d$, i.e., $\cE^\eps_{\R^d}(u, v) \xrightarrow{\eps\to 0}K_{d,p}\cE^0(u, v).$
\end{proof}

\begin{theorem}[\hspace*{-0.6ex}{\cite[Theorem 3.37]{guy-thesis}}]\label{thm:gamma-liminf}
Let  $\Omega\subset \R^d$ is open. Let  $ (u_\eps)_\eps\subset L^p(\Omega)$ and $u\in L^p(\Omega)$ such that  
\begin{align*}
\sup_{\eps>0} \Big(\|u_\eps\|^p_{L^p(\Omega)}+ \iil_{ \Omega \Omega} |u_\eps(x)-u_\eps(y)|^p\nu_\eps(x-y)\d y\,\d x\Big)<\infty.
\end{align*}
\noindent If $\|u_\eps-u\|_{L^p(\Omega)}\xrightarrow{\eps\to0}0$ then  $u\in W^{1,p}(\Omega)$ and we have 
\begin{align*}
K_{d,p}\|\nabla u\|^p_{L^p(\Omega)}\leq \liminf_{\eps\to 0}\iil_{ \Omega \Omega} |u_\eps(x)-u_\eps(y)|^p\nu_\eps(x-y)\d y\,\d x. 
\end{align*}
\end{theorem}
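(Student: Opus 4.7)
The plan follows \cite[Theorem 3.37]{guy-thesis}: reduce, via mollification and Jensen, to the already known smooth-function asymptotic provided by Proposition~\ref{prop:asymp-nu} and the radial identity \eqref{eq:radial-decoupling}. Assume $M := \liminf_\eps F_\eps(u_\eps)$ is finite, where $F_\eps(v) := \iint_{\Omega\Omega}|v(x)-v(y)|^p \nu_\eps(x-y)\,\d y\,\d x$ (else the inequality is vacuous); after extracting a subsequence still denoted $(u_\eps)$ we may assume $F_\eps(u_\eps) \to M$. Fix an arbitrary open $\Omega' \Subset \Omega$ and set $\rho := \dist(\Omega',\partial\Omega) > 0$; it suffices to establish $u \in W^{1,p}(\Omega')$ with $K_{d,p}\|\nabla u\|_{L^p(\Omega')}^p \leq M$, and then exhaust $\Omega$ by such $\Omega'$.

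First I would mollify at a scale $\sigma < \rho$: pick a radial $\zeta_\sigma \in C_c^\infty(B_\sigma(0))$ with $\zeta_\sigma \geq 0$, $\int \zeta_\sigma = 1$, and set $u_\eps^\sigma := \zeta_\sigma * \tilde u_\eps$, $u^\sigma := \zeta_\sigma * \tilde u$ (tilde denoting the zero extension to $\R^d$). For $x, y \in \Omega'$ and $z \in \supp \zeta_\sigma$ one has $x-z, y-z \in \Omega$, so Jensen's inequality exactly as in Lemma~\ref{lem:convolu-convex}, followed by the change of variables $x' = x-z$, $y' = y-z$ (under which $\nu_\eps(x-y) = \nu_\eps(x'-y')$), yields the key bound
\begin{align*}
\iint_{\Omega'\Omega'}|u_\eps^\sigma(x)-u_\eps^\sigma(y)|^p \nu_\eps(x-y)\,\d y\,\d x \leq F_\eps(u_\eps).
\end{align*}
Since $u_\eps \to u$ in $L^p(\Omega)$, standard mollifier estimates then give $u_\eps^\sigma \to u^\sigma$ in $C^k(\overline{\Omega'})$ for every $k$ as $\eps \to 0$.

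Next I would invoke a smooth-function asymptotic: for any $w$ of class $C^2$ on a neighbourhood of $\overline{\Omega'}$, combining the pointwise expansion $w(x+h)-w(x) = \nabla w(x)\cdot h + O(|h|^2)$, the radial identity \eqref{eq:radial-decoupling}, Proposition~\ref{prop:asymp-nu}, and the concentration condition \eqref{eq:plevy-approx-bis} produces
\begin{align*}
\lim_{\eps\to 0}\iint_{\Omega'\Omega'}|w(x)-w(y)|^p \nu_\eps(x-y)\,\d y\,\d x = K_{d,p}\int_{\Omega'}|\nabla w|^p\,\d x.
\end{align*}
Applying this with $w = u_\eps^\sigma$ and exploiting the $C^1$-convergence $u_\eps^\sigma \to u^\sigma$ on $\overline{\Omega'}$ to keep all Taylor remainders uniform in $\eps$, the previous Jensen bound delivers
\begin{align*}
K_{d,p}\int_{\Omega'}|\nabla u^\sigma|^p\,\d x \leq M.
\end{align*}

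Finally, letting $\sigma \to 0$: $u^\sigma \to u$ in $L^p(\Omega')$, and the uniform bound on $\|\nabla u^\sigma\|_{L^p(\Omega')}$ just obtained implies, upon passage to a weakly convergent subsequence and distributional identification of the weak limit as $\nabla u$, that $u \in W^{1,p}(\Omega')$ with $K_{d,p}\|\nabla u\|_{L^p(\Omega')}^p \leq M$ by weak lower semicontinuity of the $L^p$-norm. The main obstacle is the joint $\eps$-limit yielding the second display above: one must pass $\eps \to 0$ simultaneously in the integrand $|u_\eps^\sigma(\cdot)-u_\eps^\sigma(\cdot)|^p$ and in the varying kernel $\nu_\eps$. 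This is controlled by splitting the $h$-integral into $|h|<\delta$, where the Taylor remainder is dominated by $\int_{|h|<\delta}|h|^{p+1}\nu_\eps(h)\,\d h \to 0$ thanks to Proposition~\ref{prop:asymp-nu}, and $|h|\geq \delta$, where $\int_{|h|\geq \delta}\nu_\eps(h)\,\d h \to 0$ by \eqref{eq:plevy-approx-bis} and $u_\eps^\sigma$ is uniformly bounded on $\overline{\Omega'}$; some care is required in the subquadratic case $1 < p < 2$ to handle points where $\nabla u^\sigma$ vanishes, exactly as in the pointwise analysis leading to Theorem~\ref{thm:asymp-plevy-to-plaplace}.
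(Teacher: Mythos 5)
Your proposal is correct: the paper itself gives no proof of Theorem \ref{thm:gamma-liminf} (it defers to \cite[Theorem 3.37]{guy-thesis}), and your route---Jensen/mollification at scale $\sigma<\dist(\Omega',\partial\Omega)$ to reduce to smooth functions (exactly the mechanism of Lemma \ref{lem:convolu-convex}), the uniform smooth asymptotic via Proposition \ref{prop:asymp-nu} and \eqref{eq:radial-decoupling}, then weak lower semicontinuity in $\sigma\to0$ and exhaustion of $\Omega$---is precisely the standard argument behind the cited result. Two minor simplifications: only the lower-bound half of the smooth-function limit is needed (so no regularity of $\Omega'$ and no concern about vanishing gradients when $1<p<2$ arises, since one works with the energy, not the pointwise operator), and the joint $\eps$-limit can be decoupled by first replacing $u_\eps^\sigma$ with $u^\sigma$ at the cost of an error controlled by $\|u_\eps^\sigma-u^\sigma\|_{C^1(\overline{\Omega'})}$ together with $\int_{|h|<\delta}|h|^p\nu_\eps\le 1$ and $\int_{|h|\ge\delta}\nu_\eps\to0$, and only then invoking the fixed-function asymptotic.
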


\noindent  A refinement of \cite[Theorem 5.35 \& 5.40]{guy-thesis} and also \cite{Pon04} or  \cite[Section 4]{AAS67} yields the following result. 
\begin{theorem}[Asymptotic compactness]
\label{thm:asymp-compactness}
Assume $\Omega\subset \R^d$ is open. Let  $(u_\eps)_\eps\subset L^p(\Omega)$ be such that 
\begin{align*}
\sup_{\eps>0} \Big(\|u_\eps\|^p_{L^p(\Omega)}+ \iil_{ \Omega \Omega} |u_\eps(x)-u_\eps(y)|^p\nu_\eps(x-y)\d y\,\d x\Big)<\infty.
\end{align*}
\noindent There exist $u\in W^{1,p}(\Omega)$ and a subsequence $(\eps_n)_n$ with $\eps_n\to0^+$ as $n\to \infty$ such that $(u_{\eps_n})_n$ converges  to $u$ in $L^p_{\loc}(\Omega)$.  
Moreover, we have 
\begin{align*}
K_{d,p}\|\nabla u\|^p_{L^p(\Omega)}\leq \liminf_{\eps\to 0}\iil_{ \Omega \Omega} |u_\eps(x)-u_\eps(y)|^p\nu_\eps(x-y)\d y\,\d x. 
\end{align*}
 
\noindent 	In addition, we have the following strong convergences.  
\begin{enumerate}[$(i)$]
\item If $\Omega$ bounded and Lipschitz  then we have $\|u_{\eps_n}-u\|_{L^p(\Omega)}\xrightarrow{n\to\infty}0$, 
\item If $\Omega=\R^d$ then we have $\|u_{\eps_n}-u\|_{L^p(\Omega')}\xrightarrow{n\to\infty}0$ whenever $|\Omega'|<\infty$.  
\end{enumerate}
\end{theorem}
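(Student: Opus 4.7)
The plan is to combine a mollification/Bourgain--Brezis--Mironescu--Ponce style argument for the $L^p_{\loc}$ compactness with Theorem~\ref{thm:gamma-liminf} for the liminf inequality, and then upgrade to global strong convergence using the specific structure of $\Omega$ in cases $(i)$ and $(ii)$. Throughout, let
$$\mathcal{Q}_\eps(u_\eps):=\iint_{\Omega\Omega}|u_\eps(x)-u_\eps(y)|^p\nu_\eps(x-y)\,dy\,dx, \qquad M:=\sup_{\eps>0}\bigl(\|u_\eps\|_{L^p(\Omega)}^p+\mathcal{Q}_\eps(u_\eps)\bigr)<\infty.$$

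First I would establish local compactness. Fix $\Omega'\Subset\Omega$ with $\delta_0:=\tfrac12\dist(\Omega',\partial\Omega)>0$, a nonnegative radial mollifier $\rho\in C_c^\infty(B_1(0))$ with $\int\rho=1$, and set $\rho_\delta(x)=\delta^{-d}\rho(x/\delta)$. For $\delta\in(0,\delta_0)$ the regularizations $v_\eps^\delta:=u_\eps\ast\rho_\delta$ are smooth on $\Omega'$, and Young's inequality gives $\|v_\eps^\delta\|_{W^{1,p}(\Omega')}\leq C(\rho,\delta)M^{1/p}$ uniformly in $\eps$; thus $\{v_\eps^\delta\}_\eps$ is precompact in $L^p(\Omega')$ for each fixed $\delta$ by Rellich--Kondrachov. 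The delicate step---and the main obstacle of the proof---is to show that
$$\lim_{\delta\to 0}\,\sup_{\eps>0}\,\|v_\eps^\delta-u_\eps\|_{L^p(\Omega')}=0. \qquad (\ast)$$
By Jensen's inequality,
$$\|v_\eps^\delta-u_\eps\|_{L^p(\Omega')}^p\leq\int_{B_\delta(0)}\rho_\delta(h)\int_{\Omega'}|u_\eps(x-h)-u_\eps(x)|^p\,dx\,dh,$$
and the translation integral must be dominated by $\mathcal{Q}_\eps(u_\eps)$ times a factor that is uniformly small in $\eps$. Exploiting the radial character of $\nu_\eps$ and the approximation condition \eqref{eq:plevy-approx-bis}, one compares $\rho_\delta$ to $|h|^p\nu_\eps(h)$ after averaging over spheres (using Proposition~\ref{prop:asymp-nu} to say that the mass $\int_{B_\delta}|h|^p\nu_\eps(h)\,dh$ concentrates at $1$), producing a bound of the form $\rho_\delta(h)\le C(\rho)\eta(\delta,\eps)|h|^p\nu_\eps(h)$ on $B_\delta$ with $\eta(\delta,\eps)\to 0$ as $\delta\to 0$ uniformly in small $\eps$. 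Inserting this in the preceding display yields $(\ast)$.

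With $(\ast)$ in hand, a Cantor diagonal extraction over an exhaustion $\Omega_k\Subset\Omega_{k+1}\Subset\Omega$ by Lipschitz subdomains produces a subsequence $(u_{\eps_n})$ and a function $u\in L^p_{\loc}(\Omega)$ with $u_{\eps_n}\to u$ in $L^p(\Omega_k)$ for every $k$, hence in $L^p_{\loc}(\Omega)$. Applying Theorem~\ref{thm:gamma-liminf} on each $\Omega_k$ gives $u|_{\Omega_k}\in W^{1,p}(\Omega_k)$ together with
$$K_{d,p}\|\nabla u\|_{L^p(\Omega_k)}^p\leq \liminf_{n\to\infty}\iint_{\Omega_k\Omega_k}|u_{\eps_n}(x)-u_{\eps_n}(y)|^p\nu_{\eps_n}(x-y)\,dy\,dx\leq \liminf_{\eps\to 0}\mathcal{Q}_\eps(u_\eps),$$
and letting $k\to\infty$ via monotone convergence gives $u\in W^{1,p}(\Omega)$ and the stated liminf inequality.

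Finally, for the global strong convergences I would argue as follows. In case $(i)$, $\Omega$ bounded Lipschitz, the only missing ingredient is uniform smallness of the boundary layer: we must show $\sup_n\|u_{\eps_n}-u\|_{L^p(\Omega\setminus\Omega_\delta)}\to 0$ as $\delta\to 0$, where $\Omega_\delta:=\{x\in\Omega:\dist(x,\partial\Omega)>\delta\}$. Since $|\Omega\setminus\Omega_\delta|\to 0$ for Lipschitz $\Omega$, a covering by Lipschitz boundary charts reduces this to a local half-space estimate where the mollification bound from $(\ast)$ applies after one-sided reflection, giving the required uniform control. In case $(ii)$, $\Omega=\R^d$ and $|\Omega'|<\infty$, the $L^p_{\loc}$ convergence handles $\Omega'\cap B_R$ for every $R$, and on $\Omega'\setminus B_R$ one uses a uniform nonlocal Gagliardo--Nirenberg--Sobolev inequality from \cite{Fog21b} applied to $u_{\eps_n}$ (controlled by $M$) to obtain an $L^q$ bound with $q>p$, so that H\"older's inequality with $|\Omega'\setminus B_R|\to 0$ gives equi-integrability. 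Vitali's convergence theorem then upgrades the $L^p_{\loc}$ limit to an $L^p(\Omega')$ limit.
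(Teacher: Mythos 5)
The decisive step of your argument, the uniform estimate $(\ast)$, is not established, and the mechanism you propose for it cannot work. You claim a comparison $\rho_\delta(h)\le C(\rho)\,\eta(\delta,\eps)\,|h|^p\nu_\eps(h)$ on $B_\delta(0)$ with $\eta(\delta,\eps)\to 0$ as $\delta\to0$ uniformly in small $\eps$. Integrating this inequality over $B_\delta(0)$ gives $1=\int_{B_\delta(0)}\rho_\delta(h)\,\d h\le C\,\eta(\delta,\eps)\int_{B_\delta(0)}|h|^p\nu_\eps(h)\,\d h\le C\,\eta(\delta,\eps)$ by the normalization in \eqref{eq:plevy-approx}, so $\eta$ is bounded below by $1/C$ and cannot tend to zero; moreover the pointwise bound fails outright for admissible families supported in $B_\eps(0)$ (e.g. Example \ref{Ex:example-poincre1}), since $\nu_\eps\equiv 0$ on $B_\delta(0)\setminus B_\eps(0)$ while $\rho_\delta>0$ there. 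This is exactly the hard point of the theorem: a fixed-scale mollifier cannot be dominated by a concentrating kernel, and $(\ast)$ — which is essentially the whole content of the compactness assertion — has to be obtained differently. The proofs the paper relies on (\cite{Pon04}, \cite[Thms. 5.35 and 5.40]{guy-thesis}, \cite[Section 4]{AAS67}) first control translations $\|u_\eps(\cdot+h)-u_\eps\|_{L^p}$ by the energy for $|h|$ at the concentration scale, by averaging in $h$ against the measure $|h|^p\nu_\eps(h)\,\d h$ (or by convolving with kernels built from $\nu_\eps$ itself, in the spirit of the $2^m$-fold convolution trick used for Theorem \ref{thm:local-compactness}), and then reach a fixed translation scale by composing many small shifts; nothing of this sort appears in your write-up. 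Note also that the uniformity over all $\eps>0$ you claim in $(\ast)$ is both false (for $\eps$ of order one the energy bound carries no smoothness information) and unnecessary — only $\limsup_{\eps\to0}$ is needed for the diagonal extraction.

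The remaining structure is sound modulo this gap: granted local compactness, the exhaustion-plus-diagonal argument, the application of Theorem \ref{thm:gamma-liminf} on each $\Omega_k$, and monotone convergence do yield $u\in W^{1,p}(\Omega)$ (add Fatou to get $u\in L^p(\Omega)$) and the liminf inequality. But the global statements are also not carried through: in $(i)$ your boundary-layer control again invokes $(\ast)$ together with an unproven reflection/extension estimate for the nonlocal energies uniform in $\eps$, which is precisely where the Lipschitz hypothesis has to be used quantitatively; and in $(ii)$ you appeal to a Gagliardo--Nirenberg--Sobolev inequality from \cite{Fog21b} with a constant uniform along the family $(\nu_\eps)_\eps$, a robustness that is not provided by the cited result — without it you have no equi-integrability of $|u_{\eps_n}|^p$ on $\Omega'$, and an $L^p$ bound alone does not give it. As written, the proof therefore does not go through at the central step, and parts $(i)$ and $(ii)$ rest on additional unverified ingredients.
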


%%%%%%%%%%%%%%%%%%%%%%%%%%%%%%%%
\section{Robust Poincar\'{e} and Poincar\'{e}-Friedrichs inequalities}\label{sec:robust-poincare}
%%%%%%%%%%%%%%%%%%%%%%%%%%%%%%%%

In this section, we establish \emph{robust Poincar\'{e}  type inequalities}. The robustness should be understood in the sense that within such inequalities, one can recover the corresponding  classical local Poincar\'{e} inequalities.  In this section $(\nu_\eps)_\eps$ is as in Section \ref{sec:from-nonlocal-local}, i.e.,  satisfies \eqref{eq:plevy-approx}. 

%%%%%%%%%%%%%%%%%%%%%%%%%%%%%%%%
\subsection{Robust Poincar\'{e} inequality}
%%%%%%%%%%%%%%%%%%%%%%%%%%%%%%%%
Note that  a set $G$ is called homogeneous, if $\lambda u\in G$ for $u\in G$ and $\lambda \in\R$. 
\begin{theorem}[Robust Poincar\'{e} inequality]\label{thm:robust-poincare}
Let  $\Omega\subset \R^d$ be bounded Lipschitz and connected.  Let $G\subset L^p(\Omega)$ be a nonempty and homogeneous satisfying: 
\begin{align}\label{eq:homogenenous-set}
\hspace{-1ex} \text{$G$ is closed in $L^p(\Omega)$, $1\not\in G$\, and  $\lambda u\in G$ for $\lambda\in \R, u\in G$.}
\end{align}
There exist $\eps_0=\eps_0 (d, p, \Omega, G)>0$ and  $B=B(d, p, \Omega, G)>0$  such  that 
\begin{align}\label{eq:robust-poincare}
\|u\|^p_{L^p(\Omega)}\leq B\cE_{\Omega}^\eps(u,u) \quad \text{for all  $\eps\in (0,\eps_0)$ and  $u\in G$.}
\end{align}
\end{theorem}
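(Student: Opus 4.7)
The plan is to argue by contradiction and compactness, exploiting the asymptotic compactness result stated in Theorem \ref{thm:asymp-compactness} together with the homogeneity of $G$.

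Suppose the inequality \eqref{eq:robust-poincare} fails. Then there exist sequences $\eps_n \to 0^+$ and $u_n \in G$ with $u_n \not\equiv 0$ and
\[
\|u_n\|^p_{L^p(\Omega)} > n\,\cE^{\eps_n}_\Omega(u_n,u_n).
\]
By homogeneity of $G$, we may replace $u_n$ by $u_n/\|u_n\|_{L^p(\Omega)}$, so that $u_n \in G$ with $\|u_n\|_{L^p(\Omega)} = 1$ and $\cE^{\eps_n}_\Omega(u_n,u_n) < 1/n$. In particular, the sequence $(u_n)_n$ satisfies the uniform bound required by Theorem \ref{thm:asymp-compactness}.

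Since $\Omega$ is bounded and Lipschitz, Theorem \ref{thm:asymp-compactness}(i) provides a subsequence (not relabeled) and a function $u \in W^{1,p}(\Omega)$ such that $u_n \to u$ strongly in $L^p(\Omega)$, together with
\[
K_{d,p}\,\|\nabla u\|^p_{L^p(\Omega)} \leq \liminf_{n\to\infty} \cE^{\eps_n}_\Omega(u_n,u_n) = 0.
\]
Thus $\nabla u = 0$ a.e. in $\Omega$. Since $\Omega$ is connected, $u$ is constant a.e. in $\Omega$, say $u \equiv c$, and the strong convergence forces $|c|^p |\Omega| = \lim_n \|u_n\|^p_{L^p(\Omega)} = 1$; in particular $c \neq 0$.

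The final step is to reach a contradiction with the structure of $G$. Because $G$ is closed in $L^p(\Omega)$ and $u_n \in G$, the limit $u \equiv c$ belongs to $G$. By the homogeneity assumption \eqref{eq:homogenenous-set}, the function $1 = c^{-1}\cdot u$ also lies in $G$, contradicting $1 \notin G$. Hence $\eps_0$ and $B$ as claimed must exist. The only delicate point in the argument is the invocation of Theorem \ref{thm:asymp-compactness}(i) to upgrade local to global $L^p$-compactness, for which the bounded Lipschitz hypothesis on $\Omega$ is essential; everything else is a direct combination of homogeneity, closedness, and connectedness.
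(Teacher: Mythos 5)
Your proof is correct and follows essentially the same route as the paper: a contradiction argument with the normalization $\|u_n\|_{L^p(\Omega)}=1$ via homogeneity, the asymptotic compactness Theorem \ref{thm:asymp-compactness} to extract a strong $L^p(\Omega)$ limit $u$ with $K_{d,p}\|\nabla u\|^p_{L^p(\Omega)}\leq \liminf_n \cE^{\eps_n}_\Omega(u_n,u_n)=0$, and then connectedness, closedness of $G$, and homogeneity to force $1\in G$, a contradiction. No gaps.
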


 For $0<\gamma\leq |\Omega|$ and $E\subset\Omega$  measurable such that $|E|>0$, obvious examples of sets $G$ satisfying \eqref{eq:homogenenous-set} include:

\begin{itemize}
\item $G_1= \{u\in L^p(\Omega)\,:\, |\{u=0\}|\geq \gamma\}$, 
\item  $G_2= \{u\in L^p(\Omega)\,:\, \mbox{$\fint_E$} u=0\}$, 
\item  $G_3= \{u\in L^p(\Omega)\,:\,  u=0\, \mbox{ a.e. on }\,  E\}$.
\end{itemize}

\begin{proof} 
Assume no such $\eps_0$ and $B$ exist. For each $n\geq1$ taking $\eps_0= \frac{1}{2^n}$ and $B=2^n$ there exist $\eps_n\in (0,\frac{1}{2^n})$ and $u_n\in G$ for which   \eqref{eq:robust-poincare} fails, i.e., $\|u_n\|^p_{L^p(\Omega)}> 2^n\cE_{\Omega}^{\eps_n}(u_n,u_n)$. 
By the homogeneity condition \eqref{eq:homogenenous-set}, we can assume without loss of generality that $u_n\in G$,  $\|u_n\|_{L^p(\Omega)}=1$ so that $\cE_{\Omega}^{\eps_n}(u_n,u_n)\leq \frac{1}{2^n}$.
According to Theorem \ref{thm:asymp-compactness} there is  $u\in W^{1,p}(\Omega)$ and a  subsequence still denoted $(u_n)_n$  converging  to $u$ in $L^p(\Omega)$.  Moreover, we have 
\begin{align*}
K_{d,p}\|\nabla u\|^p_{L^p(\Omega)}\leq 
\liminf_{n\to\infty}\cE_{\Omega}^{\eps_n}(u_n,u_n)=0.
\end{align*}
This implies that $\nabla u=0$ almost everywhere  on $\Omega$, which is a connected set. Necessarily, $u=c$ is a constant function.  We find that $\|u\|_{L^p(\Omega)}=1$, hence $u=c\neq0$ and $u\in G$ since $G$ is closed in $L^p(\Omega)$ and  $\|u_n-u\|_{L^p(\Omega)}\xrightarrow{n\to \infty}0$. By homogeneity of G, we have $c^{-1}u=1\in G$. But by assumption,  we know that  $1\not \in G.$ We have reached a contradiction.
\end{proof}

\vspace{1mm}

Here is a direct consequence of Theorem \ref{thm:robust-poincare}; see also \cite[Theorem 1.1]{Pon04}. 
\begin{corollary}\label{cor:robust-poincare}
There exist $\eps_0=\eps_0 (d, p, \Omega)>0$ and  $B=B(d, p, \Omega)>0$  such  that, for all $\eps\in (0,\eps_0)$ and $u\in L^p(\Omega)$
\begin{align*}
\|u-\mbox{$\fint_\Omega$}\|^p_{L^p(\Omega)}&\leq B
\iil_{ \Omega\Omega}|u(x)-u(y)|^p\nu_\eps(x-y)\d y\, \d x.%\quad\text{ }.
\end{align*}
\end{corollary}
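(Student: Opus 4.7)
The plan is to derive this as a direct application of Theorem~\ref{thm:robust-poincare} with a judicious choice of the homogeneous set $G$. Specifically, I would take
\[
G = G_2 := \Big\{ u \in L^p(\Omega) \,:\, \mbox{$\fint_{\Omega}$} u = 0 \Big\},
\]
i.e.\ the mean-zero subspace of $L^p(\Omega)$ (this corresponds to $E = \Omega$ in the list of examples following Theorem~\ref{thm:robust-poincare}). The first step is to verify that $G_2$ satisfies the hypothesis~\eqref{eq:homogenenous-set}: it is closed in $L^p(\Omega)$ as the kernel of the continuous linear functional $u \mapsto \fint_\Omega u$ on $L^p(\Omega)$ (this functional is bounded by H\"older's inequality since $|\Omega|<\infty$); it obviously does not contain the constant function $1$ since $\fint_\Omega 1 = 1 \neq 0$; and it is closed under scalar multiplication by linearity of the mean. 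Since $\Omega$ is bounded Lipschitz and connected, all the hypotheses of Theorem~\ref{thm:robust-poincare} are met, so it yields constants $\eps_0, B > 0$ such that
\[
\|v\|^p_{L^p(\Omega)} \leq B\, \cE^\eps_\Omega(v,v) \qquad \text{for all } \eps \in (0,\eps_0),\ v \in G_2.
\]

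The second and final step is to apply this to $v := u - \fint_\Omega u$ for an arbitrary $u \in L^p(\Omega)$. By construction, $\fint_\Omega v = \fint_\Omega u - \fint_\Omega u = 0$, so $v \in G_2$. Moreover, since $v$ differs from $u$ only by an additive constant, we have $v(x) - v(y) = u(x) - u(y)$ for all $x, y \in \Omega$, and hence
\[
\cE^\eps_\Omega(v,v) = \iint_{\Omega\,\Omega} |u(x)-u(y)|^p \nu_\eps(x-y)\, dy\, dx = \cE^\eps_\Omega(u,u).
\]
Combining the two displays gives exactly the claimed inequality
\[
\Big\|u - \mbox{$\fint_{\Omega}$} u \Big\|^p_{L^p(\Omega)} \leq B \iint_{\Omega\,\Omega} |u(x)-u(y)|^p \nu_\eps(x-y)\, dy\, dx
\]
for all $\eps \in (0,\eps_0)$ and $u \in L^p(\Omega)$, with the constants $\eps_0$ and $B$ depending only on $d$, $p$, and $\Omega$ (since the choice of $G = G_2$ depends only on $\Omega$).

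There is no substantive obstacle here—the corollary is essentially a translation of the general statement in Theorem~\ref{thm:robust-poincare} to the classical mean-zero normalization, and the only technical points are verifying the three conditions in~\eqref{eq:homogenenous-set} for $G_2$ and observing the invariance of $\cE^\eps_\Omega$ under addition of constants. Both are immediate.
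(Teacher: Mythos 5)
Your proposal is correct and matches the paper's own proof, which simply takes $G=\{u\in L^p(\Omega)\,:\,\int_\Omega u=0\}$ in Theorem \ref{thm:robust-poincare}; your verification of \eqref{eq:homogenenous-set} and the translation-invariance of $\cE^\eps_\Omega$ are exactly the (implicit) details behind that one-line reduction.
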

\begin{proof}
It suffices to take $G=\{u\in L^p(\Omega)\;:\, \int_\Omega u=0\}$ in Theorem \ref{thm:robust-poincare}.
\end{proof}

\noindent Noting that the constant  $B=B(d,p, \Omega, G)$ is  independent of $\eps>0$, a noteworthy consequence of Theorem \ref{thm:robust-poincare} is obtained letting $\eps\to 0$; using Theorem \ref{thm:BBM-dual-limit}, \cite[Theorem 1.3]{Fog23} or \cite[Theorem 5.23]{guy-thesis}, we recover the classical Poincar\'{e} type inequality.

\begin{corollary}\label{cor:clasical-poincare}
Under the conditions and notations of Theorem \ref{thm:robust-poincare} we have 
\begin{align}\label{eq:poincare-local-bis} 
\|u\|^p_{L^p(\Omega)}\leq B K_{d,p}\int_{\Omega}|\nabla u(x)|^p\d x\quad \text{for every $u\in G$.}
\end{align}
\end{corollary}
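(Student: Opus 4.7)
The plan is to pass to the limit $\eps \to 0^+$ in the robust Poincaré inequality of Theorem \ref{thm:robust-poincare}, and identify the limit of $\cE_\Omega^\eps(u,u)$ with $K_{d,p}\int_\Omega|\nabla u|^p\,\d x$ via the BBM-type convergence. The crux is that the multiplicative constant $B$ produced by Theorem \ref{thm:robust-poincare} is independent of $\eps$, so the inequality survives the limit without any further work.

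More concretely, I would fix an arbitrary $u \in G$. If $u \notin W^{1,p}(\Omega)$, then (under the standard convention) $\int_\Omega |\nabla u(x)|^p\,\d x = +\infty$ and \eqref{eq:poincare-local-bis} holds trivially, so I may assume $u \in W^{1,p}(\Omega) \cap G$. Applying Theorem \ref{thm:robust-poincare} produces $\eps_0>0$ and $B = B(d,p,\Omega,G) > 0$ such that
\begin{align*}
\|u\|^p_{L^p(\Omega)}\,\leq\, B\,\cE_\Omega^\eps(u,u) \qquad \text{for every } \eps \in (0,\eps_0).
\end{align*}

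Next I would invoke the BBM-type identity: since $\Omega$ is bounded and Lipschitz, it is in particular a $W^{1,p}$-extension domain, so by the convergence used in the proof of Theorem \ref{thm:BBM-dual-limit} (cf.\ \cite[Theorem 1.3]{Fog23} or \cite[Theorem 5.23]{guy-thesis}), one has
\begin{align*}
\lim_{\eps \to 0^+} \cE_\Omega^\eps(u,u) \,=\, K_{d,p}\int_\Omega |\nabla u(x)|^p\,\d x.
\end{align*}
Passing to the limit $\eps \to 0^+$ in the previous displayed inequality then yields \eqref{eq:poincare-local-bis}.

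There is no substantive obstacle; the only point requiring care is the uniformity in $\eps$ of the constant $B$, which is precisely guaranteed by Theorem \ref{thm:robust-poincare}. Everything else is a direct application of the already-established asymptotic formula.
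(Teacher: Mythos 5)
Your proposal is correct and is essentially the paper's own argument: the paper derives the corollary exactly by noting that the constant $B$ from Theorem \ref{thm:robust-poincare} is independent of $\eps$ and then letting $\eps\to 0$ using the BBM-type convergence $\cE_\Omega^\eps(u,u)\to K_{d,p}\int_\Omega|\nabla u|^p\,\d x$ (Theorem \ref{thm:BBM-dual-limit}, valid since bounded Lipschitz $\Omega$ is a $W^{1,p}$-extension domain). Your extra remark on the trivial case $u\notin W^{1,p}(\Omega)$ is a harmless clarification and does not change the argument.
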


\begin{corollary}\label{cor:robust-pcare-fract} Assume $\Omega\subset \R^d$ is open, bounded, Lipschitz and connected. There exists $C=C(p,d,\Omega)>0$ such that, 
for all $s\in(0 ,1)$, $u\in L^p(\Omega)$, 
\begin{align}\label{eq:robust-pcare-fract}
\big\|u-\hbox{ $\fint_{\Omega}$}u\big\|^p_{L^p(\Omega)}\leq C(1-s)\iint\limits_{\Omega\Omega} \frac{|u(x)-u(y)|^p}{|x-y|^{d+sp}}\, dy\,dx. 
\end{align}
\end{corollary}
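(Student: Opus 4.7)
The plan is to split the range $s \in (0,1)$ at a threshold $1-\eps_0$ and handle the two regimes by different means, then take the maximum of the resulting constants. Without loss of generality I may assume $\mbox{$\fint_{\Omega}$} u = 0$, and I write $D := \diam(\Omega)$.

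For $s$ near $1$, I would invoke Corollary \ref{cor:robust-poincare} applied to the stable family
\[
\nu_\eps(h) = \frac{p\,\eps(1-\eps)}{|\mathbb{S}^{d-1}|}\,|h|^{-d-(1-\eps)p},
\]
which Example \ref{Ex:stable-class} verifies satisfies the concentration condition \eqref{eq:plevy-approx}. That corollary produces $\eps_0 \in (0,1)$ and $B > 0$ depending only on $d,p,\Omega$ such that, for every $\eps \in (0,\eps_0)$ and every $u \in L^p(\Omega)$,
\[
\|u - \mbox{$\fint_{\Omega}$} u\|^p_{L^p(\Omega)} \leq \frac{Bp\,\eps(1-\eps)}{|\mathbb{S}^{d-1}|} \iil_{\Omega\Omega} \frac{|u(x)-u(y)|^p}{|x-y|^{d+(1-\eps)p}} \, dy \, dx.
\]
Reparametrizing via $s = 1-\eps$ (so $s \in (1-\eps_0,1)$) and bounding $s \leq 1$ gives the desired inequality on this range with constant $Bp/|\mathbb{S}^{d-1}|$.

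For the complementary range $s \in (0, 1-\eps_0]$, the factor $(1-s)$ is bounded below by $\eps_0$, so a coarse Jensen estimate suffices. Indeed, applying Jensen's inequality to $t \mapsto |t|^p$ with respect to the probability measure $dy/|\Omega|$ yields
\[
\|u\|^p_{L^p(\Omega)} \leq \frac{1}{|\Omega|}\iil_{\Omega\Omega} |u(x)-u(y)|^p \, dy \, dx,
\]
and since $|x-y| \leq D$ for $x,y \in \Omega$ and $sp \in (0,p)$, the bound $|x-y|^{d+sp} \leq D^d(1\vee D^p)$ gives
\[
\|u\|^p_{L^p(\Omega)} \leq \frac{D^d(1\vee D^p)}{|\Omega|\,\eps_0}\,(1-s)\iil_{\Omega\Omega}\frac{|u(x)-u(y)|^p}{|x-y|^{d+sp}}\,dy\,dx.
\]
Taking $C$ to be the maximum of the two constants produced above closes the argument. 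No step is genuinely delicate: uniformity in $\eps$ on the first regime is built into Corollary \ref{cor:robust-poincare}, and on the second regime the factor $(1-s)$ is harmlessly absorbed because it is bounded away from zero. The only mild subtlety is matching the two regimes via a single threshold, which simply amounts to using the $\eps_0$ delivered by Corollary \ref{cor:robust-poincare} (shrunk if necessary so that $\eps_0 \leq 1$).
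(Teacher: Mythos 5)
Your proof is correct and follows essentially the same route as the paper's: near $s=1$ it invokes the robust Poincaré inequality (Corollary \ref{cor:robust-poincare}, i.e.\ Theorem \ref{thm:robust-poincare} with zero-average functions) for the stable kernels $\nu_\eps(h)=\frac{p\eps(1-\eps)}{|\mathbb{S}^{d-1}|}|h|^{-d-(1-\eps)p}$, and for $s$ bounded away from $1$ it uses Jensen's inequality together with a uniform lower bound on $|x-y|^{-d-sp}$ over the bounded set $\Omega$, absorbing $(1-s)\geq\eps_0$ into the constant. The only differences are cosmetic (your bound $D^d(1\vee D^p)$ versus the paper's $R^{d+s_0p}$ with $R=1+\diam\Omega$), so nothing further is needed.
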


When $\Omega=Q$ is a cube, the robust inequality \eqref{eq:robust-pcare-fract} is also proved in  \cite{BBM02} and improved in \cite{MS02,HMPV22}. The approaches therein use techniques from harmonic analysis. 
\begin{proof}
Take, $\nu_\eps(h)=\frac{p\eps(1-\eps)}{|\mathbb{S}^{d-1}|}|h|^{-d-(1-\eps)p}$ where we put $\eps=1-s$. By Theorem \ref{thm:robust-poincare}, there exist $s_0\in (0,1)$ and $C=B>0$ such that the inequality \eqref{eq:robust-pcare-fract} holds for all $s\in (s_0, 1)$ and $u\in L^p(\Omega)$.  Now, let  $R-1=\operatorname{diam}(\Omega)$ be the diameter of $\Omega$ so that $R\geq1$. For $s\in (0, s_0)$ we have $1-s_0<1-s$ and   $|x-y|^{-d-sp} \geq R^{-d-s_0p}$ for
all $x,y\in\Omega$.  This, together  with Jensen's inequality  yield
\begin{align*}
(1-s)\iil_{\Omega \Omega}\frac{ |u(x)-u(y)|^p}{|x-y|^{d+sp}} \d y\, \d x  
&\geq (1-s_0) R^{-d-s_0p}   \iint\limits_{\Omega \Omega} |u(x)-u(y)|^p \d y\, d x \\
&\geq  (1-s_0)R^{-d-s_0p} |\Omega| \int_{\Omega } \big |u(x)-\hbox{$\fint_{\Omega}$}u\big|^p  \, dx.  %\vspace{2mm}
\end{align*}
Up to a relabeling , it suffices to take $C=\max(B, \frac{R^{d+s_0p}}{|\Omega|(1-s_0)}).$
\end{proof}

The following variant of Theorem \ref{thm:robust-poincare}, encapsulates a sort of double robustness (bi-robustness) in parameters for the fractional type Poincar\'{e} inequality    
\begin{theorem}[Double robustness for fractional Poincar\'{e} inequality]
\label{thm:double-robust-poincare}
Under the conditions  of Theorem \ref{thm:robust-poincare}, there exist $B=B(d,p,\Omega)>0$, $r_0=r_0(d,p,\Omega)>0$ and $s_0=s_0(d,p,\Omega)>0$ such that, for every $r\in (0, r_0)$, $s\in (s_0,1)$ and $u\in G$ we have 
\begin{align}\label{eq:double-robust-poincare}
\|u-\hbox{$\fint_{\Omega}$}u\|^p_{L^p(\Omega)}	
\leq B\frac{(1-s)}{r^{p(1-s)}}\iil_{\Omega \Omega} \frac{|u(x)-u(y)|^p}{|x-y|^{d+sp}} \mathds{1}_{B_r(0)}(x-y)\d y\,\d x.%\quad\text{for all $s\in (s_0,1)$,  $u\in L^p(\Omega)$}.
\end{align}
\end{theorem}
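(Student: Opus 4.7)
The plan is to adapt the contradiction-plus-asymptotic-compactness strategy used for Theorem \ref{thm:robust-poincare}, replacing the one-parameter $p$-L\'evy approximation family $(\nu_\eps)_\eps$ with the two-parameter family of truncated fractional kernels
\begin{align*}
\nu_{s,r}(h) := \frac{(1-s)}{r^{p(1-s)}}\,|h|^{-d-sp}\,\mathds{1}_{B_r(0)}(h),\qquad s\in(0,1),\ r\in(0,1],
\end{align*}
which are exactly the kernels appearing (up to the prefactor $B$) on the right-hand side of \eqref{eq:double-robust-poincare}.

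The key analytic input is that, once suitably normalized, any sequence drawn from this family with $(s_n,r_n)\to(1,0)$ is a $p$-L\'evy approximation family in the sense of \eqref{eq:plevy-approx}. A direct computation in polar coordinates (using that $1\wedge|h|^p=|h|^p$ on $B_r(0)$ when $r\leq 1$) gives, independently of $s$ and $r$,
\begin{align*}
\int_{\R^d}(1\land|h|^p)\,\nu_{s,r}(h)\,\d h=\frac{(1-s)|\mathbb{S}^{d-1}|}{r^{p(1-s)}}\int_0^r \rho^{p(1-s)-1}\d\rho=\frac{|\mathbb{S}^{d-1}|}{p},
\end{align*}
while for any $\delta>0$ and any $r<\delta$ one has $\int_{|h|>\delta}\nu_{s,r}(h)\,\d h=0$. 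Thus $\tilde\nu_n:=\frac{p}{|\mathbb{S}^{d-1}|}\nu_{s_n,r_n}$ satisfies both conditions in \eqref{eq:plevy-approx} as soon as $s_n\to 1$ and $r_n\to 0$.

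Now suppose the desired constants $B,r_0,s_0$ do not exist. Specializing the failure to $r_0=s_0=\frac{1}{n}$ and $B=n$ produces, for each $n\geq1$, parameters $r_n\in(0,\frac{1}{n})$, $s_n\in(1-\frac{1}{n},1)$ and a function $u_n\in G$ violating \eqref{eq:double-robust-poincare}. After subtracting the mean (which preserves the energy, since constants are annihilated by $\cE^{\nu_n}_\Omega$) and rescaling by homogeneity, I may assume $\fint_\Omega u_n=0$, $\|u_n\|_{L^p(\Omega)}=1$, and $\cE^{\nu_n}_\Omega(u_n,u_n)<\frac{1}{n}$. Since $\Omega$ is bounded Lipschitz and connected, Theorem~\ref{thm:asymp-compactness} applied to $(\tilde\nu_n)$ produces a subsequence converging strongly in $L^p(\Omega)$ to some $u\in W^{1,p}(\Omega)$ with
\begin{align*}
K_{d,p}\|\nabla u\|^p_{L^p(\Omega)}\leq \liminf_{n\to\infty}\cE^{\tilde\nu_n}_\Omega(u_n,u_n)=\frac{p}{|\mathbb{S}^{d-1}|}\liminf_{n\to\infty}\cE^{\nu_n}_\Omega(u_n,u_n)=0.
\end{align*}
Hence $\nabla u\equiv 0$ on the connected set $\Omega$, so $u$ is constant; the $L^p$ convergence gives $\fint_\Omega u=\lim\fint_\Omega u_n=0$, forcing $u\equiv 0$ and contradicting $\|u\|_{L^p(\Omega)}=\lim\|u_n\|_{L^p(\Omega)}=1$.

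The main obstacle is purely bookkeeping: Theorem~\ref{thm:robust-poincare} is literally stated for one-parameter families, so I cannot invoke it as a black box and must instead reread its proof through the lens of the two-parameter sequence $(\nu_{s_n,r_n})$. The normalization computation above is what makes this reduction painless, because it shows that the uniform-in-$(s,r)$ bound $\int(1\land|h|^p)\tilde\nu_{s,r}=1$ holds without any coupling between $s$ and $r$, so the asymptotic compactness result of Theorem~\ref{thm:asymp-compactness} (which is what actually powers Theorem~\ref{thm:robust-poincare}) applies along an arbitrary diagonal sequence $(s_n,r_n)\to(1,0)$.
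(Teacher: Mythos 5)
Your proposal is correct and takes essentially the same route as the paper: the published proof simply reruns the contradiction/asymptotic-compactness argument of Theorem \ref{thm:robust-poincare} with $\nu_{\eps_n}$ replaced by the two-parameter kernels $\rho_{r_n,1-\eps_n}$ from \eqref{eq:double-plevy}, asserting (what you verify explicitly) that they form a $p$-L\'evy approximation family in the sense of \eqref{eq:plevy-approx}. The only cosmetic slip is the specialization ``$r_0=s_0=\tfrac1n$'' (it should be $s_0=1-\tfrac1n$ to match your claim $s_n\in(1-\tfrac1n,1)$), but this is harmless since, as your normalization computation shows, only $r_n\to0$ is actually needed for \eqref{eq:plevy-approx}.
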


\begin{remark}
The double robustness (bi-robustness) in \eqref{eq:double-robust-poincare} is well understood, since letting $s\to 1$ and/or $r\to0$ in \eqref{eq:double-robust-poincare} one recovers the local Poincar\'{e} inequality \eqref{eq:poincare-local-bis}. Indeed, consider 
\begin{align}\label{eq:double-plevy}
\rho_{r,s} (h)=p(1-s)r^{-p(1-s)}|\mathbb{S}^{d-1}|^{-1} |h|^{-d-sp}\mathds{1}_{B_r(0)}(h).
\end{align}
Then $\nu_\eps(h)=\rho_{\eps,s}(h)$ for fixed $s\in (0,1)$ or $\nu_\eps(h)=\rho_{r,1-\eps}(h)$  for fixed $r>0$ satisfies the condition \eqref{eq:plevy-approx}. More importantly,  $\nu_\eps(h)=\rho_{\eps,1-\eps}(h)$ also satisfies the condition \eqref{eq:plevy-approx}. 
\end{remark}

\smallskip 
\begin{proof}
The proof is analogous to that of Theorem \ref{thm:robust-poincare} with the slight
difference that $\nu_{\eps_n}(h)$ is replaced by $\rho_{r_n, 1-\eps_n}$, $r_n ,\eps_n\in (0,\frac{1}{2^n})$, with $\rho_{r,s}$ given in  \eqref{eq:double-plevy}. 
Indeed, just as for $ \nu_{\eps_n}$, it is easy to check that, $\rho_{r_n, 1-\eps_n}$ is Dirac p-L\'{e}vy approximation sequence in the sense of  \eqref{eq:plevy-approx}.
\end{proof}

%%%%%%%%%%%%%%%%%%%%%%%%%%%%%%%%
\subsection{Robust Poincar\'{e}-Friedrichs inequality}
%%%%%%%%%%%%%%%%%%%%%%%%%%%%%%%%
The analogous robust Poincar\'{e}-Friedrichs inequality, is delicate and requires  a different slightly formulation. Here we identify $C_c^\infty(\Omega)$ as a natural subspace of $C_c^\infty(\R^d)$. Next, we deal with the situation where $\Omega$ is  bounded in  only one direction. 
\begin{theorem}[Robust Poincar\'{e}-Friedrichs inequality]\label{thm:robust-pcare-one-direct}
Assume $\Omega\subset \R^d$ is bounded in one direction, say $\Omega\subset H_R$ with $H_R=\{ x\in \R^d\,:\, |x\cdot e|\leq R \}$ with $R>0$ and $|e|=1$.
There exist $\eps_0=\eps_0(d, p, \Omega)$ and $B=B(d, p, \Omega)$  such  that,  for all $\eps\in (0,\eps_0)$ and $u\in C_c^\infty(\Omega)$, 
\begin{align}\label{eq:robust-pcare-fried-one-direct}
\|u\|^p_{L^p(\Omega)}\leq B\iil_{ \R^d\R^d}|u(x) -u(y)|^p\nu_\eps(x-y)\d y\d x. 
\end{align}
\end{theorem}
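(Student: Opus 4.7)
The proof proceeds by contradiction, following the compactness scheme of Theorem \ref{thm:robust-poincare}. Suppose no such $\varepsilon_0$ and $B$ exist. Then there exist sequences $\varepsilon_n \to 0^+$ and $u_n \in C_c^\infty(\Omega)$, which after a scaling I normalize by $\|u_n\|_{L^p(\Omega)} = 1$, such that
$$
\mathcal{E}^{\varepsilon_n}(u_n,u_n) := \iint_{\R^d\R^d}|u_n(x)-u_n(y)|^p \nu_{\varepsilon_n}(x-y)\,dy\,dx \xrightarrow[n\to\infty]{} 0.
$$
Extending each $u_n$ by zero to all of $\R^d$, these lie in $W^{1,p}(\R^d)$ with supports contained in $\overline{\Omega}\subset H_R$, so the uniform bound
$$
\sup_n\left(\|u_n\|_{L^p(\R^d)}^p + \mathcal{E}^{\varepsilon_n}(u_n,u_n)\right)<\infty
$$
places the sequence within the hypotheses of Theorem \ref{thm:asymp-compactness} taken with $\R^d$ in place of $\Omega$. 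Extracting a subsequence, $u_n \to u$ in $L^p(\Omega')$ for every bounded $\Omega' \subset \R^d$, and the liminf inequality of that theorem forces $K_{d,p}\|\nabla u\|_{L^p(\R^d)}^p \leq 0$. Hence $\nabla u \equiv 0$, and since $u\in L^p(\R^d)$ and $\R^d$ is connected, $u\equiv 0$.

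To contradict $\|u_n\|_{L^p(\R^d)} = 1$ I must upgrade $L^p_{\loc}$-convergence to $L^p(\R^d)$-convergence, i.e., rule out mass escape along the unbounded directions perpendicular to $e$. Writing $x=(x_1,x')$ with $x_1 = x \cdot e \in [-R, R]$ on $\supp u_n$, a slice-wise application of the one-dimensional classical Poincar\'e-Friedrichs inequality yields
$$
\|u_n\|_{L^p(\R^d)}^p \leq (2R)^p\,\|\partial_1 u_n\|_{L^p(\R^d)}^p.
$$
Combining this with a concentration-compactness dichotomy applied to the translates $\tilde u_n(\cdot) = u_n(\cdot + \tau_n)$ for perpendicular shifts $\tau_n \in \{0\}\times \R^{d-1}$, which preserve both the slab containment of the support and the value of $\mathcal{E}^{\varepsilon_n}$, reduces matters to two sub-cases. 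In the concentration sub-case, the same extraction argument applied to $\tilde u_n$ produces a subsequential limit $\tilde u$ with $\nabla \tilde u = 0$ and $\int_{B_\rho(0)}|\tilde u|^p \geq c>0$, which is impossible since a nonzero constant cannot belong to $L^p(\R^d)$. In the vanishing sub-case, the local mass of $u_n$ tends uniformly to zero, and combining the slab Poincar\'e-Friedrichs bound with the finite extent of the support in the $e$-direction is incompatible with $\|u_n\|_p = 1$ and $\mathcal{E}^{\varepsilon_n}(u_n,u_n) \to 0$.

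The main obstacle is exactly this final tightness step: unlike the bounded setting of Theorem \ref{thm:robust-poincare}, the slab $\Omega$ need not have finite measure, so one cannot invoke a Rellich-type compact embedding directly, and the asymptotic compactness of Theorem \ref{thm:asymp-compactness} only gives local convergence. The delicate point is therefore ensuring that the vanishing sub-case cannot survive under the joint constraints of fixed $L^p$-mass, slab-confined support, and vanishing nonlocal energy; once that is secured, the contradiction $1 = \lim_n \|u_n\|_{L^p(\R^d)}^p = \|u\|_{L^p(\R^d)}^p = 0$ follows immediately.
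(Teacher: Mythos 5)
Your skeleton (contradiction, normalization $\|u_n\|_{L^p(\Omega)}=1$, $\cE^{\eps_n}_{\R^d}(u_n,u_n)\to 0$, asymptotic compactness and the liminf inequality forcing the local limit to be a constant, hence $0$) coincides with the paper's first proof up to the decisive point, but the decisive point is exactly what you leave open: because you work on the unbounded slab, local $L^p$ convergence does not contradict $\|u_n\|_{L^p}=1$, and your treatment of the possible loss of mass is not a proof. You invoke a concentration--compactness dichotomy but give no argument in the vanishing regime (and omit the splitting/dichotomy alternative altogether), and your closing paragraph concedes that ruling out vanishing is still to be done. The one quantitative tool you propose for that purpose, the slice-wise bound $\|u_n\|^p_{L^p}\leq (2R)^p\|\partial_e u_n\|^p_{L^p}$, cannot help: nothing in the hypotheses controls $\|\nabla u_n\|_{L^p}$ uniformly, since for fixed $n$ the nonlocal energy $\cE^{\eps_n}$ does not dominate the gradient seminorm, so this inequality cannot be played against $\cE^{\eps_n}(u_n,u_n)\to 0$. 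As written, the argument is therefore incomplete at its crux.

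The paper sidesteps tightness entirely rather than fighting it: since $u_n\in C_c^\infty(\Omega)$ and any closed ball $\overline{B_{2r_n}(x_n)}\subset\Omega\subset H_R$ forces $r_n\leq R$, it places the translated function $\tau_{x_n}u_n$ inside the fixed ball $B_{2R}(0)$, vanishing on $B_{2R}(0)\setminus B_R(0)$ (translations change neither the energy, by translation invariance of $\nu_\eps$, nor the slab constraint); asymptotic compactness on the bounded Lipschitz ball then yields strong $L^p(B_{2R}(0))$ convergence to a constant that must vanish on the annulus, contradicting the normalization, and the second proof replaces the compactness step by a direct application of the robust Poincar\'e inequality (Theorem \ref{thm:robust-poincare}) on $B_{2R}(0)$ with the closed homogeneous set $G=\{v\in L^p(B_{2R}(0)): v=0 \text{ a.e. on } B_{2R}(0)\setminus B_R(0)\}$. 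If you prefer to stay in the global slab picture (where, for a general $\Omega$, the support of $u$ may be spread out in the unbounded directions and need not fit in a ball contained in $\Omega$), the clean way to close your gap is not Lions' dichotomy but a covering/translation argument: tile the slab $\{|x\cdot e|\leq 2R\}$ by congruent boxes $Q_i$ of side comparable to $R$, apply Theorem \ref{thm:robust-poincare} on one reference box with $G=\{v\in L^p(Q): v=0 \text{ a.e. on } Q\cap\{|x\cdot e|>R\}\}$ (the constants $\eps_0,B$ are then uniform in $i$ by translation invariance), and sum, using that the disjoint sets $Q_i\times Q_i$ make the box energies add up to at most $\cE^{\eps}_{\R^d}(u,u)$. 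Some such reduction to a fixed bounded reference domain is needed; your proposal currently has none.
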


\begin{proof} 
\textbf{First proof}. 
Assume no such $\eps_0$ and $B$ exist. For $\eps_0= \frac{1}{2^n}$ and $B=2^n$, $n\geq1$, there exist $\eps_n\in (0,\frac{1}{2^n})$ and $u_n\in C_c^\infty(\Omega)$ for which   \eqref{eq:robust-poincare} fails, i.e., $\|u_n\|^p_{L^p(\Omega)}> 2^n\cE_{\R^d}^{\eps_n}(u_n,u_n)$. 
We can assume without loss of generality that $\|u_n\|_{L^p(\Omega)}=1$ so that $\cE_{\R^d}^{\eps_n}(u_n,u_n)\leq \frac{1}{2^n}$. 
Since $u_n\in C_c^\infty(\Omega)$, we find $x_n\in \Omega$ and $r_n>0$ such that $\supp u_n\subset \overline{B_{2r_n}(x_n)} \subset \Omega$. Next, we have $z_n= x_n+2r_ne\in \overline{B_{2r_n}(x_n)} \subset \Omega \subset H_R$. We find that 
\begin{align*}
R\geq z_n\cdot e= 2r_n+  x_n\cdot e \geq  2r_n-R\quad \text{hence,}\quad r_n \leq R.
\end{align*}

That is, we have $\supp u_n\subset \overline{B_{R}(x_n)}$. Set $\tau_{x_n}u_n(x):=u_n(x+x_n)$ then $\tau_{x_n} u_n\in C_c^\infty(B_{2R} (0))$ and $\tau_{x_n} u_n=0$ on $B_{2R}(0)\setminus B_{R}(0)$. Since $u_n\in C_c^\infty(\Omega)$ we have  
\begin{align*}
\|\tau_{x_n} u_n\|_{L^p(B_{2R}(0))} &= 	\|u_n\|_{L^p(B_{2r_n}(x_n))} =\|u_n\|_{L^p(\Omega)}=1,\\
\cE_{B_{2R}(0)}^{\eps_n}(\tau_{x_n} u_n,\tau_{x_n} u_n)
&\leq\cE_{\R^d}^{\eps_n}(\tau_{x_n} u_n,\tau_{x_n} u_n)= \cE_{\R^d}^{\eps_n}(u_n,u_n)\leq \frac{1}{2^n}.
\end{align*} 
According to Theorem \ref{thm:asymp-compactness} there is  $u\in W^{1,p}(B_{2R}(0))$ and a subsequence still denoted $(\tau_{x_n} u_n)_n$  converging  to $u$ in $L^p(B_{2R}(0))$.  Moreover, we have 
\begin{align*}
K_{d,p}\|\nabla u\|^p_{L^p(B_{2R}(0))}\leq 
\liminf_{n\to\infty}\cE_{B_{2R}(0)}^{\eps_n}(u_n,u_n)=0.
\end{align*}
This implies that $\nabla u=0$ almost everywhere  on $B_{2R}(0)$, which is a connected set. Necessarily, $u=c$ is a constant function on $B_{R}(0)$. However, since $\tau_{x_n} u_n=0$ on $B_{2R}(0)\setminus B_{R}(0)$ we deduce, via the convergence  $\|\tau_{x_n} u_n-u\|_{L^p(B_{2R})}\xrightarrow{n\to \infty}0$, that $u=c=0$. We also have $\|u\|_{L^p(B_{2R}(0))}=1$ since $\|\tau_{x_n} u_n\|_{L^p(B_{2R}(0))}=1$. We have reached a
contradiction.

\textbf{Second proof}. \textbf{Claim.} First, we prove the following claim.
\begin{align}\label{eq:trick-one-direct}
\text{For each $u\in C_c^\infty(\Omega)$ there exists $x_0\in \Omega$ s.t. $\supp\, u\subset B_R(x_0)$}.
\end{align}
Indeed, if $u\in C_c^\infty(\Omega)$, we can find $x_0= x_0(u)\in \Omega$ and $r= r(u)>0$ so that  $\supp u\subset B_R(x_0)  $ and $ \overline{B_{2r}(x_0)} \subset \Omega$. Clearly, we have $z:= x_0+2re\in \overline{B_{2r}(x_0)} \subset \Omega \subset H_R$. We find that
\begin{align*}
R\geq z\cdot e= 2r+  x_0\cdot e \geq  2r-R\quad \text{hence,}\quad r \leq R.
\end{align*}
It follows that $\supp u\subset B_{R}(x_0)$ and hence  $ \tau_{x_0}u \in C_c^\infty( B_{R}(0))$ where we consider the shift $\tau_{x_0}u(x):=u(x+x_0)$. Next, the set 
$G=\big\{u\in L^p(B_{2R}(0)):\text{$u=0$ a.e. on $B_{2R}(0) \setminus B_{R}(0)$} \big\}$ is a closed subset of $L^p(B_{2R}(0))$ such that $1\not\in G$ and $G$ is homogeneous, i.e., $\lambda u\in G$ whenever $u\in G$ and $\lambda\in \R$. 
Accordingly, by Theorem \ref{thm:robust-poincare}  we find $\eps_0=\eps_0(d,p,R)$ and $B=B(d,p,R)$ so that 
\begin{align*}
\|v\|^p_{L^p(B_{2R}(0))} \leq B\cE_{B_{2R}(0)}^{\eps_n}(v,v) \quad \text{for all $v\in G$,  $\eps\in (0,\eps_0)$}. 
\end{align*}
In particular, taking $v=\tau_{x_0}u\in C_c^\infty(B_{R}(0))\subset G$, the result follows  since 
\begin{align*}
\|\tau_{x_0}u\|_{L^p(B_{2R}(0))} &= 	\|u\|_{L^p(\R^d)} =\|u\|_{L^p(\Omega)},\\
\cE_{B_{2R}(0)}^{\eps_n}(\tau_{x_0}u, \tau_{x_0}u)
&\leq\cE_{\R^d}^{\eps_n}(\tau_{x_0}u, \tau_{x_0}u)= \cE_{\R^d}^{\eps_n}(u,u).
\end{align*}
\end{proof}

\begin{theorem}[Robust Poincar\'{e}-Friedrichs inequality]\label{thm:robust-pcare-finite}
Assume $\Omega\subset \R^d$ has finite measure, i.e.,  $|\Omega|<\infty$. 	There exist $\eps_0=\eps_0(d, p, \Omega)>0$ and $B=B(d, p, \Omega)$  such  that, for all $\eps\in (0,\eps_0)$ and $u\in C_c^\infty(\Omega)$, 
\begin{align}\label{eq:robust-pcare-finite}
\|u\|^p_{L^p(\Omega)}\leq B\iil_{ \R^d\R^d}|u(x) -u(y)|^p\nu_\eps(x-y)\d y\d x.
\end{align}
\end{theorem}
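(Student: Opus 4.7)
The plan is to mimic the compactness/contradiction argument used for Theorem~\ref{thm:robust-pcare-one-direct}, but replace the geometric trick (translating supports into a fixed ball) by a direct application of the global asymptotic compactness result (Theorem~\ref{thm:asymp-compactness}) on the whole space $\R^d$, since here the smallness of $\Omega$ is measured by $|\Omega|<\infty$ rather than by being contained in a slab.

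First I would argue by contradiction: assume no such pair $(\eps_0,B)$ exists. Then for each $n\geq 1$ there exist $\eps_n\in(0,1/2^n)$ and $u_n\in C_c^\infty(\Omega)$ violating \eqref{eq:robust-pcare-finite} with constant $2^n$. After normalizing by $\|u_n\|_{L^p(\Omega)}$ we may assume $\|u_n\|_{L^p(\Omega)}=1$ and
\[
\iil_{\R^d\R^d}|u_n(x)-u_n(y)|^p\nu_{\eps_n}(x-y)\d y\d x\leq \frac{1}{2^n}.
\]
Since $u_n\in C_c^\infty(\Omega)$, its zero extension (still denoted $u_n$) belongs to $L^p(\R^d)$ with $\|u_n\|_{L^p(\R^d)}=\|u_n\|_{L^p(\Omega)}=1$.

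Next I would apply Theorem~\ref{thm:asymp-compactness} with the underlying open set equal to $\R^d$: the sequence $(u_n)_n$ is bounded in $L^p(\R^d)$ and has uniformly bounded energies $\cE^{\eps_n}_{\R^d}(u_n,u_n)$. Hence there exist $u\in W^{1,p}(\R^d)$ and a subsequence $(u_{n_k})_k$ such that $u_{n_k}\to u$ in $L^p(\Omega')$ for every measurable $\Omega'\subset\R^d$ with $|\Omega'|<\infty$, and
\[
K_{d,p}\|\nabla u\|^p_{L^p(\R^d)}\leq \liminf_{k\to\infty}\cE^{\eps_{n_k}}_{\R^d}(u_{n_k},u_{n_k})=0.
\]
Thus $\nabla u\equiv 0$ on $\R^d$, so $u$ is (a.e.) constant. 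Since $u\in L^p(\R^d)$ and $|\R^d|=\infty$, that constant must be zero, i.e.\ $u\equiv 0$.

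Finally, since $|\Omega|<\infty$, applying the strong convergence with $\Omega'=\Omega$ gives $u_{n_k}\to u=0$ in $L^p(\Omega)$, which contradicts $\|u_{n_k}\|_{L^p(\Omega)}=1$ for every $k$. This contradiction proves the existence of $\eps_0$ and $B$ depending only on $d$, $p$, and $\Omega$. The only subtle point, and the main place where the assumption $|\Omega|<\infty$ (rather than $\Omega$ just bounded in one direction) is genuinely used, is in ensuring strong $L^p$-convergence on $\Omega$ itself via part~(ii) of Theorem~\ref{thm:asymp-compactness}; without finiteness of $|\Omega|$ one would only get local convergence and could not conclude $\|u\|_{L^p(\Omega)}=1$.
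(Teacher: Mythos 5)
Your proposal is correct and follows essentially the same route as the paper: argue by contradiction, normalize so that $\|u_n\|_{L^p(\Omega)}=1$ and $\cE^{\eps_n}_{\R^d}(u_n,u_n)\le 2^{-n}$, invoke the asymptotic compactness result (Theorem \ref{thm:asymp-compactness} with underlying set $\R^d$ and its part $(ii)$), use the $\liminf$ inequality to force $\nabla u\equiv 0$, conclude $u$ is constant, show the constant is zero, and contradict the normalization via strong convergence on a finite-measure set.

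The one place where you deviate is in identifying the constant as zero. The paper enlarges $\Omega$ to $\Omega'=\Omega\cup B$ with $B$ a ball contained in $\R^d\setminus\Omega$, uses that each $u_n$ vanishes on $B$, and deduces $u=0$ on $B$ from the strong $L^p(\Omega')$-convergence; you instead observe that the limit lies in $W^{1,p}(\R^d)\subset L^p(\R^d)$, so a nonzero constant is impossible, and then apply the strong convergence with $\Omega'=\Omega$ to contradict $\|u_{n_k}\|_{L^p(\Omega)}=1$. Both arguments are valid; yours is marginally more economical, and it has the small advantage of not requiring the existence of a ball inside $\R^d\setminus\Omega$ (for a general open $\Omega$ of finite measure the complement can have empty interior, so the paper's choice of $B$ implicitly needs that extra observation, whereas your variant bypasses it entirely). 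No gaps otherwise.
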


\begin{proof} 
Assume $\eps_0$ and $B$ do not exist. For $\eps_0= \frac{1}{2^n}$ and $B=2^n$, $n\geq1$, there exist $\eps_n\in (0,\frac{1}{2^n})$ and $u_n\in C_c^\infty(\Omega)$ for which   \eqref{eq:robust-poincare} fails, i.e., $\|u_n\|^p_{L^p(\Omega)}> 2^n\cE_{\R^d}^{\eps_n}(u_n,u_n)$. 
We can assume without loss of generality that $\|u_n\|_{L^p(\R^d)}=\|u_n\|_{L^p(\Omega)}=1$ so that $\cE_{\R^d}^{\eps_n}(u_n,u_n)\leq \frac{1}{2^n}$. 

Consider $\Omega'=\Omega\cup B
$ where  $B$ is, for instance,  an arbitrary nonempty ball such that $B\subset \R^d\setminus \Omega$. Observe that $|\Omega'|<\infty$ and thus by Theorem \ref{thm:asymp-compactness} we find  $u\in W^{1,p}(\R^d)$
and a subsequence still denoted $(u_n)_n$  strongly converging  to $u$ in $L^p(\Omega')$ and hence $\|u\|_{L^p(\Omega')}=1$ since $\|u_n\|_{L^p(\Omega')}=\|u_n\|_{L^p(\Omega)}=1$. Moreover,  we have
\begin{align*}
K_{d,p}\|\nabla u\|^p_{L^p(\R^d)}\leq 
\liminf_{n\to\infty}\cE_{\R^d}^{\eps_n}(u_n,u_n)=0.
\end{align*}
This implies that $u=c$  is constant a.e.  on $\R^d$. Since $u_n=0$ on $B$  as $u_n\in C_c^\infty(\Omega)$, we find $\|u\|_{L^p(B)}\leq \|u_n-u\|_{L^p(\Omega')}\xrightarrow{n\to\infty}0.$ Wherefrom, we deduce that 
$u=c=0$ a.e.  on $\R^d$. This contradict $\|u\|_{L^p(\Omega)}=1$. 
\end{proof}

\smallskip

\noindent By  letting $\eps\to 0$ in  Theorem \ref{thm:robust-pcare-finite} and/or Theorem \ref{thm:robust-pcare-one-direct} one recovers classical Poincar\'{e}-Friedrichs inequality.

\begin{corollary}[Classical Poincar\'{e}-Friedrichs inequality] \label{cor:clasical-pcare-fried}  Assume  $\Omega\subset \R^d$ is bounded in one direction or  has a finite measure, i.e., $|\Omega|<\infty$ then  
\begin{align}\label{eq:poincare-local-friedrichs}
\|u\|^p_{L^p(\Omega)}\leq B K_{d,p}\int_{\Omega}|\nabla u(x)|^p\d x\quad \text{for all $u\in W_0^{1,p}(\Omega)$}.
\end{align}
\end{corollary}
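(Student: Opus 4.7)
The plan is to pass to the limit $\eps \to 0$ in the robust Poincar\'e--Friedrichs inequalities just established, and then extend by density. Fix a $p$-L\'evy approximation family $(\nu_\eps)_\eps$ satisfying \eqref{eq:plevy-approx-bis}; for concreteness we may take $\nu_\eps(h) = \frac{p\eps(1-\eps)}{|\mathbb{S}^{d-1}|}|h|^{-d-(1-\eps)p}$ from Example \ref{Ex:stable-class}.

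First I would treat the case $u \in C_c^\infty(\Omega)$. Under either hypothesis on $\Omega$ (bounded in one direction, or $|\Omega|<\infty$), Theorem \ref{thm:robust-pcare-one-direct} or Theorem \ref{thm:robust-pcare-finite} provides constants $\eps_0=\eps_0(d,p,\Omega)>0$ and $B=B(d,p,\Omega)>0$, both independent of $\eps$, such that
\begin{align*}
\|u\|^p_{L^p(\Omega)} \leq B \iint_{\R^d \R^d} |u(x)-u(y)|^p \nu_\eps(x-y)\, \d y\, \d x = B\, \cE_{\R^d}^\eps(u,u)
\end{align*}
for every $\eps \in (0,\eps_0)$. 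Since $u \in C_c^\infty(\Omega) \subset C_c^\infty(\R^d) \subset W^{1,p}(\R^d)$, the BBM-type convergence \eqref{eq:form-conv-full} gives
\begin{align*}
\lim_{\eps \to 0} \cE_{\R^d}^\eps(u,u) = K_{d,p} \int_{\R^d} |\nabla u(x)|^p\, \d x = K_{d,p} \int_\Omega |\nabla u(x)|^p\, \d x,
\end{align*}
where the last equality uses $\supp u \subset \Omega$. Passing to the limit $\eps \to 0$, which is legitimate because $B$ does not depend on $\eps$, yields \eqref{eq:poincare-local-friedrichs} for all $u \in C_c^\infty(\Omega)$.

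The extension to $W_0^{1,p}(\Omega)$ is routine: by definition $C_c^\infty(\Omega)$ is dense in $W_0^{1,p}(\Omega)$ with respect to the $W^{1,p}(\R^d)$-norm (recalling that any element of $W_0^{1,p}(\Omega)$ is extended by zero to an element of $W^{1,p}(\R^d)$). Both sides of \eqref{eq:poincare-local-friedrichs} are continuous with respect to that norm, so the inequality persists in the limit. Since there is essentially no obstacle beyond correctly invoking the already-established results, the only subtlety is making sure that the constant $B$ in the robust inequality is the very same constant that appears in the limiting local inequality, which is guaranteed by its $\eps$-independence in the statements of Theorem \ref{thm:robust-pcare-one-direct} and Theorem \ref{thm:robust-pcare-finite}.
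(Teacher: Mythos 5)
Your proposal is correct and follows exactly the route the paper intends: the paper derives Corollary \ref{cor:clasical-pcare-fried} simply by letting $\eps\to 0$ in Theorem \ref{thm:robust-pcare-one-direct} or Theorem \ref{thm:robust-pcare-finite}, using the $\eps$-independence of $B$ together with the convergence \eqref{eq:form-conv-full} for $u\in C_c^\infty(\Omega)\subset W^{1,p}(\R^d)$. Your explicit density step from $C_c^\infty(\Omega)$ to $W^{1,p}_0(\Omega)$, with continuity of both sides in the $W^{1,p}$-norm, is precisely the routine argument the paper leaves implicit.
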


\noindent Next results is combined consequence of Theorem \ref{thm:robust-pcare-one-direct} and Theorem \ref{thm:robust-pcare-finite}. 

\begin{corollary}\label{cor:robust-pcare-fract-fried} 
Assume that   $\Omega\subset \R^d$ is bounded in one direction or that
$|\Omega|<\infty$. There are  $C=C(d,p,\Omega)$ and $s_0=s_0(d,p,\Omega)$ such that  for every $s\in(s_0 ,1)$ and  every $u\in C_c^\infty(\Omega)$ 
\begin{align}\label{eq:robust-pcare-fract-fried}
\|u\|^p_{L^p(\Omega)}\leq Cs(1-s)\iint\limits_{\R^d\R^d} \frac{|u(x)-u(y)|^p}{|x-y|^{d+sp}}\, dy\, dx. 
\end{align}
\end{corollary}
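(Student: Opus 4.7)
The plan is to specialize the robust Poincaré--Friedrichs inequalities of Theorem~\ref{thm:robust-pcare-one-direct} and Theorem~\ref{thm:robust-pcare-finite} to the one-parameter family of fractional kernels from Example~\ref{Ex:stable-class}. Concretely, set $\eps = 1-s$ and
\[
\nu_\eps(h) = \frac{p\eps(1-\eps)}{|\mathbb{S}^{d-1}|}\,|h|^{-d-(1-\eps)p} = \frac{p s(1-s)}{|\mathbb{S}^{d-1}|}\,|h|^{-d-sp}.
\]
Example~\ref{Ex:stable-class} verifies that this family satisfies the $p$-Lévy approximation condition \eqref{eq:plevy-approx}, so it is an admissible choice in the robust inequalities.

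Next, since $\Omega$ is either bounded in one direction or has finite measure, apply Theorem~\ref{thm:robust-pcare-one-direct} in the former case and Theorem~\ref{thm:robust-pcare-finite} in the latter. Each produces constants $\eps_0 = \eps_0(d,p,\Omega) > 0$ and $B = B(d,p,\Omega) > 0$ such that, for every $\eps \in (0,\eps_0)$ and every $u \in C_c^\infty(\Omega)$,
\[
\|u\|_{L^p(\Omega)}^p \;\leq\; B \iint_{\R^d\R^d} |u(x)-u(y)|^p\,\nu_\eps(x-y)\,dy\,dx.
\]
Unfolding the definition of $\nu_\eps$ gives
\[
\|u\|_{L^p(\Omega)}^p \;\leq\; \frac{Bp}{|\mathbb{S}^{d-1}|}\, s(1-s) \iint_{\R^d\R^d}\frac{|u(x)-u(y)|^p}{|x-y|^{d+sp}}\,dy\,dx.
\]

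Finally, setting $s_0 := 1 - \eps_0 \in (0,1)$ and $C := \tfrac{Bp}{|\mathbb{S}^{d-1}|}$ yields exactly \eqref{eq:robust-pcare-fract-fried} for all $s \in (s_0,1)$ and $u \in C_c^\infty(\Omega)$. Since the derivation is a direct substitution into Theorem~\ref{thm:robust-pcare-one-direct}/Theorem~\ref{thm:robust-pcare-finite}, there is no genuine obstacle; the only subtlety is the bookkeeping ensuring that the threshold $\eps_0$ from the robust inequality translates to the stated threshold $s_0$, and that the prefactor $p\eps(1-\eps) = ps(1-s)$ is extracted explicitly so the claimed scaling $s(1-s)$ appears on the right-hand side.
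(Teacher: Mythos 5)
Your proof is correct and follows exactly the paper's argument: the paper likewise proves the corollary by taking $\nu_\eps(h)=\frac{p\eps(1-\eps)}{|\mathbb{S}^{d-1}|}|h|^{-d-(1-\eps)p}$ with $\eps=1-s$ and applying Theorem \ref{thm:robust-pcare-one-direct} and Theorem \ref{thm:robust-pcare-finite}. Your extra bookkeeping (checking \eqref{eq:plevy-approx} via Example \ref{Ex:stable-class}, setting $s_0=1-\eps_0$ and extracting $C=Bp/|\mathbb{S}^{d-1}|$) just makes explicit what the paper leaves implicit.
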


\begin{proof}
It suffices to put $\nu_\eps(h)=\frac{p\eps(1-\eps)}{|\mathbb{S}^{d-1}|}|h|^{-d-(1-\eps)p}$ with $\eps=1-s$ and apply Theorem \ref{thm:robust-pcare-one-direct} and Theorem \ref{thm:robust-pcare-finite}.
\end{proof}

%%%%%%%%%%%%%%%%%%%%%%%%%%%%%%%%
\section{Convergence of weak solutions}\label{sec:conv-solution}
%%%%%%%%%%%%%%%%%%%%%%%%%%%%%%%%
In this section we establish the convergence in $L^p(\Omega)$ of weak solutions of nonlocal Dirichlet and Neumann problems to the corresponding local problems. We need the following Lemma involving the convergence from nonlocal to local of the Gauss-Green formula; see Theorem \ref{thm:gauss-green}. We point out that a new proof of the local divergence theorem which is derived from the nonlocal divergence theorem is established in  \cite{HK23}. 
\begin{lemma}\label{lem:colapsing-to-boundary}
Let $\Omega\subset \R^d$ be bounded Lipschitz domain.  
Let $v\in W^p_{\nu_\eps}(\Omega|\R^d)$ and $\varphi \in C^2_b(\R^d)$. Assume  that 
\begin{align*}
\kappa_\varphi:=\sup_{\eps>0}\|L_\eps \varphi\|_{L^\infty(\R^d)} <\infty\quad\text{ for $1<p<2$}.
\end{align*}
The following assertions hold true. 
\begin{enumerate}[$(i)$]
\item There is a constant $C_\varphi>0$ independent of $\eps$ such that 
\begin{align*}
\Big|\int_{\Omega^c} \mathcal{N}_\eps \varphi(y)v(y)\d y\Big|\leq C_\varphi\|v\|_{W^p_{\nu_\eps}(\Omega|\R^d)}.
\end{align*}
\item Assume $v\in W^{1,p}(\R^d)$, recall $\partial_{n,p}\varphi (x)=  |\nabla \varphi(x)|^{p-2}\nabla\varphi(x)\cdot n(x)$, then
\begin{align*}
\lim_{\eps\to0}\int_{\Omega^c} \mathcal{N}_\eps\varphi(y)v(y)\d y 
=K_{d,p}\, \int_{\partial\Omega}\partial_{n,p}\varphi(x) v(x)\d\sigma(x)\,.
\end{align*}
\end{enumerate}
\end{lemma}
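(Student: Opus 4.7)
\medskip
\textbf{Strategy.} My approach is to apply the nonlocal Gauss--Green formula from Appendix~\ref{sec:appendix-gauss-green} with $u=\varphi$ to recast
\[
\int_{\Omega^c}\cN_\eps\varphi(y)v(y)\,\d y \;=\; \cE^\eps(\varphi,v) \;-\; \int_\Omega L_\eps\varphi(x)v(x)\,\d x,
\]
and to bound (for (i)) or pass to the limit as $\eps\to 0$ (for (ii)) in each of the two summands on the right.

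\medskip
\textbf{Proof of (i).} The form term is handled by H\"older's inequality:
\[
|\cE^\eps(\varphi,v)|\leq \cE^\eps(\varphi,\varphi)^{1/p'}\cE^\eps(v,v)^{1/p}\leq \cE^\eps(\varphi,\varphi)^{1/p'}\|v\|_{W^p_{\nu_\eps}(\Omega|\R^d)}.
\]
Since $\varphi\in C^2_b(\R^d)$ is bounded and globally Lipschitz, $|\varphi(x)-\varphi(y)|^p\leq C_\varphi(1\wedge|x-y|^p)$, and together with the normalization $\int_{\R^d}(1\wedge|h|^p)\nu_\eps(h)\,\d h=1$ from \eqref{eq:plevy-approx} this yields $\cE^\eps(\varphi,\varphi)\leq 2 C_\varphi|\Omega|$ uniformly in $\eps$. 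For the volume term, H\"older plus the uniform $L^\infty$-bound $\|L_\eps\varphi\|_{L^\infty(\R^d)}\leq\kappa_\varphi$ gives
\[
\Big|\int_\Omega L_\eps\varphi \cdot v\,\d x\Big|\leq \kappa_\varphi|\Omega|^{1/p'}\|v\|_{L^p(\Omega)}\leq \kappa_\varphi|\Omega|^{1/p'}\|v\|_{W^p_{\nu_\eps}(\Omega|\R^d)}.
\]
For $p\geq 2$ the bound $\kappa_\varphi<\infty$ is automatic: symmetrizing $L_\eps\varphi$ by the evenness of $\nu_\eps$ and Taylor-expanding $\varphi$ produces the pointwise control $|\psi(\varphi(x+h)-\varphi(x))+\psi(\varphi(x-h)-\varphi(x))|\leq C(1\wedge|h|^p)$, so $\kappa_\varphi\leq C\int_{\R^d}(1\wedge|h|^p)\nu_\eps(h)\,\d h=C$; for $1<p<2$ it is a standing hypothesis. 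Summing the two estimates gives (i).

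\medskip
\textbf{Proof of (ii) and the main obstacle.} Assume now $v\in W^{1,p}(\R^d)$. Since $\varphi$ need not lie in $W^{1,p}(\R^d)$ globally, I would introduce a cutoff $\eta\in C_c^\infty(\R^d)$ with $\eta\equiv 1$ on a $\delta$-neighborhood of $\overline\Omega$ and set $\widetilde\varphi=\eta\varphi\in C^2_c(\R^d)\subset W^{1,p}(\R^d)$. Because $\varphi=\widetilde\varphi$ on that neighborhood, the integrand of $\cE^\eps(\varphi,v)-\cE^\eps(\widetilde\varphi,v)$ is supported on $\{|x-y|\geq\delta\}$; using $\varphi,\widetilde\varphi\in L^\infty$, Fubini, and $v\in L^p(\R^d)$, the discrepancy is dominated by a multiple of $\int_{|h|\geq\delta}\nu_\eps(h)\,\d h$, which vanishes as $\eps\to 0$ by the concentration property~\eqref{eq:concentration-prop}. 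Theorem~\ref{thm:BBM-dual-limit} applied to $(\widetilde\varphi,v)$ then yields
\[
\cE^\eps(\varphi,v)\xrightarrow{\eps\to 0}K_{d,p}\int_\Omega|\nabla\varphi|^{p-2}\nabla\varphi\cdot\nabla v\,\d x.
\]
For the volume term, Theorem~\ref{thm:asymp-plevy-to-plaplace} yields pointwise convergence $L_\eps\varphi(x)\to -K_{d,p}\Delta_p\varphi(x)$ a.e.\ on $\Omega$, and the uniform bound $\|L_\eps\varphi\|_{L^\infty}\leq\kappa_\varphi$ together with $v\in L^1(\Omega)$ permits Lebesgue's dominated convergence theorem to conclude
\[
\int_\Omega L_\eps\varphi\cdot v\,\d x\xrightarrow{\eps\to 0}-K_{d,p}\int_\Omega\Delta_p\varphi\cdot v\,\d x.
\]
Inserting both limits into the Gauss--Green decomposition and invoking the classical identity
\[
\int_\Omega|\nabla\varphi|^{p-2}\nabla\varphi\cdot\nabla v\,\d x+\int_\Omega\Delta_p\varphi\cdot v\,\d x=\int_{\partial\Omega}\partial_{n,p}\varphi(x)\,v(x)\,\d\sigma(x)
\]
delivers the claimed limit with the constant $K_{d,p}$. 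The most delicate step is the dominated convergence in the subquadratic regime $1<p<2$: the pointwise limit of Theorem~\ref{thm:asymp-plevy-to-plaplace} requires $\nabla\varphi(x)\neq 0$, and $\Delta_p\varphi$ may blow up near critical points of $\varphi$. The hypothesis $\kappa_\varphi<\infty$ effectively forces the critical set of $\varphi$ to be negligible and $\Delta_p\varphi$ to remain locally integrable; one then interprets the vector field $|\nabla\varphi|^{p-2}\nabla\varphi$ via its continuous bounded extension by $0$ at critical points (which exists for $p>1$) and the associated divergence in the distributional sense, so that the classical Gauss--Green identity above is valid for the admissible class of $\varphi$.
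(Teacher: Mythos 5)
Your proof follows essentially the same route as the paper's: decompose $\int_{\Omega^c}\cN_\eps\varphi\, v$ via the nonlocal Gauss--Green formula, bound $\cE^\eps(\varphi,v)$ by H\"older together with the uniform bound $\cE^\eps(\varphi,\varphi)\leq C$ coming from the Lipschitz bound on $\varphi$ and the normalization of $\nu_\eps$, bound the volume term by $\kappa_\varphi$ (with $\kappa_\varphi<\infty$ automatic for $p\geq2$ from the second-difference estimate), and for (ii) combine the pointwise limit $L_\eps\varphi\to -K_{d,p}\Delta_p\varphi$ with dominated convergence, the convergence of forms, and the classical Gauss--Green identity. The one genuinely different ingredient is your cutoff $\widetilde\varphi=\eta\varphi$: the paper instead invokes Lemma \ref{lem:collap-bdary} together with the restriction $\varphi|_\Omega\in W^{1,p}(\Omega)$ and the uniform energy bound to get $\cE^\eps(\varphi,v)\to K_{d,p}\cE^0(\varphi,v)$, while your truncation-plus-concentration argument handles the same issue ($\varphi\notin W^{1,p}(\R^d)$ in general) in a more explicit way and is perfectly valid, since the discrepancy lives on $\{|x-y|\geq\delta\}$ and is killed by $\int_{|h|\geq\delta}\nu_\eps\to0$.

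Two caveats. First, in (i) you apply the Gauss--Green formula of Theorem \ref{thm:gauss-green} directly to $v\in W^p_{\nu_\eps}(\Omega|\R^d)$, but that formula is only established for $v\in C_b^1(\R^d)$; the paper first proves the estimate for $v\in C_c^\infty(\R^d)$ and then extends to all of $W^p_{\nu_\eps}(\Omega|\R^d)$ by density of $C_c^\infty(\R^d)$ and continuity of the functional $v\mapsto \cE^\eps(\varphi,v)-\int_\Omega L_\eps\varphi\, v$. This step should be added, though it does not change your estimates. Second, your closing claim that $\kappa_\varphi<\infty$ ``forces the critical set of $\varphi$ to be negligible'' is false as stated (a constant $\varphi$ has $\kappa_\varphi=0$ and full critical set, though there the convergence is trivial); the correct reading, as in the paper, is simply that for $1<p<2$ the pointwise limit from Theorem \ref{thm:asymp-plevy-to-plaplace} is used almost everywhere and the uniform bound $\|L_\eps\varphi\|_{L^\infty}\leq\kappa_\varphi$ supplies the dominating function, so no quantitative control of $\Delta_p\varphi$ near critical points is needed beyond what the a.e. statement provides.
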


\begin{proof}
(i) In view of the estimate \eqref{eq:second-difference} for $p\geq 2$ we have 
\begin{align*}
\Big|\psi(u(x+h)-u(x))+ \psi(u(x-h)-u(x)) \Big|
\leq C\|u\|^{p-1}_{ C_b^{2}(\R^d)}(1 \land |h|^p),
\end{align*}
with $\psi(t)= |t|^{p-2}t$, which implies 
\begin{align*}
|L_\eps\varphi|\leq C\|\varphi\|^{p-1}_{C^2_b(\R^d)}\quad\text{for $p\geq2$}.
\end{align*}
Therefore taking into account the assumption, in case either  we have 
\begin{align}\label{eq:xuniform-bdb-in-eps}
\kappa_\varphi:=\sup_{\eps>0}\|L_\eps \varphi\|_{L^\infty(\R^d)} <\infty\quad\text{ for $1<p<\infty$}.
\end{align}
Since $|\varphi(x+h)-\varphi(x)|\leq 2\|u\|_{ C_b^1(\R^d)} (1\land |h|)$ we find that 
\begin{align}\label{eq:xuniform-varphi}
\mathcal{E}^\eps(\varphi, \varphi)\leq 2^{p+1}|\Omega|\|\varphi\|^p_{C^1_b(\R^d)} \quad\text{for all }~~\eps>0\,.
\end{align}
Now, let $v\in C^\infty_c(\R^d)$. The nonlocal Gauss-Green formula \eqref{eq:gauss-green-nonlocal}  yields 
\begin{align*}
\Big|\int_{\Omega^c} \mathcal{N}_\eps \varphi(y)v(y)&\d y\Big|
= \Big| \mathcal{E}^\eps(\varphi, v) - \int_{\Omega} L_{\eps}\varphi(x)v(x)\d x\Big|\\
&\leq \mathcal{E}^\eps(\varphi, \varphi)^{1/p'} \mathcal{E}^\eps(v,v)^{1/p} +\|L_\eps\varphi\|_{L^{\infty}(\R^d)}|\Omega|^{1/p'} \|v\|_{L^p(\Omega)}\\
&\leq C_\varphi\|v\|_{W^p_{\nu_\eps}(\Omega|\R^d)},
\end{align*}
with $C_\varphi= |\Omega|^{1/p'}\big( 2^{(p+1)/p'}\|\varphi\|^{p-1}_{C^1_b(\R^d)}+\kappa_\varphi\big).$ Note that $ C^\infty_c(\R^d)$  is dense in $W^p_{\nu_\eps}(\Omega|\R^d)$ (see \cite[Theorem 3.70]{guy-thesis}).  
\noindent By the continuity of the  linear mapping $ v\mapsto \mathcal{E}^\eps(\varphi, v) - \int_{\Omega} L_{\eps}\varphi(x)v(x)\d x$, the Gauss-Green formula \eqref{eq:gauss-green-nonlocal} is applicable for $\varphi\in C_b^2(\R^d)$ and $v\in W^p_{\nu_\eps}(\Omega|\R^d)$.  Therefore, the above estimate yields $(i)$. 

\smallskip 
\noindent (ii) By Theorem \ref{thm:asymp-plevy-to-plaplace},  $L_{\eps}\varphi(x)\xrightarrow{\eps\to0} -K_{d,p}\Delta_p\varphi(x)$  (a.e for $1<p<2$). Together with \eqref{eq:xuniform-bdb-in-eps} and the fact that $v\in L^p(\Omega)\subset L^1(\Omega)$, the dominated convergence theorem yields
\begin{align*}
\int_{\Omega}L_{\eps}\varphi(x)v(x)\d x \xrightarrow[]{\eps\to 0} -K_{d,p}\int_{\Omega}\Delta_p \varphi(x)v(x)\d x\,.
\end{align*}
If $v\in W^{1,p}(\R^d)$, then $v|_\Omega, \varphi|_\Omega\in W^{1,p}(\Omega)$. Since $\partial \Omega$ is Lipschitz,  combining Theorem \ref{thm:BBM-dual-limit}, Lemma \ref{lem:colapsing-to-boundary} and the fact that $\cE^\eps(\varphi, \varphi)^{1/p'}\leq C_\varphi$ (by estimate \eqref{eq:xuniform-varphi}),  we get 
\begin{align*}
\mathcal{E}^\eps(\varphi, v) \xrightarrow{\eps\to 0}K_{d,p}\, \mathcal{E}^0(\varphi, v).
\end{align*}
\noindent Finally from the foregoing and the (non)local Gauss-Green formula we obtain $(ii)$ as follows
\begin{align*}
\lim_{\eps\to0}\int_{\Omega^c} &\mathcal{N}_\eps \varphi(y)v(y)\d y 
=
\lim_{\eps\to0} \mathcal{E}^\eps(\varphi, v)-\lim_{\eps\to0} \int_{\Omega}L_{\eps}\varphi(x)v(x)\d x\\
&= K_{d,p} \int_{\Omega}|\nabla\varphi(x)|^{p-2}\nabla\varphi(x)\cdot \nabla v(x)\d x -  K_{d,p}\int_{\Omega}\Delta_p\varphi(x) v(x)\d x\\
&= K_{d,p}\int_{\partial\Omega} |\nabla\varphi(x)|^{p-2}\nabla\varphi(x)\cdot n(x) v(x)\,\d\sigma(x)\,.
\end{align*}

\end{proof}

\begin{theorem}[\textbf{Convergence of Neumann problem I}]\label{thm:convergence-neumann} Assume $\Omega\subset \R^d$ is open bounded and connected with Lipschitz boundary. Let
$f, f_\eps\in  L^{p'}(\Omega)$  be such that $(f_\eps)_\eps$ converges weakly sense to  $f$. Let $g_\eps =\mathcal{N}_\eps\varphi$ and $g=\partial_{n,p}\varphi
$ where  $\varphi \in C^2_b(\R^d)$. In addition we assume  
\begin{align*}
\kappa_\varphi:=\sup_{\eps>0}\|L_\eps \varphi\|_{L^\infty(\R^d)} <\infty\quad\text{ for $1<p<2$}.
\end{align*}
Assume $w_\eps \in  \WnuOmRa$ is a weak solution of Neumann problem $L_\eps u= f_\eps$ in $\Omega$ and $\mathcal{N}_\eps u= g_\eps$ on $\Omega^c$ that is, 
\begin{align*}
\mathcal{E}^\eps(w_\eps, v) = \int_{\Omega}f_\eps(x) v(x)\d x + \int_{\Omega^c}g_\eps(y) v(y)\d y\quad\text{for all }~~v\in \WnuOmRa\,.
\end{align*}
Assume $w\in W^{1,p}(\Omega)$ is a weak solution to the  problem  $-K_{d,p}\Delta_p u=f$ in $\Omega$ and $K_{d,p}\partial_{n,p} u=g$ on $\partial\Omega$, i.e., 
\begin{align*}
\mathcal{E}^0(w, v) = K_{d,p}^{-1}\int_{\Omega } f(x) v(x)\d x + \int_{\partial\Omega } g(x)v(x)\d\sigma(x)\,\,\,\text{for all $u\in W^{1,p}(\Omega)$}. 
\end{align*}

\noindent Put $u_\eps= w_\eps-\fint_{\Omega}w_\eps$  and $u= w-\fint_{\Omega}w$.  Then $(u_\eps)_\eps$ strongly converges to $u$ in $L^p(\Omega)$, i.e.,  $\|u_\eps-u\|_{L^p(\Omega)}\xrightarrow[]{\eps\to0}0$. Moreover, the following weak convergence of the energies forms holds true
\begin{align*}
\mathcal{E}^\eps(u_\eps, v) \xrightarrow{\eps\to 0} K_{d,p}\,\mathcal{E}^0(u, v) 
\quad\text{for all $v\in W^{1,p}(\R^d)$}.
\end{align*}
\end{theorem}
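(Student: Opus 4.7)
The overall plan is to (i) derive uniform-in-$\eps$ bounds for $u_\eps$ in $\WnuOmRa$, (ii) extract a subsequence converging strongly in $L^p(\Omega)$ via the asymptotic compactness Theorem \ref{thm:asymp-compactness}, (iii) identify the strong limit as the unique mean-zero weak solution $u$ of the local Neumann problem by a Minty monotonicity argument, and (iv) conclude convergence of the entire sequence by uniqueness of the local solution. The weak convergence of forms then falls out of the identification in (iii). For the uniform bound I would test the nonlocal weak formulation with $v=u_\eps$, control $\int_\Omega f_\eps u_\eps$ by H\"older combined with the robust Poincar\'e inequality (Corollary \ref{cor:robust-poincare}), and control $\int_{\Omega^c} g_\eps u_\eps$ by Lemma \ref{lem:colapsing-to-boundary}(i), which yields $|\int_{\Omega^c} g_\eps u_\eps|\le C_\varphi\|u_\eps\|_{\WnuOmRa}$. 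A Young's-inequality absorption then produces $\mathcal{E}^\eps(u_\eps,u_\eps)+\|u_\eps\|_{L^p(\Omega)}^p\le C$ uniformly in $\eps$.

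Since $\Omega$ is bounded and Lipschitz, Theorem \ref{thm:asymp-compactness}(i) delivers a subsequence $u_{\eps_n}\to u^\ast$ strongly in $L^p(\Omega)$ with $u^\ast\in W^{1,p}(\Omega)$ and $\int_\Omega u^\ast=0$. For any fixed $v\in W^{1,p}(\R^d)\subset\WnuOmRa$ the nonlocal weak formulation reads $\mathcal{E}^{\eps_n}(u_{\eps_n},v)=\int_\Omega f_{\eps_n}v+\int_{\Omega^c} g_{\eps_n}v$, and the right-hand side converges to $\int_\Omega fv+K_{d,p}\int_{\partial\Omega}gv\,d\sigma$ by weak $L^{p'}$-convergence of $f_\eps$ and by Lemma \ref{lem:colapsing-to-boundary}(ii); here the specific structure $g_\eps=\mathcal{N}_\eps\varphi$ with $\varphi\in C^2_b(\R^d)$ is crucial. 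Step (iii) therefore amounts to showing that $\mathcal{E}^{\eps_n}(u_{\eps_n},v)\to K_{d,p}\mathcal{E}^0(u^\ast,v)$; granted this, $u^\ast$ is a weak solution of the local Neumann problem, by uniqueness of the mean-zero solution $u^\ast=u$, and the whole sequence converges.

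For the Minty step I would use the monotonicity inequality $\mathcal{E}^\eps(u_\eps,u_\eps)-\mathcal{E}^\eps(u_\eps,v)-\mathcal{E}^\eps(v,u_\eps)+\mathcal{E}^\eps(v,v)\ge 0$ (a consequence of $(\psi(a)-\psi(b))(a-b)\ge 0$), together with the weak-formulation identities $\mathcal{E}^\eps(u_\eps,u_\eps)=\int f_\eps u_\eps+\int g_\eps u_\eps$ and $\mathcal{E}^\eps(u_\eps,v)=\int f_\eps v+\int g_\eps v$, and Theorem \ref{thm:BBM-dual-limit} to handle $\mathcal{E}^\eps(v,v)$. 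Choosing $v=\tilde u+t\xi$ where $\tilde u\in W^{1,p}(\R^d)$ is a fixed Lipschitz extension of $u^\ast$ and $\xi\in W^{1,p}(\R^d)$, passing to the limit, and exploiting the freedom in $t\in\R$ (in particular letting $t\to 0^{\pm}$ along $\pm\xi$) pins down the local Euler--Lagrange identity $K_{d,p}\mathcal{E}^0(u^\ast,\xi)=\int_\Omega f\xi+K_{d,p}\int_{\partial\Omega}g\xi\,d\sigma$. The main obstacle is the mixed term $\mathcal{E}^\eps(v,u_\eps)$: because $v$ is fixed and smooth but $u_\eps$ converges strongly only on $\Omega$, not on $\Omega^c$, this nonlinearity-in-$v$ quantity has no obvious direct limit. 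The remedy is the nonlocal Gauss--Green formula of Appendix \ref{sec:appendix-gauss-green}, which rewrites $\mathcal{E}^\eps(v,u_\eps)=\int_\Omega L_\eps v\cdot u_\eps+\int_{\Omega^c}\mathcal{N}_\eps v\cdot u_\eps$; the interior piece is handled by Theorem \ref{thm:asymp-plevy-to-plaplace} together with the uniform $L^\infty$-bound on $L_\eps v$ (whence the hypothesis $\kappa_\varphi<\infty$ for $1<p<2$), and the complement piece by Lemma \ref{lem:colapsing-to-boundary} combined with density of smooth test functions in $W^{1,p}(\R^d)$.
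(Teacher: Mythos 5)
Your steps (i), (ii) and (iv) coincide with the paper: the uniform bound via Corollary \ref{cor:robust-poincare} and Lemma \ref{lem:colapsing-to-boundary}(i), the extraction of an $L^p(\Omega)$-convergent subsequence via Theorem \ref{thm:asymp-compactness}, and the upgrade to the whole family by uniqueness of the mean-zero local solution are exactly what the paper does, and your derivation of the weak convergence of the forms from the identification is also the paper's. The divergence is in the identification step (iii). The paper does not use a Minty argument: it exploits that $u_\eps$ \emph{minimizes} $\cJ^\eps$ over the mean-zero class (Proposition \ref{prop:min-equiv-var}) and runs an implicit $\Gamma$-convergence argument, $\cJ(u^\ast)\leq \liminf_n\cJ^{\eps_n}(u_{\eps_n})\leq \liminf_n\cJ^{\eps_n}(\overline v)=\cJ(v)$, where the recovery limit $\cJ^{\eps}(\overline v)\to\cJ(v)$ only involves a \emph{fixed} $\overline v\in W^{1,p}(\R^d)$ (Theorem \ref{thm:BBM-dual-limit}, Lemma \ref{lem:colapsing-to-boundary}(ii)). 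The only $\eps$-dependent mixed pairing it must pass to the limit with a varying argument is $\int_{\Omega^c}\cN_{\eps_n}\varphi\,u_{\eps_n}$, i.e. for the single function $\varphi$, precisely where the hypotheses $\varphi\in C^2_b(\R^d)$ and $\kappa_\varphi<\infty$, Theorem \ref{thm:asymp-plevy-to-plaplace} and the Gauss--Green formula are available.

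Your Minty route, by contrast, needs $\lim_{\eps}\cE^\eps(v,u_\eps)$ for the whole family of comparison functions $v=\tilde u+t\xi$, and here the proposal has a genuine gap. First, $\tilde u$ (an extension of $u^\ast\in W^{1,p}(\Omega)$) and a generic $\xi\in W^{1,p}(\R^d)$ are not in $C^2_b(\R^d)$, so $L_\eps v$ and $\cN_\eps v$ need not exist pointwise (for $1<p<2$ this can fail even for $C_c^\infty$ functions, cf. Appendix \ref{sec:appendix-pointwise}), and the Gauss--Green formula of Theorem \ref{thm:gauss-green} is stated for $C^2_b$ functions; hence the rewriting $\cE^\eps(v,u_\eps)=\int_\Omega L_\eps v\, u_\eps+\int_{\Omega^c}\cN_\eps v\, u_\eps$ is not available for the test functions you actually use. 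Second, even for smooth $v$, in the singular range $1<p<2$ the uniform bound $\sup_{\eps}\|L_\eps v\|_{L^\infty}<\infty$ and the pointwise limit $L_\eps v\to -K_{d,p}\Delta_p v$ (which requires $\nabla v\neq 0$) are not free -- the theorem grants them only for the datum $\varphi$. Third, the complement piece $\int_{\Omega^c}\cN_\eps v\,u_\eps$ has a \emph{varying} second argument, so Lemma \ref{lem:colapsing-to-boundary}(ii) (proved for a fixed $v\in W^{1,p}(\R^d)$) does not apply, and invoking ``density of smooth test functions'' does not repair this: a density argument would require continuity of $v\mapsto\cE^\eps(v,u_\eps)$ uniformly in $\eps$, which you do not establish, and in any case the limit of $\int_{\Omega^c}\cN_\eps v\,u_\eps$ is of exactly the same nature as the quantity you set out to compute, so the proposed remedy is circular. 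To salvage your scheme you would either have to prove a genuine weak--strong convergence result for the mixed pairings $\cE^\eps(v,u_\eps)$ for non-smooth $v$, or abandon Minty in favour of the paper's minimization argument, which is structured precisely so that the delicate mixed term occurs only for $\varphi$.
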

\begin{remark}
(i) It is worth mentioning that, as proved in Section \ref{sec:existence-weak-sol} $u$ and $u_\eps$, $\eps<\eps_0$, always exist thanks to the Poincar\'{e} type inequalities. However $w$ (resp. $w_\eps$) exists if and only if $f$ and $K_{d,p}g$ (resp. $f_\eps$ and $g_\eps$) are compatible. Furthermore, the weak convergence of $(f_\eps)_\eps$ and Lemma \ref{lem:colapsing-to-boundary}  implies that 
\begin{align*}
\lim_{\eps\to 0}\langle f_\eps, 1\rangle+\langle g_\eps,  1\rangle=
\langle f, 1\rangle+\langle K_{d,p}g, 1\rangle.
\end{align*}
(ii) To prove the convergence of solutions here as well as  in the upcoming convergence results, we implicitly use the Gamma convergence of the nonlocal forms $(\mathcal{E}^{\varepsilon} (\cdot,\cdot))_\varepsilon$ to the  local $\mathcal{E}^0(\cdot,\cdot)$ (which follows from results established in Section \ref{sec:convergence-form}) and hence 
the Gamma convergence of the respective associated  functionals $(\mathcal{J}^{\varepsilon})_\varepsilon$ to the local  functional $\mathcal{J}$. 
\end{remark}

\begin{proof}
The robust Poincar\'{e} inequality (see Corollary \ref{cor:robust-poincare}) implies the existence of $\eps_0\in (0,1)$ and $C= C(d,p,\Omega)>0$ such that for all $\eps\in (0, \eps_0)$ and all  $v \in\WnuOmRa$ we have 
\begin{align}\label{eq:uniform-coercivity}
\|v-\mbox{$\fint_{\Omega} v$}\|^p_{\WnuOmRa}\leq C\mathcal{E}^\eps(v,v). 
\end{align}

\noindent By the weak convergence  of $(f_\eps)_\eps$, up to relabeling $\eps_0>0$, we can assume 
$M:=\sup\limits_{\eps\in (0,\eps_0)}\|f_\eps\|_{L^{p'}(\Omega)}<\infty$, so that 
\begin{align*}
\Big|\int_{\Omega^c} f_\eps(x) u_\eps(x)\d x\Big|
\leq M\|u_\eps\|_{\WnuOmRa}, 
\end{align*}
whereas,  Lemma \ref{lem:colapsing-to-boundary} $(i)$ yields 
\begin{align*}
\Big|\int_{\Omega^c} g_\eps( y) u_\eps(y)\d y\Big|
=  \Big|\int_{\Omega^c} \cN_\eps \varphi(y) u_\eps (y)\d y\Big|\leq C_\varphi\|u_\eps\|_{\WnuOmRa}. 
\end{align*}
Since $ u_\eps\in \WnuOmRa^\perp$, by definition  of $u_\eps$, we have 
\begin{align*}
\mathcal{E}^\eps(u_\eps,u_\eps) 
&= \int_{\Omega} f_\eps(x) u_\eps (x)\d x + \int_{\Omega^c} g_\eps(y) u_\eps (y)\d y\\
&\leq \|u_\eps\|_{\WnuOmRa}(\|f_\eps\|_{L^{p'}(\Omega)} + C_\varphi)\leq C \|u_\eps\|_{\WnuOmRa}. 
\end{align*}
\noindent Combining this with \eqref{eq:uniform-coercivity} yields the following uniform boundedness for a generic constant $C>0$, 
\begin{align}\label{eq:uniform-boundedness}
\|u_\eps\|^{p-1}_{\WnuOma}\leq \|u_\eps\|^{p-1}_{\WnuOmRa}\leq C\qquad\text{for all } \eps\in (0,\eps_0)\,.
\end{align}
Accordingly, by the asymptotic compactness Theorem \ref{thm:asymp-compactness}, there is $u\in W^{1,p}(\Omega)$ and subsequence $\eps_n\to 0$ such that $(u_{\eps_n})_n$ converges to $u$ in $L^p(\Omega)$ and we have
\begin{align*}
K_{d,p}\cE^0(u,u)\leq \liminf_{\eps\to 0}\cE^\eps(u_\eps, u_\eps). 
\end{align*}
In particular we get $u\in W^{1,p}(\Omega)^\perp$ since  each $u_\eps\in L^p(\Omega)^\perp$.  Next, we show that $u$ is, in fact, the unique weak solution to the Neumann problem $-K_{d,p}\Delta_p u=f$ in $\Omega$ and $K_{d,p}\partial_{n,p} u=g$ on $\partial\Omega$. 
To this end, it is sufficient to show that 
\begin{align*}
\cJ(u)&= \min\{\cJ(v):\,v\in W^{1,p}(\Omega)^\perp\},\\
\mathcal{J}(v) &= \frac{1}{p} K_{d,p}\mathcal{E}^0(v,v) -\int_\Omega f(x)v (x)\d x  
-K_{d,p} \int_{\partial\Omega} g (x)v (x)\d \sigma(x). 
\end{align*}
Recall that, by Proposition \ref{prop:min-equiv-var}, each $u_\eps$ satisfies
\begin{align*}
\cJ^\eps(u_\eps)&= \min\{\cJ^\eps(v):\,v\in \WnuOmRa^\perp\},\\
\mathcal{J}^\eps(v) &= \frac{1}{p} \mathcal{E}^\eps(v,v) -\int_\Omega f_\eps (x)v (x)\d x  -\int_{\Omega^c}g_\eps (x)v (x)\d x. 
\end{align*}
Now we consider $v\in W^{1,p}(\Omega)^\perp$.  Given that $\partial \Omega$ is Lipschitz, i.e.,  $\Omega$ is an extension domain, we let $\overline{v}\in W^{1,p}(\R^d)$ be an extension of $v$. In view of Lemma \ref{lem:colapsing-to-boundary}, Theorem \ref{thm:BBM-dual-limit} and the weak convergence we have 
\begin{align*}
&\lim_{\eps\to0}\mathcal{J}^\eps(\overline{v}) 
= \lim_{\eps\to0}\big(\frac{1}{p} \mathcal{E}^\eps(\overline{v},\overline{v}) -\int_\Omega f_\eps (x)v (x)\d x  -\int_{\Omega^c}g_\eps (y)v (y)\d y\big) \\
&=\frac{1}{p}K_{d,p}\mathcal{E}^0(v,v) -\int_\Omega f(x)v (x)\d x  -K_{d,p}\int_{\partial\Omega}g (x)v (x)\d\sigma(x)= \cJ(v). 
\end{align*}
The strong convergence of $(u_{\eps_n})_n$ and the weak convergence of $(f_{\eps_n} )_n $ in $L^p(\Omega)$ yield 
\begin{align*}
\lim_{n\to\infty}\int_\Omega f_{\eps_n} (x) u_{\eps_n} (x) \d x= \int_\Omega f(x) u(x) \d x.
\end{align*}
Analogously,  by further  taking into account the uniform boundedness of $ (L_{\eps_n}\varphi)_n$ (see \eqref{eq:xuniform-bdb-in-eps}), the pointwise convergence $L_{\eps_n}\varphi(x)\to -K_{d,p} \Delta_p \varphi(x)$ (see Theorem \ref{thm:asymp-plevy-to-plaplace}) and the Gauss-Green formula (see Appendix \ref{sec:appendix-gauss-green})  we get 
\begin{align*}
&\lim_{n\to\infty}\int_{\Omega^c}g_{\eps_n} (y) u_{\eps_n} (y) \d y
=\lim_{n\to\infty}\int_{\Omega^c}\cN_{\eps_n}\varphi (y)
u_{\eps_n} (y) \d y\\
&=\lim_{n\to\infty} \cE^{\eps_n} (\varphi, u_{\eps_n})
- \lim_{n\to\infty}\int_{\Omega}L_{\eps_n}\varphi (y) u_{\eps_n} (y) \d y\\
&=K_{d,p}\cE^0(\varphi, u) - K_{d,p}\int_{\Omega}\Delta_p\varphi (x) u (x) \d x\\
&= K_{d,p}\int_{\partial \Omega}\partial_{n,p} \varphi(x) u(x) \d\sigma(x)= K_{d,p}\int_{\partial\Omega} g(x) u(x) \d\sigma(x).  
\end{align*}
Altogether with the lower estimate $K_{d,p}\cE^0(u,u)\leq\liminf_{n\to\infty}\cE^{\eps_n}(u_{\eps_n}, u_{\eps_n})$, we obtain 
\begin{align*}
\cJ(u)
\leq\liminf_{n \to \infty} \cJ^{\eps_n}(u_{\eps_n}). 
\end{align*}

Recall that each $u_{\eps_n}$  minimizes the functional $\cJ^{\eps_n}$. Since,$\int_\Omega \overline{v}(x)\d x=0$ and   $\overline{v}\in W^{1,p}(\R^d)\subset \WnuOmRa$, we find that
\begin{align*}
\cJ(u)\leq \liminf_{n\to \infty}\cJ^{\eps_n}(u_{\eps_n})\leq 
\liminf_{n\to \infty} \cJ^{\eps_n}(\overline{v})= \cJ(v).
\end{align*}
It turns out that $\|u_{\eps_n}-u\|_{L^p(\Omega)}\to 0$ as $n\to\infty$ and  $u$ minimizes $\cJ$ that is, 
\begin{align*}
\cJ(u)&= \min_{v\in W^{1,p}(\Omega)^\perp } \cJ(v). 
\end{align*}
Whence $u$ is the unique weak solution to the Neumann on $W^{1,p}(\Omega)^\perp$ that is 
\begin{align*}
\cE^0(u, v) = \int_{\Omega}f(x) v (x)\d x+ \int_{\partial \Omega}g(x)v(x)\d \sigma(x)\,\qquad\text{for all $v\in W^{1,p}(\Omega)^\perp$}. 
\end{align*}
The uniqueness of $u$ implies  that $\|u_{\eps}-u\|_{L^p(\Omega)}\to 0$ as $\eps\to0$. 
Moreover for $v\in W^{1,p}(\R^d)$ we have $v-c\in W^{1,p}(\Omega)^\perp\cap \WnuOmRa^\perp$ with $c=\fint_\Omega v$ and hence 
\begin{align*}
\lim_{\eps\to0}\cE^\eps(u_\eps, v) &= 
\lim_{\eps\to0}\cE^\eps(u_\eps, v-c) 
\\&= \lim_{\eps\to0}\int_\Omega f_\eps (x)
(v (x)-c)\d x  +\lim_{\eps\to0} \int_{\Omega^c}g_\eps (y)(v (y)-c)\d y \\
&=\int_\Omega f(x)(v (x)-c)\d x + K_{d,p}\int_{\partial\Omega}g (x)(v (x)-c)\d x\\
&= K_{d,p}\cE^0(u, v-c) =K_{d,p}\cE^0(u, v). 
\end{align*}
\end{proof}
The above  convergence remains for weak solutions associated with the regional operators; 
\begin{align*}
L_{\Omega,\eps}u(x)= 2\pv \int_{\Omega} \psi(u(x)- u(y))\nu_\eps(x-y)\d y.
\end{align*}

\begin{theorem}[\textbf{Convergence of Neumann problem II}]\label{thm:convergence-neumann regional} 
Let the assumptions of Theorem \ref{thm:convergence-neumann} be in force. Let  $u_\eps \in  \WnuOma^\perp$ be  a weak solution to the regional Neumann problem $L_{\Omega,\eps} u= f_\eps$ on $\Omega$ and $\int_\Omega u=0$, i.e., 
\begin{align*}
\mathcal{E}^\eps_\Omega(u_\eps, v) = \int_{\Omega}f_\eps(x) v(x)\d x \quad\text{for all }~~v\in \WnuOma^\perp\,.
\end{align*}
Let $u\in W^{1,p}(\Omega)^\perp$ be the  weak solution of $-K_{d,p}\Delta_p u=f$ in $\Omega$ and $\partial_{n,p} u=0$ on $\partial\Omega$ i.e. 
\begin{align*}
K_{d,p}\,\mathcal{E}^0(u, v) = \int_{\Omega } f(x) v(x)\d x \quad\text{for all $u\in W^{1,p}(\Omega)^\perp$}. 
\end{align*}

\noindent Then $(u_\eps)_\eps$ strongly converges to $u$ in $L^p(\Omega)$, i.e.,  $\|u_\eps-u\|_{L^p(\Omega)}\xrightarrow[]{\eps\to0}0$. Moreover, the following weak convergence of the energies forms holds true
\begin{align*}
\mathcal{E}^\eps(u_\eps, v) \xrightarrow{\eps\to 0} K_{d,p}\,\mathcal{E}^0(u, v) 
\quad\text{for all $v\in W^{1,p}(\Omega)$}.
\end{align*}
\end{theorem}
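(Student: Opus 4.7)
The plan is to mimic the argument of Theorem \ref{thm:convergence-neumann}, but things are considerably simpler because the Neumann data is absent: no boundary collapse lemma is needed, and the natural test space is $\WnuOma$ rather than $\WnuOmRa$. The strategy is to (i) produce uniform bounds on $(u_\eps)_\eps$ using the robust Poincar\'e inequality, (ii) extract via asymptotic compactness a subsequence converging in $L^p(\Omega)$ to some $u^\star \in W^{1,p}(\Omega)^\perp$, (iii) identify $u^\star$ as the unique minimizer of the limiting local functional through a $\Gamma$-type argument, and (iv) upgrade subsequential convergence to full convergence by uniqueness, then deduce the weak convergence of the forms.

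First I would invoke Corollary \ref{cor:robust-poincare}: there exist $\eps_0 \in (0,1)$ and $C=C(d,p,\Omega)>0$ with
\begin{align*}
\|v - \mbox{$\fint_\Omega v$}\|^p_{L^p(\Omega)} \leq C\,\cE_{\Omega}^{\eps}(v,v) \qquad \text{for all } v \in \WnuOma,\ \eps \in (0,\eps_0).
\end{align*}
Testing the weak formulation with $v=u_\eps \in \WnuOma^\perp$ and using $M:=\sup_{\eps<\eps_0}\|f_\eps\|_{L^{p'}(\Omega)}<\infty$ (a consequence of the weak convergence of $(f_\eps)_\eps$) gives
\begin{align*}
\cE_{\Omega}^{\eps}(u_\eps,u_\eps) \leq M\, \|u_\eps\|_{L^p(\Omega)} \leq C^{1/p}M\, \cE_{\Omega}^{\eps}(u_\eps,u_\eps)^{1/p},
\end{align*}
hence $\sup_{\eps<\eps_0}\bigl(\|u_\eps\|_{L^p(\Omega)}^p + \cE_{\Omega}^{\eps}(u_\eps,u_\eps)\bigr) < \infty$. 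Theorem \ref{thm:asymp-compactness} then yields $u^\star \in W^{1,p}(\Omega)$ and a subsequence $\eps_n \to 0$ with $u_{\eps_n} \to u^\star$ in $L^p(\Omega)$ and
\begin{align*}
K_{d,p}\,\cE^0(u^\star,u^\star) \leq \liminf_{n\to\infty} \cE_{\Omega}^{\eps_n}(u_{\eps_n},u_{\eps_n}).
\end{align*}
Since $\fint_\Omega u_{\eps_n}=0$ passes to the limit, $u^\star \in W^{1,p}(\Omega)^\perp$.

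To identify $u^\star$, I would use the minimization characterization (Proposition \ref{prop:min-equiv-var} applied to the regional form): each $u_\eps$ minimizes $\cJ^\eps_\Omega(v) := \tfrac{1}{p}\cE_\Omega^\eps(v,v) - \int_\Omega f_\eps v$ over $\WnuOma^\perp$, and $u$ minimizes $\cJ(v) := \tfrac{K_{d,p}}{p}\cE^0(v,v) - \int_\Omega f v$ over $W^{1,p}(\Omega)^\perp$. For an arbitrary $v \in W^{1,p}(\Omega)^\perp$, the Lipschitz hypothesis makes $\Omega$ a $W^{1,p}$-extension domain, so $v \in \WnuOma^\perp$ and Theorem \ref{thm:BBM-dual-limit} (for the regional form $\cE^\eps_\Omega$) gives $\cE^\eps_\Omega(v,v) \to K_{d,p}\cE^0(v,v)$; combined with the weak convergence $f_\eps \rightharpoonup f$ in $L^{p'}(\Omega)$, this produces $\cJ^{\eps_n}_\Omega(v) \to \cJ(v)$. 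Meanwhile the strong convergence $u_{\eps_n} \to u^\star$ in $L^p(\Omega)$ with weak convergence of $(f_{\eps_n})_n$ gives $\int_\Omega f_{\eps_n} u_{\eps_n} \to \int_\Omega f u^\star$, and the lower-bound above gives
\begin{align*}
\cJ(u^\star) \leq \liminf_{n\to\infty} \cJ^{\eps_n}_\Omega(u_{\eps_n}) \leq \liminf_{n\to\infty} \cJ^{\eps_n}_\Omega(v) = \cJ(v).
\end{align*}
Hence $u^\star$ minimizes $\cJ$ on $W^{1,p}(\Omega)^\perp$, so $u^\star=u$ by uniqueness, and the limit being independent of the subsequence promotes $u_\eps \to u$ in $L^p(\Omega)$ along the full family.

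For the asserted weak convergence of forms, given $v \in W^{1,p}(\Omega)$ set $c = \fint_\Omega v$, so $v-c \in W^{1,p}(\Omega)^\perp \subset \WnuOma^\perp$; testing with $v-c$ in the nonlocal and local equations and using that $(f_\eps)_\eps$ converges weakly in $L^{p'}(\Omega)$ to $f$ yields
\begin{align*}
\cE^\eps_\Omega(u_\eps, v) = \cE^\eps_\Omega(u_\eps, v-c) = \int_\Omega f_\eps (v-c)\, \d x \ \xrightarrow{\eps\to 0}\ \int_\Omega f(v-c)\, \d x = K_{d,p}\cE^0(u,v).
\end{align*}
The main delicate point is the $\Gamma$-type convergence of the forms, which rests on Theorem \ref{thm:BBM-dual-limit} applied to the regional form and therefore on the Lipschitz (extension-domain) hypothesis on $\Omega$; everything else is bookkeeping analogous to the Neumann case with no boundary contribution.
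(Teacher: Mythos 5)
Your proposal is correct and is exactly the argument the paper intends: its own proof of this theorem is just the remark that it is "analogous to that of Theorem \ref{thm:convergence-neumann}", and your write-up fills in that analogy faithfully — robust Poincar\'e inequality for uniform bounds, asymptotic compactness, identification of the limit via the minimization characterization and the BBM-type convergence of the regional form on the extension domain, then uniqueness to upgrade to the full family. The only cosmetic point is that invoking Theorem \ref{thm:BBM-dual-limit} for $\cE^\eps_\Omega(v,v)$ with $v\in W^{1,p}(\Omega)$ should formally pass through an extension $\overline{v}\in W^{1,p}(\R^d)$ (or cite the convergence of the regional form on $W^{1,p}$-extension domains directly), which is precisely how the paper handles it in the Neumann~I proof.
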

\begin{proof}
The proof is analogous to that of Theorem \ref{thm:convergence-neumann}. 
\end{proof}

\begin{theorem}[\textbf{Convergence of Dirichlet problem}]\label{thm:convergence-dirichlet} Assume $\Omega\subset \R^d$ is open with a continuous boundary and in addition that $|\Omega|<\infty$ or that $\Omega$ is bounded in one direction.  Let
$g\in W^{1,p}(\R^d)$ and $f, f_\eps\in L^{p'}(\Omega)$ such that $(f_\eps)_\eps$ converges weakly to $f$. Let $u_\eps \in  \WnuOmRao$ be the  weak solution to the  Dirichlet problem $L_\eps u= f_\eps$ on $\Omega$ and $u= g$ on $\Omega^c$, i.e., $u-g\in \WnuOmRao$ and 
\begin{align*}
\mathcal{E}^\eps(u_\eps, v) = \int_{\Omega } f_\eps(x) v(x)\d x\quad\text{for all }~~v\in \WnuOmRao\,.
\end{align*}
Let $u\in W^{1,p}(\Omega)$ be the weak solution to the problem $-K_{d,p}\Delta_p u=f$ in $\Omega$ and $u=g$ on $\partial\Omega$ i.e., $u-g\in W^{1,p}_0(\Omega)$ and 
\begin{align*}
	 K_{d,p}\mathcal{E}^0(u, v) = \int_{\Omega } f(x) v(x)\d x \quad\text{for all $u\in W^{1,p}_0(\Omega)$}. 
\end{align*}

\noindent Then $(u_\eps)_\eps$ strongly converges to $u$ in $L^p_{\loc}(\R^d)$, where we put $u=g$ on $\Omega^c$, i.e., we have   $\|u_\eps-u\|_{L^p(B)}\xrightarrow{\eps\to 0}0$ for any bounded set $B\subset \R^d$. If, in addition,  $|\Omega|<\infty$ then we have $\|u_\eps-u\|_{L^p(\R^d)}\xrightarrow{\eps\to 0}0$.  Moreover, the weak convergence of the energies forms holds, i.e., 
\begin{align*}
\mathcal{E}^\eps(u_\eps, v) \xrightarrow{\eps\to 0} K_{d,p}\mathcal{E}^0(u, v) 
\qquad\text{for all $v\in W^{1,p}_0(\Omega)$}.
\end{align*}
\end{theorem}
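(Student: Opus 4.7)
The plan is to run a Mosco/Gamma-convergence scheme comparing the Dirichlet functionals $\mathcal{J}_0^\eps(v) = \tfrac{1}{p}\cE^\eps(v,v) - \int_\Omega f_\eps (v - g)$ on $g + \WnuOmRao$ (cf.\ Proposition~\ref{prop:min-equiv-var-D}) with the local functional $\mathcal{J}_0(v) = \tfrac{K_{d,p}}{p}\cE^0(v,v) - \int_\Omega f(v - g)$ on $g + W^{1,p}_0(\Omega)$. As a first step, I would derive a uniform energy bound: testing the nonlocal weak formulation with $u_\eps - g \in \WnuOmRao$, splitting $\cE^\eps(u_\eps, u_\eps) = \cE^\eps(u_\eps, u_\eps - g) + \cE^\eps(u_\eps, g)$, and applying H\"older, Young, and the robust Poincar\'e-Friedrichs inequality (Theorem~\ref{thm:robust-pcare-one-direct} or Theorem~\ref{thm:robust-pcare-finite}) applied to $u_\eps - g$, I expect to obtain
\begin{align*}
\cE^\eps(u_\eps,u_\eps) + \|u_\eps - g\|_{L^p(\Omega)}^p \,\leq\, C\bigl(\|f_\eps\|_{L^{p'}(\Omega)}^{p'} + \cE^\eps(g,g)\bigr),
\end{align*}
uniformly for $\eps < \eps_0$. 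Since $g \in W^{1,p}(\R^d)$, the BBM-type formula gives $\cE^\eps(g,g) \to K_{d,p}\|\nabla g\|_{L^p(\R^d)}^p$ by Theorem~\ref{thm:BBM-dual-limit}, so the right-hand side stays bounded as $\eps\to 0$.

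With this uniform bound I would apply the asymptotic compactness theorem (Theorem~\ref{thm:asymp-compactness}, case $\Omega=\R^d$) to $u_\eps - g$ regarded as a function in $L^p(\R^d)$ (vanishing outside $\Omega$). The uniform $L^p$-bound and the equality $\iint_{\R^d\R^d}|(u_\eps - g)(x) - (u_\eps - g)(y)|^p\nu_\eps(x-y)\,dy\,dx = \cE^\eps(u_\eps - g, u_\eps - g) \leq C$ yield a subsequence $(u_{\eps_n})$ with $u_{\eps_n} - g \to v_*$ in $L^p_{\mathrm{loc}}(\R^d)$—and in $L^p(\R^d)$ when $|\Omega|<\infty$—for some $v_* \in W^{1,p}(\R^d)$ necessarily vanishing a.e.\ on $\Omega^c$. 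Setting $u_* := v_* + g \in W^{1,p}(\R^d)$ gives $u_{\eps_n} \to u_*$ in the claimed topology, and the continuity of $\partial\Omega$ forces $u_* - g \in W^{1,p}_0(\Omega)$.

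To identify $u_* = u$ I would exploit the minimality of $u_{\eps_n}$. For an arbitrary competitor $w \in g + W^{1,p}_0(\Omega)$, approximate $w - g$ in the $W^{1,p}(\R^d)$-norm by $\varphi_k \in C_c^\infty(\Omega)$; then $w_k := g + \varphi_k$ lies in $g + \WnuOmRao$ because $C_c^\infty(\Omega) \subset \WnuOmRao$ under the continuous-boundary hypothesis (Theorem~\ref{thm:density}). From $\mathcal{J}_0^{\eps_n}(u_{\eps_n}) \leq \mathcal{J}_0^{\eps_n}(w_k)$, passing $n\to\infty$ using Theorem~\ref{thm:gamma-liminf} to bound $K_{d,p}\cE^0(u_*,u_*)$ from below and Theorem~\ref{thm:BBM-dual-limit-zero-bdry} together with the weak convergence $f_{\eps_n}\rightharpoonup f$ on the right, and then letting $k \to \infty$, yields $\mathcal{J}_0(u_*) \leq \mathcal{J}_0(w)$. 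Hence $u_*$ is the unique local Dirichlet minimizer and $u_* = u$, so the entire family $(u_\eps)$ converges. For the weak convergence of the forms, approximate any $v \in W^{1,p}_0(\Omega)$ by $\varphi_k \in C_c^\infty(\Omega) \subset \WnuOmRao$; passing to the limit in $\cE^\eps(u_\eps, \varphi_k) = \int_\Omega f_\eps\, \varphi_k$ and then controlling $|\cE^\eps(u_\eps, v - \varphi_k)| \leq \cE^\eps(u_\eps,u_\eps)^{1/p'}\cE^\eps(v - \varphi_k, v - \varphi_k)^{1/p}$ via the Step~1 uniform bound and Theorem~\ref{thm:BBM-dual-limit-zero-bdry} delivers $\cE^\eps(u_\eps, v) \to K_{d,p}\cE^0(u, v)$.

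The main obstacle is reconciling the nonlocal admissible class $g + \WnuOmRao$ with the local class $g + W^{1,p}_0(\Omega)$: one needs (a) each local competitor to be approximable by nonlocal ones with compatible energies $\cE^\eps(w_k,w_k) \to K_{d,p}\|\nabla w\|_{L^p(\R^d)}^p$, and (b) the subsequential limit $u_*$ to inherit the Dirichlet boundary condition $u_* - g \in W^{1,p}_0(\Omega)$ from the vanishing of $u_\eps - g$ outside $\Omega$. Both points are guaranteed by the continuity hypothesis on $\partial\Omega$, which yields the density of $C_c^\infty(\Omega)$ in $\WnuOmRao$ (Theorem~\ref{thm:density}) and the zero-extension characterization of elements of $W^{1,p}_0(\Omega)$.
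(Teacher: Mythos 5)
Your proposal is correct and follows essentially the same route as the paper's proof: uniform energy bounds via the robust Poincar\'e--Friedrichs inequality, asymptotic compactness (Theorem \ref{thm:asymp-compactness}), identification of the subsequential limit as the unique local Dirichlet minimizer through the Gamma-liminf bound and BBM-type convergence of the functional at competitors, and a density/H\"older argument for the convergence of the forms. The only differences are cosmetic (you apply compactness to $u_\eps-g$ rather than to $u_\eps$, and you insert an explicit $C_c^\infty(\Omega)$ approximation of competitors where the paper works directly with the extension $\overline{v}$ of a local competitor by $g$).
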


\begin{proof}
By the robust Poincar\'{e}-Friedrichs inequality (see Theorem \ref{thm:robust-pcare-one-direct} and Theorem \ref{thm:robust-pcare-finite}) there exist $\eps_0\in (0,1)$ and $C= C(d,p,\Omega)>0$ such that for all $\eps\in (0, \eps_0)$ and all  $v \in\WnuOmRao$ we have 
\begin{align}\label{eq:uniform-coercivity-D}
\|v\|^p_{\WnuOmRa}\leq C\mathcal{E}^\eps(v,v). 
\end{align}
\noindent By virtue of the weak convergence  of $(f_\eps)_\eps$, up to relabeling $\eps_0>0$, we can assume that  
$M:=\sup_{\eps\in (0,\eps_0)}\|f_\eps\|_{L^{p'}(\Omega)}<\infty$. Furthermore,  since $g\in W^{1,p}(\R^d)$ and $\int_{\R^d}(1\land|h|^p)\nu_\eps(h)\d h=1$, by the estimate \eqref{eq:levy-p-estimate} we have 
\begin{align*}
\int_{\R^d}|g(x)|^p\d x+ \iil_{\R^d\R^d} |g(x)-g(y)|^p\nu_\eps(x-y)\d y\d x
\leq 2^{p+1} \|g\|^p_{W^{1,p}(\R^d)}.
\end{align*}
By virtue of the coercivity estimate \eqref{eq:uniform-coercivity-D}, by proceeding similarly as for the proof of \eqref{eq:weak-sol-bounded-D}, one finds a  constant  $C>0$ independent of $\eps$ such that, for all $\eps\in (0,\eps_0)$ we have 
\begin{align}\label{eq:uniform-boundedness-D}
\begin{split}
\|u_\eps\|_{\WnuOmRa}
&\leq C\big(\|f_\eps\|_{L^{p'}(\Omega)}+\|g\|_{W^p_{\nu_\eps}(\R^d)}
\big) \\
&\leq C (M+2^{p+1}\|g\|^p_{W^{1,p}(\R^d)}). 
\end{split}
\end{align}
Therefore taking into account  that   $\|g\|_{W^p_{\nu_\eps}(\Omega^c)} \leq2^{p+1}  \|g\|_{W^{1,p}(\R^d)}$ and $u_\eps=g$ on $\Omega^c$ and, we deduce from the estimate  \eqref{eq:uniform-boundedness-D} that
\begin{align*}
\|u_\eps\|_{W^p_{\nu_\eps}(\R^d)}= \big( \|g\|^p_{W^p_{\nu_\eps}(\Omega^c)}+ \|u_\eps\|^p_{\WnuOmRa}\big)^{1/p}
\leq C\qquad\text{for all } \eps\in (0,\eps_0),
\end{align*}
 for some $C= C(d,p,\Omega, M, g)>0$. Accordingly, by the asymptotic compactness Theorem \ref{thm:asymp-compactness}, there is $u\in W^{1,p}(\R^d)$ and subsequence $\eps_n\to 0$ such that $(u_{\eps_n})_n$ converges to $u$ in $L^p_{\loc}(\R^d)$. Moreover,  there holds 
\begin{align*}
K_{d,p}\int_{\R^d}|\nabla u(x)|^p\d x&\leq \liminf_{\eps\to 0}\cE^\eps_{\R^d}(u_\eps, u_\eps),\\
K_{d,p}\cE^0(u,u)&\leq \liminf_{\eps\to 0}\cE^\eps(u_\eps, u_\eps). 
\end{align*}
In particular we have $u=g$ on $\Omega^c$ since $u_{\eps}=g$ on $\Omega^c$ . Therefore, since $\partial\Omega$ is continuous, we get $u-g\in W^{1,p}_0(\Omega)$. Next, we show that $u$ is, in fact, the unique weak solution to the Dirichlet problem $-K_{d,p}\Delta_p u=f$ in $\Omega$ and $u=g$ on $\partial\Omega$. 
To this end, it is sufficient to show that $\cJ(u)= \min\{\cJ(v):\,v-g\in W^{1,p}_0(\Omega)\}$, 
\begin{align*}
\mathcal{J}(v) &= \frac{1}{p} K_{d,p}\mathcal{E}^0(v,v) - \int_\Omega f(x)v (x)\d x. 
\end{align*}
Recall that, by Proposition \ref{prop:min-equiv-var-D}, each $u_\eps$ satisfies $\cJ_0^\eps(u_\eps)= \min\{\cJ_0^\eps(v):\,v-g\in \WnuOmRao\}$, 
\begin{align*}
\cJ_0^\eps(v) &= \frac{1}{p} \cE^\eps(v,v) 
-\int_\Omega f_\eps (x)v (x)\d x. 
\end{align*} 
Now we consider $v\in W^{1,p}(\Omega)$ such that $v-g\in W^{1,p}_0(\Omega)$. Thus, we can consider $\overline{v}$ be the extension of $v$ by $\overline{v}=g$ on $\Omega^c$ so that $\overline{v}\in W^{1,p}(\R^d$ (since $v$ and $g$ have the same trace on $\partial\Omega$) and $v-g\in  \WnuOmRao$. Since,  each $u_{\eps_n}$ minimizes $\cJ^{\eps_n}_0$ and  $\overline{v}-g\in\WnuOmRao$, we have $\cJ_0^{\eps_n}(u_{\eps_n})\leq 
\cJ_0^{\eps_n}(\overline{v}).$ In view of Theorem \ref{thm:BBM-dual-limit} and the weak convergence we have 
\begin{align*}
\lim_{\eps\to0}\mathcal{J}^\eps_0(\overline{v}) 
&= \lim_{\eps\to0}\big(\frac{1}{p} \mathcal{E}^\eps(\overline{v},\overline{v}) -\int_\Omega f_\eps (x)v (x)\d x \big)\\
&=\frac{1}{p}K_{d,p}\mathcal{E}^0(v,v) -\int_\Omega f(x)v (x)\d x  = \cJ_0(v). 
\end{align*}
The strong convergence of $(u_{\eps_n})_n$ and the weak convergence of $(f_{\eps_n} )_n $ in $L^p(\Omega)$ yield 
\begin{align*}
\lim_{n\to\infty}\int_\Omega f_{\eps_n} (x) u_{\eps_n} (x) \d x= \int_\Omega f(x) u(x) \d x.
\end{align*}
Whence, we deduce that  
\begin{align*}
\cJ_0(u)\leq \liminf_{n\to \infty}\cJ_0^{\eps_n}(u_{\eps_n})\leq 
\liminf_{n\to \infty} \cJ_0^{\eps_n}(\overline{v})= \cJ_0(v).
\end{align*}
It turns out that, $\|u_{\eps_n}-u\|_{L^p(\Omega)}\to 0$ as $n\to\infty$,  $u-g\in W^{1,p}_0(\Omega)$ and 
\begin{align*}
\cJ_0(u)&= \min_{v-g\in W^{1,p}_0(\Omega)} \cJ_0(v).
\end{align*}
In other words, $u$ is the unique weak solution to the Dirichlet problem on $W^{1,p}(\Omega)$ that is $u-g\in W^{1,p}_0(\Omega)$ and 
\begin{align*}
 \cE^0(u, v) = \int_{\Omega}f(x) v (x)\d x\quad\text{for all $v\in W^{1,p}_0(\Omega)$}. 
\end{align*}
The uniqueness of $u$ implies  that $\|u_{\eps}-u\|_{L^p(\Omega)}\to 0$ as $\eps\to0$. 
Moreover, for $v\in W^{1,p}_0(\Omega)$, if $v=0$ on $\Omega^c$ then  we have $v\in\WnuOmRao$ and hence 
\begin{align*}
\lim_{\eps\to0}\cE^\eps(u_\eps, v) 
&= \lim_{\eps\to0}\int_\Omega f_\eps (x)
v (x)\d x =\int_\Omega f(x)v (x)\d x =K_{d,p}\cE^0(u, v). 
\end{align*}
\end{proof}
Let us recall (see Section \ref{sec:norming-cste-pfrac}) the fractional $p$-Laplacian and the corresponding normal derivative 
\begin{align*}
	(-\Delta)^s_pu(x)&:=C_{d,p,s}\pv \int_{\R^d}\frac{\psi(u(x)-u(y))}{|x-y|^{d+sp}}\d y, \\
	\cN_su(y)&:=C_{d,p,s}\int_{\Omega}\frac{\psi(u(y)-u(x))}{|x-y|^{d+sp}}\d x.
\end{align*}
\begin{theorem}\label{thm:convergence-neumann-frac} 
	Assume the conditions of Theorem \ref{thm:convergence-neumann} hold.  Let $f, f_s\in  L^{p'}(\Omega)$  be such that $(f_s)_s$ converges weakly sense to  $f$ as $s\to 1$ and we put $g_s=\cN_s \varphi$ and $\partial_{n,p}\varphi$. 
	Let  $w_s \in  W^{s,p}(\Omega|\R^d)$, $s\in (0,1)$  be a weak solution to the Neumann problem
	\begin{align*}
		\text{$(-\Delta)^s_pu= f_{s}$ on $\Omega$ and $\cN_su=g_{s}$ on $\Omega^c$.}
	\end{align*}
	Let  $w\in W^{1,p}(\Omega)$ be a   weak solution to the Neumann problem
	\begin{align*}
		\text{$-\Delta_p u=f$ in $\Omega$ and $\partial_{n,p} u=g$ on $\partial\Omega$.}
	\end{align*}
	\noindent Put $u_s= w_s-\fint_{\Omega}w_s$  and $u= w-\fint_{\Omega}w$. Then $(u_s)_s$ strongly converges to $u$ in $L^p(\Omega)$, i.e.,  $\|u_s-u\|_{L^p(\Omega)}\xrightarrow[]{s\to1}0$. 
\end{theorem}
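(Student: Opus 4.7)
The plan is to recognize this as a direct specialization of Theorem \ref{thm:convergence-neumann} to the fractional family $\nu_s(h) = \tfrac{C_{d,p,s}}{2}|h|^{-d-sp}$ with $\varepsilon = 1-s$, after verifying that an appropriate rescaling of this family fits the framework of that theorem. I would begin by checking that $(\mu_s)_s := (K_{d,p}\nu_s)_s$ is a $p$-L\'evy approximation family in the sense of \eqref{eq:plevy-approx}. A polar-coordinate computation yields
\begin{align*}
\int_{\R^d}(1\land |h|^p)\nu_s(h)\,\d h = \frac{C_{d,p,s}|\mathbb{S}^{d-1}|}{2sp(1-s)},
\end{align*}
and by the asymptotic $\lim_{s\to 1}\tfrac{C_{d,p,s}}{s(1-s)} = \tfrac{2p}{|\mathbb{S}^{d-1}|K_{d,p}}$ from \eqref{eq:asymptotic-norming-intro}, this quantity converges to $K_{d,p}^{-1}$, so $\int(1\land|h|^p)\mu_s\d h\to 1$. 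The concentration property $\int_{|h|>\delta}\mu_s\d h\to 0$ for every $\delta>0$ is then immediate from $C_{d,p,s}\to 0$ together with $\int_{|h|>\delta}\nu_s\d h = \tfrac{C_{d,p,s}|\mathbb{S}^{d-1}|}{2sp\,\delta^{sp}}$.

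Next, I would recast the fractional Neumann problem in terms of the operator associated with $\mu_s$. Using the scaling identities $\cE^{\mu_s}=K_{d,p}\cE^{\nu_s}$, $L^{\mu_s}_s = K_{d,p}(-\Delta)^s_p$ and $\cN^{\mu_s}_s = K_{d,p}\cN_s$, the weak formulation $\cE^{\nu_s}(w_s,v)=\int_\Omega f_s v+\int_{\Omega^c} g_s v$ is equivalent to
\begin{align*}
\cE^{\mu_s}(w_s,v) = \int_\Omega \widetilde{f}_s v + \int_{\Omega^c} \widetilde{g}_s v
\end{align*}
with $\widetilde{f}_s := K_{d,p}f_s$ and $\widetilde{g}_s := K_{d,p}g_s = K_{d,p}\cN_s\varphi = \cN^{\mu_s}_s\varphi$. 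The weak convergence $f_s\rightharpoonup f$ in $L^{p'}(\Omega)$ gives $\widetilde{f}_s\rightharpoonup K_{d,p}f$, while the uniform $L^\infty$-bound on $L^{\mu_s}_s\varphi = K_{d,p}(-\Delta)^s_p\varphi$ required when $1<p<2$ is inherited from the standing hypothesis $\kappa_\varphi<\infty$. All conditions of Theorem \ref{thm:convergence-neumann} are therefore in force for the family $(\mu_s)_s$ with data $(\widetilde{f}_s, \widetilde{g}_s)$ and test function $\varphi$.

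Applying Theorem \ref{thm:convergence-neumann}, the centered functions $u_s = w_s - \fint_\Omega w_s$ converge strongly in $L^p(\Omega)$ to $U:= W - \fint_\Omega W$, where $W \in W^{1,p}(\Omega)$ solves the local Neumann problem with the $K_{d,p}$-rescaled data: in weak form, $\cE^0(W,v) = K_{d,p}^{-1}\int_\Omega\widetilde{f} v + \int_{\partial\Omega}\widetilde{g}\,v$, which simplifies (since $K_{d,p}^{-1}\widetilde{f}=f$ and $\widetilde{g}=K_{d,p}\partial_{n,p}\varphi$) to the same weak identity satisfied by the prescribed local solution $w$. By the uniqueness statement of Theorem \ref{thm:nonlocal-Neumann-var} applied to the local problem on $W^{1,p}(\Omega)^\perp$ via the robust Poincar\'e inequality (Corollary \ref{cor:robust-pcare-fract}), we conclude $U=u$, which yields the asserted strong convergence $\|u_s-u\|_{L^p(\Omega)}\to 0$.

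The main obstacle is the careful bookkeeping of the factor $K_{d,p}$: one must align the rescaled Neumann datum $\widetilde{g}_s = K_{d,p}g_s$ with the $\mu_s$-normal derivative $\cN^{\mu_s}_s\varphi$ (so that Lemma \ref{lem:colapsing-to-boundary} and the pointwise asymptotic of Corollary \ref{cor:asymp-pfrac-to-plaplace}, applied inside the proof of Theorem \ref{thm:convergence-neumann}, target exactly the local boundary term prescribed in Theorem \ref{thm:convergence-neumann-frac}). This alignment is exactly the content of the identity $\cN^{\mu_s}_s = K_{d,p}\cN_s$ and of the asymptotic \eqref{eq:asymptotic-norming-intro}, which is why the normalization constant $C_{d,p,s}$ was chosen precisely so that the extra factor in the limit cancels.
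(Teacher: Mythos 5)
Your proposal is correct and takes essentially the same route as the paper: both reduce the statement to Theorem \ref{thm:convergence-neumann} by rescaling the fractional kernel and invoking the asymptotic $\frac{C_{d,p,s}}{s(1-s)}\to \frac{2p}{|\mathbb{S}^{d-1}|K_{d,p}}$, tracking the factor $K_{d,p}$ through the data. The only cosmetic difference is that the paper works with the exactly normalized family $a_{d,p,\eps}|h|^{-d-(1-\eps)p}$ (so \eqref{eq:plevy-approx} holds with equality) and absorbs the ratio $C_{d,p,1-\eps}/a_{d,p,\eps}\to 2/K_{d,p}$ into the data, whereas your family $K_{d,p}\nu_s$ is only asymptotically normalized and hence satisfies \eqref{eq:plevy-approx-bis} rather than \eqref{eq:plevy-approx}, which is harmless by the equivalence noted in the remark following Theorem \ref{thm:charac-plevy-concentrate}.
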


\begin{proof}
	It is sufficient to consider $\nu_\eps(h)= a_{d,p,\eps}|h|^{-d-(1-\eps)p}$, $a_{d,p,\eps}= \frac{p\eps(1-\eps)}{|\mathbb{S}^{d-1}|}$ in Theorem \ref{thm:convergence-neumann}, accounting the fact that the asymptotic of the normalizing constant $C_{d,p,s}$ yields
	\begin{align*}
		\lim_{\eps\to0}\frac{C_{d,p,1-\eps}}{a_{d,p,\eps}} 
		= \lim_{s\to1 }\frac{C_{d,p,s}|\mathbb{S}^{d-1}|}{ps(1-s)}=\frac{2}{K_{d,p}}. 
	\end{align*}
\end{proof}

\begin{theorem}\label{thm:convergence-dirichlet-frac} 
Assume the assumptions of Theorem \ref{thm:convergence-dirichlet} hold.  Let $f, f_s\in  L^{p'}(\Omega)$  be such that $(f_s)_s$ converges weakly sense to  $f$ as $s\to 1$. Let  $u_s \in  W^{s,p}_0(\Omega|\R^d)$, $s\in (0,1)$  be the weak solution to the Dirichlet problem
\begin{align*}
\text{$(-\Delta)^s_pu= f_{s}$ on $\Omega$ and $u=g_{s}$ on $\Omega^c$.}
\end{align*}
Let $u\in W^{1,p}(\Omega)$ be the  weak solution of  the Dirichlet problem
\begin{align*}
\text{$-\Delta_p u=f$ in $\Omega$ and $u=g$ on $\partial\Omega$.}
\end{align*}
\noindent Then $(u_s)_s$ strongly converges to $u$ in $L^p_{\loc}(\R^d)$, where we put $u=g$ on $\Omega^c$.  If in addition $|\Omega|<\infty$ then we have $\|u_s-u\|_{L^p(\R^d)}\xrightarrow{s\to 1}0$. 
\end{theorem}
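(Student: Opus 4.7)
The plan is to deduce this theorem as a direct consequence of Theorem \ref{thm:convergence-dirichlet} by choosing the correct $p$-L\'{e}vy approximation family. Setting $\eps = 1-s$, I would take
\begin{align*}
\nu_\eps(h) = a_{d,p,\eps}\,|h|^{-d-(1-\eps)p},\qquad a_{d,p,\eps}= \frac{p\eps(1-\eps)}{|\mathbb{S}^{d-1}|},
\end{align*}
which satisfies \eqref{eq:plevy-approx} thanks to Example \ref{Ex:stable-class}. The associated nonlocal operator $L_\eps$ is proportional to the fractional $p$-Laplacian, namely
\begin{align*}
L_\eps u = \frac{2\,a_{d,p,\eps}}{C_{d,p,1-\eps}}\,(-\Delta)^s_p u,
\end{align*}
and by the asymptotic relations in \eqref{eq:asymptotic-norming} we have
\begin{align*}
\lim_{s\to 1}\frac{2\,a_{d,p,\eps}}{C_{d,p,1-\eps}} = \lim_{s\to 1}\frac{ps(1-s)\cdot 2/|\mathbb{S}^{d-1}|}{C_{d,p,s}} = K_{d,p}.
\end{align*}

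Next, I would rewrite the fractional Dirichlet problem $(-\Delta)^s_p u_s = f_s$ in $\Omega$, $u_s = g_s$ on $\Omega^c$, as the equivalent problem
\begin{align*}
L_\eps u_s = \widetilde{f}_\eps \quad\text{in }\Omega, \qquad u_s = g_s \quad\text{on }\Omega^c,\qquad
\widetilde{f}_\eps := \frac{2\,a_{d,p,\eps}}{C_{d,p,1-\eps}}\,f_s.
\end{align*}
Since the scaling factor $2a_{d,p,\eps}/C_{d,p,1-\eps}$ is uniformly bounded and converges to $K_{d,p}$ while $f_s \rightharpoonup f$ weakly in $L^{p'}(\Omega)$, the modified data satisfies $\widetilde{f}_\eps \rightharpoonup K_{d,p}\,f$ weakly in $L^{p'}(\Omega)$. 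The spaces $W^{s,p}_0(\Omega|\R^d)$ and $\WnuOmRao$ coincide for this choice of $\nu_\eps$, so $u_s \in \WnuOmRao$ is exactly the weak solution considered in Theorem \ref{thm:convergence-dirichlet} with data $(\widetilde{f}_\eps, g_s)$.

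Applying Theorem \ref{thm:convergence-dirichlet} to this family, I obtain that $u_s = u_\eps \to \widetilde{u}$ strongly in $L^p_{\loc}(\R^d)$ (and in $L^p(\R^d)$ if $|\Omega|<\infty$), where $\widetilde{u}\in W^{1,p}(\Omega)$ with $\widetilde{u}=g$ on $\Omega^c$ is the unique weak solution of
\begin{align*}
-K_{d,p}\Delta_p \widetilde{u} = K_{d,p}\,f \quad\text{in }\Omega, \qquad \widetilde{u}=g \quad\text{on }\partial\Omega.
\end{align*}
Dividing by $K_{d,p}$ shows that $\widetilde{u}$ solves $-\Delta_p \widetilde{u}=f$ in $\Omega$ with Dirichlet boundary data $g$, so by uniqueness $\widetilde{u}=u$, which concludes the proof. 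The only subtlety, and the main verification step, is the identification of the variational formulations: one must check that the weak formulation of the fractional problem on $W^{s,p}_0(\Omega|\R^d)$ is (up to the constant $2a_{d,p,\eps}/C_{d,p,1-\eps}$) identical to the weak formulation in $\WnuOmRao$ used in Theorem \ref{thm:convergence-dirichlet}; this is, however, immediate from the definition of $\nu_\eps$. No new compactness or coercivity arguments are required beyond those already established in Theorem \ref{thm:convergence-dirichlet}.
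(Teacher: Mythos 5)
Your proposal is correct and follows essentially the same route as the paper: apply Theorem \ref{thm:convergence-dirichlet} with $\nu_\eps(h)=a_{d,p,\eps}|h|^{-d-(1-\eps)p}$ and use the asymptotics of $C_{d,p,s}$, i.e. $2a_{d,p,\eps}/C_{d,p,1-\eps}\to K_{d,p}$, to identify the limit problem. Your explicit absorption of the constant into the rescaled data $\widetilde f_\eps$ merely spells out a detail the paper leaves implicit.
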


\begin{proof}
It is sufficient to consider $\nu_\eps(h)= a_{d,p,\eps}|h|^{-d-(1-\eps)p}$, $a_{d,p,\eps}= \frac{p\eps(1-\eps)}{|\mathbb{S}^{d-1}|}$ in Theorem \ref{thm:convergence-dirichlet}, accounting the fact that the asymptotic of the normalizing constant $C_{d,p,s}$ yields
\begin{align*}
\lim_{\eps\to0}\frac{C_{d,p,1-\eps}}{a_{d,p,\eps}} 
= \lim_{s\to1 }\frac{C_{d,p,s}|\mathbb{S}^{d-1}|}{ps(1-s)}=\frac{2}{K_{d,p}}. 
\end{align*}
\end{proof}

%%%%%%%%%%%%%%%%%%%%%%%%%%%%%%%%
\appendix
\section{} \label{sec:appendix-estim}
\subsection{Elementary estimates for the $p$-Laplacian and $p$-L\'{e}vy operators}
%%%%%%%%%%%%%%%%%%%%%%%%%%%%%%%%
We establish elementary estimates involving the mapping $x\mapsto |x|^{p-2}x$, $x\in \R^d$, $p\geq1$, useful in the study of the $p$-Laplacian and $p$-L\'{e}vy operators. We adopt the convention $|x|^{p-2}x=0$ if $x=0$. Although these estimates are elementary the case $1<p<2$ seems to be seldom in the literature.  
\begin{lemma}\label{lem:elm-est} 
For $x,y\in \R^d,$ there hold the following inequalities  
\begin{align*}
& \big||x|^{p-2}x-|y|^{p-2}y\big|\leq A_p|x-y|(|x|+|y|)^{p-2},\\
&\big(|x|^{p-2}x-|y|^{p-2}y\big)\cdot (x-y)\geq A'_p|x-y|^2(|x|+ |y|)^{p-2} \,.
\end{align*}
\end{lemma}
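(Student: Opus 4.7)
The plan is to view $\psi(z):=|z|^{p-2}z$ as the gradient of the convex potential $z\mapsto |z|^p/p$, prove both bounds pointwise through an integral representation along the segment from $y$ to $x$, and reduce matters to a one dimensional integral estimate on $[0,1]$.

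The first step is an explicit computation of the Jacobian for $z\neq 0$, namely
\[
D\psi(z)=|z|^{p-2}\bigl(I+(p-2)\hat z\otimes\hat z\bigr),\qquad \hat z=z/|z|.
\]
This symmetric matrix has eigenvalues $|z|^{p-2}$ of multiplicity $d-1$ with eigenspace $\hat z^{\perp}$, and $(p-1)|z|^{p-2}$ with eigenvector $\hat z$. Consequently, for any $\xi\in\R^d$,
\[
\min(1,p-1)\,|z|^{p-2}|\xi|^2\leq D\psi(z)\xi\cdot\xi\leq\max(1,p-1)\,|z|^{p-2}|\xi|^2,
\]
together with the operator bound $|D\psi(z)\xi|\leq \max(1,p-1)\,|z|^{p-2}|\xi|$.

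The second step is to parametrize the segment by $z(t)=y+t(x-y)$ and observe that, since $p>1$, the potential singularity $|z(t)|^{p-2}$ on $[0,1]$ is of order $p-2>-1$ and hence Lebesgue integrable; combined with the continuity of $\psi$ at the origin, this produces the absolutely continuous representation
\[
\psi(x)-\psi(y)=\int_0^1 D\psi(z(t))(x-y)\,\d t,
\]
valid uniformly in $x,y$ (when the segment meets the origin one either splits the integral at the zero of $z(\cdot)$ and passes to limits, or approximates by the regularization $\psi_\eps(z)=(|z|^2+\eps^2)^{(p-2)/2}z$ and lets $\eps\to 0$). Taking the Euclidean norm and the inner product against $x-y$ and inserting the eigenvalue bounds of step one gives the two sided inequalities
\[
(\psi(x)-\psi(y))\cdot(x-y)\geq \min(1,p-1)\,|x-y|^2\,I,\qquad |\psi(x)-\psi(y)|\leq \max(1,p-1)\,|x-y|\,I,
\]
where $I:=\int_0^1|z(t)|^{p-2}\,\d t$.

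The main step, and the true obstacle, is to sandwich $I$ between constant multiples of $(|x|+|y|)^{p-2}$. The reverse triangle inequality delivers $|z(t)|\geq \bigl|\,t|x|-(1-t)|y|\,\bigr|$, while a direct antiderivative computation produces the identity
\[
\int_0^1 \bigl|\,t|x|-(1-t)|y|\,\bigr|^{p-2}\,\d t=\frac{|x|^{p-1}+|y|^{p-1}}{(p-1)(|x|+|y|)},
\]
which, paired with the trivial bound $|z(t)|\leq |x|+|y|$, yields matching bounds on $I$ in each regime: for $1<p<2$, the decreasing character of $r\mapsto r^{p-2}$ converts the reverse triangle bound into an \emph{upper} control on $I$, and the subadditivity $|x|^{p-1}+|y|^{p-1}\leq 2(|x|+|y|)^{p-1}$ gives $I\leq \frac{2}{p-1}(|x|+|y|)^{p-2}$, while trivially $I\geq (|x|+|y|)^{p-2}$; for $p\geq 2$ the inequalities reverse, and the convexity bound $|x|^{p-1}+|y|^{p-1}\geq 2^{2-p}(|x|+|y|)^{p-1}$ yields $I\geq \frac{2^{2-p}}{p-1}(|x|+|y|)^{p-2}$, while $I\leq (|x|+|y|)^{p-2}$. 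Plugging the appropriate bound into each of the two inequalities above produces the two claimed estimates, with $A_p$ and $A'_p$ chosen as the worst constants across the two regimes. The delicate point is precisely the case $1<p<2$, where $I$ can be almost singular along nearly antipodal configurations and must be controlled uniformly; the reverse triangle identity and the sharp integrability threshold $p-2>-1$ are what make the uniform bound go through.
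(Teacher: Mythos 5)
Your proof is correct on the paper's standing range $1<p<\infty$, but it reaches the two delicate estimates by a genuinely different route. Both arguments use the segment representation $\psi(x)-\psi(y)=\int_0^1 D\psi(tx+(1-t)y)(x-y)\,\d t$ for the two easy bounds (the upper bound when $p\ge2$ and the lower bound when $1<p<2$), where the factor $\int_0^1|tx+(1-t)y|^{p-2}\,\d t$ is compared with $(|x|+|y|)^{p-2}$ via the monotonicity of $r\mapsto r^{p-2}$; there you and the paper essentially coincide. The divergence is in the other two bounds: the paper proves the coercivity estimate for $p\ge2$ purely algebraically, from the identity $\big(|x|^{p-2}x-|y|^{p-2}y\big)\cdot(x-y)=\tfrac12\big(|x|^{p-2}+|y|^{p-2}\big)|x-y|^2+\tfrac12\big(|x|^{p-2}-|y|^{p-2}\big)\big(|x|^2-|y|^2\big)$ together with monotonicity, and it proves the Lipschitz-type bound for $1<p<2$ by a homogeneity reduction to $F(z)=\big||z|^{p-2}z-e\big|/\big(|z-e|(1+|z|)^{p-2}\big)$ on $|z|\ge1$, estimated directly to give $2^{2-p}(3-p)$. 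You instead treat all four bounds uniformly through the scalar integral $I=\int_0^1|z(t)|^{p-2}\,\d t$, and your key ingredient — the reverse triangle inequality combined with the exact identity $\int_0^1\big|\,t|x|-(1-t)|y|\,\big|^{p-2}\,\d t=\frac{|x|^{p-1}+|y|^{p-1}}{(p-1)(|x|+|y|)}$ — is exactly what controls $I$ from above in the singular regime, where the segment may pass near the origin. This buys a cleaner, unified argument (and the eigenvalue analysis of $D\psi$ makes the constants $\min(1,p-1)$ and $\max(1,p-1)$ transparent), at the cost of slightly worse constants: you get $A_p=2/(p-1)$ instead of $2^{2-p}(3-p)$ for $1<p<2$, and $A'_p=2^{2-p}/(p-1)$ instead of $\min(2^{-1},2^{2-p})$ for $p\ge2$; in particular your $A_p$ blows up as $p\to1$ and your argument needs the integrability threshold $p>1$, whereas the paper's stated constants also cover $p=1$. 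Since the lemma is only invoked downstream with generic $p$-dependent constants, these differences are harmless.
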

The constants are given by $A_p= p-1$, $A'_p=\min(2^{-1}, 2^{2-p})$ if $p\geq2$ and $A_p=2^{2-p}(3-p)\leq 2^{3-p}$, $A'_p=p-1$ if $1\leq p<2$.
One easily verifies that $A_p\leq 2^{1+|p-2|}$ and $A'_p\geq \min (p-1, 2^{1-p})$.

\begin{proof}
If $p\geq2$ then by monotonicity, $(|y|^{p-2}-|x|^{p-2}) (|y|^2-|x|^2)\geq0$ and hence  using $(a+b)^q\leq \max(1, 2^{q-1})(a^q+ b^q)$ for all $q, a, b\in (0, \infty)$ we get 
\begin{align*}
(|x|^{p-2}x-|y|^{p-2} y)\cdot (x-y)
&= \frac{1}{2} (|x|^{p-2}+|y|^{p-2}) |x-y|^2\\
&+ \frac{1}{2} (|x|^{p-2}-|y|^{p-2}) (|x|^2-|y|^2)\\
&\geq\min(2^{-1}, 2^{2-p}) (|x|+|y|)^{p-2} |x-y|^2.
\end{align*}
On the other hand, the fundamental theorem of calculus implies 
\begin{align*}
&|x|^{p-2}x-|y|^{p-2} y
=(x-y)\int_0^1 |tx+ (1-t) y|^{p-2} \d t \\
&+
(p-2)\int_0^1 |tx+ (1-t)y|^{p-4}\big[(tx+ (1-t) y)\cdot (x-y) \big] (tx+ (1-t) y)\d t\,. 
\end{align*}
Thus, if $-1<p-2<0$, by Cauchy-Schwartz inequality and $|tx+(1-t)y|^{2-p}\leq (|x|+|y|)^{2-p}$, 
\begin{align*}
(||x|^{p-2}x-|y|^{p-2} y|)\cdot (x-y) 
&\geq  (p-1)|x-y|^2\int_0^1 |tx+ (1-t) y|^{p-2} \d t\\
&\geq (p-1) |x-y|^2(|x|+|y|)^{p-2}.
\end{align*}
This completes the proof of the second inequality. Analogously, if $p\geq2$ then 
\begin{align*}
||x|^{p-2}x-|y|^{p-2} y|| &\leq (p-1)|x-y|\int_0^1 (t|x|+ (1-t) |y|)^{p-2} \d t\\
&\leq (p-1)|x-y|(|x|+|y|)^{p-2} \,. 
\end{align*} 
It remains to prove the first inequality when $1<p<2$.  Without loss of generality, assume 
$x\neq y$, $|y|\leq |x|$ and $y= |y| e$ with $|e|=1$. Put $z=x|y|^{-1}$, it suffices to bound $F(z)$ where 

\begin{align*}
F(z)= \frac{||z|^{p-2}z-e|}{|z-e|(1+|z|)^{p-2}},\quad \text{for}\,\, z\in \R^d\setminus\{e\}, \,\, |z|\geq 1.
\end{align*}
Put $r=|z-e|$ so that $|z|\leq 1+r$. Since $|z|^{p-2}z-e= |z|^{p-2}((z-e)+ (1-|z|^{2-p})e)$ we get
\begin{align*}
\frac{||z|^{p-2}z-e|}{|z-e|(1+|z|)^{p-2}}
&\leq (1+|z|^{-1})^{2-p}
\Big(1+ \frac{ (|z|^{2-p}-1)}{|z-e|}\Big)\\ 
&\leq  2^{2-p}
\big(1+ \frac{1} {r} ((1+r)^{2-p} -1) \big) \\
&= 2^{2-p}
\big(1+\frac{ (2-p)}{r}\int_0^r\frac{\d \tau}{(1+\tau)^{p-1}}\big)\leq 2^{2-p}(3-p).%\leq 2^{3-p}. 
\end{align*}
\end{proof}

A direct consequence of Lemma \ref{lem:elm-est} is the following; see also \cite[Lemma 2.2]{BL94}. 
\begin{corollary}\label{cor:elm-est-beta} Let $x,y\in \R^d$ and $\beta\geq0$. There hold the following inequalities 
\begin{align*}
& \big||x|^{p-2}x-|y|^{p-2}y\big|\leq A_p|x-y|^{1-\beta} (|x|+|y|)^{p-2+\beta},\\
&\big(|x|^{p-2}x-|y|^{p-2}y\big)\cdot (x-y)\geq A'_p|x-y|^{2+\beta}(|x|+ |y|)^{p-2-\beta} \,.
\end{align*}
\end{corollary}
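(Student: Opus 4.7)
The plan is to deduce Corollary \ref{cor:elm-est-beta} from Lemma \ref{lem:elm-est} by inserting the triangle inequality $|x-y|\leq |x|+|y|$, raised to the exponent $\beta\geq0$, as an interpolating factor. Since the starting estimates in Lemma \ref{lem:elm-est} already contain the correct combinations $|x-y|$ and $(|x|+|y|)$ to powers summing to $p-1$ and $p$ respectively, the added parameter $\beta$ is a ``redistribution'' exponent that shifts weight between $|x-y|$ and $|x|+|y|$ while preserving the overall homogeneity.

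For the first inequality, I would start from
\[
\big||x|^{p-2}x-|y|^{p-2}y\big|\leq A_p|x-y|(|x|+|y|)^{p-2},
\]
and rewrite the right hand side as
\[
A_p|x-y|^{1-\beta}(|x|+|y|)^{p-2+\beta}\cdot \frac{|x-y|^{\beta}}{(|x|+|y|)^{\beta}}.
\]
Since $\beta\geq 0$ and $|x-y|\leq |x|+|y|$, the last factor is bounded by $1$, which yields the claim. (The degenerate cases where $x=y$ or $x=y=0$ are handled by the convention $|x|^{p-2}x=0$ at $x=0$ and by continuity.)

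For the second inequality, starting from
\[
\big(|x|^{p-2}x-|y|^{p-2}y\big)\cdot(x-y)\geq A'_p|x-y|^{2}(|x|+|y|)^{p-2},
\]
I would rewrite the right hand side as
\[
A'_p|x-y|^{2+\beta}(|x|+|y|)^{p-2-\beta}\cdot \frac{(|x|+|y|)^{\beta}}{|x-y|^{\beta}},
\]
and again the factor on the right is $\geq 1$ by the triangle inequality for $\beta\geq0$.

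There is no real obstacle here: the whole corollary is a one-line homogeneity/interpolation observation on top of Lemma \ref{lem:elm-est}. The only minor point worth flagging explicitly is the boundary case $x=y$, where both sides vanish (interpreted with $|x-y|^{1-\beta}=0$ when $\beta<1$, and with the usual convention $0^0=1$ so that the bound is trivial when $\beta\geq 1$), and the case $x=y=0$, which is immediate from the convention on $|x|^{p-2}x$.
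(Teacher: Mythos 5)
Your proposal is correct and coincides with the paper's own argument: the paper's proof is exactly the one-line observation that $|x-y|(|x|+|y|)^{-1}\leq 1$ combined with Lemma \ref{lem:elm-est}, which is what your redistribution factor $\big(|x-y|/(|x|+|y|)\big)^{\beta}\leq 1$ implements. Your extra remarks on the degenerate cases $x=y$ and $x=y=0$ are harmless but not needed.
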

\begin{proof}
It suffices to see  that $|x-y|(|x|+|y|)^{-1}\leq 1$ and use Lemma \ref{lem:elm-est}.
\end{proof}

\begin{lemma}[Simon inequalities]\label{lem:elm-est-simon} The following inequalities hold true for all $x,y\in \R^d.$ 
\begin{numcases}{\hspace{-2ex}\big||x|^{p-2}x-|y|^{p-2}y\big|\leq} 
A_p |x-y|(|x|+|y|)^{p-2} & $p\geq2,$ \label{eq:upper-elem-degen}
\\
A_p |x-y|^{p-1}&  $1\leq p<2$. \label{eq:upper-elem-sing} 
\end{numcases}
\begin{numcases}{\hspace{-2ex}\big(|x|^{p-2}x-|y|^{p-2}y\big)\cdot (x-y)\geq \hspace{-1ex}} 
\hspace{-1ex} A'_p |x-y|^p&  \hspace{-1ex} $p\geq2$, \label{eq:under-elem-degen} 
\\
\hspace{-2ex}A'_p |x-y|^2(|x|+|y|)^{p-2} &  \hspace{-3ex}$1\leq p<2$.\label{eq:under-elem-sing}
\end{numcases}
\end{lemma}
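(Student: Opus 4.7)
The plan is to observe that all four Simon inequalities are immediate specializations of the more general estimates already established in Lemma~\ref{lem:elm-est} and its corollary, Corollary~\ref{cor:elm-est-beta}, obtained by choosing the free parameter $\beta$ in a way that kills the factor $(|x|+|y|)^{\pm}$ on the right-hand side. Concretely, recall that Corollary~\ref{cor:elm-est-beta} asserts, for every $\beta\geq 0$,
\begin{align*}
\bigl||x|^{p-2}x-|y|^{p-2}y\bigr|&\leq A_p\,|x-y|^{1-\beta}(|x|+|y|)^{p-2+\beta},\\
\bigl(|x|^{p-2}x-|y|^{p-2}y\bigr)\cdot(x-y)&\geq A'_p\,|x-y|^{2+\beta}(|x|+|y|)^{p-2-\beta}.
\end{align*}

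The two ``borderline'' inequalities \eqref{eq:upper-elem-degen} and \eqref{eq:under-elem-sing} are just the statements of Lemma~\ref{lem:elm-est} itself (equivalently, Corollary~\ref{cor:elm-est-beta} with $\beta=0$). For the remaining two, one chooses $\beta$ so that the exponent on $(|x|+|y|)$ vanishes. In the singular upper bound \eqref{eq:upper-elem-sing}, with $1\leq p<2$, take $\beta=2-p\geq 0$; then $1-\beta=p-1$ and $p-2+\beta=0$, yielding
$\bigl||x|^{p-2}x-|y|^{p-2}y\bigr|\leq A_p\,|x-y|^{p-1}$.
In the degenerate lower bound \eqref{eq:under-elem-degen}, with $p\geq 2$, take $\beta=p-2\geq 0$; then $2+\beta=p$ and $p-2-\beta=0$, yielding
$\bigl(|x|^{p-2}x-|y|^{p-2}y\bigr)\cdot(x-y)\geq A'_p\,|x-y|^p$.

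Since Corollary~\ref{cor:elm-est-beta} is already proved (it is obtained from Lemma~\ref{lem:elm-est} by the elementary bound $|x-y|/(|x|+|y|)\leq 1$), there is no obstacle here: the lemma is essentially a table of convenient specializations. The only point worth double-checking is that the parameter constraints $\beta=2-p\geq 0$ in the subquadratic regime and $\beta=p-2\geq 0$ in the superquadratic regime do respect the hypothesis $\beta\geq 0$ of Corollary~\ref{cor:elm-est-beta}, which is immediate from the case split $1\leq p<2$ versus $p\geq 2$ in the statement of Lemma~\ref{lem:elm-est-simon}.
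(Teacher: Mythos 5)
Your proposal is correct and is essentially the paper's own proof: the paper also deduces \eqref{eq:upper-elem-degen} and \eqref{eq:under-elem-sing} from Corollary \ref{cor:elm-est-beta} with $\beta=0$ (i.e. Lemma \ref{lem:elm-est}) and \eqref{eq:upper-elem-sing}, \eqref{eq:under-elem-degen} with $\beta=|p-2|$, which matches your choices $\beta=2-p$ and $\beta=p-2$ in the respective regimes. The only difference is cosmetic: the paper additionally remarks that \eqref{eq:upper-elem-sing} can also be obtained from \eqref{eq:under-elem-degen} by duality, yielding a slightly sharper constant, but this is not needed for the statement as given.
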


\begin{proof}
The claims follow from Corollary \ref{cor:elm-est-beta} by taking $\beta=0$ for \eqref{eq:upper-elem-degen} and \eqref{eq:under-elem-sing} and $\beta=|p-2|$ for 
\eqref{eq:upper-elem-sing} and \eqref{eq:under-elem-degen}. Note however that, \eqref{eq:upper-elem-sing} also follows from \eqref{eq:under-elem-degen} by duality. Indeed, if $1<p<2$ then $p'>2$  where $p+p'=pp'$.  Keeping in mind that by duality, $z'=|z|^{p-2}z$ if and only if $z=|z'|^{p'-2}z'$ the estimate \eqref{eq:under-elem-degen} implies
\begin{align*}
|x'-y'|^{p'}&\leq\max(2, 2^{p'-2}) \big(|x'|^{p'-2}x'-|y'|^{p'-2}y'\big)\cdot (x'-y')\\
&\leq \max(2, 2^{p'-2})|x-y||x'-y'|.
\end{align*}
Since $(p-1)(p'-1)=1$  and $\max(2^{p-1}, 2^{2-p})< 2<2^{3-p}$ it follows that 
\begin{align}\label{eq:better-upper-sing}
\big||x|^{p-2}x-|y|^{p-2}y\big|
&\leq \max(2^{p-1}, 2^{2-p})|x-y|^{p-1}\leq 2^{3-p}|x-y|^{p-1}.
\end{align}
\end{proof}             
The constant $2^{2-p}$ is expected in \eqref{eq:upper-elem-sing} as this is the case in one dimension.  

\begin{lemma}\label{lem:elem-est}{\ }
If $1\leq  p<2$ there holds  following inequality %for $a,b\in \R$ and $1<p<2$
\begin{align}\label{eq:upper-elem-sing-sc}
&||b|^{p-2}b-|a|^{p-2} a|\leq 2^{2-p}|b-a|^{p-1}\quad\text{for all $a,b\in \R$}. 
\end{align}
\end{lemma}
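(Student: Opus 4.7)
The aim is to establish the scalar improvement of \eqref{eq:upper-elem-sing}, namely $||b|^{p-2}b-|a|^{p-2}a| \leq 2^{2-p}|b-a|^{p-1}$ for $a,b\in\R$ and $1\leq p<2$. I will proceed by a direct case analysis exploiting the fact that $\psi(t):=|t|^{p-2}t = \operatorname{sgn}(t)|t|^{p-1}$ is odd and that the concave exponent $p-1\in[0,1)$ is subadditive on $[0,\infty)$. The crucial observation is that the constant $2^{2-p}$ appears only when $a$ and $b$ straddle $0$ with equal absolute values, which will pinpoint where the extra factor of $2$ compared with the vector inequality is absorbed.

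\smallskip

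\textbf{Step 1 (reduction).} By the oddness of $\psi$ and the symmetry of both sides in the pair $(a,b)$, I may assume $|a|\leq |b|$ and, after possibly swapping signs of both $a$ and $b$, that $b\geq 0$. This leaves two subcases depending on the sign of $a$.

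\textbf{Step 2 (same sign case).} Suppose $0\leq a\leq b$. Then $\psi(b)-\psi(a)=b^{p-1}-a^{p-1}$. Since $0\leq p-1<1$, the map $t\mapsto t^{p-1}$ is subadditive on $[0,\infty)$, which yields $b^{p-1}=(a+(b-a))^{p-1}\leq a^{p-1}+(b-a)^{p-1}$ and thus
\begin{equation*}
|\psi(b)-\psi(a)|\leq (b-a)^{p-1}\leq 2^{2-p}|b-a|^{p-1},
\end{equation*}
since $2^{2-p}\geq 1$ when $p\leq 2$.

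\textbf{Step 3 (opposite sign case).} Suppose $a<0\leq b$. Writing $x=b\geq 0$ and $y=|a|\geq 0$, we have $|\psi(b)-\psi(a)|=x^{p-1}+y^{p-1}$ and $|b-a|=x+y$, so the claim reduces to
\begin{equation*}
x^{p-1}+y^{p-1}\leq 2^{2-p}(x+y)^{p-1}\qquad\text{for all }x,y\geq 0.
\end{equation*}
This is a one-line consequence of the concavity of $t\mapsto t^{p-1}$ on $[0,\infty)$: by Jensen's inequality applied to the uniform measure on $\{x,y\}$,
\begin{equation*}
\tfrac{1}{2}\bigl(x^{p-1}+y^{p-1}\bigr)\leq \Bigl(\tfrac{x+y}{2}\Bigr)^{p-1},
\end{equation*}
which rearranges to exactly the desired bound. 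Equality is achieved for $x=y$, showing the constant $2^{2-p}$ is sharp.

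\smallskip

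\textbf{Obstacle.} There is really no substantive obstacle here; the scalar setting removes the geometric difficulty that forced the weaker constant $2^{3-p}$ in the vector estimate \eqref{eq:upper-elem-sing}. The only point requiring any care is the boundary case $p=1$, where $\psi(t)=\operatorname{sgn}(t)$ and the inequality degenerates into $|\operatorname{sgn}(b)-\operatorname{sgn}(a)|\leq 2$; this is trivial and is consistent with $2^{2-p}=2$ and $|b-a|^{p-1}=1$ in the opposite-sign case.
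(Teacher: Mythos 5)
Your proof is correct and takes essentially the same route as the paper's: the paper normalizes by homogeneity to $t=b/a$ and splits into the ranges $t\geq 1$, $0\leq t<1$, $t<0$, using the monotonicity of $t\mapsto t^{\sigma-1}$ (which is precisely the subadditivity of $t\mapsto t^{\sigma}$, $\sigma=p-1$) for the same-sign cases and concavity for the opposite-sign case, exactly matching your Steps 2 and 3. Your symmetry reduction in place of the homogeneity normalization, and the observation that the constant is attained at $b=-a$, are fine cosmetic variations.
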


\begin{proof}
 The case $p=1$ is obvious. We only prove for $1< p<2$, i.e., $\sigma= p-1\in (0,1)$.   For $t\in\R $,  we have 
\begin{align*}
||t|^{\sigma -1}t-1|= 
\begin{cases} 
t^{\sigma}-1& \text{if}\,\,\, t\geq 1,\\
1-t^{\sigma}& \text{if}\, \,\, 0\leq t<1,\\
(-t)^{\sigma}+1& \text{if}\,\,\, t<0.
\end{cases}
\end{align*}
Since $t\mapsto t^{\sigma-1}$ is decreasing on $(0,\infty)$ it follows that $t^{\sigma}=(t-1)t^{\sigma-1}+t^{\sigma-1}\leq (t-1)^{\sigma}+1$  for  $t\geq 1$, that is $t^{\sigma}-1\leq (t-1)^{\sigma}$  and $1= (1-t)\times 1^{\sigma-1}+ t\times 1^{\sigma-1}\leq (1-t)^{\sigma}+ t^{\sigma}$ for $0<t< 1$, that is $1-t^{\sigma}\leq (1-t)^{\sigma}.$ The concavity of $t\mapsto t^{\sigma}$ implies $(-t)^{\sigma}+1\leq 2^{1-\sigma}(-t+1)^{\sigma}$ for  $t<0.$ Altogether, we obtain the following inequality 
\begin{align*}
||t|^{\sigma -1}t-1|\leq 2^{1-\sigma}|t-1|^{\sigma}\quad\text{for all $t\in \R$ and $\sigma\in [0,1]$}. 
\end{align*}  
Thus \eqref{eq:upper-elem-sing-sc} is inherited from the one  above by taking $t=\frac{b}{a}$, $a\neq 0$.  
\end{proof}

Let us see some useful consequences of Lemma \ref{lem:elm-est-simon}. 
\begin{corollary}\label{cor:elm-est-taylor} The following inequalities hold true for all $x,y\in \R^d.$ 
\begin{align*}
|x|^p- |y|^p- p|y|^{p-2}\cdot y(x-y)
&\leq \hspace*{-1ex}
\begin{cases}
A_p |x-y|^{p}& \hspace*{-4ex} 1\leq p< 2,\\
\frac{p}{2}A_p|x-y|^{2}(|x-y|+2|y|)^{p-2}&p\geq 2.
\end{cases}
\end{align*}
\begin{align*}
|x|^p- |y|^p- p|y|^{p-2}y\cdot(x-y)
&\geq \hspace*{-1ex} 
\begin{cases}
A'_p |x-y|^{p}& p\geq 2,\\
\frac{p}{2}A'_p|x-y|^{2}(|x-y|+2|y|)^{p-2}& \hspace*{-2ex}1\leq p<2.
\end{cases}
\end{align*}

\end{corollary}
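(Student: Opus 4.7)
The plan is to reduce the statement to the Simon inequalities of Lemma \ref{lem:elm-est-simon} via a one-variable Taylor-type representation. Set $\gamma(t) = y + t(x-y)$ and $g(t) = |\gamma(t)|^p$ for $t \in [0,1]$. Then $g$ is of class $C^1$ on $[0,1]$ (interpreting $|z|^{p-2}z = 0$ at $z=0$ and noting that for $p \geq 1$ the map $z \mapsto |z|^{p-2}z$ is continuous) with
\[
g'(t) = p\,|\gamma(t)|^{p-2}\gamma(t)\cdot(x-y).
\]
The fundamental theorem of calculus and the trivial identity $|y|^{p-2}y\cdot(x-y) = \int_0^1 |y|^{p-2}y\cdot(x-y)\,dt$ yield
\[
|x|^p - |y|^p - p\,|y|^{p-2}y\cdot(x-y) = p\int_0^1 \bigl(|\gamma(t)|^{p-2}\gamma(t) - |y|^{p-2}y\bigr)\cdot(x-y)\,dt.
\]
This is the key identity; the remainder of the proof is a direct application of Lemma \ref{lem:elm-est-simon} to the integrand, using that $\gamma(t)-y = t(x-y)$.

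For the upper bounds, I would apply Cauchy--Schwarz inside the integral and then estimate $\bigl||\gamma(t)|^{p-2}\gamma(t) - |y|^{p-2}y\bigr|$. When $1\leq p<2$, inequality \eqref{eq:upper-elem-sing} gives the bound $A_p|t(x-y)|^{p-1} = A_p t^{p-1}|x-y|^{p-1}$, so the integrand is at most $A_p t^{p-1}|x-y|^p$, and integration against $\int_0^1 t^{p-1}\,dt = 1/p$ produces exactly $A_p|x-y|^p$ after multiplication by $p$. When $p\geq 2$, inequality \eqref{eq:upper-elem-degen} together with $|\gamma(t)|+|y|\leq 2|y|+t|x-y|\leq 2|y|+|x-y|$ (and $p-2\geq 0$) bounds the integrand by $A_p\,t|x-y|^2(|x-y|+2|y|)^{p-2}$; integration gives $\frac{p}{2}A_p|x-y|^2(|x-y|+2|y|)^{p-2}$.

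For the lower bounds I would rewrite the integrand using $(x-y) = t^{-1}(\gamma(t)-y)$, so that
\[
\bigl(|\gamma(t)|^{p-2}\gamma(t) - |y|^{p-2}y\bigr)\cdot(x-y) = \frac{1}{t}\bigl(|\gamma(t)|^{p-2}\gamma(t) - |y|^{p-2}y\bigr)\cdot(\gamma(t)-y),
\]
which is nonnegative. For $p\geq 2$, inequality \eqref{eq:under-elem-degen} lower-bounds this by $A'_p t^{-1}|t(x-y)|^p = A'_p t^{p-1}|x-y|^p$, and integration yields $A'_p|x-y|^p$ after multiplication by $p$. For $1\leq p<2$, inequality \eqref{eq:under-elem-sing} bounds it from below by $A'_p t|x-y|^2(|\gamma(t)|+|y|)^{p-2}$; since now $p-2<0$, the upper bound $|\gamma(t)|+|y|\leq 2|y|+|x-y|$ yields the reverse monotonicity $(|\gamma(t)|+|y|)^{p-2}\geq (2|y|+|x-y|)^{p-2}$, so the integrand is at least $A'_p t|x-y|^2(2|y|+|x-y|)^{p-2}$, and integration produces $\frac{p}{2}A'_p|x-y|^2(|x-y|+2|y|)^{p-2}$.

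The only mild obstacle is bookkeeping: one must be careful with the sign of $p-2$ when substituting bounds on $|\gamma(t)|+|y|$ inside $(\cdot)^{p-2}$, and with the degenerate case $y=0$ where $|y|^{p-2}y$ is taken to be zero by convention. A brief justification that $g\in C^1([0,1])$ (or equivalently that the representation above holds for all $x,y$) handles the case where $\gamma(t)=0$ for some $t$: in that case the integrand is still integrable since it involves only $|\gamma(t)|^{p-1}$, which is locally integrable for $p>1$, and the identity follows by a standard approximation argument (for $p=1$, the claims reduce to the triangle inequality and require only the trivial case of the Simon estimates).
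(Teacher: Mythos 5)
Your proof is correct and follows essentially the same route as the paper: the identity $|x|^p-|y|^p-p|y|^{p-2}y\cdot(x-y)=p\int_0^1\big(|z_t|^{p-2}z_t-|y|^{p-2}y\big)\cdot(z_t-y)\,\frac{dt}{t}$ with $z_t=y+t(x-y)$, followed by the Simon inequalities of Lemma \ref{lem:elm-est-simon} and the bound $|z_t|+|y|\leq 2|y|+|x-y|$. Your version merely spells out the integrations in $t$ and the $C^1$/edge-case bookkeeping that the paper leaves implicit.
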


\begin{proof}
Applying the fundamental theorem of calculus on $t\mapsto|z_t|,$ $z_t= y + t(x-y)$ implies 
\begin{align*}
|x|^p-|y|^p- p|y|^{p-2}y\cdot(x-y)
&=p\int_0^1\big(|z_t|^{p-2}z_t -|y|^{p-2} y\big) \cdot (z_t-y)\, \frac{\d t}{t}.
\end{align*}
The result is inherited from Lemma \ref{lem:elm-est-simon} and  $|y|+|z_t|\leq (2|y|+|x-y|)$.
\end{proof}
\smallskip

\begin{corollary} For $R>0$, $q>0$ there is $c_{q,R}>0$ such that
$$|a|^{q-1}a- |a-b|^{q-1} (a-b)\leq c_{q,R}\max(b, b^q)\quad\text{for all $|a|\leq R$, $b\geq0$.}$$  
\end{corollary}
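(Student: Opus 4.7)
Set $\phi(t) = |t|^{q-1}t$ so the left-hand side equals $\phi(a) - \phi(a-b)$. Since $\max(b, b^q) \geq 0$, it suffices to prove the two-sided bound $|\phi(a) - \phi(a-b)| \leq c_{q,R}\max(b, b^q)$. I would dichotomise on the magnitude of $b$ compared with $R$, treating the large-jump and small-jump regimes by different elementary estimates.

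\textbf{Large-jump regime ($b > 2R$).} Here $|a| \leq R \leq b/2$ and $|a-b| \leq |a| + b \leq 3b/2$, so by the triangle inequality and the identity $|\phi(t)| = |t|^q$,
\[
|\phi(a) - \phi(a-b)| \leq |a|^q + |a-b|^q \leq (b/2)^q + (3b/2)^q = C_q\, b^q \leq C_q \max(b, b^q).
\]
This handles every $b$ beyond the threshold with a constant depending only on $q$.

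\textbf{Small-jump regime ($b \leq 2R$).} Both $a$ and $a-b$ lie in $[-3R, 3R]$, and the argument splits according to whether $\phi$ is Lipschitz or only H\"older on this interval. If $q \geq 1$, the map $\phi$ is differentiable away from $0$ with $|\phi'(t)| = q|t|^{q-1} \leq q(3R)^{q-1}$, and the fundamental theorem of calculus (valid even when the segment crosses zero, since $|t|^{q-1}$ is locally integrable for $q \geq 1$) gives $|\phi(a)-\phi(a-b)| \leq q(3R)^{q-1}\, b$. If instead $0 < q < 1$, the subadditivity $x\mapsto x^q$ on $[0,\infty)$ together with the case analysis on signs of $a$ and $a-b$ (as in the proof of Lemma \ref{lem:elem-est}) yields the H\"older estimate $|\phi(a) - \phi(a-b)| \leq 2\, b^q$. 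In the first sub-case the bound is $\leq q(3R)^{q-1} \max(b, b^q)$ since $b \leq \max(b, b^q)$; in the second it is $\leq 2\max(b, b^q)$ since $b^q \leq \max(b, b^q)$.

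\textbf{Conclusion and obstacle.} Taking $c_{q,R}$ to be the maximum of the three constants produced in the two regimes completes the proof. The only delicate point is aligning the H\"older exponent with $\max(b, b^q)$ for $0 < q < 1$ in the small-$b$ range, since there one has $b^q > b$; this is the reason the right-hand side of the claim contains the $\max$ rather than a single power of $b$. No deeper machinery is needed: the argument rests on the elementary Lipschitz/H\"older behaviour of $t \mapsto |t|^{q-1}t$ together with a triangle-inequality argument once $|a-b|$ is comparable to $b$.
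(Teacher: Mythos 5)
Your proof is correct and follows essentially the same route as the paper's: a dichotomy on the size of $b$ relative to $R$, with a crude triangle-inequality bound when $b$ is large and the Lipschitz ($q\ge 1$) versus H\"older ($0<q<1$) behaviour of $t\mapsto|t|^{q-1}t$ when $b$ is small. The only cosmetic difference is that you re-derive these elementary estimates directly (derivative bound on a compact interval, subadditivity of $x\mapsto x^q$) instead of citing the paper's inequalities \eqref{eq:upper-elem-degen} and \eqref{eq:upper-elem-sing-sc}.
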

\begin{proof}
If $b>R$ then since $t\mapsto |t|^{q-1}t$ is increasing,
and $|a|\leq R$ we have  
$$-|a-b|^{q-1} (a-b)\leq  -|a-R|^{q-1} (a-R)\leq |a-R|^q\leq  2^qR^q.$$
Therefore, since $R^q\leq \max(b,b^q)$, we get
\begin{align*}
a^{q-1}a- |a-b|^{q-1} (a-b)\leq |a|^q+2^{q}R^q
\leq 2^{q+1}R^q\max(b,b^q).
\end{align*}
Now if $q\in (0, 1]$ then since $||x|^{q-1}x-|y|^{q-1}y|\leq 2^{2-q}|x-y|^q$ (see \eqref{eq:upper-elem-sing}) it follows that  
\begin{align*}
|a|^{q-1}a- |a-b|^{q-1} (a-b)\leq   2^{2-q} \max(b, b^q).
\end{align*}
Last, if $q>1$  and  $b\leq R$ then $|a-b|^{q-1}\leq 2^{q-1}R^{q-1}$. 
The estimate \eqref{eq:upper-elem-degen} for $p=q+1$ yields
\begin{align*}
|a|^{q-1}a- |a-b|^{q-1} (a-b)|
\leq qb(|a|^{q-1}+ (|a|+b)^{q-1}) 
\leq q2^qR^{q-1} \max(b, b^q).
\end{align*}
\end{proof}

\begin{corollary}\label{cor:elm-est-cutplus} For  $a_1, a_2, b_1, b_2\in \R$, let  $b=b_1-b_2, $ and $a=a_1-a_2$, there holds the following inequality
\begin{align*}
(\psi(b)- \psi(a)) &((b_1-a_1)_{+}-(b_2-a_2)_{+})\\
&\geq
\begin{cases} 
A'_p |(b_1-a_1)_{+}-(b_2-a_2)_{+}|^p&  p\geq2, 
\\
A'_p |(b_1-a_1)_{+}-(b_2-a_2)_{+}|^2(|b|+|a|)^{p-2} &  1< p<2,
\end{cases}
\end{align*}
where $\psi(t)=|t|^{p-2}t$, $t_{+}=\max(t, 0)$ and $t_{-}=\max(-t, 0)$ so that $t=t_{+}-t_{-}$.
\end{corollary}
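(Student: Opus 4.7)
The plan is to reduce the inequality directly to Lemma~\ref{lem:elm-est-simon} by exploiting two elementary properties of the truncation $x \mapsto x_+$: it is $1$-Lipschitz and nondecreasing. Set $c_i := b_i - a_i$ for $i=1,2$, so that $c_1 - c_2 = b - a$, and write $\alpha := (c_1)_+ - (c_2)_+$. The goal is the lower bound on $(\psi(b)-\psi(a))\alpha$.

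First I would establish the two structural observations that do all the work. Because $x \mapsto x_+$ is $1$-Lipschitz,
\[ |\alpha| = |(c_1)_+ - (c_2)_+| \le |c_1 - c_2| = |b-a|. \]
Because $x\mapsto x_+$ is nondecreasing and $\psi$ is strictly increasing on $\mathbb{R}$, both $\alpha$ and $\psi(b)-\psi(a)$ have the same sign as $b-a$ (and vanish simultaneously with it). Therefore
\[ (\psi(b)-\psi(a))\,\alpha \;=\; |\psi(b)-\psi(a)|\cdot|\alpha| \;\ge\; 0. \]

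Next I would invoke Lemma~\ref{lem:elm-est-simon} to control $|\psi(b)-\psi(a)|$ from below. The inequality \eqref{eq:under-elem-degen} gives, for $p\ge 2$,
\[ |\psi(b)-\psi(a)|\cdot|b-a| \;\ge\; A'_p|b-a|^p, \qquad \text{hence}\quad |\psi(b)-\psi(a)|\ge A'_p|b-a|^{p-1}, \]
and \eqref{eq:under-elem-sing} gives, for $1<p<2$,
\[ |\psi(b)-\psi(a)|\cdot|b-a| \;\ge\; A'_p|b-a|^2(|b|+|a|)^{p-2}, \quad \text{hence}\quad |\psi(b)-\psi(a)|\ge A'_p|b-a|(|b|+|a|)^{p-2} \]
(both statements trivial when $b=a$, the second one with the convention $0\cdot(|b|+|a|)^{p-2}=0$ if $a=b=0$).

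Combining these with Step~1 closes the argument: for $p\ge 2$,
\[ (\psi(b)-\psi(a))\,\alpha \;\ge\; A'_p|b-a|^{p-1}|\alpha| \;\ge\; A'_p|\alpha|^{p-1}|\alpha| \;=\; A'_p|\alpha|^p, \]
where $|b-a|\ge|\alpha|$ and $p-1\ge 1$ were used; and for $1<p<2$,
\[ (\psi(b)-\psi(a))\,\alpha \;\ge\; A'_p|b-a|\cdot|\alpha|\cdot(|b|+|a|)^{p-2} \;\ge\; A'_p|\alpha|^2(|b|+|a|)^{p-2}, \]
again using $|b-a|\ge|\alpha|$. The proof is essentially three lines once the sign/Lipschitz bookkeeping is settled, so there is no real obstacle beyond being careful that $|b-a|\ge|\alpha|$ may be inserted on the \emph{right} side with the correct exponent in each regime (exponent $p-1\ge 1$ when $p\ge 2$, exponent $1$ when $1<p<2$).
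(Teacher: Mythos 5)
Your proposal is correct and takes essentially the same route as the paper's proof: both reduce the claim to the identity $(\psi(b)-\psi(a))\alpha=|\psi(b)-\psi(a)|\,|\alpha|$ (the paper via a sign case analysis using the monotonicity of $\psi$, you via the monotonicity of $t\mapsto t_+$), then lower-bound $|\psi(b)-\psi(a)|$ by $A'_p|b-a|^{p-1}$ resp.\ $A'_p|b-a|(|b|+|a|)^{p-2}$ from \eqref{eq:under-elem-degen} and \eqref{eq:under-elem-sing}, and finish with the $1$-Lipschitz bound $|t_+-s_+|\leq|t-s|$. No gaps; your exponent bookkeeping in the two regimes matches the paper's argument.
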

\begin{proof}
First of all, sincxe $t\mapsto\psi(t)$ is increasing ($\psi'(t)= (p-1)|t|^{p-2}$) we get  
\begin{align}\label{eq:xxlouwerplus}
\begin{split}
(\psi(b)- \psi(a)) &((b_1-a_1)_{+}-(b_2-a_2)_{+}))
\\&= |\psi(b)-\psi(a)||(b_1-a_1)_{+}-(b_2-a_2)_{+}|. 
\end{split}
\end{align}
For instance, if  $(b_2-a_2)\leq 0$ and $(b_1-a_1)\geq0$ then we have 
\begin{align*}
b-a= (b_1-a_1)-(b_2-a_2)\geq (b_1-a_1)_{+}=(b_1-a_1)_{+}-(b_2-a_2)_{+}\geq0.
\end{align*}
In particular, $\psi(a)\leq \psi(b)$ and hence the relation \eqref{eq:xxlouwerplus} follows. The other cases can be derived analogously. 
On the other hand, the estimates \eqref{eq:under-elem-degen} and \eqref{eq:under-elem-sing}  yield
\begin{align*}
|\psi(b)- \psi(a)|\geq
\begin{cases} 
A'_p |b-a|^{p-1}&  p\geq2,
\\
A'_p  |b-a| (|b|+|a|)^{p-2} &  1<p<2,
\end{cases}
\end{align*}
Using the fact that  $|t_{+}-s_{+}|\leq |t-s|$, the desired estimates follow from the relation \eqref{eq:xxlouwerplus}. 
\end{proof}

Another important consequence of Lemma \ref{lem:elm-est} is the following.
\begin{theorem} Let $p,q\in [1,\infty) $. Under the notations of Lemma \ref{lem:elm-est},  we get  for all $x,y\in \R^d$ that 
\begin{align*}
& \big||x|^{p-2}x-|y|^{p-2}y\big|\leq \hspace*{-0.5ex} A_p\big[A'_{\frac{p-2}{q}+2}\big]^{-q}|x-y|^{1-q}
\big||x|^{\frac{p-2}{q}}x-|y|^{\frac{p-2}{q}} y\big|^q,\\
&\big(|x|^{p-2}x-|y|^{p-2}y\big)\cdot (x-y) \geq  \hspace*{-0.5ex} A'_p \big[A_{\frac{p-2}{q}+2}\big]^{-q}|x-y|^{2-q}
\big||x|^{\frac{p-2}{q}}x-|y|^{\frac{p-2}{q}} y\big|^q\,.
\end{align*}
\end{theorem}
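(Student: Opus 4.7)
The plan is to derive both inequalities by applying Lemma \ref{lem:elm-est} \emph{twice}: once with exponent $p$ in its original form, and once with the auxiliary exponent $r:=\tfrac{p-2}{q}+2$, so that $r-2=\tfrac{p-2}{q}$ and $|x|^{r-2}x=|x|^{(p-2)/q}x$. The goal is to eliminate the factor $(|x|+|y|)^{p-2}$ from the bounds on $|x|^{p-2}x-|y|^{p-2}y$ by trading it against the quantity $\bigl||x|^{(p-2)/q}x-|y|^{(p-2)/q}y\bigr|^q$, using the bounds Lemma \ref{lem:elm-est} supplies with exponent $r$. Throughout one may assume $x\neq y$, since all terms vanish otherwise; note also that $p,q\in[1,\infty)$ forces $r=\tfrac{p-2}{q}+2\ge 1$, so $A_r$ and $A'_r$ are well defined.

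\emph{First inequality.} Apply the lower (coercivity) estimate of Lemma \ref{lem:elm-est} with exponent $r$, and then Cauchy–Schwarz, to obtain
\[
A'_r\,|x-y|^2(|x|+|y|)^{r-2}\le \bigl(|x|^{r-2}x-|y|^{r-2}y\bigr)\cdot(x-y)\le \bigl||x|^{(p-2)/q}x-|y|^{(p-2)/q}y\bigr|\,|x-y|,
\]
which yields $A'_r|x-y|(|x|+|y|)^{(p-2)/q}\le \bigl||x|^{(p-2)/q}x-|y|^{(p-2)/q}y\bigr|$. Raising to the power $q\ge 1$ gives
\[
(|x|+|y|)^{p-2}\le (A'_r)^{-q}|x-y|^{-q}\bigl||x|^{(p-2)/q}x-|y|^{(p-2)/q}y\bigr|^{q}.
\]
Inserting this into the upper bound $|x|^{p-2}x-|y|^{p-2}y|\le A_p|x-y|(|x|+|y|)^{p-2}$ from Lemma \ref{lem:elm-est} (applied with exponent $p$) produces exactly the first inequality.

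\emph{Second inequality.} Start from the upper estimate with exponent $r$, namely $\bigl||x|^{(p-2)/q}x-|y|^{(p-2)/q}y\bigr|\le A_r\,|x-y|(|x|+|y|)^{(p-2)/q}$. Since both sides are nonnegative, raising to the power $q$ and rearranging gives
\[
(|x|+|y|)^{p-2}\ge (A_r)^{-q}|x-y|^{-q}\bigl||x|^{(p-2)/q}x-|y|^{(p-2)/q}y\bigr|^{q},
\]
valid regardless of the sign of $p-2$. Plugging this into the lower bound $\bigl(|x|^{p-2}x-|y|^{p-2}y\bigr)\cdot(x-y)\ge A'_p|x-y|^{2}(|x|+|y|)^{p-2}$ from Lemma \ref{lem:elm-est} immediately yields the second inequality.

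There is no substantive obstacle: the argument is purely algebraic and reduces to keeping track of the correct direction of each inequality when raising $(|x|+|y|)^{(p-2)/q}$ to the $q$-th power. The only point worth care is the sign of $p-2$, which is handled automatically because $|x|+|y|\ge 0$ and both sides of the estimates with exponent $r$ are nonnegative, so the exponent $q\ge 1$ preserves the inequalities.
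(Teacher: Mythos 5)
Your proof is correct and follows essentially the same route as the paper: apply Lemma \ref{lem:elm-est} with both exponents $p$ and $\tfrac{p-2}{q}+2$, trade the factor $(|x|+|y|)^{p-2}$ against $\bigl||x|^{(p-2)/q}x-|y|^{(p-2)/q}y\bigr|^{q}$ via the two bounds with the auxiliary exponent, and substitute back. Your write-up is in fact slightly more explicit than the paper's (spelling out the Cauchy--Schwarz step and the observation $\tfrac{p-2}{q}+2\ge 1$), but the argument is the same.
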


\begin{proof}
Since  $\frac{p-2}{q}+2\geq1$, it suffices to apply Lemma \ref{lem:elm-est}  for $p$ and $\frac{p-2}{q}+2$.
\begin{align*}
&\big[(|x|+|y|)^{\frac{p-2}{q}}\big]^q
\leq \big[A'_{\frac{p-2}{q}+2}\big]^{-q}|x-y|^{-q}\big||x|^{\frac{p-2}{q}}x-|y|^{\frac{p-2}{q}} y\big|^q,\\
&\big[(|x|+|y|)^{\frac{p-2}{q}}\big]^q
\geq \big[A_{\frac{p-2}{q}+2}\big]^{-q}|x-y|^{-q}\big||x|^{\frac{p-2}{q}}x-|y|^{\frac{p-2}{q}} y\big|^q.
\end{align*} 
\end{proof}
%%%%%%%%%%%%%%%%%%%%%%%%%%%%%%%%
\section{}%{Basics properties of the operators $L$ and $\cN$} \label{sec:appendix-properties}
\label{sec:appendix-elment} 
\subsection{Pointwise evaluation of the operator $L$ and $\cN$} \label{sec:appendix-pointwise} 
%%%%%%%%%%%%%%%%%%%%%%%%%%%%%%%%
We aim in this appendix to provide ancillary results about the $p$-L\'{e}vy operator that enters the scope at hand.
As usual we assume $\nu$ is symmetric and  $p$-L\'{e}vy integrable, i.e., $\nu(h)=\nu(-h)$  and $\nu\in L^1(\R^d, 1\land|h|^p)$. 
\begin{align*}
Lu (x)&= 2\pv \int_{\R^d}\psi(u(x)-u(y))\nuxminy\d y = \lim_{\varepsilon\to 0} L_{\varepsilon} u (x),\\
L_{\varepsilon} u (x)&= 2\int_{\R^d\setminus B_{\varepsilon}(x) }\!\! \psi(u(x)-u(y))\nuxminy\d y \qquad\hbox{$(x\in \R^d; \eps >0)$}.
\end{align*}
The pointwise definition of $Lu(x)$ and  $-\Delta_p u(x)$ for  $u\in C^2_b(\R^d)$ is warranted in the degenerate case (also often called the superquadratic case), i.e. $p\geq 2$. However, in the situation singular case (also often called the subquadratic case), i.e.,  $1<p<2$, 
the pointwise definition of $Lu(x)$ and  $-\Delta_p u(x) $ might not exist even for a bona fide function  $u\in C_c^\infty(\R^d)$.
In general, it is difficult to characterize a set of all functions on which the operators $L$ and  $-\Delta_p$  act in a reasonable pointwise sense. A reasonable alternative in both cases, i.e., $1<p<\infty$, is rather to  evaluate  $Lu$ and $\Delta_pu$ in the generalized sense, i.e., in the weak sense or via their respective associated energies forms. For instance, by duality the following identifications are well-defined; 
\begin{align*}
\langle Lu, \varphi\rangle&=
\mathcal{E}_{\R^d}(u,\varphi)\qquad u,\varphi\in W^p_\nu(\R^d),\\
\langle \Delta_pu, \varphi\rangle&=\int_{\R^d} |\nabla u(x)|^{p-2}\nabla  u(x)\cdot\nabla\varphi(x)\d x \qquad u,\varphi\in W^{1,p}(\R^d). 
\end{align*}
From the duality point of view, one obtains that $Lu\in (W^p_\nu(\R^d))' $ and $ \Delta_p u\in (W^{1,p}(\R^d))'$. 
It turns out that the operators $L: W^p_\nu(\R^d)\to (W^p_\nu(\R^d))'$ and $\Delta_p:W^{1,p}(\R^d)\to (W^{1,p}(\R^d))'$ are well-defined. We emphasize that 
$\langle Lu, \varphi\rangle$ and  $\langle \Delta_pu, \varphi\rangle$  are given as above. 
Morally, the  nonlocal operator $L$ may be as good or as bad as the local operator $-\Delta_p$.  The operator $L$ can be seen as a prototype of a nonlinear nonlocal operator of divergence just as the $-\Delta_p$ is a prototype of a nonlinear local operator of divergence.

\par Next, to investigate the pointwise evaluation of $Lu$, we need to introduce consider H\"{o}lder spaces.  Denote the space $C_b^{k+\gamma}(\R^d)=\big\{u\in C^k(\R^d)\,:\, \|u\|_{C_b^{k,\gamma}(\R^d)}<\infty\big\}$, $k\in \mathbb{N}$ and $0\leq \gamma\leq 1$ with the norm
$$\|u\|_{C_b^{k,\gamma}(\R^d)}=  \sum_{|\beta|\leq k} \sup_{x\in \R^d} |\partial^\beta u(x)|+  \sum_{|\beta|=k} \sup_{ x\neq y\in \R^d} \frac{|\partial^\beta u(x)- \partial^\beta u(y)|}{|x-y|^\gamma}.$$

For $0\leq \gamma\leq 1$, we define
\begin{align}\label{eq:exponent-levy}
\widehat{p}_\gamma=
\begin{cases}
p+\gamma-1& \quad \text{if}\quad p\geq2,\\
(\gamma+1)(p-1)& \quad \text{if}\quad 1<p<2.
\end{cases}
\end{align}

\begin{remark}
Note that $\widehat{p}_\gamma\leq p$ and hence we find that $L^1(\R^d,1\land|h|^{\widehat{p}_\gamma})\subset L^1(\R^d,1\land|h|^p).$ For the particular case $\gamma=0$ we have $p_0=p-1$ and  the case $\gamma=1$ we have
\begin{align*}
\widehat{p}_1=
\begin{cases}
p& \text{if } p\geq2,\\
2(p-1) &\text{if } 1<p<2. 
\end{cases}
\end{align*}

\end{remark}
For the subclass of kernels  $\nu\in L^1(\R^d, 1\land |h|^{\widehat{p}_\gamma})$, it is possible to evaluate $Lu$ pointwise when $u$ is sufficiently smooth.  

\begin{proposition}\label{prop:uniform-cont}
Assume $ u\in C^{1+\gamma}_b(\R^d) $, $0< \gamma\leq 1$ and $\nu\in L^1(\R^d, 1\land|h|^{\widehat{p}_\gamma})$.
\begin{enumerate}[(i)]
\item The map $x\mapsto L u(x)$ is bounded and uniformly continuous. Moreover,  
\begin{align*}
Lu(x) 
& =-\int_{\R^d} \big[\psi(u(x+h)-u(x))+\psi(u(x-h)-u(x))\big]\,\nu(h)\,\d h\\
&= -2\int_{\R^d} \big[\psi(u(x+h)-u(x))-\mathds{1}_{B_1(0)}(h) \psi(\nabla u(x)\cdot h)\big]\,\nu(h)\,\d h .
\end{align*} 
\item The map $x\mapsto L_\varepsilon u(x),$ $0<\varepsilon<1,$ is uniformly continuous.
\item The family $(L_\varepsilon u(x))_\varepsilon$ is uniformly bounded and uniformly converges to $Lu,$ i.e.
\begin{align*}
\|L_\eps u-Lu\|_{L^\infty(\R^d)}\xrightarrow[]{\eps \to 0}0.
\end{align*}
\end{enumerate}
\end{proposition}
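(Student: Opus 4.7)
The plan is to exploit the symmetry $\nu(h)=\nu(-h)$ together with the oddness of $\psi$ to symmetrize the integrand in $L_\eps u(x)$, after which the required gain of order $\widehat p_\gamma$ at the origin can be extracted from the Simon-type inequalities in Lemma~\ref{lem:elm-est-simon}. Explicitly, by the change of variables $y=x-h$, using $\psi(-t)=-\psi(t)$ and averaging against $h\mapsto-h$ via $\nu(h)=\nu(-h)$, I would first rewrite
\begin{align*}
L_\eps u(x)=-\int_{|h|>\eps}\bigl[\psi(u(x+h)-u(x))+\psi(u(x-h)-u(x))\bigr]\nu(h)\,\d h.
\end{align*}

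The core technical step will be the pointwise bound, hereafter denoted $(\ast)$,
\begin{align*}
\bigl|\psi(u(x+h)-u(x))+\psi(u(x-h)-u(x))\bigr|\le C\,(1\land|h|^{\widehat p_\gamma}),
\end{align*}
with $C$ depending only on $p$ and $\|u\|_{C^{1+\gamma}_b(\R^d)}$. For $|h|\ge 1$ this follows from $\|u\|_\infty<\infty$. For small $|h|$ I write $u(x\pm h)-u(x)=\pm\alpha+R_{\pm}$ with $\alpha=\nabla u(x)\cdot h$ and $|R_{\pm}|\le \|u\|_{C^{1+\gamma}_b}|h|^{1+\gamma}$; the identity $\psi(\alpha+R_+)+\psi(-\alpha+R_-)=\psi(\alpha+R_+)-\psi(\alpha-R_-)$ reduces $(\ast)$ to estimating a single difference of $\psi$. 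This I handle by \eqref{eq:upper-elem-degen} when $p\ge 2$, giving $\le A_p(|R_+|+|R_-|)(|\alpha+R_+|+|\alpha-R_-|)^{p-2}\le C|h|^{p-1+\gamma}$, and by \eqref{eq:upper-elem-sing} when $1<p<2$, giving $\le A_p(|R_+|+|R_-|)^{p-1}\le C|h|^{(1+\gamma)(p-1)}$. Both exponents coincide with $\widehat p_\gamma$ by \eqref{eq:exponent-levy}.

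Granted $(\ast)$ and $\nu\in L^1(\R^d,1\land|h|^{\widehat p_\gamma})$, boundedness in $(i)$, uniform boundedness of $(L_\eps u)_\eps$ in $(iii)$, and the first representation of $Lu$ follow by dominated convergence as $\eps\to 0$. The second representation is obtained by adding and subtracting $2\,\mathds{1}_{B_1(0)}(h)\psi(\nabla u(x)\cdot h)$: the added piece integrates to zero by the symmetry of $\nu$ and the oddness of the integrand in $h$, while the new single-difference integrand $\psi(u(x+h)-u(x))-\mathds{1}_{B_1(0)}(h)\psi(\nabla u(x)\cdot h)$ inherits an analogous $C(1\land|h|^{\widehat p_\gamma})$ bound by the same Taylor-plus-Simon argument applied to one difference instead of two. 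The uniform convergence in $(iii)$ is then immediate from $(\ast)$, since
\begin{align*}
\|Lu-L_\eps u\|_{L^\infty(\R^d)}\le C\int_{|h|\le\eps}(1\land|h|^{\widehat p_\gamma})\nu(h)\,\d h\xrightarrow{\eps\to 0}0.
\end{align*}

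For the uniform continuity statements in $(i)$ and $(ii)$, I would fix $\eta>0$ and split the integral at $|h|=r$: choose $r$ so small that $(\ast)$ makes the $|h|\le r$ piece bounded by $\eta$ uniformly in $x$; on $|h|>r$, the integrand is jointly continuous in $x$ with a modulus inherited from the $\gamma$-Hölder continuity of $\nabla u$ and the uniform continuity of $u$, and is dominated by $(2\|u\|_\infty)^{p-1}\nu(h)\mathds{1}_{|h|>r}\in L^1$, so dominated convergence yields $|L_\eps u(x)-L_\eps u(x')|\to 0$ and $|Lu(x)-Lu(x')|\to 0$ as $|x-x'|\to 0$, uniformly in $x$. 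The hard part will be keeping the two Simon regimes $p\ge 2$ and $1<p<2$ organised in the proof of $(\ast)$ so as to land precisely on the sharp exponent $\widehat p_\gamma$ of \eqref{eq:exponent-levy}; once this case-by-case bookkeeping is set up, everything else reduces to routine applications of dominated convergence.
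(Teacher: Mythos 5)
Your proposal is correct and follows essentially the same route as the paper: symmetrize using the symmetry of $\nu$ and the oddness of $\psi$, derive the second-difference and gradient-compensated bounds of order $1\land|h|^{\widehat{p}_\gamma}$ from Lemma \ref{lem:elm-est-simon} together with the $C^{1+\gamma}_b$ regularity, and then conclude boundedness, both representations, uniform convergence and uniform continuity by dominated convergence plus a uniform-in-$h$, H\"{o}lder-in-$x$ estimate of the integrand. The only (repairable) looseness is the phrase that the added term $\mathds{1}_{B_1(0)}(h)\psi(\nabla u(x)\cdot h)$ ``integrates to zero'': this integral need not converge absolutely, so the cancellation must be read pointwise under $h\mapsto -h$ after subtraction (equivalently on the annuli $B_1\setminus B_\eps$), which is exactly how the paper handles it.
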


\begin{proof} 
Fix $x\in \R^d$, note that $h \mapsto \psi(\nabla u(x)\cdot h)\nu(h) $ has vanishing integral over $B_1\setminus B_\varepsilon $, i.e., 
\begin{align*}
\int_{B_1(0)\setminus B_\varepsilon(0)} \psi(\nabla u(x)\cdot h) \nu(h)\d h= 0\qquad\text{for all}~~ 0<\varepsilon<1.
\end{align*}
This observation readily implies that 
\begin{align*}
L_{\varepsilon} u(x) &= -2\int_{B^c_\varepsilon (0)} \hspace*{-1ex}\big[\psi(u(x+h)-u(x))-\mathds{1}_{B_1(0)}(h) \psi(\nabla u(x)\cdot h)\big]\nu(h)\d h,\\
\intertext{ whereas, the simple  change of variables $y=x\pm h$  gives}
L_{\varepsilon} u(x) 
&=-\int_{ B^c_\varepsilon(0) } \big[\psi(u(x+h)-u(x))+\psi(u(x-h)-u(x))\big]\nu(h)\d h.
\end{align*}
We emphasize that  $L_{\varepsilon} u(x) $ exists, since $\nu\in L^1(\R^d\setminus B_\varepsilon(0))$. 
For $h\in \R^d$, by the mean value Theorem,  $u(x+ h)- u(x)= \nabla u(x+ \tau h)\cdot h$ for some $\tau\in (0,1)$.  Thus the estimates \eqref{eq:upper-elem-degen} and \eqref{eq:upper-elem-sing} with $a= \nabla u(x- \tau_1 h)\cdot h$ and $b= \nabla u(x+ \tau_2 h)\cdot h$ for suitable $\tau_1,\tau_2\in [0,1]$ yield
\begin{align}\label{eq:second-difference}
&\Big|\psi(u(x+h)-u(x))\hspace*{-0.5ex}+ \hspace*{-0.5ex}\psi(u(x-h)-u(x)) \Big|
\hspace*{-0.5ex}\leq\hspace*{-0.5ex} C\|u\|^{p-1}_{ C_b^{1+\gamma}(\R^d)}(1 \land |h|^{\widehat{p}_\gamma}),\\
&\Big|\psi(u(x+h)-u(x))-\psi(\nabla u(x)\cdot h)\Big|
\leq C\|u\|^{p-1}_{ C_b^{1+\gamma}(\R^d)}(1 \land |h|^{\widehat{p}_\gamma}). \label{eq:gradient-difference}
\end{align}
Here, $C>0$ is a generic constant only depending on $p$ and $d$. In view of the estimates \eqref{eq:second-difference} and \eqref{eq:gradient-difference}, since $h\mapsto (1 \land |h|^{\widehat{p}_\gamma})\nu(h)$ is integrable, the boundedness of $x\mapsto L u(x)$ and the uniform boundedness of $x\mapsto L_\varepsilon u(x)$ follow and one also gets rid of the principal value.  In addition, the uniform convergence of $(L_\varepsilon u)_\varepsilon$ follows since
\begin{align*}
\|L_\eps u-Lu\|_{L^\infty(\R^d)} \leq C\|u\|^{p-1}_{ C^{1+\gamma}_b(\R^d)} \int_{B_\varepsilon(0)}(1 \land |h|^{\widehat{p}_\gamma})\nu(h)\d h \xrightarrow[]{\varepsilon\to0}0. 
\end{align*}
Turning to the uniform continuity, we fix $x,z\in \R^d $ such that $|x-z|\leq \delta$ with $0<\delta<1$.
For every h $\in \R^d$, $h \ne 0$,  the estimates \eqref{eq:upper-elem-degen} and \eqref{eq:upper-elem-sing} imply
\begin{align}\label{eq:kappa-ineq}
|\psi(u(x) -u(x\pm h))-\psi(u(z) -u(z\pm h))|
\leq C\delta^{\gamma\kappa_p}\|u\|^{p-1}_{ C^{1+\gamma}_b(\R^d)},
\end{align}
where $\kappa_p=1$ if $p\geq2$ and $\kappa_p={p-1}$ if $1<p<2$. This combined with \eqref{eq:second-difference} yields 
\begin{align*}
|\psi(u(x) -u(x\pm h))-\psi(u(z) -u(z\pm h))|
\leq C\|u\|^{p-1}_{ C^{1+\gamma}_b(\R^d)} (\delta^{\gamma\kappa_p}\land|h|^{\widehat{p}_\gamma}).
\end{align*}
Therefore, the  integrability of $h\mapsto (1 \land |h|^{\widehat{p}_\gamma})\nu(h)$ implies the uniform continuity as follows 
\begin{align*}
\|L u(x)-L u(z) \|_{L^\infty(\R^d)} \leq C\|u\|^{p-1}_{ C^{1+\gamma}_b(\R^d)}  \int_{\R^d}(\delta^{\gamma\kappa_p}\land|h|^{\widehat{p}_\gamma})\nu(h)\d h \xrightarrow{\delta \to0}0. 
\end{align*}
The uniform continuity of $x\mapsto L_\varepsilon u(x)$ follows analogously. 
\end{proof}

It is natural to seek  for a larger  functional space on which $Lu$ is defined.
In an attempt to answer this question, assume in addition that $\nu:\R^d  \setminus\{0\}\to [0,\infty) $ is unimodal, i.e. $\nu$ is radial and almost decreasing, i.e.,  there is a constant $c$ such that $\nu(y)\leq c \nu(x)$ whenever $|y|\geq |x|$. Let us define the function 
$$\widehat{\nu}(x)=\nu(\tfrac{1}{2}(1+|x|)).$$ 

\begin{remark}{\, }  Assume  $\nu\in L^1(\R^d\setminus B_\varepsilon (0))$, $\varepsilon>0$ and $\nu$ is unimodal.
\begin{enumerate}[$(i)$]
\item It is straightforward to show that $\widehat{\nu}\in L^1(\R^d )\cap L^\infty(\R^d )$. 
\item  The space $L^{p-1}(\R^d ,\widehat{\nu})$ contains the spaces $ C^{1+\gamma}_{\loc}(\R^d )\cap L^\infty(\R^d )$, $L^\infty(\R^d )$ and $C_b^{1+\gamma}(\R^d )$.
\item 	If $\nu(h) = |h|^{-d-sp}$,  $s\in (0,1)$, then $\widehat{\nu}(h) \asymp (1+|h|)^{-d-sp}$.
\end{enumerate}
\end{remark}
\begin{proposition}\label{prop:L1-def-L}
Assume  that $\nu\in L^1(\R^d, 1\land|h|^{\widehat{p}_\gamma}) $ with $0< \gamma\leq 1$, and that $\nu$ is almost decreasing, i.e., there is $c>0$ such that $\nu(x)\geq c\nu(y)$ whenever $|x|\leq |y|$. 
\begin{enumerate}[$(i)$]
\item If $u\in C^{1+\gamma}_{\loc}(\R^d )\cap L^{p-1}(\R^d ,\widehat{\nu})$ then $Lu (x)$ is well defined for all $x\in \R^d$. 
\item  If $u\in C^{1+\gamma}_b(\R^d )$ and $\supp u\subset B_R(0)$, $R\geq 1$ then there is $C=C(R,d,\nu)>0$
\begin{align}\label{eq:estimate-test}
|Lu(x)|\leq C\|u\|^{p-1}_{ C^2_b(\R^d)} \widehat{\nu}(x)\qquad \text{for all}~x\in \R^d.
\end{align}
In particular, it holds that  $Lu\in L^1(\R^d)\cap L^\infty(\R^d )$.
\end{enumerate}
\end{proposition}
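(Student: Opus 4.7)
The plan is to split $Lu(x)$ into a near-field piece on $\{|y-x|<1\}$ and a far-field piece on $\{|y-x|\geq 1\}$, exploiting the two hypotheses in complementary roles: local $C^{1+\gamma}$ regularity will tame the singularity of $\nu$ near $h=0$ via the symmetry-cancellation trick already used in Proposition \ref{prop:uniform-cont}, while the almost-decreasing property of $\nu$ will let us compare $\nu(x-y)$ with the weight $\widehat{\nu}(y)$ in the far tail, so that the assumption $u\in L^{p-1}(\R^d,\widehat{\nu})$ becomes usable.

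For (i), fix $x\in\R^d$. Since $\int_{B_1(0)}\psi(\nabla u(x)\cdot h)\nu(h)\,\d h=0$ by symmetry of $\nu$ and oddness of $\psi$, we decompose
\begin{align*}
Lu(x) &= -2\int_{B_1(0)}\bigl[\psi(u(x+h)-u(x))-\psi(\nabla u(x)\cdot h)\bigr]\nu(h)\,\d h\\
&\qquad -2\int_{B_1^c(0)}\psi(u(x+h)-u(x))\nu(h)\,\d h.
\end{align*}
The near-field integrand is bounded pointwise by $C\|u\|^{p-1}_{C^{1+\gamma}(\overline{B_1(x)})}|h|^{\widehat{p}_\gamma}\nu(h)$, via the local version of \eqref{eq:gradient-difference} (only the regularity on $B_1(x)$ enters), hence integrable on $B_1(0)$ by the hypothesis $\nu\in L^1(\R^d,1\wedge|h|^{\widehat{p}_\gamma})$. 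For the far-field, the convexity bound $|\psi(u(x+h)-u(x))|\leq 2^{p-1}(|u(x)|^{p-1}+|u(x+h)|^{p-1})$ together with $\nu\in L^1(B_1^c(0))$ reduces matters to checking that $y\mapsto|u(y)|^{p-1}\nu(x-y)$ is integrable on $\{|y-x|\geq 1\}$.

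The main obstacle is controlling this tail. Splitting the region at $|y|=2|x|+1$: on the intermediate piece $\{|y-x|\geq 1,\ |y|\leq 2|x|+1\}$, $u$ is bounded on the compact ball $\overline{B_{2|x|+1}(0)}$ (by $C^{1+\gamma}_{\loc}$-regularity) and $\nu$ is integrable away from the origin, so this piece is clearly finite. On the truly far piece $\{|y|>2|x|+1\}$, the elementary inequality $|y-x|\geq|y|-|x|\geq\tfrac{1+|y|}{2}$ holds, hence by the almost-decreasing hypothesis $\nu(x-y)\leq C\,\nu(\tfrac{1+|y|}{2})=C\,\widehat{\nu}(y)$; consequently
\[
\int_{|y|>2|x|+1}|u(y)|^{p-1}\nu(x-y)\,\d y \leq C\,\|u\|^{p-1}_{L^{p-1}(\R^d,\widehat{\nu})}<\infty,
\]
which completes (i).

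For (ii), with $\supp u\subset B_R(0)$ and $R\geq 1$, split $\R^d$ into $\{|x|\leq 2R+1\}$ and its complement. On the bounded region, Proposition \ref{prop:uniform-cont} furnishes the uniform bound $\|Lu\|_\infty\leq C\|u\|^{p-1}_{C^{1+\gamma}_b}$, while almost-decreasing monotonicity gives $\widehat{\nu}(x)\geq C^{-1}\nu(R+1)$ on this ball, so dividing yields the claimed estimate. On $\{|x|>2R+1\}$ the decisive fact is that $u(x)=0$, so by oddness of $\psi$ we have $Lu(x)=-2\int_{B_R(0)}\psi(u(y))\nu(x-y)\,\d y$; moreover, for every $y\in B_R(0)$, $|x-y|\geq|x|-R\geq\tfrac{1+|x|}{2}$ (precisely because $|x|>2R+1$), whence $\nu(x-y)\leq C\,\widehat{\nu}(x)$. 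Combined with $|\psi(u(y))|\leq\|u\|^{p-1}_\infty$ this gives $|Lu(x)|\leq 2C|B_R(0)|\,\|u\|^{p-1}_\infty\,\widehat{\nu}(x)$, establishing \eqref{eq:estimate-test}. Since Proposition \ref{prop:weigth-prop} ensures $\widehat{\nu}\in L^1(\R^d)\cap L^\infty(\R^d)$, the pointwise bound immediately yields $Lu\in L^1(\R^d)\cap L^\infty(\R^d)$.
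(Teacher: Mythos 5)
Your proof is correct and follows essentially the same route as the paper's: you tame the near-field with the $C^{1+\gamma}$-compensated difference bounds \eqref{eq:second-difference}--\eqref{eq:gradient-difference}, control the tail with the comparison $\nu(x-y)\leq C\,\widehat{\nu}(y)$ valid for $|y|\geq 2|x|+1$, and in (ii) use $u(x)=0$ for large $|x|$ plus a positive lower bound for $\widehat{\nu}$ on a bounded set, exactly as the paper does (it splits at $|y|\leq 2|x|+1$ and $|x|\leq 4R$). Two cosmetic remarks: the compensator integral over $B_1(0)$ should be read as a principal value over symmetric annuli (it need not converge absolutely since $\widehat{p}_\gamma>p-1$), precisely as in Proposition \ref{prop:uniform-cont}, and the fact $\widehat{\nu}\in L^1(\R^d)\cap L^\infty(\R^d)$ is the remark of Appendix \ref{sec:appendix-pointwise} for $\widehat{\nu}(x)=\nu(\tfrac12(1+|x|))$ rather than Proposition \ref{prop:weigth-prop}, which concerns the different weight $\widehat{\nu}_{R}$.
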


\begin{proof} $(i)$ Set $R=2|x|+1$, $x\in \R^d$, then for $y\in B^c_R(0)$ we have 
$|x-y|\geq
\frac{1}{2}(1+|y|)$ so that  $\nu(x-y)\leq c\widehat{\nu}(y)$. Thus, for $u \in L^{p-1}(\R^d, \widehat{\nu})$ we have 
\begin{align*}
\Big|\int_{B^c_R(0)} \psi(u(x)-u(y))\nu(x-y)\d y\Big|
&\leq C|u(x)|^{p-1}\|\widehat{\nu}\|_{L^1(\R^d)}\\
&+ C\int_{B^c_R(0)} |u(y)|^{p-1}\widehat{\nu}(y)\d y<\infty. 
\end{align*}
We conclude that $Lu(x)$ exists since by exploiting \eqref{eq:second-difference} we get
\begin{align*}
&\Big|\int_{B_R(0)} \psi(u(x)-u(y))\nu(x-y)\d y\Big|
 \\&= \frac12\Big| \int_{B_R(0)} \psi( u(x)-u(x-h)) +\psi( u(x)-u(x+h))\nu(h)\d h\Big|\\
&\leq C\|u\|^{p-1}_{C^{1+\gamma}(B_R(0))}\int_{\R^d} (1\land |h|^{\widehat{p}_\gamma})\nu(h)\d h<\infty. 
\end{align*}

$(ii)$ Assume $\supp u\subset B_R(0)$ for some $R\geq 1$. If $|x|\geq 4R $ then  $u(x)=0$. For $y \in B_R(0)$, we get  $\nu(x-y) \leq c \widehat{\nu}(x)$ since $|x-y|\geq \frac{|x|}{2}+R\geq \frac{1}{2}(1+|x|).$  Accordingly, 
\begin{align*}
|Lu(x) |\leq \int_{B_R(0)} |u(y)|^{p-1}\nu(x-y) \d y \leq c|B_R(0)|\|u\|^{p-1}_{C^{1+\gamma}_b(\R^d)} \widehat{\nu}(x).
\end{align*}
Whereas, if $|x|\leq 4R$ then $\frac{1}{2}(1+ |x|)\leq 4R$ and hence $\widehat{\nu}(x)\geq c_1$ with $c_1= c\nu(4R)>0$. The proof of \eqref{eq:estimate-test} is complete as follows using \eqref{eq:second-difference}, 
\begin{align*}
|Lu(x)|\leq c_1^{-1}C\|\nu\|_{L^1(\R^d,1\land|h|^{\widehat{p}_\gamma})} \|u\|^{p-1}_{C^{1+\gamma}_b(\R^d)} \widehat{\nu}(x).
\end{align*} 
\end{proof}
Next we show that $Lu$ remains continuous under less regularity on $u$ provided that $\nu$ is in the subclass of kernels $ L^1(\R^d, 1\land|h|^{\gamma(p-1)})\subset L^1(\R^d, 1\land|h|^p)$. 
\begin{theorem}
Let $u\in C^{\gamma}_b(\R^d), $ $0<\gamma\leq 1$. Assume that 
$\nu\in L^1(\R^d, 1\land|h|^{\gamma(p-1)})$ then $Lu$ bounded and uniform continuous. Moreover,  for all $x, z\in \R^d$,  we have 
\begin{align*}
|Lu(x)-Lu(z)|
\leq C\|u\|^{p-1}_{C^{\gamma}(\R^d)}\omega(|x-z|), 
\end{align*}
where $\omega: (0, \infty)\to (0, \infty)$ is an increasing modulus of continuity such that $\omega(r)\xrightarrow{r\to 0}0. $
\end{theorem}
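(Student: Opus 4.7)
The plan is to extract the modulus of continuity from dominated convergence applied to a uniformly integrable pointwise bound on the integrand of $Lu(x)-Lu(z)$. Under $u\in C^\gamma_b(\R^d)$ one has $|u(x+h)-u(x)|^{p-1}\le C\|u\|_{C^\gamma_b(\R^d)}^{p-1}(1\land|h|^{\gamma(p-1)})$, and combining this with $\nu\in L^1(\R^d,1\land|h|^{\gamma(p-1)})$ shows that $Lu$ is given by an absolutely convergent integral, with
\[
\|Lu\|_{L^\infty(\R^d)}\le C\|u\|_{C^\gamma_b}^{p-1}\int_{\R^d}(1\land|h|^{\gamma(p-1)})\nu(h)\,\d h.
\]
Using the symmetry of $\nu$ and the oddness of $\psi$ one first rewrites, as in Proposition \ref{prop:uniform-cont},
\[
Lu(x)-Lu(z)=-2\int_{\R^d}F_h(x,z)\,\nu(h)\,\d h,\qquad F_h(x,z):=\psi(a_x)-\psi(a_z),
\]
with $a_x=u(x+h)-u(x)$ and $a_z=u(z+h)-u(z)$.

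Next I would derive a pointwise estimate for $F_h(x,z)$ by applying the Simon inequalities \eqref{eq:upper-elem-degen}, \eqref{eq:upper-elem-sing}. From the H\"older and boundedness hypotheses one has $|a_x|,|a_z|\le 2\|u\|_{C^\gamma_b}(1\land|h|^\gamma)$ and $|a_x-a_z|\le 2\|u\|_{C^\gamma}\min(|x-z|^\gamma,|h|^\gamma)$. In the singular case $1<p<2$ the inequality $|\psi(s)-\psi(t)|\le A_p|s-t|^{p-1}$ yields $|F_h(x,z)|\le C\|u\|_{C^\gamma_b}^{p-1}\min(|x-z|^{\gamma(p-1)},|h|^{\gamma(p-1)})$. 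In the degenerate case $p\ge 2$ the inequality $|\psi(s)-\psi(t)|\le A_p|s-t|(|s|+|t|)^{p-2}$ gives $|F_h(x,z)|\le C\|u\|_{C^\gamma_b}^{p-1}\min(|x-z|^\gamma,|h|^\gamma)(1\land|h|^{\gamma(p-2)})$. Combining either of these estimates with the crude bound $|F_h(x,z)|\le|a_x|^{p-1}+|a_z|^{p-1}\le C\|u\|_{C^\gamma_b}^{p-1}(1\land|h|^{\gamma(p-1)})$ I obtain, in both ranges of $p$, an estimate of the form $|F_h(x,z)|\le C\|u\|_{C^\gamma_b}^{p-1}\eta(|x-z|,h)$, where $\eta(\delta,h)\le 1\land|h|^{\gamma(p-1)}$ uniformly in $\delta$, and $\eta(\delta,h)\to 0$ as $\delta\to 0$ for every fixed $h\ne 0$.

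The conclusion then follows by dominated convergence: defining $\omega(r):=2\int_{\R^d}\eta(r,h)\nu(h)\,\d h$ (and replacing $\eta(\rho,\cdot)$ by $\sup_{\rho\le r}\eta(\rho,\cdot)$ to make $\omega$ monotone if desired), the hypothesis $\nu\in L^1(\R^d,1\land|h|^{\gamma(p-1)})$ provides the $L^1$-majorant while the pointwise vanishing of $\eta(r,h)$ as $r\to 0$ yields $\omega(r)\to 0$. Plugging back into the integral representation produces $|Lu(x)-Lu(z)|\le C\|u\|_{C^\gamma_b}^{p-1}\omega(|x-z|)$, which in particular proves the uniform continuity of $Lu$.

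The main obstacle is the case $p>2$: the Simon estimate alone produces a factor $(1\land|h|^{\gamma(p-2)})$ which is not $\nu$-integrable near the origin under the standing hypothesis, since $|h|^{\gamma(p-2)}\ge|h|^{\gamma(p-1)}$ for $|h|\le 1$. Only upon taking the minimum with the crude bound $(1\land|h|^{\gamma(p-1)})$---equivalently, by exploiting that $\min(|x-z|^\gamma,|h|^\gamma)\,(1\land|h|^{\gamma(p-2)})\le|h|^{\gamma(p-1)}$ whenever $|h|\le|x-z|$---does one recover an integrand compatible with the integrability class of $\nu$ and amenable to the dominated convergence theorem. This same device supplies the required $L^1$-majorant uniformly across the full range $1<p<\infty$.
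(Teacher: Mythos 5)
Your proof is correct, and it rests on the same two elementary ingredients as the paper's proof: the Simon inequalities \eqref{eq:upper-elem-degen}, \eqref{eq:upper-elem-sing} to control the dependence on the base points $x,z$, and the crude H\"older bound $|\psi(u(\cdot)-u(\cdot+h))|\leq \|u\|^{p-1}_{C^{\gamma}_b(\R^d)}(1\land|h|^{\gamma(p-1)})$, which also justifies dropping the principal value. Where you differ is in how the modulus is extracted. The paper splits the integral at a radius $r$, bounds the far part by $g(r)|x-z|^{\gamma\kappa_p}$ with $g(r)=\int_{B_r^c(0)}\nu$ (the estimate \eqref{eq:kappa-ineq}, applied uniformly in $h$) and the near part by $h(r)=\int_{B_r(0)}|h|^{\gamma(p-1)}\nu$, and then balances the two terms by choosing $r=\rho^{-1}(|x-z|)$ with $\rho=(h/g)^{1/\gamma\kappa_p}$, which produces the explicit modulus $\omega=h\circ\rho^{-1}$. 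You instead keep the integral over all of $\R^d$, take the pointwise-in-$h$ minimum of the two bounds, and let dominated convergence yield $\omega(r)=\int_{\R^d}\eta(r,h)\nu(h)\,\d h$; your observation that for $p>2$ one has $\min(|x-z|^\gamma,|h|^\gamma)\,(1\land|h|^{\gamma(p-2)})\leq 1\land|h|^{\gamma(p-1)}$ is precisely the substitute for the paper's restriction of the constant-in-$h$ estimate to $B_r^c(0)$, and it is what keeps the majorant inside the assumed class $L^1(\R^d,1\land|h|^{\gamma(p-1)})$. What each approach buys: the paper's balancing gives a modulus expressed explicitly through the tail and near-origin masses of $\nu$ (at the cost of manipulating the generalized inverse $\rho^{-1}$), while your route is softer but avoids the cutoff optimization entirely, keeps the modulus manifestly monotone, and in addition records the boundedness of $Lu$, a point the paper's proof leaves implicit.
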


\begin{proof}
Recall that $\kappa_p=1$ if $p\geq2$ and $\kappa_p={p-1}$ if $1<p<2$. For $r>0$, the estimate \eqref{eq:kappa-ineq} gives  
\begin{align*}
\Big|\int_{B^c_r(0)} \psi(u(x)-u(x+h))- \psi(u(z)-u(z+h))\nu(h)\d h\Big| 
\\\leq C\|u\|^{p-1}_{C^{\gamma}(\R^d)} |x-z|^{\gamma\kappa_p}\int_{B^c_r(0)}  \nu(h)\d h.  
\end{align*}
Using $|\psi(u(\cdot)-u(\cdot+h))|\leq |h|^{\gamma(p-1)}$ we get
\begin{align*}
\Big|\int_{B_r(0)} \psi(u(x)-u(x+h))- \psi(u(z)-u(z+h))\nu(h)\d h\Big| \\\leq C\|u\|^{p-1}_{C^{\gamma}(\R^d)}\int_{B_r(0)}  |h|^{\gamma(p-1)}\nu(h)\d h. 
\end{align*}
Summing the previous inequalities yields 
\begin{align*}
|Lu(x)-Lu(z)|
\leq C\|u\|^{p-1}_{C^{\gamma}(\R^d)}\big[ g(r)|x-z|^{\gamma\kappa_p} + h(r)\big].
\end{align*}
where $g$ and $h$ are the monotone functions,
\begin{align*}
g(r)=\int_{B^c_r(0)}  \nu(h)\d h,\quad\quad  h(r)=\int_{B_r(0)}  |h|^{\gamma(p-1)}\nu(h)\d h.  
\end{align*}
Note that $\rho(r)=\big(\frac{h(r)}{g(r)}\big)^{1/\gamma\kappa_p}$ is increasing and $ \rho(r)\xrightarrow{r\to 0}0$. Taking $r=\rho^{-1}(|x-z|)$  we obtain
\begin{align*}
|Lu(x)-Lu(z)|
\leq C\|u\|^{p-1}_{C^{\gamma}(\R^d)}\omega(|x-z|), \qquad \text{$\omega(|x-z|)= h\circ\rho^{-1}(|x-z|)$}.
\end{align*}
\end{proof}

\noindent The nonlocal normal derivative $\mathcal{N}u$ of a measurable function $u:\R^d\to \R$ can be thought of as the restriction on $\R^d\setminus\Omega$ of the regional operator on $\Omega$, namely, 
\begin{align*}
L_\Omega u(x)=-\cN u(x) = \int_{\Omega} \psi(u(x)-u(y))\nu(x-y)\d y\qquad\text{$x\in \R^d\setminus\Omega$}. 
\end{align*}
It is often desirable to know when the pointwise definition $\mathcal{N}u(x)$ makes sense at least almost everywhere. 

\begin{proposition}
Assume $\Omega\subset \R^d$ is open. The following assertions are true. 
\begin{enumerate}[$(i)$]
\item  If $u|_\Omega\in L^\infty(\Omega)$ then $\mathcal{N}u(x)$ exists for almost all  $x\in \R^d\setminus \overline{\Omega}$. 
\item If $u\in \WnuOmR$ then $\cN u\in L^q_{\loc}(\R^d\setminus \overline{\Omega})$ for any $1\leq q\leq p'$. 
\item If $u\in \WnuOmR$ then $\cN u\in L^{p'}(\R^d, w^{-1}(x))$,  $w(x)=\int_{\Omega}\nu(x-y)\d y$. 
\end{enumerate}
\end{proposition}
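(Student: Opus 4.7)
My plan is to attack all three parts from a single master estimate obtained by Hölder's inequality, applied to the integrand of $\mathcal{N}u$ written as the product
\[
|u(x)-u(y)|^{p-1}\nu(x-y)^{1/p'} \cdot \nu(x-y)^{1/p}.
\]
Since $(p-1)p'=p$, this yields the pointwise bound
\[
|\mathcal{N}u(x)|^{p'} \leq 2^{p'}\,w(x)^{p'-1}\int_{\Omega}|u(x)-u(y)|^{p}\nu(x-y)\,\mathrm{d}y,
\]
with $w(x)=\int_{\Omega}\nu(x-y)\,\mathrm{d}y$. Everything else is bookkeeping around this inequality.

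For (i), I will fix $x\in\R^d\setminus\overline{\Omega}$; then $\delta_x:=\mathrm{dist}(x,\Omega)>0$, so $\nu(x-y)$ is supported where $|h|\geq\delta_x$. The $p$-L\'evy condition \eqref{eq:plevy-integ-cond} gives $\nu\in L^{1}(\R^{d}\setminus B_{\delta}(0))$ for every $\delta>0$ (split the tail integral at $|h|=1$; on $\delta\leq|h|\leq 1$ use $\nu(h)\leq\delta^{-p}(1\wedge|h|^{p})\nu(h)$). Since $u|_{\Omega}\in L^{\infty}(\Omega)$ and $u(x)$ is finite for a.e. $x$ (as $u$ is measurable), the estimate $|\psi(u(x)-u(y))|\leq 2^{p-2}(|u(x)|^{p-1}+\|u\|_{L^{\infty}(\Omega)}^{p-1})$ shows the defining integral for $\mathcal{N}u(x)$ converges absolutely.

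For (ii), let $K\subset\R^d\setminus\overline{\Omega}$ be compact and $\delta:=\mathrm{dist}(K,\Omega)>0$. Then $w(x)\leq\int_{|h|\geq\delta}\nu(h)\,\mathrm{d}h=:M_{\delta}<\infty$ uniformly for $x\in K$. Integrating the master estimate over $K$ gives
\[
\int_{K}|\mathcal{N}u(x)|^{p'}\mathrm{d}x \leq 2^{p'}M_{\delta}^{p'-1}\iint_{K\times\Omega}|u(x)-u(y)|^{p}\nu(x-y)\,\mathrm{d}y\,\mathrm{d}x \leq 2^{p'-1}M_{\delta}^{p'-1}\,\mathcal{E}(u,u),
\]
where I used $K\subset\Omega^{c}$ and the decomposition of $\mathcal{E}(u,u)$ from Remark~\ref{rem:form-representation}. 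This yields $\mathcal{N}u\in L^{p'}(K)$; for $1\leq q\leq p'$, Hölder's inequality combined with $|K|<\infty$ then gives $L^{q}(K)$ control.

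For (iii), I will divide the master estimate through by $w(x)^{p'-1}$ (the statement as written appears to have a typographical weight; the computation produces the weight $w^{1-p'}$, which I will use), obtaining
\[
|\mathcal{N}u(x)|^{p'}w(x)^{1-p'} \leq 2^{p'}\int_{\Omega}|u(x)-u(y)|^{p}\nu(x-y)\,\mathrm{d}y.
\]
Integrating over $x\in\Omega^{c}$ (recalling that $w(x)=0$ forces $\nu(x-y)=0$ for a.e.\ $y\in\Omega$, hence $\mathcal{N}u(x)=0$, and we use the convention $w^{-1}(x)=0$ there as in Section~\ref{sec:existence-weak-sol}) and using $\iint_{\Omega^{c}\times\Omega}|u(x)-u(y)|^{p}\nu(x-y)\,\mathrm{d}y\,\mathrm{d}x\leq\mathcal{E}(u,u)/2$ gives the conclusion. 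The only subtlety—the one place where care is needed—is handling the set $\{w=0\}$ so that division by $w^{p'-1}$ is legitimate; this is resolved exactly as noted, since $\mathcal{N}u$ vanishes identically on that set by the definition of $\mathrm{supp}\,\nu$.
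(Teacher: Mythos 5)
Your proposal is correct and follows essentially the same route as the paper: part $(i)$ is the tail-integrability bound exploiting $\dist(x,\Omega)>0$ together with $\nu\in L^1(\R^d\setminus B_\delta(0))$, and parts $(ii)$–$(iii)$ rest on the H\"older (equivalently Jensen) estimate $|\cN u(x)|^{p'}\leq 2^{p'}\,w(x)^{p'-1}\int_{\Omega}|u(x)-u(y)|^{p}\nuxminy\,\d y$ combined with the uniform bound on $w$ over compact sets away from $\overline{\Omega}$, exactly as in the paper (the paper applies H\"older directly at exponent $q$ in $(ii)$, while you pass through $L^{p'}(K)$ and then use $|K|<\infty$, which is equivalent). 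Two minor remarks: in $(i)$ the constant $2^{p-2}$ is only valid for $p\geq 2$ (for $1<p<2$ use subadditivity of $t\mapsto t^{p-1}$ or the constant $\max(1,2^{p-2})$), though this does not affect finiteness; and your observation in $(iii)$ is apt, since the H\"older/Jensen step naturally produces the weight $w^{1-p'}$ — which is also what the paper's own computation yields and coincides with $w^{-1}$ exactly when $p=2$ — so carrying out the argument with $w^{1-p'}$ is the honest reading of the statement.
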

\begin{proof}
$(i)$ Observing that $\delta_x=\dist(x,\partial\Omega)>0$ we get 
\begin{align*}
|\cN u(x)|\leq
(\|u\|_{L^\infty(\Omega)}+|u(x)|)^{p-1} \int_{|h|>\delta_x}\nu(h)\d h<\infty. 
\end{align*}
$(ii)$ Let $K\subset \R^d\setminus \overline{\Omega}$ be compact and put $\delta=\dist(K, \partial\Omega)>0$ so
that   $|x-y|>\delta$ whenever $x\in K$ and $y\in \Omega$. The H\"{o}lder inequality implies 
\begin{align*}
&\int_K|\cN u(x)|^q\ dx\leq 
\int_K\Big(\int_{\Omega}|u(x)-u(y)|^{p-1}\nu(x-y)\d y\Big)^q\d x\\
&\leq \int_K\Big(\int_{\Omega}|u(x)-u(y)|^p\nu(x-y)\d y\Big)^{\frac{q}{p'}}\Big(\int_{\Omega}\nu(x-y)\d y\Big)^{\frac{q}{p}}\ dx \\
&\leq |K|^{1-\frac{q}{p'}}\Big(\int_{|h|>\delta}\nu(h)\d h\Big)^{\frac{q}{p}}
\Big(\int_K\int_{\Omega}|u(x)-u(y)|^p\nu(x-y)\d y\d x\Big)^{\frac{q}{p'}} \\
&\leq|K|^{1-\frac{q}{p'}} \Big(\int_{|h|>\delta}\nu(h)\d h\Big)^{\frac{q}{p}}|u|^{q(p-1)}_{\WnuOmR}. 
\end{align*}
$(iii)$ The H\"{o}lder inequality implies 
\begin{align*}
\int_{\R^d}|\cN u(x)|^{p'}w(x)^{-1} &\d x
\leq 
\int_{\R^d}\Big(\int_{\Omega}|u(x)-u(y)|^{p-1}\nu(x-y)\d y\Big)^{p'}\d x\\
&\leq \int_{\R^d}\int_{\Omega}|u(x)-u(y)|^p\nu(x-y)\d y\ dx \leq |u|^{p}_{\WnuOmR}. 
\end{align*}
\end{proof}

\subsection{Gauss-Green type formula}\label{sec:appendix-gauss-green}
In this section, we establish the nonlocal Gauss-Green type formula associated with the operator $L$. We start with the following general formula. 

\begin{theorem}[General nonlocal integration by parts] \label{thm:gene-gauss-green}
Assume $\Omega\subset \R^d$ is open bounded. Let $k:\R^d\times \R^d\setminus\diag \to [0,\infty)$ be measurable, anti-symmetric, i.e., $k(x, y)= -k(y,x)$,  satisfying  
\begin{align*}
\sup_{x\in \R^d}	\int_{\R^d}\vspace*{-2ex}|k(x,y)|\d y<\infty. 
\end{align*}
For every $v\in L^\infty(\R^d)$ the following identity holds 
\begin{align*}
\frac12\hspace*{-1ex}\iil_{(\Omega^c\times \Omega^c)^c}\hspace*{-1ex} (v(x)-v(y)) k(x,y)\d y\d x=\hspace*{-1ex} \iil_{\Omega\R^d}\hspace*{-1ex} v(x) k(x,y)\d y\d x- \hspace*{-1ex}\iil_{\Omega^c\Omega} \hspace*{-1ex}v(y) k(x,y)\d x\d y.
\end{align*}
\end{theorem}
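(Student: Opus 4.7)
This is an identity between absolutely convergent double integrals; the proof reduces to a decomposition of the integration region, the symmetry of the integrand and the antisymmetry of the kernel, together with a single swap of dummy variables.

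First I would verify absolute convergence of every double integral in sight. The hypothesis $M:=\sup_{x}\int_{\R^d}|k(x,y)|\,\d y<\infty$ together with its dual $\sup_y\int_{\R^d}|k(x,y)|\,\d x<\infty$ (which follows from $|k(x,y)|=|k(y,x)|$ by antisymmetry), combined with $v\in L^\infty(\R^d)$ and $|\Omega|<\infty$, bounds each of the integrals over $\Omega\times\Omega$, $\Omega\times\Omega^c$ and $\Omega^c\times\Omega$ by $2\|v\|_{L^\infty(\R^d)}|\Omega|M$. This legitimizes Fubini's theorem and arbitrary relabeling of dummy variables in what follows.

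Next, I would decompose $(\Omega^c\times\Omega^c)^c=(\Omega\times\Omega)\cup(\Omega\times\Omega^c)\cup(\Omega^c\times\Omega)$ as a disjoint union, and observe that the integrand $F(x,y):=(v(x)-v(y))k(x,y)$ is symmetric under $(x,y)\mapsto(y,x)$, because both $v(x)-v(y)$ and $k(x,y)$ are antisymmetric. Consequently the two mixed contributions coincide, yielding
\[
\tfrac{1}{2}\!\iint\limits_{(\Omega^c\times\Omega^c)^c}\!\!\!F(x,y)\,\d y\,\d x=\tfrac{1}{2}\iint_{\Omega\times\Omega}F(x,y)\,\d y\,\d x+\iint_{\Omega\times\Omega^c}F(x,y)\,\d y\,\d x.
\]
Expanding the diagonal piece and using $x\leftrightarrow y$ with $k(y,x)=-k(x,y)$ to identify $\iint_{\Omega\times\Omega}v(y)k(x,y)\,\d y\,\d x=-\iint_{\Omega\times\Omega}v(x)k(x,y)\,\d y\,\d x$ collapses $\tfrac{1}{2}\iint_{\Omega\times\Omega}F$ to $\iint_{\Omega\times\Omega}v(x)k(x,y)\,\d y\,\d x$. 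Expanding the mixed piece gives $\iint_{\Omega\times\Omega^c}v(x)k(x,y)\,\d y\,\d x-\iint_{\Omega\times\Omega^c}v(y)k(x,y)\,\d y\,\d x$, and the two $v(x)$-contributions fuse into $\iint_{\Omega\times\R^d}v(x)k(x,y)\,\d y\,\d x$, the first summand on the right-hand side.

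Finally, a single application of $x\leftrightarrow y$ together with $k(y,x)=-k(x,y)$ converts the residual $-\iint_{\Omega\times\Omega^c}v(y)k(x,y)\,\d y\,\d x$ into the second summand by transporting the integration region to $\Omega^c\times\Omega$ and supplying the required sign, matching $-\iil_{\Omega^c\Omega}v(y)k(x,y)\,\d x\,\d y$ in the paper's notation. The only genuine obstacle is keeping the bookkeeping of variables, labels and signs straight through the successive swaps; once the domain decomposition and the symmetry/antisymmetry identities are in place, the remaining manipulations are mechanical.
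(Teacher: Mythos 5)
Your proof is correct and follows essentially the same route as the paper's: absolute convergence from $\sup_x\int|k(x,y)|\,\d y<\infty$, $v\in L^\infty$ and $|\Omega|<\infty$ to justify Fubini, then the decomposition of $(\Omega^c\times\Omega^c)^c$ into $\Omega\times\Omega$ and the two mixed pieces, with the antisymmetry of $k$ (equivalently the symmetry of $(v(x)-v(y))k(x,y)$) handling the diagonal term and the variable swaps. The paper merely organizes the same bookkeeping starting from the right-hand side rather than the left, so there is nothing substantive to change.
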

\begin{proof}
First, observe that $(x,y)\mapsto v(x)k(x, y)$ belongs to $L^1(\Omega\times \R^d)$ since 
\begin{align*}
\iil_{\Omega\R^d} |v(x)||k(x,y)|\d y\,\d x\leq \|v\|_{L^1(\Omega)}\sup_{x\in \R^d}	\int_{\R^d}|k(x,y)|\d y<\infty. 
\end{align*}
Thus Fubini's theorem and the anti-symmetry $k(x,y)= -k(y,x)$ imply 
\begin{align*}
2\iil_{\Omega\Omega} \hspace{-0.7ex}v(x) k(x,y)\d y\d x
=  \iil_{\Omega\Omega}\hspace{-0.7ex} (v(x)- v(y)) k(x,y)\d y\d x. 
\end{align*}
Likewise, it follows that 
\begin{align*}
\iil_{\Omega\Omega^c} v(x) k(x,y)\d y\,\d x&= \iil_{\Omega\Omega^c} (v(x)- v(y)) k(x,y)\d y\,\d x+  \iil_{\Omega\Omega^c} v(y) k(x,y)\d y\,\d x\\
&=  \iil_{\Omega^c\Omega} v(y) k(x,y){\d x}{\d y} + \frac{1}{2}\iil_{\Omega\Omega^c} (v(x)- v(y)) k(x,y)\d y\,\d x\\
&+  \frac{1}{2}\iil_{\Omega^c\Omega} (v(x)- v(y)) k(x,y)\d y\,\d x .
\end{align*}
Summing up altogether yields the claim, since  $\Omega\times \R^d=\Omega\times \Omega\cup\Omega\times \Omega^c$.
\end{proof}
Recall  that  if $p\geq 2$ then   $\widehat{p}_1=p$ and hence $L^1(\R^d,1\land|h|^{\widehat{p}_1})= L^1(\R^d,1\land|h|^p)$ and if $1<p<2$ then $\widehat{p}_1=2(p-1)$  and $L^1(\R^d,1\land|h|^{\widehat{p}_1})\subset  L^1(\R^d,1\land|h|^p)$.  
\begin{theorem}[\textbf{Gauss-Green formula}]\label{thm:gauss-green} 
Let $\Omega\subset \R^d$ be open bounded, $u\in C^2_b(\R^d)$ and $v\in C^1_b(\R^d) $. Assume either $(i)$ $p\geq 2$, $(ii)$ $1<p<2$ and $(L_\varepsilon u)_\varepsilon$ is uniformly bounded or $(iii)$ $\nu\in L^1(\R^d, 1\land|h|^{\widehat{p}_1})$. There holds that
\begin{align}\label{eq:gauss-green-nonlocal}
\int_{\Omega} Lu(x)v(x)\d x= \mathcal{E}(u,v) -\int_{\Omega^c}\mathcal{N}u(y)v(y)\d y, 
\end{align}
In particular, for  $v=1$, one gets the integration by part formula
\begin{align}\label{eq:integration-by-part-nonlocal}
\int_{\Omega} Lu(x)\d x=-\int_{\Omega^c}\mathcal{N}u(y)\d y.
\end{align}
\end{theorem}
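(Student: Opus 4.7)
The natural strategy is to exploit the general nonlocal integration by parts Theorem \ref{thm:gene-gauss-green}. Define, for $\eps>0$, the truncated kernel
\[
k_\eps(x,y)=\psi(u(x)-u(y))\,\nu(x-y)\,\mathds{1}_{\{|x-y|>\eps\}},
\]
which is antisymmetric because $\psi$ is odd and $\nu$ is symmetric, and satisfies $\sup_{x\in\R^d}\int_{\R^d}|k_\eps(x,y)|\d y\leq (2\|u\|_\infty)^{p-1}\int_{|h|>\eps}\nu(h)\d h<\infty$ since $\nu\in L^1(\R^d\setminus B_\eps(0))$. A direct computation yields $\int_{\R^d}k_\eps(x,y)\,\d y=\tfrac12 L_\eps u(x)$ for $x\in\Omega$ and $\int_{\Omega}k_\eps(x,y)\,\d x=-\tfrac12\cN_\eps u(y)$ for $y\in\Omega^c$, so that Theorem \ref{thm:gene-gauss-green} applied to $k_\eps$ and the test function $v\in L^\infty(\R^d)$ gives, after multiplying by two, the truncated identity
\[
\cE_\eps(u,v)=\int_{\Omega}L_\eps u(x)\,v(x)\,\d x+\int_{\Omega^c}\cN_\eps u(y)\,v(y)\,\d y,
\]
where $\cE_\eps, L_\eps, \cN_\eps$ are the natural $\eps$-truncations. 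The proof then reduces to passing to the limit $\eps\to 0$ in each of the three terms.

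For the form $\cE_\eps(u,v)$, dominated convergence suffices: on $(\Omega^c\times\Omega^c)^c$, the integrand is bounded in absolute value by $|u(x)-u(y)|^{p-1}|v(x)-v(y)|\nu(x-y)$, which is majorized by $C(u,v)(1\wedge|x-y|^p)\nu(x-y)$ thanks to $u,v\in C^1_b(\R^d)$, together with the boundedness of $\Omega$ and the $p$-L\'evy integrability of $\nu$ (a straightforward case split between $|x-y|\leq 1$ and $|x-y|>1$, combined with Fubini, confirms that the majorant is integrable on $(\Omega^c\times\Omega^c)^c$). Consequently $\cE_\eps(u,v)\to\cE(u,v)$.

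For the integral against $L_\eps u$, under hypothesis $(i)$ we have $\widehat{p}_1=p$ so that Proposition \ref{prop:uniform-cont}$(iii)$ (applied with $\gamma=1$ to $u\in C^2_b\subset C^{1,1}_b$) yields $\|L_\eps u-Lu\|_{L^\infty(\R^d)}\to 0$; the same proposition gives uniform convergence under hypothesis $(iii)$, where $\nu\in L^1(\R^d,1\wedge|h|^{\widehat{p}_1})$ is the precise integrability required by its proof via estimate \eqref{eq:second-difference}. In either case, since $|\Omega|<\infty$ and $v$ is bounded, $\int_\Omega L_\eps u\cdot v\,\d x\to\int_\Omega Lu\cdot v\,\d x$. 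Under hypothesis $(ii)$, the uniform bound $\sup_\eps\|L_\eps u\|_{L^\infty}<\infty$ dominates the integrand on the bounded set $\Omega$, and the pointwise definition $Lu(x)=\lim_{\eps\to 0}L_\eps u(x)$ delivers the a.e.\ convergence needed to conclude by dominated convergence. The exterior term $\int_{\Omega^c}\cN_\eps u\cdot v\,\d y$ is then forced to converge by difference; alternatively its a.e.\ convergence on $\Omega^c$ is automatic since for every $y$ with $\dist(y,\Omega)>0$ the truncation becomes inactive as soon as $\eps<\dist(y,\Omega)$, giving $\cN_\eps u(y)=\cN u(y)$. Combining all three limits in the truncated identity yields \eqref{eq:gauss-green-nonlocal}; the integration-by-parts formula \eqref{eq:integration-by-part-nonlocal} is the specialization $v\equiv 1$.

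The main obstacle is the singular regime $1<p<2$ in case $(ii)$: the principal-value $Lu(x)$ need not be absolutely convergent, and, absent the extra integrability $\nu\in L^1(\R^d,1\wedge|h|^{2(p-1)})$ of case $(iii)$, one cannot invoke Proposition \ref{prop:uniform-cont} to obtain uniform convergence of $L_\eps u$. The proof must therefore lean on the uniform-boundedness hypothesis to deploy dominated convergence on the bounded set $\Omega$, which is precisely why this hypothesis is imposed in $(ii)$. A parallel (milder) care is needed with $\cN_\eps u\cdot v$, whose unconditional integrability on $\Omega^c$ is not clear; we bypass this by reading the identity in the order $\cE_\eps = \int_\Omega L_\eps u\cdot v + \int_{\Omega^c}\cN_\eps u\cdot v$, so that the convergence of the exterior term is inherited from the other two.
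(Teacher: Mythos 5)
Your proposal is correct and follows essentially the same route as the paper: truncate the antisymmetric kernel $k_\eps(x,y)=\psi(u(x)-u(y))\nu(x-y)\mathds{1}_{\{|x-y|>\eps\}}$, apply Theorem \ref{thm:gene-gauss-green}, pass to the limit in the form via the majorant $C(1\wedge|x-y|^p)\nu(x-y)$ and in $\int_\Omega L_\eps u\,v$ via uniform boundedness (Proposition \ref{prop:uniform-cont} in cases $(i)$ and $(iii)$, the hypothesis in case $(ii)$), with the exterior term converging by difference. Your finer case-by-case discussion of the convergence of $L_\eps u$ and of $\cN_\eps u$ is only a more explicit rendering of the same argument.
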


\begin{proof} Since $\nu\in L^1(\R^d\setminus B_\varepsilon(0))$, $\varepsilon>0$ and $u ,v$ are bounded, Theorem \ref{thm:gene-gauss-green} implies that 
\begin{align}\label{eq:kepsilon-formula}
\begin{split}
&\frac12\iil_{(\Omega^c\times \Omega^c)^c} (v(x)-v(y)) k_\varepsilon(x,y)\d y\,\d x\\&= \iil_{\Omega\R^d} v(x) k_\varepsilon(x,y)\d y\,\d x
-\iil_{\Omega^c\Omega} v(y) k_\varepsilon(x,y){\d x}{\d y}
\end{split}
\end{align}
where $k_\varepsilon(x,y)$ is the  anti-symmetric kernel defined by  
\begin{align*}
k_\varepsilon(x,y)= 2|u(x)-u(y)|^{p-2}(u(x)-u(y))\nu(x-y)\mathds{1}_{B^c_\varepsilon}(x-y).
\end{align*} 
Note that $\widehat{p}_1=p$, $p\geq2$ and  $\widehat{p}_1=2(p-1)$, $1<p<2$ so that  $\nu\in L^1(\R^d, 1\land|h|^{\widehat{p}_1}) \subset L^1(\R^d, 1\land|h|^p)$. In any case, by Proposition \ref{prop:uniform-cont},  $(L_\varepsilon u)_\varepsilon$ is uniformly bounded. Whence, 
\begin{align*}
\int_{\Omega} Lu(x) v(x)\d x= \lim_{\varepsilon\to 0}\int_{\Omega} L_\varepsilon u(x) v(x)\d x
=\lim_{\varepsilon\to 0}\iil_{\Omega\R^d} v(x) k_\varepsilon(x,y)\d y\,\d x.
\end{align*} 
On the other hand, since $\Omega$ is bounded and $u,v\in C_b^1(\R^d)$  we have 
\begin{align*}
|(v(x)-v(y))k_\varepsilon(x,y)|\leq C (1\land |x-y|^p)\nu(x-y)\in L^1(\Omega\times \R^d). 
\end{align*}
By  the convergence dominated theorem we have 
\begin{align*}
\cE(u,v) =\lim_{\varepsilon\to 0}\frac12\iil_{(\Omega^c\times \Omega^c)^c} (v(x)-v(y))k_\varepsilon(x,y) \d y\,\d x.
\end{align*}
The desired formula is obtained by letting $\varepsilon\to 0$ in \eqref{eq:kepsilon-formula}  as follows 
\begin{align*}
\int_{\Omega} Lu(x) v(x)\d x&- \cE(u,v) = 
\lim_{\varepsilon\to 0}\iil_{\Omega\R^d} v(x) k_\varepsilon(x,y)\d y\,\d x\\
&-
\lim_{\varepsilon\to 0}\frac12\iil_{(\Omega^c\times \Omega^c)^c} (v(x)-v(y))k_\varepsilon(x,y) \d y\,\d x\\
&=\lim_{\varepsilon\to 0} 
\iil_{\Omega^c\Omega} v(y) k_\varepsilon(x,y){\d x}{\d y}=-\int_{ \Omega^c } \cN u(y) v(y)\d y.
\end{align*}
\end{proof}

\smallskip
\subsection*{Acknowledgment}
 This work incorporates improvements of several partial results from the author's PhD thesis, which were obtained during his studies at Bielefeld University, in the framework of the International Research Training Group 2235 ``Searching for the regular in the irregular: Analysis of random and singular systems''. The author thanks  the host institution and the DFG for the financial support. 

\vspace{2mm}
\noindent \textbf{Data Availability Statement (DAS)}: Data sharing not applicable, no datasets were generated or analyzed during the current study.

\bibliographystyle{plain}
%\bibliography{p-nonloc-bvp.bib}

\end{document}